\newcommand{\C}{{\mathbb C}}       
\newcommand{\M}{{\mathcal M}}       %
\newcommand{\R}{{\mathbb R}}       
\newcommand{\N}{{\mathbb N}}
\newcommand{\bS}{{\mathbb S}}
\newcommand{\DD}{{\mathcal D}}
\newcommand{\HH}{{\mathcal H}}
\newcommand{\WW}{{\mathcal W}}
\newcommand{\cB}{{\mathcal B}}
\newcommand{\cG}{{\mathcal G}}
\newcommand{\NN}{{\mathcal N}}
\newcommand{\EE}{{\mathcal E}}
\newcommand{\mfg}{{\mathfrak g}}
\newcommand{\mfc}{\mathfrak{c}}
\newcommand{\bM}{\mathbb{M}}
\newcommand{\diam}{{\rm diam}}
\newcommand{\dist}{{\rm dist}}
\newcommand{\ds}{\displaystyle }
\newcommand{\rf}[1]{{(\ref{#1})}}
\newcommand{\supp}{\operatorname{supp}}
\newcommand{\vphi}{{\varphi}}
\newcommand{\ve}{{\varepsilon}}
\newcommand{\vv}{{\vspace{2mm}}}
\newcommand{\vvv}{{\vspace{3mm}}}
\newcommand{\wt}[1]{{\widetilde{#1}}}
\newcommand{\wh}[1]{{\widehat{#1}}}
\newcommand{\noi}{\noindent}
\newcommand{\rad}{{\operatorname{rad}}}
\newcommand{\Apsi}{\mathcal{A}_{\psi}}
\newcommand{\apsi}{\mathfrak{a}_{\psi}^i}
\newcommand{\apsun}{\mathfrak{a}_{\psi}^+}
\newcommand{\apsdu}{\mathfrak{a}_{\psi}^-}
\newcommand{\aps}{\mathfrak{a}_\psi}
\newcommand{\lca}{\mathfrak{a}}
\newcommand{\A}{\mathcal{A}}
\newcommand{\chara}{\mathds{1}}
\newcommand{\dr}{\frac{dr}{r}}
\newcommand{\hi}{\mathcal{H}^n}
\newcommand{\pStop}{\mathcal{W}_{\Gamma}}
\newcommand{\ppStop}{\mathcal{W}_{\Gamma,0}}
\newcommand{\ttt}{{\rm Top}}
\newcommand{\G}{{\mathrm G}}
\newcommand{\VG}{{\mathrm {VG}}}
\newcommand{\LD}{{\mathrm{LD}}}
\newcommand{\BA}{{\mathrm{BA}}}
\newcommand{\Next}{{\mathrm{Next}}}
\newcommand{\capp}{\operatorname{Cap}}
\newcommand{\tcb}[1]{\textcolor{black}{#1}}
\newcommand{\redeps}{\kappa^2}
\newcommand{\redepsinv}{\kappa^{-2}}
\newcommand{\blueps}{\kappa^{-1}}
\def\XXint#1#2#3{{\setbox0=\hbox{$#1{#2#3}{\int}$ }
		\vcenter{\hbox{$#2#3$ }}\kern-.58\wd0}}
\def\one{\mathds{1}}
\newcommand{\cfive}{c_2}
\newcommand{\ctwo}{c_3}
\newcommand{\csix}{c_4}
\newcommand{\conedouble}{c_5}
\newcommand{\csixdouble}{c_6}
\newcommand{\cseven}{c_7}
\newcommand{\cfour}{c_8}
\newcommand{\Cten}{C_1}
\newcommand{\Celeven}{C_2}
\newcommand{\Cone}{C_3}
\newcommand{\Ctwo}{C_4}
\newcommand{\Cthree}{C_5}
\newcommand{\Whit}{\mathcal{W}}
\newcommand{\av}[1]{\left| #1 \right|}
\newcommand{\ps}[1]{\left( #1 \right)}
\newtheorem{theorem}{Theorem}[section]
\newtheorem{lemma}[theorem]{Lemma}
\newtheorem{coro}[theorem]{Corollary}
\newtheorem{propo}[theorem]{Proposition}
\newtheorem{claim}[theorem]{Claim}
\newtheorem{question}{Question}
\newtheorem*{theorem*}{Theorem}
\newtheorem*{claim*}{Claim}
\newtheorem{assum}[theorem]{Assumption}
\newtheorem{theorema}{Theorem}[section]
\newtheorem{mlemma}{Main Lemma}[section]
\theoremstyle{definition}
\newtheorem{definition}[theorem]{Definition}
\theoremstyle{remark}
\newtheorem{rem}[theorem]{\bf Remark}
\newcommand{\settheoremtag}[1]{
	\let\oldthetheorem\thetheorem
	\renewcommand{\thetheorem}{#1}
	\g@addto@macro\endtheorem{
	\addtocounter{theorem}{-1}
	\global\let\thetheorem\oldthetheorem}
}
\numberwithin{equation}{section}
\renewcommand{\thepart}{Part \Roman{part}}
\titleformat{\subsubsection}{\normalfont\itshape}{\thesubsubsection.}{0.1cm}{}
\newcommand{\brem}{\begin{rem}}
	\newcommand{\erem}{\end{rem}}
\begin{document}
	
	\title[Carleson's $\ve^2$ conjecture]{Carleson's $\ve^2$ conjecture in higher dimensions}



	\author{Ian Fleschler}
\address{Department of Mathematics, Fine Hall, Princeton University, Washington Road, Princeton, NJ
08540, USA}
\email{imf@princeton.edu}

\author{Xavier Tolsa}

\address{ICREA (Barcelona, Catalonia)\\
 Universitat Aut\`onoma de Barcelona \\
and Centre de Recerca Matem\`atica (Barcelona, Catalonia).}
\email{xavier.tolsa@uab.cat}

\author{Michele Villa}

\address{University of Oulu (Oulu, Finland)
and Universitat Aut\`onoma de Barcelona (Barcelona, Catalonia).}

\email{michele.villa@oulu.fi}

\thanks{
I.F. acknowledges the support of the National Science Foundation through the grant FRG-1854147. 
X.T and M.V. are supported by the European Research Council (ERC) under the European Union's Horizon 2020 research and innovation programme (grant agreement 101018680). X.T. is also partially supported by MICINN (Spain) under the grant PID2020-114167GB-I00, the María de Maeztu Program for units of excellence (Spain) (CEX2020-001084-M), and 2021-SGR-00071 (Catalonia).  
M.V. is supported by the Academy of Finland via the project ``Higher dimensional Analyst's Traveling Salesman theorems and Dorronsoro estimates on non-smooth sets", grant No. 347828.
This work is based upon research funded by the  Deutsche Forschungsgemeinschaft (DFG, German Research Foundation) under Germany's Excellence Strategy – EXC-2047/1 – 390685813, while the authors were in residence at the Hausdorff Institute of Mathematics in Spring 2022 during the program ``Interactions between geometric measure theory, singular integrals, and PDEs''.
}

\subjclass[2020]{28A75, 28A78, 28A12}

\keywords{Tangent points, Carleson square function, rectifiability}
\date{\today}

	\begin{abstract}
In this paper we prove a higher dimensional analogue of Carleson's $\varepsilon^2$ conjecture. Given two arbitrary disjoint Borel sets $\Omega^+,\Omega^-\subset \mathbb{R}^{n+1}$, and $x\in\mathbb{R}^{n+1}$, $r>0$, we denote $$\varepsilon_n(x,r) := \frac{1}{r^n}\, \inf_{H^+} \mathcal{H}^n \left( ((\partial B(x,r)\cap H^+) \setminus \Omega^+) \cup ((\partial B(x,r)\cap H^-) \setminus \Omega^-)\right),$$ where the infimum is taken over all open affine half-spaces $H^+$ such that $x \in \partial H^+$ and we define $H^-= \mathbb{R}^{n+1} \setminus \overline {H^{+}}$. Our first main result asserts that any Borel subset of $$\left\{ x\in\mathbb{R}^{n+1} \, :\,  \int_0^1 \varepsilon_n(x,r)^2 \, \frac{dr}{r}<\infty \right\}$$ is $n$-rectifiable. For our second main result we assume that $\Omega^+$, $\Omega^-$ are open and that $\Omega^+\cup\Omega^-$ satisfies the capacity density condition. For each $x \in \partial \Omega^+ \cup \partial \Omega^-$ and $r>0$, we denote by $\alpha^\pm(x,r)$ the characteristic constant of the (spherical) open sets $\Omega^\pm \cap \partial B(x,r)$. We show that, up to a set of $\mathcal{H}^n$ measure zero, $x$ is a tangent point for both $\partial \Omega^+$ and $ \partial \Omega^-$ if and only if\begin{equation*}
  \int_0^{1} \min(1,\alpha^+(x,r) + \alpha^-(x,r) -2) \frac{dr}{r} < \infty.
  \end{equation*} The first result is new even in the plane and the second one improves and extends to higher dimensions the $\varepsilon^2$ conjecture of Carleson.	
	\end{abstract}
	\maketitle
	
	\newpage

 \tableofcontents


\section{Introduction}
In this paper we prove a higher dimensional analogue of Carleson's $\ve^2$ conjecture. Consider a Jordan domain $\Omega^+ \subset \R^2$ and set $\Omega^-= \R^2\setminus \overline{\Omega^+}$. For $x \in \R^2$ and $r>0$ denote by $I^+(x,r)$ and $I^-(x,r)$ the longest open arcs of the circumference  $S(x,r)\equiv\partial B(x,r)$ contained in $\Omega^+$ and $\Omega^-$ respectively (note that they might be empty). Then set
\begin{equation}\label{e:ve}
	\ve(x,r) = \frac{1}{r} \max \left( |\pi r - \HH^1(I^+(x,r))|, \, |\pi r - \HH^1(I^-(x,r))| \right),
\end{equation}
and define the Carleson's $\ve^2$ square function as 
\begin{equation}\label{e:intro-1}
	\mathcal{E}(x)^2 := \int_0^1 \ve(x,r)^2 \, \frac{dr}{r}.
\end{equation}
Carleson's $\ve^2$ conjecture, now a theorem, see \cite{JTV}, asserts the following. 
\begin{theorem}\label{jtv}
	Let $\Omega^+ \subset \R^2$ be a Jordan domain, $\Gamma= \partial \Omega^+$, and $\mathcal{E}$ the associated square function as defined in \eqref{e:intro-1}. Then the set of tangent points for $\Omega^+$ coincides with the subset of those points $x \in \Gamma$ such that $\mathcal{E}(x)<\infty$, up to a set of $\HH^1$-measure zero.
\end{theorem}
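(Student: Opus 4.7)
I would prove the biconditional by treating the two implications separately. The forward direction (tangent implies finite square function) is relatively soft and relies on standard structural results for rectifiable sets, whereas the reverse direction lies at the heart of Carleson's conjecture.

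\textbf{Tangent point $\Rightarrow$ $\mathcal{E}(x)<\infty$ a.e.} The set $T$ of tangent points of a Jordan domain is $1$-rectifiable, so up to an $\HH^1$-null set it is contained in a countable union of Lipschitz graphs $\Gamma_k$. I would localize to $x \in T \cap \Gamma_k$ at a density point of $T \cap \Gamma_k$. For $r$ small enough, the tangent property at $x$ guarantees that $\partial \Omega^+ \cap B(x,r)$ lies in a thin neighborhood of $\Gamma_k$; hence $\ve(x,r)$ is dominated, up to a constant, by the Jones $\beta_2$-coefficient of $\Gamma_k$ at $(x,r)$, plus a rapidly decaying error from the portion of $\partial\Omega^+$ outside $\Gamma_k$. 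Dorronsoro's theorem for Lipschitz graphs yields $\int_{\Gamma_k}\int_0^1 \beta_2(y,r)^2 \, \frac{dr}{r} \, d\HH^1(y)<\infty$, and Fubini concludes.

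\textbf{$\mathcal{E}(x)<\infty \Rightarrow x$ is a tangent point, a.e.} At each scale $r$ where $\ve(x,r)$ is small, the longest arcs $I^\pm(x,r)$ are nearly antipodal on $S(x,r)$, so their midpoints determine an oriented line $L_r$ through $x$, the candidate tangent direction at scale $r$. A direct geometric computation yields a tilting inequality
\[
|\angle(L_r, L_{r/2})| \lesssim \ve(x,r)+\ve(x,r/2).
\]
The hypothesis provides only $\sum_k \ve(x,2^{-k})^2<\infty$, which is strictly weaker than $\ell^1$-summability, so a naive telescoping does not produce a limit of $L_r$. The crucial move is to treat $k \mapsto \angle(L_{2^{-k}},L_{2^{-k-1}})$ as a Littlewood--Paley-type decomposition of a direction field and exploit almost-orthogonality. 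Concretely, I would group scales into blocks on which $\ve$ is comparable to a fixed dyadic value, prove a block-wise stability estimate that absorbs logarithmic losses, and then sum the blocks using Cauchy--Schwarz against the convergent $\ell^2$ tail $\sum_{k\geq K}\ve(x,2^{-k})^2$. This forces $L_r$ to converge as $r\to 0$, and the limit is the tangent line at $x$.

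\textbf{Main obstacle.} The genuine difficulty is precisely this $\ell^2 \to \ell^1$ upgrade; Carleson's square-function formulation sits exactly at the borderline where the tilting inequality combined with $\ell^2$-summability still suffices for convergence of directions. To control the exceptional set where the argument might fail, I would introduce a stopping-time decomposition that isolates \emph{coherent} scales (where $L_r$ is nearly stable) from \emph{bad} scales, with the bad scales organized into a Carleson-type family. An $L^2$ maximal inequality for the associated square function, together with the rectifiability of $\{\mathcal{E}<\infty\}$ (itself a planar specialization of the first main theorem of this paper), would then show that the set of points where $L_r$ fails to converge has $\HH^1$-measure zero.
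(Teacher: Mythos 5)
This statement is not proved in the paper at all: the paper states it as a known result, citing Bishop for the necessary direction (tangent $\Rightarrow$ $\mathcal{E}<\infty$) and \cite{JTV} for the sufficient direction, and then builds on it. So there is no ``paper's own proof'' to compare against. Evaluating your sketch on its own terms, however, there are two substantive gaps.

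\textbf{Forward direction.} The inequality $\ve(x,r)\lesssim \beta_{2}(x,r)$ is the wrong comparison and does not hold. The coefficient $\ve(x,r)$ measures an $L^{\infty}$-type deficiency of the longest arcs $I^{\pm}(x,r)$, and even when $\partial\Omega^+\cap B(x,r)$ lies in a thin slab around a line $L$, you must also know which side of $L$ is in $\Omega^+$ and which in $\Omega^-$ (the Jordan curve separation property) before you can bound $\ve$; and the correct geometric quantity is $\beta_{\infty}$, not $\beta_2$, so Dorronsoro's $L^2$ estimate does not directly apply. The Bishop--Jones theorem on $\beta_\infty$ at tangent points (cited in the introduction here) is the closer tool, but even that needs the topological input. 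The route taken in the companion paper \cite{FTV}, via the Alt--Caffarelli--Friedman monotonicity formula applied to the characteristic constants $\alpha^{\pm}(x,r)$ together with the inequality $\ve(x,r)^2\lesssim \min(1,\alpha^++\alpha^--2)$, circumvents both issues; your argument as stated does not.

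\textbf{Reverse direction.} You correctly identify that the crux is that $\sum_k\ve(x,2^{-k})^2<\infty$ is strictly weaker than $\ell^1$-summability of the tilting angles, but the proposed ``Littlewood--Paley almost-orthogonality of the direction field'' is an assertion with no mechanism behind it. Consecutive tilting increments $\angle(L_{2^{-k}},L_{2^{-k-1}})$ are not oscillatory in any sign-definite or frequency-localized sense, so there is no reason an almost-orthogonality inequality should hold; and the tilting inequality $|\angle(L_r,L_{r/2})|\lesssim\ve(x,r)+\ve(x,r/2)$ itself is delicate because $\ve$ is unstable under small perturbations of $r$ (an instability the paper flags explicitly as the main difficulty with spherical coefficients). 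What \cite{JTV} — and this paper in higher dimensions — actually does is different: a compactness/blow-up argument to establish flatness (the analogue of Main Lemma \ref{mainlemma1}), a corona decomposition to build an approximating Lipschitz graph, and a Fourier-analytic estimate (Lemma \ref{lemlips} here) that converts smallness of the symmetrized square function $\mathfrak{g}$ into an $L^2$ bound on $\nabla A$. Your sketch does not reproduce this; the stopping-time step you propose is closer in spirit, but without the flatness input and the Fourier estimate the ``bad-scales Carleson family'' has no measure bound. Finally, even convergence of $L_r$ does not by itself produce a true tangent point: you also need that $\partial\Omega^+$ stays uniformly inside the cone, which in the planar Jordan case is a topological argument (the curve separates the plane) that your sketch leaves implicit.
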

\noindent
Let us recall our notion of tangents.  For a point $x \in \R^{n+1}$, a unit vector $u$ and a parameter $a \in (0,1)$, we denote the two sided cone with axis in direction $u$ and aperture $a$ by
\begin{equation*}
	X_a(x,u) = \{ y \in \R^{n+1} \, |\, |(y-x) \cdot u | > a |y-x|\}.
\end{equation*}
Given two disjoint open subset $\Omega^+, \Omega^- \subset \R^{n+1}$,
 we say that $x \in \partial \Omega^+ \cap \partial \Omega^-$ is a tangent point for the pair $\Omega^+, \Omega^-$ if there exists a unit vector $u$ such that for all $a \in(0,1)$ there exists a radius $r>0$ so that
\begin{equation*}
	(\partial \Omega^+ \cup \partial \Omega^-) \cap X_a(x, u) \cap B(x,r) = \varnothing,
\end{equation*}
and moreover one component of $X_a(x,u) \cap B(x,r)$ is contained in $\Omega^+$ and the other in $\Omega^-$. The hyperplane through $x$ orthogonal to $u$ is the tangent to the pair $\Omega^+,\Omega^-$ at $x$. Sometimes, we will call it \textit{true} tangent, to distinguish it from the \textit{approximate} tangent\footnote{Recall that a set $E\subset\R^{n+1}$ has an approximate tangent at $x\in E$ if there exists a unit vector $u$ such that for all $a \in(0,1)$
$$\lim_{r\to0} \frac{\HH^n(E \cap X_a(x, u) \cap B(x,r))}{r^n} = 0.
$$
}.

Carleson's initial conjecture appeared naturally in a two-phase free boundary problem about harmonic measure; the necessary implication in Theorem \ref{jtv} was proven by Bishop \cite{bishop-thesis}, see also \cite{bcgj} and \cite[Theorem 6.3]{garnett-marshall}. The sufficient implication is more recent, see \cite{JTV}. We refer to the introduction of \cite{JTV} for a more thorough explanation about its connection with harmonic measure. 

\vspace{0.2cm}

It is natural to consider whether some analogue of Carleson's conjecture holds in higher dimension\footnote{Not only natural, but also useful: see the applications on free boundary problems and spectral inequalities in the companion paper \cite{FTV}, and below in this introduction.}. In other words, we would like to characterise tangent points of codimension-one `surfaces' in terms of a Carleson's $\ve$-like square function. In analogy with the case of the Analyst's Travelling Salesman Theorem  (ATST), even before being able to precisely state the problem we are confronted with two issues: 
\vspace{0.1cm}
\begin{itemize}
	\item[(A)] What is the right analogue of Carleson's $\ve$ square function?
	\vspace{0.1cm}
	\item[(B)] What sort of higher dimensional sets can we work with?
\end{itemize}
\vspace{0.1cm}
Issue (A) is rather obvious: $\ve$ as defined in \eqref{e:ve} won't possibly make sense in higher dimensions and  some  obvious generalisations of the coefficient $\ve$ may vanish on certain non-flat surfaces; 
Issue (B) might be a little more mysterious. After all, why not simply consider continuous images of $\R^n$ in $\R^{n+1}$? This is by now a well-known fact: connectedness in higher dimension doesn't say much at all about geometry. Indeed, note that Jordan curves possess measure theoretic qualities that are \textit{false} for higher dimensional topological manifolds; for example, curves are $1$-lower content regular: for any $x \in \Gamma$, where $\Gamma$ is a curve, and radius $0 < r < \diam(\Gamma)$, we have $\HH^1_\infty(\Gamma\cap (B(x,r)) \geq r$. This is definitely not true for a topological $2$-plane, for example. Some form of lower bound on the size of sets is usually needed when trying to deduce geometrical properties from the decay rate of integral quantities (see e.g. \cite{AS} in the context of the ATST). The obvious reason is that there might be very little to integrate upon, and thus finiteness of the square function says nothing about geometry. 

\subsection{A new coefficient and the rectifiability result}
To state our first main result, we need some more notation. For $x \in \R^{n+1}$, $r>0$ and an affine half-space $H$ such that $x \in \partial H$, denote by $S_{H}^+(x,r)=S(x,r) \cap H$ and $S_{H}^-(x,r):= S(x,r) \cap (\R^{n+1} \setminus \overline{H})$. Given two disjoint Borel sets $\Omega^+,\Omega^-\subset\R^{n+1}$, we put
\begin{equation}\label{ve_n-intro}
	\ve_n(x,r) := \frac{1}{r^n}\, \inf_H \HH^n \left( (S_H^+(x,r) \setminus \Omega^+) \cup (S_H^-(x,r) \setminus \Omega^-)\right).
\end{equation}
It is clear that if $\Omega^+$ and $\Omega^-$ are themselves complementary (open) half-spaces, then $\ve_n(x,r)=0$ for any  $x \in \partial \Omega^+ = \partial \Omega^-$ and $r>0$. Note that in the plane $\ve_1(x,r)  \lesssim \ve(x,r)$.  
The following result, our first main, shows that, indeed, this quantity does encode geometric information. Put 
\begin{equation}\label{calE_n}
	\mathcal{E}_n(x)^2 := \int_0^1 \ve_n(x,r)^2 \, \frac{dr}{r}.
\end{equation}
We will prove the following.

\begin{theorema}\label{teomain1}
	For $n\geq 1$ let $\Omega^+, \Omega^- \subset \R^{n+1}$ be two disjoint Borel subsets, and let $E\subset\{x \in \R^{n+1} \, |\, \mathcal{E}_n(x) < \infty\}$ be Borel\footnote{When $\Omega^+, \Omega^-$ are open, the set $\{x \in \R^{n+1} \, |\, \mathcal{E}_n(x) < \infty\}$ is Borel, and by the theorem it is $n$-rectifiable. So any subset is also $n$-rectifiable. See Remark \ref{rem-measurability}.}. Then $E$ is $n$-rectifiable. 
\end{theorema}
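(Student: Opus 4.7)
The plan is to reduce to a uniform scale and then construct Lipschitz graphs covering almost all of $E$. For each $M \in \N$, let $E_M = \{x \in E : \mathcal{E}_n(x) \le M\}$, so $E = \bigcup_M E_M$; since $n$-rectifiability is preserved under countable unions, it suffices to show that each $E_M$ is $n$-rectifiable. Via a Vitali-type covering one further reduces to showing that $E_M \cap B_0$ is $n$-rectifiable for balls $B_0$ where we control the profile $r \mapsto \ve_n(\cdot,r)$ uniformly.

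The key geometric principle is that when $\ve_n(x,r)$ is small, the optimal half-space $H = H_{x,r}$ in \eqref{ve_n-intro} has the property that $S(x,r) \cap H$ lies in $\Omega^+$ and $S(x,r) \setminus \overline H$ lies in $\Omega^-$ up to a set of small $\HH^n$-measure. Because $\Omega^+ \cap \Omega^- = \varnothing$, this pins down where the ``interface'' between the two phases can lie on $S(x,r)$: essentially in a thin neighborhood of the great $(n-1)$-sphere $S(x,r) \cap \partial H_{x,r}$. My first task is to make this quantitative, extracting a scale-dependent flatness statement for $E_M$ near $x$ at scale $r$ in terms of $\ve_n(x,r)$, and showing that $E_M$ itself must cluster around such hyperplanes at most scales along $E_M$.

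Next I would run a corona-type decomposition on a Christ dyadic system adapted to $E_M$ (or to a suitable Ahlfors-type $n$-dimensional approximating measure). A cube $Q$ is marked \emph{flat} if the scale-averaged $\ve_n$ on $Q$ is below a small threshold, and \emph{stopping} otherwise; these group into trees with top cubes of total content bounded by the $L^2(dr/r)$ Carleson packing coming from $\mathcal{E}_n \le M$. Within each tree, the half-space minimizers should be stable enough to fit the leaves inside a single Lipschitz graph $\Sigma_T$; summing the $\Sigma_T$'s and iterating (peeling off covered pieces and re-running the corona on the residue) yields the required countable Lipschitz cover of $E_M$ up to $\HH^n$-null.

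The main obstacle is establishing stability of the half-space minimizers $H_{x,r}$ across nearby centers and scales without losing powers of $\ve_n$. A naive estimate bounds the angle $\angle(H_{x,r},H_{x,2r})$ by $\ve_n(x,r)^{1/n}+\ve_n(x,2r)^{1/n}$, and squaring and integrating in $dr/r$ need not converge from a finite $L^2(dr/r)$ bound on $\ve_n$. To circumvent this, I would instead compare minimizers through an almost-orthogonality computation: any half-space admissible for scale $2r$ induces, after a controlled rotation, a competitor at scale $r$, and the resulting comparison of $\HH^n$-errors is linear rather than H\"older in $\ve_n$. Combining such angle estimates via Cauchy--Schwarz with the $L^2(dr/r)$ bound yields the finite total angular variation necessary for the Lipschitz graph construction, and is, in my view, the technical heart of the argument.
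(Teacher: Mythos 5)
Your global outline --- truncate the square function, pass to a good ball, run a corona decomposition, and stitch together Lipschitz graphs --- is broadly parallel to the paper's, but the steps you flag as routine are where essentially all the work lies, and the step you single out as the technical heart is not the real obstruction.

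First, the derivation of flatness. You propose to ``make quantitative'' the heuristic that small $\ve_n(x,r)$ pins the interface near a great sphere and hence conclude that $E$ clusters near hyperplanes. This does not follow from a single-scale, pointwise estimate, and the paper does not obtain it that way: there is no direct inequality of the form $\beta_{\infty,E}(B)\lesssim\ve_n$. The difficulty is that $E$ is an \emph{arbitrary} Borel set of finite $\mathcal{E}_n$; the coefficients $\ve_n$ see $\Omega^\pm$ on spheres, not $E$, and $E$ need not lie in $\partial\Omega^+\cup\partial\Omega^-$. The paper proves $\beta_{\infty,E}(B_0)<\epsilon$ by contradiction and compactness: pass to weak-$*$ limits $\one_{\Omega_j^\pm}\to g^\pm$, show that a limiting measure is supported on a real analytic variety $Z=\bigcap_k\{\mathfrak a_0(\cdot,2^{-k})=0\}$, show that $Z$ contains a hyperplane by a symmetrization argument over geodesic arcs, and finally show the limit set is contained in that hyperplane (Sections \ref{sec-flat2}--\ref{sec-flat4}). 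This is a genuinely new mechanism, and your outline contains nothing that would replace it. The smooth auxiliary coefficients $\mathfrak a(x,r)$ are introduced precisely so that the compactness argument has an analytic object to latch onto; they play no role in your scheme.

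Second, the obstacle you name --- stability of the minimizing half-space in $r$, allegedly costing $\ve_n^{1/n}$ --- is not actually a loss. If $H,H'$ are half-spaces through $x$ at angle $\theta$, the lune $S(x,r)\cap H^+\cap (H')^-$ has $\HH^n$-measure comparable to $\theta r^n$, and since $\Omega^+\cap\Omega^-=\varnothing$ one gets $\theta\lesssim\ve_n(x,r;H)+\ve_n(x,r;H')$ directly: a linear bound. The comparison across dyadic scales is similarly linear. So your proposed ``almost-orthogonality'' fix solves a non-problem, while the genuine bottleneck in bounding $\mu(\BA)$ is unaddressed: you must relate $\|\nabla A\|_2^2$ of the constructed Lipschitz function to the Carleson packing, which in the paper requires the Fourier estimate $\int_\Gamma\Apsi^2\approx\|\nabla A\|_2^2$ on Lipschitz graphs (Lemma \ref{lemlips}) and the key comparison $\mathfrak g_V\lesssim\mathfrak g_\Omega+\text{error}$ between the symmetry square function of the graph domain $V^\pm$ and that of $\Omega^\pm$ (Lemma \ref{lemkey40}). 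That comparison is what forces hypothesis (b) of Main Lemma \ref{mainlemma2}: the \emph{domain-splitting} property that the two sides of $L_B$ are each filled, up to $\epsilon$, by $\Omega^+$ and $\Omega^-$ respectively. Your proposal never establishes this, and without it one cannot identify which of $V^\pm$ matches which of $\Omega^\pm$, so the transfer from $\mathfrak g_V$ to $\mathfrak g_\Omega$ collapses.

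In short, you treat as a technicality the compactness/analytic-variety argument that actually produces flatness, you solve a non-existent problem in place of the Fourier and symmetry-transfer estimates, and you omit the domain-splitting condition and the auxiliary coefficients $\mathfrak a$, all of which are load-bearing.
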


\noindent
Our proof is in fact quantitative and as such it yields more information than what Theorem \ref{teomain1} reveals. We make this explicit in 

\settheoremtag{A*}
\begin{theorem}\label{teomain1'}
	Let $n \geq 1$ and $\Omega^+, \Omega^- \subset \R^{n+1}$ be two disjoint Borel subsets. Fix $c_0 \in (0,1)$ and let $x_0 \in R^{n+1}$ and $r_0>0$. There exists a number $\delta>0$ such that if $\mu$ is a positive Radon measure such that $\mu(B(x,r))\leq r^n$ for all $x \in \R^{n+1}$ and $r>0$, $E \subset B(x_0,r_0)$ is Borel, and $\mu$ is supported on $E$, then the following is true. If $\mu(B(x_0, r_0))\geq c_0r_0^n$ and if
	\begin{equation*}
		\int_0^{20r_0} \ve_n(x,r)^2 \, \frac{dr}r < \delta \quad\mbox{ for all } x \in E,
	\end{equation*}
	then there exists a Lipschitz graph $\Gamma$ with slope $c_\Gamma\leq 1/10$ such that $\mu(\Gamma) \geq \tfrac{99}{100}\mu(B_0)$. Moreover, the slope $c_\Gamma$ can be taken to be arbitrarily small, assuming $\delta$ small enough.
\end{theorem}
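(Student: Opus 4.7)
The plan is to establish the conclusion through a corona-type stopping-time decomposition yielding an approximating Lipschitz graph at the top scale $B_0 = B(x_0, r_0)$. The crucial first ingredient is to convert the integral-geometric hypothesis on $\ve_n$ into a bilateral flatness statement for $\supp \mu$. For $x \in E$ and $r > 0$ with $\ve_n(x,r)$ small, let $H = H(x,r)$ be an open half-space nearly realizing the infimum in \eqref{ve_n-intro}, and set $L(x,r) = \partial H$. I would establish that a point $y \in \supp \mu$ at distance $d$ from $L(x,r)$, with $d$ much larger than a small power of $\ve_n(x,r)$ times $r$, would lie in $\overline{\Omega^+}$ or $\overline{\Omega^-}$ at comparable scales, forcing $\ve_n(y, \rho) \gtrsim 1$ on a range of $\rho$ of positive $d\rho/\rho$-mass; this would contradict $\int_0^{20 r_0} \ve_n(y, s)^2\,\frac{ds}{s} < \delta$. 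The upshot is a quantitative two-sided $\beta$-type estimate: up to $\mu$-mass of order $\ve_n(x,r)^\alpha r^n$, the support of $\mu$ inside $B(x, r/2)$ lies within a $\ve_n(x,r)^\beta r$-neighborhood of $L(x,r)$, for universal exponents $\alpha, \beta > 0$.

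Next, introduce David-type dyadic cubes $\mathcal{D}_\mu$ on $\supp \mu$, with a root cube $R$ comparable to $B_0$. To each cube $Q$ attach a best approximating hyperplane $L_Q$ extracted from the previous step at the scale of $Q$. Grow a tree of coherent cubes from $R$, stopping at a cube $Q$ whenever (a) the angle $\angle(L_Q, L_R)$ exceeds a small threshold $\theta$, (b) the bilateral flatness input fails beyond a fixed tolerance, or (c) the density $\mu(Q)/\ell(Q)^n$ drops below a small constant. On this tree the hyperplanes lie in a narrow cone of directions around $L_R$ and $\mu$ remains concentrated in a thin neighborhood of a hyperplane parallel to $L_R$; a standard David--Toro type construction then produces a Lipschitz graph $\Gamma$ over $L_R$ of slope at most $C\theta$, containing the portion of $\supp \mu$ subordinated to the tree.

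The quantitative control comes from the Carleson packing estimate. By Fubini,
\begin{equation*}
  \sum_{Q \in \mathcal{D}_\mu(R)} \ve_n(z_Q, \ell(Q))^2 \, \mu(Q) \; \lesssim \; \int_E \int_0^{20 r_0} \ve_n(x, r)^2 \, \frac{dr}{r} \, d\mu(x) \; < \; \delta\, \mu(B_0),
\end{equation*}
where $z_Q \in Q$ is a distinguished point. Each stopping condition, when triggered at $Q$, can be charged a uniform amount of $\ve_n^2$ at $Q$ or at one of its ancestors $Q'$: for (a), a triangle-type inequality comparing $L_{Q'}$ and the plane of its dyadic parent converts the accumulated tilt into a sum of $\ve_n^2$ terms; for (b) and (c), the bilateral flatness input itself directly yields a lower bound on $\ve_n^2$. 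Summing these charges bounds the total $\mu$-mass of stopping cubes by $C(\theta)\,\delta\,\mu(B_0)$; choosing $\delta$ small enough depending on $c_0$, $n$, and $\theta$ makes this less than $\tfrac{1}{100}\mu(B_0)$, yielding $\mu(\Gamma) \geq \tfrac{99}{100}\mu(B_0)$. Since $\theta$ is a free parameter, the slope of $\Gamma$ can be made arbitrarily small at the cost of shrinking $\delta$.

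The main obstacle is the first step, namely extracting genuine two-sided flatness of $\supp \mu$ from the sphere-level deficit $\ve_n(x, r)$. With $\Omega^\pm$ only assumed Borel, no topological separation is available, and the half-space realizing the infimum can vary substantially with $r$, so smallness on a single sphere gives no direct information about the interior of the ball. One must combine information across a range of radii---via a coarea/pigeonhole extraction of coherent spheres, together with a careful argument that a "deep" support point must itself produce a large $\ve_n$ at comparable or smaller scales---to upgrade the sphere-level deficit to a solid $\beta$-estimate whose exponent is sharp enough to fit the Carleson packing budget. Everything downstream is a refinement of established coronization and David--Toro style machinery.
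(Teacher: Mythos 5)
There are two concrete gaps in your first step, and one in the packing estimate, and together they are not small details but the crux of the paper. First, the claim that a point $y\in\supp\mu$ far from the approximating plane $L(x,r)$ must lie in $\overline{\Omega^+}$ or $\overline{\Omega^-}$ is not available: with $\Omega^\pm$ merely Borel and $E$ an abstract subset of $\{\EE_n<\infty\}$, $y$ can perfectly well lie in $F=\R^{n+1}\setminus(\Omega^+\cup\Omega^-)$, and $\ve_n(x,r)$ being a sphere-level quantity says nothing about the solid interior of $B(x,r)$. Even establishing that each side of $L(x,r)$ contains a ball mostly in $\Omega^+$ or $\Omega^-$ requires a delicate multi-scale pigeonhole argument over spheres centered at two separated points (the paper's Lemma~\ref{lemcork1}), and what you get is only a measure-theoretic ``almost corkscrew.'' Upgrading this to the bilateral flatness $\beta_{\infty,E}(B)<\epsilon$ is then done by a compactness/blow-up argument (weak-$*$ limits $\one_{\Omega^\pm_j}\to g^\pm$, an $n$-dimensional analytic variety $Z$ annihilating the smoothed square function, an argument that $Z$ contains a hyperplane, Lemmas~\ref{l:analytic-variety}--\ref{lemflat1}). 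Your sketch asserts a quantitative polynomial-rate $\beta$-estimate directly from $\ve_n(x,r)$; nothing of that strength is proved or, as far as anyone knows, provable, and the abstract $\delta(\epsilon)$ you actually get from compactness is exactly why the statement only claims ``there exists a $\delta$.''

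Second, the Carleson packing step as written fails because $\ve_n$ is evaluated on a single sphere and is unstable under perturbation of center and radius: there is no inequality $\ve_n(z_Q,\ell(Q))^2 \lesssim \fint_Q\fint_{\ell(Q)/2}^{\ell(Q)}\ve_n(x,r)^2\,\frac{dr}{r}\,d\mu(x)$ for arbitrary Borel $\Omega^\pm$, so your Fubini step does not close. The paper addresses this by introducing smoothed, ball-integrated variants $\mathfrak a^\pm(x,r)$ and symmetry coefficients $\mathfrak g(x,r)$, proves they are controlled by the Dini integral of $\ve_n$ (Lemmas~\ref{lem1}, \ref{lem1**}), and then runs the L\'eger/David--Semmes corona machinery on those. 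Even then, charging the big-angle region $\BA$ to the square function is not done by a pointwise tilt estimate; it requires (i) a Fourier-analytic identity on Lipschitz graphs equating $\int_\Gamma\A_\psi^2$ with $\|\nabla A\|_2^2$ (Section~\ref{sec-fourier}), and (ii) a domain-comparison lemma (Lemma~\ref{lemkey40}) bounding $\mathfrak g_V$ for the two components of $\R^{n+1}\setminus\Gamma$ by $\mathfrak g_\Omega$ plus a packing sum over Whitney cubes, which in turn requires the domain-splitting statement \eqref{e:mainlemma1-b'} (that one side of $L_B$ is $99\%$ in $\Omega^+$ and the other $99\%$ in $\Omega^-$). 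Your proposal never establishes this splitting and never explains how to compare the two pairs of domains, yet it is precisely there that the geometry of $\Omega^\pm$ enters the estimate. So the outline is attractive in shape but the two load-bearing ingredients -- flatness from $\ve_n$, and the $\BA$ bound -- are both left unresolved.
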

\noindent

\noindent
Theorem \ref{teomain1} is both somewhat surprising and somewhat unsatisfactory. Surprising because, given what was said above, it nevertheless holds  
for arbitrary Borel subsets which do not need to be open and
\textit{without} size assumptions on\footnote{Since we do not assume that $\Omega^+, \Omega^-$ are complementary domains, it might very well happen that $\partial \Omega^+ \neq \partial \Omega^-$; hence the writing $\R^{n+1} \setminus (\Omega^+ \cup \Omega^-)$ to denote what, in the some obvious examples, would simply be the common boundary.} $\R^{n+1}\setminus (\Omega^+ \cup \Omega^-)$ (henceforth denoted by $F$). In fact, our proof yields some control on the behaviour of $\Omega^+$ and $\Omega^-$, albeit very weak:

\begin{coro}\label{coro-L1}
		Notation and assumptions as in Theorem \ref{teomain1}. $\HH^n$-almost every point $x \in E$ is an $L^1$ tangent point\footnote{See Definition \ref{def-L1tan} for the precise definition of $L^1$ tangent points.} for the pair of sets $\Omega^+$ and $\Omega^-$.
\end{coro}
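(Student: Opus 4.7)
By Theorem~\ref{teomain1'} together with a standard covering/exhaustion argument (Vitali--Besicovitch on balls on which the quantitative hypotheses can be arranged, followed by an Egorov-type reduction), we reduce to the case where $E$ is contained in a single Lipschitz $n$-graph $\Gamma$ of arbitrarily small slope, and every point of $E$ is a Lebesgue density point of $E \cap \Gamma$. At $\HH^n$-a.e.\ $x \in E$, $\Gamma$ then admits a true tangent plane $V_x$; let $H_x^+, H_x^-$ denote the two complementary open half-spaces bounded by $V_x$. The aim is to show that this pair witnesses the $L^1$-tangency of $x$ with respect to $(\Omega^+, \Omega^-)$ in the sense of Definition~\ref{def-L1tan}.

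Fix such an $x$ and $\eta \in (0,1)$. The finiteness of $\mathcal{E}_n(x)^2 = \int_0^1 \ve_n(x,r)^2 \,\tfrac{dr}{r}$ implies $\int_r^{2r} \ve_n(x,s)^2 \,\tfrac{ds}{s} \to 0$ as $r \to 0$, so by Chebyshev the logarithmic (hence Lebesgue) density of the set of \emph{bad} scales $\{s \in [r,2r] : \ve_n(x,s) \geq \eta\}$ vanishes as $r \to 0$. On each \emph{good} scale $s$, pick a near-optimal half-space $H_s$ in the infimum defining $\ve_n(x,s)$, oriented so that
\begin{equation*}
	\HH^n(S_{H_s}^+(x,s) \setminus \Omega^+) + \HH^n(S_{H_s}^-(x,s) \setminus \Omega^-) < \eta\, s^n.
\end{equation*}

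The crux is to show that $\partial H_s \to V_x$ with consistent orientation as $s \to 0$ through good scales. This alignment comes from combining two ingredients. First, for nearby good scales $s, s' \in [r/2, 2r]$, the smallness of $\ve_n$ at both forces $\partial H_s$ and $\partial H_{s'}$ to be angularly $O(\eta)$-close: otherwise the approximate $\Omega^\pm$-separations on the concentric spheres $S(x,s), S(x,s')$ would be incompatible on the intermediate annulus, contradicting the disjointness of $\Omega^+$ and $\Omega^-$. Iterating across dyadic scales turns $(\partial H_s)$ into a Cauchy family (up to $O(\eta)$ drift per decade), hence it admits a limit direction. Second, the tangent property of $\Gamma$ at $x$ places $\Gamma \cap B(x,s)$ in an $o(s)$-tube around $V_x$, and since $E \subset \Gamma$ has full $\HH^n$-density at $x$, the only direction compatible with the approximate separations of $\Omega^\pm$ is $V_x$ itself. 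Combining alignment with the choice of $H_s$ gives, for every good $s$,
\begin{equation*}
	\HH^n\bigl((S_{H_x}^+(x,s)\setminus\Omega^+)\cup(S_{H_x}^-(x,s)\setminus\Omega^-)\bigr) \leq \eta\, s^n + \HH^n\bigl(S(x,s)\cap(H_s\triangle H_x)\bigr) \leq (\eta+o_{s\to 0}(1))\,s^n.
\end{equation*}

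Finally, integrating via $|A| = \int_0^r \HH^n(A\cap S(x,s))\,ds$ and estimating the contribution of bad scales by the trivial bound $\HH^n \leq C s^n$ (whose Lebesgue-mass fraction in $[0,r]$ tends to $0$), we obtain
\begin{equation*}
	\frac{\bigl|(B(x,r)\cap H_x^+)\setminus\Omega^+\bigr| + \bigl|(B(x,r)\cap H_x^-)\setminus\Omega^-\bigr|}{r^{n+1}} \leq C\eta + o_{r\to 0}(1),
\end{equation*}
and since $\eta>0$ was arbitrary, this yields the $L^1$-tangency property at $x$. \emph{The main obstacle} is the alignment step: proving $\partial H_s \to V_x$ demands delicately combining the cross-scale compatibility of the approximate $\Omega^\pm$-separation (using only the disjointness of $\Omega^+$ and $\Omega^-$, not any structural hypothesis on them) with the flatness of $\Gamma$ at the tangent point; once alignment is in hand, the rest is a routine Chebyshev-and-coarea computation.
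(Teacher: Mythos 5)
Your approach identifies the right high-level goal and correctly handles the Chebyshev-on-scales bookkeeping and the final coarea integration, but the central ``alignment'' step has a genuine gap, and it is precisely the part that the paper spends the most effort on.

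Your first ingredient claims that smallness of $\ve_n(x,s)$ and $\ve_n(x,s')$ at two nearby good scales $s,s'$ forces $\partial H_s$ and $\partial H_{s'}$ to be angularly close, ``otherwise the approximate $\Omega^\pm$-separations on the concentric spheres would be incompatible on the intermediate annulus.'' This is false as stated: $\ve_n(x,s)$ and $\ve_n(x,s')$ are conditions on two \emph{disjoint} spheres and impose no constraint whatsoever on the open annulus between them. A concrete obstruction: take $\Omega^+ \cap S(x,s) = S_{H_s}^+(x,s)$ and $\Omega^+ \cap S(x,s') = S_{H_{s'}}^+(x,s')$ with $\partial H_s \perp \partial H_{s'}$, and fill the annulus in between arbitrarily; both coefficients vanish and the half-spaces are orthogonal. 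Disjointness of $\Omega^+,\Omega^-$ alone cannot rule this out. Even if you replaced $\ve_n$ by a solid version (say $\mathfrak{g}$) so that adjacent annuli genuinely interact, an $O(\eta)$ drift per dyadic scale does not yield a Cauchy sequence: summing $\ve_n(x,2^{-k})$ over $k$ is controlled by $\int_0^1 \ve_n \,dr/r$, not by the $L^2$ Dini quantity $\int_0^1 \ve_n^2\,dr/r$ that is actually finite, so the cumulative drift need not converge. Your second ingredient (using the flatness of $\Gamma$ to pin the limiting direction to $V_x$) is the right intuition but is stated only qualitatively and rests on the first ingredient.

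The paper sidesteps this entirely: it never tries to show the $\ve_n$-infimizing half-spaces $H_s$ converge to the tangent plane. Instead it takes the $\beta_\infty$-minimizing planes $L_r$ for $E$ (whose convergence to the approximate tangent $L_x$ is immediate from rectifiability and $\beta_{\infty,E}(x,r)\to 0$), and then invokes Main Lemma~\ref{mainlemma1}(iii) -- the domain-splitting statement from Section~\ref{sec-splitting} -- to assert that the two components of $\tfrac12 B(x,r)\setminus \NN_{2\epsilon r}(L_r)$ are, up to $\gamma_j$-fraction, contained in $\Omega^+$ and $\Omega^-$ respectively. Main Lemma~\ref{mainlemma1}(iii) is itself the product of the quasi-corkscrew construction (Lemma~\ref{lemcork1}), the weak flatness compactness argument (Sections~\ref{sec-flat2}--\ref{sec-flat4}), and the annular propagation argument (Lemmas~\ref{lem11.2**}, \ref{lem-99domain}), and it crucially uses that the square function is small not just at the single point $x$ but on a set $E$ of substantial $\mu$-measure. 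Your proposal, by working only with the Dini condition at the single point $x$ and ``disjointness,'' leaves out exactly this input, so the alignment cannot be recovered. To repair the argument you would need to cite and apply Main Lemma~\ref{mainlemma1}(iii) in place of the two-scale heuristic, which is what the paper's proof does.
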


\noindent
Still, Theorem \ref{teomain1} is somewhat unsatisfactory because it achieves less than desired: in Theorem \ref{jtv} the set where the square function is finite coincides with the set of \textit{true} tangents for $\Omega^\pm$. For $E\subset\{\mathcal{E}_n(x)<\infty\}$, its rectifiability implies existence of \textit{approximate} tangents \textit{of} $E$, but, aside from Corollary \ref{coro-L1}, we have no additional control on the behaviour of $F$ or on $\Omega^+$ and $\Omega^-$. 

\subsection{A family of new coefficients, slicing and \textup{(}true\textup{)} tangents}

The coefficient $\ve_n$ does not detect pieces of $F$ of zero $\HH^n$ measure: to prove existence of true tangents (assuming $\Omega^\pm$ to be open) we need a stronger quantity. To proceed further we need further notation. Given an open set $V$ in some manifold $\M$, its \textit{characteristic constant} $\alpha=\alpha_V$ is the number which satisfies $\lambda(V)=\alpha(n-1+\alpha)$, where $\lambda(V)$ is the first Dirichlet eigenvalue of $V$ (defined in terms of the Laplace-Beltrami operator $\Delta_\M$, the relevant case here being the spherical Laplacian). The Friedland-Hayman \cite{FH} inequality states that, given any two disjoint open subsets $V^+, V^-$ of $\mathbb{S}^n$,
\begin{equation}\label{e:FH}
	\alpha_{V^+} + \alpha_{V^-} - 2 \geq 0,
\end{equation}
and equality is attained if and only if $V^+$, $V^-$ are two complementary half-spheres. Note that all the above can be applied to the domains $V^\pm=S(x,r)\cap \Omega^\pm$, where $\Omega^+, \Omega^-$ are open and disjoint subsets of $\R^{n+1}$. In this case, we use the notation $\alpha^\pm(x,r)$ for the relative and rescaled characteristic constants.  More precisely, for $i=+,-$, we let $\alpha^i(x,r)= \alpha_{V^i}(x,r)$ where $V^i(x,r) = \{r^{-1}(x-y) \, : \, y \in S(x,r)\cap \Omega^i\}$. 
Now set 
\begin{equation*}
	\mathcal{A}(x)^2 := \int_0^1 \min(1, \,  \alpha^+(x,r)+\alpha^-(x,r) -2) \, \frac{dr}{r}.
\end{equation*}
By \eqref{e:FH}, this is a nonnegative quantity and vanishes when $\Omega^\pm$ are complementary half-spaces. What's intriguing, is that in \cite{AKN2} the authors observe that, in the planar case $n=1$,
\begin{equation}\label{eqakn**}
	 \ve(x,r)^2 \lesssim \min(1,\alpha_1(x,r) + \alpha_2(x,r) - 2), 
\end{equation}
where $\ve$ here is the classical Carleson's coefficient (see also \cite{Bishop-conjectures} for a related discussion). Then, if $\Omega^\pm$ are complementary Jordan domains in the plane, it follows from \cite{JTV} that $\mathcal{A}(x) < \infty$ implies that $x$ is a tangent point (up to a set of $\HH^1$ measure zero).
The inequality \eqref{eqakn**} is easy to check, see \cite{FTV}. 
It is rather harder to see that
\begin{equation}\label{ve_n<alphas}
	\ve_n(x,r)^2 \lesssim_n \min(1, \alpha^+(x,r)+ \alpha^-(x,r)-2),
\end{equation}
for $\Omega^\pm \subset \R^{n+1}$.
This is a special case of \cite[Theorem C]{FTV}, and the proof goes through a quantitative Faber-Krahn inequality for spherical domains. In \cite{FTV} moreover, it is shown that if $x$ is a tangent point of a pair of open sets $\Omega^\pm$ satisfying the so called capacity density condition (CDC), a rather weak size condition  (satisfied, in particular, when $\Omega^+$ is a Jordan domain in the plane and $\Omega^-= \R^2\setminus \overline{\Omega^+}$), then $\mathcal{A}(x)<\infty$, up to a set of zero $\HH^n$ measure. See Theorem D there. The proof uses the Alt-Caffarelli-Friedman monotonicity formula.

The previous discussion clearly suggests that the square function $\mathcal{A}$ is the good object for a putative Carleson's $\ve^2$ conjecture in higher dimension. Indeed, this is our next result.

\begin{theorema}\label{teomain2}
	For $n\geq 1$ let $\Omega^+, \Omega^- \subset \R^{n+1}$ be two open and disjoint subsets. Suppose that $\Omega^+ \cup \Omega^-$ satisfies the capacity density condition\footnote{See Definition \ref{cdc}.} \textup{(}CDC\textup{)}. Then, up to a set of zero $\HH^n$ measure,
	\begin{equation*}
		\mathcal{A}(x) < \infty \,\, \mbox{ if and only if $x$ is a tangent point of the pair} \,\, \Omega^+, \Omega^-.
	\end{equation*} 
\end{theorema}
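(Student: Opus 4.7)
The necessity direction---$\HH^n$-a.e.\ true tangent point of the pair $\Omega^\pm$ satisfies $\mathcal{A}(x)<\infty$---is exactly \cite[Theorem D]{FTV}, obtained through the Alt--Caffarelli--Friedman monotonicity formula applied to Green functions of $\Omega^\pm$, where the CDC is what legitimates these Green functions and the monotonicity formula. I therefore focus on the sufficiency direction: $\HH^n$-a.e.\ $x$ with $\mathcal{A}(x)<\infty$ is a true tangent point of the pair.

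The starting point is the pointwise bound
\[ \ve_n(x,r)^2 \;\lesssim_n\; \min\bigl(1,\,\alpha^+(x,r)+\alpha^-(x,r)-2\bigr), \]
from \cite[Theorem C]{FTV}, which gives $\mathcal{E}_n(x)\lesssim\mathcal{A}(x)$ pointwise. Hence $E:=\{\mathcal{A}<\infty\}\subset\{\mathcal{E}_n<\infty\}$, and Theorem~\ref{teomain1} yields that $E$ is $n$-rectifiable. Combined with Corollary~\ref{coro-L1}, $\HH^n$-a.e.\ $x\in E$ has an approximate tangent $n$-plane $P_x=x+u_x^\perp$ and is an $L^1$-tangent point of the pair: $\Omega^+$ and $\Omega^-$ fill the two half-balls bounded by $P_x$ in the density/$L^1$ sense at $x$. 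Note that at this stage the CDC has not yet been used.

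The main task---and the main obstacle---is upgrading this $L^1$/measure-theoretic tangency to the pointwise, cone-based notion of true tangent. The plan is a contradiction argument combining the CDC with a quantitative Friedland--Hayman stability statement. Since $\mathcal{A}(x)<\infty$ is a Dini-type integrability, for a.e.\ such $x$ there exists a sequence of scales $r_k\downarrow 0$ along which $\alpha^+(x,r_k)+\alpha^-(x,r_k)-2 \to 0$; the quantitative equality case of \eqref{e:FH} then forces $\Omega^\pm\cap S(x,r_k)$ to lie within small symmetric-difference distance of a pair of complementary open hemispheres whose equator is some plane $P_{r_k}$, and the $L^1$-tangency of Corollary~\ref{coro-L1} pins the axes $u_{r_k}\to u_x$. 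Now suppose, toward a contradiction, that $x$ failed to be a true tangent along $u_x$: there would exist $a\in(0,1)$ and points $y_k\in F\cap X_a(x,u_x)$ with $\rho_k:=|y_k-x|\to 0$. The CDC applied at $y_k$ places $(n-1)$-capacity $\gtrsim \rho_k^{n-1}$ of $F$ inside $B(y_k,c\rho_k)\subset X_a(x,u_x)$; projecting onto spheres $S(x,r)$ for $r$ comparable to $\rho_k$ and applying a quantitative spherical Faber--Krahn estimate would produce a lower bound $\alpha^+(x,r)+\alpha^-(x,r)-2\gtrsim \eta(a)>0$, directly contradicting the scale-by-scale smallness forced by $\mathcal{A}(x)<\infty$. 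The most delicate step is this last one: converting the ambient-space $(n-1)$-capacity lower bound supplied by the CDC into a genuine obstruction on the spherical characteristic constants, and invoking a version of Friedland--Hayman stability that is uniform in the axis direction $u_x$.
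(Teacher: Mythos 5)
Your high-level architecture matches the paper's: cite the necessity direction from [FTV, Theorem~D], reduce sufficiency to rectifiability via Theorem~\ref{teomain1} using the bound $\ve_n^2 \lesssim \min(1,\alpha^++\alpha^--2)$, and then run a contradiction argument from a hypothetical point of $F$ sitting in a cone $X_a(x,u_x)$ at a small scale, using the CDC to produce capacity there and deriving from it a lower bound on the integrand of $\mathcal{A}$. The gap is precisely where you flag it, but it is larger than ``a delicate step'' --- the conversion from ambient $(n-1)$-capacity to a spherical obstruction, as you describe it, would not go through without three further ideas that the paper has to bring in. First, CDC gives only $(n-1)$-capacity, and slicing a capacity by a sphere drops the index by one; at the critical index $n-2$ the Hausdorff-content-to-capacity comparison (Lemma~\ref{lemcap-contingut}) degenerates, so the paper first invokes Lewis's self-improvement (Lemma~\ref{lem-fat}) to upgrade to $s$-CDC for some $s>n-1$ and works with $\ve_{s-1}$ rather than anything at the endpoint. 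Second, ``projecting onto spheres $S(x,r)$'' is not a pointwise operation: Proposition~\ref{propo-slicing} is an averaged statement over a set of centers $z$ on a Lipschitz graph at transversal distance from the ball carrying the capacity, and the conclusion is that \emph{some} nearby $z\in E$ on the graph has $\int_0^\infty\capp_{s-1}(F\cap S(z,r))\,dr$ large --- not that $z=x$ works. Consequently one must first pass to a subset of $E$ on which the entire square function (including $\ve_{s-1}$) is uniformly small, cover it by Lipschitz graphs, and take $x$ to be a density point of one such graph piece before slicing; Corollary~\ref{coro-L1} alone (which only gives $L^1$-tangency, not a graph) does not supply the geometry the slicing proposition needs. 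Third, even after slicing, the contradiction the paper actually reaches runs through the coefficient $\ve_{s-1}(z,r)$ and its ``thick points'' set $V_{\mfc_0}^a$, with a careful conversion from $\HH^{s-1}_\infty$ lower bounds to $\capp_{n-2}$ lower bounds (Lemma 2.1 of [FTV]) to verify membership in $V_{\mfc_0}^a$; the bound $\ve_{s-1}^2\lesssim\min(1,\alpha^++\alpha^--2)$ then feeds back into $\mathcal{A}(x)$, so the ``Friedland--Hayman stability'' you gesture at is mediated by the $\ve_{s-1}$ machinery rather than applied directly to the eigenvalues. You should also note that the smallness of $\alpha^++\alpha^--2$ only holds at density-one sets of radii, not along a hand-picked sequence $r_k$, so the argument has to be run on a full-measure set of scales against a positive-measure set of bad scales obtained from slicing --- this pigeonholing is part of Main Lemma~\ref{mainlemma3} and is missing from your sketch.
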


As mentioned above, the ``if" part is shown in \cite[Theorem D]{FTV}. It is in this implication that the minimum in the integrand of $\mathcal{A}$ appears naturally, for even at a tangent point $x$ it might happen that at large scales $r$, $\Omega^i \cap S(x,r) = \varnothing$ for either $i=+$ or $i=-$, in which case the characteristic constant $\alpha^+(x,r)$ degenerates to infinity. 

We also remark that $\mathcal{A}$ is in fact a much stronger quantity than what \eqref{ve_n<alphas} suggests. Set 
\begin{equation}\label{ve_s}
	\ve_s(x,r) = \inf_{H} \frac{1}{r^s} \int_{V_{\mathfrak{c}_0}^a(x,r;H)}\left( \frac{\delta_{L_H}(y)}{r}\right)^{n-s} \, d  \HH_\infty^{s}(y) \,\, \mbox{ when } \,\, 0<s<n-1,
\end{equation}
 where the infimum is over all affine half-spaces $H\subset \R^{n+1}$ such that $x\in\partial H$, $\delta_{A}(y):= \dist(y, A)$, and integration with respect to Hausdorff content is defined in \eqref{e:choquet}.
Also $V_{\mathfrak{c}_0}^a(x,r;H)$ is a set of `thick points' in $S(x,r)$ with respect to $F=\R^{n+1}\setminus (\Omega^+\cup\Omega^-)$, which will be defined accurately below, see Definition \ref{def-thickpoints} (for the moment, the reader might simply read $S(x,r) \cap F$ instead). Then the full Theorem C from \cite{FTV} reads as follows.

\begin{theorem}\label{teo-ve<alpha}
	Let $n\geq 1$. Suppose that $0<s\leq n$. Let $V^+, V^- \subset S(x,r)\subset \R^{n+1}$ be relatively open in $S(x,r)$ and disjoint. If $\alpha^\pm(x,r)$ denote the characteristic constants of $V^\pm$,  then for any $\mathfrak{c}_0>0$ and $a \in (0,1)$, we have
	\begin{equation}
		\ve_s (x,r)^2 \lesssim_{s, \mathfrak{c}_0, a} \min(1, \alpha^+(x,r)+\alpha^-(x,r) -2),
	\end{equation} 
	where $\ve_s(x,r)$ is as in \eqref{ve_s} if $s<n$ and as in \eqref{ve_n-intro} when $s=n$.
\end{theorem}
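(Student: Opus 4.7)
The strategy is to reduce the inequality to a quantitative form of the Friedland--Hayman inequality via a quantitative Faber--Krahn inequality on $\bS^n$, then transfer the resulting symmetric-difference bound into bounds on $\ve_s$. By scaling, assume $x=0$, $r=1$; write $\sigma=\HH^n|_{\bS^n}$ and $\eta=\alpha^+(0,1)+\alpha^-(0,1)-2$. If $\eta\gtrsim 1$ the estimate is trivial: pick any affine half-space $H$ through the origin and bound the integrand of $\ve_s$ pointwise on $\bS^n$. Throughout I assume $\eta$ small.

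The central claim is the existence of an affine half-space $H\subset\R^{n+1}$ with $0\in\partial H$ such that, setting $\widetilde H^\pm:=H^\pm\cap \bS^n$,
\begin{equation*}
\sigma(\widetilde H^+\triangle V^+)+\sigma(\widetilde H^-\triangle V^-)\lesssim \eta^{1/2}.
\end{equation*}
I would prove this in three steps. First, a quantitative Faber--Krahn inequality on $\bS^n$: if $C_V$ denotes the spherical cap with $\sigma(C_V)=\sigma(V)$, then
\begin{equation*}
\lambda_1(V)-\lambda_1(C_V)\gtrsim \sigma(V\triangle C_V)^2,
\end{equation*}
the spherical counterpart of Brasco--De~Philippis--Velichkov for the Euclidean Faber--Krahn inequality, obtainable through spherical P\'olya--Szeg\H{o} rearrangement together with $L^2$ stability of the ground state. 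Using that $\alpha(C_{V^+})+\alpha(C_{V^-})\geq 2$ for disjoint caps (the equality case of Friedland--Hayman, elementary on caps), this yields $\sigma(V^\pm\triangle C_{V^\pm})^2\lesssim \eta$. Second, the map $|C|\mapsto\alpha(C)$ is smooth and strictly monotone with nondegenerate derivative at caps of volume $|\bS^n|/2$; hence if $\alpha(C_{V^+})+\alpha(C_{V^-})-2\lesssim\eta$ and $|C_{V^+}|+|C_{V^-}|\leq |\bS^n|$, then the volumes $|C_{V^\pm}|$ are each within $O(\eta^{1/2})$ of $|\bS^n|/2$. Third, the disjointness of $V^+,V^-$ forces the two near-hemispheres $C_{V^\pm}$ to be almost antipodal, so they match a single common hemisphere pair $\widetilde H^\pm$ at cost $\lesssim \eta^{1/2}$, and the claim follows by the triangle inequality.

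With this in hand, the case $s=n$ is immediate: $F\cap\bS^n=\bS^n\setminus(V^+\cup V^-)\subset (\widetilde H^+\triangle V^+)\cup(\widetilde H^-\triangle V^-)$, so $\ve_n(0,1)\lesssim \eta^{1/2}$ and squaring gives the conclusion. For $s<n$, the thick-point set $V_{\mathfrak{c}_0}^a(0,1;H)$ still lies in $F$; slicing $\bS^n$ into equatorial strips $A_k:=\{y:\delta_{L_H}(y)\sim 2^{-k}\}$ and applying the capacity-density/thickness hypothesis on $V_{\mathfrak{c}_0}^a$ to upgrade the $\sigma$-control to $\HH^s_\infty$-control on each $A_k$, one reduces to estimating
\begin{equation*}
\sum_{k\geq 0} 2^{-k(n-s)}\,\HH^s_\infty(V_{\mathfrak{c}_0}^a\cap A_k).
\end{equation*}

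The main obstacle is precisely the $s<n$ case: the unweighted Faber--Krahn bound controls only the total symmetric difference, not its location relative to the equator, so the sum above may fail to close at $O(\eta^{1/2})$ without further input. The cleanest remedy is a \emph{weighted} quantitative Faber--Krahn inequality in which deviation is measured against $\delta_{L_H}^{\,n-s}$; this is natural because the first eigenfunction on a hemisphere equals $\cos\theta$, automatically imparting a $\delta_{L_H}$ weight to the Dirichlet energy of competing configurations. Securing this weighted stability---by replacing the plain $L^2$ stability of the ground state with one capturing the linear vanishing of $\cos\theta$ at the equator---is the main technical hurdle.
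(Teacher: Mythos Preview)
This theorem is not proved in the present paper; it is quoted as Theorem C of the companion paper \cite{FTV}, and the authors only remark that the proof there ``goes through a quantitative Faber--Krahn inequality for spherical domains.'' Your strategy is therefore the right one, and for $s=n$ your outline is essentially complete: the sharp quantitative Faber--Krahn inequality on $\bS^n$ (developed in \cite{AKN1} precisely in connection with the Alt--Caffarelli--Friedman formula) gives $\sigma(V^\pm\triangle C_{V^\pm})^2\lesssim \eta$, and your three-step argument then produces the half-space $H$ with $\sigma(\widetilde H^\pm\setminus V^\pm)\lesssim \eta^{1/2}$, from which $\ve_n^2\lesssim \eta$ follows directly.

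For $s<n$ you have correctly located the genuine obstacle and named the missing ingredient: an unweighted symmetric-difference bound cannot control $\int \delta_{L_H}^{\,n-s}\,d\HH^{s}_\infty$, since the deficit set may concentrate near the equator (where the weight vanishes) or be spread thinly (where the content exponent bites). The remedy you propose---a weighted stability estimate that exploits the linear vanishing of the hemisphere eigenfunction $\cos\theta$ at the equator---is exactly what \cite{FTV} supplies, and it is the substantive analytic content of that paper. Your proposal is thus a faithful blueprint of the actual proof, with the honest admission that the weighted stability for $s<n$ is where the real work lies. One small correction: obtaining the sharp exponent in quantitative Faber--Krahn by ``P\'olya--Szeg\H{o} rearrangement plus $L^2$ ground-state stability'' is not straightforward; the route taken in \cite{AKN1}, following Brasco--De~Philippis--Velichkov, goes through a selection principle and free-boundary regularity rather than rearrangement alone.
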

\noindent

As we will see, in order to prove the existence of true tangents, we need the full strength of $\mathcal{A}(x)$, and in particular the fact that $\mathcal{E}_s(x)$, defined as in \eqref{calE_n} but now integrating $\ve_s(x,r)$, is finite for some $s<n$. For $\ve_s$ to access geometric information on $F$, $F\cap S(x,r)$ needs to be somewhat ``thick'', or, phrased differently, $V_{\mathfrak{c}_0}^a(x,r;H) \cap F$ should have sizeable measure. This motivates our next result, which allows for the ``thickness'' of $F$, guaranteed by the CDC, to be transferred onto the slices $S(x,r)\cap F$.

\begin{propo}\label{propo-slicing}
		Let $B(0,r_0)\subset \R^{n+1}$ and let $\Gamma$ be a Lipschitz graph through the origin with slope at most $\tau$ \textup{(}with respect to $\R^n\times \{0\}$\textup{)}. Let $B\subset B(0,r_0)$ be a ball with $\rad(B)\leq \frac1{10}\,r_0$ such that 
		$\dist(B,\Gamma)\geq 100\tau r_0$.
		Let $K\subset B$  and  $G\subset \Gamma$ both be  compact sets. 
		Then, for any $s>1$,
		$$\capp_{s}(K)\,\frac{\HH^n(G)^2}{r_0^n} \leq C(\tau) \int_{G}\int_0^\infty \capp_{s-1}(K\cap S(z,r))\,dr\,d\HH^n(z).$$
	\end{propo}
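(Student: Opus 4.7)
The argument will be a Mattila-type slicing scheme for Riesz capacity, with spheres $\{S(z,r):z\in G,\,r>0\}$ in place of affine slicing planes and with the transversality input coming from the separation hypothesis $\dist(B,\Gamma)\ge 100\tau r_0$. If $\capp_s(K)=0$ the claim is trivial; otherwise fix $\eta>0$ and, by the energy characterization of Riesz $s$-capacity, pick a Borel probability measure $\sigma$ on $K$ with $I_s(\sigma):=\iint|y-w|^{-s}\,d\sigma(y)d\sigma(w)\le(1+\eta)/\capp_s(K)$. After mollifying $\sigma$ so that it has a smooth density (the ensuing limiting arguments via lower semicontinuity of Riesz energies are standard), for each $z\in G$ I would disintegrate $\sigma$ along the $1$-Lipschitz radial map $y\mapsto|y-z|$ to obtain slice measures $\sigma_{z,r}$ on $K\cap S(z,r)$ satisfying $\sigma=\int_0^\infty \sigma_{z,r}\,dr$. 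The energy dual of $\capp_{s-1}$ on each slice gives $\sigma_{z,r}(K\cap S(z,r))^2\le I_{s-1}(\sigma_{z,r})\cdot\capp_{s-1}(K\cap S(z,r))$. Setting $A(z):=\int_0^\infty I_{s-1}(\sigma_{z,r})\,dr$ and $B(z):=\int_0^\infty \capp_{s-1}(K\cap S(z,r))\,dr$, Cauchy--Schwarz first in $r$ and then in $z\in G$ yields
\[ \sigma(K)^2\,\HH^n(G)^2 \;\le\; \Bigl(\int_G A\,d\HH^n\Bigr)\Bigl(\int_G B\,d\HH^n\Bigr), \]
so since $\int_G B\,d\HH^n$ is precisely the right-hand side of the proposition, the task reduces to estimating $\int_G A\,d\HH^n$.

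By Fubini combined with the polar-coordinate identity $\int_0^\infty\iint_{S(z,r)\times S(z,r)} f\,d\HH^n_y d\HH^n_w\,dr = \iint f(y,w)\,\delta(|y-z|-|w-z|)\,dy\,dw$ applied to the smoothed density of $\sigma$, one obtains
\[ \int_G A(z)\,d\HH^n(z) \;=\; \iint |y-w|^{-(s-1)}\,W(y,w)\,d\sigma(y)\,d\sigma(w), \qquad W(y,w):=\int_G\delta(|y-z|-|w-z|)\,d\HH^n(z). \]
The crux of the proof is then the geometric estimate
\begin{equation*}
W(y,w) \;\le\; \frac{C(\tau)\,r_0^n}{|y-w|} \qquad \text{for all } y,w\in B. \tag{$\ast$}
\end{equation*}
Granting $(\ast)$, we obtain $\int_G A\,d\HH^n\le C(\tau)r_0^n I_s(\sigma)\le C(\tau)r_0^n(1+\eta)/\capp_s(K)$; inserting this into the double Cauchy--Schwarz bound together with $\sigma(K)=1$ and sending $\eta\to 0$ yields $\capp_s(K)\HH^n(G)^2/r_0^n \le C(\tau)\int_G B\,d\HH^n$, the desired inequality.

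The proof of $(\ast)$ is the main obstacle, and where the geometric hypotheses are used. Using the identity $|y-z|^2-|w-z|^2=2(z-m)\cdot(w-y)$ with $m:=(y+w)/2$, the set $\{z\in\Gamma:\,||y-z|-|w-z||<\epsilon\}$ is contained in the intersection of $\Gamma$ with a slab of width $\lesssim \epsilon r_0/|y-w|$ normal to $\hat v:=(w-y)/|w-y|$. Split on the size of the horizontal component $\hat v_{1:n}$. In the \emph{tangential} case $|\hat v_{1:n}|\ge 2\tau |\hat v_{n+1}|$, the Lipschitz function $f(z')=(z'-m_{1:n})\cdot\hat v_{1:n}+(A(z')-m_{n+1})\hat v_{n+1}$ on $\R^n$ satisfies $|\nabla f|\ge |\hat v_{1:n}|/2$, so the slab meets $\Gamma\cap B(0,2r_0)$ in a set of $\HH^n$-measure $\lesssim \epsilon r_0^n/(|y-w|\cdot|\hat v_{1:n}|)$; the case hypothesis together with $|\hat v|=1$ forces $|\hat v_{1:n}|\gtrsim \tau$, and hence the bound is $\lesssim \epsilon r_0^n/(\tau|y-w|)$. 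In the \emph{vertical} case $|\hat v_{n+1}|\ge 1/\sqrt{2}$, the vertical separation $|m_{n+1}-A(m_{1:n})|\gtrsim \tau r_0$ together with the slope bound on $A$ forces $|f(z')|\gtrsim \tau r_0$ uniformly over $z'\in B(0,2r_0)$, so the slab is empty once $\epsilon\lesssim\tau|y-w|$ and $W(y,w)=0$. Dividing the area bound by $2\epsilon$ and letting $\epsilon\to 0$ establishes $(\ast)$ with $C(\tau)=O(1/\tau)$.
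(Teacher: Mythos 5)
Your proof is correct and follows essentially the same route as the paper's: the key transversality estimate you call $(\ast)$ is precisely the paper's Lemma \ref{lemFaux}, and the rest (energy--capacity duality on each spherical slice, Fubini over the slices, double Cauchy--Schwarz) matches the structure of Lemmas \ref{lemabscont} and \ref{lem-slicing2} and the subsequent proof. The only packaging difference is that you mollify $\sigma$ and use a formal delta-function identity, whereas the paper runs the disintegration via Fatou's lemma directly on the equilibrium measure, which avoids the (standard but nontrivial) limiting argument needed because mollification pushes the support slightly off $K$ and hence off $K\cap S(z,r)$.
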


How to use this? We know from Theorem \ref{teomain1} that $E=\{ \mathcal{A}(x)<\infty\}$ is $n$-rectifiable. In particular, it can be split into parts $E_i$ which lie in Lipschitz graph with small slope. Hence we can locally apply Proposition \ref{propo-slicing} with $G=E_i$ and $K = \overline{B} \cap F = \overline B \setminus (\Omega^+\cup\Omega^-)$ to translate the size condition CDC onto slices and hence access the geometric strength of $\ve_s$, for some appropriate $s<n$.


\subsection{A brief panorama on related literature}
Aside from Theorem \ref{jtv} (over which Theorem \ref{teomain2} is an improvement), there are several related results which characterise tangent points of different types of sets. To the authors knowledge, one of the first and most influential result of this type is that of Bishop and Jones, \cite[Theorem 2]{bishop-jones}, where they prove the following: let $E \subset \R^2$ be compact. Then, up to a set of $\HH^1$-measure zero, $x \in E$ is a tangent point of $E$ if and only if 
\begin{equation*}
	\int_0^1 \beta_{\infty, E}(x,r)^2 \, \frac{dr}{r} <\infty. 
\end{equation*} 
Here $\beta_{\infty, E}(x,r)$ is the well-known Jones' (see \cite{jones}) $\beta$-number $
	\beta_{\infty, E}(x,r) := \inf_{L} \sup_{y \in E \cap B(x,r)} \frac{\delta_{L}(y)}{r},
$
where the infimum is taken over all affine lines in $\R^2$. In fact, the result of Bishop and Jones was partly motivated by Carleson's $\ve^2$ conjecture. In higher dimensions, Theorem \ref{teomain2} should be compared to \cite[Theorem 1.4]{villa-tan}, where tangent points of lower content regular sets are characterised in terms of integral $\beta$ coefficients, and integration is with respect to Hausdorff content. In general, it is more difficult to work with `spherical' square functions, such as $\ve, \ve_n, \ve_s$, as they are unstable with respect to the radii. Indeed, the current proof is twice as long and rather more subtle than that of \cite{villa-tan}. We mention another related result\footnote{There and in other instances the term `cone point' is used instead of `tangent point'.}, \cite{hyde-villa}, where the authors characterise tangent points of general compact sets (no lower size conditions assumed). This might come as a surprise. The point in that work is to use a modified Hausdorff content, which essentially sees any set as `lower regular'.
More akin to Theorem \ref{teomain1} is the characterization of rectifiability \cite{tolsa-beta}, \cite{azzam-tolsa}, later extended in \cite{env}; these works show that a measure with positive upper density and finite lower density is rectifiable if and only if a Dini integral as in \eqref{calE_n} integrating $\beta_{2,\mu}(x,r)$ is finite $\mu$-a.e. We refer the reader to those works for a precise definition of these coefficients. 

 Tangent (or cone) points are of course basic objects in rectifiability, hence there is a vast literature characterizing them in various manners (remark that characterisations are often in terms of approximate tangents, as in Theorem \ref{teomain1}). See for example the works \cite{dabrowski, badger-naples, chang-tolsa, del-nin, orponen-martikainen} and references therein.

\subsection{Some open questions}
We believe that Proposition \ref{propo-slicing} has its standalone interest. Unfortunately, the assumptions we make are rather strong. However, a theorem by Mattila, see  \cite{mattila-acta}, says that slicing by planes holds for general sets. See also \cite[Chapter 10]{Mattila-gmt}. A natural question is then whether Proposition \ref{propo-slicing} holds in this level of generality.
More interesting from the rectifiability point of view, is slicing \textit{with respect to Hausdorff measures/contents}. The question is the following:

\begin{question}
	Is it true that \textup{(}some form of\textup{)} slicing with respect to Hausdorff measure/contents, and either by planes or sphere, implies rectifiability?
\end{question}
\noindent
Of course, the question could be formulated rather more precisely, but we leave it vague deliberately.

\subsection{Structure of the article}
Very broadly, the proof splits into three steps, corresponding to the three parts of the paper. The first and second steps prove Theorem \ref{teomain1}, the third improves on this to prove Theorem \ref{teomain2}.
\begin{itemize}
	\item[\emph{Step 1}.] We work assuming that $\mathcal{E}_n$ is small; we have no assumptions of $\partial \Omega^+ \cup \partial \Omega^-$. Denote by $E$ the set where $\mathcal{E}_n$ is finite. There are three main preliminary results that we achieve in this first step:
	\begin{itemize}
		\item[Step 1.1] Here we prove a key preliminary fact (see Lemma \ref{lemcork1}): the smallness of  $\EE_n$ implies the \textit{existence of quasi-corkscrew balls} (see Definition \ref{def-quasi-corkscrew}).
		\item[Step 1.2] This step, one of the most involved of the paper and carried through in Sections \ref{sec-flat2}, \ref{sec-flat3}, \ref{sec-flat4}, shows that, whenever we have a ball with substantial measure of $E$, and $\ve_n(x,r)$ is small, then $E$ is flat, that is, $\beta_{\infty, E}(B)<\epsilon$.
		\item[Step 1.3] The last step to prove the Main Theorem \ref{teomain1} is to show that wherever the square function is small (and we have substantial measure) then not only $E$ is very flat, but moreover while one side of the minimising $n$-plane is 99\% contained in $\Omega^+$  in terms of Lebesgue measure, the other side is 99\% contained in $\Omega^-$. See Section \ref{sec-splitting}.
	\end{itemize}
In Section \ref{sec-proofs-teo12} we put together these three steps (summarised in Main Lemma \ref{mainlemma1}) to prove Theorem \ref{teomain1}.

\item[Step 2.] This step is summarised in Main Lemma \ref{mainlemma2}. 	We prove Theorem \ref{teomain1}  assuming these two steps in Section \ref{sec-proofs-teo12}.
\begin{itemize}
	\item[Step 2.1.] A preliminary result (Section \ref{sec-fourier}) gives us some precise Fourier analytic estimates on certain auxiliary square functions on Lipschitz graphs. 
	\item[Step 2.2.] Next, via a Corona-type construction, we approximate $E$ by a Lipschitz graph $\Gamma$ to then transfer the Fourier analytic estimates onto $E$. This will allow us to say that, indeed, the approximation is so good that it in fact intersects $E$ on a sizeable portion. See Sections \ref{sec-corona}, \ref{sec-coronaest}.
\end{itemize}

\item[Step 3.] The final step is concerned with improving Theorem \ref{teomain1} to obtain true tangents via slicing and assuming CDC. See sections \ref{sec-slicing}, \ref{sec-mainlemma3}. The proof of Theorem \ref{teomain2} can be found in Section~\ref{sec-proofs-teo12}. 
\end{itemize}

\vv
\section{Preliminaries}

\subsection{Notation}\label{secnot}
 In the paper, constants denoted by $C$ or $c$ depend just on the dimension unless otherwise stated. We will write $a\lesssim b$ if there is $C>0$ such that $a\leq Cb$. We write $a\approx b$ if $a\lesssim b\lesssim a$.
 
 Open balls in $\R^{n+1}$ centered in $x$ with radius $r>0$ are denoted by $B(x,r)$, and closed balls by 
$\bar B(x,r)$. For an open or closed ball $B\subset\R^{n+1}$ with radius $r$, we write $\rad(B)=r$.
An open annulus in $\R^{n+1}$ centered in $x$ with inner radius $r_1$ and outer radius $r_2$ is denoted by 
$A(x,r_1,r_2)$, and the corresponding closed annulus by $\bar A(x,r_1,r_2)$. 
We use the two notations $S(x,r)\equiv \partial B(x,r)$ for spheres
in $\R^{n+1}$ centered in $x$ with radius $r$, so that $\bS^n=S(0,1)$.

The Hausdorff distance between two compact sets $A,B\subset\R^{n+1}$ is defined by 
 \begin{equation*}
 	d_H(A, B) = \max \left\{ \sup_{x \in A } \dist(x, B), \, \sup_{x \in B} \dist(x, A)\right\}.
 \end{equation*}

We say a that a measure $\mu$ in $\R^{n+1}$ has $n$-polynomial growth if there exists some constant $C_0$ such that
$$\mu(B(x,r))\leq C_0\,r^n\quad\text{ for all $x\in\R^{n+1}$ and $r>0$.}$$
If we want to be precise about the constant $C_0$, we will say that $\mu$  $(n,C_0)$-polynomial growth or that it has 
$n$-polynomial growth with constant $C_0$.

Given a set $F\subset\R^{n+1}$, the notation $M_+(F)$ stands for the
set of (positive) Radon measures supported on $F$.
For $s>0$, the $s$-dimensional Hausdorff measure is denoted by $\HH^s$, and the $s$-dimensional Hausdorff
content by $\HH^s_\infty$. We also define integration with respect to  $\HH^s_\infty$ as the Choquet integral
\begin{equation}\label{e:choquet}
	\int_A f(x)^p \, d \HH^s_\infty(s) := \int_0^\infty \HH_\infty^s\left(\{x \in A \, |\, f(x) > t \}\right) \, t^{p-1} \, dt.
\end{equation}
Recall that Hausdorff content is \textit{not a measure}. For more on Hausdorff contents and Choquet integration, see \cite[Preliminaries]{AS}.


\vv

\subsection{Capacities and the CDC}\label{seccap}

 Given a Radon mesaure $\mu$ in $\R^{n+1}$ and $s>0$, we consider the potential
$U_s\mu(x) = \int \frac1{|x-y|^s}\,d\mu(y)$. In the case $s=0$, we set
$U_0\mu(x) = \int
\frac1{2\pi}\,\log \frac1{|x-y|} \,d\mu(y)$. Then, for $s\geq0$, we consider the energy
 $I_s(\mu)$ defined by
$$
I_s(\mu) =  \int U_s\mu(x)\,d\mu(x).
$$
The $s$-capacity of a compact set $F\subset \R^{n+1}$ is equal to
$$
\capp_s(F) = \frac 1{\inf_{\mu\in M_1(F)} I_s(\mu)},
$$
where the infimum is taken over all {\em probability} measures $\mu$ supported on $F$. For subsets $F\subset\bS^n$, we define $\capp_s(F)$ in the same way, understanding that $\bS^n\subset\R^{n+1}$.

For $s>0$, equivalently, we can write
\begin{equation}\label{eqcaps}
\capp_s(F)=\sup\big\{\mu(F):\mu\in M_+(F),\,\|U_s\mu\|_{\infty,F}\leq1\big\},
\end{equation}
where $M_+(F)$ is the family of all Radon measures supported in $F$.
It is easy to check that the capacity $\capp_s$ is
homogeneous of degree $s$. In the case when $s=n-1$ with $n\geq2$, $\capp_{n-1}$ coincides the Newtonian capacity, modulo a constant factor.

In the case $s=0$, $n=1$, $\capp_0(F)$ is the Wiener capacity of $F$. The Wiener capacity is not homogeneous. However, the logarithmic capacity $\capp_L$, defined by
$$\capp_L(F) = e^{-\frac{2\pi}{\capp_0(F)}},$$
is $1$-homogeneous.

We have the following well-known relationship between $s$-capacities and Hausdorff contents.

\begin{lemma}\label{lemcap-contingut}
Let $F\subset\R^{n+1}$ be compact and $0<s<t\leq n+1$. We have
\begin{equation}\label{eq:capHs}
 \HH_\infty^t(F)^{\frac{s}t}\lesssim_{s,t} \capp_{s}(F)\lesssim\HH_\infty^s(F).
 \end{equation}
On the other hand, for $s>0$, we have
\begin{equation}\label{eq:capH0}
\capp_L(E) \gtrsim_s \HH_\infty^s(E)^{\frac1s}.
 \end{equation}
\end{lemma}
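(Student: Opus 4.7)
The plan is to treat the three inequalities separately. For the upper bound $\capp_s(F) \lesssim \HH^s_\infty(F)$, fix any $\mu$ admissible in the dual characterization \eqref{eqcaps}, i.e., with $\|U_s\mu\|_{\infty,F} \leq 1$. The key observation is that for every $x \in \supp\mu \subset F$ and $r>0$,
$$1 \;\geq\; U_s\mu(x) \;\geq\; \int_{B(x,r)} \frac{d\mu(y)}{|x-y|^s} \;\geq\; \frac{\mu(B(x,r))}{r^s},$$
forcing the $s$-growth bound $\mu(B(x,r)) \leq r^s$ at every point of the support. Cover $F$ by balls $B(x_i, r_i)$ with $\sum_i r_i^s \leq 2\HH^s_\infty(F)$; after discarding balls not meeting $\supp\mu$ and recentring the rest at a point of $\supp\mu \cap B(x_i, r_i)$ (doubling the radii), summing the growth bound yields $\mu(F) \lesssim \HH^s_\infty(F)$, and the supremum in $\mu$ gives the claim.

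For the lower bound $\HH^t_\infty(F)^{s/t} \lesssim_{s,t} \capp_s(F)$, invoke Frostman's lemma to produce $\mu \in M_+(F)$ with $\mu(B(x,r)) \leq r^t$ for all $x,r$ and $\mu(F) \geq c\,\HH^t_\infty(F)$. Estimate the potential via the tail formula and split at a threshold $r_0$:
$$U_s\mu(x) \;=\; s \int_0^\infty \frac{\mu(B(x,r))}{r^{s+1}}\,dr \;\leq\; s\int_0^{r_0} \frac{r^t}{r^{s+1}}\,dr + s\int_{r_0}^\infty \frac{\mu(F)}{r^{s+1}}\,dr \;\lesssim_{s,t}\; r_0^{t-s} + \frac{\mu(F)}{r_0^s}.$$
The choice $r_0 = \mu(F)^{1/t}$ balances the two terms and gives $\|U_s\mu\|_\infty \leq C(s,t)\,\mu(F)^{(t-s)/t}$. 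The renormalized measure $\nu = \mu/\bigl(C(s,t)\,\mu(F)^{(t-s)/t}\bigr)$ is admissible in \eqref{eqcaps}, so $\capp_s(F) \geq \nu(F) = \mu(F)^{s/t}/C(s,t) \gtrsim_{s,t} \HH^t_\infty(F)^{s/t}$.

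For the logarithmic case $\capp_L(F) \gtrsim_s \HH^s_\infty(F)^{1/s}$, both sides are $1$-homogeneous under scaling, reducing matters to $\diam F \leq 1$, so that $\log(1/|x-y|) \geq 0$ whenever $x,y \in F$. For a Frostman measure $\mu$ with exponent $s$, a layer-cake argument gives
$$U_0\mu(x) \;=\; \frac{1}{2\pi}\int_0^\infty \mu(B(x, e^{-t}))\,dt \;\leq\; \frac{1}{2\pi}\int_0^\infty \min\bigl(\mu(F),\, e^{-st}\bigr)\,dt \;\lesssim_s\; \mu(F)\bigl(\log(1/\mu(F)) + 1\bigr),$$
with optimal threshold $t = (1/s)\log(1/\mu(F))$. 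Then $\nu = \mu/\mu(F)$ has energy $I_0(\nu) \lesssim_s \log(1/\mu(F)) + 1$, whence $\capp_0(F) \gtrsim_s 1/(\log(1/\mu(F))+1)$, and exponentiating yields $\capp_L(F) = \exp(-2\pi/\capp_0(F)) \gtrsim_s \mu(F)^{1/s} \gtrsim_s \HH^s_\infty(F)^{1/s}$.

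The main subtlety will be this last case, due to the non-homogeneity of the logarithmic kernel: the scaling reduction (verifying $\capp_L(\lambda F) = \lambda\capp_L(F)$ via the transformation of $I_0$ under dilation) and the precise tracking of how the energy depends on $\mu(F)$ are what produce the correct $1/s$ exponent. The other two inequalities follow from routine potential-theoretic manipulations once Frostman's lemma is in hand.
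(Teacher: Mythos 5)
Your proof is correct and complete. The paper itself does not spell out an argument — it simply refers to Mattila's book for \eqref{eq:capHs} and to Cuf\'{\i}--Tolsa--Verdera for \eqref{eq:capH0} — and your Frostman-plus-potential-estimate argument (growth from bounded potential for the upper bound; tail/layer-cake splitting at the balancing threshold for the two lower bounds, with the scaling reduction handling the non-homogeneous logarithmic kernel) is exactly the standard argument underlying those references, now written out in full.
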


The proof of the second inequality in \rf{eq:capHs} follows easily from the identity \rf{eqcaps}, while the first one is a direct consequence of  Frostman's Lemma. See \cite[Chapter 8]{Mattila-gmt}, for example. The estimate \rf{eq:capH0}
follows also from  Frostman's Lemma (see \cite[Lemma 4]{Cufi-Tolsa-Verdera}).

\begin{definition}\label{cdc}
For $n\ge1$, let $\Omega\subset\R^{n+1}$ be an open set. For $s>0$, we say that $\Omega$ satisfies the $s$-capacity density condition ($s$-CDC)
if there exists some constant $c>0$ such that, for every $x\in\partial\Omega$ and $r\in(0,\diam(\Omega))$, 
$$\capp_{s}(B(x,r)\setminus\Omega)\geq c\,r^{s}.$$
In the case $s=0$, we say that $\Omega$ satisfies the $0$-capacity density condition ($0$-CDC) if, for every $x\in\partial\Omega$ and $r\in(0,\diam(\Omega))$, 
$$\capp_L(B(x,r)\setminus\Omega)\geq c\,r.$$
In the case $s=n-1$ (for $\Omega\subset \R^{n+1}$), we just say that $\Omega$ satisfies the CDC.
\end{definition}

The following result is a direct consequence of a more general result due to Lewis \cite{Lewis}.

\begin{lemma}\label{lem-fat}
For $n\ge1$, let $\Omega\subset\R^{n+1}$ be an open set satisfying the CDC. Then there exists some $s>n-1$ such that
$\Omega$ also satisfies the $s$-CDC.
The parameter $s$ depends only on $n$ and the constant $c$ involved in the definition of CDC for $\Omega$.
\end{lemma}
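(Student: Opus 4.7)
The plan is to reduce the statement to Lewis's self-improvement theorem for variational $p$-capacity and then translate back to the Riesz $s$-capacity of the paper via Hausdorff content.

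First, I would recast the CDC in terms of the variational $p$-capacity
$$\capp_p^W(F) := \inf\left\{\int_{\R^{n+1}} |\nabla u|^p\,dx : u \in C_c^\infty(\R^{n+1}),\ u \geq 1 \text{ on a neighborhood of } F\right\}.$$
For $n \geq 2$, the Riesz $(n-1)$-capacity used in the paper is comparable, on compact sets, to the Newtonian capacity $\capp_2^W$ (with constants depending only on $n$), and an analogous equivalence holds between $\capp_L$ and the relative $\capp_2^W$ in $\R^2$ in the borderline case $n=1$. Thus the CDC assumption translates to the \emph{uniform $2$-fatness} of $F := \R^{n+1} \setminus \Omega$, namely $\capp_2^W(\bar B(x,r) \cap F) \gtrsim r^{n-1}$ for every $x \in \partial \Omega$ and $0 < r < \diam \Omega$.

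Next, I would invoke Lewis's theorem \cite{Lewis}: for any $1 < p < \infty$, uniform $p$-fatness of a set in $\R^{n+1}$ self-improves, in the sense that there exist $q \in (1,p)$ and $c' > 0$, depending only on $p$, the dimension, and the fatness constant, such that the set is also uniformly $q$-fat. Applied with $p = 2$, this yields $q < 2$ and $\capp_q^W(\bar B(x,r) \cap F) \gtrsim r^{n+1-q}$ for all $x \in \partial \Omega$ and $0 < r < \diam \Omega$.

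To convert this back into a statement about Riesz capacity, I would use that Choquet subadditivity of $\capp_q^W$ together with the standard scaling $\capp_q^W(B(y,\rho)) \approx \rho^{n+1-q}$ for $1 < q < n+1$ gives the one-sided bound $\capp_q^W(K) \lesssim \HH^{n+1-q}_\infty(K)$, just by covering $K$ by balls that realise the Hausdorff content. Combining this with the uniform $q$-fatness produced by Lewis, we obtain $\HH^{n+1-q}_\infty(\bar B(x,r) \cap F) \gtrsim r^{n+1-q}$. Now choose any $s$ with $n-1 < s < n+1-q$ (possible precisely because $q < 2$). The first inequality in Lemma \ref{lemcap-contingut} (a consequence of Frostman's lemma) then yields
$$\capp_s(\bar B(x,r) \cap F) \gtrsim \HH^{n+1-q}_\infty(\bar B(x,r) \cap F)^{s/(n+1-q)} \gtrsim r^s,$$
which is exactly the $s$-CDC. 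The hard part is of course Lewis's theorem itself, whose proof rests on a delicate reverse H\"older inequality for $|\nabla u|^p$ where $u$ is the $p$-capacitary potential; the steps above are merely bookkeeping that converts between the Riesz and variational capacity scales with the help of the Frostman estimate in Lemma \ref{lemcap-contingut}.
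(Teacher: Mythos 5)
Your argument is correct and takes the route the paper intends: the paper simply declares the lemma a direct consequence of Lewis \cite{Lewis}, and you supply the missing bookkeeping---recasting CDC as uniform $2$-fatness of $F=\R^{n+1}\setminus\Omega$, invoking Lewis's self-improvement to obtain uniform $q$-fatness for some $q<2$, and converting back to Riesz $s$-capacity via Hausdorff content together with the Frostman inequality of Lemma \ref{lemcap-contingut}. The choice of any $s$ with $n-1<s<n+1-q$ works, and the constant dependencies match the statement.
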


\vv
\subsection{The $\ve_s$ coefficients}\label{sec-main-functions}

In this section, we make precise the definition of $\ve_s$ (recall that we left suspended the definition of the `thick points' set $V_{\mathfrak{c}_0}$) and show the measurability of the various square functions. 
First, however, we rewrite the definition of $\ve_n$, for future reference.

\begin{definition}\label{def-ve_n} Let $\Omega^+, \Omega^- \subset \R^{n+1}$ be disjoint Borel sets. For $x \in \R^{n+1}$, $r>0$ and an affine half-space $H$ with $x \in \partial H$, denote by $S_{H}^+(x,r)=S(x,r) \cap H$ and $S_{H}^-(x,r):= S(x,r) \cap (\R^{n+1} \setminus \overline{H})$. We put
\begin{equation*}
	\ve_n(x,r; H):= \HH^n \left( (S_H^+(x,r) \setminus \Omega^+) \cup (S_H^-(x,r) \setminus \Omega^-)\right)
\end{equation*}
and
\begin{equation}\label{ve_n} 
	\ve_n(x,r) := \inf_H \ve_n(x,r; H),
\end{equation}
where the infimum is taken over all affine half-spaces such that $x \in \partial H$.
\end{definition}

\vvv

\begin{definition}[Thick points]\label{def-thickpoints}
	Let $\mfc_0>0$ and $a \in(0,1)$. Let $\Omega^+, \Omega^-$ be two open disjoint subsets in $\R^{n+1}$ and put $F:= \R^{n+1}\setminus (\Omega^+ \cup \Omega^-)$. Let $x \in F$ and $r>0$ and $L$ be an $n$-plane trough $x$. Then, if $n\geq 3$ we set, for $i=+,-$,
	\begin{equation*}
		T_{\mfc_0,a}^i(x,r, H):= \{y \in S_H^i(x,r) \setminus \Omega^i \, |\, \capp_{n-2}(\overline{B}(y, a \delta_{L_H}(y))\cap S(x,r)\setminus \Omega^i)\geq \mfc_0 \delta_{L_H}^{n-2}(y)\}.
	\end{equation*}		
In the case $n=2$, we set
$$	T_{\mfc_0,a}^i(x,r, H):= \{y \in S_H^i(x,r) \setminus \Omega^i \, |\, \capp_L(\overline{B}(y, a \delta_{L_H}(y))\cap S(x,r)\setminus \Omega^i)\geq \mfc_0 \delta_{L_H}(y)\}.$$
	Then we put
	\begin{equation*}
		V_{\mfc_0}^a(x,r; H) := T_{\mfc_0, a}^+ (x,r; H) \cup T_{\mfc_0, a}^- (x,r; H).
	\end{equation*}
With this, we set
\begin{equation*}
	\ve_s(x,r):= \inf_{H} \frac{1}{r^s} \int_{V_{\mfc_0}^a(x,r; H)} \, \left(\frac{\delta_{L_H}(y)}{r}\right)^{n-s} \, d \HH^{s}_\infty(y).
\end{equation*}
\end{definition}

\vv

\begin{rem}[About measurability]\label{rem-measurability}

\begin{itemize}
	\item[(i)]
First some remarks about the coefficients $\ve_n(x,r)$. 
The fact that $\Omega^\pm$ are Borel ensures that for each fixed x and r, the sets $S(x,r) \cap \Omega^\pm$ are Borel, so that we can compute the coefficient $\ve_n(x,r)$ for all $x, r$. Moreover, for each fixed $x$, the function $r \mapsto \ve_n(x,r)$ is Lebesgue measurable in $\R$, by using Fubini in polar coordinates. We omit the details.
Let us also remark that the function
$
(a,b) \mapsto  \int_a^b \ve_n(x,r)^2\,dr/r$
is continuous  with respect to the parameters $a,b$.

\item[(ii)] As far as $\HH^n$-measurability of $x \mapsto \int_0^1 \ve_n(x,r)^2 \, dr/r$ is concerned, note first that it suffices to prove $\HH^n$-measurability of $x \mapsto \ve_n(x,r; H)$ for any affine half-space $H$. If $\Omega^+$ and $\Omega^-$ are open subsets this can be done directly since, given $H$ an affine half-space, $\ve_n(x,r;H)$ is an upper semi-continuous function of $(x,r)$. In the more general case of $\Omega^+$ and $\Omega^-$ being Borel subsets, we appeal to a coarea inequality of Esmayli and Haj\l{}asz \cite[Theorem 1.1]{EH}, which in particular says that if $X, Y$ are metric spaces, $X$ is boundedly compact, $0 \leq t \leq s < \infty$, $\wt E \subset X$ an $\HH^s$-measurable subset with $\HH^s(\wt E) <\infty$, and $f:X \to Y$ a Lipschitz function, then 
\begin{equation}\label{eh-coarea}
	\int_{Y} \HH^{s-t} (\wt E \cap f^{-1}(y)) \, d \HH^t(y) \lesssim (\mathrm{Lip}(f))^t \HH^s(\wt E),
\end{equation}
and the function $y \mapsto \HH^{s-t}(f^{-1}(y)\cap \wt E)$ is $\HH^t$-measurable.

To use \eqref{eh-coarea}, we put $X=\R^{n+1} \times \R^{n+1} = \R^{2n+2}$ and $Y= \R^{n+1}\times \R \times \R$; for $R \ll \diam(E) +1$ (which for our purposes we can assume to be finite), set $\wt E= E \times (B(0, R)\setminus \Omega^+)$ - note that $\HH^{2n+1}(\wt E)<\infty$. Finally, let $f(x,y)= (x, |x-y|, y_{n+1}-x_{n+1})$. Note that $f(x,y) = (x, r, h)$ if and only if $|x-y|=r$ and $h=y_{n+1} - x_{n+1}$. As mentioned above, it suffices to show measurability of $x \mapsto \ve_n(x,r;H)$. For simplicity we take $H= \R^{n+1} \cap \{x_{n+1} >0\}$. We apply the Esmayli-Haj\l{}asz result with $s=2n+1$ and $t=n+2$, to obtain that 
\begin{equation*}
	(z, r, h) \mapsto \HH^{n-1} (\wt E \cap f^{-1}(z,r,h)) \mbox{ is } \HH^{n+2}\mbox{-measurable}.
\end{equation*}
It follows by Fubini-Tonelli that, for fixed $r$,
\begin{equation*}
	z \mapsto \int_0^r \HH^{n-1} (\wt E \cap f^{-1}(z,r,h)) \, dh \,\, \mbox{is $\HH^n$-measurable.}
\end{equation*}
Now it is enough to note that the integral in the last display equals to $\HH^n(S_H^+(x,r) \setminus \Omega^+)$. Repeating the same argument with $\HH^n(S_H^-(x,r)\setminus \Omega^-)$ proves that $x \mapsto \ve_n(x,r;H)$ is $\HH^n$-measurable.

\item[(iii)]

In the case in which $\Omega^+$ and $\Omega^-$ are two disjoint open sets. The coefficients $\varepsilon_n(x,r)$ are upper semicontinous in $(x,r)$.
Note first that, given $H$ an affine half space, $\ve_n(x,r;H)$ is an upper semi-continuous function of $(x,r)$. Then, since the infimum of upper semicontinuous functions is upper semicontinuous we get that $\ve_n(x,r)$ is upper semicontinuous and thus the map $x \mapsto \int_{s}^1 \ve_n(x,r)^2 \frac{dr}{r}$ is upper semicontinuous in $x$ for each $s$. The set where the square function is finite is the countable union over $s=1/m$ of the intersection of the sets where $\int_{s}^1 \ve_n(x,r)^2 \frac{dr}{r} \leq N$ and thus in the open case it $E$ is borel.

\item[(iv)]
The coefficients \begin{equation*}\widetilde{\ve}_n(x,r):= \lim_{t \rightarrow 0}\,\sup_{
|(x-y,r-s)|<t}\varepsilon_n(y,s),
\end{equation*} which are defined as the upper semicontinuous envelope of $\ve(x,r)$ in $\R^n\times\R$, enjoy better measurability properties than $\varepsilon_n(x,r)$. In fact, for arbitrary Borel sets $\Omega^+$, $\Omega^-$, they are upper semicontinuous in $(x,r)$ and thus
\begin{equation*}
\Big\{x: \int_0^1 \widetilde{\ve_n}(x,r)^2\frac{dr}{r}<\infty\Big\}
\end{equation*}
is a Borel subset of $E$ for which our theorem applies.

\item[(v)]  As far as the quantity $\alpha(x,r)$ is concerned, it is upper semicontinuous in $(x,r)$ when $\Omega^\pm$ are open, as a consequence of the upper semicontinuity of $\lambda(x,r)$, i.e. the first eigenvalue of the spherical laplacian of the rescaled open set in the sphere.  Indeed, for any function $u \in  C^\infty$
in $\Omega$, let
\begin{equation*}
	F_u(x,r) = \frac{\|\nabla u\|_{L^2(\Omega \cap S(x,r))}}{\|u\|_{L^2(\Omega
			\cap S(x,r))}}.
\end{equation*}
For a smooth function $F_{u}(x,r)$ is continuous on $(x,r)$ and thus when taking the infimum over $u$ we get that $\lambda(x,r) = \inf_u F_u(x,r)$ is an upper semicontinuous function of $(x,r)$.

\item[(vi)] Finally,  the square function $\mathcal{E}_s$ is also upper semicontinuous when $\Omega^\pm$ are open. Again it is enough to show that for a fixed plane $H$
\begin{equation*}
(x,r) \mapsto	\frac{1}{r^s} \int_{V_{\mfc_0}^a(x,r; H)} \, \left(\frac{\delta_{L_H}(y)}{r}\right)^{n-s} \, d \HH^{s}_\infty(y)
\end{equation*}
is upper semicontinuous on $\R^{n+1}  \times (0, \infty)$.
It is easy to notice that if $x_i \rightarrow x$ and $r_i \rightarrow r$ then, since capacity over compact sets is upper semicontinuous with respect to Hausdorff distance, 
\begin{equation*}
\limsup_{i \to \infty} V_{\mfc_0}^a(x_i,r_i; H)  \subset V_{\mfc_0}^a(x,r; H).
\end{equation*}
Since upper semicontinuity with respect to Hausdorff distance also holds for the Hausdorff content, and recalling that $\ve_s(x,r)$ is defined as a Choquet integral, upper semicontinuity follows for $\mathcal{E}_s$ as well.
\end{itemize}
\end{rem}
\vv




\section{Auxiliary coefficients}

In this section, we introduce a few coefficients that will help us at the various stages of the proof. 
Throughout this section, let $\ve(x,r)= \ve_n(x,r)$.

\subsection{The symmetry coefficients}
Let  $\Omega^+,\Omega^-\subset \R^{n+1}$ be disjoint Borel sets and let 
$B(x,r)\subset \R^{n+1}$ be a ball.
For a given $x\in\R^{n+1}$, we consider the symmetry map with respect to $x$ given by $T_x(y) = 2x-y$.
For $U \subset \R^{n+1}$, we write $U^c = \R^{n+1} \setminus U$.
For  $x\in\R^{n+1}$, $r>0$, we consider the coefficients
\begin{equation}\label{e:def-gamma}
\gamma(x,r)  = 
\frac1{r^{n}}\,\max_{i=+,-}\HH^n\big((\Omega^i\triangle\, T_x(\Omega^{i,c}))\cap S(x,r)\big),
\end{equation}
(where $\Omega^{i,c}$ stands for the complement of $\Omega^i$)
and
\begin{equation}\label{e:def-a}
a(x,r)  = 
\frac1{r^{n}}\,\max_{i=+,-}\Big|\HH^n(\Omega^i\cap S(x,r)) - \frac12\HH^n(S(x,r))
\Big|.
\end{equation}
\vv

\begin{lemma}\label{lemgamma}
	Let  $\Omega^+,\Omega^- \subset \R^{n+1}$ be disjoint Borel sets and let $x\in\R^{n+1}$, $r>0$. 
	The associated coefficients $\ve(x,r)$, $\gamma(x,r)$, $a(x,r)$, satisfy
	$$2a(x,r) \leq \gamma(x,r) \leq 2\,\ve(x,r).$$
\end{lemma}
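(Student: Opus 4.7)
\medskip

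The plan is to prove the two inequalities separately, using the reflection map $T_x$ and the fact that it preserves $S(x,r)$ setwise and interchanges the two hemispheres cut out by any affine half-space through $x$.

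\smallskip

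For the left inequality $2a(x,r) \leq \gamma(x,r)$, the key observation is that $T_x$ is an isometry of $\R^{n+1}$ fixing the sphere $S(x,r)$, so for $i=+,-$
$$\HH^n(T_x(\Omega^{i,c}) \cap S(x,r)) = \HH^n(\Omega^{i,c}\cap S(x,r)) = \HH^n(S(x,r)) - \HH^n(\Omega^i \cap S(x,r)).$$
Applying the elementary bound $\HH^n(A\triangle B) \geq |\HH^n(A) - \HH^n(B)|$ to $A = \Omega^i \cap S(x,r)$ and $B = T_x(\Omega^{i,c})\cap S(x,r)$ yields $\HH^n(A\triangle B) \geq 2|\HH^n(\Omega^i \cap S(x,r)) - \tfrac12 \HH^n(S(x,r))|$. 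Dividing by $r^n$ and taking the maximum over $i$ gives the claim.

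\smallskip

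For the right inequality $\gamma(x,r) \leq 2\ve(x,r)$, fix any affine half-space $H$ with $x \in \partial H$ and set $E := (S_H^+(x,r)\setminus \Omega^+) \cup (S_H^-(x,r)\setminus \Omega^-)$, so that $\ve_n(x,r;H) = \HH^n(E)$. The essential point is that $T_x$ swaps $S_H^+(x,r)$ and $S_H^-(x,r)$ (up to a measure-zero equator on $\partial H$). I will prove the pointwise inclusion
$$\big(\Omega^+ \triangle\, T_x(\Omega^{+,c})\big) \cap S(x,r) \;\subset\; E \cup T_x(E),$$
and analogously with $+$ and $-$ exchanged. Taking $\HH^n$ and using that $T_x$ preserves $\HH^n$ on the sphere then gives $\HH^n((\Omega^i \triangle T_x(\Omega^{i,c}))\cap S(x,r)) \leq 2\HH^n(E)$; maximizing over $i$ and taking the infimum over $H$ finishes the proof.

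\smallskip

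To establish the inclusion, decompose $S(x,r) = A^+ \sqcup A^- \sqcup A^F$ where $A^i = \Omega^i \cap S(x,r)$ for $i=+,-$ and $A^F = F \cap S(x,r)$ with $F = \R^{n+1}\setminus(\Omega^+\cup\Omega^-)$. A short calculation shows that $y$ lies in $(\Omega^+ \triangle T_x(\Omega^{+,c}))\cap S(x,r)$ iff $y$ and $T_x(y)$ have the \emph{same} status relative to $\Omega^+$, i.e. either both lie in $A^+$ or both lie in $A^- \cup A^F$. In the first case, whichever of $y, T_x(y)$ lies in $S_H^-(x,r)$ belongs to $S_H^-(x,r)\setminus \Omega^- \subset E$; in the second, whichever lies in $S_H^+(x,r)$ belongs to $S_H^+(x,r)\setminus \Omega^+ \subset E$. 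The only mildly subtle point is the bookkeeping caused by the possibly nontrivial set $F$ appearing in $\Omega^{+,c} = \Omega^- \cup F$, but the case analysis is robust to this since we never need $A^F$ to be empty. This is the main (modest) obstacle; everything else is formal.
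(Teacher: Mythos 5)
Your proof is correct. The left inequality is identical in substance to the paper's (both boil down to $\HH^n(A\triangle B)\geq|\HH^n(A)-\HH^n(B)|$ combined with the fact that $T_x$ is a sphere-preserving isometry). For the right inequality, you prove the pointwise inclusion $(\Omega^i\triangle T_x(\Omega^{i,c}))\cap S(x,r)\subset E\cup T_x(E)$ and then take measures, whereas the paper performs the equivalent estimate directly at the level of measures by splitting over the hemispheres $S_H^\pm$; these are two presentations of the same mechanism (using $T_x$ to swap hemispheres and exploiting $\Omega^+\cap\Omega^-=\varnothing$). Your set-theoretic formulation is arguably a bit cleaner, since the symmetry in $i=+,-$ is automatic and the disjointness of $\Omega^\pm$ and the presence of $F$ are handled in one place, but it is not a genuinely different route.
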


\begin{proof}
	First we show $2a(x,r) \leq \gamma(x,r)$. To shorten notation, we write $S=S(x,r)$. Then, for $i=+,-$,  we have
	\begin{align*}
		\HH^n\big((\Omega^i\triangle\, T_x(\Omega^{i,c}))\cap S\big) 
		& = \HH^n\big((\Omega^i\setminus T_x(\Omega^{i,c}))\cap S\big) + \HH^n\big( (T_x(\Omega^{i,c})\setminus \Omega^i)\cap S\big)\\
		& \geq \big(\HH^n(\Omega^i\cap S) - \HH^n(T_x(\Omega^{i,c})\cap S)\big)^+ \!\!+ 
		\big(\HH^n(T_x(\Omega^{i,c})\cap S) - \HH^n(\Omega^i\cap S) \big)^+\\
		& = \big|\HH^n(\Omega^i\cap S) - \HH^n(T_x(\Omega^{i,c})\cap S)\big| =
		\big|\HH^n(\Omega^i\cap S) - \HH^n(\Omega^{i,c}\cap S)\big|,
	\end{align*}
	where we wrote $t^+ = \max(h,0)$ for any $t\in\R$. Observe now that
	$$\big|\HH^n(\Omega^i\cap S) - \HH^n(\Omega^{i,c}\cap S)\big| =
	\big|\HH^n(\Omega^i\cap S) - (\HH^n(S) - \HH^n(\Omega^i\cap S)\big| = 
	\big|2\,\HH^n(\Omega^i\cap S) - \HH^n(S)\big|.
	$$
	Dividing the preceding estimates by $r^n$ and maximizing in $i=1,2$,  we get $\gamma(x,r)\geq 2\,a(x,r)$.
	
	Next we show that $\gamma(x,r) \leq 2\,\ve(x,r)$. Let $H$ be an arbitrary half-space such that $x\in\partial H$. Again to shorten notation, we write $S_H^i = S_H^i(x,r)$.
	Then we split
	\begin{equation}\label{eqa11}
		\HH^n\big((\Omega^i\triangle\, T_x(\Omega^{i,c}))\cap S\big) 
		= \HH^n\big((\Omega^i\triangle\, T_x(\Omega^{i,c}))\cap S_H^+\big)
		+ \HH^n\big((\Omega^i\triangle\, T_x(\Omega^{i,c}))\cap S_H^-\big).
	\end{equation}
	We have
	\begin{align*}
		\HH^n\big((\Omega^i\triangle\, T_x(\Omega^{i,c}))\cap S_H^+\big) 
		&\leq 
		\HH^n(S_H^+\cap \Omega^i \setminus T_x(\Omega^{i,c}))
		+ 
		\HH^n(S_H^+\cap T_x(\Omega^{i,c})\setminus \Omega^i)\\
		& \leq 
		\HH^n(S_H^+ \setminus T_x(\Omega^{i,c}))
		+ 
		\HH^n(S_H^+\setminus \Omega^i).
	\end{align*}
	Since
	$$\HH^n(S_H^+ \setminus T_x(\Omega^{i,c}))
	= \HH^n( T_x(S_H^-\setminus \Omega^{i,c})) = \HH^n(S_H^-\setminus \Omega^{i,c})\leq
	\HH^n(S_H^-\setminus \Omega^j)$$
	with $i\neq j$,
	we get
	$$\HH^n\big((\Omega^i\triangle\, T_x(\Omega^{i,c}))\cap S_H^+\big)
	\leq \HH^n(S_H^+\setminus \Omega^i) + \HH^n(S_H^-\setminus \Omega^j).$$
	Concerning the last summand on the right hand side of \rf{eqa11}, we have
	\begin{align*}
		\HH^n\big((\Omega^i\triangle\, T_x(\Omega^{i,c}))\cap S_H^-\big) 
		&\leq 
		\HH^n(S_H^-\cap \Omega^i \setminus T_x(\Omega^{i,c}))
		+ 
		\HH^n(S_H^-\cap T_x(\Omega^{i,c})\setminus \Omega^i)\\
		& \leq 
		\HH^n(S_H^-\cap \Omega^i )
		+ 
		\HH^n(S_H^-\cap T_x(\Omega^{i,c})).
	\end{align*}
	Now we use that $\HH^n(S_H^-\cap \Omega^i )\leq \HH^n(S_H^-\setminus \Omega^j )$ for $j\neq i$ and that
	$$\HH^n(S_H^-\cap T_x(\Omega^{i,c})) = \HH^n( T_x(S_H^+\cap\Omega^{i,c})) =\HH^n(S_H^+\setminus\Omega^i),$$ and the we obtain
	$$\HH^n\big((\Omega^i\triangle\, T_x(\Omega^{i,c}))\cap S_H^-\big)\leq 
	\HH^n(S_H^-\setminus \Omega^j ) + \HH^n(S_H^+\setminus\Omega^i).$$
	Plugging the preceding estimates into \rf{eqa11}, we obtain
	$$\HH^n\big((\Omega^i\triangle\, T_x(\Omega^{i,c}))\cap S\big) \leq 2\big[
	\HH^n(S_H^+\setminus\Omega^i) + \HH^n(S_H^-\setminus \Omega^j )\big].$$
	By the same arguments, we get
	$$\HH^n\big((\Omega^j\triangle\, T_x(\Omega^{j,c}))\cap S\big) \leq 2\big[
	\HH^n(S_H^+\setminus\Omega^i) + \HH^n(S_H^-\setminus \Omega^j )\big]$$
	(with $j \neq i$) and so
	$$\max_{k=+,-}\HH^n\big((\Omega^k\triangle\, T_x(\Omega_{k,c}))\cap S\big) \leq 2\big[
	\HH^n(S_H^+\setminus\Omega^i) + \HH^n(S_H^-\setminus \Omega^j )\big].$$
	Dividing by $r^n$ and taking the infimum over all $H$, we deduce
	$$\gamma(x,r) \leq 2\,\ve(x,r).$$
	
\end{proof}

\vvv
 
 We also set
 \begin{equation}\label{eqcoefg-a}
 	\mathfrak g(x,r)  = 
 	\frac1{r^{n+1}}\,\max_{i=+,-}\HH^{n+1}\big((\Omega^i\triangle\, T_x(\Omega^{i,c}))\cap B(x,r)\big).
 \end{equation}
 Notice that, for $i=+,-$,
 \begin{align*}
 	\frac1{r^{n+1}}\,\HH^{n+1}\big((\Omega^i\triangle\, T_x(\Omega^{i,c}))\cap B(x,r)\big) &
 	= \frac1{r^{n+1}}\int_0^r\HH^{n}\big((\Omega^i\triangle\, T_x(\Omega^{i,c}))\cap S(x,t)\big)\,dt\\
 	& = \frac1{r^{n+1}}\int_0^r \gamma(x,t)\,t^n\,dt.
 \end{align*}
 We have the following lemma. Its proof resembles that of Lemma \ref{lem1} below so we omit it.
 \begin{lemma}\label{lem1**}
 	There is a constant $\Cten$  such that for every $x\in\R^{n+1}$ and every $R>0$,  we have
 	$$\int_0^R \mathfrak g(x,r) ^2\frac{dr}{r}\leq \Cten \int_0^{R} \gamma(x,r) ^2\frac{dr}{r}
 	\leq 4 \Cten \int_0^{R} \ve(x,r) ^2\frac{dr}{r}.$$
 \end{lemma}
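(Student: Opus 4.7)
The plan is to establish the first inequality via a standard Hardy-type (scale-invariant averaging) estimate, and then to derive the second inequality directly from Lemma \ref{lemgamma}.

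For the first inequality, the observation driving everything is that the identity already recorded just above the statement,
$$\mathfrak g(x,r) = \frac1{r^{n+1}}\int_0^r \gamma(x,t)\,t^n\,dt,$$
becomes scale-invariant after the substitution $t = rs$, yielding
$$\mathfrak g(x,r) = \int_0^1 \gamma(x,rs)\,s^n\,ds.$$
This exhibits $\mathfrak g(x,\cdot)$ as a weighted average of dilates of $\gamma(x,\cdot)$, and the measure $dr/r$ is exactly the Haar measure on the multiplicative group that makes such averages well behaved.

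The next step is to apply Minkowski's integral inequality in $L^2(dr/r)$ on $(0,R)$:
\begin{align*}
\left(\int_0^R \mathfrak g(x,r)^2\,\frac{dr}{r}\right)^{1/2}
&= \left(\int_0^R \left(\int_0^1 \gamma(x,rs)\,s^n\,ds\right)^2\frac{dr}{r}\right)^{1/2}\\
&\leq \int_0^1 s^n \left(\int_0^R \gamma(x,rs)^2\,\frac{dr}{r}\right)^{1/2} ds.
\end{align*}
For each fixed $s\in(0,1)$, the change of variables $u=rs$ preserves $dr/r = du/u$ and shrinks the interval, so $\int_0^R \gamma(x,rs)^2\,dr/r = \int_0^{Rs}\gamma(x,u)^2\,du/u \leq \int_0^R \gamma(x,u)^2\,du/u$. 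Combining this with $\int_0^1 s^n\,ds = 1/(n+1)$ and squaring gives the first inequality with the explicit constant $\Cten = 1/(n+1)^2$.

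The second inequality is immediate: by Lemma \ref{lemgamma}, $\gamma(x,r)\leq 2\,\ve(x,r)$ pointwise in $r$, so squaring and integrating against $dr/r$ gives the factor $4$. I do not foresee a real obstacle here — the only point to watch is that the Minkowski step must be carried out on $L^2(dr/r)$ (not $L^2(dr)$) so that the dilation $r\mapsto rs$ is an isometry onto a subinterval, producing the clean constant $1/(n+1)$; trying to use a Cauchy–Schwarz approach in $dr$ would pick up spurious factors of $r$.
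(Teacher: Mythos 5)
Your proof is correct and follows essentially the same route the paper intends: the paper omits the proof but states it "resembles that of Lemma \ref{lem1}," whose core is exactly the Minkowski-in-$L^2(dr/r)$ step you carry out, followed by the pointwise bound $\gamma\leq 2\ve$ from Lemma \ref{lemgamma}. One pedantic note: because $\mathfrak g$ and $\gamma$ each take a max over $i=+,-$, the displayed relation $\mathfrak g(x,r)=\frac1{r^{n+1}}\int_0^r\gamma(x,t)\,t^n\,dt$ is really only an inequality $\leq$ (the paper has the same slip), but that is all your argument uses.
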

\noindent

 \vvv

 \subsection{The smoothed coefficients}

For a non-negative smooth function $\varphi:\R\to [0,\infty)$ satisfying
$\int_0^{\infty}\varphi(t) t^n\sqrt{\log(e+t)}dt<\infty,
$
set $\psi(x) = \varphi(|x|)$.  For $i=+,-$, consider
$$\apsi(x,r) = \Bigl|c_{\psi} - \frac{1}{r^{n+1}}\int_{\Omega^i}\psi\Bigl(\frac{x-y}{r}\Bigl)dy\Bigl|,\;\text{ where }c_{\psi} = \int_{\R^{n+1}_+}\psi(y)dy.
$$
Then define
\begin{equation}\label{e:def-mathfrak-1+2}
	\aps(x,r)^2 := \apsun(x,r)^2 + \apsdu(x,r)^2,
\end{equation}
and set
$$\Apsi(x)^2 = \int_0^1\aps (x,r)^2\,\frac{dr}{r}.
$$

\begin{lemma}\label{lem1}
	There is a constant $C_{1,\psi}$  such that for every $x\in\R^{n+1}$, every $R>0$, and $M\geq 1$, we have
	$$\int_0^R \apsi(x,r) ^2\frac{dr}{r}\leq C_{1,\psi} \int_0^{MR} \ve(x,r) ^2\frac{dr}{r}  + C_n\int_M^\infty \varphi(t)\, t^n\,\Big(\log^+\frac tM\Big)^{1/2}
	\,dt.$$
\end{lemma}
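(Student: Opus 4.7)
The plan is to reduce the smooth coefficient $\apsi(x,r)$ to the spherical coefficient $a(x,r)$ (equivalently, to $\ve(x,r)$ via Lemma \ref{lemgamma}), via a direct change-of-variables, and then use a Minkowski-integral argument.

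First I would write $\apsi(x,r)$ in an integral form adapted to the spheres $S(x,rs)$. Substituting $y=x-rz$ in the integral defining $\apsi(x,r)$ and then using polar coordinates $z=s\omega$, together with the identity $\HH^n(S(x,rs)\cap\Omega^i)=(rs)^n\int_{\bS^n}\chi_{\Omega^i}(x-rs\omega)\,d\HH^n(\omega)$, gives
\begin{equation*}
\frac{1}{r^{n+1}}\int_{\Omega^i}\psi\Bigl(\frac{x-y}{r}\Bigr)\,dy=\int_0^\infty\varphi(s)\,\frac{\HH^n(S(x,rs)\cap\Omega^i)}{r^n}\,ds.
\end{equation*}
On the other hand, passing to polar coordinates in the definition of $c_\psi$ yields $c_\psi=\tfrac12\HH^n(\bS^n)\int_0^\infty\varphi(s)s^n\,ds=\int_0^\infty\varphi(s)\tfrac{\HH^n(S(x,rs))}{2r^n}\,ds$. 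Subtracting and using the definition of $a(x,t)$ in \eqref{e:def-a},
\begin{equation*}
\apsi(x,r)\leq\int_0^\infty\varphi(s)\,s^n\,a(x,rs)\,ds\leq\int_0^\infty\varphi(s)\,s^n\,\ve(x,rs)\,ds,
\end{equation*}
where the last inequality is Lemma \ref{lemgamma}.

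Next I would apply Minkowski's integral inequality to the $L^2(dr/r)$-norm on $(0,R)$, producing
\begin{equation*}
\Bigl(\int_0^R\apsi(x,r)^2\,\tfrac{dr}{r}\Bigr)^{1/2}\leq\int_0^\infty\varphi(s)\,s^n\Bigl(\int_0^R a(x,rs)^2\,\tfrac{dr}{r}\Bigr)^{1/2}\,ds=\int_0^\infty\varphi(s)\,s^n\Bigl(\int_0^{Rs}a(x,t)^2\,\tfrac{dt}{t}\Bigr)^{1/2}\,ds,
\end{equation*}
after the change of variables $t=rs$. Split the outer integral at $s=M$. For $s\leq M$, the inner integral is bounded by $\int_0^{RM}a(x,t)^2\,dt/t$. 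For $s>M$, split $\int_0^{Rs}=\int_0^{RM}+\int_{RM}^{Rs}$; the second piece is controlled by $C_n^2\log(s/M)$ thanks to the trivial bound $a(x,t)\leq\tfrac12\HH^n(\bS^n)=:C_n$. Grouping the two cases together gives, with $C_\psi:=\int_0^\infty\varphi(s)s^n\,ds$,
\begin{equation*}
\Bigl(\int_0^R\apsi(x,r)^2\,\tfrac{dr}{r}\Bigr)^{1/2}\leq C_\psi\Bigl(\int_0^{RM}\ve(x,t)^2\,\tfrac{dt}{t}\Bigr)^{1/2}+C_n\int_M^\infty\varphi(s)\,s^n\,\bigl(\log^+\tfrac{s}{M}\bigr)^{1/2}\,ds.
\end{equation*}

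Squaring and using $(u+v)^2\leq 2u^2+2v^2$ produces the first term on the right-hand side of the claim, but the second term comes out \emph{squared}, which is not what is stated. To fix this it suffices to absorb one power: the hypothesis $\int_0^\infty\varphi(t)t^n\sqrt{\log(e+t)}\,dt<\infty$ and the elementary bound $\sqrt{\log^+(s/M)}\leq\sqrt{\log(e+s)}$ (valid for $s>M\geq 1$) imply that the error term
$B:=\int_M^\infty\varphi(s)s^n(\log^+(s/M))^{1/2}\,ds$
is uniformly bounded by a constant depending only on $\varphi$ and $n$, hence $B^2\leq C_{n,\varphi}\,B$. Substituting this into the squared inequality yields exactly the stated form with constant $C_{1,\psi}$. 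The only mildly delicate point is the passage from $B^2$ to $B$; everything else is a routine combination of change of variables, Minkowski's inequality, and the elementary bound $a\leq\ve$ from Lemma \ref{lemgamma}.
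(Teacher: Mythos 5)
Your proof is correct and follows essentially the same route as the paper's: both rewrite $\apsi(x,r)$ in polar coordinates centered at $x$, bound it by $\int_0^\infty \varphi(s)\,s^n\,a(x,rs)\,ds$ using \eqref{e:def-a}, pass to $\ve$ via Lemma \ref{lemgamma}, apply Minkowski's integral inequality in $L^2(dr/r)$, change variables to $t=rs$, and split the outer $s$-integral at $s=M$ using the trivial bound $a\lesssim_n 1$ to produce the logarithmic tail. The one thing you do that the paper does not is to spell out the final squaring step: the paper simply ends with an inequality for the square roots and says ``the lemma follows,'' whereas the stated form (with the log term \emph{not} squared) genuinely requires the absorption $B^2\leq C_{n,\varphi}B$, which as you observe is available because the hypothesis $\int_0^\infty\varphi(t)t^n\sqrt{\log(e+t)}\,dt<\infty$ and $M\geq 1$ give a uniform bound on $B=\int_M^\infty\varphi(t)t^n(\log^+(t/M))^{1/2}\,dt$. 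That is a correct and worthwhile clarification of a step the paper leaves implicit.
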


\begin{proof}
	Observe that, by integrating polar coordinates centered at $x$,
	\begin{align}\label{eqal82}
		\apsi(x,r) & = \left|c_\psi - \frac1{r^{n+1}}\int_0^\infty
		\varphi\Bigl(\frac{s}{r}\Bigl) \HH^n(S(x,s)\cap\Omega^i)\,ds\right|\\
		& = \frac1{r^{n+1}}\left|\int_0^\infty
		\varphi\Bigl(\frac{s}{r}\Bigl) \Big(\frac12\HH^n(S(x,r)) - \HH^n(S(x,s)\cap\Omega^i)\Big)\,ds\right|
		\nonumber\\
		& \leq \frac1{r^{n+1}}\int_0^\infty
		\varphi\Bigl(\frac{s}{r}\Bigl) a(x,s)\,s^n\,ds,\nonumber
	\end{align}
	where recall that $a(x,r)$ was defined in \eqref{e:def-a}.
	Squaring both sides and integrating over $r\in(0,R)$ and using the fact that $a(x,s)\leq \ve(x,s)$ (i.e. Lemma \ref{lemgamma}), yields,
	\begin{align*}
		\left(\int_0^R \apsi(x,r) ^2\frac{dr}{r}\right)^{1/2}
		& \leq \left(\int_0^R  \left(\frac1{r^{n+1}}\int_0^\infty
		\varphi\Bigl(\frac{s}{r}\Bigl)\,\ve(x,s)\,s^n\,ds\right)^2\,\frac{dr}r\right)^{1/2} \\
		&= \left(\int_0^R  \left(\int_0^\infty
		\varphi(t)\,\ve(x,tr)\, t^n\,dt\right)^2\,\frac{dr}r\right)^{1/2}\\
		&\stackrel{\text{Minkowski's inequality}}{\leq} \int_0^\infty
		\left(\int_0^R
		\ve(x,tr)^2\,\frac{dr}r\right)^{1/2}\varphi(t)\, t^n\,dt \\
		&= \int_0^\infty
		\left(\int_0^{tR}
		\ve(x,u)^2\,\frac{du}u\right)^{1/2}\,\varphi(t)\, t^n\,dt.
	\end{align*}
	For $t\geq1$ and some $M>1$, we split
	$$\int_0^{tR}
	\ve(x,u)^2\,\frac{du}u \leq \int_0^{MR}
	\ve(x,u)^2\,\frac{du}u + C\int_{MR}^{tR}\frac{du}u = \int_0^{MR}
	\ve(x,u)^2\,\frac{du}u + C\log^+ \frac tM.$$
	This bound certainly also holds for $t\in(0,1)$, so we get
	\begin{align*}
		\int_0^\infty
		\left(\int_0^{tR}
		\ve(x,u)^2\,\frac{du}u\right)^{1/2}\,\varphi(t)\, t^n\,dt &\leq  \left(\int_0^{MR}
		\ve(x,u)^2\,\frac{du}u\right)^{1/2}  \int_0^\infty
		\varphi(t)\, t^n\,dt \\
		&\quad + \int_M^\infty \varphi(t)\, t^n\,\Big(\log^+\frac tM\Big)^{1/2}
		\,dt,
	\end{align*}
	and the lemma follows.
\end{proof}

\vv

From now on, for the choice $\psi(x) = e^{-|x|^2}$, we denote
$$\mathfrak a^i(x,r) = \apsi(x,r) 
= \bigg|\frac1{r^{n+1}}\int_{\R^{n+1}_+} e^{-|y-x|^2/r^2}\,dy 
- \frac1{r^{n+1}}\int_{\Omega^i} e^{-|y-x|^2/r^2}\,dy\bigg|,$$
and then, as in \eqref{e:def-mathfrak-1+2},
\begin{equation}\label{e:def-mathfrak-1+2-exp}
\mathcal{A}(x)^2 = \int_0^1\mathfrak{a}(x,r)^2\,\frac{dr}{r}.
\end{equation}

\vv



\section{The three Main Lemmas and the proofs of Theorem \ref{teomain1} and Theorem \ref{teomain2}}\label{sec-proofs-teo12}
In this section we put together all our main tools, Main Lemmas \ref{mainlemma1}, \ref{mainlemma2} and \ref{mainlemma3}, to prove our main results, Theorems \ref{teomain1} and \ref{teomain2}.

\subsection{The first two main lemmas and the rectifiability of $E$}\label{sec-proof-teoA}

\noindent
The first main lemma is concerned with deriving some weak, but key, geometric and topological information from the smallness of the square functions $\ve_n(x,r)$ and $\mathfrak{a}(x,r)$. To state it, we need to introduce the following notion.

\begin{definition}\label{def-quasi-corkscrew}
	Let $B\subset\R^{n+1}$ be a ball and $\beta\in(0,1)$, and let $i=+,-$. We say that a ball $B'\subset\R^{n+1}$ is a $\beta$-\textit{almost corkscrew ball} for $B$ and
	$\Omega^i$ if
	$B'\subset B$, $\rad(B')\approx\rad(B)$, and 
	$$\HH^{n+1}(B'\setminus \Omega^h)\leq \beta\,\HH^{n+1}(B').$$
\end{definition}

\vvv

\begin{mlemma}\label{mainlemma1}
	Let $\Omega^+$, $\Omega^- \subset \R^{n+1}$ be  disjoint Borel sets. Fix $c_0 \in (0,1)$. For each appropriately\footnote{See Remark \ref{rem-mainlemma1} for what appropriate means here.} chosen triple of positive and small parameters $(\beta, \epsilon, \gamma)$ 
	there exists a $\delta=\delta(\beta, \epsilon, \gamma,c_0)>0$ such that the following holds true. Let $B_0=B(x_0, r_0)\subset\R^{n+1}$ and let $\mu$ a Radon measure with $(n,1)$-polynomial growth supported on a subset $E \subset  \overline{B_0}$ and such that 
	$
		\mu(B_0) \geq c_0 r_0^n.
	$
	 If
	\begin{equation}\label{mainlemma1-ve<delta}
		\int_0^{20r_0} \big(\ve_n(x,r)^2 + \mathfrak{a}(x,r)^2\big) \, \frac{dr}{r}\leq \delta \,\, \mbox{ for each }\,\, x \in E, 
	\end{equation}
	then:
	\begin{itemize}
		\item[\textup{(i)}] There exists a two balls $B^+, B^- \subset 6 B_0$ such that both $B^+$ and $B^-$ are $\beta$-almost corkscrew for $6B_0$ and $\Omega^+$ and $\Omega^-$, respectively. We have $\rad(B_i) \geq c_1 r_0$ for some universal constant $c_1 >0$.
		\item[\textup{(ii)}] It holds that
		\begin{equation}\label{beta-infty-small'}
			\beta_{\infty,E}(B_0) < \epsilon.
		\end{equation}
		\item[\textup{(iii)}] 
		Assume  that $x_0\in E$ and that		
		$E\cap (B_0\setminus \frac45B_0)\neq\varnothing$ and denote by 
		 $L_0$  a hyperplane infimizing $\beta_{\infty, E}(B_0)$.
	 Then the two components $D^+, D^-$ of $\frac12B\setminus \NN_{2\epsilon r_0}(L_B)$ satisfy \textup{(}up to relabeling\textup{)}
		\begin{equation}\label{e:mainlemma1-b}
			\HH^{n+1}(D^h \cap \Omega^i) \geq (1-\gamma) \HH^{n+1}(D^h) \,\,\mbox{ with } \, \, i =+,-.
		\end{equation}
	\end{itemize}
\end{mlemma}

\vspace{0.3cm}
Above, in (iii), the notation $\NN_t(A)$ stands for the $t$-neighborhood of a set $A$.

\noindent
The second main lemma builds upon the first one to conclude that whenever the square functions are small, then, in fact, the set $E$ intersects, in a large portion, a Lipschitz graph. Recall that the coefficient $\mathfrak{g}(x,r)$ was defined in \eqref{eqcoefg-a}. We also take an even $C^\infty$ function $\overline{\vphi}:\R\to\R$ such that $\one_{[-1,1]}\leq \overline{\vphi}\leq  \one_{[-1.1,1.1]}$, and we denote
\begin{equation}\label{eqassocpsi}
\psi(x) = \vphi(|x|)\quad\mbox{ for $x\in\R^{n+1}$.}
\end{equation}
Next we will consider the coefficients $\mathfrak{a}^\psi(x,r)$ associated to $\psi$.

\begin{mlemma}\label{mainlemma2}
	Let $\Omega^+,\Omega^-\subset\R^{n+1}$ be disjoint Borel sets.
	Fix $c_0\in(0,1)$, $\theta>0$, $\epsilon>0$ and $\delta>0$.  
	Let $B_0\subset \R^{n+1}$ be a ball, let $E$ be compact, and let $\mu$ be a measure with $n$-polynomial growth with constant $1$ supported on $E$
	 satisfying the following conditions:
	\begin{itemize}
		\item[(a)]  $\mu(B_0\cap E)\geq c_0\, \rad(B_0)^n$.
		\item[(b)]   For any ball $B$ with radius at most $200\, \rad(B_0)$ centered at $E$  such that
		$\mu(B)\geq \theta \, \rad(B)^n$, it holds $\beta_{\infty,E}(B)\leq \epsilon$ and moreover, if 
		$E\cap (B\setminus \frac45B)\neq\varnothing$
		and 
		$L_B$ stands for the hyperplane minimizing $\beta_{\infty,E}(B)$, the two components $D^+$, $D^-$ of $\frac12B\setminus L_B$ satisfy \textup{(}up to relabeling\textup{)}
		$$\HH^{n+1}(D^i \cap \Omega^i) \geq (1-\epsilon)\,\HH^{n+1}(D^i) \, \, \mbox{ for } \, i=+,-.$$
		
		\item[(c)] For $\psi$ chosen as in \rf{eqassocpsi}, it holds that
		$$\int_0^{20\rad(B_0)}\big(\mathfrak g(x,r)^2+ \mathfrak{a}^\psi(x,r)^2\big)\, \frac{dr}{r} \leq \delta \,\text{ for every }x\in E.$$
	\end{itemize}
	If $\theta$ is small enough in terms of $c_0$, $\epsilon$ is small enough in terms of $c_0$ and $\theta$, and $\delta$ sufficiently small depending on $\epsilon$, then there exists an $n$-dimensional Lipschitz graph $\Gamma$ with slope at most $1/10$ such that
	$$  \mu(\Gamma) \geq c \,\mu(B_0),$$
	for some absolute constant $0<c\leq 1$.
	Moreover, the slope of $\Gamma$ can be made arbitrarily small assuming $\delta$ small enough. 
\end{mlemma}

\vv

\noindent
Assuming the two main lemmas above, we are ready to prove our first main result, Theorem \ref{teomain1}. 
First we recall some basic facts about densities:  for a set $E\subset \R^{n}$ we set
$$\Theta^{s,*}(x,E) = \limsup_{r\to 0}\frac{\HH^s(E\cap B(x,r))} {\omega_s r^s}, \;\; \Theta^s_*(x,E) = \liminf_{r\to 0}\frac{\HH^s(E\cap B(x,r))}{\omega_s r^s}.
$$
These are called the upper and lower density of $E$ at $x$, respectively. If the two coincide, and thus the limit $\lim_{r \to 0} \HH^s(E \cap B(x,r))/\omega_s r^s$ exists, we denote this number by $\Theta^s(x,E)$ - the density of $E$ at $x$. When dealing with a Radon measure $\mu$, we write $\Theta^{s,*}(x,\mu), \Theta^s_*(x,\mu)$ and $\Theta^s(x,\mu)$. 
For the proof of the following simple lemma, see for example \cite[Theorem 6.1]{Mattila-gmt}.

\begin{lemma} If $E\subset \R^{n}$ satisfies $\HH^s(E)<\infty$, then
	$$2^{-s}\leq \Theta^{s,*}(x,E)\leq 1\text{ for }\HH^s\text{-a.e.} \, x\in E.
	$$
\end{lemma}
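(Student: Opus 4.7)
The statement is a classical density dichotomy for sets of finite $\HH^s$ measure, and the plan is to prove the two inequalities separately by covering arguments, both relying on the fact that $\mu := \HH^s\lfloor E$ is a finite Borel regular (hence Radon) measure on $\R^n$.

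For the upper bound $\Theta^{s,*}(x, E) \le 1$, I would fix $\lambda > 1$, set $A_\lambda = \{x \in E : \Theta^{s,*}(x, E) > \lambda\}$, and show $\HH^s(A_\lambda) = 0$. By Borel regularity of $\mu$, pick an open $V \supset A_\lambda$ with $\HH^s(V \cap E) \le \HH^s(A_\lambda) + \eta$. For each $x \in A_\lambda$ there are arbitrarily small $r$ with $\bar B(x,r) \subset V$ and $\HH^s(E \cap B(x, r)) > \lambda\, \omega_s r^s$, so these balls form a Vitali cover of $A_\lambda$; Vitali's theorem for the Radon measure $\mu$ extracts a countable disjoint subfamily $\{B_i = B(x_i, r_i)\}$ with $\HH^s(A_\lambda \setminus \bigcup_i B_i) = 0$. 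Then disjointness together with the defining inequality yields
$$\omega_s \sum_i r_i^s < \lambda^{-1} \sum_i \HH^s(E \cap B_i) \le \lambda^{-1} \HH^s(V \cap E) \le \lambda^{-1}(\HH^s(A_\lambda) + \eta),$$
while covering $A_\lambda$ by the $B_i$ (each of diameter $2r_i$) gives $\HH^s(A_\lambda) \le \omega_s \sum_i r_i^s$. Combining and letting $\eta \to 0$ forces $\HH^s(A_\lambda)(1 - \lambda^{-1}) \le 0$; since $\lambda > 1$ and $\HH^s(A_\lambda) < \infty$, we get $\HH^s(A_\lambda) = 0$, and letting $\lambda \downarrow 1$ proves the upper bound.

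For the lower bound $\Theta^{s,*}(x, E) \ge 2^{-s}$, fix $\mu_0 < 2^{-s}$ and $\mu' \in (\mu_0, 2^{-s})$, set $C_{\mu_0} = \{x \in E : \Theta^{s,*}(x, E) < \mu_0\}$, and decompose $C_{\mu_0} = \bigcup_k A_k$ with $A_k = \{x \in C_{\mu_0} : \HH^s(E \cap B(x, r)) < \mu' \omega_s r^s \text{ for all } 0 < r < 1/k\}$; it suffices to show $\HH^s(A_k) = 0$ for each $k$. For each $\eta > 0$, choose a cover $\{U_j\}$ of $A_k$ with $\diam U_j < 1/k$ and $(\omega_s / 2^s) \sum_j (\diam U_j)^s \le \HH^s(A_k) + \eta$. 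For each $U_j$ meeting $A_k$, pick $x_j \in U_j \cap A_k$ so that $U_j \subset B(x_j, \diam U_j)$, and then $\HH^s(E \cap U_j) < \mu' \omega_s (\diam U_j)^s = 2^s \mu' (\omega_s/2^s)(\diam U_j)^s$. Summing,
$$\HH^s(A_k) \le \sum_j \HH^s(E \cap U_j) < 2^s \mu' \bigl(\HH^s(A_k) + \eta\bigr),$$
so letting $\eta \to 0$ gives $\HH^s(A_k) \le 2^s \mu' \HH^s(A_k)$; since $2^s \mu' < 1$ and $\HH^s(A_k) < \infty$, we obtain $\HH^s(A_k) = 0$. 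Hence $\HH^s(C_{\mu_0}) = 0$, and letting $\mu_0 \uparrow 2^{-s}$ completes the lower bound.

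The delicate point is the Vitali extraction in the upper bound: it requires that the Vitali cover of $A_\lambda$ admit a disjoint subfamily exhausting $A_\lambda$ up to a $\mu$-null set, which rests on $\mu$ being Radon with $\mu(E) < \infty$. The factor $2^s$ appearing in the lower bound is structural rather than an artifact of the argument: an arbitrary set of diameter $d$ fits inside a ball of radius $d$ but not of radius $d/2$, mismatching the normalization of the density by balls of radius $r$. This factor is sharp, as totally disconnected self-similar Cantor-type sets realize $\Theta^{s,*} = 2^{-s}$ at typical points.
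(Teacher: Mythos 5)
Your proof is correct, and it is precisely the classical argument from Mattila's book (the reference the paper cites without giving its own proof): the upper bound comes from the Vitali covering theorem applied to the Radon measure $\HH^s\lfloor E$ on the super-level set $\{\Theta^{s,*} > \lambda\}$, and the lower bound from a direct covering argument using that a set of diameter $d$ lies in a ball of radius $d$. Both steps are sound, and the factor $2^{-s}$ in the lower bound is indeed structural and sharp, exactly as you observe.
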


\vv

\subsubsection{\bf Proof of Theorem \ref{teomain1} using the Main Lemmas \ref{mainlemma1} and \ref{mainlemma2}}
	We prove first the following
	\begin{claim}\label{claim-A}
		Any Borel subset of finite $\HH^n$ measure of 
		$E_0:=\{x\in\R^{n+1}\, :\, \EE_n(x)<\infty\}$ is $n$-rectifiable.
	\end{claim}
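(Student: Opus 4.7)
The plan is to combine Main Lemmas \ref{mainlemma1} and \ref{mainlemma2} with a Lebesgue-differentiation plus exhaustion scheme. First, I reduce to the following \emph{big piece} statement: any compact $F\subset E$ with $\HH^n(F)>0$ contains a subset of positive $\HH^n$-measure lying on a Lipschitz graph. From this, a standard maximality argument concludes: let $s$ be the supremum of $\HH^n(E\cap G)$ over all countable unions $G$ of Lipschitz graphs; the supremum is attained by some $G_0$, and if $\HH^n(E\setminus G_0)>0$ the big piece construction, applied to a compact $F\subset E\setminus G_0$ of positive $\HH^n$-measure (which still inherits $\EE_n<\infty$), would produce another Lipschitz graph $\Gamma$ with $\HH^n(F\cap\Gamma)>0$, contradicting maximality of $s$.

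\emph{Uniformization step.} Fix $c_0\in(0,1)$ and then $\theta,\epsilon,\delta>0$ in this order, small enough that Main Lemma \ref{mainlemma1} applies with its ``$c_0$'' replaced by $\theta$ and Main Lemma \ref{mainlemma2} applies with parameters $(c_0,\theta,\epsilon,\delta)$. Since $\EE_n(x)<\infty$ on $E$, Lemmas \ref{lem1} (applied both to the Gaussian $\psi(y)=e^{-|y|^2}$ and to the compactly supported bump $\psi$ from \eqref{eqassocpsi}) and \ref{lem1**} yield
\begin{equation*}
\lim_{R\to 0}\int_0^{R}\bigl(\ve_n(x,r)^2+\mathfrak{a}(x,r)^2+\mathfrak{g}(x,r)^2+\mathfrak{a}^\psi(x,r)^2\bigr)\,\frac{dr}{r}=0\quad \text{pointwise on $E$.}
\end{equation*}
Invoking Egorov's theorem (with measurability justified by Remark \ref{rem-measurability}, passing to the upper semicontinuous envelope $\widetilde{\ve}_n$ if necessary) together with standard Lusin-type trimming for the upper density of $\HH^n|_F$, I extract a compact $F'\subset F$ with $\HH^n(F')\ge \tfrac{1}{2}\HH^n(F)$ and a scale $r_*>0$ such that the above integral is $\le \delta/2$ for every $x\in F'$ and every $R\le 40 r_*$, and moreover $\HH^n(F'\cap B(x,r))\le C_n r^n$ uniformly for $x\in F'$, $r\le r_*$.

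\emph{Producing the big piece.} At $\HH^n$-a.e.\ $x_0\in F'$ the upper density $\Theta^{n,*}(x_0,F')$ is bounded below by a dimensional constant, so I fix such an $x_0$ and a scale $r_0<r_*$ with $\HH^n(F'\cap B_0)\gtrsim r_0^n$ for $B_0:=B(x_0,r_0)$. Setting $\mu:=C_n^{-1}\HH^n|_{F'}$ gives a measure with $(n,1)$-polynomial growth and $\mu(B_0)\ge c_0\, r_0^n$ (after relabeling constants). Then hypothesis (a) of Main Lemma \ref{mainlemma2} holds by construction; (c) follows from the uniform tail bound; and (b), for any sub-ball $B$ centered on $F'$ with $\rad(B)\le 200 r_0$ and $\mu(B)\ge\theta\rad(B)^n$, is a direct consequence of Main Lemma \ref{mainlemma1} applied to $B$ (with its ``$c_0$'' played by $\theta$), because the uniform tail bound supplies $\int_0^{20\rad(B)}(\ve_n^2+\mathfrak{a}^2)(x,r)\,dr/r\le\delta$ for every $x\in F'$. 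The conclusion of Main Lemma \ref{mainlemma2} then delivers a Lipschitz graph $\Gamma$ with $\mu(\Gamma)\ge c\,\mu(B_0)>0$, hence $\HH^n(F\cap\Gamma)>0$.

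\emph{Main obstacle.} The delicate point is the coordination of the two main lemmas: Main Lemma \ref{mainlemma2} requires $\beta_\infty$-flatness and half-space splitting at \emph{every} sub-ball of $B_0$ carrying enough $\mu$-mass, and Main Lemma \ref{mainlemma1} provides exactly this, but only when the tail integrals of $\ve_n^2+\mathfrak{a}^2$ are \emph{simultaneously} small at every point of the support. The Egorov-type uniformization in the second paragraph is what forces this compatibility, at the cost of shrinking $F$ by an arbitrarily small amount of $\HH^n$-measure; and one must carefully order the small constants $(c_0,\theta,\epsilon,\delta)$ so that a single $\delta$ works for both lemmas simultaneously.
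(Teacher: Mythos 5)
Your proposal is correct and follows essentially the same route as the paper: uniformize the tail decay of the combined square function (Egorov) and the polynomial growth, then apply Main Lemma \ref{mainlemma1} to supply hypothesis (b) of Main Lemma \ref{mainlemma2} at every sufficiently dense sub-ball, and use the resulting Lipschitz graph. The only cosmetic difference is that you argue via a maximality/exhaustion scheme while the paper passes to a purely $n$-unrectifiable subset and derives a contradiction, but these are interchangeable; note also that your uniform growth and tail bounds should be arranged to hold up to scale $\sim 4000\,r_0$ (not merely $40\,r_*$ with $r_0<r_*$), since Main Lemma \ref{mainlemma2}(b) feeds balls of radius up to $200\,\rad(B_0)$ into Main Lemma \ref{mainlemma1}, which itself integrates up to $20$ times that radius — a trivial adjustment of $r_0\ll r_*$.
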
 
\begin{proof}
	Indeed, suppose suppose there is subset $E\subset E_0$ with finite $\HH^n$ measure which is not $n$-rectifiable.
	Then there exists another subset $E^*\subset E$ with $0<\HH^n(E^*)<\infty$ which is purely $n$-unrectifiable.
	Since the Carleson square function $\mathcal{E}_n(x)^2<\infty$ for $\HH^n$-a.e. $x\in E$, we have from Lemma \ref{lem1} and Lemma \ref{lem1**} that
	$$\int_0^1 \big( \ve_n(x,r)^2+ \mathfrak a(x,r)^2+\aps(x,r)^2 + \mfg(x,r)^2\big) \, \frac{dr}{r}<\infty\;\text{ for }\HH^n\text{-a.e. } x\in E^*,
	$$
	where $\psi$ is the function from Main Lemma \ref{mainlemma2}.
	By replacing $E^*$ by a subset with positive $\HH^n$ measure if necessary,
	we may assume that
	\begin{equation}\label{e:unifint}
		\lim_{s\to0}
		\int_0^s \big(\ve_n(x,r)^2+ \mathfrak a(x,r)^2+\apsi(x,r)^2 + \mfg(x,r)^2\big) \,\frac{dr}{r}=0\quad \mbox{ uniformly in $E^*$,}
	\end{equation}
	and
	\begin{equation}\label{e:Flingrowth}\HH^n(B(x,r)\cap E^*)\leq 2\omega_n r^n\;\text{ for all }x\in \R^{n+1} \text{ and } r>0.\end{equation}
	This second inequality is a consequence of the fact that $\Theta^{n,*}(x,E^*)\leq 1$ for $\HH^n$-a.e.\ $x\in E^*$. To lighten notation, for the sequel of the proof, we will relabel $E^*$ simply by $E$.
	
	For the choice $c_0=9^{-n}$, pick $\theta>0$, then $\epsilon\in (0,\theta)$ and then $\delta(\epsilon)$ small enough positive numbers so that Main Lemma \ref{mainlemma2} is applicable.  Then, having chosen three parameters $(\beta, \epsilon, \gamma)$ appropriately\footnote{See Remark \ref{rem-mainlemma1} below.}, reduce $\delta= \delta(\beta, \epsilon, \gamma)>0$ if necessary so that
	Main Lemma \ref{mainlemma1} holds with the choice $c_0$ replaced by $\theta$.

	Let $R$ be small enough so that
	\begin{equation}\label{Repseq}\int_0^{7R/\epsilon}\big( \ve_n(x,r)^2+ \mathfrak a(x,r)^2+\aps(x,r)^2 + \mfg(x,r)^2\big) \,\frac{dr}r\leq \delta.
	\end{equation}
	for all $x\in E$.
	
	Denote $\mu=(2\omega_n)^{-1}\HH^n|_E$.  Then $\mu$ has $n$-polynomial growth with constant $1$. Recalling that $\Theta^{n,*}(x,E)\geq 2^{-n}$ for $\HH^n$-a.e.\ $x\in E$,
	we can find a ball $B_0$ centered at $E$ with radius smaller than $R$ such that
	$\mu(B_0)\geq 9^{-n}\rad(B_0)^n = c_0\,\rad(B_0)^n$. 
	
	Now we can apply Main Lemma \ref{mainlemma2} with the measure $\nu =\mu|_{B_0}$, which satisfies $\nu(B_0)\geq  c_0\,\rad(B_0)^n$. The assumptions of this lemma are satisfied thanks to Main
	Lemma \ref{mainlemma1} (and an appropriate choice of $\gamma$ with respect to $\epsilon$).
	So there is a Lipschitz graph $\Gamma$ such that the set $E_1 = E\cap \Gamma$ satisfies $\HH^n(E_1)>0$. This yields a contradiction, since $E$ is purely $n$-unrectifiable.
 \end{proof}
	
	To complete the proof of Theorem \ref{teomain1} it suffices to show that any Borel subset $E\subset E_0$ has $\sigma$-finite measure $\HH^n$. For the sake of contradiction, suppose that $\HH^n|_E$ is non-$\sigma$-finite. We now appeal to a result of the first author and C. De Lellis stating that whenever a subset $A \subset \R^n$ is analytic and $\HH^k|_A$ is non-$\sigma$-finite for some $k$, then there exists a purely $k$-unrectifiable subset $A' \subset A$ with finite and positive $\HH^k$ measure (see \cite[Theorem 1.1]{DLF}). Since $E$ is assumed to be Borel, this means it is analytic, and thus we can find a purely $n$-unrectifiable subset $E' \subset E$ with $0< \HH^n(E') < \infty$. But this contradicts Claim \ref{claim-A} and thus Theorem \ref{teomain1} is proven. 
	
	\begin{rem}
		When $\Omega^+, \Omega^-$ we have a short argument to prove that $\HH^n|_E$ is $\sigma$-finite. We have included it in Appendix A. 
	\end{rem}

\begin{rem}
	That Theorem \ref{teomain1'} holds is an immediate consequence of these two main lemmas and their proofs.
\end{rem}

\subsection{Proof of Corollary \ref{coro-L1}}

 \begin{definition}\label{def-L1tan}
 	Given $x \in \partial \Omega^+ \cap \partial\Omega^-$, $H^+=\{y:(y-x) \cdot u>0\}$, we say that $x$ is an $L^1$ tangent point for the pair $\Omega^+$ and $\Omega^-$ if, for \begin{equation*}\Omega^+_{x,r}=\frac{\Omega^+-x}{r}+x
 	\end{equation*}
 	the function $\one_{\Omega^+_{x,r}}$ converges in $L^1_{loc}$ to $\one_{H^+}$, 
 	and if the equivalent local $L^1$ convergence for $\Omega^-$ and $H^-=\{y:(y-x) \cdot u<0\}$ is satisfied. 
\end{definition}

 \begin{proof}[Proof of Corollary \ref{coro-L1}]
 Let $\mu$ be the  measure constructed as in the proof of Theorem \ref{teomain1}. Then $\mu = c \HH^n|_{E'}$, where $E' \subset E$ satisfies 
\rf{e:unifint} and $c$ is a positive absolute constant chosen so that $\mu$ has $(n,1)$-polynomial growth. By Theorem \ref{teomain1}, $E'$ is $n$-rectifiable. Denote by $E^*$ the subset of $E'$ where $\Theta^n(x,\mu)=c=c\,\Theta^{n}(x, E)$ and such that for any $x \in E^*$, $E'$ has a unique approximate tangent $n$-plane $L_x$. Then, $\mu(E')= \mu(E^*)$. For the sequel of this proof, we relabel $E^*$ as $E$ to ease notation. For each $x \in E$ there exists an $r_x >0$ so that
\begin{equation}\label{e:999}
	E \cap \left( B(x,r) \setminus \tfrac45 B(x,r)\right) \neq \varnothing \, \mbox{ and } \, \mu(B(x,r)) > c \cdot  0.99 \omega_n r^n,
\end{equation}
for $r<r_x$.

Fix now $x \in E$, take a decreasing sequence $\{\delta_j\}$, and for each $j$ pick parameters $\beta_j,\epsilon_j,\gamma_j \rightarrow 0$ appropriately as required by Main Lemma \ref{mainlemma1}.
 Moreover, for each $j \in \N$, choose $r_j>0$ such that \begin{equation*}
    \int_0^{20r_j} \big(\ve_n(y,s)^2 + \mathfrak{a}(y,s)^2\big) \, \frac{ds}{s} \leq \delta_j \,\, \mbox{ for every $y \in E \cap \overline B(x,r_j)$},
 \end{equation*} 
 and such that, for any $r \leq r_j$, \eqref{e:999} is satisfied.
 In particular, for each of these pairs $(x,r)$, $r< r_j$, the measure $\mu$ satisfies the hypothesis of Main Lemma \ref{mainlemma1}, (iii), and thus for every $r<r_j$ there exists a plane $L_r$ so that 
\begin{equation*}
\HH^{n+1}(H_{L_r}^\pm \cap \Omega^\pm \cap B(x,r/2)) \geq (1-C\gamma_j) \HH^{n+1}(H_{L_r}^\pm \cap B(x,r/2))
\end{equation*}
where $H_{L_r}^\pm$ are the complementary half-space associated to $L_r$.
It is not difficult to see that $L_r \to L_x$ as $r \to 0$ in the sense of Hausdorff convergence by recalling that $\beta_{\infty, E}(x,r; L_r)< 2\epsilon_j$.


For $h=+,-$, the subset 
$\left(H_{L_x}^h \cap \Omega^h \cap B(x,r) \right)$
is contained in
\begin{align*}
  \left(H_{L_r}^h \cap \Omega^h \cap B(x,r) \right)\cup \{y \in B(x,r)\,:\, \dist(y,L_x) \leq c(n) d_{H}(L_x \cap B(x,r),L_r \cap B(x,r))\}.
\end{align*}
This means that for every $r \leq r_j'$, where $r_j'$  is taken smaller than $r_j/2$ so that the Hausdorff distance between the planes $L_x, L_r$ in the balls $B(x,r)$ is smaller than $\gamma_j r$, we have
\begin{equation*}
\HH^{n+1}(H_{L_x}^h \cap \Omega^h \cap B(x,r)) \geq (1-c(n)\gamma_j) \HH^{n+1}(H_{L_x}^h \cap B(x,r)).
\end{equation*}
This establishes local $L^1$ convergence.
 
\end{proof}
\vv

\subsection{The third main lemma and existence of true tangents}

In this subsection, we prove Theorem \ref{teomain2}
using Theorem \ref{teo-ve<alpha}, Theorem \ref{teomain1} and our third main lemma, which we now state.

\begin{mlemma}\label{mainlemma3}
Let $\Omega^+$, $\Omega^- \subset \R^{n+1}$ be open and disjoint, and $F=\R^{n+1} \setminus (\Omega^+ \cup \Omega^-)$. Fix $c_0\in (0,1)$. For any $\tau>0$, there exists $\delta>0$ small enough such that the following holds. Let 
$\Gamma\subset\R^{n+1}$ be a Lipschitz graph with slope at most $\delta$, 
let  $B_0=B(x_0, r_0)$ be a ball centered in $E$, where $E\subset \Gamma\cap B_0$ is such that $\HH^n(E \cap \tfrac14 B_0) \geq c_0 r_0^n$ and $E \cap (B_0 \setminus \tfrac{4}{5}B_0) \neq \varnothing$.
If
	$\Omega^+\cup\Omega^-$ satisfies the $s$-CDC for some $s \in (n-1, n)$ and
	\begin{equation*}
		\int_0^{1000 \rad (B_0)} \big(\ve_n(x,r)^2 + \ve_{s-1}(x,r)^2+ \mathfrak{a}(x,r)^2 \big)\,\frac{dr}r < \delta \,\, \mbox{ for all } \, x \in E,
	\end{equation*}
	where $a, \mathfrak{c}_0$ in the definition of $\ve_{s-1}$ are chosen appropriately depending on $\tau$, then
\begin{equation}\label{small-beta-F}
	\beta_{\infty, F}(\tfrac14B_0) < \tau.
\end{equation}
\end{mlemma}

\vv

\begin{proof}[Proof of Theorem \ref{teomain2}]
We are now in the position to prove Theorem \ref{teomain2}. From Theorem \ref{teomain1}, we know that $E$ is $n$-rectifiable. Hence for $\HH^n$ almost all points, it admits an approximate tangent.
By replacing $E$ with a subsets of positive measure, by Theorem \ref{teo-ve<alpha}
we can assume that
\begin{equation}\label{e:41}
	\lim_{t \to 0} \int_{0}^t \big(\ve_n(x,r)^2  + \ve_{s-1}(x,r)^2  + \mathfrak{a}(x,r)^2\big) \, \frac{dr}{r} = 0 \,\, \mbox{ uniformly on $E$}.
\end{equation}

To reach a contradiction, let $T_E$ be the subset of tangent points of $E$.
Let $x \in E \setminus T_E$ and assume moreover that $E$ has an approximate tangent at $x$. Then there exists an $a \in (0,1)$ such that for all radii $r>0$ we have that
\begin{equation}\label{e:40}
		(\partial \Omega^+ \cap \partial \Omega^-)\cap B(x,r) \cap X_{a}(x,u_x) \neq \varnothing,
\end{equation}
where $u_x$ is the unit vector perpendicular to the approximate tangent of $E$ at $x$. 
We write $x\in H_{a}$ if this happens. Then we have
$$E\setminus T_E = \bigcup_{k\geq1} H_{1/k}.$$

We will show now that $\HH^n(H_{1/k})=0$ for every $k\geq 1$. To this end, for a given $k$, by the rectifiability of $H_{1/k}$ we can write
$$H_{1/k} =  \bigcup_{j \geq 1}\big(\Gamma_j^k\cap H_{1/k}\big) \cup Z_k,$$
where $\HH^n(Z_k)=0$ and each $\Gamma_j^k$ is a Lipschitz graph with slope at most $\delta_k$, where $\delta_k>$ will be chosen soon, depending on $k$.

Next, set $E_j^k := \Gamma_j^k \cap H_{1/k}$ and suppose that $\HH^n(E_j^k)>0$ for some $j \geq 1$. Let moreover $x\in E_j^k:= \Gamma_j^k\cap H_{1/k}$ such that $\Theta^n(x,E_j^k)=1$.
Note that \eqref{e:40} implies that there exists a sequence of radii $r_i \to 0$ so that 
\begin{equation}\label{e:47}
	F\cap   A(x, \tfrac12 r_i, r_i) \cap X_{1/k}(x,u)= 	(\partial \Omega^+ \cap \partial \Omega^-) \cap A(x, \tfrac12 r_i, r_i) \cap X_{1/k}(x,u) \neq \varnothing.
\end{equation}
By \rf{e:41} there exists $t_0>0$ small enough such that
$$\int_{0}^{1000t_0} \big(\ve_n(x,r)^2  + \ve_{s-1}(x,r)^2  + \mathfrak{a}(x,r)^2\big) \, \frac{dr}{r}\leq \delta_k
\quad\mbox{ for all $x\in E_j^k$.}$$
Then we can choose $t_0>0$ sufficiently small so that if $r_i\leq t_0$, we have
\begin{align}
	&	\int_{0}^{4000r_i} \big(\ve_n(x,r)^2 + \mathfrak{a}^\pm(x,r)^2 + \ve_{s-1}(x,r)^2\big)\,\frac{dr}r < \delta_k, \label{e:42}\\
	& \HH^n(B(x,r_i)\cap E_j^k) \geq c_0 r_i^n,    \label{e:43}\\
	& E_j^k \cap (B(x, 4r_i) \setminus \tfrac45 B(x, 4r_i))\neq \varnothing  \mbox{ and  } \label{e:43b}\\ 
	& \HH^n(B(x,4r_i)\cap E \cap X_{a/2}(x,u)) \ll \HH^n(B(x,4r_i) \cap E).\label{e:44}
\end{align}
For each such $r_i$, $E_j^k$ satisfies the hypotheses of Main Lemma \ref{mainlemma3} (with $B_0= B(x,4r_i)$). Hence we deduce that, for any given $\tau>0$, if $\delta_k$ is small enough, 
$$\beta_{\infty, F}(B(x,r_i)) < \tau\ll \frac1k.$$
It is immediate to check that this contradicts \rf{e:47} for $\tau$ small enough. Indeed,   
thanks to \eqref{e:44} and \eqref{e:43} we can conclude that there are $n+1$ points $y_1,...,y_{n+1}$ contained in $E \cap B(x, r_i) \setminus X_{a/2}(x,u)$ and such that they are linearly independent with good constants (i.e., such that the $n$-dimensional volume of their convex hull is comparable to $r_i^n$). Then, if $z\in F\cap   A(x, \tfrac12 r_i, r_i) \cap X_{1/k}(x,u)$, the volume of the simplex with vertices $z,y_1,\ldots,y_{n+1}$ is bounded away from $0$, which implies that 
$\beta_{\infty, F}(B(x,r_i)) \gtrsim  \frac1k$.
\end{proof}


\part{Weak geometric and topological results}

This part of the paper is devoted to the proof of the following first main lemma. Throughout Part~I and Part II, we will keep writing $\ve(x,r)=\ve_n(x,r)$.

\section{Existence of almost corkscrews}\label{sec-cork}

For the convenience of the reader, we state again the first main lemma:

\begin{lemma}\label{mainlemma1-text}
	Let $\Omega^+$, $\Omega^- \subset \R^{n+1}$ be disjoint Borel sets. Fix $c_0 \in (0,1)$. For each appropriately\footnote{See Remark \ref{rem-mainlemma1} for what appropriate means here.} chosen triple of positive and small parameters $(\beta, \epsilon, \gamma)$ 
	there exists a $\delta=\delta(\beta, \epsilon, \gamma)>0$ such that the following holds true. Let $B_0=B(x_0, r_0)$ be a ball and let $\mu$ be a Radon measure  with $(n, C_0)$-polynomial growth supported on a subset $E \subset  \overline{B_0}$ and such that 
	\begin{equation*}
		\mu(B_0) \geq c_0 r_0^n.
	\end{equation*}
	If
	\begin{equation}\label{mainlemma1-ve<delta'}
		\int_0^{20r_0} \big(\ve(x,r)^2 + \mathfrak{a}^\pm(x,r)^2\big) \, \frac{dr}{r}< \delta \,\, \mbox{ for each }\,\, x \in E, 
	\end{equation}
	then 
	\begin{itemize}
		\item[\textup{(i)}] There exists a two balls $B^+, B^- \subset 6 B_0$ such that both $B^+$ and $B^-$ are $\beta$-almost corkscrew for $6B_0$ and $\Omega^+$ and $\Omega^-$, respectively. Further, we have $\rad(B^\pm) \geq c_1 r_0$ for some universal constant $c_1 >0$.
		\item[\textup{(ii)}] It holds that
		\begin{equation}\label{beta-infty-small}
			\beta_{\infty,E}(B_0) < \epsilon.
		\end{equation}
	\item[\textup{(iii)}] 
		Assume  that $x_0\in E$ and that		
		$E\cap (B_0\setminus \frac45B_0)\neq\varnothing$ and denote by 
		 $L_0$  a hyperplane infimizing $\beta_{\infty, E}(B_0)$.
	 Then the two components $D^+, D^-$ of $\frac12B\setminus \NN_{2\epsilon r_0}(L_B)$ satisfy \textup{(}up to relabeling\textup{)}
		\begin{equation}\label{e:mainlemma1-b'}
			\HH^{n+1}(D^h \cap \Omega^i) \geq (1-\gamma) \HH^{n+1}(D^i) \,\,\mbox{ with } \, \, i =+,-.
		\end{equation}
	\end{itemize}	
\end{lemma}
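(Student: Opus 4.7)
The plan is to derive all three conclusions from the uniform smallness on $E$ of the whole family of auxiliary coefficients introduced in Section 3. Combining Lemma \ref{lem1} and Lemma \ref{lem1**}, applied both to $\psi(x)=e^{-|x|^2}$ and to the bump from \eqref{eqassocpsi}, promotes \eqref{mainlemma1-ve<delta'} to the single estimate
\[
\int_0^{r_0}\big(\gamma(x,r)^2 + \mathfrak{g}(x,r)^2 + \mathfrak{a}^\pm(x,r)^2 + \mathfrak{a}^\psi(x,r)^2\big)\,\frac{dr}{r}\lesssim \delta \qquad\text{for every }x\in E.
\]
A dyadic pigeonhole then produces an $\eta=\eta(\delta)\to 0$ as $\delta\to 0$ such that each $x\in E$ enjoys a density-one set of scales $r\in(0,r_0)$ on which every one of these coefficients is at most $\eta$.

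For part (i), fix any $x_1\in E\cap 2B_0$ (available because $\mu(B_0)\geq c_0 r_0^n$ and $\supp\mu\subset\overline{B_0}$) and a good scale $r_1\in[c(c_0)r_0,r_0]$. Smallness of $\mathfrak{a}^+(x_1,r_1)$ and $\mathfrak{a}^-(x_1,r_1)$ forces $\Omega^+$ and $\Omega^-$ to each carry Gaussian mass within $O(\eta)r_1^{n+1}$ of $c_\psi r_1^{n+1}$ inside $B(x_1,r_1)$; smallness of $\mathfrak{g}(x_1,r_1)$ forces each $\Omega^\pm$ to be $T_{x_1}$-symmetric in that ball up to $O(\eta)r_1^{n+1}$ Lebesgue error. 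Together this says $B(x_1,r_1)$ decomposes, outside a bad set of Lebesgue measure $C\eta r_1^{n+1}$, into two ``halves'', each of measure $\sim r_1^{n+1}$ and each contained in $\Omega^+$ or $\Omega^-$. A Lebesgue density/Vitali argument then extracts inside each half an actual ball of radius $\gtrsim r_0$ whose $\Omega^\pm$-defect is at most $\beta$ of its volume, provided $\eta\ll \beta$.

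For part (ii), I argue by contradiction via $\gamma$. If $\beta_{\infty,E}(B_0)\geq \epsilon$, the set $E\cap B_0$ must contain an $(\epsilon r_0/C)$-thick affine $n$-simplex together with a further point $y^*\in E\cap B_0$ at distance $\geq \epsilon r_0/C$ from the spanning hyperplane. At each vertex $x$ I pick a good scale $r\in[\epsilon r_0/C, r_0]$ with $\gamma(x,r)\leq \eta$; by the definition of $\gamma$ and Lemma \ref{lemgamma}, $\Omega^+\cap S(x,r)$ is approximately $T_x$-symmetric on the sphere. Iterating the $n+1$ independent reflection symmetries obtained at the vertices, combined with the half-space mass constraint coming from $\mathfrak{a}^+$, pins $\Omega^+$ locally to a genuine half-space, which forces $y^*$ off $\partial\Omega^+$ once $\eta\ll \epsilon^{C(n)}$, a contradiction.

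The hard part is (iii). Once $\beta_{\infty,E}(B_0)<\epsilon$ holds, the two components $D^\pm\subset \tfrac12 B_0\setminus \mathcal{N}_{2\epsilon r_0}(L_0)$ are both disjoint from $E$ and hence each lies in a single connected component of $\R^{n+1}\setminus E$; the difficulty is \emph{globalising} the labels $\Omega^+$ versus $\Omega^-$ consistently across all of $D^\pm$. My strategy is threefold. First, use the two almost-corkscrew balls from part (i) as anchors: for the optimal hyperplane produced in part (i), after relabeling one lies in $D^+$ and the other in $D^-$, because both $B^\pm$ have radius comparable to $r_0$ and $D^\pm$ are the only large components inside $\tfrac12 B_0\setminus\mathcal{N}_{2\epsilon r_0}(L_0)$. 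Second, for any $z\in D^\pm$ connect it to the anchor by a Vitali-type chain of overlapping almost-corkscrew balls whose centers track the geometry of $E$ near $L_0$; the simultaneous smallness of $\mathfrak{a}^+ + \mathfrak{a}^-$ at a selected point of $E$ lying between two consecutive links rules out an orientation flip across the link, since a flip would violate the Gaussian mass split at that point. Third, bound the wrong-side volume by Fubini in the direction orthogonal to $L_0$,
\[
\HH^{n+1}(D^\pm\setminus\Omega^\pm)\leq C(\epsilon)\,r_0^{\,n}\sup_{x\in E\cap L_0}\int_0^{r_0}\mathfrak{a}^\psi(x,r)\,dr\leq C(\epsilon)\sqrt{\delta}\,\HH^{n+1}(D^\pm),
\]
which is at most $\gamma\,\HH^{n+1}(D^\pm)$ once $\delta\ll \delta(\beta,\epsilon,\gamma)$. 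The delicate point throughout is the orientation propagation in step~two, and it is precisely this step that dictates the ``appropriate'' relationship among $\beta,\epsilon,\gamma$ alluded to in the statement.
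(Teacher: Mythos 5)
Your argument for part (i) has a genuine gap. You claim that the simultaneous smallness of $\mathfrak{a}^+(x_1,r_1)$, $\mathfrak{a}^-(x_1,r_1)$ and $\mathfrak{g}(x_1,r_1)$ at a single point $x_1$ and a single scale $r_1$ forces $B(x_1,r_1)$ to split, up to $O(\eta)r_1^{n+1}$ Lebesgue error, into two ``halves'' each essentially contained in $\Omega^+$ or $\Omega^-$. This is false. Take $n\geq 1$ and a Lipschitz-free ``graph'' $\Omega^+=\{y:y_{n+1}>g(y')\}$, $\Omega^-=\{y:y_{n+1}<g(y')\}$ with $g$ an odd function ($g(-y')=-g(y')$) oscillating with amplitude comparable to $r_1$ on a scale much smaller than $r_1$. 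Then $T_{0}(\Omega^{+,c})=\Omega^+$ exactly, so $\mathfrak{g}(0,r)=0$ for all $r$, and by the same symmetry $\mathfrak{a}^{\pm}(0,r)=0$ for all $r$; yet no ball of radius $\gtrsim r_1$ has small $\Omega^\pm$-defect, because the oscillations are on scale $\ll r_1$ but reach height $\approx r_1$. The reflection-symmetry coefficient $\mathfrak{g}$ captures anti-symmetry of $\Omega^+$, not half-space structure; these are very different. What is missing is the essential difficulty that the paper's Lemma~\ref{lemcork1} addresses: even with the much stronger coefficient $\ve_n(x,r)$ small for all scales, the optimal half-space $H_{x,r}$ may \emph{rotate} with $r$. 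The paper pigeonholes a stable direction over a positive-measure interval of radii, then brings in a \emph{second} point $y\in E$ with $|x-y|\approx r_0$ and exploits the transversality of the spheres $S(y,s)$ to the sheaves $S(x,t)$: a geodesic convex hull argument on $S(y,s)$ then produces the corkscrew. Without two points (or an equivalent mechanism for locking the orientation), the corkscrew does not follow; and without the raw $\ve_n$ coefficients (as opposed to the derived $\mathfrak{a},\mathfrak{g}$), you lose the hemisphere-by-hemisphere information that drives the argument.

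Part (ii) is sketched too loosely to assess, but it appears to rest on a misconception as well. Smallness of $\gamma(x,r)$ at a vertex tells you that $\Omega^+$ is approximately anti-symmetric under the point reflection $T_x$; iterating finitely many such point reflections does not manifestly constrain the boundary of $\Omega^+$ to lie near a hyperplane, for the same reason as above. The paper's Sections~\ref{sec-flat2}--\ref{sec-flat4} take a fundamentally different route: a weak-$*$ compactness argument sending $\delta\to 0$ and showing the limiting density functions $g^\pm$ define a zero-set that is a real-analytic variety (Lemma~\ref{l:analytic-variety}), that this variety contains a hyperplane of positive limiting measure (Lemma~\ref{l:analytic-variety2}), and finally that the whole limit set lies on that hyperplane (Lemma~\ref{lemflat1}). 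Nothing like this survives in the reflection-iteration proposal. In part (iii) the displayed Fubini estimate
\[
\HH^{n+1}(D^\pm\setminus\Omega^\pm)\leq C(\epsilon)\,r_0^{\,n}\sup_{x\in E\cap L_0}\int_0^{r_0}\mathfrak{a}^\psi(x,r)\,dr
\]
does not follow from anything established: $\mathfrak{a}^\psi$ is a global Gaussian average, not a local density, so it cannot be integrated along fibers to bound a wrong-side volume; moreover $E$ does not lie on $L_0$. The paper's proof of (iii) (Lemmas~\ref{lem11.2**} and~\ref{lem-99domain}) instead uses a Besicovitch cover by balls of size $\sim\gamma r_0$, upgrades any ball with significant wrong-side volume to a corkscrew via Lemma~\ref{lemcork2}, and contradicts $\ve_n(x_0,r)\ll 1$ on a good radius $r$ by a great-circle / geodesic-convexity argument on $S(x_0,r)$ — again the two-point transversal machinery. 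In short, the proposal substitutes reflection and Gaussian-average coefficients for the spherical hemisphere structure encoded in $\ve_n$, and loses precisely the geometric rigidity that makes the paper's argument go through.
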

\vv

\begin{rem}\label{rem-mainlemma1}
    The choice of parameters is as follows. In Lemma \ref{lemcork1} we show that for any $\beta$ there exists a $\delta_1(\beta)$ so that (i) holds. Next, in Lemma \ref{lem-beta<epsilon}, we show that for any $\epsilon$ there exists a $\delta_2(\epsilon)$ so that (ii) holds; thus for both (i) and (ii) to hold for the same $\delta$, it suffices to pick $\delta_3(\beta, \epsilon)= \min(\delta(\beta), \delta(\epsilon))$ (see Remark \ref{rem-beta<epsilon}). In Lemma \ref{lem-corkscrew-everywhere}, we show that, given $\lambda>0$, as long as $\epsilon \ll \lambda$, we can find a $\delta_4=\delta_4(\lambda, \epsilon)$ so that if \eqref{mainlemma1-ve<delta'} holds with a $\delta_5=\delta_5(\beta, \epsilon, \lambda)$ satisfying $\delta_5<\min(\delta_4, \delta_3)$, then (i), (ii) and the conclusions of Lemma \ref{lem-corkscrew-everywhere} holds with this choice of parameters. Finally, in Lemma \ref{lem-99domain}, we show that for any $\gamma>0$, conclusion (iii) holds as long as we choose $\delta_6=\delta(\beta, \epsilon, \lambda, \gamma)$ with $\delta_6 \leq \delta_6$, $\delta_6 \ll \beta$ and $\delta_6 \ll \gamma$.
	In the end, we take $\delta=\delta_6$.
\end{rem}

\vv


In the remainder of this section we will prove Lemma \ref{mainlemma1-text}(i). Recall the definition of $\beta$-almost corkscrew in Definition \ref{def-quasi-corkscrew}.

\begin{lemma}\label{lemcork1}
Let $\beta,\delta>0$ and let $x,y\in\R^{n+1}$ be such that $|x-y|=R$ and
$$\int_0^{8R} \big(\ve_n(x,t)^2+\ve_n(y,t)^2\big)\,\frac{dt}t \leq \delta.$$
If $\delta=\delta(\beta)>0$ is small enough, then there exist $B^+,B^-\subset B(x,6R)$ such that each $B^i$ is a $\beta$-almost corkscrew ball
for $B(x,6R)$ and $\Omega^i$, with $\rad(B^i)\geq c_1R$, where $c_1$ is an absolute constant.
\end{lemma}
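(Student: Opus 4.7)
The plan is to use the smallness of $\ve_n$ at both $x$ and $y$ to pin down a single ``splitting direction'' $u \in \bS^n$ so that, on most spheres $S(x,r)$ of radius comparable to $R$, the set $\Omega^+$ fills the hemisphere $\{z:(z-x)\cdot u>0\}$ and $\Omega^-$ fills the opposite one, both up to small error; the two balls $B^\pm$ are then placed deep inside these two half-balls. The second point $y$ is essential: with only one anchor, $u$ could vary arbitrarily with the radius $r$, and we would never be able to assemble a full-dimensional corkscrew ball.

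First I would use Chebyshev's inequality applied to the hypothesis to produce a set $G \subset (4R, 6R)$ of linear measure $\gtrsim R$ on which both $\ve_n(x,r)$ and $\ve_n(y,r)$ are at most $\eta := C\sqrt{\delta}$. For each such $r$, the definition of $\ve_n(x,r)$ gives a half-space $H_x(r)$ with unit normal $u_x(r)$ satisfying
\begin{equation*}
\mathcal{H}^n\bigl((H_x(r)\cap S(x,r)) \triangle (\Omega^+\cap S(x,r))\bigr) \le \eta r^n,
\end{equation*}
and symmetrically for the opposite hemisphere and $\Omega^-$; an identical statement at $y$ yields $u_y(r)$.

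The technical core is to show that all these vectors $u_x(r), u_y(r)$ cluster near one fixed unit vector $u$. Since $|x-y|=R$ and $r\ge 4R$, the two spheres $S(x,r)$ and $S(y,r)$ meet in an $(n-1)$-sphere $\Sigma_r$ of measure $\gtrsim r^{n-1}$, and requiring both hemispheres to approximate $\Omega^+$ on $\Sigma_r$ forces, via elementary spherical geometry, $|u_x(r)-u_y(r)| \lesssim \sqrt{\eta}$ (with both vectors nearly perpendicular to $y-x$). To propagate the direction across different $r\in G$, Lemma \ref{lem1**} converts the $\ve_n$ bounds into smallness of the volumetric coefficients $\mathfrak{g}(x,\cdot)$ and $\mathfrak{g}(y,\cdot)$; a coarea/telescoping argument on annuli $A(x,r,r')$ with $r,r'\in G$ then rules out large drifts in $u_x(r)$, since any substantial drift would produce a positive-$\mathcal{H}^{n+1}$-measure set lying simultaneously in the ``wrong'' half-space of $H_x(r)$ and the ``right'' half-space of $H_x(r')$, contradicting that both approximate $\Omega^+$. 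This delivers a single $u$ with $|u_x(r)-u| \lesssim \sqrt{\eta}$ on a subset of $G$ of comparable measure.

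Having fixed $u$, I would set $B^{\pm} := B(x \pm \tfrac{9R}{2}u, c_1 R)$ with a small absolute constant $c_1$; both lie in $B(x,6R)$ with radius $c_1 R$. Writing the defect in polar coordinates around $x$, and using that $B^+\cap S(x,r) \subset H_x(r)$ up to a spherical cap of measure $\lesssim \sqrt{\eta}\, r^n$ coming from the direction drift,
\begin{equation*}
\mathcal{H}^{n+1}(B^+ \setminus \Omega^+) = \int_{(9/2-c_1)R}^{(9/2+c_1)R} \mathcal{H}^n(B^+\cap S(x,r)\setminus \Omega^+)\, dr \lesssim c_1 \sqrt{\eta}\, R^{n+1}.
\end{equation*}
Dividing by $|B^+| \approx c_1^{n+1} R^{n+1}$ and choosing $\delta$ sufficiently small in terms of $\beta$ and $c_1$ forces the quotient below $\beta$. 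The argument for $B^-$ is identical, with $\Omega^-$ and the direction $-u$. The main obstacle is the direction-pinning in the third paragraph: $\ve_n$ controls the geometry only in an integrated sense across radii, and it is the interplay between the two anchor points $x, y$ and the volumetric information from Lemma \ref{lem1**} that allows one to extract a single global direction.
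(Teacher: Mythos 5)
There is a genuine gap in your direction-pinning step (paragraph 3), and it is at the heart of why the lemma is hard.

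First, the $\Sigma_r$ argument fails. You claim that since both $H_x(r)$ and $H_y(r)$ approximate $\Omega^+$ on the intersection sphere $\Sigma_r = S(x,r)\cap S(y,r)$, elementary spherical geometry forces $|u_x(r)-u_y(r)|\lesssim\sqrt\eta$. But $\Sigma_r$ is $(n-1)$-dimensional, hence $\HH^n$-null; the coefficient $\ve_n$ controls $\HH^n$ on the full spheres and gives no information whatsoever about $\Omega^\pm$ restricted to $\Sigma_r$. So there is no forcing at all from this argument. Second, the $\mathfrak g$/coarea argument fails because the symmetry coefficients are completely blind to direction drift. If on each sphere $S(x,\rho)$ the set $\Omega^+\cap S(x,\rho)$ is an \emph{exact} hemisphere $\{z:(z-x)\cdot u(\rho)>0\}$ with $u(\rho)$ spiraling arbitrarily as $\rho$ increases, then $T_x$ maps the complementary hemisphere to the same set and $\Omega^+\triangle T_x(\Omega^{+,c})$ is $\HH^n$-null on every sphere centered at $x$, so $\mathfrak g(x,\rho)\equiv 0$. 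Such spirals \emph{are} ruled out, but only by $\ve_n(y,\cdot)$ being small; the telescoping contradiction you sketch, "a positive-$\HH^{n+1}$-measure set in the wrong half-space of $H_x(r)$ and the right half-space of $H_x(r')$," involves only $x$-centered half-spaces and never actually uses the $y$-information, so it cannot close the gap. In short, you are trying to prove that $u_x(r)$ is approximately constant for a large proportion of $r$, and your proposed mechanisms do not establish this.

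The paper's proof is structured precisely to avoid having to prove that. It does not pin down a single $u$ for almost all radii; instead it (i) $\tau$-clusters the normals $u_{H_{x,t}}$ by pigeonhole and selects the heaviest cluster $A_k\subset[2R,3R]$ (measure $\gtrsim R/N_\tau$ only, not $(1-o(1))R$), (ii) refines $A_k$ with the dyadic interval Lemma \ref{lemaux99} so that it is dense at all scales inside a subinterval $J$, and (iii) uses $y$ not to compare normals at equal radii but to \emph{slice}: setting $G_x=\bigcup_{t\in A_k}S(x,t)$, it shows $S(y,r)\cap G_x$ is substantial in each of $n+1$ affinely independent cones $X(v_j,\gamma)$ for most $r\approx R$, and then the geodesic convexity of the hemisphere $S^+_{y,r}$, together with $\ve_n(y,r)\leq\delta^{1/4}$, forces the spherical hull of points in those cones into $S^+_{y,r}$, hence into $\Omega^+$ up to a $\delta^{1/4}$-error. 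Integrating over $r$ produces the corkscrew ball. The transversality condition $\dist(\Delta_{x,r},L_{x,y})\gtrsim R$ is what makes the slicing effective — it plays the role you wanted $\Sigma_r$ to play, but in a way that is actually accessible to the $\HH^n$-level information available. Your final integration step, given a pinned direction $u$, is fine; it is the existence of that $u$ that you have not established, and the paper never needs it.
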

\noindent

We need an intermediate result. 

\begin{lemma}\label{lemaux99}
Let $\theta\in (0,1/2)$.
Let $I\subset \R$ be an interval and consider a measurable subset $G\subset I$ such that $|G|\geq \cfive\,|I|$. Then there exists an interval $J\subset I$ which satisfies the following properties:
\begin{itemize}
\item[(a)] $\ell(J) \geq c(\cfive,\theta)\,\ell(I)$,
\item[(b)] Any dyadic descendant $\wt J$ of $J$ such that $\ell(\wt J) \geq \theta\,\ell(J)$ satisfies 
$$|G\cap \wt J|\geq \frac{\cfive}2\,|\wt J|.$$
\end{itemize}
\end{lemma}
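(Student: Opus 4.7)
The plan is to combine a dyadic Calder\'on--Zygmund stopping-time construction on $I$ with an iterative descent. First, I would form the family $\mathcal{F}$ of maximal dyadic subintervals $K\subseteq I$ (in the dyadic system rooted at $I$) with $|G\cap K| < (\cfive/2)|K|$. The disjointness of $\mathcal{F}$ together with $|G|\geq \cfive|I|$ yields, via the one-line estimate
$$\cfive|I| \leq |G| \leq (\cfive/2)\Bigl|\bigcup \mathcal{F}\Bigr|+\Bigl|I\setminus \bigcup \mathcal{F}\Bigr|,$$
the bound $|\bigcup \mathcal{F}| \leq \frac{2(1-\cfive)}{2-\cfive}|I| < |I|$. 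Setting $N:=\lceil\log_2(1/\theta)\rceil$, condition (b) for a dyadic $J\subseteq I$ is precisely the statement that no $F\in\mathcal{F}$ is a dyadic descendant of $J$ of depth $\leq N$.

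I would then construct $J$ iteratively, starting from $J^{(0)}:=I$. At step $i$, if $J^{(i)}$ satisfies (b), stop and set $J := J^{(i)}$; otherwise pick $F \in \mathcal{F}$ with $F\subsetneq J^{(i)}$ and $\ell(F) \geq \theta\ell(J^{(i)})$, and consider the dyadic chain $J^{(i)} \supsetneq K_1 \supsetneq \cdots \supsetneq K_{s-1} \supsetneq K_s = F$ connecting $J^{(i)}$ to $F$. The maximality of $F$ in $\mathcal{F}$ forces each $K_j$ with $j<s$ to be good, so in particular $K_{s-1}$ is good, and the sibling $K_s'$ of $F$ inside $K_{s-1}$ satisfies
$$\frac{|G\cap K_s'|}{|K_s'|} = 2\,\frac{|G\cap K_{s-1}|}{|K_{s-1}|} - \frac{|G\cap F|}{|F|} > \cfive - \frac{\cfive}{2} = \frac{\cfive}{2},$$
so $K_s'$ is also good. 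I would set $J^{(i+1)} := K_s'$; note that $\ell(J^{(i+1)}) = \ell(F) \geq \theta\ell(J^{(i)})$.

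The main obstacle will be bounding the number of iterations so as to guarantee $\ell(J) \geq c(\cfive,\theta)\,\ell(I)$. My plan is two-pronged. On the one hand, if at any step the $G$-density of $J^{(i)}$ reaches $1-(1-\cfive/2)\theta$, then for any dyadic descendant $\widetilde J$ of $J^{(i)}$ with $\ell(\widetilde J)\geq \theta\ell(J^{(i)})$ the simple estimate
$$|G^c\cap \widetilde J| \leq |G^c\cap J^{(i)}| \leq (1-\cfive/2)\theta\,\ell(J^{(i)}) \leq (1-\cfive/2)\ell(\widetilde J)$$
immediately gives (b). On the other hand, one needs a careful accounting of how sibling-descents exhaust the ``bad budget'' $|\bigcup \mathcal{F}\cap J^{(i)}|$, contending with the subtle point that $F$'s which were ``small'' relative to $J^{(i)}$ may become ``large'' relative to the shrunken $J^{(i+1)}$; an inductive step on this observation, combined with the forced density lower bound for $J^{(i)}$, gives termination after $O(\log(1/(\cfive\theta)))$ iterations. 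Either way, each step costs at worst a factor of $\theta$ in length, so $\ell(J) \geq (\cfive\theta)^{O(1)}\ell(I)$, yielding (a) with $c(\cfive,\theta) = (\cfive\theta)^{O(1)}$.
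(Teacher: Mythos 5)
Your setup is sound: the maximal low-density family $\mathcal{F}$ is exactly the family $\LD_{\max}$ the paper uses, and your bound $\bigl|\bigcup\mathcal{F}\bigr|\leq\frac{2(1-\cfive)}{2-\cfive}|I|$ matches \rf{eqLDL}. The sibling observation is also correct: the brother of a maximal bad interval inside a good parent has density $>\cfive/2$. However, the step you flag yourself --- bounding the number of iterations of the depth-first descent --- is precisely where the whole difficulty of the lemma lives, and neither of the two mechanisms you propose is actually carried out, nor is it clear either can be made to work.

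Concretely: the density of $J^{(i+1)}=K_s'$ is only guaranteed to exceed $\cfive/2$; there is no monotone drift towards $1$, so the saturation threshold $1-(1-\cfive/2)\theta$ need never be reached in any bounded number of steps. As for ``exhausting the bad budget,'' the intervals $F^{(0)},F^{(1)},\ldots$ you peel off are indeed pairwise disjoint with $\sum_i \ell(F^{(i)})\leq\frac{2-2\cfive}{2-\cfive}|I|$, but since $\ell(F^{(i)})\geq\theta^{i+1}\ell(I)$ the total is a geometric series whose sum is $\leq\frac{\theta}{1-\theta}\ell(I)$; when $\theta$ is small this is far below the budget, so disjointness alone places no constraint whatsoever on the number of steps. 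One can also check that the ratio $|\bigcup\mathcal{F}\cap J^{(i)}|/\ell(J^{(i)})$ satisfies no useful recursion across a sibling step --- passing to a dyadic child can double it, and the sibling step only subtracts one unit --- so there is no obvious decreasing potential. In short, the termination bound $O(\log(1/(\cfive\theta)))$ is asserted, not proved, and I see no straightforward way to prove it within your single-chain framework.

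The paper sidesteps this entirely by running a breadth-first rather than depth-first recursion. For a bad interval $J$ it does not pick one sibling to descend into; instead it collects $\Next(J)$, \emph{all} the generation-$N_\theta$ descendants of $J$ that avoid $\LD$, and iterates on the whole family $\ttt_k$. The key quantitative input is that each bad $J$ must contain at least one $\LD$-descendant of depth $\leq N_\theta$, which forces $\sum_{L\in\Next(J)}\ell(L)\leq(1-2^{-N_\theta})\ell(J)$; this geometric loss compounds across generations, so after $k\approx\frac{\log(1/\cfive)}{-\log(1-2^{-N_\theta})}$ generations the total length of $\ttt_k$ drops below the slack $\frac{1+\csix}{2}\ell(I) - \csix\ell(I)$, and a good ancestor of bounded depth must exist. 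This is a genuinely different mechanism, and it is what makes the iteration count controllable. To salvage your approach you would need an analogous decreasing quantity for the single descent chain, which I do not see; I would recommend switching to the breadth-first bookkeeping.
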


We prove now the existence of almost corkscrew balls assuming Lemma \ref{lemaux99}.
Let us sketch the proof in words first, to aide the intuition.

By pigeon-holing we can assume that the best approximating half spaces are very close to one half space (call it $H^+_{u_k}$) for many radii. Denote this set of radii by $A_k$ and call the union of spheres $S(x,t)$ for $t \in A_k$ by $G_x$ (see Figure \ref{figSec5}). Then there is a location, represented by the ball $B$ in Figure \ref{figSec5}, where $B \cap S(x,t) \subset H^+_{u_k}$, $r \in A_k$. Since for all these radii we can also assume that $\ve_n(x,t) \ll 1$, this means that $S(x,t) \cap B$ will be essentially void of $\Omega^-$ for each $t \in A_k$. Basically, we can imagine $G_x \cap B$ to be contained in $\Omega^+$.

Let us now look at the spheres $S(y,t)$ which intersect $B$. Since $A_k$ is rather large, this means that $S(y, r) \cap G_x$ is large for most spheres $S(y,r)$. Hence we can find $n+1$ (or 2 in Figure \ref{figSec5}) linearly independent cones $X(v_i, \gamma)$ of aperture $\gamma$ and axis $v_i$, such that if $\Delta= X(v_i, \gamma) \cap S(y,r)$ then $\Delta$ contains plenty of $G_x$, and thus of $\Omega^+$. Since we know that $\ve_n(y, r) \ll 1$, this forces $\Delta$ to intersect $H^+_{y, r}$. By then, for any $n+1$-tuple of points $y_0,...,y_n$, its geodesic convex hull\footnote{ That is, the convex hull in $S(y,r)$.} must lie in $H^+_{y,r}$. Since $\ve_n(y,r)\ll 1$, this in turn implies that most of this hull must belong to $\Omega^+$ - and this is true for essentially all spheres centered at $y$ and intersecting $B$. Hence by Fubini most of $B$ is in $\Omega^+$.

\begin{figure}[!tbp]
	\centering

	\begin{subfigure}[b]{0.6\textwidth}
		\includegraphics[width=\textwidth]{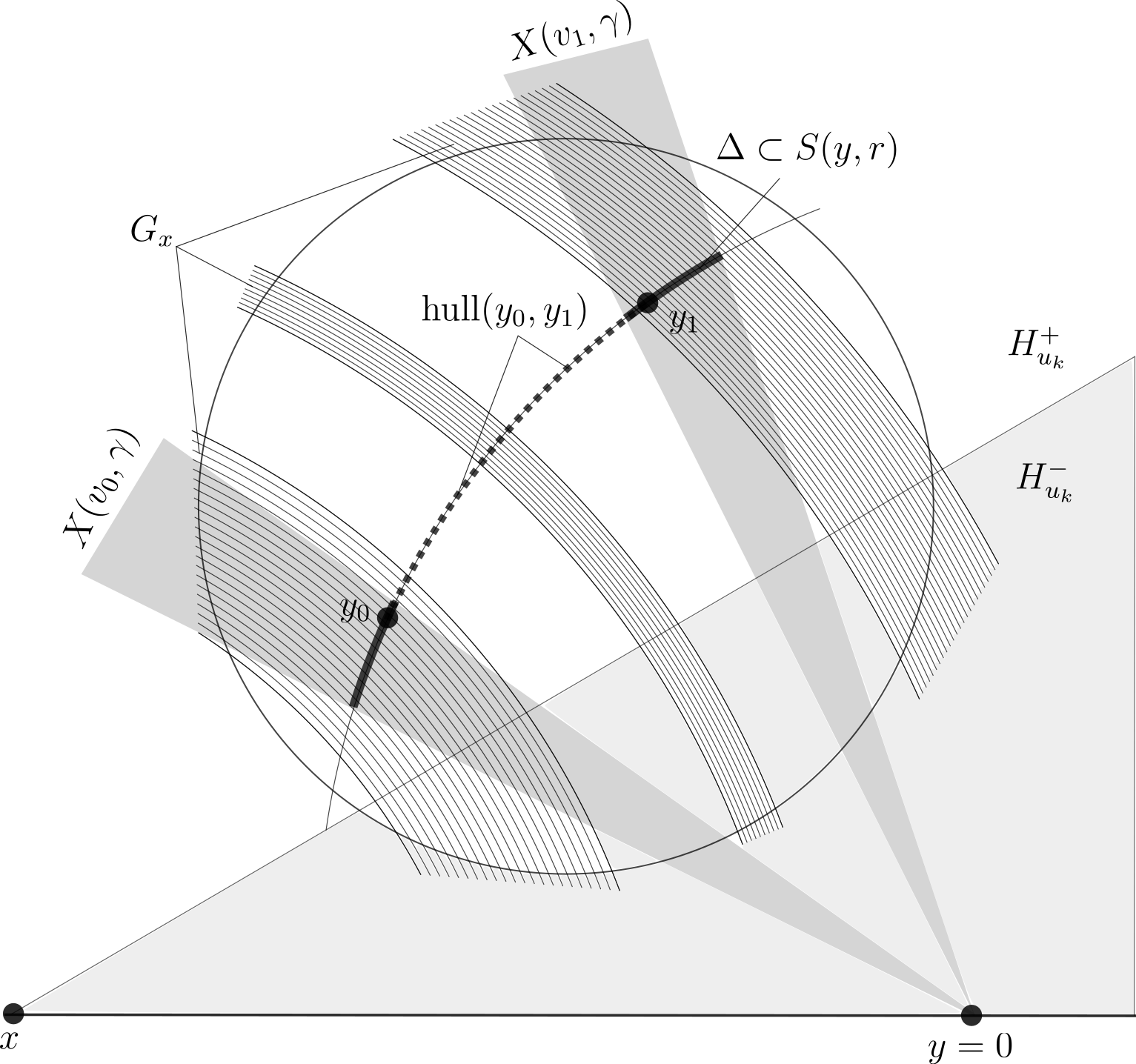}
	\end{subfigure}

 	\vspace{1cm}

	\begin{subfigure}[b]{\textwidth}
		\hspace{1cm}
			\begin{minipage}[b]{0.45\textwidth}
			\includegraphics[width=\textwidth]{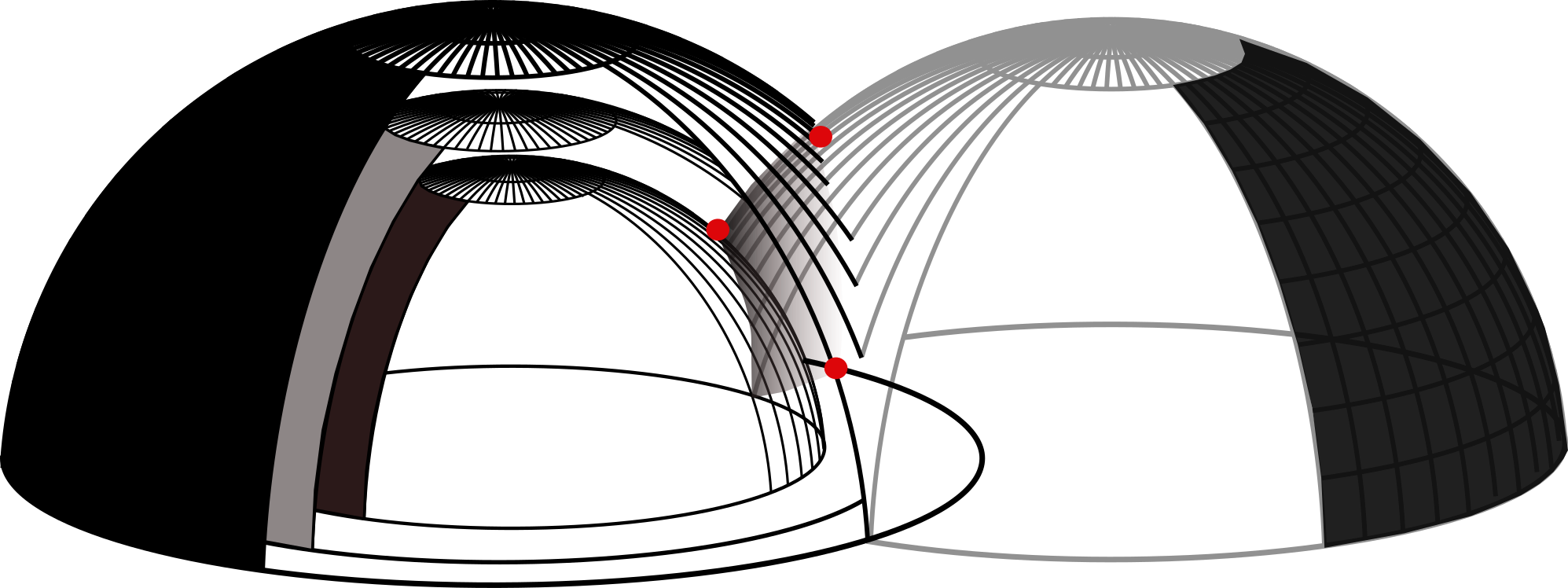}
		\end{minipage}
		\hspace{2cm}
		\begin{minipage}[b]{0.45\textwidth}
			\includegraphics[scale=0.07]{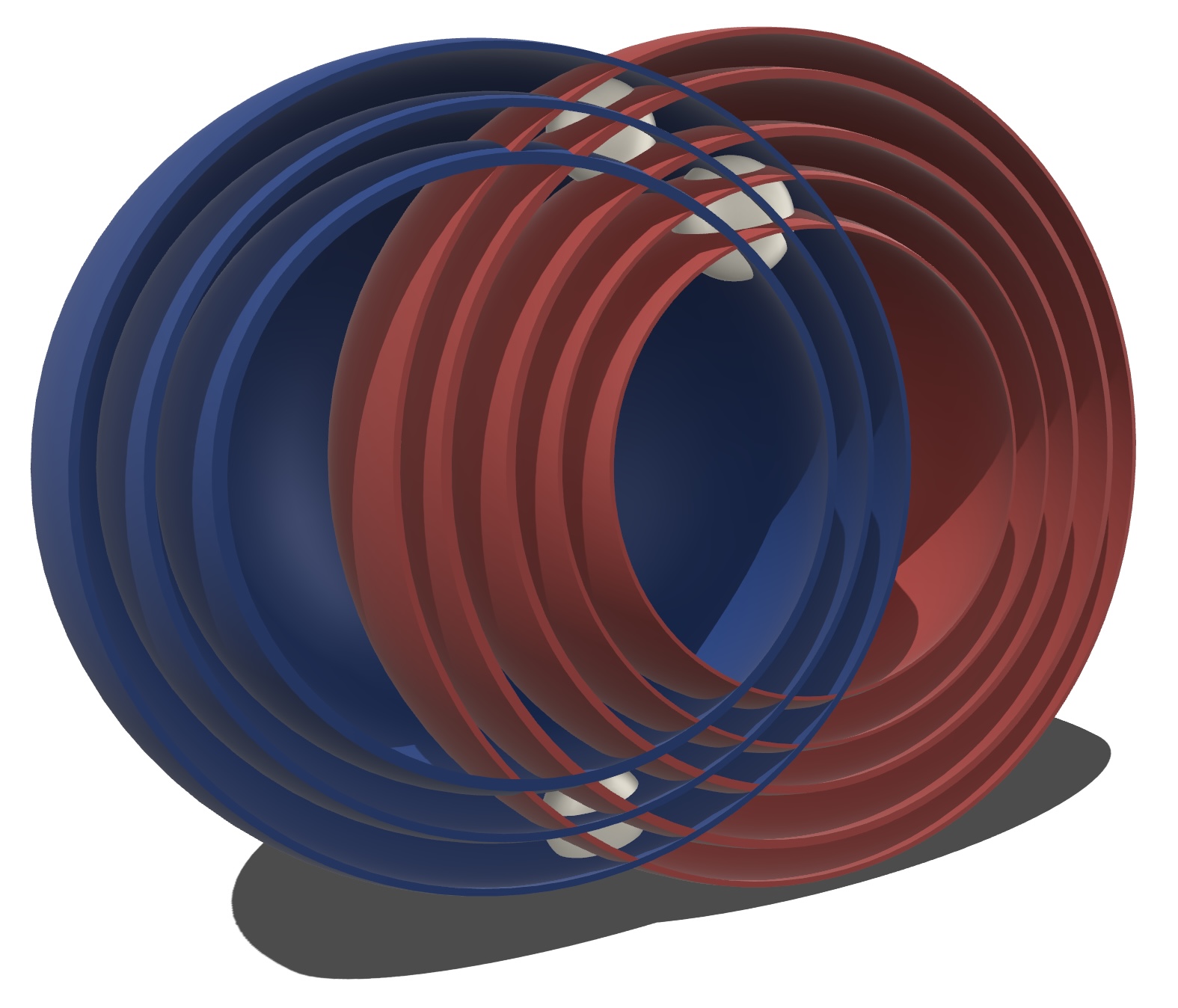}
		\end{minipage}
		
	\end{subfigure}

\caption{The set up of the proof of Lemma \ref{lemcork1}. On top, the labelled 2d case. At the bottom left, the 3d case: the spheres on the left represent $G_x$ and the three red dots the points $y_1,y_2,y_3$. The shaded area will contain the hul$\{y_1,y_2,y_3\}$. At the bottom right, another 3d rendering.} 
\label{figSec5}
\end{figure}

\begin{proof}[Proof of Lemma \ref{lemcork1}]
We will just show the existence of the almost corkscrew ball $B^+$ since the arguments for $B^-$ are analogous.
For a fixed $x\in\R^{n+1}$, we identify each half-space $H\subset \R^{n+1}$ such that $x\in\partial H$ with the inner unit vector to $\partial H$, which we denote by $u_H$. Then, for some $\tau\in (0,1/10)$ to be fixed below, we consider a maximal
$\tau$-separated family of unit vectors $u_i$, $1\leq i \leq N_\tau$, so that $\bS^n\subset \bigcup_i B(u_i,\tau)$.

We let
$$A_i = \big\{t\in [2R,3R]:u_{H_{x,t}}\in B(u_i,\tau)\big\},$$
where $u_{H_{z,t}}$ is the unit vector identifying the subspace $H_{z,t}$ that minimizes $\ve_n(z,t)$ (this might not be unique, in which case we simply choose one).
So we have $[2R,3R]=\bigcup_{i=1}^{N_\tau} A_i$. We choose $k$ such that $A_k$ has maximal Lebesgue measure among all the $A_i$'s, so that 
$$|A_k|\geq \frac1{N_\tau}\,R.$$

For $I=[2R,3R]$ and
some constant $\theta\in (0,1/2)$ to be fixed below, consider a subinterval $J\subset I$ satisfying the properties given by Lemma
\ref{lemaux99}, with $A_k$ in place of $G$ in the lemma. Denote by $r_J$ the center of $J$.
Notice that 
\begin{equation}\label{eqcork1}
\dist_H(H_{u_k} \cap B(x,8R),H_{x,r}\cap B(x,8R)) \lesssim \tau\,R\quad \mbox{ for all $r\in A_k$.}
\end{equation}
Then we infer that, for some absolute constant $\tau$ small enough, there is a geodesic ball $\Delta_0\subset \bS^n$ of radius $1/10$
such that
\begin{equation}\label{eqtrans00}
\Delta_{x,r}:= \{z\in S(x,r):r^{-1}(z-x)\in\Delta_0\}\subset 
H_{x,r}\quad \mbox{ for every $r\in A_k$,}
\end{equation} 
and moreover 
\begin{equation}\label{eqtrans1}
\dist(\Delta_{x,r},L_{x,y})\gtrsim R \quad \mbox{ for all $r\in [R/2,4R]$},
\end{equation}
where $L_{x,y}$ is the line through $x,y$.

Let $\xi_J \in S(x,r_J)$ be the geodesic center of $\Delta_{x,r_J}$
(recall that $r_J$ is the center of the interval $J$). 
Without loss of generality we assume that $y=0$.
Observe that 
$\xi_J\in S(0,|\xi_J|) \cap S(x,r_J)$ and $|\xi_J| = |\xi_J-y|\approx R$. Further, the condition \rf{eqtrans1} ensures that $\dist(\xi_J,L_{x,y})\gtrsim R$ and so the intersection of both spheres
is ``transversal" near $\xi_J$\footnote{Roughly speaking, this is because the points $x,y,\xi_J$ are far from being colinear.}.
From this fact and the condition (b) in Lemma \ref{lemaux99} with $\theta$ small enough, 
we infer that for any $\gamma>0$ there exists some small constant $\ctwo(\gamma)\in (0,1/10)$ such that, for every $r>0$ and every geodesic ball $\Delta\subset S(0,r)=S(y,r)$ such that
\begin{enumerate}[label=\roman*.]
\item $|r-|\xi_J||\leq \ctwo R$,
\item $\dist(\xi_J,\Delta) \leq \ctwo R$,
\item $\rad_{S(0,r)}(\Delta) \geq \gamma \ctwo\,R,$
\end{enumerate}
it holds
\begin{equation}\label{eqcond82}
\HH^n(\Delta \cap G_x)\gtrsim_{\tau,\gamma} \HH^n(\Delta),
\quad \mbox{ 
where \quad$G_x = \bigcup_{t\in A_k} S(x,t)$.}
\end{equation}

Now we choose unit vectors $v_0,\ldots,v_{n}\in S(0,1)\cap B(|\xi_J|^{-1}\xi_J,\frac12 \ctwo)$
 such that
$\dist(v_i,L_{v_0,\ldots,v_{i-1}})\gtrsim_{\ctwo} 1$ for $1\leq i\leq n$, where $L_{v_0,\ldots,v_{i-1}}$ stands for the $(i-1)$-plane through 
$v_0,\ldots,v_{i-1}$, and we consider the (one-sided) cones
$$X(v_i,\gamma) = \{z\in\R^{n+1}\setminus\{0\}: ||z|^{-1}z - v_i|\leq \gamma/2\},$$
for $i=0,\ldots,n$.
Choosing $\gamma$ small enough, it is clear that, for all $r\approx R$ and all $y_i\in X(v_i,\gamma)\cap S(0,r)$, the geodesic convex hull of $y_0,\ldots, y_{n}$ in $S(0,r)$, which we denote by ${\rm hull}_{S(0,r)}(y_0,\ldots, y_{n})$,
satisfies
\begin{equation}\label{eqfk35}
\HH^n({\rm hull}_{S(0,r)}(y_0,\ldots, y_{n})) \gtrsim_{\ctwo,\gamma} r^n\approx_{\ctwo,\gamma} R^n.
\end{equation}

To shorten notation, we write $r_1=|\xi_J|-\ctwo R$ and $r_2=|\xi_J|+\ctwo R$. Then, for $j=0,\ldots,n$ we have
\begin{align*}
\HH^{n+1}(A(0,&r_1,r_2) \cap X(v_j,\gamma) \cap G_x\cap\Omega^+)\\&  =  
\HH^{n+1}(A(0,r_1,r_2) \cap X(v_j,\gamma) \cap G_x) -
\HH^{n+1}(A(0,r_1,r_2) \cap X(v_j,\gamma) \cap G_x\setminus\Omega^+).
\end{align*}
Regarding the last term, notice that $A(0,r_1,r_2) \cap X(v_j,\gamma)$ is contained in a small neighborhood of $\xi_J$ 
and then, assuming $\ctwo$ small enough with respect to $\tau$, 
$$A(0,r_1,r_2) \cap X(v_j,\gamma) \cap G_x \cap S(x,t) \subset H_{x,t}\quad \mbox{ for every $t\in A_k$.}$$
Therefore, by the definition of $G_x$,
\begin{align}\label{eqal734}
\HH^{n+1}(A(0,r_1,r_2) \cap X(v_j,\gamma) \cap G_x\setminus\Omega^+) & \leq
\int_{A_k} \HH^n(S(x,t) \cap H_{x,t}\setminus \Omega^+)\,dt\\
&\lesssim R^n\,\int_{A_k}\ve_n(x,t)\,dt \nonumber\\
& \leq R^n
\left(\int_0^{4R} \ve_n(x,t)^2\,\frac{dt}t\right)^{1/2}\left(\int_0^{4R} t\,dt\right)^{1/2} \nonumber\\& \lesssim \delta^{1/2}R^{n+1}.\nonumber
\end{align}

\noindent
We denote  
$${\rm Bad}_j = \big\{r\in [r_1,r_2]: \HH^{n}(S(0,r) \cap X(v_j,\gamma) \cap G_x\setminus\Omega^+) \geq \delta^{1/4}\,\HH^{n}(S(0,r) \cap X(v_j,\gamma) \cap G_x)\big\}.$$
In a sense, ${\rm Bad}_j$ is the set of radii for which
 there is a lot of mass from $S(0,r) \cap X(v_j,\gamma) \cap G_x$ 
 outside $\Omega^+$.
By \rf{eqcond82} and the condition iii above, we have
$$\HH^n(S(0,r)\cap X(v_j,\gamma) \cap G_x) \gtrsim_{\gamma} \HH^n(S(0,r)\cap X(v_j,\gamma)) \approx_{\gamma} R^n.
$$
Then, by Chebyshev and \rf{eqal734}, we obtain
\begin{align*}
\HH^1({\rm Bad}_j) & \leq \delta^{-1/4}\int_{r_1}^{r_2}\frac{\HH^{n}(S(0,r) \cap X(v_j,\gamma) \cap G_x\setminus\Omega^+)}{\HH^{n}(S(0,r) \cap X(v_j,\gamma) \cap G_x)}\,dr \\
&\lesssim_\gamma \frac{\delta^{-1/4}}{R^n}\int_{r_1}^{r_2}\HH^{n}(S(0,r) \cap X(v_j,\gamma) \cap G_x\setminus\Omega^+)\,dr\\
& \approx_\gamma \frac{\delta^{-1/4}}{R^n}
\HH^{n+1}(A(0,r_1,r_2) \cap X(v_j,\gamma) \cap G_x\setminus\Omega^+) \lesssim_\gamma \delta^{1/4}\,R.
\end{align*}

Let 
$${\rm Good}_0 = \big\{r\in [r_1,r_2]:\ve_n(y,r) \leq\delta^{1/4}\big\}.$$
By Chebyshev and Cauchy-Schwarz, we have
\begin{align}\label{eqal843**}
\HH^1\big([r_1,r_2]\setminus {\rm Good_0}\}\big) & \leq \delta^{-1/4} \int_0^{4r} \ve_n(y,t)\,dt \\
& \leq 
\delta^{-1/4} \left(\int_0^{4R} \ve_n(y,t)^2\,\frac{dr}r\right)^{1/2}\left(\int_0^{4R} r\,dr\right)^{1/2} \lesssim\delta^{1/4}R.\nonumber
\end{align}
Finally, let
 ${\rm Good}={\rm Good}_0\setminus \bigcup_{j=0}^n {\rm Bad}_j$. From the last estimates we deduce 
\begin{equation}\label{eqfk35.5}
\HH^1( [r_1,r_2]\setminus {\rm Good}) \leq \HH^1( [r_1,r_2]\setminus {\rm Good}_0) + \sum_{j=0}^{n}
\HH^1( {\rm Bad}_j)\lesssim \delta^{1/4}R.
\end{equation}
Observe also that for $r\in{\rm Good}$, since $r\not\in {\rm Bad}_j$,
it hol ds
\begin{align*}
\HH^{n}(S(0,r) \cap X(v_j,\gamma)\cap\Omega^+) &\geq
\HH^{n}(S(0,r) \cap X(v_j,\gamma) \cap G_x\cap\Omega^+) \\ 
&\geq (1 - \delta^{1/4})\,\HH^{n}(S(0,r) \cap X(v_j,\gamma) \cap G_x)\gtrsim_\gamma r^n.
\end{align*}
Since $\ve_n(0,r)=\ve_n(y,r)\ll1$ for $r\in{\rm Good}$, we deduce that the half-sphere $S_{y,r}^+= S(y,r) \cap H_{y,r}$ satisfies
$$S_{y,r}^+ \cap X(v_j,\gamma)\neq \varnothing.$$
 Since $S_{y,r}^+$ is geodesically convex in $S(y,r) \equiv S(0,r)$ we infer that, for all $y_j\in S_{y,r}^+ \cap X(v_j,\gamma)$, $0\leq j \le n$,
$${\rm hull}_{S(0,r)}(y_0,\ldots, y_{n})\subset S_{y,r}^+.$$
Consequently, if we let 
$$V_r = \bigcap_{\substack{0\leq j \leq n\\y_j\in X(v_j,\gamma)\cap S(0,r)}} \!\!{\rm hull}_{S(0,r)}(y_0,\ldots, y_{n}),$$
it is clear that $V_r\subset  S_{y,r}^+.$
Therefore,
\begin{equation}\label{eqfk36}
\HH^n(V_r\setminus \Omega^+) \leq \HH^n(S_{y,r}^+\setminus \Omega^+) \leq r^n\,\ve_n(y,r) \leq \delta^{1/4}r^n
\quad\mbox{ for all $r\in {\rm Good}$.}
\end{equation}
Next we let
$$V= \bigcup_{r\in [r_1,r_2]} V_r.$$
It is immediate to check that
$$V= \bar A(0,r_1,r_2)   \cap \bigcap_{\substack{0\leq j \leq n\\ y_j\in X(v_j,\gamma)}} \!\!{\rm hull}(y_0,\ldots, y_{n}),$$
and from \rf{eqfk35} it follows easily that $V$ contains some ball $B^+$ with $\rad(B^+)\approx_\gamma R$.
Further, by \rf{eqfk36} and \rf{eqfk35.5} we deduce that
\begin{align*}
\HH^{n+1}(B^+\setminus \Omega^+) & \leq \HH^{n+1}(V\setminus \Omega^+)
= \int_{r_1}^{r_2} \HH^n(V_r\setminus \Omega^+)\,dr\\
&\lesssim \int_{{\rm Good}} \HH^n(V_r\setminus \Omega^+)\,dr + R^n\,\HH^1([r_1,r_2]\setminus {\rm Good})
\lesssim \delta^{1/4}\,R^{n+1}.
\end{align*}
So $B^+$ is a $(C\delta^{1/4})$-almost corkscrew.
Notice that its radius does not depend on $\delta$, but on $\tau$ and $\gamma$, which are fixed geometric parameters.
\end{proof}
\vv
\begin{proof}[Proof of Lemma \ref{lemaux99}]
We need to introduce some notation. Given an arbitrary half open-closed interval
$L\subset \R$ and $k\geq0$, we denote by $\DD_k(L)$ the family half open-closed intervals contained in $L$ with side length $2^{-k}\ell(L)$ generated by splitting $L$ dyadically $k$ times. We also set $\DD(L) = \bigcup_{k\geq0}\DD_k(L)$.

Let $I$ and $G$ be as in the statement of the lemma. Without loss of generality, we assume that $I$ is half open-closed. Given $L\in\DD(I)$,
we say that $L$ has low density, and we write $L\in\LD$, if $|G\cap L|\leq \frac{\cfive}2 \,\ell(L)$. We also  denote by $\LD_{\max}$ the family of maximal intervals from $\LD\cap \DD(I)$. Since the intervals $L\in \LD_{\max}$ are disjoint
and they satisfy
$$|L|\leq \frac1{1-\frac{\cfive}2}\, |L\setminus G| =\frac2{2-\cfive}\,|L\setminus G|,$$
taking also into account that
$|I\setminus G|\leq (1-\cfive)\,\ell(I)$, 
we deduce
\begin{equation}\label{eqLDL}
\Big|\bigcup_{L\in\LD} L\Big| = \sum_{L \in\LD_{\max}} \ell(L) \leq \frac2{2-\cfive} \sum_{L \in\LD_{\max}} |L\setminus G|\leq \frac2{2-\cfive}\,|I\setminus G| \leq \frac{2-2\cfive}{2-\cfive}\,\ell(I) =: \csix\,\ell(I),
\end{equation}
with $\csix\in(0,1)$.

We say that an interval $J\in\DD(I)$ is good, and we write $J\in\cG$, if every subinterval $L\in\DD_k(J)$ with $k\in[0,-\log_2 \theta]$ satisfies $L\not\in\LD$. Otherwise we say that
$J$ is bad and we write $J\in\cB$.
In this language, the lemma will be proved if we find a good interval $J\in\DD(I)$ such that $\ell(J)\gtrsim c(\cfive,\theta)\ell(I)$.

We write $N_\theta = \lceil-\log_2 \theta\rceil$.
Given $J\in\cB$, we denote by $\Next(J)$ the subfamily of the intervals from $\DD_{N_\theta}(J)$ which are not contained in any interval
from $\DD(J)\cap\LD$. Notice that the condition $J\in\cB$ ensures that there exists some
$L\in\DD(J)\cap\LD$ such that $\ell(L)\geq 2^{-N_\theta}\ell(J)$, so that $\DD_{N_\theta}(J)\setminus\Next(J)\neq\varnothing$. In particular, this implies that
\begin{equation}\label{eqnext33}
\sum_{L\in\Next(J)} \ell(L)\leq (1-2^{-N_\theta})\,\ell(J).
\end{equation}
In the case $J\in\cG$, we set $\Next(J)=\varnothing$. Of course, the preceding estimate is trivially true in this case too.

Next we define inductively a family of intervals $\ttt\subset\DD(I)$. First we set $\ttt_0=I$. For $k\geq 1$, we set
$$\ttt_k = \bigcup_{J\in\ttt_{k-1}}\Next(J),$$
and then
$$\ttt=\bigcup_{k\geq0}\ttt_k.$$
Notice that $\ttt_k\subset\DD_{kN_\theta}(I)$ and by construction, if $L\in \DD_{kN_\theta}(I)\setminus \ttt_k$, then 
there exists some $J\in \DD(I)$ with $L\subset J \subset I$ such that either $J\in\LD$ or $J\in \cG\cap\ttt$. 
Observe also that, by \rf{eqnext33},
$$\sum_{L\in\ttt_k} \ell(L) \leq \big(1-2^{-N_\theta}\big)\,\sum_{L'\in\ttt_{k-1}}\ell(L')\leq \cdots \leq \big(1-2^{-N_\theta}\big)^k\,\ell(I).$$
So for $k$ large enough depending on $\csix$ and $N_\theta$, so on $\cfive$ and $\theta$, 
$$\sum_{L\in\ttt_k} \ell(L) \leq \frac{1-\csix}2\,\ell(I)$$
and thus
$$\sum_{L\in\DD_{k\,N_\theta}(I)\setminus\ttt_k} \ell(L) \geq \frac{1+\csix}2\,\ell(I)> \csix \,\ell(I).$$
From \rf{eqLDL} we infer that there exists some $L\in\DD_{k\,N_\theta}(I)\setminus\ttt_k$ which is not contained in any $J\in\LD$. Hence, by the discussion above, for such $L$ there exists $J\in \DD(I)$ with $L\subset J \subset I$ such that $J\in \cG\cap\ttt$. This is the interval
we were looking for, which satisfies the properties stated in the lemma.
\end{proof}

\vv

We also need the following lemma.

\begin{lemma}\label{lembola*}
Let $\Omega^+,\Omega^- \subset\R^{n+1}$ be disjoint Borel sets. Given $x\in \R^{n+1}$, $r>0$, and $a>0$,
let $\Delta\subset S(x,r)$ be a geodesic ball such that $a\,r\leq \rad_{S(x,r)}(\Delta)\leq \frac\pi2\,r$, so that $\Delta$ is contained in a half-sphere in  $S(x,r)$. Denote by  $H^+_{x,r}$ the half-space minimizing $\ve_n(x,r)$.
Then there exists some $\delta=\delta(a)>0$ such that:
\begin{itemize}
\item[(a)] If
$$\HH^n(\Delta\cap \Omega^+)\geq a\,\HH^n(\Delta) \quad \text{ and }\quad \ve_n(x,r)\leq\delta,$$
then $\Delta\cap H^+_{x,r}\neq\varnothing$.
\item[(b)] If
$$\HH^n(\Delta\setminus \Omega^+)\leq \delta\,\HH^n(\Delta) \quad \text{ and }\quad \ve_n(x,r)\leq\delta,$$
then $\frac34 \Delta\subset H^+_{x,r}$, where $\frac34 \Delta$ is the spherical ball in $S(x,r)$ concentric with $\Delta$	such that $\rad_{S(x,r)}(\frac34\Delta)= \frac34\rad_{S(x,r)}(\Delta)$.
\end{itemize}
\end{lemma}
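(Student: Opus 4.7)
I would prove both parts by contradiction, leaning on a single elementary observation: since $\Omega^+\cap\Omega^-=\varnothing$, for any half-space $H$ with $x\in\partial H$ one has $S_H^-(x,r)\cap\Omega^+\subset S_H^-(x,r)\setminus\Omega^-$, so any mass of $\Omega^+$ sitting in the ``wrong'' hemisphere $S_H^-(x,r)$ (with $H=H^+_{x,r}$) is controlled by the definition of $\ve_n(x,r;H)$, giving an upper bound of order $\ve_n(x,r)\,r^n\leq\delta\,r^n$. The plan is to pair this upper bound against the lower bounds on $\HH^n(\Delta\cap\Omega^+)$ coming from the hypotheses to reach a contradiction once $\delta=\delta(a)$ is small enough.

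\textbf{Part (a).} Suppose towards contradiction that $\Delta\cap H^+_{x,r}=\varnothing$; then, up to an $\HH^n$-null set, $\Delta\subset S_H^-(x,r)$, and the observation above yields
$$a\,\HH^n(\Delta)\leq\HH^n(\Delta\cap\Omega^+)\leq\HH^n(S_H^-(x,r)\setminus\Omega^-)\leq\ve_n(x,r)\,r^n\leq\delta\,r^n.$$
Since the spherical radius of $\Delta$ lies in $[ar,\tfrac{\pi}{2}r]$, a standard spherical-cap volume bound yields $\HH^n(\Delta)\gtrsim_n (ar)^n$, so the displayed inequality forces $\delta\gtrsim_n a^{n+1}$; we defeat this by choosing $\delta(a)$ small enough.

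\textbf{Part (b).} The heart of the matter will be the geometric claim: \emph{$(\star)$ if some point of $\tfrac34\Delta$ lies outside $H^+_{x,r}$, then $\HH^n(\Delta\cap S_H^-(x,r))\geq c_n\,\HH^n(\Delta)$ for some absolute $c_n>0$}. Granting $(\star)$, the hypothesis $\HH^n(\Delta\setminus\Omega^+)\leq\delta\,\HH^n(\Delta)$ combined with the opening observation gives
$$(c_n-\delta)\HH^n(\Delta)\leq\HH^n(\Delta\cap S_H^-(x,r)\cap\Omega^+)\leq\HH^n(S_H^-(x,r)\setminus\Omega^-)\leq\delta\,r^n,$$
and together with $\HH^n(\Delta)\gtrsim_n(ar)^n$ this forces $\delta\gtrsim_n a^n$, again a contradiction for $\delta=\delta(a)$ small.

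\textbf{Proof of $(\star)$.} Let $p\in\tfrac34\Delta$ lie outside $H^+_{x,r}$; since $\partial H\cap S(x,r)$ is a great $(n-1)$-sphere of $\HH^n$-measure zero and both $\tfrac34\Delta$ and $S_H^-(x,r)$ are open, we may assume $p\in S_H^-(x,r)$ after perturbing slightly. Let $z$ and $\rho$ denote the spherical center and radius of $\Delta$; since $d_{S(x,r)}(p,z)\leq\tfrac34\rho$, the spherical ball $B_{S(x,r)}(p,\tfrac14\rho)$ is contained in $\Delta$. The Euclidean reflection $\sigma$ across the hyperplane $\partial H^+_{x,r}\subset\R^{n+1}$ restricts to an $\HH^n$-preserving isometry of $S(x,r)$ pointwise fixing $\partial H\cap S(x,r)$ and swapping $S_H^\pm$. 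For any $q\in B_{S(x,r)}(p,\tfrac14\rho)\cap S_H^+(x,r)$, the minimizing spherical geodesic from $p\in S_H^-$ to $q\in S_H^+$ crosses $\partial H\cap S(x,r)$ at some $m$; since $\sigma(m)=m$, the triangle inequality gives
$$d_{S(x,r)}(p,\sigma(q))\leq d_{S(x,r)}(p,m)+d_{S(x,r)}(m,q)=d_{S(x,r)}(p,q)\leq\tfrac14\rho,$$
so $\sigma(q)\in B_{S(x,r)}(p,\tfrac14\rho)\cap S_H^-(x,r)$. Thus $\sigma$ injects the ``bad'' half of the ball into the ``good'' one measure-preservingly, yielding $\HH^n(\Delta\cap S_H^-)\geq\tfrac12\HH^n(B_{S(x,r)}(p,\tfrac14\rho))\gtrsim_n\rho^n\approx_n\HH^n(\Delta)$. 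I expect the only genuine subtlety to be this reflection argument; everything else will be bookkeeping around the single disjointness identity $\Omega^+\cap S_H^-\subset S_H^-\setminus\Omega^-$.
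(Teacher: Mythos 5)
Your proof is correct and follows essentially the same strategy as the paper's: for (a), observe $\Delta\subset S_H^-$ (up to a null set) and use $\Omega^+\cap S_H^-\subset S_H^-\setminus\Omega^-$ against a lower bound $\HH^n(\Delta)\gtrsim_n(ar)^n$; for (b), reduce to the claim that a definite fraction of $\Delta$ lies in $S_H^-$ and again compare with the $\ve_n$-bound on $\HH^n(S_H^-\setminus\Omega^-)$.

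Where you go beyond the paper is precisely your claim $(\star)$. The paper states without proof that there is an absolute constant $c>0$ with $\HH^n(H^-_{x,r}\cap\Delta)\geq c\,\HH^n(\Delta)$ whenever $\tfrac34\Delta\not\subset H^+_{x,r}$, and then proceeds exactly as you do (estimating $\HH^n$ of the part of $\Delta\cap S_H^-$ that meets $\Omega^+$, or in the paper's wording $F\cup\Omega^+$, and comparing with $\ve_n$). Your reflection argument is a clean and correct way to supply the missing justification: since $x\in\partial H^+_{x,r}$, the Euclidean reflection across $\partial H^+_{x,r}$ is an isometry of $S(x,r)$ fixing the equator pointwise, any minimizing geodesic from $p\in S_H^-$ to $q\in S_H^+$ passes through a fixed point $m$, whence $d(p,\sigma(q))\leq d(p,m)+d(m,q)=d(p,q)$, so $\sigma$ maps $B_{S(x,r)}(p,\tfrac14\rho)\cap S_H^+$ into $B_{S(x,r)}(p,\tfrac14\rho)\cap S_H^-$ measure-preservingly, giving at least half the ball inside $S_H^-$; and the spherical-cap volume comparison $\HH^n(B_{S(x,r)}(p,\tfrac14\rho))\approx_n\rho^n\approx_n\HH^n(\Delta)$ (valid since all radii are $\leq\tfrac\pi2 r$) converts this into an absolute lower bound for $\HH^n(\Delta\cap S_H^-)/\HH^n(\Delta)$. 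The perturbation of $p$ off $\partial H$ is harmless since the bound degrades continuously. The bookkeeping in both parts (the exponents $a^{n+1}$ and $a^n$ against which $\delta(a)$ is chosen) is consistent with what the paper implicitly requires. In short: same route, with one unjustified step in the paper filled in correctly.
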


\begin{proof}
Set $\sigma = \HH^n|_{S(x,r)}$ and $S^\pm= S(x,r)\cap H_{x,r}^\pm(x,r)$.
To see (a), notice that if $\Delta\cap H^+_{x,r}=\varnothing$, then $\Delta\subset S^+$, and thus
$$\sigma(S^-\setminus \Omega^-) \geq \sigma(S^-\cap \Omega^+)\geq \sigma(\Delta\cap \Omega^+)\geq a\,\sigma(\Delta)
\gtrsim_a r^n.$$
For an appropriate choice of $\delta=\delta(a)$, this contradicts the fact that $\ve_n(x,r) \leq \delta$ and proves (a).

To show (b), suppose that $\frac{3}{4}\Delta \subsetneq H^+_{x,r}$. Put $D:=H^-_{x,r} \cap \Delta$. 
Then there is a fixed numerical constant $c>0$ such that $\sigma(D)\geq c\, \sigma(\Delta)$. Put also $F := \R^{n+1} \setminus(\Omega^+ \cup \Omega^-)$. If $\delta$ is sufficiently small with respect to $c$, we have
\begin{align*}
    \sigma( D \cap (F \cup \Omega^+))& = \sigma(\Delta \cap (F \cup \Omega^+)) - \sigma((\Delta\setminus D) \cap (F\cup \Omega^+))\\
    & \geq (1-\delta) \sigma(\Delta) - \sigma(\Delta\setminus D) \\
    & \geq \left((1-\delta)-(1-c)\right) \sigma(\Delta)\gtrsim \sigma(D).
\end{align*}
 We then see that 
\begin{align*}
    \sigma(S^- \setminus \Omega^-)  = \sigma(S^- \cap (F\cup \Omega^+)) \geq \sigma(D \cap (F \cup \Omega^+)) \gtrsim \sigma(D) \gtrsim_{a} r^n.
\end{align*}
Again, for an appropriate choice of $\delta=\delta(a)$, this contradicts the hypothesis that $\ve_n(x,r) \leq \delta$.
\end{proof}
\vv

Below we will also need the following variant of Lemma \ref{lemcork1}.

\begin{lemma}\label{lemcork2}
Let $\delta,a,R>0$ and let $x,y\in\R^{n+1}$ be such that $aR\leq|x-y|\leq R$ and
$$\int_0^{8R} \big(\ve_n(x,t)^2+\ve_n(y,t)^2\big)\,\frac{dt}t \leq \delta.$$
Let $\eta>0$ and let $B_1^+$ be a ball contained in $B(x,R)$ such that 
$$2B_1^+\cap L_{x,y}\neq\varnothing , \quad\rad(B_1^+)\geq a\,R, \quad \mbox{ and }\quad \HH^{n+1}(B_1^+\cap \Omega^+) \geq \eta\,\HH^{n+1}(B_1^+),$$
where $ L_{x,y}$ is the line passing through $x$ and $y$.
For any $\beta>0$, 
if $\delta=\delta(a,\beta,\eta)>0$ is small enough, then there exists a $\beta$-almost corkscrew ball $B^+\subset 2B_1^+$  for $\Omega^+$ such that $\rad(B^+)\geq c(a,\eta)\,\rad(B_1^+)$, with $c(a,\eta)>0$.
\end{lemma}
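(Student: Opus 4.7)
The strategy is to adapt the proof of Lemma \ref{lemcork1}, with the critical new element that the direction $\Delta_0$ used to locate the corkscrew must now point toward $2B_1^+$ from $x$, so that the final construction lies inside $2B_1^+$.

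First, I would pick $\xi_*\in 2B_1^+$ with $\dist(\xi_*,L_{x,y})\gtrsim aR$, which is possible since $\rad(B_1^+)\ge aR$ and $2B_1^+\cap L_{x,y}\neq\varnothing$. Set $\rho=|x-\xi_*|$, $v^*=(\xi_*-x)/\rho$, and work in a short interval $I$ of radii of length $\asymp aR$ around $\rho$, so that each sphere $S(x,t)$ for $t\in I$ meets $2B_1^+$ in a spherical cap $\Delta_t$ of angular radius $\asymp a$ centered at $v^*$. Applying Fubini in polar coordinates to the density hypothesis, together with Chebyshev applied to the integrated bound $\int_0^{8R}\ve_n(x,t)^2\,dt/t\le\delta$, one extracts a subset $A\subset I$ of substantial measure on which simultaneously $\ve_n(x,t)\le\delta^{1/4}$ and $\HH^n(\Delta_t\cap\Omega^+)\gtrsim \eta\,\HH^n(\Delta_t)$.

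Next, I would run the $\tau$-separation and pigeonhole argument of Lemma \ref{lemcork1} on the direction map $t\mapsto u_{H_{x,t}}$ for $t\in A$, and combine it with Lemma \ref{lemaux99} to extract a popular direction $u_k$ and a subinterval $J\subset A$ with good density properties. The step specific to this lemma is that the positive $\Omega^+$-density in $\Delta_t$ constrains $u_k$ to align with $v^*$: by an argument similar to Lemma \ref{lembola*}(a), if $u\cdot v^*$ were too negative then the cap $\Delta_t$ would lie on the wrong side of $H^+_{x,t}$, producing a contribution $\HH^n(S^-_{H_{x,t}}\setminus\Omega^-)\gtrsim\eta(aR)^n$, which contradicts $\ve_n(x,t)\le\delta^{1/4}$ once $\delta$ is sufficiently small in terms of $a,\eta$. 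By carefully quantifying this interplay, I would pin $u_k$ so that a small geodesic cap $\Delta_0\subset\bS^n$ of angular radius $\asymp a$ centered at $v^*$ is contained in $H^+_{u_k}$, and hence $\xi_J:=x+r_J v^*\in 2B_1^+$ for every $r_J\in J$.

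From this point, the argument reruns the second half of the proof of Lemma \ref{lemcork1}. The transversality at $\xi_J$ between spheres centered at $x$ and at $y$ holds with quantitative constants of order $a$ (since $\dist(\xi_J,L_{x,y})\gtrsim aR$). Choosing the parameters $\gamma$ and $\ctwo(\gamma)$ small enough in terms of $a$ ensures that the final region $V=\bar A(y,r_1,r_2)\cap\bigcap_j \mathrm{hull}_{S(y,r)}(y_0,\dots,y_n)$ is contained in $2B_1^+$; combining the integrated $\ve_n(y,\cdot)$ control with the usual Cauchy--Schwarz and Chebyshev estimates yields $\HH^{n+1}(V\setminus\Omega^+)\lesssim\delta^{1/4}\HH^{n+1}(V)$, and hence a $\beta$-almost corkscrew ball $B^+\subset V\subset 2B_1^+$ of radius $\gtrsim c(a,\eta)\rad(B_1^+)$. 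The main obstacle is the quantitative pinning of $u_k\cdot v^*$ to a positive constant depending only on $a$ and $\eta$: a direct application of Lemma \ref{lembola*}(a) only yields $u_k\cdot v^*\gtrsim -a$, so upgrading this to a positive lower bound is the delicate step that really exploits the interplay between the positive $\Omega^+$-density in $B_1^+$ and the smallness of $\ve_n$ along the relevant range of scales.
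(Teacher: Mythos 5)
Your proposal follows the same overall structure as the paper's proof: localize to the annulus of radii intersecting $B_1^+$, extract via Chebyshev a large set of radii where the slice has positive $\Omega^+$--density and $\ve_n(x,t)$ is small, apply Lemma~\ref{lembola*}(a) and the $\tau$-pigeonhole plus Lemma~\ref{lemaux99}, build a small geodesic cap $\Delta_0$ inside the common half-space and the cone over $B_1^+$, and then rerun the second half of Lemma~\ref{lemcork1}. So the approach matches the paper's.

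One point in your write-up is, however, misformulated, and it is exactly the step you single out as delicate. You claim you would pin $u_k\cdot v^*$ to a \emph{positive} constant depending on $a,\eta$. That is not what the argument produces, and it is also not needed. The caps $\Delta_t=B_1^+\cap S(x,t)$ have angular radius $\rho_t\gtrsim a\eta^{2}$ (after the Chebyshev selection), and the quantitative refinement of Lemma~\ref{lembola*}(a) (that a definite \emph{fraction} of $\Delta_t$, not merely a point, must lie in $S^+(x,t)$, since otherwise $\HH^n(S^-\setminus\Omega^-)\gtrsim_{a,\eta}t^n$ contradicts $\ve_n(x,t)\le\delta^{1/4}$) only constrains the angle between $u_{H_{x,t}}$ and the center of $\Delta_t$ to be at most $\pi/2+\rho_t(1-c(a,\eta))$; after the $\tau$-pigeonhole this yields $u_k\cdot v^*\gtrsim -c'(a,\eta)$ with $c'$ small but still in general \emph{negative}. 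This weaker bound is enough: the cone $X(\tfrac32 B_1^+)$ has angular radius $\gtrsim a$, so the lens $H^+_{u_k}\cap X(\tfrac32 B_1^+)$ is nonempty and has angular width bounded below in terms of $a,\eta$, and one then picks $\tau$ small enough (depending on $a,\eta$) so that a $\tau$-cap $\Delta_0$ fits inside it with room to spare. Trying to upgrade to $u_k\cdot v^*>0$ would fail. Separately, the bare construction of $V$ only gives $B^+\subset CB_1^+$ for an absolute $C$; as the paper remarks, to guarantee $B^+\subset 2B_1^+$ one should first replace $B_1^+$ by a suitably smaller concentric ball $\wt B_1^+$ carrying a comparable fraction of the $\Omega^+$-measure. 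You should incorporate these two corrections, but otherwise your plan is sound.
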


\begin{proof}
The arguments are similar to the ones for Lemma \ref{lemcork1}, and so we will sketch the proof. 

Let $x_{B_1^+}$ be the center of $B_1^+$ and let $R_1=|x-x_{B_1^+}|$.  Since $x\not\in 2B_1^+$, we have $R_1\geq 2aR$.
Let $I_0 = [R_1-\rad(B_1^+),R_1+\rad(B_1^+)]$. For each $t\in I_0$, consider the spherical ball
$\Delta_t = B_1^+ \cap B(x,t).$
Let
$$G_0:=\{t\in I_0:\HH^n(\Delta_t\cap\Omega^+)\geq \eta^2\rad(B_1^+)^n\}.$$
To bound $\HH^1(G_0)$ from below we write
\begin{align*}
\eta\,\HH^{n+1}(B_1^+) &\leq 
\HH^{n+1}(B_1^+\cap\Omega^+) = \int_{I_0} \HH^n(\Delta_t\cap\Omega^+)\,dt \\
& \leq \eta^2\,\rad(B_1^+)^n\,\HH^1(I_0\setminus G_0) + C\,\rad(B_1^+)^n \,\HH^1(G_0)\\
&\lesssim \eta^2\,\HH^{n+1}(B_1^+) + \rad(B_1^+)^n \,\HH^1(G_0).
\end{align*}
For $\eta$ small enough, this implies that
$$\HH^1(G_0)\gtrsim \eta\,\rad(B_1^+).$$
Notice that, for $t\in G_0$, the spherical ball $\Delta_t$ satisfies
$\HH^n(\Delta_t)\geq \HH^n(\Delta_t\cap\Omega^+)\geq \eta^2\rad(B_1^+)^n$. Thus,
\begin{equation}\label{eqggtlp2}
t\gtrsim \rad_{S(x,t)}(\Delta_t) \gtrsim\eta^2\,\rad(B_1^+)\geq \eta^2a\,t.
\end{equation}

Now, by Chebyshev, arguing as in
\rf{eqal843**}, it follows easily that
$$\HH^1\big(\{t\in I_0:\ve_n(x,t)> \delta^{1/4}\}\big)  \leq C\,\delta^{1/4}R \lesssim_a \delta^{1/4}\,\rad(B_1^+).$$
So assuming $\delta$ small enough (depending on $a$ and $\eta$), we deduce that the set
$$G_1:=\{t\in G_0:\ve_n(x,t)\leq \delta^{1/4}\}$$
satisfies $\HH^1(G_1)\geq \frac12\,\HH^1(G_0)\gtrsim  \eta\,\rad(B_1^+)$.
Observe now that, by Lemma \ref{lembola*} (a), \rf{eqggtlp2}, and the fact that $\ve_n(x,r)\leq \delta^{1/4}$ for $t\in G_1$, assuming $\delta$ small enough (depending on $\eta$ and $a$) it follows that
\begin{equation}\label{intersec68}
\Delta_t\cap H^+_{x,t}\neq\varnothing\quad \text{ for all $t\in G_1$.}
\end{equation}

As in Lemma \ref{lemcork1},
 we identify each half-space $H\subset \R^{n+1}$ such that $x\in\partial H$ with the inner unit vector to $\partial H$, which we denote by $u_H$. Then, for some $\tau\in (0,1/10)$ to be fixed below, we consider a maximal
$\tau$-separated family of unit vectors $u_i$, $1\leq i \leq N_\tau$, so that $\bS^n\subset \bigcup_i B(u_i,\tau)$.
We let
$$A_i = \big\{t\in G_1:u_{H_{x,t}}\in B(u_i,\tau)\big\},$$
where $u_{H_{x,t}}$ is the unit vector identifying the subspace $H_{z,t}$ that minimizes $\ve_n(x,t)$.
So we have $G_1=\bigcup_{i=1}^{N_\tau} A_i$. 
We choose $k$ such that $A_k$ has maximal Lebesgue measure among all the $A_i$'s, so that 
$$\HH^1(A_k)\geq \frac1{N_\tau}\,\HH^1(G_1)|\gtrsim \frac \eta{N_\tau}\,\rad(B_1^+).$$

For 
some constant $\theta\in (0,1/2)$ to be fixed below, consider a subinterval $J\subset I_0$ satisfying the properties given by Lemma
\ref{lemaux99}, with $I_0,A_k$ in place of $I,G$ in the lemma, respectively. Denote by $r_J$ the center of $J$.
Consider now the cone $X(B_1^+)$ made up of all the half-lines with origin in $x$ and passing through $B_1^+$. That is, 
$X(B_1^+)$ is the smallest cone with vertex in $x$ that contains $B_1^+$. Analogously, let $X(\tfrac32 B_1^+)$ made up of all the half-lines with origin in $x$ and passing through $\tfrac32B_1^+$.
Notice that, by \rf{intersec68}, 
$$X(B_1^+)\cap H^+_{x,t} \cap S(x,t) = X(B_1^+)\cap S^+(x,t)\neq\varnothing\quad \text{ for all $t\in G_1$,}$$
and thus, in particular, for all $t\in A_k$. 
Together with the fact that
$$\dist_H(H_{u_k} \cap B(x,8R),H_{x,r}\cap B(x,8R)) \lesssim \tau\,R\quad \mbox{ for all $r\in A_k$,}
$$
 we infer that, for some constant $\tau$ small enough (depending on $a,\eta,\delta$), there is a geodesic ball $\Delta_0\subset \bS^n$ of radius comparable to $\tau R$
such that
\begin{equation}\label{eqtrans00**}
\Delta_{x,r}:= \{z\in S(x,r):r^{-1}(z-x)\in\Delta_0\}\subset 
H_{x,r}\cap X(\tfrac32 B_1^+)\quad \mbox{ for every $r\in A_k$,}
\end{equation} 
and moreover 
\begin{equation}\label{eqtrans1**}
\dist(\Delta_{x,r},L_{x,y})\geq c aR \quad \mbox{ for all $r\in I_0$},
\end{equation}
where $L_{x,y}$ is the line through $x,y$ and $c$ is an absolute constant.
This estimate is analogous to \rf{eqtrans1} in the proof of Lemma \ref{lemcork1}.
Now we continue the proof as in that lemma, allowing the different parameters there to depend on $a$ and $\eta$. 
It is easy to check that the ball $B^+$ found at the end of that proof should be contained in in $CB_1^+$. To ensure that this is contained in $2B_1^+$, we could just replace the ball $B_1^+$ in these arguments by a suitable small ball $\wt B_1^+$  contained in $B_1^+$. We leave the details for the reader.
\end{proof}
\vv


\section{Weak flatness: finding an analytic variety}\label{sec-flat2}

In the this section and the two following ones, we will carry out the proof of Lemma \ref{mainlemma1-text} (ii), which we recall (with a
slight change in the condition \rf{eq10r0} below).

\begin{lemma}\label{lem-beta<epsilon} 
	Given $c_0 \in (0,1)$, there exists $\epsilon_0>0$ so that for each $\epsilon \in(0, \epsilon_0)$ we can find $\delta=\delta(\epsilon)>0$ such that the following holds true. 
Let $\Omega^+$, $\Omega^- \subset \R^{n+1}$ be disjoint Borel sets and 
	 $B_0=B(x_0, r_0)$. Suppose also that there is a Radon measure $\mu$ with $(n, 1)$-polynomial growth supported on a subset $E \subset \overline{B_0}$ such that 
	\begin{equation*}
		\mu(B_0) \geq c_0 r_0^n.
	\end{equation*}
	If
	\begin{equation}\label{eq10r0}
		\int_0^{10 r_0} \big(\ve_n(x,r)^2 + \mathfrak{a}(x,r)^2\big) \, \frac{dr}{r} < \delta \quad\mbox{ for all } \, x \in E, 
	\end{equation}
	then $\beta_{\infty, E}(B_0) < \epsilon$. 
\end{lemma}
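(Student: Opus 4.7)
My plan follows the three-section structure announced in the introduction for Step 1.2.

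\textbf{Stage 1 — Non-degeneracy and an analytic approximant (this section).} I will first invoke Lemma \ref{lemcork1} to produce two $\beta$-almost corkscrew balls $B^+, B^- \subset 6B_0$ of radius $\geq c_1 r_0$ for $\Omega^+$ and $\Omega^-$ respectively. The mass assumption $\mu(B_0)\geq c_0 r_0^n$ together with the $(n,1)$-polynomial growth yields two points $x,y\in E$ at mutual distance $\gtrsim r_0$, which is exactly the input needed for Lemma \ref{lemcork1}. With these corkscrews in hand I will study the real-analytic function
$$F(x)\;:=\;\frac{1}{r_0^{n+1}}\int_{\Omega^+}e^{-|y-x|^2/r_0^2}\,dy-c_\psi\qquad(x\in\R^{n+1}).$$
A Chebyshev argument applied to the hypothesis $\int_0^{10r_0}\mathfrak{a}(x,r)^2\,\frac{dr}{r}<\delta$ produces, at every $x\in E$, a scale $r_x\in[r_0,2r_0]$ with $\mathfrak{a}^+(x,r_x)\lesssim \delta^{1/2}$, which by comparing $r_x$-scale and $r_0$-scale Gaussians (both bounded and analytic in the scale parameter) transfers into $|F(x)|\lesssim \delta^{1/2}$ for every $x\in E$. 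At the centres of the corkscrews, instead, $F$ takes values close to $+c_\psi$ and $-c_\psi$ respectively. Consequently $Z:=F^{-1}(0)$ is a nontrivial analytic hypersurface in $B_0$ separating $B^+$ from $B^-$, and $E$ lies in a thin tubular neighbourhood of $Z$.

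\textbf{Stage 2 — Flatness of the analytic variety (Sections \ref{sec-flat3}--\ref{sec-flat4}).} Next I plan to upgrade $Z\cap B_0$ to an approximate affine hyperplane. Here I will use the full strength of the hypothesis: for each $x\in E$ and most scales $r<r_0$ the Gaussian coefficient $\mathfrak{a}(x,r)$ is small, so at scale $r$ there is a nearly-optimal half-space $H_{x,r}^+$ whose Gaussian mass matches that of $\Omega^+$ near $x$. A telescoping/Dini argument in $r$, comparing $\mathfrak{a}(x,r)$ with $\mathfrak{a}(x,2r)$, shows these half-spaces stabilise as $r\to 0$, producing a preferred normal direction $\nu_x$ at every $x\in E$. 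The corkscrew non-degeneracy will translate into a lower bound $|\nabla F|\gtrsim r_0^{-1}$ on $Z\cap B_0$, so the implicit function theorem makes $Z$ a smooth graph locally, with normal close to $\nu_x$. A further Chebyshev/telescoping estimate across scales, this time comparing the directions $\nu_x$ and $\nu_{x'}$ for two points of $E$ in $B_0$, will show all these normals coincide to within $\epsilon$, so $Z\cap B_0$ sits in an $(\epsilon r_0/2)$-neighbourhood of some affine hyperplane $L_0$.

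\textbf{Stage 3 — Conclusion.} Combining the two neighbourhood inclusions $E\subset\NN_{c\delta^{1/2}}(Z)$ from Stage 1 and $Z\cap B_0\subset\NN_{\epsilon r_0/2}(L_0)$ from Stage 2 will give $\beta_{\infty,E}(B_0)<\epsilon$ once $\delta$ is sufficiently small in terms of $\epsilon$ (and the corkscrew constant $\beta$ is chosen small in terms of $\epsilon$ first, so that the almost-corkscrews really force $|\nabla F|$ to be uniformly bounded below). The principal obstacle is the multi-scale stability in Stage 2: the passage from the $L^2$-Dini integrability of $\mathfrak{a}(x,r)$ in $r$ to \emph{pointwise} closeness of the minimising half-spaces at different scales and at different points $x,x'\in E$. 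This requires a quantitative rigidity statement of the form ``if $\mathfrak{a}^+(x,r)$ is very small and the corkscrews on each side are non-degenerate, then the optimal half-space is essentially unique and varies Lipschitz-continuously in $x$ and logarithmically in $r$'', in the spirit of Lemma \ref{lembola*} but for Gaussian rather than spherical averages. Once this rigidity is established, the telescoping of the $L^2$-small quantity $\mathfrak{a}$ closes the argument.
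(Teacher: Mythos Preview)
Your plan has a genuine structural gap in Stage~2, and a definitional confusion that undermines it. The coefficient $\mathfrak a^\pm(x,r)$ is defined via the \emph{radial} Gaussian $e^{-|y-x|^2/r^2}$; there is no half-space in its definition, hence no ``optimal half-space $H_{x,r}^+$'' and no ``preferred normal direction $\nu_x$'' extractable from $\mathfrak a$ alone. You would need to use $\ve_n$ for that, but the half-space infimising $\ve_n(x,r)$ can jump wildly in $r$, so the telescoping/Dini stabilisation you propose does not follow from $L^2$-smallness of the square function. Moreover, the claim that the corkscrews force $|\nabla F|\gtrsim r_0^{-1}$ on $Z\cap B_0$ is unjustified: the corkscrews only tell you $F$ is large positive in $B^+$ and large negative in $B^-$, which says nothing about $\nabla F$ \emph{on the zero set}. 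Real-analytic hypersurfaces can be arbitrarily curved, and nothing in your sketch rules out, say, a paraboloid.

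The paper's route is fundamentally different: it argues by contradiction and compactness. One assumes $\beta_{\infty,E_j}(B_0)>\epsilon$ along a sequence with $\delta_j=1/j$, passes to Hausdorff and weak limits $E_0,\mu_0$, and shows $E_0$ lies in a hyperplane, a contradiction. In the limit $\mathfrak a_0(x,r)=0$ exactly on $\supp\mu_0$, so $\supp\mu_0$ sits in a genuine analytic variety $Z$ (Lemma~\ref{l:analytic-variety}). The flatness of $Z$ is then forced not by a gradient bound but by a delicate sphere-based geometric argument (Lemma~\ref{l:analytic-variety2} and Claim~\ref{claim33}): if $Z$ had nonzero Hessian at a density point, one could locate four almost-corkscrew balls $B_1^\pm,B_2^\pm$ on a single circumference $\Gamma_0\subset S(z_0,r_0)$ in an \emph{interlocking} pattern (one $B_i^-$ trapped between $B_1^+$ and $B_2^+$ along a half-circle), and the geodesic convexity of $S^+(z_j,t)$ together with $\ve_n(z_j,t)\ll 1$ makes this impossible. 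A second sphere-geometry argument (Lemma~\ref{lemflat1}) then shows $E_0\setminus L=\varnothing$. These arguments use $\ve_n$ essentially and have no analogue in your sketch; the compactness is what allows one to work with an \emph{exact} variety rather than an approximate tubular neighbourhood, sidestepping the quantitative rigidity you identify as the principal obstacle.
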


Notice that in the assumption \rf{eq10r0} the integral runs over $r\in (0,10r_0)$ while in Lemma \ref{mainlemma1-text}
we wrote the integral over $(0,20r_0)$. This is minor technical improvement which will be useful later to prove 
Lemma \ref{mainlemma1-text} (iii) in Section \ref{sec-splitting}.

\begin{rem}\label{rem-beta<epsilon}
	For each pair $(\beta, \epsilon)$, in order for conclusions (i) and (ii) of Lemma \ref{mainlemma1-text} to hold at the same time, it suffices to choose $\delta(\beta, \epsilon)= \min(\delta(\beta), \delta(\epsilon))$.
\end{rem}

Let us fix Borel sets $\Omega_j^+,\Omega_j^-\subset \R^{n+1}$ so that $\Omega_j^+\cap \Omega_j^-=\varnothing$ for each $j$. For the remainder of this section, we will write $\ve(x,r)= \ve_n(x,r)$. We denote by $\ve_j(x,r)$ and $\mathfrak{a}_j(x,r)$ the coefficients
$\ve(x,r)$ and $\mathfrak{a}(x,r)$  associated to the pair $\Omega_j^+,\Omega_j^-$.

The following lemma is an immediate consequence of the weak * compactness of the unit ball of $L^\infty$.

\begin{lemma} \label{l:convbasic}
Let $\{\Omega_j^\pm\}_j$ be a sequence of pairs of disjoint Borel sets in $\R^{n+1}$ as above. 
Then
there are non-negative functions $g^+,g^-\in L^\infty(\R^{n+1})$ such that, for a subsequence $\Omega_{j_k}^{\pm}$ it holds
    $$\one_{\Omega_{j_k}^\pm}\to g^\pm\quad \mbox{weakly $*$ in $L^\infty(\R^{n+1})$},$$
and    $$g^+ + g^- \leq 1.$$
\end{lemma}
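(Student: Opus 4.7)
The statement is essentially a routine application of sequential weak-$*$ compactness, so the plan is short. I would organize it as follows.

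First I would invoke the Banach--Alaoglu theorem together with the fact that $L^1(\R^{n+1})$ is separable (so the weak-$*$ topology on the closed unit ball of $L^\infty(\R^{n+1}) = L^1(\R^{n+1})^*$ is metrizable). Since $\|\one_{\Omega_j^+}\|_{L^\infty} \leq 1$ for every $j$, the sequence $\{\one_{\Omega_j^+}\}_j$ lies in the closed unit ball of $L^\infty$, and hence admits a weak-$*$ convergent subsequence to some $g^+ \in L^\infty(\R^{n+1})$ with $\|g^+\|_\infty \leq 1$. Passing along this subsequence, the same argument applied to $\{\one_{\Omega_{j_k}^-}\}_k$ yields a further subsequence (still denoted $j_k$) such that $\one_{\Omega_{j_k}^-}\to g^-$ weak-$*$ in $L^\infty$.

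Next, I would verify $g^\pm \geq 0$ almost everywhere. For any non-negative $\varphi \in L^1(\R^{n+1})$, weak-$*$ convergence gives
\[
\int g^\pm \,\varphi\,dx = \lim_{k\to\infty}\int \one_{\Omega_{j_k}^\pm}\,\varphi\,dx \geq 0,
\]
and since this holds for every non-negative $\varphi \in L^1$, we conclude $g^\pm \geq 0$ a.e.

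Finally, to obtain $g^+ + g^- \leq 1$, I would use the disjointness of $\Omega_j^+$ and $\Omega_j^-$: for each $j$,
\[
\one_{\Omega_j^+}(x) + \one_{\Omega_j^-}(x) = \one_{\Omega_j^+ \cup \Omega_j^-}(x)\leq 1 \quad\text{for a.e.\ }x.
\]
Testing against an arbitrary non-negative $\varphi \in L^1(\R^{n+1})$ and taking the weak-$*$ limit along the extracted subsequence yields
\[
\int (g^+ + g^-)\,\varphi\,dx = \lim_{k\to\infty}\int(\one_{\Omega_{j_k}^+} + \one_{\Omega_{j_k}^-})\,\varphi\,dx \leq \int \varphi\,dx,
\]
from which $g^+ + g^- \leq 1$ almost everywhere. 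There is no real obstacle here; the only mildly technical point is the appeal to metrizability of the weak-$*$ topology on the unit ball (needed to extract a sequential subsequence rather than merely a net), which follows from separability of $L^1(\R^{n+1})$.
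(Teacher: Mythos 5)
Your argument is correct and is precisely the "immediate consequence of weak-$*$ compactness" that the paper alludes to (the paper itself gives no written proof). Nothing to add.
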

\vv

Under the assumptions of the preceding lemma, 
we denote 
  \begin{align}
  	\mathfrak{a}_0(x,r)^2 & = \mathfrak{a}^+_0(x,r)^2+\mathfrak{a}^-_0(x,r)^2 \nonumber \\ 
  & \bigg|\frac{\pi^{(n+1)/2}}2 - \frac1{r^{n+1}}\int g^+(y)\,e^{-|y-x|^2/r^2}\,dy\bigg|^2 + 
\bigg|\frac{\pi^{(n+1)/2}}2 - \frac1{r^{n+1}}\int g^-(y)\, e^{-|y-x|^2/r^2}\,dy\bigg|^2.\label{e:def-mathfrak-a-0}
\end{align}
\vv


Our next objective is to prove a key geometric fact: roughly speaking, the set of points where the square function $\mathfrak{a}_0$ is zero (in the limit) must lie in an $n$-dimensional analytic variety. Moreover, that the square function $\ve$ is zero (in the limit) implies that the quasi-corkscrews found in Lemma \ref{lemcork1} are real corkscrews, albeit in a measure theoretic sense. This is made precise in the lemma below. 

\begin{lemma}\label{l:analytic-variety}
 Fix $c_0>0$. Let $\{\Omega_j^\pm\}_j$  be as above and $g^+,g^-$ as in Lemma \ref{l:convbasic}.  
    Let $B_0\subset \R^n$ be a ball and, for each $j$, let $E_j\subset B_0$ be a compact set such that
   \begin{equation}\label{e:small-alpha-compactness}  
   \int_0^{8\,\rad(B_0)} \big(\ve_j(x,r)^2 +\mathfrak{a}_j(x,r)^2\big) \, \frac{dr}{r}\leq \frac1j\quad \mbox{ for all $x\in E_j$.}
   \end{equation}
   Suppose that the sets $E_j$ converge in Hausdorff distance to a compact set $E_0$.
   Let  $\{\mu_j\}$ be a sequence of measures with $(n,1)$-polynomial growth supported on $E_j$ and converging weakly to a measure $\mu_0$, so that $\supp(\mu_0) \subset E_0$ and it has $(n,1)$-polynomial growth and suppose moreover that $\mu_0(\overline{B}_0)\geq c_0 \,\rad(B_0)^n$.
Then the following holds:
\begin{itemize}
\item[(a)] There are two balls $B^+,B^-\subset 6B_0$ such that $\rad(B^\pm)\approx \rad(B_0)$
with $$g^+=1\;\,\text{$\HH^{n+1}$-a.e.\ in $B^+$}\quad \text{ and } \quad g^-=1\;\, \text{$\HH^{n+1}$-a.e.\ in $B^-$.}$$
\item[(b)]  There is an $n$-dimensional real analytic variety $Z\subset\R^{n+1}$ such that $$\supp(\mu_0) \cap 8B_0 \subset Z \subset \R^{n+1}\setminus (B^+\cup B^-).$$
The analytic variety equals
\begin{equation}\label{eqdefZ}
Z = \bigcap_{k \geq 0} \left\{ x \in \R^{n+1}  :\, \mathfrak{a}_0(x, 2^{-k}) = 0 \right\},
\end{equation}
with $\mathfrak{a}_0(x,2^{-k})$ defined by \rf{e:def-mathfrak-a-0}.
\end{itemize}
\end{lemma}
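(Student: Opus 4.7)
I would first use the mass lower bound $\mu_0(\overline{B_0}) \geq c_0\,\rad(B_0)^n$ together with the $(n,1)$-polynomial growth of the $\mu_j$'s to locate, for each large $j$, two points $x_j, y_j \in E_j$ with $|x_j - y_j| \approx \rad(B_0)$ (any ball $B(x_j, \rho\,\rad(B_0))$ with $\rho$ small carries $\mu_j$-mass at most $\rho^n\rad(B_0)^n < \mu_j(B_0)$). Choose $\beta_j \to 0$ with $\delta(\beta_j) \geq 1/j$ in Lemma \ref{lemcork1}; then \eqref{e:small-alpha-compactness} lets us apply that lemma to $(x_j, y_j)$ to produce $\beta_j$-almost corkscrew balls $B_j^\pm \subset 6B_0$ for $(\Omega_j^+,\Omega_j^-)$ with $\rad(B_j^\pm) \gtrsim \rad(B_0)$. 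By Blaschke selection, along a subsequence $B_j^\pm \to B^\pm$ in Hausdorff distance, with $B^\pm \subset \overline{6B_0}$ and $\rad(B^\pm) \gtrsim \rad(B_0)$. Testing the weak-$*$ convergence $\one_{\Omega_j^\pm} \to g^\pm$ against $\one_{B^\pm} \in L^1$ gives
\[
\int_{B^\pm} g^\pm\,dy = \lim_j \HH^{n+1}(B^\pm \cap \Omega_j^\pm) \geq \liminf_j \bigl[(1-\beta_j)\HH^{n+1}(B_j^\pm) - \HH^{n+1}(B^\pm \triangle B_j^\pm)\bigr] = \HH^{n+1}(B^\pm),
\]
so $g^\pm = 1$ a.e.\ on $B^\pm$; since $g^+ + g^- \leq 1$, also $g^\mp = 0$ a.e.\ on $B^\pm$.

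\textbf{Analytic setup for part (b).} I would write $F_j^\pm(x,r) := r^{-(n+1)}\int_{\Omega_j^\pm} e^{-|x-y|^2/r^2}\,dy$ and $F_0^\pm(x,r) := r^{-(n+1)}\int g^\pm(y) e^{-|x-y|^2/r^2}\,dy$, so that $\mathfrak{a}_j^\pm = |c_\psi - F_j^\pm|$ and $\mathfrak{a}_0^\pm = |c_\psi - F_0^\pm|$ with $c_\psi = \pi^{(n+1)/2}/2$. Three observations then drive the rest: $(i)$ for each fixed $(x,r)$, weak-$*$ convergence applied to the $L^1$ Gaussian kernel $y \mapsto e^{-|x-y|^2/r^2}$ gives $F_j^\pm(x,r) \to F_0^\pm(x,r)$; $(ii)$ the uniform Lipschitz bound $|F_j^\pm(x,r) - F_j^\pm(x',r)| \leq C|x-x'|/r$ combined with $(i)$ yields the diagonal convergence $\mathfrak{a}_j^\pm(x_j,r) \to \mathfrak{a}_0^\pm(x_0,r)$ whenever $x_j \to x_0$ and $r > 0$ is fixed; $(iii)$ under $u = 1/r^2$, $F_0^\pm(x,r) = u^{(n+1)/2}\int g^\pm(y) e^{-u|x-y|^2}\,dy$ extends to a function jointly holomorphic in $(x,u) \in \C^{n+1}\times\{\real u > 0\}$ (by Gaussian domination), so $(x,r) \mapsto F_0^\pm(x,r)$, and hence $\mathfrak{a}_0^\pm(x,r)^2$, is jointly real-analytic on $\R^{n+1}\times(0,\infty)$.

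\textbf{Plan for part (b).} Given $x_0 \in \supp(\mu_0)\cap 8B_0 \subset E_0$, Hausdorff convergence $E_j \to E_0$ yields $x_j \in E_j$ with $x_j \to x_0$. Hypothesis \eqref{e:small-alpha-compactness} gives $\int_0^{8\rad(B_0)} \mathfrak{a}_j(x_j,r)^2\,dr/r \leq 1/j$, so Fatou combined with $(i)$--$(ii)$ forces $\mathfrak{a}_0(x_0,r) = 0$ for $\HH^1$-a.e.\ $r \in (0,8\rad(B_0))$; continuity in $r$ promotes this to every $r$ in that interval, and real-analyticity $(iii)$ then extends it to all $r > 0$, whence $\mathfrak{a}_0(x_0, 2^{-k}) = 0$ for every $k \geq 0$ and $x_0 \in Z$. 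For the reverse inclusion, if $x \in B^+$ then $g^+ = 1$ a.e.\ on a neighborhood of $x$, so as $r \to 0^+$ the change of variables $z = (y-x)/r$ gives
\[
F_0^+(x,r) \longrightarrow \int_{\R^{n+1}} e^{-|z|^2}\,dz = \pi^{(n+1)/2} \neq \pi^{(n+1)/2}/2 = c_\psi,
\]
so $\mathfrak{a}_0(x,2^{-k}) > 0$ for all sufficiently large $k$ and $x \notin Z$; the argument for $B^-$ is symmetric. Finally, by $(iii)$ the map $x \mapsto \mathfrak{a}_0(x,r)^2$ is real-analytic on $\R^{n+1}$ for each fixed $r$ and, by the previous display, not identically zero for any small $r$, so $Z$ lies inside the zero set of a nontrivial real-analytic function on $\R^{n+1}$, and is therefore a real-analytic variety of dimension at most $n$.

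\textbf{Expected obstacle.} The delicate part is converting the \emph{integrated} hypothesis \eqref{e:small-alpha-compactness} into \emph{pointwise} vanishing of $\mathfrak{a}_0(x_0,\cdot)$ at every dyadic radius $2^{-k}$, $k \geq 0$: Fatou alone only provides vanishing for a.e.\ $r$ in the finite interval $(0,8\rad(B_0))$, and extending this to all $r > 0$ forces one to exploit the rather special real-analytic structure of the Gaussian convolutions $F_0^\pm$ rather than any generic mollifier. A second, milder annoyance is the ball-convergence bookkeeping in part (a): making sure the limit balls $B^\pm$ inherit both the corkscrew inclusion $B^\pm \subset 6B_0$ and the size $\rad(B^\pm) \gtrsim \rad(B_0)$, which is handled by the uniform two-sided bound on $\rad(B_j^\pm)$ coming from Lemma \ref{lemcork1}.
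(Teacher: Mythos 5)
Your approach to part (a) and the bulk of part (b) is sound and in several places cleaner than the paper's. For the forward inclusion $\supp(\mu_0)\cap 8B_0 \subset Z$, the paper integrates $\mathfrak{a}_j^+(x,r)^2 r^3$ against $dr\,d\mu_j$, tests against smooth cutoffs in $(x,r)$, and invokes Arzel\`a--Ascoli; your replacement — diagonal convergence $\mathfrak{a}_j(x_j,r)\to\mathfrak{a}_0(x_0,r)$ from the uniform $C/r$-Lipschitz bound in $x$ plus weak-$*$ convergence against the Gaussian, then Fatou in $r$ — reaches the same conclusion with less machinery. Your part (a) argument (Lemma \ref{lemcork1}, Blaschke selection, testing weak-$*$ against $\one_{B^\pm}$) is essentially the paper's. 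The extension of $\mathfrak{a}_0(x_0,\cdot)=0$ to all $r>0$ via analyticity in $r$ is unnecessary once $B_0$ is rescaled to $B(0,1)$ (the dyadic radii $2^{-k}$, $k\geq 0$, already lie in $(0,8)$), but it is harmless.

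There is, however, a genuine gap in your final step. You conclude that $Z$ ``lies inside the zero set of a nontrivial real-analytic function on $\R^{n+1}$, and is therefore a real-analytic variety of dimension at most $n$.'' The inference from ``contained in a proper variety'' to ``is a variety'' does not hold: an arbitrary subset of a codimension-one real-analytic set has Hausdorff dimension at most $n$, but it need not be a variety at all, and the stratification $Z = \bigcup_{k=0}^n V_k$ into real-analytic submanifolds that Lemma \ref{l:analytic-variety2} relies on is only available if $Z$ itself is the zero set of a real-analytic function. Since $Z = \bigcap_{k\geq 0}\{\mathfrak{a}_0(\cdot,2^{-k})=0\}$ is a countable intersection, one must exhibit a \emph{single} real-analytic $F$ with $\{F=0\}=Z$. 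The paper does this by forming
$F(z)= \sum_{k\geq 0} 2^{-k} e^{-2^{2k+1}k^2}\,\mathfrak{a}_0^+(z,2^{-k})^2$
as a holomorphic function on $\C^{n+1}$: the terms are nonnegative on $\R^{n+1}$ so $\{F=0\}\cap\R^{n+1}=Z$, and the double-exponential weights $e^{-2^{2k+1}k^2}$ are precisely what tames the $e^{C|\imag z|^2/r^2}$ growth of the complexified Gaussian convolution at scale $r=2^{-k}$, making the series converge locally uniformly on $\C^{n+1}$. This convergence estimate is the step your proposal is missing; without it, ``$Z$ is a real-analytic variety'' is unproved, and the downstream stratification argument does not get off the ground.
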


We prove this lemma, we rescale and take $B_0=B(0,1)$. Using Lemma \ref{l:convbasic} we find a subsequence of Borel sets $\{\Omega_{j_k}^\pm\}$ whose characteristic functions $\one_{\Omega_{j_k}^\pm}$ converge weakly-$*$ to functions $g^\pm \in L^\infty(\R^{n+1})$. First we show the following.\\

\begin{claim}\label{claim:alpha-0}
   For all $x \in B(0,8) \cap \supp(\mu_0)$ and all $r \in (0, 8)$ it holds that 
    \begin{equation}\label{e:alpha-equals-0}
    \mathfrak{a}_0^+(x,r) = 0, 
    \end{equation}
    where we recall that
    \begin{equation*}
    \mathfrak{a}_0^+(x,r)= \left| \frac{\pi^{(n+1)/2}}{2} - \frac{1}{r^{n+1}} \int g^+(y) e^{-|y-x|^2/r^2} \, dy\right|.
    \end{equation*}
    and $g^+$ is the weak limit in $L^{\infty}$ of $\one_{\Omega_{j_k}^+}$ as per Lemma \ref{l:convbasic} (2). 
    
\end{claim}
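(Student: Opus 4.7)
The plan is to fix an arbitrary $x \in B(0,8) \cap \supp(\mu_0)$ and show $\mathfrak{a}_0^+(x,r)=0$ for every $r \in (0,8)$ by transferring the square-function smallness on $E_{j_k}$ to the weak-$*$ limit $g^+$. Since $\supp(\mu_0) \subset E_0$ and $E_{j_k} \to E_0$ in Hausdorff distance, I can select $x_{j_k} \in E_{j_k}$ with $x_{j_k} \to x$. Hypothesis \eqref{e:small-alpha-compactness} then yields
\[
\int_0^{8} \mathfrak{a}_{j_k}^+(x_{j_k},r)^2\,\frac{dr}{r} \leq \frac{1}{j_k} \longrightarrow 0.
\]
If I can establish pointwise convergence $\mathfrak{a}_{j_k}^+(x_{j_k},r) \to \mathfrak{a}_0^+(x,r)$ for each fixed $r \in (0,8)$, then Fatou's lemma applied on any $[\epsilon,8]$ with respect to the measure $dr/r$ will force $\mathfrak{a}_0^+(x,\cdot) = 0$ almost everywhere on $(\epsilon,8)$; sending $\epsilon\to 0$ and invoking the continuity of $r \mapsto \mathfrak{a}_0^+(x,r)$ for $r>0$ (immediate from the explicit Gaussian formula) will promote this to identical vanishing on all of $(0,8)$.

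The crux is the pointwise-in-$r$ convergence. I would split
\[
\int \one_{\Omega_{j_k}^+}(y)\,e^{-|y-x_{j_k}|^2/r^2}\,dy - \int g^+(y)\,e^{-|y-x|^2/r^2}\,dy = I_k + II_k,
\]
where
\[
I_k = \int \bigl(\one_{\Omega_{j_k}^+}(y) - g^+(y)\bigr)\,e^{-|y-x|^2/r^2}\,dy,
\]
\[
II_k = \int \one_{\Omega_{j_k}^+}(y)\bigl(e^{-|y-x_{j_k}|^2/r^2} - e^{-|y-x|^2/r^2}\bigr)\,dy.
\]
Now $I_k \to 0$ because the Gaussian is a fixed $L^1(\R^{n+1})$ test function against which $\one_{\Omega_{j_k}^+} - g^+$ tends to zero weakly-$*$ in $L^\infty$. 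And $II_k \to 0$ because $\|\one_{\Omega_{j_k}^+}\|_\infty \leq 1$ together with the continuity of translation in $L^1$ gives $\|e^{-|\cdot - x_{j_k}|^2/r^2} - e^{-|\cdot - x|^2/r^2}\|_{L^1(\R^{n+1})} \to 0$ as $x_{j_k} \to x$. Dividing by $r^{n+1}$ and taking absolute values yields the desired $\mathfrak{a}_{j_k}^+(x_{j_k},r) \to \mathfrak{a}_0^+(x,r)$.

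The main obstacle I anticipate is coordinating the moving center $x_{j_k} \to x$ with the weak-$*$ convergence $\one_{\Omega_{j_k}^+} \to g^+$: neither mode of convergence alone produces a limit for the jointly-indexed pairings, and the two-term splitting above is precisely what decouples them. Once the pointwise convergence is secured, the Fatou-plus-continuity step is routine — the nonnegativity of $\mathfrak{a}_{j_k}^+(x_{j_k},r)^2$ is all that Fatou needs on $[\epsilon,8]$ — and we conclude $\mathfrak{a}_0^+(x,r) = 0$ for every $r \in (0,8)$, as claimed. The argument for $\mathfrak{a}_0^-$ is identical with $g^+$ replaced by $g^-$.
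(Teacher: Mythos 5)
Your proof is correct, and it takes a genuinely different route from the paper's. The paper works at the level of the limiting measure $\mu_0$: it reduces the claim to the vanishing of the double integral $\int_{B(0,8)}\int_0^8 \mathfrak{a}_0^+(x,r)^2\,r^3\,dr\,d\mu_0(x)$, then introduces smooth cutoffs $\Phi(x)\phi(r)$ to form auxiliary functions $h_{j_k},h_0$, upgrades pointwise convergence $h_{j_k}\to h_0$ to locally uniform convergence via Arzel\`a--Ascoli, and finally passes to the limit by splitting into a term controlled by the weak convergence $dr\,d\mu_{j_k}\rightharpoonup dr\,d\mu_0$, a term controlled by the uniform convergence, and a term controlled by the hypothesis bound; at the end, continuity in \emph{both} $x$ and $r$ upgrades ``$\mu_0$-a.e.\ $x$, a.e.\ $r$'' to ``all $x\in\supp\mu_0$, all $r$''. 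You instead fix $x\in\supp\mu_0\cap B(0,8)$ from the start, exploit the Hausdorff convergence $E_{j_k}\to E_0$ to produce approximants $x_{j_k}\in E_{j_k}$, and prove pointwise-in-$r$ convergence $\mathfrak{a}_{j_k}^+(x_{j_k},r)\to\mathfrak{a}_0^+(x,r)$ by decoupling the moving center (handled by $L^1$-continuity of translation) from the weak-$*$ pairing (handled by testing against the fixed Gaussian); Fatou with the measure $dr/r$ and continuity in $r$ alone then finish. Your approach sidesteps the weak measure convergence $\mu_{j_k}\rightharpoonup\mu_0$ entirely (using it only through $\supp\mu_0\subset E_0$), needs no test functions and no Arzel\`a--Ascoli, and only requires continuity of $\mathfrak{a}_0^+(x,\cdot)$ in the radial variable. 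The paper's proof buys a conclusion stated as the vanishing of an integral against $\mu_0$ directly, which is a form sometimes convenient when one wants to iterate such arguments, but for the present claim your argument is more elementary and shorter. Two minor bookkeeping points: make explicit that the subsequence $E_{j_k}$ still Hausdorff-converges to $E_0$ (immediate, since the whole sequence does), and note that the division by $r^{n+1}$ in the definition of $\mathfrak{a}^+_{j_k}$ is harmless in the pointwise limit since $r>0$ is fixed.
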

    \begin{proof}[Proof of Claim \ref{claim:alpha-0}] For any $r>0$, the mapping $x \mapsto \mathfrak{a}_0^+(x,r)$ is continuous on $\mathbb{R}^{d+1}$. Hence to prove \eqref{e:alpha-equals-0} it suffices to show that
    \begin{equation}\label{e:factor-r3}
        \int_{B(0,8)} \int_{0}^8 \mathfrak{a}_0^+(x,r)^2 \, r^3 \, dr d\mu_0(x) =0.
    \end{equation}
    Recall that $r \in (0,8)$; then, using \eqref{e:small-alpha-compactness}, we conclude that 
    \begin{equation}\label{e:alphajk-factor-r3}
    \int_{B(0,8)} \int_0^8 \mathfrak{a}_{j_k}^+(x,r)^2 \, r^3 \, dr d\mu_{j_k}(x) \leq C j_k^{-1}.
    \end{equation}
\noindent
Now let $\Phi: \R^{n+1} \to \R$ be a non-negative smooth function compactly supported on $B(0,8)$, and let $\phi:\R \to \R$ be a non-negative smooth function compactly supported on $(0,8)$. For $x \in \R^{n+1}$, $r \in \R$, set 
\begin{align*}
    & h_{j_k}(x,r) := \Phi(x) \cdot \phi(r) \, r^3 \left( \frac{1}{r^{n+1}}  e^{-\left| \frac{\cdot}{r} \right|^2} * \one_{\Omega_{j_k}^+} (x) - \frac{\pi^{(n+1)/2}}{2} \right), \mbox{ and } \\
    & h_{0}(x,r) := \Phi(x) \cdot \phi(r) \, r^3 \left( \frac{1}{r^{n+1}}  e^{-\left| \frac{\cdot}{r} \right|^2} * g^+ (x) - \frac{\pi^{(n+1)/2}}{2} \right).
\end{align*}
Since $\one_{\Omega_{j_k}^+}$ converges weakly-$*$ in $L^\infty(\R^{n+1})$ to $g^+$, we see that 
\begin{equation*}
    h_{j_k}(x,r) \to h_0(x,r)\,\, \mbox{ pointwise.}
\end{equation*}
Note that $\{(x,r) \mapsto h_{j_k}(x,r)\}$ is a uniformly bounded sequence of functions on $\overline{B(0,8) \times (0,8)}$, with uniformly bounded derivatives. By the Arzel\`{a}-Ascoli Theorem, we deduce that $h_{j_k}$ converges uniformly to $h_0$ on compact subsets - up to a subequence which is immediately relabelled. After these preliminaries, we compute
\begin{align*}
\int_{B(0,8)} \int_0^8 h_0(x,r) \, dr d\mu_0(x) & = \int_{B(0,8)} \int_0^8 h_0(x,r) \, dr (\mu_0-\mu_{j_k})(x) \\
& \,\, + \int_{B(0,8)} \int_0^8 \left(h_0-h_{j_k}\right)(x,r) \, dr d\mu_{j_k}(x) \\
& \,\,+ \int_{B(0,8)}\int_0^8 h_{j_k}(x,r) \, dr d\mu_{j_k}(x).
\end{align*}
The first integral on the right hand side of the equation above tends to $0$ as $k \to \infty$ since $dr \, d\mu_{j_k}$ converges weakly to $dr \, \mu_0$ (clearly $h_0$ is continuous). 
The second integral also converges to $0$ as $k \to \infty$, this time by the uniform convergence of $h_{j_k}$ to $h_0$ on compact subsets. As for the third integral, we bound it above by the left hand side of \eqref{e:alphajk-factor-r3}. All in all, we obtain that 
\begin{equation*}
    \int_{B(0,8)} \int_0^8 h_0(x,r) \, dr \, d\mu_0(x) = 0.
\end{equation*}
Since $\Phi$ and $\phi$ are arbitrary non-negative smooth functions compactly supported on $B(0,8)$ and $(0,8)$, respectively, we conclude that \eqref{e:factor-r3} holds true. This, as previously mentioned, implies \eqref{e:alpha-equals-0} and thus Claim \ref{claim:alpha-0}.
\end{proof}

\vv

\begin{proof}[Proof of Lemma \ref{l:analytic-variety}, (b)]
We define
\begin{equation*}
    Z = \bigcap_{k \geq 0} \left\{ x \in \R^{n+1} \, |\, \mathfrak{a}_0^+(x, 2^{-k}) = 0 \right\}. 
\end{equation*}
By Claim \ref{claim:alpha-0}, if $x \in B(0,8) \cap \supp(\mu_0)$ and $r \in (0,8)$, then $\mathfrak{a}_0^+(x,r)=0$. In particular, this implies that $\supp(\mu_0) \cap B(0,8) \subset Z$.

We verify $Z$ is a real analytic variety. This means there exists a real analytic function whose zero set is $Z$. Considering $\R^{n+1} \subset \C^{n+1}$, it is enough to construct instead an entire holomorphic function whose real zero set is $Z$.
We consider the following extension of the convolution of two functions $f:\C^{n+1}\to\C$, $g:\R^{n+1}\to\R$:
\begin{equation*}
    (g*f)(x+iy):= \int_{\R^{n+1}}g(u)f(x+iy-u)du,
\end{equation*}
whenever the integral makes sense.
For $a>0$, $z=(z_1,\ldots,z_{n+1})$, with $z_k=x_k+iy_k$, we define $f_{a}(z):=a^{(n+1)/2}e^{-a\sum_{k=1}^{n+1} (x_k+iy_k)^2}$ for $a>0$ and the holomorphic extension 
\begin{equation*}
{\mathfrak{a}}_0^+(z,r)^2:= (g^+* f_{1/r^2}(z) -\pi^{(n+1)/2})^2\quad \text{for $z\in\C^{n+1}$.}
\end{equation*}
 The convolution is well defined since $g^+ \in L^{\infty}$ and $f_{a}$ has exponential decay with fixed $y$.
The desired function suitable to show that $Z$ is a real analytic variety can be taken as
\begin{equation*}
F(z):= \sum_{k \geq 0} 2^{-k}e^{-2^{2k+1} k^2}{\mathfrak{a}}_0^+(z,2^{-k})^2.
\end{equation*}
See the appendix in \cite{F} for a detailed proof that this function is entire holomorphic.

To conclude the proof of Lemma \ref{l:analytic-variety}, we need to show that $Z \neq \R^{n+1}$. In fact, we claim that for every $x \in \supp(\mu_0) \cap B(0, 8) \subset Z$, and for all $r \in (0, 8)$ such that $\mu_0(B(x,r))\geq c_0 r^n$ there exists a ball $B \subset B(x,r)$ with $\rad(B) \approx r$ such that $B \cap Z  = \varnothing$. Indeed, let $x \in \supp(\mu_0) \cap B(0,8)$. Let $\{x_j\}$ be a sequence of points in $\R^{n+1}$ so that $x_j \in \supp(\mu_j)$ and $|x_j-x| \to 0$. There exists a subsequence $j_k$ such that $\mu_{j_k}(B(x,r))> \frac{c_0}{2} \, r^n$ for $k$ large enough. For all these $k$'s, this implies that there exists a point $y_{k} \in E_{j_k}$  such that $r\geq |x_{j_k}-y_k| \gtrsim_{c_0} \, r$. Recall that, since $x_{j_k
}, y_k \in E_{j_k}$, then it holds that
\begin{align*}
& \int_0^{8|x_{j_k}-y_k|} (\ve_{j_k}(x_{j_k}, t)^2+ \ve_{j_k}(y_k, t)^2) \, \frac{dt}{t} \\
& \, \, \, \,
\leq \int_{0}^{8 \rad (B_0)} (\ve_{j_k}(x_{j_k}, t)^2 + \mathfrak{a}_{j_k}(x_{j_k},t)^2 )\, \frac{dt}{t} +  \int_{0}^{8 \rad (B_0)} (\ve_{j_k}(y_k, t)^2 + \mathfrak{a}_{j_k}(y_{k},t)^2) \, \frac{dt}{t} \leq \frac{2}{j_k}. 
\end{align*}
This is due to the hypothesis \eqref{e:small-alpha-compactness}.
In particular, for an arbitrary $\delta>0$, we can pick $k$ large enough so that the hypotheses of Lemma \ref{lemcork1} hold, and hence it may be applied with the preferred $\beta$. In fact, we first pick a sequence of numbers $\beta_i$ converging to $0$. We then find a subsequence $\{j_{k_i}\}$ such that for each $i$, the conclusion of Lemma \ref{lemcork1} holds with $\beta_i$. That is, for each $i$, there exists two balls $B^\pm_{k_i} \subset B(x_{j_{k_i}}, C(c_0) r) \subset B(x,r)$ such that each of them is an almost $\beta_i$-corkscrew ball with $\rad(B_{k_i}^\pm) \geq c_1 |x_{j_{k_i}}-y_{k_i}| \geq c_1 C(c_0) r$. 
We can assume the balls $B^\pm_{k_i}$ to be convergent in Hausdorff distance. Then we let $B^\pm$ be two balls with $\rad(B^\pm) \approx \rad(B^\pm_{k_i})$ and such that $B^\pm \subset B^\pm_{k_i}$ for all sufficiently large $k_i$. For $\kappa>0$, let $\phi_\kappa$ be a smooth cut off of $B^+$ so that $\phi=1$ on $(1-\kappa)B^+$.
Then we see that, for any $\eta>0$, there is an $i_0$ so that for all $i \geq i_0$, 
\begin{equation*}
   \eta+  \int_{B^+} g^+(y) \, dy \geq \eta + \int_{\R^{n+1}} g^+ (y) \phi_\kappa(y) \, dy \geq \int_{\R^{n+1}} \one_{\Omega_{j_{k_i}}}(y) \phi_\kappa(y) \, dy = \int_{B^+ \cap \Omega_{j_{k_i}}} \phi_\kappa(y) \, dy. 
\end{equation*}
Moreover, 
\begin{equation*}
    \int_{B^+ \cap \Omega_{j_{k_i}}} \phi_\kappa \, dy >|(1-\kappa)B^+ \cap \Omega_{j_{k_i}}|.
\end{equation*}
This is true for each $\kappa>0$ and hence 
$
    \eta + \int_{B^+} g^+(y)\, dy \geq |B^+ \cap \Omega_{j_{k_i}}| .$
Since $B^+ \subset B_{k_i}^+$ for all sufficiently large $i$, and $\rad(B^+) \approx \rad(B^+_{k_i})$, it follows that $|B^+ \cap \Omega_{j_{k_i}}| \geq (1-C\beta_i)|B^+|$, where $C$ depends on $n,c_0$ and $c_1$. In particular, it is independent of $i$. We conclude that 
\begin{equation*}
    \eta + \int_{B^+}g^+ \, dy \geq |B^+|.
\end{equation*}
Since $\eta$ was arbitrary, and $g^+ \leq 1$, this implies that $g^+ = 1$ a.e. on $B^+$. 

To conclude the proof of the claim, we want to show that $Z \cap B^+ = \varnothing$. Suppose that $z \in Z \cap B^+$. But then there exists an $m$ such that $B(z, 2^{-m}) \subset B^+$ and, because $g^+= 1$ a.e., we get that 
\begin{equation*}
   2^{(n+1)m} \int_{\R^{n+1}} g^+(y) e^{-2^{2m}|z-y|^2} \, dy \geq 2^{(n+1)m} \int_{B^+}  e^{-2^{2m}|z-y|^2} \, dy \stackrel{m\to\infty}\longrightarrow \pi^{(n+1)/2}.
\end{equation*}
In particular, $\mathfrak{a}_0^+(z, 2^{-m})^2$ is bounded away from $0$ for $m$ large enough. Thus $F(z) \neq 0$ and we run into a contradiction. This ends the proof of the claim. Then Lemma \ref{l:analytic-variety}, (2) is proven.


\end{proof}

In fact, a closer look the proof of Lemma \ref{l:analytic-variety} shows that we have proven the following lemma, which implies Lemma \ref{l:analytic-variety}, conclusion (a).

\begin{lemma}\label{l:analytic-variety1.5}
 Fix $c_0>0$. Let $\{\Omega_j^\pm\}_j$ be as above and $g^+,g^-$ as in Lemma \ref{l:convbasic}.  
    Let $B_0\subset \R^n$ be a ball and, for each $j$, let $E_j\subset B_0$ be a compact set such that
   \begin{equation}\label{e:small-alpha-compactness2}  
   \int_0^{8\,\rad(B_0)} \big(\ve_j(x,r)^2 +\mathfrak{a}_j(x,r)^2\big) \, \frac{dr}{r}\leq \frac1j\quad \mbox{ for all $x\in E_j$.}
   \end{equation}
    Suppose that the sets $E_j$ converge in Hausdorff distance to a compact set $E_0$.
   Let  $\{\mu_j\}$ be a sequence of measures with $(n,1)$-polynomial growth supported on $E_j$ and converging weakly to a measure $\mu_0$, so that $\supp(\mu_0) \subset E_0$ and it has $(n,1)$-polynomial growth and suppose moreover that $\mu_0(\overline{B}_0)\geq c_0 \,\rad(B_0)^n$. 
Then for every $x\in\supp\mu_0\cap B_0$, and radius $0<r<r_0$ such that $\mu_0(B(x,r)) \geq c_0 r^n$
there are two balls $B^+,B^-\subset  B(x,r)$ such that $\rad(B^\pm)\approx_{c_0} r$
satisfying $$g^+=1\;\,\text{$\HH^{n+1}$-a.e.\ in $B^+$}\quad \text{ and } \quad g^-=1\;\, \text{$\HH^{n+1}$-a.e.\ in $B^-$.}$$
\end{lemma}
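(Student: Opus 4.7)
The plan is to mimic almost verbatim the argument embedded in the proof of Lemma~\ref{l:analytic-variety}(b) (the part showing $g^+=1$ a.e.\ on $B^+$), but anchored at an arbitrary point $x\in\supp\mu_0\cap B_0$ and scale $r$ with $\mu_0(B(x,r))\geq c_0 r^n$, rather than at $B_0$ itself. The hypothesis \eqref{e:small-alpha-compactness2} is exactly what allows Lemma~\ref{lemcork1} to be applied at \emph{any} pair of points in $E_{j}$ whose distance is bounded by $\rad(B_0)$, so localisation costs nothing beyond bookkeeping.

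First, since $x\in\supp\mu_0$ and $E_j\to E_0$ in Hausdorff distance with $\supp\mu_0\subset E_0$, I can pick $x_j\in E_j$ with $x_j\to x$. By weak convergence of $\mu_j$ to $\mu_0$ and lower semicontinuity applied to an open ball slightly smaller than $B(x,r)$, I get $\mu_{j_k}(B(x,r))\geq \tfrac{c_0}{2}\,r^n$ for all $k$ large enough (along a subsequence, immediately relabeled). Using the $(n,1)$-polynomial growth of $\mu_{j_k}$, this forces the existence of a point $y_k\in E_{j_k}\cap B(x_{j_k}, r)$ with $|x_{j_k}-y_k|\geq c(c_0)\,r$; otherwise $\supp\mu_{j_k}\cap B(x,r)$ would be contained in a ball of radius $\ll r$ around $x_{j_k}$ and have too little mass.

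Next, for any prescribed tolerance $\beta>0$, by \eqref{e:small-alpha-compactness2} we have
$$\int_0^{8|x_{j_k}-y_k|}\!\!\big(\ve_{j_k}(x_{j_k},t)^2+\ve_{j_k}(y_k,t)^2\big)\,\frac{dt}{t}\;\leq\;\frac{2}{j_k},$$
so for $k$ large the hypothesis of Lemma~\ref{lemcork1} is satisfied with threshold $\delta(\beta)$. Thus the lemma produces two balls $B_k^+,B_k^-\subset B(x_{j_k},6|x_{j_k}-y_k|)\subset B(x,r)$ which are $\beta$-almost corkscrews for $\Omega_{j_k}^+$ and $\Omega_{j_k}^-$ respectively, with $\rad(B_k^\pm)\geq c_1|x_{j_k}-y_k|\gtrsim_{c_0} r$. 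I then run the standard diagonal procedure: pick a sequence $\beta_i\to 0$ and for each $i$ extract a further subsequence along which Lemma~\ref{lemcork1} gives $\beta_i$-almost corkscrews $B_{k_i}^\pm$, then (by Bolzano--Weierstrass for centers/radii) pass to a final subsequence along which $B_{k_i}^\pm$ converge in Hausdorff distance to balls $B^\pm\subset B(x,r)$ with $\rad(B^\pm)\approx_{c_0} r$.

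Finally, to conclude $g^+=1$ a.e.\ on $B^+$ (and analogously for $g^-$), fix $\kappa>0$ and choose a smooth bump $\phi_\kappa$ with $\one_{(1-\kappa)B^+}\leq\phi_\kappa\leq\one_{B^+}$. For all $i$ large enough $B^+\subset B_{k_i}^+$, so by the weak-$*$ convergence $\one_{\Omega_{j_{k_i}}^+}\stackrel{*}{\rightharpoonup} g^+$ and the $\beta_i$-almost corkscrew property,
$$\int_{B^+}g^+\,\phi_\kappa\,dy=\lim_{i\to\infty}\int_{B_{k_i}^+}\one_{\Omega_{j_{k_i}}^+}\,\phi_\kappa\,dy\;\geq\;\liminf_{i\to\infty}\big(1-C\beta_i\big)\,\HH^{n+1}\big((1-\kappa)B^+\big).$$
Letting first $i\to\infty$ and then $\kappa\to 0$, and using $g^+\leq 1$, gives $\int_{B^+}g^+=\HH^{n+1}(B^+)$, whence $g^+=1$ a.e.\ on $B^+$. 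The argument for $g^-$ on $B^-$ is identical. The only mildly delicate step is the existence of the well-separated point $y_k$ (step two above), but this follows immediately from polynomial growth; no genuine new obstacle appears, since the heavy lifting (Lemma~\ref{lemcork1} and weak-$*$ compactness) is already in place.
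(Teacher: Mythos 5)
Your proof is correct and follows essentially the same route as the paper's: the paper proves this lemma by extracting the relevant portion of its proof of Lemma~\ref{l:analytic-variety}(b), which is precisely the argument you reproduce (pick $x_j\in E_j\to x$, use weak lower semicontinuity of $\mu_j$ on the open ball and $(n,1)$-polynomial growth to find a well-separated companion point $y_k$, apply Lemma~\ref{lemcork1} along a diagonal subsequence with $\beta_i\to 0$, pass to a Hausdorff limit of the corkscrew balls, and upgrade to $g^\pm=1$ a.e.\ via weak-$*$ convergence tested against smooth cutoffs $\phi_\kappa$). Your treatment of the final cutoff estimate is correct even though the intermediate bound $(1-C\beta_i)\HH^{n+1}((1-\kappa)B^+)$ differs cosmetically from $\HH^{n+1}((1-\kappa)B^+)-C\beta_i\HH^{n+1}(B^+)$; both tend to $\HH^{n+1}((1-\kappa)B^+)$ as $\beta_i\to0$, and sending $\kappa\to 0$ then closes the argument.
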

\vv

\vv

\section{Weak flatness: the analytic variety contains a hyperplane}\label{sec-flat3}

In this section we improve upon Lemma \ref{l:analytic-variety} to conclude that the analytic variety $Z$ in fact contains an $n$-dimensional affine plane. This is yet not enough to end the proof of Lemma \ref{lem-beta<epsilon} since \textit{a priori} there could be pieces of $E_0$ lying outside this plane. We will see in the next section that this cannot be the case.

\begin{lemma}\label{l:analytic-variety2}
  Let $\{\Omega_j^\pm\}_j$ be a sequence of Borel sets satisfying the assumptions of Lemma \ref{l:analytic-variety}.
Also, let $\mu_0,B_0,Z$ be as in Lemma \ref{l:analytic-variety}. Then $Z$ contains a hyperplane $L$
 such that $\mu_0(L\cap \overline{B_0}) >0$.
\end{lemma}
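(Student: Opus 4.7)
The approach combines a blow-up of $\mu_0$ at a point of positive upper $n$-density with the rigidity built into the moment identity $\mathfrak{a}_0 \equiv 0$ on $Z$, and then propagates the resulting local flatness to a global hyperplane through analytic continuation.

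First, since $\mu_0$ has $(n,1)$-polynomial growth and $\mu_0(\overline{B_0}) \geq c_0 \rad(B_0)^n$, standard upper-density theory for $n$-upper regular measures supplies a point $x_0 \in \supp\mu_0 \cap B_0$ with $\Theta^{n,*}(x_0,\mu_0)>0$; choosing a generic such $x_0$ we may in addition assume $x_0$ lies in the smooth stratum of the $n$-dimensional analytic variety $Z$ (the singular stratum has Hausdorff dimension at most $n-1$ and so cannot carry mass of a point of positive $n$-upper density). Thus, in a neighborhood $V$ of $x_0$ we may write $Z = \{x_{n+1} = f(x')\}$ with $f$ real analytic and, after a rotation, $f(x_0')=0$, $\nabla f(x_0')=0$.

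Blowing up at $x_0$ along radii $r_j\downarrow 0$ with $\mu_0(B(x_0,r_j)) \geq c r_j^n$, one obtains $L^\infty$ weak-$*$ limits $g^\pm_\infty$ of the rescaled $g^\pm$ and a weak limit $\mu_\infty$ of the rescaled measures. The rescaling identity $\mathfrak{a}_j^\pm(x,\tau) = \mathfrak{a}_0^\pm(x_0+r_j x, r_j\tau)$ together with the argument of Claim \ref{claim:alpha-0} passes the moment condition to the limit: $\mathfrak{a}_\infty^\pm(x,\tau)=0$ for every $x\in\supp\mu_\infty$ and every $\tau>0$, i.e.\ $(g^\pm_\infty * G_\tau)(x)=\tfrac{\pi^{(n+1)/2}}{2}\tau^{n+1}$ for all such $(x,\tau)$; summing the two identities and using nonnegativity of $1-g^+_\infty-g^-_\infty$ yields $g^+_\infty + g^-_\infty = 1$ a.e. Lemma \ref{l:analytic-variety1.5} applied in the limit produces balls of definite size on opposite sides of $\supp\mu_\infty$ where $g^\pm_\infty = 1$.

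The main step---and the main obstacle---is the rigidity extracted from the moment condition. Using the ball separation, near $x_0$ one identifies $g^+$ (up to a Lebesgue-negligible set) with $\mathds{1}_{\{y_{n+1}>f(y')\}}$, so the identity $(g^+*G_r)(x',f(x')) = \tfrac{\pi^{(n+1)/2}}{2}r^{n+1}$ on $Z$ becomes
\[
\int_{\R^n} e^{-|z'|^2}\,\operatorname{erf}\!\Big(\tfrac{f(x'+rz') - f(x')}{r}\Big)\, dz' = 0
\]
for all $x'$ near $x_0'$ and all small $r>0$. A Hermite/Taylor expansion in $r$, together with odd/even parity analysis, yields at leading order $r^{n+2}$ the equation $\Delta f(x') \equiv 0$ in a neighborhood of $x_0'$; the higher-order coefficients, combined with the real-analyticity of $f$ and the validity of the identity for all $x'$ in a neighborhood, force all second- and higher-order derivatives of $f$ at $x_0'$ to vanish, so $f$ is affine. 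The principal technical difficulty lies exactly here: showing that harmonicity plus the full tower of higher-order moment conditions forces $f$ to be linear rather than merely harmonic, which requires careful bookkeeping. A possible alternative is the ACF monotonicity formula in the spirit of \cite{FTV}, or a Preiss tangent-to-tangent iteration that directly exhibits $g^\pm_\infty=\mathds{1}_{H^\pm}$ for complementary half-spaces $H^\pm$.

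Finally, once $f$ is affine on a neighborhood of $x_0'$, the graph $L=\{(x',f(x')):x'\in\R^n\}$ is a hyperplane with $L\cap V\subset Z$. Writing $Z=\{F=0\}$ for the entire-analytic function $F$ constructed in Lemma \ref{l:analytic-variety}, the restriction $F|_L$ is real analytic on $L$ and vanishes on the nonempty open subset $L\cap V$; by analytic continuation on the connected real-analytic manifold $L$, $F|_L\equiv 0$ and hence $L\subset Z$. Since $\supp\mu_0 \cap V \subset L$ and $\Theta^{n,*}(x_0,\mu_0)>0$, we conclude $\mu_0(L\cap\overline{B_0})\geq \mu_0(L\cap V)>0$, which is the required hyperplane.
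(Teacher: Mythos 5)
Your proposal takes a genuinely different route from the paper's, but it has critical gaps in exactly the places it needs to be rigorous, one of which you acknowledge yourself.

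The paper does not attempt an explicit moment expansion. Instead it works entirely geometrically: it selects a smooth $n$-dimensional stratum $U$ of $Z$ with $\mu_0(U)>0$, writes $U$ as an analytic graph $\Phi_U$ over a plane, and argues by contradiction that $\nabla^2\Phi_U$ must vanish on a set of positive $\mu_0$-measure (hence identically, by analyticity). The mechanism for the contradiction is Claim \ref{claim33}: if $\nabla^2\Phi_U(0)\neq 0$, one constructs four balls $B_1^\pm$, $B_2^\pm$ on which $g^\pm=1$ and a circumference $\Gamma_0$ meeting all four in an alternating pattern; then for a well-chosen radius $t$ the geodesic convexity of the half-sphere $H^+(z_j,t)$ (coming from $\varepsilon_j(z_j,t)\ll1$) forces a "$-$" ball to lie inside a geodesic hull of two "$+$" balls, a contradiction. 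This goes back to the finite approximating pairs $\Omega_j^\pm$ and uses the $\varepsilon_n$ smallness quantitatively, rather than arguing purely at the level of the limit $g^\pm$ and $\mathfrak{a}_0$.

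There are two substantive gaps in your proposal. First, the identification $g^+=\mathds{1}_{\{y_{n+1}>f(y')\}}$ (up to a null set) near $x_0$ is asserted from the "ball separation" but is not actually established. Lemma \ref{l:analytic-variety1.5} produces corkscrew balls where $g^\pm=1$, but it does not show $g^+$ equals $1$ a.e.\ in the entire region above $\Gamma_f$; and crucially, the moment condition alone does not pin this down. Even in the blow-up (where $Z$ flattens to a plane $L_\infty$), the identity $\int g(y)e^{-|y-x|^2/r^2}dy=\tfrac{\pi^{(n+1)/2}}{2}r^{n+1}$ for all $x\in L_\infty$, $r>0$, together with $0\le g\le1$, admits other solutions — $g\equiv\tfrac12$ satisfies it identically — because the condition only kills the even part of $g(y',\cdot)-\mathds{1}_{y_{n+1}>0}$ and leaves the odd part free. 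You would need to combine the moment identity with the corkscrew balls in a genuinely nontrivial way to rule out such profiles, and that argument is missing. Second, you explicitly flag that the step from the moment expansion to "$f$ is affine" is the "principal technical difficulty," and you do not carry it out; the alternatives you suggest (ACF, Preiss iteration) are also not executed. Incidentally, the leading-order PDE you extract is not literally $\Delta f\equiv 0$ on a neighborhood: at a point $x'$ with $\nabla f(x')\neq 0$, the first-order-in-$r$ coefficient is the trace of $H(x')$ against the anisotropic weight $e^{-|z'|^2-(\nabla f(x')\cdot z')^2}$, not the Euclidean Laplacian. This is not fatal, but it illustrates that the bookkeeping you deferred is nontrivial.

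The final analytic-continuation step (extending from $L\cap V\subset Z$ to $L\subset Z$ using the entire function $F$) is fine and is consistent with the paper's use of analyticity; the problem is everything that precedes it.
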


Before getting into the proof of this Lemma, we recall some facts about real analytic varieties. A real analytic variety in an open
set $W\subset \R^{n+1}$ is the zero set of a function $F:W \to \R$ which is real analytic.
The zero set $Z$ of a real analytic function in a compact subset of $W$ and not vanishing everywhere can be written as
\begin{equation*}
    Z = \bigcup_{k=0}^{n} V_k, \,\, \mbox{ where } \, \, V_k = \bigcup_{\ell \geq 0} U_\ell^k,
\end{equation*}
and $U_\ell^k$ are $k$-dimensional real analytic submanifolds and the union over $\ell>0$ is finite. 
 First, each $U_\ell^k$ can be locally realised as the graph of a real analytic map defined on some $k$-dimensional affine plane. 
Second, we have that $U_\ell^k \cap U_{\ell'}^{k'} = \varnothing$ unless $k=k'$ and $\ell=\ell'$. Finally, for each $k$, the closure of $V_k$ contains all the other $V_{k'}$ for $k' \leq k$. 

Under the assumptions of Lemma \ref{l:analytic-variety}, it is easy to see that there exists an $\ell$ such that 
\begin{equation*}
    \mu_0(U_\ell^n)>0.
\end{equation*}
This follows from the fact that the Hausdorff dimension of $\bigcup_{k=0}^{n-1}V_k$ is at most $n-1$, and then the polynomial growth 
of degree $n$ of $\mu_0$ implies that $\mu_0\left(\bigcup_{k=0}^{n-1}V_k\right)=0$.

\begin{proof}[Proof of Lemma \ref{l:analytic-variety2}]
Using the notation above, let $U:=U_\ell^n$ be such that $\mu_0(U_\ell^n)>0$. We can further assume that $U$ is given by a real analytic graph over an affine $n$-plane $P_U$ (up to restricting to a local chart).
    Without loss of generality, assume that $P_U$ is the horizontal plane $\{x_{n+1}=0\}$.    Then the analytic function 
    $\Phi_U$ whose graph is $U$ is of the form
    \begin{equation}
        \Phi_U(x)= \sum_{\alpha} a_{\alpha} x^\alpha.
    \end{equation}
    Our objective is to show that $\nabla^2 \Phi_U$ vanishes in a set of positive measure $\mu_0$. This implies that $\nabla^2 \Phi_U$ vanishes identically, and then as in \cite{JTV}, $U$ is contained in a hyperplane which in turn is contained in $Z$,
    which will prove the lemma.
    
    The polynomial growth of degree $n$ of $\mu_0$ ensures that $\mu_0|_U$ is absolutely continuous with respect to $\HH^n|_U$. Denote 
    $g=\frac{d\mu_0|_U}{d\HH^n|_U}$.
      We may assume that $P_U$ is tangent to $Z$ in the origin, so that there are neither constant nor linear terms in the above Taylor expansion of $\Phi_U$, and that the origin is a Lebesgue point of $g$ (with respect to $\HH^n|_U$).         
      Now, up to an orthogonal 
    change of coordinates (by the orthogonal classification of quadrics)  there is a radius $r>0$ such that for all $x \in B(0,2r) \subset P_U$,
    \begin{equation*}
        \Phi_U(x)= \sum_{i=1}^{n} a_i x_i^2 + \Xi(x),  
    \end{equation*}
    where $|\Xi(x)|\leq C\,|x|^3$. 
    It suffices to prove that $\nabla^2 \Phi_U(0)=0$. Aiming for a contradiction, suppose this is not the case. Under this assumption, we have:
    
\begin{claim} \label{claim33}
For any $r\in(0,1)$ small enough, there exists $z_0\in \supp\mu_0\cap B(0,r)$, $r_0\in (\frac1{100}r,r)$, and four disjoint closed balls $B_1^+,B_2^+,B_1^-,B_2^-$ satisfying the following properties:
\begin{itemize}
\item[(a)] For $i=1,2$, $\rad(B_i^\pm)\approx r_0^2$, $B_i^\pm\subset G^\pm$, and $g^\pm=1$ $\HH^{n+1}$-a.e.\ in $B_i^\pm$.

\item[(b)] There exists a circumference $\Gamma_0\subset S(z_0,r_0)$ that intersects the balls
$\frac18B_1^+,\frac18B_1^-,\frac18B_2^+,\frac18B_2^-$. 

\item[(c)] There is a half-circumference $\wt \Gamma_0\subset \Gamma_0$ such that $B_i^\pm\cap \Gamma_0\subset\wt\Gamma_0$ for $i=1,2$, and moreover, if we denote by $\Lambda_0^+$ (resp.\ $\Lambda_0^-$) the shortest sub-arc from $\wt\Gamma_0$ which joins $B_1^+$ and $B_2^+$ (resp. $B_1^-$ and $B_2^-$), then either 
there exists a ball $B_i^-$ such that $B_i^-\cap \wt\Gamma_0\subset \Lambda_0^+$ or there exists a ball $B_i^+$ such that $B_i^+\cap \wt\Gamma_0\subset \Lambda_0^-$.
\end{itemize}
\end{claim}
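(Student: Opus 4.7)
The plan is to leverage the nontrivial Hessian $\nabla^2\Phi_U(0)\ne 0$, which encodes the curvature of $U$, in order to produce the desired configuration on a well-chosen sphere. I would first diagonalise $\nabla^2\Phi_U(0)$ in the coordinates of $P_U$ so that one coefficient $a_1$ is nonzero; after possibly flipping $x_{n+1}$, I take $a_1>0$. Since $0$ is a Lebesgue point of the density $g:=d\mu_0|_U/d\HH^n|_U$, for every sufficiently small $r$ one can pick $z_0\in\supp\mu_0\cap B(0,r)\cap U$ arbitrarily close to the origin, and I set $r_0:=r/2$.

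Next, I would choose the $2$-plane $\Pi$ through $z_0$ spanned by $e_1$ and $e_{n+1}$, and take $\Gamma_0:=\Pi\cap S(z_0,r_0)$, the great circle of $S(z_0,r_0)$ lying in $\Pi$, parametrised by an angle $\theta\in[0,2\pi)$. Because $U\cap\Pi$ is a small perturbation of the parabola $\{(t,a_1 t^2):t\in\R\}$ near the origin, a direct Taylor expansion shows that $\Gamma_0\cap U$ consists of exactly two transverse crossings at angular positions $\theta_1\approx a_1 r_0$ and $\theta_2\approx\pi-a_1 r_0$, both lying in the upper half-circumference $\wt\Gamma_0:=\{\theta\in[0,\pi]\}$. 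The two crossings split $\wt\Gamma_0$ into three consecutive subarcs: two short outer arcs of length $\approx a_1 r_0^2$ lying below $U$ (at distance $\lesssim a_1 r_0^2$ from $U$), and a long central arc of length $\approx \pi r_0$ lying above $U$ (at distance $\approx r_0$ from $U$). The key point of this setup is that both crossings land in a single half of $\Gamma_0$, producing the colour pattern $-,+,-$ along $\wt\Gamma_0$.

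The third step is to identify the two local sides of $U$ with $G^+$ and $G^-$ up to null sets. I would iterate Lemma~\ref{l:analytic-variety1.5} at the point $z_0$ over a sequence of shrinking scales to obtain $G^\pm$-balls on each side of $U$ at every such scale; combining this positive-density conclusion with the local path-connectedness of each component of $\R^{n+1}\setminus U$ near $z_0$ and with the disjointness $g^++g^-\le 1$ should force each side to coincide $\HH^{n+1}$-a.e.\ with exactly one of $G^+,G^-$. Relabelling if necessary, I may assume that the ``above $U$'' side in $\Pi$ is the $G^+$ side. With this dichotomy in place, I set $\rho:=a_1 r_0^2/10$ and place $B_1^-,B_2^-$ centred at the midpoints of the two short outer arcs of $\wt\Gamma_0$, and $B_1^+,B_2^+$ at the sphere points with $\theta=\pi/3$ and $\theta=2\pi/3$ respectively, each of radius $\rho$. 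A short Taylor estimate for $\Phi_U$ shows that every $B_i^-$ sits entirely below $U$ (hence in $G^-$) and every $B_i^+$ sits entirely above $U$ (hence in $G^+$); the $\frac{1}{8}$-version of each ball contains its centre on $\Gamma_0$ and thus meets $\Gamma_0$; and the four balls are pairwise disjoint because their centres are at Euclidean separation $\gtrsim r_0\gg\rho$. Reading the balls along $\wt\Gamma_0$ in the order of increasing $\theta$ gives the arrangement $B_1^-,B_1^+,B_2^+,B_2^-$, so the shortest subarc of $\wt\Gamma_0$ joining $B_1^-$ to $B_2^-$ traverses both $B_j^+$, verifying the second alternative of condition~(c).

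The step I expect to be the main obstacle is the identification, carried out at the beginning of the third paragraph, of each side of $U$ with one of $G^+,G^-$ up to null sets, near a smooth point $z_0$ of $U$. This rests on combining the scale-invariant vanishing of $\mathfrak a_0$ on $Z$ (which forces the Gaussian kernel to act as an approximate identity for $g^\pm$ at $z_0$) with a connectedness argument and the $L^\infty$ disjointness $g^++g^-\le 1$. Once this local dichotomy is in hand, the rest of the construction reduces to elementary Taylor-expansion bookkeeping around the origin.
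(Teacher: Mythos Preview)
Your plan has a genuine gap precisely at the step you flag: the identification of each local side of $U$ with one of $G^+,G^-$ up to null sets. Lemma~\ref{l:analytic-variety1.5} does produce, at each scale with enough $\mu_0$-density, a ball where $g^+=1$ a.e.\ and a ball where $g^-=1$ a.e., but it gives no information on where these balls sit relative to $U$: they come from the almost-corkscrew construction for the approximating domains $\Omega_j^\pm$, which knows nothing about $U$. The ``connectedness plus $g^++g^-\le1$'' sketch does not close this. From $\mathfrak a_0^\pm(x,r)=0$ on $\supp\mu_0$ one can infer that $g^++g^-=1$ a.e.\ near $U$, but this is compatible with $g^\pm$ taking arbitrary values in $[0,1]$ (e.g.\ $g^+=g^-=1/2$ on open regions); nothing in your outline forces $g^\pm$ to be $\{0,1\}$-valued off a null set. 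Without the side identification you cannot place balls by hand: your $B_i^+$ at $\theta=\pi/3,\,2\pi/3$ lie far from $\supp\mu_0$, so no lemma from the paper gives you $g^+=1$ there.

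The paper avoids this issue altogether. It does not attempt to label the sides of $U$. Instead it covers two regions $R_l,R_r\subset P_U$ at distance $\approx r$ from the origin by many cubes of side $\approx r^2$, applies Lemma~\ref{l:analytic-variety1.5} on each (using that $\supp\mu_0$ is $\HH^n$-dense in $U$ near $0$) to obtain a $B^+$ ball, then projects to $P_U$ and applies the lemma once more to obtain a nearby $B^-$ ball. All four balls end up within $O(r^2)$ of $U$ and hence, by the curvature bound $\Phi_U>\tfrac{a_1}{2}x_1^2$ in the cone $X_\theta$, strictly above the tangent plane $P_U$. A reflection/pigeonhole argument on the projections then locates $x_0\in R_c\cap\Pi_{P_U}(\supp\mu_0)$ with a left pair and a right pair nearly symmetric about $x_0$; one sets $z_0=g(x_0)$ and $r_0=|x_0-y_l|$, and checks via an $O(r^3)$ error estimate that the circle $\Gamma_0$ meets all four $\tfrac18$-balls. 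The convexity~\eqref{eqdirectional} forces all four balls into a single half-circumference $\wt\Gamma_0$, and condition~(c) then holds automatically: whatever the relative order of $B_k^+,B_k^-$ on the left and of $B_h^+,B_h^-$ on the right, one of the two required interleavings occurs. This is why the paper never needs to know which side of $U$ each ball lies on.
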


Let us see how Lemma \ref{l:analytic-variety2} follows from the claim. Denote by $P_0$  the plane that contains 
$\Gamma_0$.
From the fact that $z_0\in \supp\mu_0$, it follows that there exists a sequence of points $z_j\in\supp\mu_j$ converging to $z_0$.
For each $j$, let $P_j$ be vertical plane parallel to $P_0$ through $z_j$. 
Let $\rho=\frac1{20}\min\{\rad(B_i^h):i=1,2,\,h=\pm\}$ and set $J_0=[r_0-\rho,r_0+\rho]$, so that $\HH^1(J_0)\approx r^2$.
From the properties of $\wt \Gamma_0$, we deduce that,
for $j$ large enough and every $t\in J_0$, there exist a circumference $\Gamma_{j,t}$ and a half-circumference $\wt\Gamma_{j,t}\subset\Gamma_{j,t}$ both centered in $z_j$ with radius $t$ such that $\Gamma_{j,t}$ intersects $\frac14B_1^+,\frac14B_1^-,\frac14B_2^+,\frac14B_2^-$ and
$\frac12B_i^\pm\cap \Gamma_{j,t}\subset\wt\Gamma_{j,t}$ for $i=1,2$.
Further, from the weak convergence in $L^\infty$ of $\one_{\Omega_j^\pm}$ to $g^\pm$, up to a subsequence, and the fact that
$g^\pm=1$ $\HH^{n+1}$-a.e.\ in $B_i^\pm$, we get 
\begin{equation}\label{eqomj88}
\HH^{n+1}(\Omega_j^+\cap B_i^+)\to \HH^{n+1}(B_i^+)\quad \text{ and }\quad\HH^{n+1}(\Omega_j^-\cap B_i^-)\to \HH^{n+1}(B_i^-)\quad
\text{as $j\to\infty$, for $i=1,2$.}
\end{equation}

For each $j\geq 1$ and a given $\gamma\in (0,1)$, let
\begin{align*}
I_j= \big\{ t\in J_0: \ve_j(z_j,t) \leq \gamma,&\;\,
\HH^n(B_i^+ \cap S(z_j,t)\setminus\Omega_j^+)\leq \gamma\,\HH^n(B_i^+ \cap S(z_j,t)),\\
&\text{ and }\, 
\HH^n(B_i^- \cap S(z_j,t)\setminus\Omega_j^-)\leq \gamma\,\HH^n(B_i^- \cap S(z_j,t))
\big\}.
\end{align*}
Notice that $\HH^n(B_i^\pm \cap S(z_j,t))\approx \rad(B_i^\pm)^n \approx r^{2n}$ for each $t\in I_j$.
By an easy application of Chebyshev's inequality, from the assumption \rf{e:small-alpha-compactness2} and \rf{eqomj88}, 
we deduce that $\HH^1(I_j)>0$ for $j$ large enough, possibly depending on $\gamma$ and $\HH^1(J_0)/r$ and so on
$r$. By Lemma \ref{lembola*}, the condition that $t\in I_j$ with $\gamma$ small enough implies that 
$$\tfrac12 B_i^\pm\cap S(z_j,t) \subset H^\pm_j(z_j,t),$$
where $H^+_j(z_j,t)$ is the half-space that minimizes $\ve_j(z_j,t)$ and $H^-_j(z_j,t)=\R^{n+1}\setminus H^+_j(z_j,t)$.

Suppose that one of the balls $B_i^-$, say $B_1^-$, satisfies $B_i^-\cap \wt\Gamma_0\subset \Lambda_0^+$.
From the fact that $(\tfrac12 B_1^+ \cup \tfrac12 B_2^+)\cap  S(z_j,t) \subset H^+_j(z_j,t)$, we deduce that, for $j$ large enough and a fixed $t\in I_j$,
\begin{equation}\label{eqhull55}
{\rm hull}_{S(z_j,t)}\big((\tfrac12 B_1^+ \cup \tfrac12 B_2^+)\cap  S(z_j,t)\big)\subset H^+_j(z_j,t).
\end{equation}
Let $\Lambda_{j,t}$ be the shortest arc contained in $\wt\Gamma_{j,t}$ which intersects the two geodesic balls $\tfrac12 B_1^+\cap  S(z_j,t)$, 
$\tfrac12 B_2^+\cap  S(z_j,t)$. This arc is strictly contained in the half-circumference $\wt\Gamma_{j,t}$
because $\frac12B_i^+\cap \Gamma_{j,t}\subset\wt\Gamma_{j,t}$ for each $i$.
 So $\Lambda_{j,t}$ is a geodesic curve from $S(z_j,t)$ which joins 
$\tfrac12 B_1^+\cap  S(z_j,t)$ and 
$\tfrac12 B_1^+\cap  S(z_j,t)$, and consequently 
$$\Lambda_{j,t}\subset {\rm hull}_{S(z_j,t)}\big((\tfrac12 B_1^+ \cup \tfrac12 B_2^+)\cap  S(z_j,t)\big)\subset H^+_j(z_j,t).$$
However, $\Lambda_{j,t}$ intersects $\tfrac14 B_1^-$, which is a contradiction because 
$\tfrac14 B_1^-\subset H_j^-(z_j,t)$.

In the case when 
one of the balls $B_i^+$ satisfies $B_i^+\cap \wt\Gamma_0\subset \Lambda_0^-$ we argue analogously, interchanging the roles of the balls $B_1^+,B_2^+$ with $B_1^-,B_2^-$.
So in any case, we get a contradiction, which proves the lemma, assuming the Claim \ref{claim33}.
\end{proof}

\vv
\begin{proof}[Proof of Claim \ref{claim33}]
Since $\nabla^2 \Phi_U(0)\neq0$, 
 there is at least an $i$ with $a_i \neq 0$. Now pick $1\leq i \leq n$ so that $|a_i| \geq |a_j|$ for $j \neq i$. Without loss of generality $i=1$ and $a_1>0$.
    
   For $\theta\in(0,1/2)$, consider the cones in $\R^n$ defined by
   $$X_\theta^l = \{x=(x_1,x') \in \R \times \R^{n-1}: |x'|<-\theta x_1\}$$
   and
   $$X_\theta^r = \{x=(x_1,x') \in \R \times \R^{n-1}: |x'|<\theta x_1\},$$
   and set $X_\theta = X_\theta^l\cup X_\theta^r$.
    There exists some $\theta>0$ and $r\in(0,1)$ such that, for $x\in X_{\theta}\cap B(0,10r)$ ,
    $$\Phi_U(x)>\frac{a_1}{2} x_1^2.$$ 
In this way, $Z\cap B(0,r)$ lies above the tangent plane $P_U$ in the cone $X_\theta$. By reducing $r$ if necessary, we also assume that
$$\partial^2_{11}\Phi_U(x)\geq \frac12\,a_{11}\geq \frac14 \,|\partial^2_{ij}\Phi_U(x)|\quad 
\mbox{ for all $x\in B(0,10r)$, $1\leq i,j\leq n$}$$
and 
\begin{equation}\label{eqdirectional}  
  \partial^2_{uu} \Phi_U(x)\geq \frac{a_1}{2}\quad
    \mbox{ for all $x\in B(0,r)$ and every unit vector $u\in X_\theta$.}
\end{equation}    

Since $0$ is a Lebesgue point of $g$, then $0$ is also a density point of $\supp\mu_0$ in $U$ with respect to $\HH^n|_U$.
We take $\tau\in(0,10^{-3})$ small enough (to be chosen below) and then we assume $r>0$ small enough so that
\begin{equation}\label{eqtau**1}
\HH^n(\supp\mu_0\cap B(0,t))\geq (1-\tau)\,\HH^n(U\cap B(0,t)) \quad \mbox{ for all $t\leq r$.}
\end{equation}
 Next we consider three $n$-dimensional cubes $R_l$ ,$R_c$, $R_r$ contained in $P_U=\{x_{n+1}=0\}\cap B(0,r)$ satisfying
\begin{itemize}
\item $R_l$ is centered at $(-\frac r2,0,\ldots,0)$, $R_c$ is centered at the origin, and $R_r$ is centered at $(\frac r2,0,\ldots,0)$.
\item  We have $\ell(R_i)=\kappa_0r$ for $i=l,c,r$, where $\kappa_0$ is such that $\kappa_0\in(0,\frac1{10n^{1/2}}) $, $\kappa_0\approx1$, and small enough so that $5R_l\subset X_\theta^l$ and $5R_r\subset X_\theta^r$ and so that
any vector of the form $y-x$ with $y\in R_l\cup R_r$ and $x\in R_c$ belongs to $X_\theta$.
\end{itemize}
Notice that the parameter $\kappa_0$ above depends on the aperture of the cone $X_\theta$ and so on $\nabla^2\Phi_U$, but not on $r$.

Next we take $N = (\conedouble\ell(R_l))^{-1}$, with $\conedouble \in (0,1)$ to be chosen below (depending on $\nabla^2\Phi_U(0)$ among other parameters, but not on $r$) and we split $R_l$ into $N^n$ $n$-dimensional cubes with disjoint interiors and equal  side length. We denote by $Q_l^1,\ldots Q_l^N$
this family of $n$-dimensional cubes. Notice that  their side length is comparable to $r^2$ and that $N\approx r^{-1}$. By standard arguments, we find a subfamily $\{Q_l^k\}_{k\in K_l}$ of $\{Q_l^1,\ldots, Q_l^N\}$ such that the $n$-dimensional cubes  $\{10Q_l^k\}_{k\in K_l}$ are pairwise disjoint and moreover
$$\sum_{k\in K_l} \HH^n(g(Q_{l}^k))\gtrsim \HH^n(g(R_l)) \gtrsim \HH^n(U\cap B(0,r))$$
(here and in the estimates below we allow the implicit constants to depend on the parameter $\kappa_0$ above).
Provided $\tau$ is small enough, then it easily follows that there is  a subfamily $\{Q_l^k\}_{k\in H_l} \subset \{Q_l^k\}_{k\in K_l}$ of the $n$-cubes 
$Q_l^k$
such that 
\begin{equation}\label{j1kbigf}
\HH^n(\supp\mu_0\cap g(Q_l^k))\approx \HH^n(g(Q_l^k)),\end{equation}
and
\begin{equation}\label{eqsjh3}
\sum_{k\in H_l} \HH^n(\supp\mu_0\cap g(Q_{l}^k))\gtrsim \HH^n(U \cap B(0, r)).
\end{equation}
See Lemma 8.6 from \cite{JTV} for more details.

Next, note that the condition \rf{j1kbigf} ensures that we can apply Lemma \ref{l:analytic-variety1.5} for each $k\in H_l$ to find a ball $B^+_k$ satisfying
$$8B^+_k\subset \NN_{\ell(Q_{l}^k)}(g(Q_l^k))\quad
\text{ with
$g^+=1$ $\HH^{n+1}$-a.e.\ in $8B_k^+$ and
$r(B_k^+)\approx \ell(Q_{l}^k)$},$$
where $\NN_\ell(A)$ stands for the $\ell$-neighborhood of $A$.
Observe that, because of the fact that  $\Phi_U(x)>\frac{a_1}{2} x_1^2$ for all $x\in X_{\theta}\cap B(0,10r)$, we have $\dist(g(Q_l^k),P_U)\approx\ell(R_l)^2\approx r^2$. On the other hand,
$$\dist(B_k^+,g(Q_l^k))\leq \ell(Q_l^k)\leq \frac{\ell(R_l)}N =\conedouble \ell(R_l)^2.$$
So if we choose $\conedouble $ small enough, then the balls $8B_k^+$ are contained in $\R^{n+1}_+$ and far from $P_U$.

Next, let $\Delta_k$, $k\in H_l$, be the projection of the balls $B_k^+$, $k\in H_l$, on the hyperplane $P_U$. The $n$-dimensional balls $\Delta_k$, $k\in H_{l}$, are disjoint, and moreover, $$\sum_{k\in H_{l}}\mathcal{H}^n(g(\tfrac{1}{10}\Delta_j))\gtrsim \sum_{k\in H_{l}}\mathcal{H}^n(g(\Delta_j))\stackrel{(\ref{eqsjh3})}{\gtrsim} r^n.$$
Therefore, by standard arguments (once again, see Lemma 8.6 from \cite{JTV} for more details), we find a family of indices $M_l\subset H_l$ such that
\begin{equation}\label{eq:onetenthbigF**}
\HH^n(\supp\mu_0\cap g(\tfrac1{10} \Delta_k))\approx \HH^n(g(\tfrac{1}{10}\Delta_k))\approx \rad(\Delta_k)^n\quad \text{ for every }k\in M_l\end{equation} 
and
$$\sum_{k\in M_l} \HH^n(\supp\mu_0\cap g(\tfrac1{10} \Delta_k))\approx r^n.$$

Since \rf{eq:onetenthbigF**} holds, we may apply Lemma \ref{l:analytic-variety1.5}
for each $k\in M_l$ to find
 some ball $B^-_k$ satisfying
$$8B^-_k\subset \NN_{\tfrac1{10} \rad(\Delta_k)}(g(\tfrac1{10} \Delta_k))\quad
\text{ with
$g^-=1$ $\HH^{n+1}$-a.e.\ in $8B_k^-$ and
$\rad(B_k^-)\approx \rad(\Delta_k)$.}$$
Again, since $\Phi_U(x)>\frac{a_1}{2} x_1^2$ for all $x\in X_{\theta}\cap B(0,10r)$, the balls $8B^-_k$ are contained in $\R^{n+1}_+$ and far away from
$P_U$. Further, if $\Pi_{P_U}$ stands for the orthogonal projection on $P_U$, 
by construction the $n$-dimensional ball $\Pi_{P_U}(B^-_k)$ is contained deep inside $\Pi_{P_U}(B^+_k)$ for
each $k\in M_l$. In fact, by shrinking the balls $B^-_k$ if necessary, we can assume that
$$\Pi_{P_U}(B^-_k)\subset \Pi_{P_U}(\tfrac12 B^+_k)\quad \mbox{ for each $k\in M_l$.}$$

Now we denote
$$W_l = \bigcup_{k\in M_l} \Pi_{P_U}(\tfrac12B^-_k).$$
By the disjointness of the $n$-dimensional cubes $10Q^k_l$, $k\in M_l$, the $n$-dimensional balls  $\Pi_{P_U}(\tfrac12B^-_k)$
are disjoint and we deduce that $\HH^n(W_l)\approx r^n.$

Next we define an analogous family of balls $\{B^\pm_k\}_{k\in M_r}$ and a set $W_r$, replacing the left
$n$-dimensional cube $R_l$ by the right one $R_r$.

We claim that there is some $x\in R_c\cap \Pi_{P_U}(\supp\mu_0)$ such that
$$W_l \cap (2x-W_r)\neq \varnothing.$$
In fact, for an arbitrary point $y_r\in W_r$,
the set $\{2x - y_r:x\in R_c\cap \Pi_{P_U}(\supp\mu_0)\}$ is of the form $R\setminus X$, where $R\subset P_U$ is an $n$-dimensional cube
 of side length $2\ell(R_c)$ centered at $-y_r\in R_l$  
 (so it contains $R_l$) and $X$ is an exceptional set with $\HH^n$-measure  at most
 $2^n\HH^n(R\setminus \Pi_{P_U}(\supp\mu_0))\leq c\HH^n(g(R)\setminus \supp\mu_0)\leq
 c\tau\ell(R)^n$. So for $\tau$ small enough\footnote{Recall that $\tau$ was introduced in \rf{eqtau**1} and notice that $\tau$ may depend on the parameter $\kappa_0$ above, and so on $\theta$ and $\nabla^2\Phi_U(0)$. However, $\tau$ does not depend on $r$.} , $\{2x - y_r:x\in R_c\cap \Pi_{P_U}(\supp\mu_0)\}$
 intersects $W_l$, since $\HH^n(W_l)\approx \ell(R)^n\gg c\tau\ell(R)^n$.

The preceding argument shows that
 there exist $y_l\in W_l$, $y_r\in W_r$, and $x_0\in R_c\cap \Pi_{P_U}(\supp\mu_0)$ such that $y_l = 2x_0-y_r$, or equivalently,
$$x_0= \frac{y_l+y_r}2.$$
Observe that, in particular, this implies that $|x_0-y_l|= |x_0-y_r|\approx r$.

Let $k\in M_l$ be such that $y_l\in\Pi_{P_U}(\tfrac12B^-_k)$ and $h\in M_r$  such that
$y_r\in\Pi_{P_U}(\tfrac12B^-_{h})$. By construction, there are points $y_l^\pm\in \frac12B_k^\pm$ and
$y_r^\pm\in \frac12B_h^\pm$ such that
$$\Pi_{P_U}(y_l^-)= \Pi_{P_U}(y_l^+) = y_l\quad \text{and}\quad \Pi_{P_U}(y_r^-)= \Pi_{P_U}(y_r^+) = y_r.$$
We claim that the circumference centered at $g(x_0)$ (observe that $g(x_0)\in  U\cap\supp\mu_0$) with radius $|x_0-y_l|$ intersects the four balls $B_k^\pm$ and $B_h^\pm$. To see this, notice that
$$\big||g(x_0)- y_l^\pm| - |x_0-y_l|\big| \leq (1-\cos\alpha^\pm)\,r
\lesssim (\alpha^\pm)^2\,r,$$
where $\alpha^\pm$ is the slope of the line passing through $g(x)$ and $y_l^\pm$, which satisfies $\alpha^\pm\lesssim r$ (taking into account that $|x_0-y_l|= |x_0-y_r|\approx r$ and the quadratic behavior of $g$ close to the origin). Thus,
$$\big||g(x_0)- y_l^\pm| - |x_0-y_l|\big| \lesssim r^3\ll r^2\approx r(B_k^\pm)
.$$
Analogously,
$$
\big||g(x_0)- y_r^\pm| - |x_0-y_r|\big| \lesssim r^3\ll r^2\approx r(B_h^\pm).$$
Thus,  since $|x_0-y_r| = |x_0-y_l|$,
 the aforementioned circumference passes through the balls $B_k^\pm$, $B_h^\pm$.

Let $z_0=g(x_0)$ and $r_0=|x_0-y_l|$ and let $P_0$ be the vertical plane containing $x_0,z_0,y_l,y_r$. Let $\Gamma_0$ be the circumference contained in $P_0$, centered in $z_0$, and with radius $r_0$.
Using the condition \rf{eqdirectional}, 
it is easy to check that there is an arc $\wt\Gamma_0\subset\Gamma_0$ satisfying the
required properties in the claim, with the balls $B_1^\pm$, $B_2^\pm$ in the claim replaced by $8B_k^\pm$, $8B_h^\pm$. To see this, let $H_{z_0}$ the open half-space whose boundary equals the tangent to $U$ at $z_0$ and containing $U\cap B(0,r)\setminus \{z_0\}$. It is easy to see that the four balls
$8B_k^\pm$, $8B_h^\pm$ are contained in $H_{z_0}$, taking into account \rf{eqdirectional} with $u$ being a unit vector with the same direction as $y_r-x_0$
 and choosing the constant $\conedouble $ above small enough if necessary.  
\end{proof}

\vv

\vv


\section{Weak flatness: the set $E_0$ is flat}\label{sec-flat4}

In this section we conclude the proof of Lemma \ref{lem-beta<epsilon}, and thus that of Main Lemma \ref{mainlemma1}(ii), by showing that the \textit{whole of} the limit set $E_0$ from Lemma \ref{l:analytic-variety} is contained in an $n$-dimensional plane.  

\begin{lemma}\label{lemflat1}
 Let $\{\Omega_j^\pm\}_j$, $\{E_j\}_j$, $Z$, and $L$ be as in Lemmas \ref{l:convbasic}, \ref{l:analytic-variety}, 
 and \ref{l:analytic-variety2}. Suppose that the sets $E_j$ converge in Hausdorff distance to a compact set $E_0$.
 Then $E_0\subset L$.
\end{lemma}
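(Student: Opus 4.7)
My plan proceeds in three steps.

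\textbf{Step 1 (embedding into the variety).} Given any $x^* \in E_0 \cap 8B_0$, by Hausdorff convergence pick $x_j \in E_j$ with $x_j \to x^*$. As in the proof of Claim \ref{claim:alpha-0}, Arzel\`a--Ascoli applied together with the smoothness of the Gaussian kernel shows that $\mathfrak{a}_j^\pm(\cdot,\cdot) \to \mathfrak{a}_0^\pm(\cdot,\cdot)$ uniformly on compact subsets of $\R^{n+1}\times(0,8\rad(B_0))$; in particular $\mathfrak{a}_j^\pm(x_j, r) \to \mathfrak{a}_0^\pm(x^*, r)$ pointwise in $r$. Since $\int_0^{8\rad(B_0)} \mathfrak{a}_j(x_j, r)^2\,\frac{dr}{r} \leq 1/j$, Fatou's lemma gives $\int_0^{8\rad(B_0)} \mathfrak{a}_0(x^*, r)^2\,\frac{dr}{r} = 0$. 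By the continuity of $r \mapsto \mathfrak{a}_0(x^*, r)$ one concludes $\mathfrak{a}_0(x^*, r) = 0$ on $(0, 8\rad(B_0))$, that is $x^* \in Z$ in view of \rf{eqdefZ}.

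\textbf{Step 2 (identifying $g^\pm$ near $L$).} Because $\mu_0$ has $(n,1)$-polynomial growth and $\mu_0(L \cap \overline{B_0}) > 0$, one has $\mu_0|_L \ll \HH^n|_L$, so $\supp\mu_0 \cap L$ has positive $\HH^n|_L$-measure. Pick a density point $y_0 \in \supp\mu_0 \cap L$ (with respect to $\HH^n|_L$). For all sufficiently small $r$, $\mu_0(B(y_0, r)) \geq c_0 r^n$, and Lemma \ref{l:analytic-variety1.5} produces balls $B^+_r, B^-_r \subset B(y_0, r)$ of radius $\approx r$ with $g^\pm = 1$ a.e.\ on them. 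Since the approximate tangent of $\supp\mu_0$ at $y_0$ is $L$, a scrutiny of the construction of Lemma \ref{lemcork1} reveals that the spherical minimizing hyperplanes of $\ve_j(y_j, t)$ (for $y_j \in E_j$ near $y_0$ and most $t$) approach $L$; after one global relabeling of $\Omega^+$ versus $\Omega^-$, this forces $B^+_r \subset H_L^+$ and $B^-_r \subset H_L^-$, where $H_L^\pm$ denote the two open half-spaces bounded by $L$. Combining this with the pointwise bound $g^+ + g^- \leq 1$ and with the global corkscrew balls from Lemma \ref{l:analytic-variety}(a) (which sit in $6B_0$, have radius $\approx \rad(B_0)$, and lie disjoint from $Z$), one runs a connectedness argument on $H_L^\pm \cap 6B_0 \setminus Z$ to conclude $g^+ = \one_{H_L^+}$ and $g^- = \one_{H_L^-}$ a.e.\ in $6B_0$.

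\textbf{Step 3 (contradiction) and main obstacle.} Suppose for contradiction there were $x^* \in E_0 \cap 6B_0 \setminus L$; assume $x^* \in H_L^+$, and set $d := \dist(x^*, L) > 0$. For any $r$ small enough that $B(x^*, 10r) \subset H_L^+ \cap 6B_0$, Step 2 yields
$$\frac{1}{r^{n+1}} \int g^+(y)\, e^{-|y-x^*|^2/r^2}\, dy \longrightarrow \pi^{(n+1)/2} \quad \text{as } r \to 0,$$
whereas $c_\psi = \pi^{(n+1)/2}/2$, so $\mathfrak{a}_0^+(x^*, r) \to \pi^{(n+1)/2}/2 \neq 0$, contradicting $x^* \in Z$ from Step 1. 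Hence $E_0 \cap 6B_0 \subset L$, and since $E_0 \subset \overline{B_0} \subset 6B_0$, one obtains $E_0 \subset L$. The delicate part is Step 2: propagating $g^\pm = \one_{H_L^\pm}$ from the local corkscrew balls near a single density point of $\supp\mu_0 \cap L$ to an almost-everywhere equality throughout $6B_0$. This rigidity requires a careful combination of the pointwise inequality $g^+ + g^- \leq 1$, the analytic structure of $Z$, and the global corkscrew balls produced in Lemma \ref{l:analytic-variety}(a), which together rule out $Z$ containing another $n$-dimensional component (in particular, a parallel hyperplane) that could carry some of the remaining mass of $\mu_0$ or house a stray point of $E_0$.
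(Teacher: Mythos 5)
Your Step 1 is a clean and valid observation — in fact it is a slight strengthening of Claim \ref{claim:alpha-0}: by Arzel\`a--Ascoli one gets uniform-on-compacts convergence $\mathfrak{a}_j^\pm \to \mathfrak{a}_0^\pm$ on $\R^{n+1}\times(0,8\rad(B_0))$, and then Fatou applied along $x_j\to x^*$ indeed gives $\mathfrak{a}_0(x^*,\cdot)\equiv 0$, hence $E_0\cap 8B_0\subset Z$ (the paper only records $\supp\mu_0\cap 8B_0\subset Z$). Step 3 is also a correct deduction \emph{granted} Step 2: if $g^+=\one_{H_L^+}$ a.e.\ in a neighborhood of a point $x^*\in H_L^+$, then $\mathfrak{a}_0^+(x^*,r)\to c_\psi\ne 0$ as $r\to 0$, which is incompatible with $x^*\in Z$.

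The genuine gap is Step 2, and you flag it yourself as ``the delicate part.'' The claimed rigidity $g^\pm=\one_{H_L^\pm}$ a.e.\ in $6B_0$ is nowhere established and is not established in the paper either; the paper's proof deliberately avoids needing any such global statement. Your ``connectedness argument on $H_L^\pm\cap 6B_0\setminus Z$'' has no mechanism to propagate the equality $g^+=1$ from the small corkscrew balls $B^+_r$ to all of $H_L^+\cap 6B_0$: the functions $g^\pm$ are only weak-$*$ limits of indicators and a priori can take any value in $[0,1]$; $Z$ is defined by vanishing of a convolution of $g^+$ against Gaussians, not by level sets of $g^+$; and $Z$ could in principle have additional $n$-dimensional components inside $6B_0$ that separate $H_L^+$ into several pieces. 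Moreover, the claim that the balls $B^+_r$ all sit in the \emph{same} half-space $H_L^+$ (with a single global relabeling, uniformly over all Lebesgue points $y_0$ and scales $r$) is an additional unverified assertion; the corkscrew construction in Lemma \ref{lemcork1} produces balls with a labeling that is consistent within one ``good'' interval of radii around one center, but nothing in the statement of Lemma \ref{l:analytic-variety1.5} gives consistency across different centers and scales.

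The paper's proof takes a fundamentally different route: it does not try to classify $g^\pm$ globally. Instead, assuming $x_0\in E_0\setminus L$, it picks a Lebesgue point $y_0\in E_0\cap L$ with $y_0\ne\Pi_L(x_0)$, constructs corkscrew balls $B^+$ and $B^-$ near two specific points of $E_0\cap L$, forms a circle $\Gamma_0$ \emph{centered at $x_0$} passing near both balls, and shows — by taking $x_j\in\supp\mu_j$ with $x_j\to x_0$, selecting good radii via Chebyshev, and exploiting Lemma \ref{lembola*} together with the geodesic convexity of the half-spheres $S^\pm(x_j,r)$ minimizing $\ve_j(x_j,r)$ — that $g^+$ (or $g^-$) must equal $1$ a.e.\ on a small ball around a point $w_2\in L\subset Z$, which is impossible. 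This local spherical argument is where the smallness of the $\ve_j$ coefficients is used at full strength; your sketch invokes $\ve_j$ only obliquely (``a scrutiny of the construction of Lemma \ref{lemcork1}'') and then falls back on a connectedness claim that cannot be derived from the available hypotheses. To repair Step 2 you would essentially have to reproduce the paper's sphere-by-sphere geodesic argument, so the proposal as written does not constitute a proof.
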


\begin{proof}
Let $L$ be a hyperplane such that $\mu_0(L\cap \overline{B_0})>0$, whose existence is ensured by Lemma \ref{l:analytic-variety2}.
Without loss of generality we assume that $L=\R^n\times\{0\}$. Aiming for a contradiction, suppose that $x_0\in E_0\setminus L$, with $x_0$ belonging to the upper half space $\R^{n+1}_+$, say.
Notice that $\supp\mu_0\subset E_0\subset \overline{B_0}$. 
Let $\Pi_L$ denote the orthogonal projection on $L$ and let $y_0\in E_0 \cap L$ be a Lebesgue point for $E_0$ (with respect to $\HH^n|_L$) such that $y_0\neq \Pi_L(x_0)$. The existence of $y_0$ follows from the fact that $\mu_0(L\cap \overline{B_0})>0$.

Denote by $\theta$ the angle between the line through $x_0,y_0$ and the vertical line through $x_0$. Because of the choice of $y_0$, we have $\theta>0$. Let $\delta_0,\delta$ be small positive parameters to be chosen below, with $0<\delta\leq\delta_0\ll \sin\theta$. Since $y_0$ is a  Lebesgue point for $E_0$ with respect to $\HH^n|_L$, we can pick $\rho\in \big(0,\frac1{10}|x_0-y_0|\,\sin\theta\big)$ such that $E_0$ is $(\delta\rho)$-dense in $B(y_0,\rho)\cap L$. 
 By Lemma \ref{l:analytic-variety1.5}, there is a ball $B^+\subset B(y_0,\delta_0\rho)$ such that $g^+=1$ $\HH^{n+1}$-a.e.\
in $B^+$ with $\rad(B^+)\approx \delta_0 \rho$. By reducing and moving $B^+$ a little if necessary, we may assume that $2B^+\cap L=\varnothing$.

Let $P$ be the vertical plane (i.e., orthogonal to $\R^n\times\{0\}$) containing $x_0$ and the center of $B^+$, which we denote by $z_0^+$. Consider the circumference $\Gamma$ contained in $P$, centered in $x_0$, with
radius $|x_0-z_0^+|$, so that $z_0^+\in \Gamma$. Notice that $\Gamma$ intersects $L\cap P$ in two points $z_1,z_2$, and the closest one to $z_0^+$, which we assume to be $z_1$, is contained in $B(y_0,\rho)$. See Figure 2. Hence there exists some point $z_1'\in E_0\cap L$ such that $|z_1-z_1'|\leq \delta\rho$.
 By Lemma \ref{l:analytic-variety1.5},  there is a ball $B^-\subset B(z_1',\delta\rho)$ such that $g^-=1$ $\HH^{n+1}$-a.e.\ with $\rad(B^-)\approx\delta\rho$. 

Denote by $z_0^-$ the center of $B^-$. Notice that $|z_0^- - z_1|\lesssim \delta\rho$, so that
\begin{equation}\label{eqdist64}
\big| |x_0-z_0^+| - |x_0-z_0^-|\big|  = \big| |x_0-z_1| - |x_0-z_0^-|\big| \leq |z_0^- - z_1|\lesssim \delta\rho.
\end{equation}
Let $P_0$ be the vertical plane containing $x_0$ and $z_0^-$. Consider the circumference $\Gamma_0$ contained in $P_0$, centered in $x_0$, with
radius $r_0=|x-z_0^-|$, so that $z_0^-\in \Gamma_0$. It is immediate to check that $\dist_H(\Gamma,\Gamma_0)\to0$ as $\delta\to0$.
This follows from \rf{eqdist64} and the fact that 
$$\dist(z_0^-,P) \leq |z_0^- - z_1|\lesssim \delta\rho,$$
since $z_1\in P$. Then we choose $\delta$ small enough so that
$$\dist_H(\Gamma,\Gamma_0)\leq \frac18\,\rad(B^+)$$
Recall that $\rad(B^+)\approx\delta_0\rho$, and so $\delta$ depends on $\delta_0$,
among other parameters. In this way $\Gamma_0$ intersects both $\frac18 B^+$ and $\frac18 B^-$,
and it intersects $L\cap P_0$ in points $w_1$, $w_2$ such that 
$$|z_i-w_i|\lesssim   \frac18\,\rad(B^+)\approx\delta_0\rho.$$

Let $\wt\Gamma_0$ the shortest arc of $\Gamma_0$ that contains $w_2$ and $z_0^-$ and intersects the closure of $\frac18 B^+$. For $\delta_0$ and $\delta$ small
enough, we have 
\begin{equation}\label{eqaks84}
\HH^1(\wt\Gamma_0) <\frac12 \HH^1(\Gamma_0).
\end{equation}
 This is due to the fact that, if we let $\wt\Gamma_0'= \Gamma_0\cap \R^{n+1}_-$ (where
$\R^{n+1}_-$ is the lower half-space), it holds that $\HH^1(\wt\Gamma_0') <\frac12 \HH^1(\Gamma_0)$, since $x_0$ belongs to the upper half space. Since $B^+$ and $B^-$ are in a $\delta_0\rho$-neigborhood of the endpoints $w_1$, $w_2$ of $\wt\Gamma_0'$, \rf{eqaks84} follows if $\delta_0$ is taken small enough.

Observe that one of the end-points of $\wt\Gamma_0$ is $w_2$, and the other which we denote by $w_0$ either coincides with $z_0^-$ or it belongs to $\frac18 B^+$. From now on, suppose that $w_0=z_0^-$.

\begin{figure}\label{figure2**}
	\centering
	\includegraphics[scale=1.2]{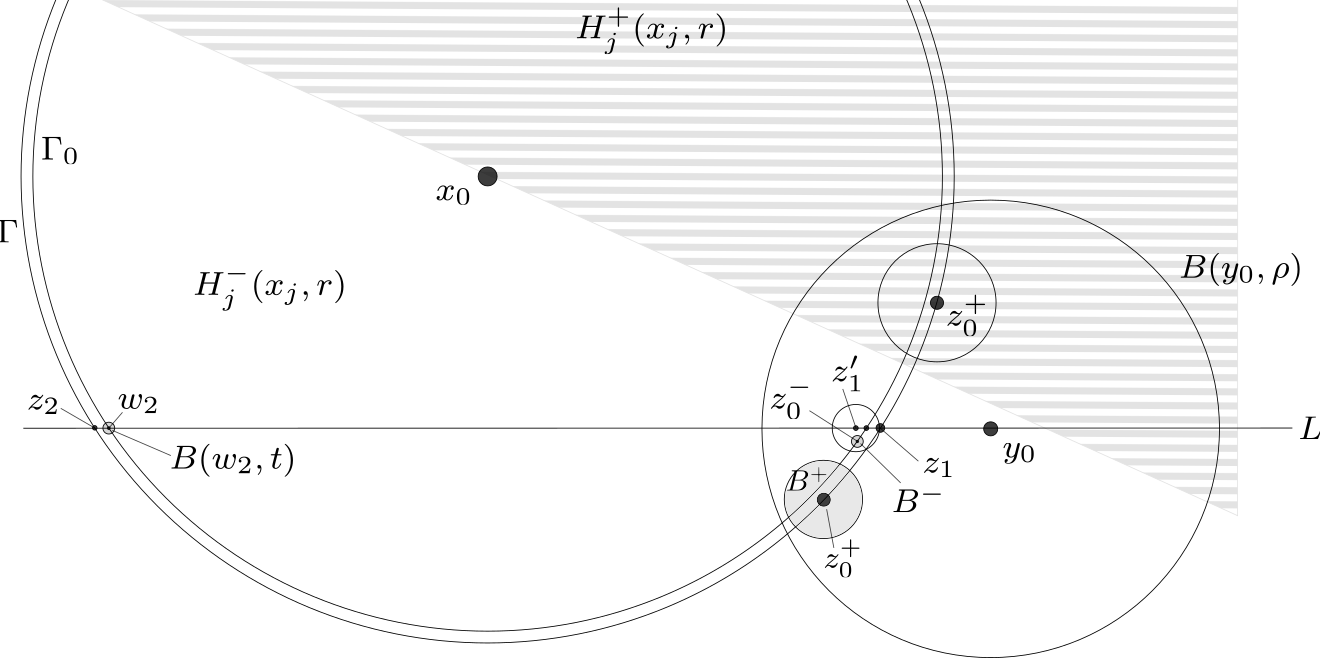}
	\caption{The setup of the proof of Lemma \ref{lemflat1}, in the case where $w_0=z_0^-$.   The figure also shows the case where $w_0 \in 1/8B^+$ (see the ball centered in the other point labelled $z_0^+$ in the shaded area).}
\end{figure}

\begin{claim*}
There exists some $t>0$ such that $g^+=1$ $\HH^{n+1}$-a.e.\ in $B(w_2,t)$.
\end{claim*}

Assume this for the moment. It is immediate to check that this implies that $\mathfrak a_0(w_2,r)>0$ for any $r>0$ small enough. By the
definition of $Z$ in \rf{eqdefZ}, this cannot happen because $w_2\in L \subset Z$. So we get the desired contradiction that proves the lemma.
\vv

Now we turn to the proof of the claim. For contradiction, for some small $\tau>0$ and some $t_0\in(0, \frac18\,\rad(B^-))$, both to be fixed below, suppose that
$$\int_{B(w_2,t_0)}g^+\,d\HH^{n+1} \leq (1-2\tau)\,\HH^{n+1}(B(w_2,t_0)).$$
Since $x_0\in E_0$, there exist a sequence of points $x_j\in\supp\mu_j\subset E_j$ converging to $x_0$.
From the weak convergence in $L^\infty$ of $\one_{\Omega_j^\pm}$ to $g^\pm$, up to a subsequence, and the fact that
$g^\pm=1$ $\HH^{n+1}$-a.e.\ in $B^\pm$, we get that for $j$ large enough,
\begin{equation}\label{eqomeg72}
	\HH^{n+1}(\Omega_j^+\cap B(w_2,t_0))\leq (1-\tau)\,\HH^{n+1}(B(w_2,t_0))
.
\end{equation}
Also, for any given $\gamma'>0$,
\begin{equation}\label{eqomeg73}
\HH^{n+1}(B^\pm\setminus \Omega_j^\pm)\leq \gamma'\,\HH^{n+1}(B^\pm)
\end{equation}
for $j$ large enough too.

For each $j\geq 1$, consider the interval of radii 
$$J_j= \big[|x_j-w_2|-t_0,|x_j-w_2|+t_0|\big]$$
Next, for a given $\gamma\in (0,1)$, let
$I_j$ the subset of those $t\in J_j$ such that 
$\ve_j(x_j,t) \leq \gamma$, $\HH^n(B(w_2,t_0)\cap S(x_j,t)\setminus\Omega_j^+)\geq \csixdouble\tau\,t_0^n$, and $\HH^n(B^h\cap S(x_j,t)\setminus\Omega_j^h)\leq \gamma\,\HH^n(B^h \cap S(x_j,t))$ for $h=\pm$.
It is not difficult to check that $I_j\neq\varnothing$ for $\csixdouble>0$ being a suitable absolute constant. Indeed, consider first the set
$$I_j^0 = \big\{t\in J_j: \HH^n(B(w_2,t_0)\cap S(x_j,t)\setminus\Omega_j^+)\geq \csixdouble\tau\,t_0^n\big\}.$$
By \rf{eqomeg72} we have 
\begin{align*}
\tau\,\HH^{n+1}(B(w_2,t_0)) & \leq 	\HH^{n+1}(B(w_2,t_0)\setminus \Omega_j^+) \\
& = 
\int_{J_j\setminus I_j^0} 	\HH^{n}(B(w_2,t_0) \cap S(x_j,t)\setminus \Omega_j^+)\,dt +
\int_{I_j^0} 	\HH^{n}(B(w_2,t) \cap S(x_j,t)\setminus \Omega_j^+)\,dt\\
& \leq \int_{J_j} \csixdouble\tau\,t_0^n\,dt + \HH^1(I_j^0)\,\sup_{t\in I_j^0}\HH^{n}(B(w_2,t_0) \cap S(x_j,t))\\
& \leq 2\csixdouble\tau\,t_0^{n+1} + C\,t_0^n\,\HH^1(I_j^0).
\end{align*}
Since $\HH^{n+1}(B(w_2,t_0)) = C_{n} t_0^{n+1}$ for a suitable constant $C_{n}>0$, rearranging terms, for $\csixdouble= C_{n}/4$ we get
$$\HH^1(I_j^0) \gtrsim \frac{C_{n} \tau\,t_0^{n+1} - 2\csixdouble\tau\,t_0^{n+1}}{t_0^n}\approx \tau \,t_0.$$
Next notice that
$$I_j = \big\{t\in I_j^0: 
\ve_j(x_j,t) \leq \gamma \text{ and } \HH^n(B^h\cap S(x_j,t)\setminus\Omega_j^h)\leq \gamma\,\HH^n(B^h \cap S(x_j,t))\text{ for $h=\pm$}\big\}.$$
Taking into account that $\HH^1(I_j^0) \gtrsim  \tau \,t_0$, together with  \rf{e:small-alpha-compactness2} and \rf{eqomeg73} (with $\gamma'$ small enough), by an easy application of the Chebyshev inequality we deduce
that $\HH^1(I_j)>0$ for $j$ large enough, and thus $I_j\neq\varnothing$, as wished.

For $j$ large enough and $r\in I_j$, from the fact that
$\ve_j(x_j,r) \leq \gamma$ and $\HH^n(B(w_2,t_0)\cap S(x_j,r)\setminus\Omega_j^+)\geq \csixdouble\tau\,t_0^n$,
we deduce that for $\gamma$ small depending on $\tau$ and $t_0$ (and thus for $j$ large enough),
\begin{equation}\label{eqdk821}
B(w_2,t_0) \cap S(x_j,r)\cap H_j^-(x_j,r)\neq \varnothing.
\end{equation}
Also, 
again by the fact that $\ve_j(x_j,r) \leq \gamma$ together the condition that $\HH^n(B^\pm\cap S(x_j,t)\setminus\Omega_j^\pm)\leq \gamma\,\HH^n(B^\pm \cap S(x_j,t))$, applying Lemma \ref{lembola*},
we deduce
\begin{equation}\label{eqdk822}
\tfrac12 B^\pm \cap S(x_j,r)\subset H_j^\pm(x_j,r).
\end{equation}

By \rf{eqdk821} and \rf{eqdk822} we can take $w_3\in B(w_2,t_0) \cap S(x_j,r)\cap H_j^-(x_j,r)$ and $z_3\in\tfrac12 B^- \cap S(x_j,r)\subset H_j^-(x_j,r)$. Further, we can choose $z_3$ such that 
$$|z_3-z_0^-|\leq |x_0-x_j| + t_0,$$
because $\big|r-|x_0-z_0^-|\big|\leq t_0.$
Then the geodesic $\wt \Gamma_j$ in $S(x_j,r)$
joining $z_3$ and $w_3$ intersects $B^+$.
Indeed, let $P_j$ be the plane passing through $x_j,z_3,w_3$. Since $x_j\to x_0$
 as $j\to\infty$, $w_3\to w_2$ as $t_0\to0$, and
$z_3\to z_0^-$ as $j\to\infty$ and $t_0\to0$, it follows that
$\wt \Gamma_j = P_j\cap S(x_j,r)$ tends in Hausdorff distance to $\wt \Gamma_0 = P_0\cap S(x_0,r_0)$ (recall that 
$r_0=|x-z_0^-|$ and that
$\wt \Gamma_0$
is a geodesic in $S(x_0,r_0)$ because $\HH^1(\wt\Gamma_0)<\frac12 \HH^1(\Gamma_0)$ and $\Gamma_0$ is a great circle contained in $S(x_0,r_0)$). Remark that we can choose $t_0$ indepently of $j$, so that then we can take $j$ large enough depending on $t_0$.
The fact that both $w_3$ and $z_3$ belong to $H_j^-(x_j,r)$ imply that $\wt \Gamma_j\subset H_j^-(x_j,r)$ because of the geodesic
convexity of $H_j^-(x_j,r)$ in $S(x_j,r)$. However,  $\wt \Gamma_j$ intersects $\frac12 B^+$ for $j$ large enough,
taking into account that $\wt \Gamma_0$ intersects $\frac18 B^+$ and the convergence of $\wt \Gamma_j$ to $\wt \Gamma_0$ in Hausdorff distance.
Since $\frac12 B^+\subset H^+(x_j,r)$, this is not possible. This contradiction proves the claim, and concludes the proof of the lemma in the case when the end-point $w_0$ of $\wt \Gamma_0$ coincides with $z_0^-$. 

\vv
The arguments for the case when $w_0$ is a point belonging to $\frac18 B^+$ are very similar. In this case, it turns out that
there exists some $t>0$ such that $g^-=1$ $\HH^{n+1}$-a.e.\ in $B(w_2,t)$, which cannot hold because $w_2\in L \subset Z$, arguing as above.
The existence of the ball $B(w_2,t)$ is proven by arguments very similar to the ones of the claim above, interchanging the role of $B^+,\Omega_j^+,g^+$ by the one of $B^-,\Omega_j^-,g^-$. We leave the details for the reader.
\end{proof}

\vv

\begin{proof}[Proof of Lemma \ref{lem-beta<epsilon} and  Lemma \ref{mainlemma1-text} (ii)]
The arguments are standard. For completeness, we will show the details. 

By renormalizing it suffices to prove the lemma for the ball $B_0:=B(0,1)$. We argue by contradiction: we suppose that there exists an $\epsilon>0$ such that, for all $j \in \N$, there exist disjoint Borel sets
$\Omega_j^+$, $\Omega_j^- \subset \R^{n+1}$ and a measure $\mu_j$ with $(n, 1)$-polynomial growth supported on $E_j \subset B_0$ such that 
	\begin{equation*}
		\mu_j(B_0) \geq c_0 r_0^n.
	\end{equation*}
	and  
	\begin{equation*}
		\int_0^{10 r_0} \big(\ve_j(x,r)^2 + \mathfrak{a}_j(x,r)^2\big) \, \frac{dr}{r} < \frac1j \quad\mbox{ for all } \, x \in E_j, 
	\end{equation*}
	while $\beta_{\infty, E_j}(B_0) > \epsilon$. 

We can assume that the sets $E_j$ converge in Hausdorff distance to a compact set $E_0$ and that the measures $\mu_j$ converge weakly to another measure $\mu_0\subset E_0$, so that we can apply Lemmas
\ref{l:analytic-variety}, \ref{l:analytic-variety2}, \ref{lemflat1} to deduce that $E_0$ is contained in a hyperplane.
That is, the sequence of sets $E_j$ converge in Hausdorff distance to a subset of a hyperplane, which contradicts the fact that $\beta_{\infty, E_j}(B_0) > \epsilon$ for every $j$.
\end{proof}

\vv


\section{Propagation of smallness of the smooth square function}

In this section we prove an auxiliary and quite easy fact: $\mathfrak{a}(x,r)$ is small everywhere near the plane which well approximates the set $E$.

\begin{lemma}\label{lem-corkscrew-everywhere}
    Let $\Omega^+$, $\Omega^-$,  $c_0$, $B_0$, $\mu$ and $E$ as in Lemma \ref{mainlemma1-text}. For all $\lambda \in (0, \tfrac{1}{2})$ there exist an $\epsilon= \epsilon(\lambda)>0$ and a $\delta=\delta(\epsilon,\lambda)$ such that the following holds. If
    \begin{equation}\label{e:ce-1}
        \int_0^{10r_0} \big(\ve_n(x,r)^2+ \mathfrak{a}(x,r)^2\big) \, \frac{dr}{r} < \delta\quad \mbox{ for all } x \in E,
    \end{equation}
    then for all points $x \in \overline{\NN_{2\epsilon r_0}(L_{B_0})\cap B_0}$, where $L_{B_0}$ is the plane minimising $\beta_{\infty, E}(B_0)$, and for all radii $r \in [10 \epsilon r_0, r_0]$,
    we have that
    \begin{equation}\label{e:mathfrak-a<lambda}
    \mathfrak{a}^\pm(x,r) < \lambda.
    \end{equation}
    
\end{lemma}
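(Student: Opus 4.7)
The proof is a compactness argument in the spirit of Sections~\ref{sec-flat2}--\ref{sec-flat4}. Fix $\lambda\in(0,1/2)$ and suppose the conclusion fails for the choice of $\epsilon=\epsilon(\lambda)$ to be determined. Then, as $\delta\downarrow 0$ along a sequence, we obtain data $(\Omega_j^\pm,\mu_j,E_j,B_{0,j},x_j,r_j)$ satisfying all hypotheses but with $\mathfrak{a}_j^\pm(x_j,r_j)\geq\lambda$, $x_j\in\overline{\NN_{2\epsilon r_{0,j}}(L_{B_{0,j}})\cap B_{0,j}}$ and $r_j\in[10\epsilon r_{0,j},r_{0,j}]$. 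Rescale so that $B_{0,j}=B(0,1)$. By Lemma~\ref{l:convbasic} and the subsequence extraction of Section~\ref{sec-flat2}, pass to a subsequence along which $\chara_{\Omega_j^\pm}\rightharpoonup g^\pm$ weakly-$*$ in $L^\infty$ (so $g^++g^-\leq 1$), $\mu_j\rightharpoonup\mu_0$, $E_j\to E_0$ in Hausdorff distance, $L_{B_{0,j}}\to L$, $r_j\to r_\infty\in[10\epsilon,1]$, and $x_j\to x_\infty$ with $\dist(x_\infty,L)\leq 2\epsilon$. Lemmas~\ref{l:analytic-variety}, \ref{l:analytic-variety2} and \ref{lemflat1} give $E_0\subset L$.

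The crucial step is to identify the limit functions: $g^+=\chara_{H^+}$ and $g^-=\chara_{H^-}$ a.e.\ in $B(0,1)$, where $H^\pm$ are the complementary open half-spaces bounded by $L$. To prove this, apply Lemma~\ref{l:analytic-variety1.5} at every $x\in\supp\mu_0\cap B(0,1)$ and every radius $r$ with $\mu_0(B(x,r))\geq c_0 r^n$; this yields balls $B^\pm(x,r)\subset B(x,r)$ of radius $\approx_{c_0} r$ on which $g^\pm=1$ a.e. The argument in the proof of Lemma~\ref{lemflat1}—specifically, the contradiction derived there from having corkscrews on the wrong side of the hyperplane—forces these balls to be consistently oriented: every $B^+(x,r)$ lies in $H^+$ and every $B^-(x,r)$ in $H^-$. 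Combining with $g^++g^-\leq 1$ shows that $g^+=0$ a.e.\ on each $B^-(x,r)$ and $g^-=0$ a.e.\ on each $B^+(x,r)$. Since $\supp\mu_0$ is $n$-dimensional and supported on $L$, varying $(x,r)$ produces a family of corkscrews whose union covers $H^\pm\cap B(0,1)$ up to an $\HH^{n+1}$-null set, giving the claim.

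With $g^\pm=\chara_{H^\pm}$ a.e.\ in $B(0,1)$, the weak-$*$ convergence and the smoothness of the Gaussian kernel imply $\mathfrak{a}_j^\pm(x_j,r_j)\to\mathfrak{a}_0^\pm(x_\infty,r_\infty)$, and a direct computation yields
$$\mathfrak{a}_0^\pm(x_\infty,r_\infty)=\pi^{n/2}\Big|\int_0^{d/r_\infty}e^{-t^2}\,dt\Big|\leq C_n\,\frac{d}{r_\infty},\qquad d=\dist(x_\infty,L)\leq 2\epsilon.$$
Taking $\epsilon=\epsilon(\lambda)$ sufficiently small with respect to $\lambda$ (and, in turn, $\delta=\delta(\epsilon,\lambda)$ small enough to trigger Lemma~\ref{lem-beta<epsilon} and the existence of almost-corkscrew balls from Lemma~\ref{lemcork1}) makes the right-hand side strictly less than $\lambda$, contradicting $\mathfrak{a}_j^\pm(x_j,r_j)\geq\lambda$. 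The main technical hurdle is the half-space identification in the previous paragraph: one must verify both that the corkscrews supplied by Lemma~\ref{l:analytic-variety1.5} come with a coherent orientation across all base points and scales (so there is a well-defined $H^+$ side), and that their union exhausts $H^\pm\cap B(0,1)$ up to an $\HH^{n+1}$-null set. Both facts are implicit in the arguments of Sections~\ref{sec-flat3}--\ref{sec-flat4}, but require some additional bookkeeping to extract cleanly.
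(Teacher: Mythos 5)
Your compactness-and-contradiction setup matches the paper's, but you aim for a considerably stronger intermediate claim — that the weak-$*$ limits satisfy $g^{\pm}=\chara_{H^{\pm}}$ a.e.\ in $B(0,1)$ — which the paper neither proves nor needs. The paper's proof instead observes that $z_0'$, the orthogonal projection of $z_0=\lim z_j$ onto the limiting hyperplane $L_0$ from Lemma~\ref{lemflat1}, lies in $L_0\subset Z$, the analytic variety of Lemma~\ref{l:analytic-variety}, so $\mathfrak{a}_0^{\pm}(z_0',\mathfrak{r})=0$. (This vanishing on all of $L_0$ is legitimate: for each fixed $r$, $z\mapsto\mathfrak{a}_0^{\pm}(z,r)^2$ is real analytic and vanishes on $\supp\mu_0$, a positive-$\HH^n$-measure subset of $L_0$.) The contradiction then follows by comparing $\mathfrak{a}_j^{\pm}(z_j,r_j)$ against $\mathfrak{a}_0^{\pm}(z_0',\mathfrak{r})=0$, with the error controlled by weak-$*$ convergence and by $|z_j-z_0'|\lesssim\epsilon r_0$. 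If completed, your route gives the same final estimate, but at the cost of a far stronger structural statement about the limit.

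The step you flag as needing ``additional bookkeeping'' is in fact a genuine obstruction, not a cosmetic one. The corkscrew balls supplied by Lemma~\ref{l:analytic-variety1.5} have radius $\approx_{c_0} r$ and sit at distance $\lesssim r$ from base points in $\supp\mu_0$; but $\supp\mu_0$ is only guaranteed to be a positive-$\HH^n$-measure subset of $L_0\cap B_0$, not all of it. Points of $H^{\pm}\cap B(0,1)$ whose orthogonal projections to $L_0$ are far from $\supp\mu_0$ need never be reached by any of these balls, and nothing in the hypotheses constrains the weak-$*$ limits $g^{\pm}$ there. Consequently the claim that the corkscrews exhaust $H^{\pm}\cap B(0,1)$ up to an $\HH^{n+1}$-null set is not implicit in Sections~\ref{sec-flat3}--\ref{sec-flat4}; it simply fails in the stated generality. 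The remedy is to drop the half-space identification entirely and use the vanishing of $\mathfrak{a}_0$ on $L_0\subset Z$, exactly as the paper does.
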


\begin{rem}	
	Before starting the proof, we pick $\lambda>0$, and then $\epsilon=\epsilon(\lambda)$ sufficiently small with respect to $\lambda$, and hence, thanks to Lemma \ref{lem-beta<epsilon}, we find a $\delta=\delta(\epsilon)$ so that $\beta_{\infty,E}(B_0)<\epsilon$ whenever \eqref{e:ce-1} is satisfied with this $\delta(\epsilon)$. Our proof will be by contradiction; thus it will output \textit{another} $\delta=\delta(\epsilon, \lambda)$ for which \eqref{e:mathfrak-a<lambda} holds as long as \eqref{e:ce-1} holds with \textit{this} $\delta=\delta(\epsilon, \lambda)$ and moreover $\epsilon \ll \lambda$.
\end{rem}

\begin{proof}
    Choose $\epsilon$ sufficiently small. Then by Lemma \ref{mainlemma1-text} (ii), we know that there is a $\delta(\epsilon)>0$ such that if \eqref{e:ce-1} holds with this $\delta$, then $\beta_{\infty, E}(B_0)<\epsilon$. Now suppose that the conclusion of Lemma \ref{lem-corkscrew-everywhere} is false. Then we find a $\lambda>0$ and a sequence of Borel sets $\Omega_j^\pm$ and measures $\mu_j$ with the properties as stated in the hypotheses of Lemma \ref{l:analytic-variety} (so in particular \eqref{e:small-alpha-compactness}), such that there exists a point $z_j \in \overline{\NN_{2 \epsilon r_0}(L_{j,B_0}) \cap B_0}$ (where $L_{j,B_0}$ is the $n$-plane minimising $\beta_{\infty, E_j}(B_0)$) and a radius $r_j \in [10\epsilon r_0, r_0]$ such that 
    \begin{equation*}
        \mbox{either} \quad\, \mathfrak{a}^+_j(z_j,r_j) \geq \lambda,\,\, \mbox{ or } \,\, \mathfrak{a}^-_j(z_j,r_j)  \geq \lambda.
    \end{equation*}
    Recall that $E_j \to E_0$ in the Hausdorff distance. Recall moreover that we proved in Lemma \ref{lemflat1} that $E_0 \subset L_0 \subset Z$, where $L_0$ is an $n$-plane and $Z$ is an $n$-dimensional analytic variety where $\mathfrak{a}_0(x,r) = 0$ for all $x \in Z$ and $r>0$ (to recall the definition of $\mathfrak{a}_0$, see \eqref{e:def-mathfrak-a-0}). Choose a subsequence, immediately relabelled, such that
    \begin{itemize}
        \item[(i)] $L_{j, B_0} \to L_0'$ for some $n$-plane $L_0'$;
        \item[(ii)] $r_j \to \mathfrak{r}$ for some $\mathfrak{r} \in [10 \epsilon r_0, r_0]$;
        \item[(iii)] $z_j \to z_0 \in \overline{\NN_{2 \epsilon r_0}(L_0') \cap B_0}$.
    \end{itemize} 
    It is clear that this subsequence exists, by compactness. Note moreover, that we have that $\dist(B_0 \cap L_0', B_0 \cap L_0) < \epsilon r_0$. If not, then for sufficiently large $j$, $\dist(x, L_{j, B_0})< \epsilon r_0$ for each $x \in E_j$, but also, by the convergence in the Hausdorff metric $E_j \to E_0$ we would have that $\dist_{H}(E_j, L_0) \ll \epsilon r_0$, and this is impossible.  Suppose that $\mathfrak{a}_j^+(z_j,r_j)> \lambda$ (as the other case is treated in the same way). Let $z_0'$ the orthogonal projection of $z_0$ onto $L_0$. We see that $|z_0-z_0'|< 2\epsilon r_0$. Then we compute
    \begin{align*}
        \lambda & = \left| \mathfrak{a}_j^+(z_j, r_j) - \mathfrak{a}_0^+(z_0', \mathfrak{r}) \right| \\
        & \leq \left| \int \one_{\Omega_j^+} e^{-|z_j-y|^2/r_j^2}\, dy - \int g^+ e^{-|z_0'-y|^2/\mathfrak{r}^2}\, dy \right| \\
        & \leq \left| \int \one_{\Omega_j^+} (e^{- |z_j-y|^2/r_j^2} - e^{-|z_0'-y|^2/\mathfrak{r}^2})\, dy\right| - \left|\int \left( \one_{\Omega_j^+} - g^+ \right) e^{-|z_0'-y|^2/\mathfrak{r}^2}\, dy\right| 
    \end{align*}
    Making $j$ sufficiently large, we can insure that the above terms are controlled by $2\epsilon$. Then, choosing 
    \begin{equation}\label{e:epsilon<lambda}
   10 \epsilon < \lambda,
    \end{equation} we reach a contradiction. The lemma is proven.
\end{proof}

\vv


\section{Domain splitting}\label{sec-splitting}

In this section we prove the third conclusion of Lemma \ref{mainlemma1-text}, that is, \eqref{e:mainlemma1-b'}. Roughly speaking, this asserts that whenever the square functions are sufficiently small in $E \cap B_0$, then a large part of the component $D^+$ is in $\Omega^+$; here $D^+$ is the component of $\frac12B_0\setminus L_0$ which contains $B^+$, the quasi-corkscrew mostly in $\Omega^+$;  $L_0$ is the plane infimising $\beta_{\infty, E}(B_0)$. The same can of course be said about $D^-$. Let us be more precise.

\begin{lemma}\label{lem-99domain}
	Let $\Omega^+$, $\Omega^- \subset \R^{n+1}$ be disjoint Borel sets. Fix $c_0 \in (0,1)$. For each $\gamma>0$, we can choose the parameters $\beta, \epsilon, \lambda>0$ from Lemma \ref{lemcork1}, \ref{lem-beta<epsilon}, and \ref{lem-corkscrew-everywhere}, respectively, so that there exists a $\delta=\delta(\beta, \epsilon, \lambda, \gamma)>0$ such that the following holds.
	Let $B_0=B(x_0, r_0)$ be a ball centered in a subset $E \subset B_0$. Suppose moreover that there is a Radon measure $\mu$ with $(n, 1)$-polynomial growth such that
	\begin{equation*}
		\mu(B_0) \geq c_0 r_0^n.
	\end{equation*}
	If
	\begin{equation}\label{e:sec11-ve<delta**}
		\int_0^{20r_0} \big(\ve_n(x,r)^2+ \mathfrak{a}(x,r)^2\big) \, \frac{dr}{r} < \delta\quad \mbox{ for all } x \in E
	\end{equation}
	and  $E\cap (B_0\setminus \frac45B_0)\neq\varnothing$,
	then the two components $D^+, D^-$ of $\frac12B_0\setminus \NN_{2\epsilon r_0}(L_0)$ satisfy (up to a relabelling)
	\begin{equation}\label{e:mainlemma1-b-proof}
		\HH^{n+1}(D^+ \setminus \Omega^+) \leq \gamma\, \HH^{n+1}(D^+) \quad \mbox{and} \quad \HH^{n+1}(D^- \setminus \Omega^-) \leq\gamma\, \HH^{n+1}(D^-),
	\end{equation}
where $L_0$ denotes a hyperplane minimizinging $\beta_{\infty, E}(B_0)$.
\end{lemma}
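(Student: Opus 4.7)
The plan is to argue by contradiction via a compactness argument in the spirit of Lemmas \ref{l:analytic-variety}, \ref{l:analytic-variety2}, and \ref{lemflat1}. Suppose the conclusion fails. Fix $\gamma>0$ and, choosing parameters $\beta_j,\epsilon_j,\lambda_j\to 0$ appropriately as prescribed by Remark \ref{rem-mainlemma1}, take a sequence of counterexamples $(\Omega_j^\pm,E_j,\mu_j)$ satisfying all hypotheses with $\delta=\delta_j\to 0$ but for which \eqref{e:mainlemma1-b-proof} fails. After rescaling to $B_0=B(0,1)$ and relabelling, we may assume
\[ \HH^{n+1}(D_j^+\setminus\Omega_j^+)>\gamma\,\HH^{n+1}(D_j^+). \]
Passing to a subsequence: $\one_{\Omega_j^\pm}\to g^\pm$ weakly-$*$ in $L^\infty(\R^{n+1})$ with $g^++g^-\le 1$; $E_j\to E_0$ in Hausdorff distance with $E_0\subset L_0$ an $n$-plane by Lemma \ref{lemflat1}; $\mu_j\to\mu_0$ weakly with $\supp\mu_0\subset L_0$ and $\mu_0(\overline{B_0})\ge c_0$; the minimising planes $L_{j,0}\to L_0$; and Lemma \ref{lemcork1} produces limit balls $B_0^\pm\subset 6B_0$ of radius comparable to $1$ on which $g^\pm=1$ a.e. Since $\epsilon_j\to 0$, the sets $D_j^\pm$ converge in $L^1_{\mathrm{loc}}$ to the two open components $D_0^\pm$ of $\tfrac12 B_0\setminus L_0$; after swapping labels if necessary, $B_0^+\subset D_0^+$. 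Combining weak-$*$ convergence of $\one_{\Omega_j^+}$ with $L^1$ convergence of $\one_{D_j^+}$, the failure assumption yields in the limit
\begin{equation}\label{e:failure-limit-proof}
\int_{D_0^+}(1-g^+)(y)\,dy \;\geq\; \gamma\,\HH^{n+1}(D_0^+),
\end{equation}
which we aim to contradict by establishing $g^+=1$ a.e.\ on $D_0^+$.

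The first key step exploits the smoothed coefficient. By Lemma \ref{lem-corkscrew-everywhere} applied with $(\epsilon_j,\lambda_j)\to(0,0)$, we have $\mathfrak{a}_j^\pm(x,r)<\lambda_j$ for every $x\in\NN_{2\epsilon_j}(L_{j,0})\cap B_0$ and $r\in[10\epsilon_j,1]$. Taking $x_j\in \NN_{2\epsilon_j}(L_{j,0})$ with $x_j\to x\in L_0\cap B_0$ and passing to the limit in the weak-$*$ convergence yields $\mathfrak{a}_0^\pm(x,r)=0$ for $x\in L_0\cap B_0$ and $r\in(0,1]$. Adding the two expanded identities $\int g^\pm(y)\,e^{-|y-x|^2/r^2}\,dy=\tfrac{\pi^{(n+1)/2}}{2}r^{n+1}$ and using $g^++g^-\le 1$ together with the strict positivity of the Gaussian kernel forces $g^++g^-=1$ a.e.\ on $\R^{n+1}$.

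The second step uses the volumetric symmetry coefficient. By Lemma \ref{lem1**} and the smallness of $\int_0^{20} \ve_j(x,r)^2\,\tfrac{dr}{r}\leq C\delta_j$ for $x\in E_j$, the corresponding integrals of $\mathfrak{g}_j(x,r)^2\,\tfrac{dr}{r}$ tend to $0$ uniformly on $E_j$. Taking $x_j\in E_j$ with $x_j\to x\in L_0\cap\supp\mu_0$ and extracting a further subsequence of scales, $\mathfrak{g}_j(x_j,r)\to 0$ for a dense set of $r\in(0,20)$. Since
\[ \mathfrak{g}_j(x_j,r)\,r^{n+1}=\int_{B(x_j,r)}\bigl|\one_{\Omega_j^+}(y)+\one_{\Omega_j^+}(2x_j-y)-1\bigr|\,dy, \]
we deduce $\one_{\Omega_j^+}(y)+\one_{\Omega_j^+}(2x_j-y)\to 1$ strongly in $L^1(B(x_j,r))$, whence $g^+(y)+g^+(2x-y)=1$ a.e.\ in $B(x,r)$ for every $x\in L_0\cap\supp\mu_0$ and a dense set of such $r$. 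Since $\supp\mu_0\cap L_0$ carries positive $\HH^n|_{L_0}$-measure (by $\mu_0(\overline{B_0})\geq c_0$ and the $n$-polynomial growth of $\mu_0$), its set of pairwise differences contains a neighborhood of the origin in $L_0$ by Steinhaus' theorem; combining two instances of the symmetry identity at basepoints $x_1,x_2\in L_0\cap\supp\mu_0$ produces $g^+(y)=g^+(y+2(x_2-x_1))$ in a common ball, and by iteration $g^+$ is invariant under all translations parallel to $L_0$. After a rotation placing $L_0=\R^n\times\{0\}$, we obtain $g^+(y',y_{n+1})=\tilde g^+(y_{n+1})$ in $\tfrac{3}{4}B_0$, with $\tilde g^+(t)+\tilde g^+(-t)=1$ a.e.

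The third and most delicate step pins down $\tilde g^+=\one_{(0,\infty)}$ a.e.\ on $\Pi(\tfrac12 B_0)$, where $\Pi$ is the orthogonal projection to the $y_{n+1}$-axis. The quasi-corkscrew $B_0^+\subset D_0^+$ already gives $\tilde g^+=1$ on an interval $[a_0,b_0]\subset(0,\infty)$ of length $\approx 1$. Applying Lemma \ref{l:analytic-variety1.5} at points $z\in L_0\cap\supp\mu_0$ and at a range of scales $r$ (available thanks to $\mu_0(\overline{B_0})\ge c_0$ together with the $n$-polynomial growth of $\mu_0$), we obtain, for each such $(z,r)$, two balls $B^\pm(z,r)\subset B(z,r)$ of radius $\approx r$ with $g^\pm=1$ a.e.; by $g^++g^-\le 1$ the balls lie on opposite sides of $L_0$. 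In the reduced setup, each $B^+(z,r)$ produces an interval of length $\approx r$ disjoint from $0$ on which $\tilde g^+=1$, and each $B^-(z,r)$ an interval of length $\approx r$ on which $\tilde g^+=0$. The anchoring on $[a_0,b_0]$ and the odd symmetry exclude any $B^+(z,r)$ from landing on the negative side whenever its reflection would force $\tilde g^+=0$ on a subinterval of $[a_0,b_0]$; a scale-by-scale propagation, covering all of $(-b_0,-a_0)$ at successively smaller scales, rules out negative-side landings throughout $\Pi(\tfrac12 B_0)$ and propagates $\tilde g^+=1$ throughout $(0,\infty)\cap\Pi(\tfrac12 B_0)$.

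This contradicts \eqref{e:failure-limit-proof} and completes the proof. The main obstacle is the third step: one must combine Lemma \ref{l:analytic-variety1.5} at an appropriate multiscale family of basepoints with the symmetry identities and the anchoring to produce an a.e.\ covering of $(0,\infty)\cap\Pi(\tfrac12 B_0)$ by intervals on which $\tilde g^+=1$, ruling out the "multi-slab" configurations which are a priori compatible with the constraints derived in the first two steps.
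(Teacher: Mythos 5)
Your approach is a compactness/blow-up argument, which is genuinely different from the paper's proof: the paper argues directly and quantitatively, producing two corkscrew balls $B^\pm$ in annular regions (via Lemma \ref{lemcork2}), showing via the auxiliary annulus Lemma \ref{lem11.2**} that each half-annulus $A^+_{B^\pm}$ is almost entirely in $\Omega^\pm$, covering $D^+\setminus\NN_{\gamma r_0}(P_0)$ by small balls $B_j$, and deriving a contradiction from the transversal overlap of two half-annuli whenever some $B_j$ has too much $\Omega^-$. No weak limits are taken.

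Your first two steps are sound. From $\mathfrak a_0^\pm\equiv 0$ on $L_0\cap B_0$ you correctly deduce $g^++g^-=1$ a.e.\ (the cancellation in $\int(1-g^+-g^-)e^{-|y-x|^2/r^2}\,dy=0$ against a strictly positive kernel with $1-g^+-g^-\geq0$). The passage from $\mathfrak g_j(x_j,r)\to 0$ to $g^+(y)+g^+(2x-y)=1$ a.e.\ is also legitimate: the $L^1$ norm of $\one_{\Omega_j^+}(\cdot)+\one_{\Omega_j^+}(2x_j-\cdot)-1$ on $B(x_j,r)$ tends to $0$, and its weak-$*$ limit is $g^+(y)+g^+(2x-y)-1$, hence $0$. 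Steinhaus then gives translation invariance along $L_0$, so $g^+(y)=\tilde g^+(y_{n+1})$ with $\tilde g^+(t)+\tilde g^+(-t)=1$.

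The gap is in step 3, and it is genuine, not merely a matter of exposition. The constraints accumulated through steps 1--2 and Lemma \ref{l:analytic-variety1.5} do \emph{not} rule out multi-slab configurations. Indeed, if $\tilde g^+(t)+\tilde g^+(-t)=1$, then $\tilde g^+(t)-\one_{t>0}$ is odd, so $\int(\tilde g^+(t)-\one_{t>0})e^{-t^2/r^2}\,dt=0$ automatically; in other words, after step 2 the identity $\mathfrak a_0^\pm(x,r)=0$ on $L_0$ carries no further information. Lemma \ref{l:analytic-variety1.5} only asserts, for each density pair $(z,r)$, the \emph{existence} of balls $B^\pm\subset B(z,r)$ of radius $\geq c_2(c_0)\,r$ with $g^\pm=1$; it says nothing about where they land. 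A geometrically decaying alternating slab pattern (e.g.\ $\tilde g^+=1$ on $[2^{-2k-1},2^{-2k}]$ and $0$ on $[2^{-2k},2^{-2k+1}]$, extended oddly) satisfies both the odd symmetry and, for every scale $r$, contains intervals of length $\gtrsim r$ where $\tilde g^+=1$ and where $\tilde g^+=0$; so if $c_2(c_0)$ is small, Lemma \ref{l:analytic-variety1.5} is satisfiable too. Your anchoring argument only forbids $B^+(z,r)$ from landing in $(-b_0,-a_0)$; for $r<a_0$ the corkscrews live in $(-r,r)$ where no anchoring is available, and the asserted ``scale-by-scale propagation'' has no mechanism to extend the anchored interval downward. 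The obstruction is structural: the $\ve_n$ coefficients control $\HH^n$ on spheres, and this sphere-level information does not survive weak-$*$ limits of the indicator functions; the surrogates you retain ($\mathfrak a$ and $\mathfrak g$) are strictly weaker. This is precisely why the paper proves this lemma by a direct half-annulus/corkscrew argument (Lemma \ref{lem11.2**} and the $B_j$-covering) rather than by compactness.
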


\vv
To prove this result we will first show the following.

\begin{lemma}\label{lem11.2**}
	Let $\Omega^+$, $\Omega^- \subset \R^{n+1}$ be disjoint Borel sets. 
Let $B_0=B(x_0, r_0)$ be a ball and let $E \subset B_0$ be a subset with $x_0\in E$. Suppose moreover that there is a Radon measure $\mu$ with $(n, 1)$-polynomial growth such that, for a given $c_0 \in (0,1)$,
	\begin{equation*}
		\mu(B_0) \geq c_0 r_0^n.
	\end{equation*}
Let $L_0$ be a hyperplane minimizing $\beta_{\infty,E}(B_0)$ and let $P_0$ be the hyperplane parallel to $L_0$ through $x_0$. 
For a given $\kappa\in(0,1/10)$, suppose there is a ball $B^+\subset B_0$ such that 
\begin{equation}\label{eq2eq**}
\HH^{n+1}(B^+\setminus \Omega^+)\leq \beta\,\HH^{n+1}(B^+)\quad \text{ and }\quad \rad(B^+)\geq\kappa\,r_0.
\end{equation}
Let $H_0^+$ be the a closed half-space with boundary $P_0$ and suppose that $4B^+\subset H_0^+$. 
Let $x^+$ be the center of $B^+$
 and consider the annulus and the half-annulus defined by
 \begin{equation*}
        A_{B^+}:= A\left(x_0, |x_0-x^+|- \tfrac12\rad(B^+),  |x_0-x^+|+ \tfrac12\rad(B^+)\right) \,\, \mbox{ and } \,\, A^+_{B^+} := A_{B^+} \cap H_0^+.
    \end{equation*}
Suppose that 	
\begin{equation}\label{e:sec11-ve<delta}
		\int_0^{10r_0} \big(\ve_n(x,r)^2+ \mathfrak{a}(x,r)^2\big) \, \frac{dr}{r} < \delta\quad \mbox{ for all } x \in E,
	\end{equation}
    For all $c_0>1$, $\sigma_1>0$ and $\kappa\in(0,1/10)$,  if $\delta$ and $\beta$ small enough, then the following holds:
    \begin{equation}\label{e:annuli-99percent}
        \HH^{n+1}(A^+_{B^+} \setminus \Omega^+) \leq \sigma_1 \HH^{n+1}(A^+_{B^+}).
    \end{equation}
\end{lemma}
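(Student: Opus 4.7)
The plan is to argue by compactness and contradiction, building on the weak flatness machinery developed in Sections \ref{sec-flat2}--\ref{sec-flat4}. Assume the conclusion fails for some fixed $c_0,\sigma_1,\kappa$ and extract sequences $\delta_j,\beta_j\downarrow 0$ with configurations $(\Omega_j^\pm,B_{0,j},E_j,\mu_j,B_j^+)$ satisfying every hypothesis but with $\HH^{n+1}(A^+_{B_j^+}\setminus\Omega_j^+)>\sigma_1\,\HH^{n+1}(A^+_{B_j^+})$. After rescaling to $B_{0,j}=B(0,1)$ and passing to a subsequence, Lemma \ref{l:convbasic} supplies weak-$*$ limits $\one_{\Omega_j^\pm}\to g^\pm$, Hausdorff convergence $E_j\to E_0$, $\mu_j\rightharpoonup\mu_0$, and $B_j^+\to B_0^+$; Lemma \ref{l:analytic-variety1.5} gives $g^+=1$ a.e.\ on $B_0^+$; and Lemmas \ref{l:analytic-variety2}--\ref{lemflat1} place $E_0\subset L_0\subset Z$ on an analytic variety along which $\mathfrak{a}_0^\pm$ vanish. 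Summing $\mathfrak{a}_0^+(x,r)=\mathfrak{a}_0^-(x,r)=0$ for $x\in L_0$, $r>0$, together with $g^++g^-\le 1$, forces $g^++g^-=1$ almost everywhere in $\R^{n+1}$.

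Next I produce a corkscrew on the side of $L_0$ opposite $B_0^+$. Since $\mu_j(B_{0,j})\ge c_0$ and $\mu_j$ has $(n,1)$-polynomial growth, one can pick $y_j\in E_j$ with $|y_j|\ge c_0^{1/n}$ and apply Lemma \ref{lemcork1} at $0$ and $y_j$ to obtain corkscrews $\wt B_j^\pm\subset B(0,6)$ of radius $\gtrsim 1$; in the limit they converge to balls $\wt B^\pm$ with $g^\pm=1$ a.e. Since $L_0\subset Z\subset\R^{n+1}\setminus(\wt B^+\cup\wt B^-)$ by Lemma \ref{l:analytic-variety}(b), we may relabel so that $\wt B^-\subset H_0^-$. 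The failure of the conclusion passes to the limit as $\HH^{n+1}(A^+_{B_0^+}\cap\{g^+<1\})\ge\sigma_1\,\HH^{n+1}(A^+_{B_0^+})$, so by a Lebesgue density argument there exist $y_0\in A^+_{B_0^+}$ and $\sigma',\eta_0>0$ with
\[
\HH^{n+1}\bigl(B(y_0,\eta_0)\cap\{g^+\le 1-\sigma'\}\bigr)\ge\tfrac12\HH^{n+1}(B(y_0,\eta_0)),
\]
and by $g^++g^-=1$ the same set also satisfies $g^-\ge\sigma'$.

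The contradiction arises from a spherical geodesic argument in the spirit of Lemma \ref{lemflat1}. I choose a $2$-plane $P$ through $0$ containing $y_0$ such that the circle $\Gamma\subset P$ of radius $|y_0|$ centred at $0$ meets $B_0^+$, $\wt B^-$, and a neighbourhood of $y_0$, with the cyclic order arranged so that the short arc of $\Gamma$ from $\wt B^-$ to $y_0$ crosses $B_0^+$. Applying Chebyshev to the smallness of $\int\ve_j(0,\cdot)^2\,dr/r$, Fubini on the three balls, and the weak convergence $\one_{\Omega_j^\pm}\to g^\pm$, one finds for each large $j$ a set $I_j$ of positive $\HH^1$-measure of radii $t\approx|y_0|$ along which $\ve_j(0,t)$ is small, $B_j^+\cap S(0,t)$ and $\wt B_j^-\cap S(0,t)$ sit mostly inside $\Omega_j^+$ and $\Omega_j^-$ respectively, and $\HH^n(B(y_0,\eta_0)\cap S(0,t)\cap\Omega_j^-)\gtrsim\sigma'\eta_0^n$. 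For $t\in I_j$, Lemma \ref{lembola*}(b) places the first two caps inside $H^+_{j,0,t}$ and $H^-_{j,0,t}$, while Lemma \ref{lembola*}(a) produces $w_{3,j}\in B(y_0,\eta_0)\cap S(0,t)\cap H^-_{j,0,t}$. For $j$ large the short geodesic on $S(0,t)$ from a suitable point $\tilde y_j^-\in\wt B_j^-\cap S(0,t)$ to $w_{3,j}$ hews close to the arc of $\Gamma$ prescribed above, so it meets $B_j^+\cap S(0,t)\subset H^+_{j,0,t}$; but both endpoints lie in $H^-_{j,0,t}$, whose geodesic convexity in $S(0,t)$ forces the arc to remain in $H^-$, the desired contradiction. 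The main technical obstacle is engineering the $2$-plane $P$ so the cyclic ordering on $\Gamma$ is correct; this uses the freedom to choose $y_0$ within its Lebesgue density region and to vary the partner point $y_j$ used in Lemma \ref{lemcork1} when producing $\wt B^-$.
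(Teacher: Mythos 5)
Your proposal takes a genuinely different route from the paper, and it contains a gap that I do not believe is easily filled.

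The paper's proof of Lemma \ref{lem11.2**} is direct and quantitative, not a compactness argument. It splits $A^+_{B^+}\setminus\Omega^+$ into the part near $P_0$, the part in $F$, and the part in $\Omega^-$ away from $P_0$; covers the last part by Besicovitch balls $B_j$ of radius $\tfrac18\sigma_1 r_{x^+}$ centred \emph{inside the half-annulus}; assumes by contradiction that some $B_j$ has $\HH^{n+1}(B_j\cap\Omega^-)>\sigma_1\HH^{n+1}(B_j)$; extracts from it, via Lemma \ref{lemcork2}, a $\beta$-corkscrew $\wt B^-\subset 2B_j$ for $\Omega^-$ \emph{at radial distance $\approx r_{x^+}$ from $x_0$ and inside $H_0^+$}; shrinks $B^+$ to $\wt B^+$ at exactly the same radial distance; and locates a third ball $B_{z_0}$ centred at $z_0\in P_0\cap\Gamma$, where $\Gamma$ is the great circle of $S(x_0,r_{x^-})$ through the two centres. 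The crucial geometric input is Lemma \ref{lem-corkscrew-everywhere}, which gives $\mathfrak{a}(z_0,r)<\lambda$ because $z_0$ lies near $L_0$, and therefore $B_{z_0}$ carries a definite fraction of both $\Omega^+$ and $\Omega^-$. Because $\wt B^+,\wt B^-$ both lie in $H_0^+$ far from $P_0$ and $B_{z_0}$ is centred on $P_0$, the cyclic order of the three caps on the great circle is determined, and the geodesic-convexity contradiction (Lemma \ref{lembola*}) goes through cleanly.

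Your compactness scheme begins well: passing to $g^\pm$, using the analytic-variety lemmas to flatten $E_0$ into $L_0\subset Z$, and noting that $\mathfrak{a}_0^+(x,2^{-k})=\mathfrak{a}_0^-(x,2^{-k})=0$ on $L_0$ forces $g^++g^-=1$ a.e.\ (since the Gaussian is strictly positive and $1-g^+-g^-\geq 0$, vanishing of even a single Gaussian average suffices). This observation is correct and is not made explicitly in the paper. The Lebesgue density point $y_0\in A^+_{B_0^+}$ with $g^-\geq\sigma'$ on half of $B(y_0,\eta_0)$ also follows.

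The gap is in the last paragraph. You write that ``we may relabel so that $\wt B^-\subset H_0^-$''; this is not justified. Lemma \ref{l:analytic-variety} only guarantees $\wt B^\pm\cap Z=\varnothing$, so each ball lies entirely on one side of $L_0$, but there is no a priori reason they lie on \emph{opposite} sides — that the two phases separate across $L_0$ is precisely what Lemma \ref{lem-99domain} (which relies on the present lemma) is to prove. More seriously, even granting $\wt B^-\subset H_0^-$, the circle $\Gamma$ you construct in the $2$-plane $P$ through $0$, $y_0$, and $x^+$, with radius $|y_0|$, passes through $y_0$ and near $x^+$ (because $y_0\in A^+_{B_0^+}$ forces $|y_0|\approx|x^+|$ to within $\tfrac12\rad(B^+)$), but there is no reason it should intersect $\wt B^-$: the radial distance of $\wt B^-$ from the origin is not controlled by Lemma \ref{lemcork1}, and the azimuthal direction to $\wt B^-$ need not lie in $P$. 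You acknowledge this as ``the main technical obstacle'' and appeal to ``the freedom to choose $y_0$'' and ``to vary the partner point $y_j$'', but this is exactly where a real argument is required and none is supplied. The paper sidesteps the issue by generating the $\Omega^-$-corkscrew \emph{from a bad ball at the correct radial distance inside the same annulus} (via Lemma \ref{lemcork2} rather than Lemma \ref{lemcork1}), and by producing the third ball not where $g^-$ is large but on $P_0$ where the smooth square function is small — which pins down the cyclic order unambiguously. Your argument would need either to replicate that positioning, or to produce the third ball by applying Lemma \ref{l:analytic-variety1.5} near $\Gamma\cap L_0$, which requires $\Gamma\cap L_0$ to sit near a density point of $\mu_0$; neither is established. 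As a secondary remark, a compactness proof here would also be less usable downstream: Lemma \ref{lem-99domain} needs the quantitative dependence of $\delta$ and $\beta$ on $\sigma_1$ and $\kappa$, which your contradiction argument would only yield after an additional extraction.
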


Remark that the ball $B^+$ above is assumed to be a $\beta$-corkscrew ball. We allow its radius to be pretty small compared to $r_0$, as soon as it is at least $\kappa \,r_0$. To prove the lemma, we will need to take $\delta,\epsilon\ll\kappa$.

\begin{figure}\label{figure3}
	\centering
	\includegraphics[scale=1.2]{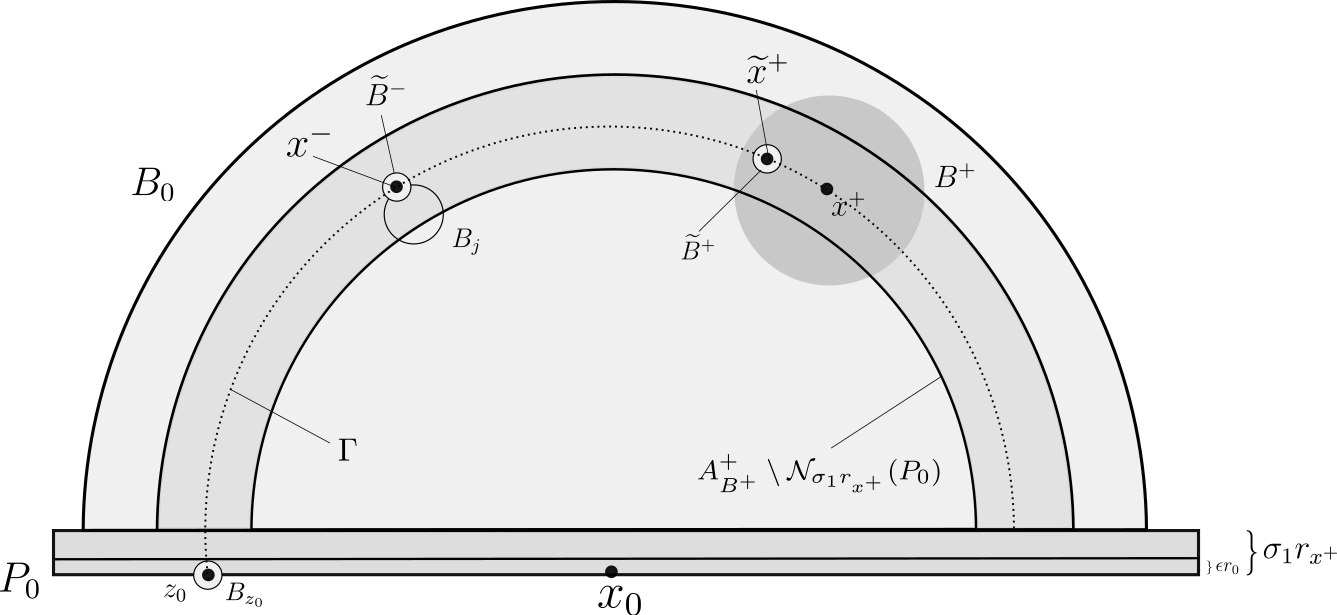}
	\caption{The setup of the proof of Lemma \ref{lem11.2**}}
\end{figure}

\begin{figure}\label{figure3-3D}
	\centering
	\includegraphics[scale=1.2]{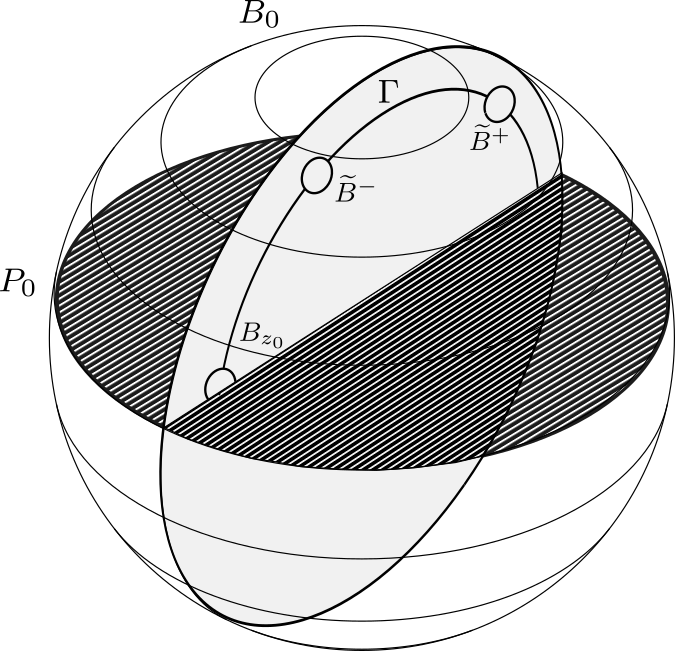}
	\caption{The setup of the proof of Lemma \ref{lem11.2**} in the three dimensional case. Note, however, that the plane which contains $\Gamma$ needn't be orthogonal to $P_0$.}
\end{figure}

\begin{proof}
Without loss loss of generality, we assume that $\sigma_1\ll \min(\kappa,1/10)$.
 By Lemma \ref{lem-beta<epsilon}, we know that $\beta_{\infty, E}(B_0)< \epsilon$, so that $\dist(P_0,L_0)\leq \epsilon\,r_0$.
Also, by Lemma \ref{lem-corkscrew-everywhere}, if $x \in \NN_{2\epsilon r_0}(L_0)\cap B_0$, then $\mathfrak{a}^\pm(x,r) < \lambda$ for each $r \in [10\epsilon r_0,r_0]$. See Figures 3 and 4 for the setup.

We write $r_{x^+} = |x_0-x^+|$, so that
$$A_{B^+}:= A\left(x_0, r_{x^+}- \tfrac12\rad(B^+),  r_{x^+}+ \tfrac12\rad(B^+)\right).$$
Notice that 
$$r_{x^+} \geq 4\,\rad(B^+) \geq 8\kappa\,r_0,$$
because $x_0\not\in 4B^+$.
To prove \rf{e:annuli-99percent}, 
we split
\begin{equation}\label{eqsplit290}
A^+_{B^+} \setminus \Omega^+  \subset \big[A^+_{B^+}\cap \Omega^- \setminus \mathcal{N}_{\sigma_1 r_{x^+}}(P_0)\big] \cup 
\big[A^+_{B^+}\cap \mathcal{N}_{\sigma_1 r_{x^+}}(P_0)\big] \cup  \big[A^+_{B^+}\cap F\big],
\end{equation}
where $F=\R^{n+1}\setminus (\Omega^+\cup\Omega^-$).
Clearly we have
\begin{equation}\label{eqskl28}
\HH^{n+1}(A^+_{B^+}\cap \mathcal{N}_{\sigma_1 r_{x^+}}(P_0))\lesssim \sigma_1\,\HH^{n+1}(A^+_{B^+}).
\end{equation}
Concerning the term $A^+_{B^+}\cap F$, since $A^+_{B^+}\subset B_0$ and for any $r>0$,
$$\HH^n(S(x_0,r)\cap F) \leq \HH^n(S^+(x_0,r)\setminus \Omega^+) +\HH^n(S^-(x_0,r)\setminus \Omega^-)
\leq r^n\,\ve(x_0,r),$$
then we have
\begin{align}\label{eqsplit291}
\HH^{n+1}(A^+_{B^+}\cap F) & \leq \HH^{n+1}(B_0\cap F) = \int_0^{r_0}\HH^n(S(x_0,r)\cap F)\,dr
\leq \int_0^{r_0} r^n\,\ve(x_0,r)\,dr \notag\\
&\lesssim \bigg(\int_0^{r_0} \ve(x_0,r)^2\,\frac{dr}r\bigg)^{1/2} r_0^{n+1}\lesssim \delta^{1/2} \,\HH^{n+1}(B_0) .
\end{align}
Since $\HH^{n+1}(B_0)\lesssim_\kappa \HH^{n+1}(B^+)\lesssim_\kappa \HH^{n+1}(A^+_{B^+})$, for $\delta$ small enough (depending on $\kappa$ and $\sigma_1$) we deduce
\begin{equation}\label{eqskl29}
\HH^{n+1}(A^+_{B^+}\cap F)\leq \sigma_1\,\HH^{n+1}(A^+_{B^+}).
\end{equation}

It remains to estimate $\HH^{n+1}(A^+_{B^+}\cap \Omega^- \setminus \mathcal{N}_{\sigma_1 r_{x^+}}(P_0))$. To this end,
by a trivial application of the Besicovitch covering theorem, we find a family of balls $B_j$, $j\in J$, such that:
\begin{itemize}
\item Each $B_j$ is centered in $A^+_{B^+}\setminus \mathcal{N}_{\sigma_1 r_{x^+}}(P_0)$ and satisfies
$\rad(B_j) = \frac18\sigma_1r_{x^+}$.
\item $A^+_{B^+}\setminus \mathcal{N}_{\sigma_1 r_{x^+}}(P_0)\subset \bigcup_{j\in J} B_j$.
\item The balls $B_j$ have finite superposition, i.e., $\sum_{j\in J}\chi_{B_j}\leq C$.
\end{itemize}
We will show below that
\begin{equation}\label{eqBj}
\HH^{n+1}(B_j\cap \Omega^-) \leq \sigma_1\HH^{n+1}(B_j)\quad \mbox{ for all $j\in J$}.
\end{equation}
From this estimate, the above properties of the balls $B_j$, and the fact that these balls are contained in
$A\left(x_0, r_{x^+}- \rad(B^+),  r_{x^+}+\rad(B^+)\right)$, we get
\begin{align*}
\HH^{n+1}(A^+_{B^+}\cap \Omega^-\setminus \mathcal{N}_{\sigma_1 r_{x^+}}&(P_0)) 
\leq \sum_{j\in J}\HH^{n+1}(B_j\cap \Omega^-) \leq \sigma_1\,\sum_{j\in J}\HH^{n+1}(B_j) \\
& \lesssim \sigma_1\,\HH^{n+1}\left( A\left(x_0, r_{x^+}- \rad(B^+),  r_{x^+}+\rad(B^+)\right)\right) \approx \sigma_1\,
\HH^{n+1}(A^+_{B^+}).
\end{align*}
Together with \rf{eqskl28} and \rf{eqskl29}, this yields
$$\HH^{n+1}(A^+_{B^+} \setminus \Omega^+) \leq \sigma_1 \HH^{n+1}(A^+_{B^+})$$
and proves \rf{e:annuli-99percent} up to harmless constant factor.

Next we turn our attention to \rf{eqBj}. For the sake of contradiction, suppose that
$\HH^{n+1}(B_j\cap \Omega^-) > \sigma_1\HH^{n+1}(B_j)$ for a given $j\in J$.
From Lemma \ref{lemcork2}  we infer that there exists  a $\beta$-corkscrew ball $\wt B^-\subset 2B_j$ for $\Omega^-$
such that $\rad(\wt B^-)\gtrsim_{\sigma_1,\kappa}\rad(B_j) \approx _{\sigma_1,\kappa}\rad(B^+)$.
Taking into account that  $\wt B^-\subset 2B_j$, that $B_j$ is centered in $A^+_{B^+}\setminus \mathcal{N}_{\sigma_1 r_{x^+}}(P_0)$, and that
$\rad(B_j) = \frac18\sigma_1r_{x^+}$, it follows that 
$$\wt B^-\subset A\left(x_0, r_{x^+}- \tfrac34\rad(B^+),  r_{x^+}+ \tfrac34\rad(B^+)\right)\cap H_0^+ 
\setminus \mathcal{N}_{\frac12\sigma_1 r_{x^+}}(P_0).$$
In particular, denoting $r_{x^-} :=|x_0-x^-|$, we also have
$|r_{x^+} - r_{x^-}|\leq \tfrac34\,\rad(B^+).$
Hence there is a ball $\wt B^+\subset B^+$ with the same radius as $\wt B^-$ such that its center $\wt x^+$ satisfies
$$|x_0-\wt x^+| = |x_0- x^-| = r_{x^-}.$$ 
So there is a unique great circle $\Gamma\subset S(x_0,r_{x^-})$ passing through $\wt x^+$ and $x^-$. Since both $\wt x^+$ and $x^-$ belong to the interior of $H_0^+$, it follows that $\Gamma$ intersects $P_0=\partial H_0^+$ in two points. Call
$z_0$ one such point and let $B_{z_0} = B(z_0,\eta\,\rad(\wt B^-))$, where $\eta\in (0,1/10)$ is some small constant which will 
be fixed below depending on $\sigma_1$ and $\kappa$, but neither on $\delta$ nor $\beta$. 

Notice that the three balls $\wt B^+,\wt B^-,B_{z_0}$ are all centered in $\Gamma\cap H_0^+$. Further,
\begin{equation}\label{equpper99}
\wt B^+\cup \wt B^- \subset  H_0^+ 
\setminus \mathcal{N}_{\frac12\sigma_1 r_{x^+}}(P_0)
\end{equation}
and $B_{z_0}$ is centered in $\partial H_0^+$. By Lemma \ref{lem-corkscrew-everywhere}, the latter condition implies that, for any given  small $\lambda>0$, we can assume that 
$$\mathfrak{a}(z_0, r)< \lambda \quad\text{ for all $r \in [10 \epsilon r_0, r_0]$,}$$
assuming $\delta,\epsilon$ small enough.
 In turn, it is easy to check that this implies that there is an absolute constant $\cseven>0$ such that
    \begin{equation}\label{eqbz07}
        \HH^{n+1}(B_{z_0} \cap \Omega^\pm) \geq \cseven\,\HH^{n+1}(B_{z_0})
    \end{equation}
    (recall that $\eta$ will be chosen below depending just on $\sigma_1$ and $\kappa$).
This condition, together with the fact that most of $\wt B^+$ is in $\Omega^+$ and most of $\wt B^-$ in $\Omega^-$, will lead
to a contradiction. The arguments are quite similar to the ones used at the end of Lemma \ref{lemflat1}. However, for completeness we will show the details.

Indeed, recall that $B^+$ and $\wt B^-$ are $\beta$-corkscrews for $\Omega^+$ and $\Omega^-$ respectively. This tells us that
$$\HH^{n+1}(\wt B^+\setminus \Omega^+) \leq \HH^{n+1}(B^+\setminus \Omega^+)\leq \beta\,\HH^{n+1}(B^+) \leq C(\sigma_1,
\kappa)\,\beta\,\HH^{n+1}(\wt B^+) =: \beta_1 \HH^{n+1}(\wt B^+),$$
and also 
$$\HH^{n+1}(\wt B^-\setminus \Omega^-) \leq \beta\,\HH^{n+1}(\wt B^-) \leq \beta_1\HH^{n+1}(\wt B^-).$$
Suppose, for example that $x^-$ is contained in shortest arc from $\Gamma$ joining $z_0$ and $x^+$ (when this does not happen, $x^+$ is contained in shortest arc from $\Gamma$ joining $z_0$ and $x^-$ and the arguments are similar).
Let $I_1= [r_{x^-}- \rad(B_{z_0}), r_{x^-}+ \rad(B_{z_0})]$ and denote by $G$ the subset of those $r\in I_1$ such that
\begin{enumerate}
\item $\HH^n(\wt B^+ \cap S(x_0,r)\setminus \Omega^+)<\beta_1^{1/2}\rad(\wt B^+)^n$,
\item $\HH^n(\wt B^- \cap S(x_0,r)\setminus\Omega^-)< \beta_1^{1/2}\rad(\wt B^-)^n$,
\item $\HH^n(B_{z_0} \cap S(x_0,r)\cap \Omega^+) \geq \frac{1}2\,\cseven\,\HH^{n}(B_{z_0}\cap S(x_0,r))
$, and
\item $\ve(x_0,r) \leq \delta^{1/4}$.
\end{enumerate}
For $k=1,\ldots,4$, we write $r\in G_k$ if $r\in I_1$ satisfies the condition previous condition (k), so that
$G=G_1\cap\ldots\cap G_4$. 

We claim that $\HH^1(G)\gtrsim\rad(B_{z_0})$. Indeed,
by Chebyshev,
\begin{align*}
\HH^1(I_1\setminus G_1) & \leq  \frac1{\beta_1^{1/2}\rad(\wt B^+)^n} \int_{I_1} \HH^n(\wt B^+ \cap S(x_0,r)\setminus\Omega^+)
\,dr  \leq  \frac1{\beta_1^{1/2}\rad(\wt B^+)^n} \,\HH^{n+1}(\wt B^+ \setminus\Omega^+)\\
&  \leq \frac{\beta_1}{\beta_1^{1/2}\rad(\wt B^+)^n} \,\HH^{n+1}(\wt B^+)  \leq C\,\beta_1^{1/2}\,\rad(\wt B^-) = C\,\eta^{-1}\beta_1^{1/2}\HH^1(I_1).
\end{align*}
The same estimate holds for $\HH^1(I_1\setminus G_2)$. Concerning $G_3$, we have
$$\int_{I_1\setminus G_3} \HH^n(B_{z_0} \cap S(x_0,r)\cap \Omega^+)\,dr \leq\frac{1}2\,\cseven\,
\int_{I_1\setminus G_3}\HH^{n}(B_{z_0}\cap S(x_0,r))\,dr\leq \frac{1}2\,\cseven\,\HH^{n+1}(B_{z_0}).$$
Thus, by \rf{eqbz07},
\begin{align*}
\HH^1(G_3)\,\rad(B_{z_0})^n &\gtrsim \int_{G_3} \HH^n(B_{z_0} \cap S(x_0,r)\cap \Omega^+)\,dr \\
& \geq 
\int_{I_1} \HH^n(B_{z_0} \cap S(x_0,r)\cap \Omega^+)\,dr - 
\frac{1}2\,\cseven\,\HH^{n+1}(B_{z_0}) \\
& = \HH^{n+1}(B_{z_0}\cap \Omega^+) - \frac{1}2\,\cseven\,\HH^{n+1}(B_{z_0})
\geq \frac{1}2\,\cseven\,\HH^{n+1}(B_{z_0})\approx \cseven\,\rad(B_{z_0})^{n+1}.
\end{align*}
Consequently, $\HH^1(G_3)\gtrsim \cseven\,\HH^1(I_1)$.
Regarding $G_4$, by Chebyshev and the assumption \rf{e:sec11-ve<delta} it follows
\begin{align*}
\HH^1(I_1\setminus G_4) & \leq \delta^{-1/4}\int_0^{2r_0}\ve(x_0,r)\,dr\leq 
\delta^{-1/4}r_0\left(\int_0^{2r_0}\ve(x_0,r)^2\,\frac{dr}r\right)^{1/2} \leq C(\sigma_1,\kappa,\eta)\,\delta^{1/4}\,\HH^1(I_1).
\end{align*}
So assuming both $\beta_1$ and $\delta$ small enough, we deduce that $\HH^1(G)\gtrsim \rad(B_{z_0})$, as claimed. 

From the claim just proved, we deduce that there exists some $r\in G$ such that $\HH^n(B_{z_0} \cap S(x_0,r))\approx \rad(B_{z_0})^n$ and $\HH^n(\wt B^+ \cap S(x_0,r)) = \HH^n( \wt B^- \cap S(x_0,r))
\approx \rad(\wt B^-)^n$, besides the properties (1)-(4) above. For this $r$, let 
$$\wt \Delta^+ = \wt B^+\cap S(x_0,r), \quad  \wt \Delta^- =  \wt B^-\cap S(x_0,r),\quad \Delta_0 =  B_{z_0}\cap S(x_0,r).$$
It is immediate to check that $\wt \Delta^+$, $\wt \Delta^-$ and $\Delta_0$ are spherical balls centered in a great circle
$\Gamma_r$ from $S(x_0,r)$. Moreover, $\wt \Delta^+\cup \wt \Delta^- \subset  H_0^+ 
\setminus \mathcal{N}_{\frac12\sigma_1 r_{x^+}}(P_0)$ and $\Delta_0$ is centered in $\Gamma_0\cap P_0$.
By Lemma \ref{lembola*} (b), if $\delta$ and $\beta$ are assumed small enough, then
$$\tfrac34\wt \Delta^+\subset H_{x_0,r}^+, \qquad \tfrac34 \wt \Delta^-\subset H_{x_0,r}^-,$$
and by Lemma \ref{lembola*} (a),
$$ \Delta_0 \cap H_{x_0,r}^+\neq\varnothing.$$

Let $z_1\in \Delta_0 \cap H_{x_0,r}^+$ and let $\Gamma_r'$ be the great circle passing through $z_1$ and the center of 
$\wt \Delta^+$. It is easy to check that if $\eta$ is assumed small enough, then $\Gamma_r'$ intersects $\tfrac34\wt \Delta^-$,
and the required smallness of $\eta$ depends only on $\sigma_1$ and $\kappa$, because the condition \rf{equpper99} guarantees that
the centers of $\Delta_0$ and $\wt \Delta_+$ are far from being antipodal points in $S(x_0,r)$ in terms of $\sigma_1$ and $r_{x_+}\approx_{\sigma_1,\kappa} r$. This leads to a contradiction because the arc of $\Gamma_r'$ 
whose end-points are $z_1$ and the center of $\wt \Delta^+$ and intersects $\tfrac34\wt \Delta^-$ is shorter than a half great circle, by \rf{equpper99} and the closeness of $z_1$ to $\partial H_0^+$, and then by the geodesic convexity of $S^+(x_0,r)$, it follows that $\tfrac34\wt \Delta^-\cap S^+(x_0,r)\neq\varnothing$, which is not possible.
So \rf{eqBj} holds and the lemma follows.
\end{proof}

\vv

\begin{proof}[Proof of Lemma \ref{lem-99domain}]
From the condition \rf{e:sec11-ve<delta} it easily follows that 
$$\HH^{n+1}(A(x_0,\tfrac8{10}r_0,\tfrac9{10}r_0)\cap \Omega^+) \approx 
\HH^{n+1}(A(x_0,\tfrac8{10}r_0,\tfrac9{10}r_0)\cap \Omega^-)\approx r_0^{n+1}.$$
So there exist balls $B_1^\pm$ centered in $A(x_0,\tfrac8{10}r_0,\tfrac9{10}r_0)$ with $\rad(B_1^\pm)=\frac1{100}r_0$ such that 
$$\HH^{n+1}(B_1^+\cap \Omega^+) \approx 
\HH^{n+1}(B_1^-\cap \Omega^-)\approx r_0^{n+1}.$$
Since the condition \rf{e:sec11-ve<delta} holds for $x_0$ and for another point $y_0\in E\cap (B_0\setminus \frac45B_0)$, from Lemma \ref{lemcork2}  we infer that there exists  two $\beta$-corkscrew balls $B^\pm$ for $\Omega^\pm$ such that
$$B^\pm\subset 2B_1^\pm\subset
A(x_0,\tfrac3{4}r_0,r_0) \quad \text{ and }\quad \rad(B^\pm)\approx r_0.$$ 
By reducing and moving $B^\pm$ suitably and replacing $\beta$ by a suitable multiple if necessary, we can assume
$8B^\pm \cap L_0=\varnothing$.

As in Lemma \ref{lem11.2**}, let $P_0$ be the hyperplane parallel to $L_0$ through $x_0$.
From the fact that $8B^+ \cap L_0=\varnothing$ and $\rad(B^+)\approx r_0$ it follows that $4B^+\cap P_0=\varnothing$
for $\epsilon$ small enough in \rf{beta-infty-small}. Let $H_0^+$ be the closed half-space with boundary $P_0$ that contains $4B^+$, and let
$$ A_{B^+}:= A\left(x_0, |x_0-x^+|- \tfrac12\rad(B^+),  |x_0-x^+|+ \tfrac12\rad(B^+)\right) \,\, \mbox{ and } \,\, A^+_{B^+} := A_{B^+} \cap H_0^+,$$
where $x^+$ is the center of $B^+$.
Then Lemma \ref{lem11.2**} ensures that, given $\sigma_1>0$ to be chosen below (with $\sigma_1\ll\gamma$), if $\delta$ is small enough, then
\begin{equation}\label{eqsigma100}
        \HH^{n+1}(A^+_{B^+} \setminus \Omega^+) \leq \sigma_1 \HH^{n+1}(A^+_{B^+}).
\end{equation}

We let $D^+$ be the component of $\frac12B_0\setminus \NN_{2\epsilon r_0}(L_0)$ which is contained in $ H_0^+$. 
We split
$$D^+  \setminus \Omega^+  \subset \big[D^+\cap \Omega^- \setminus \mathcal{N}_{\gamma r_0}(P_0)\big] \cup 
\big[D^+\cap \mathcal{N}_{\gamma r_0}(P_0)\big] \cup  \big[D^+\cap F\big].$$
Notice the analogy with \rf{eqsplit290}.
It is clear that
\begin{equation}\label{eqskl28'}
\HH^{n+1}(D^+\cap \mathcal{N}_{\gamma r_0}(P_0))\lesssim \gamma\,r_0^{n+1} \approx \gamma \,\HH^{n+1}(D^+).
\end{equation}
To deal with  $D^+\cap F$, we argue as in \rf{eqsplit291} and \rf{eqskl29}, and then we obtain
\begin{equation}\label{eqskl29**}
\HH^{n+1}(D^+\cap F)\leq \gamma\,r_0^{n+1}\approx \gamma \,\HH^{n+1}(D^+).
\end{equation}

To estimate  $\HH^{n+1}(D^+\cap \Omega^- \setminus \mathcal{N}_{\gamma r_0}(P_0))$, as in the proof of Lemma
\ref{lem11.2**}, by the Besicovitch covering theorem, we find a family of balls $B_j$, $j\in J$, such that:
\begin{itemize}
\item Each $B_j$ is centered in $D^+\setminus \mathcal{N}_{\gamma r_0}(P_0)$ and satisfies
$\rad(B_j) = \frac18\gamma r_0$.
\item $D^+\setminus \mathcal{N}_{\gamma r_0}(P_0)\subset \bigcup_{j\in J} B_j$.
\item The balls $B_j$ have finite superposition.
\end{itemize}
Arguing as in Lemma \ref{lem11.2**}, to prove the first inequality in \rf{e:mainlemma1-b-proof} (up to replacing $\gamma$ by a constant mutiple of $\gamma$), it suffices to show that
\begin{equation}\label{eqBj**}
\HH^{n+1}(B_j\cap \Omega^-) \leq \gamma\,\HH^{n+1}(B_j) \quad \mbox{ for all $j\in J$}.
\end{equation}

For contradiction, suppose that the preceding estimate does not hold for some ball $B_j$.
Then, from Lemma \ref{lemcork2}  we infer that there exists  a $\beta$-corkscrew ball $\wt B^-\subset 2B_j$ for $\Omega^-$
such that $\rad(\wt B^-)\gtrsim_{\gamma}\rad(B_j)  \approx _{\gamma}r_0$.
Taking into account that  $\wt B^-\subset 2B_j$, that $B_j$ is centered in $D^+\setminus \mathcal{N}_{\gamma r_0}(P_0)$, and that
$\rad(B_j) = \frac18\gamma r_0$, for $\gamma$ small enough it follows that 
$$\wt B^-\subset \tfrac{11}{20} B_0\cap H_0^+ 
\setminus \mathcal{N}_{\frac12\gamma r_0}(P_0).$$

Recall that there exists a point $y_0\in E\cap (B_0\setminus \frac45B_0)$. Then we apply Lemma \ref{lem11.2**} with the ball $B(y_0,2r_0)$ in place of $B_0$, with $\wt B^-$, $\Omega^-$ in place of $B^+$ and $\Omega^+$,
and with $c\gamma$ in place of $\kappa$ and also $\gamma$ in place of $\sigma_1$ in that lemma.
Let $P_{y_0}$ be the hyperplane parallel to $L_0$ through $y_0$ and let $H_{y_0}^+$ be the a closed half-space with boundary $P_{y_0}$ that contains $4\wt B^-$ (notice that $4\wt B^-\subset H_0^+ \cap H_{y_0}^+$).
Let $\wt x^-$ be the center of $\wt B^-$
 and consider the annulus and the half-annulus defined by
 \begin{equation*}
        A_{\wt B^-}:= A\left(y_0, |y_0-\wt x^-|- \tfrac12\rad(\wt B^-),  |y_0-\wt x^-|+ \tfrac12\rad(\wt B^-)\right) \,\, \mbox{ and } \,\, A^+_{\wt B^-} := A_{\wt B^-} \cap H_{y_0}^+.
    \end{equation*}
Then, assuming $\delta$ and $\beta$ small enough (depending on $\gamma$ among other parameters), it holds:
    \begin{equation}\label{e:annuli-99percent***}
        \HH^{n+1}(A^+_{\wt B^-} \setminus \Omega^-) \leq \gamma\, \HH^{n+1}(A^+_{\wt B^-}).
    \end{equation}

From the fact that $y_0\in B_0\setminus \frac45 B_0$ and $\wt x^-\in \frac{11}{20} B_0$ it follows that $|y_0-\wt x^-|\geq \frac14r_0$. Then one can check that the half annuli $A^+_{B^+}$ and $A^+_{\wt B^-}$ intersect in a transversal way and that
$$\HH^{n+1}(A^+_{B^+} \cap A^+_{\wt B^-}) \gtrsim \rad(\wt B^-)\,r_0^n,$$
taking into account that the thickness (i.e., the difference between the outer radius and the inner radius) of 
$A^+_{\wt B^-}$ equals $\rad(\wt B^-)$ and the thickness of $A^+_{B^+}$ is comparable to $r_0$. Consequently,
$$\HH^{n+1}(A^+_{B^+} \cap A^+_{\wt B^-}) \gtrsim \HH^{n+1}( A^+_{\wt B^-}),$$
where the implicit constant is absolute.
Therefore,
$$
\HH^{n+1}(A^+_{B^+} \cap A^+_{\wt B^-} \setminus \Omega^-)
\leq \HH^{n+1}(A^+_{\wt B^-} \setminus \Omega^-) \leq \gamma\,\HH^{n+1}(A^+_{\wt B^-})\leq C\,
\HH^{n+1}(A^+_{B^+} \cap   A^+_{\wt B^-}).$$ 
So, assuming $C\gamma\leq1/2$,
\begin{align*}
\HH^{n+1}(A^+_{B^+}\setminus\Omega^+) & \geq \HH^{n+1}(A^+_{B^+} \cap A^+_{\wt B^-} \cap \Omega^-)
\geq (1-C\,\gamma)\,\HH^{n+1}(A^+_{B^+} \cap   A^+_{\wt B^-})\\
& \geq \frac12\,\HH^{n+1}(A^+_{B^+} \cap   A^+_{\wt B^-}) \gtrsim \rad(\wt B^-)\,r_0^n \geq c(\gamma)\,r_0^{n+1} \approx c(\gamma)\,\HH^{n+1}(A^+_{B^+}).
\end{align*}
However, this estimate contradicts \rf{eqsigma100} if $\sigma_1$ is chosen small enough (depending on $\gamma$).
So \rf{eqBj**} holds and the first inequality in \rf{e:mainlemma1-b-proof} follows. The second inequality is proven in a similar way. Further, the estimates in \rf{e:mainlemma1-b-proof} imply that $D^+$ and $D^-$ are located at different
sides from $L_0$.
\end{proof}



\part{Fourier analytic estimates and stopping time arguments}

\section{Sketch of the proof of the second main lemma}


Recall the definitions
\begin{equation}\label{eqcoefgamma}
\gamma(x,r)  = 
\frac1{r^{n}}\,\max_{i=+,-}\HH^n\big((\Omega^i\triangle\, T_x(\Omega^{i,c}))\cap S(x,r)\big),
\end{equation}
where $T_x(y) = 2x-y$.
We also set
\begin{equation}\label{eqcoefg}
\mathfrak g(x,r)  = 
\frac1{r^{n+1}}\,\max_{i=+,-}\HH^{n+1}\big((\Omega^i\triangle\, T_x(\Omega^{i,c}))\cap B(x,r)\big).
\end{equation}

\vv

Now recall the Main Lemma \ref{mainlemma2}.
\begin{lemma}\label{l:main2}
	Let $\Omega^+,\Omega^-\subset\R^{n+1}$ be disjoint Borel sets.
	Fix $c_0\in(0,1)$, $\theta>0$ and $\ve>0$.  Let $B_0$ be a ball, let $E$ be compact, and let $\mu$ be a measure with $n$-polynomial growth with constant $1$ supported on $E$
	 satisfying the following conditions:
	\begin{itemize}
		\item[(a)]  $\mu(B_0\cap E)\geq c_0 r(B_0)^n$.
		
		\item[(b)]   For any ball $B$ with radius at most $200\, \rad(B_0)$ centered at $E$  such that
		$\mu(B)\geq \theta \, \rad(B)^n$, it holds $\beta_{\infty,E}(B)\leq \epsilon$ and moreover, if 
		$E\cap (B\setminus \frac45B)\neq\varnothing$
		and 
		$L_B$ stands for the hyperplane minimizing $\beta_{\infty,E}(B)$, the two components $D^+$, $D^-$ of $\frac12B\setminus L_B$ satisfy \textup{(}up to relabeling\textup{)}
		$$\HH^{n+1}(D^i \cap \Omega^i) \geq (1-\epsilon)\,\HH^{n+1}(D^i) \, \, \mbox{ for } \, i=+,-.$$

		\item[(c)] It holds that
		$$\int_0^{10\rad(B_0)}\mathfrak g(x,r)^2\frac{dr}{r} \leq \ve \,\text{ for every }x\in E.$$
	\end{itemize}
	If $\theta$ is small enough in terms of $c_0$, and $\ve$ is small enough in terms of $c_0$ and $\theta$, then there exists an $n$-dimensional Lipschitz graph $\Gamma$ with slope at most $1/10$ such that
	$$  \mu(\Gamma) \geq c \,\mu(B_0),$$
	for some absolute constant $0<c\leq 1$.
	Moreover, the slope of $\Gamma$ can be made arbitrarily small assuming $\delta$ small enough. 
\end{lemma}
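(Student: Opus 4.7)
The plan is to construct the Lipschitz graph $\Gamma$ via a Corona / stopping-time decomposition of $E\cap B_0$ into pieces lying close to well-controlled affine planes, and then to check both that these planes can be glued into a genuinely Lipschitz graph of small slope and that a definite fraction of $\mu$ actually lies on it. Concretely, following the strategy sketched in the introduction (Steps 2.1--2.2), I would first organize $\supp\mu\cap B_0$ into a system of dyadic-type cells adapted to $\mu$ (\`a la David), keeping only cells $Q$ whose associated ball $B_Q$ has $\mu(B_Q)\gtrsim \theta\rad(B_Q)^n$. For each such cell hypothesis (b) gives a best approximating plane $L_Q$ with $\beta_{\infty,E}(B_Q)\le\epsilon$, together with a coherent labelling of the two half-spaces bounded by $L_Q$ by $\Omega^+$ and $\Omega^-$. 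This labelling is the crucial piece of information that turns the purely metric ``flatness'' into orientation information and allows the planes of parent and child cells to be compared consistently. One then organizes the cells into trees $T_k$, stopping whenever the angle between $L_Q$ and the plane of the parent of $Q$ exceeds a fixed small threshold, or when the density drops below $\theta$.

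Having the trees in hand, on each $T_k$ I would identify the root plane with $\R^n$ and glue the local planes $L_Q$ using a Whitney-type partition of unity, obtaining a graph $\Gamma_{T_k}$ of a function $A_{T_k}:\R^n\to\R$. Standard computations show that $\|\nabla A_{T_k}\|_\infty$ is controlled by the stopping threshold, while
\[
\|\nabla A_{T_k}\|_2^2 \lesssim \sum_{Q\in T_k}\mu(Q)\,\mathrm{ang}(L_Q,L_{Q^*})^2,
\]
where $Q^*$ is the parent of $Q$. The key step is then to bound this sum using the Fourier-analytic estimates developed in Section~\ref{sec-fourier} (Step 2.1). On a Lipschitz graph, a Plancherel/Littlewood--Paley argument expresses $\int_{\Gamma_{T_k}}\int_0^{\rad(B_0)}\mathfrak{a}^\psi(x,r)^2\,\tfrac{dr}{r}\,d\HH^n(x)$ essentially as $\|\nabla A_{T_k}\|_2^2$ plus lower-order terms. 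Since on $E$ the coefficient $\mathfrak{a}^\psi$ is controlled by $\mathfrak g$ through Lemma~\ref{lem1} (applied with $\varphi=\bar\varphi$), and since $\Gamma_{T_k}$ is $\epsilon$-close to $E$ at the scales relevant to the tree, hypothesis (c) transfers to smallness of the Fourier-analytic integral on $\Gamma_{T_k}$. This yields $\|\nabla A_{T_k}\|_\infty^2\lesssim\ve$ (after a small further argument turning the $L^2$-bound and the packing estimates on the tree into an $L^\infty$ slope bound), hence slope $\le 1/10$ for $\ve$ small, and arbitrarily small as $\ve\to 0$.

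Finally, standard packing/summation estimates for Corona decompositions (using (a) to guarantee positive mass in the top tree and (b) plus the corkscrew conclusions of Main Lemma~\ref{mainlemma1-text}(i) to control the total $\mu$-mass assigned to stopping cells) yield $\mu(\Gamma_{T_0})\ge c\,\mu(B_0)$ for the top graph. I expect the main obstacle to be Step~3, i.e.\ turning the ambient symmetry coefficient $\mathfrak g$ (which compares $\Omega^+$ with $T_x(\Omega^{+,c})$ in $\R^{n+1}$) into the intrinsic tangential gradient $\nabla A_{T_k}$ on the approximating graph. The side-splitting hypothesis (b) is the bridge between these two worlds -- it is exactly what ensures that the symmetry of $\Omega^+,\Omega^-$ around $x$ is measured, up to errors of order $\epsilon$, by the reflection symmetry of the graph about the tangent plane -- and threading it through the Corona construction uniformly in scale, while keeping all error terms under control by $\sqrt\ve$, is the delicate point on which the whole argument rests.
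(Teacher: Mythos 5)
Your proposal has the right architecture and is indeed the same at a high level as the paper's proof (a David--Semmes/L\'eger-type stopping-time construction of a single Lipschitz graph, a Fourier/Plancherel identity on the graph, and a transfer of the ambient symmetry coefficient onto the graph via hypothesis (b)). However there is a genuine error in how you propose to obtain the slope bound, and a substantive gap in the step you yourself flag as delicate.

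\textbf{The slope-bound logic is reversed.} You first say, correctly, that $\|\nabla A\|_\infty$ is ``controlled by the stopping threshold'', but then claim that the $L^2$ estimate plus packing yields $\|\nabla A\|_\infty^2\lesssim\ve$, so that the slope is $\le 1/10$ ``for $\ve$ small''. That step cannot work: an $L^2(\R^n)$ bound on $\nabla A$ does not yield an $L^\infty$ bound on $\nabla A$, and there is no packing argument that rescues this. In the actual proof the slope of $\Gamma$ is $\lesssim\alpha$ purely by construction (the stopping threshold $\alpha$ is chosen first; see Lemma~\ref{l:A}), and the ``slope can be made arbitrarily small'' conclusion is obtained by choosing $\alpha$ small at the outset and then letting $\epsilon$ and $\ve$ depend on $\alpha$. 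The Fourier $L^2$ estimate $\|\nabla A\|_2^2\lesssim \epsilon^{1/(n+2)}r_0^n$ serves a different purpose: it is fed into the inequality $\mu(\BA)\lesssim\alpha^{-2}\|\nabla A\|_2^2$ to conclude that the big-angle stopping set has small $\mu$-measure. You have this piece, but you use it for the wrong conclusion.

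\textbf{The transfer from $\mathfrak g_\Omega$ to the graph is not just a matter of ``$\epsilon$-closeness''.} You observe correctly that the delicate point is passing from the ambient coefficient $\mathfrak g$ for $\Omega^\pm$ to an intrinsic quantity controlling $\nabla A$, and that hypothesis (b) is the bridge. But your argument---``since $\Gamma_{T_k}$ is $\epsilon$-close to $E$, hypothesis (c) transfers''---does not engage the actual difficulty: the Fourier estimate on $\Gamma$ involves $\mathfrak a_{V,\psi}$ or $\mathfrak g_V$ for the complementary Lipschitz domains $V^\pm=\R^{n+1}\setminus\Gamma$, whereas hypothesis (c) controls $\mathfrak g_\Omega$. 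These two quantities live on different pairs of sets, and Hausdorff proximity of $E$ to $\Gamma$ alone does not compare them, because the coefficients measure symmetry of open regions in $\R^{n+1}$, not of a thin set. The paper's Lemma~\ref{lemkey40} does this comparison by decomposing $B(x,r)$ with a Whitney net adapted to the \emph{symmetrized} distance $\wt d(y)=\max(d(y),d(-y))$, so that $Q\in\wt\WW$ iff $-Q\in\wt\WW$; it then splits cubes according to whether both $\pm Q$ avoid $\Gamma$ (where hypothesis (b) forces $Q$, $-Q$ to be almost entirely inside matching $\Omega^\pm$, yielding a lower bound for $\mathfrak g_\Omega$), or one of them meets $\Gamma$ (handled by a Carleson-type sum over Whitney cubes of $\Gamma$). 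This is not subsumed by ``closeness'' and must be proved; as written your argument has a gap precisely where the statement demands the most.

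\textbf{A minor remark.} The corkscrew conclusions of Main Lemma~\ref{mainlemma1}(i) are not what controls the stopping cells here; $\mu(\LD)$ is handled by a standard packing estimate, and $\mu(\BA)$ by the Fourier bound described above. The corkscrews enter in the proof of Main Lemma~\ref{mainlemma1}, not in this one.
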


We sketch how this part goes. We follow a strategy that goes back to  David and Semmes \cite[Section 8]{DS1} and L\'eger \cite{leger}. We need to construct a Lipschitz graph $\Gamma$ that  intersects in a portion of $\supp (\mu)$. To this end, we will use a stopping time argument which stops either because the angle from the approximating hyperplane becomes too big ($\BA$, `big angle') or because the measure $\mu$ has too low density ($\LD$).
In order to show that the graph intersects a portion of large measure $\supp(\mu)$, we must show that the places where we stopped, both $\BA$ and $\LD$, have small measure. That $\LD$ has small measure is standard; indeed having low density along a small slope graph means that by a packing argument it can only have a small proportion of the whole measure. The difficulty lies in showing that $\BA$ has small measure. 

It can be shown that if $\Gamma = \{(x, A(x))\}$ is the Lipschitz graph we construct, then
\begin{equation*}
	\mu(\BA) \lesssim\|\nabla A\|_2^2.
\end{equation*}
Thus if $\mu(\BA)$ is large, this forces $\|\nabla A\|_2^2$ to be large. From this, some contradiction should (and will) arise.
By a Fourier inequality, we will estimate $\|\nabla A\|_2^2$ in terms of a smooth square function over the approximating Lipschitz graph $\Gamma$. In turn, this smooth square function will be controlled by the square function $\mathfrak g$ associated with the symmetry coefficients for $\Omega^\pm$. The key lemma will be \ref{lemkey40}, which allow us to estimate the symmetry square function $\mathfrak g$ associated to the Lipschitz domains induced by $\Gamma$ 
in terms of the symmetry square function associated with $\Omega^\pm$.
 The arguments in this step require to argue with the symmetry square function $\mathfrak g$, instead of the smooth square funcion $\mathfrak a_\psi$, as in \cite{JTV}, say. The subtlety here is that we do not have control of $\beta_{\infty}$ over the whole set $\R^{n+1}\setminus(\Omega^+\cup\Omega^-)$ (which equals $\partial\Omega^+$ in \cite{JTV}), but rather of $\beta_{\infty}$ over the support of the measure $\mu$. The advantage of the coefficients $\mathfrak g$, which somewhat resemble the local symmetry condition of David-Semmes \cite{DS1} (but at the level of the domains), is that they contain more information than the smooth square function $\mathfrak a_\psi$.
\vv


\section{The smooth square function $\Apsi$ on Lipschitz graphs} \label{sec-fourier}
Recall that, given an integrable $C^\infty$ function $\psi:\R^{n+1}\to\R$,
and disjoint Borel sets $\Omega^\pm\subset \R^{n+1}$ and $x\in\R^{n+1}$, $r>0$, we denote
$$c_\psi = \int_{y\in\R^{n+1}_+} \psi(y)\,dy,
\qquad \mathfrak a_\psi^\pm(x,r) = \bigg|c_\psi - \frac1{r^{n+1}}\int_{\Omega^\pm} \psi\bigg(\frac{y-x}{r}\bigg)\,dy\bigg|.
$$
We also set
$$\Apsi(x)^2 = \int_0^\infty (\apsun(x,r)^2+\apsdu(x,r)^2)\,\frac{dr}r.$$
Remark that we allow $\psi$ to be non-radial.

We fix an even $C^\infty$ function $\overline{\vphi}:\R\to\R$ such that $\one_{[-1,1]}\leq \overline{\vphi}\leq  \one_{[-1.1,1.1]}$, and we denote
$$\overline{\vphi}_r(\zeta) = \frac1{r} \overline{\vphi}\Big(\frac {\zeta}r\Big),\quad\mbox{ for $\zeta\in\R$, $r>0$.}$$
We denote by $\vphi:\R^n \rightarrow \R$ the radial function such that $\vphi(x)=\overline{\vphi}(|x|)$ and we set 
$$\vphi_r(x_0) = \frac1{r^n} \vphi\Big(\frac {x}r\Big),\quad\mbox{ for $x\in\R^n$, $r>0$.}$$

Our objective in this section is to prove the following.

\begin{lemma}\label{lemlips}
Consider a Lipschitz function $f:\R^n\to\R$ with compact support and let $\Gamma\subset\R^{n+1}$ be its Lipschitz graph. Let
$\Omega^+ =\{(x,y)\in\R^{n+1}:y>f(x)\}$ and $\Omega^- =\{(x,y)\in\R^{n+1}:y<f(x)\}$.
Let $\vphi$ be a function as above and let
$$\psi(x) = \vphi(|x|),\quad \mbox{ for $x\in\R^{n+1}$.}$$
Let $\Apsi$ and $\apsi$ be the associated coefficients defined above.
There exists some $\alpha_0>0$ such that if $\|\nabla f\|_\infty\leq \alpha_0$, then
\begin{equation*}
\int_\Gamma \Apsi(x)^2\,d\HH^n(x)\approx \|\nabla f\|_{L^2(\R^n)}^2.
\end{equation*}
\end{lemma}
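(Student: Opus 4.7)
The plan is to reduce the surface integral over $\Gamma$ to a Fourier-analytic computation on $\R^n$. Parameterizing $\Gamma$ by $x_0 \mapsto (x_0, f(x_0))$ gives $d\HH^n|_\Gamma = \sqrt{1 + |\nabla f(x_0)|^2}\, dx_0$, which is comparable to $dx_0$ since $\|\nabla f\|_\infty \leq \alpha_0 \ll 1$. For $x = (x_0, f(x_0))$, I would perform the change of variables $y = x + rz$ with $z = (z_0, u) \in \R^n \times \R$ in $\apsun(x,r)$ and use that $\psi$ is radial to compute $c_\psi = \int_{\R^n}\!\int_0^\infty \psi(z_0, u)\, du\, dz_0$; this yields
\[ \apsun(x, r) = \left| \int_{\R^n} \int_0^{F_r(x_0, z_0)} \psi(z_0, u)\, du\, dz_0 \right| =: |I_r(x_0)|,\]
with $F_r(x_0, z_0) := (f(x_0 + rz_0) - f(x_0))/r$. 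The identity $\psi(z_0, -u) = \psi(z_0, u)$ gives $\apsdu(x,r) = |I_r(x_0)|$ as well, so $\aps(x,r)^2 = 2|I_r(x_0)|^2$ on $\Gamma$.

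Next I would split $I_r = L(r, x_0) + R(r, x_0)$ using the Taylor expansion $\psi(z_0, u) = \tilde\psi(z_0) + \tfrac{1}{2}\partial_u^2\psi(z_0, 0)\, u^2 + O(u^4)$ (odd-order terms vanish by radial symmetry of $\psi$), where $\tilde\psi(z_0) := \psi(z_0, 0)$. The linear part is
\[ L(r, x_0) := \int_{\R^n} \tilde\psi(z_0)\, F_r(x_0, z_0)\, dz_0 = \tfrac{1}{r}\bigl(\tilde\psi_r * f(x_0) - M f(x_0)\bigr),\]
with $\tilde\psi_r(y) := r^{-n}\tilde\psi(y/r)$ and $M := \int \tilde\psi = \hat{\tilde\psi}(0)$. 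By Plancherel in $x_0$,
\[ \int_{\R^n} \int_0^\infty |L(r, x_0)|^2\, \frac{dr}{r}\, dx_0 = \int_{\R^n} |\hat f(\xi)|^2 \int_0^\infty \frac{|\hat{\tilde\psi}(r\xi) - \hat{\tilde\psi}(0)|^2}{r^2}\, \frac{dr}{r}\, d\xi = C_\psi \|\nabla f\|_{L^2}^2,\]
where $C_\psi = \int_0^\infty |G(s) - G(0)|^2 s^{-3}\, ds$ and $G$ is the radial profile of $\hat{\tilde\psi}$. This constant is finite and strictly positive: near $s = 0$, $\nabla G(0) = 0$ by radial symmetry so $|G(s) - G(0)|^2 = O(s^4)$; for large $s$, $|G(s) - G(0)|^2$ is bounded and the integrand decays like $s^{-3}$.

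The main obstacle is the remainder estimate. Writing
\[ R(r, x_0) = \tfrac{1}{6}\int \partial_u^2 \psi(z_0, 0)\, F_r(x_0, z_0)^3\, dz_0 + \text{higher-order terms}, \]
the naive pointwise bound $|R| \lesssim \int |F_r|^3\, dz_0$ is insufficient because the first-order difference $F_r$ is not a Littlewood--Paley quantity (the Plancherel integral $\int_0^\infty \sin^2(\pi rz_0\xi)/r^3\, dr$ diverges at $r=0$). The key point is to exploit the radial symmetry of $z_0 \mapsto \partial_u^2\psi(z_0, 0)$: after substituting $z_0 \to -z_0$ and symmetrizing, I would factor
\[ F_r(z_0)^3 + F_r(-z_0)^3 = [F_r(z_0) + F_r(-z_0)]\,[F_r(z_0)^2 - F_r(z_0) F_r(-z_0) + F_r(-z_0)^2].\]
The first bracket equals the scaled second-order symmetric difference $(f(x_0 + rz_0) + f(x_0 - rz_0) - 2f(x_0))/r$, whose square satisfies $\int_{\R^n}\int_0^\infty r^{-2}|\,\cdot\,|^2\, dr/r\, dx_0 \approx |z_0|^2 \|\nabla f\|_{L^2}^2$ by Plancherel, since $\int_0^\infty \sin^4(\pi s)/s^3\, ds < \infty$ (integrable at both endpoints). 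The second bracket is pointwise bounded by $3\alpha_0^2 |z_0|^2$ because $|F_r(\pm z_0)| \leq \alpha_0 |z_0|$. Combining via Cauchy--Schwarz against the smooth compactly supported radial weight $|\partial_u^2\psi(z_0, 0)|\,|z_0|^2$ yields
\[ \int_{\R^n}\int_0^\infty |R(r, x_0)|^2\, \frac{dr}{r}\, dx_0 \lesssim \alpha_0^4 \|\nabla f\|_{L^2}^2.\]

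Finally, combining via $\bigl||I_r|^2 - |L|^2\bigr| \leq 2|L||R| + |R|^2$ and Cauchy--Schwarz,
\[ \int_\Gamma \Apsi(x)^2\, d\HH^n(x) \approx \int_{\R^n}\int_0^\infty 2|I_r(x_0)|^2\, \frac{dr}{r}\, dx_0 = 2 C_\psi \|\nabla f\|_{L^2}^2 \bigl(1 + O(\alpha_0^2)\bigr) \approx \|\nabla f\|_{L^2}^2,\]
provided $\alpha_0$ is chosen small enough to absorb the cross term. The approximation argument from smooth to Lipschitz $f$ is standard.
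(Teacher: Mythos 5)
Your proof is correct and takes a genuinely different route from the paper's. The paper introduces the auxiliary \emph{product-form} test function $\rho(x)=\varphi(x_0)\,\overline\varphi(x_{n+1})$: the point of this non-radial $\rho$ is that, on the graph, the associated coefficient $\lca_\rho(x,r)$ equals \emph{exactly} the linearized quantity $(\varphi_r*f(x_0)-c(\varphi)f(x_0))/r$ with no remainder (Lemma \ref{lem5.4}), and the discrepancy $|\A_\rho-\Apsi|$ is then controlled in Lemma \ref{lemdiff1} by comparing $\Omega^+$ with the half-space through $x$ whose slope is the \emph{averaged} gradient $c(\varphi)^{-1}\varphi_r*\nabla f(x_0)$, whose $L^2$-norm over $\dr\,dx$ is evaluated by the Fourier Lemma \ref{lemfourier2}. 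You skip the auxiliary $\rho$ and Taylor-expand the radial $\psi$ in the transverse coordinate directly, recovering the same leading convolution term; for the remainder you instead use the symmetry $z_0\mapsto-z_0$ to factor out the second-order symmetric difference $f(x_0+rz_0)+f(x_0-rz_0)-2f(x_0)$, whose multiplier $4\sin^2(\pi r z_0\cdot\xi)$ makes the $\dr$-integral converge at both ends. Both arguments yield the same $\|\nabla f\|_\infty^4\,\|\nabla f\|_{L^2}^2$ error; yours is somewhat more self-contained (no auxiliary test function, no averaged-tangent-plane comparison), while the paper's makes the geometric content of the error---the measure of $\Omega^+\triangle H^+_{x,r}$---visible, which it reuses elsewhere.

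One small point needs attention: you should not reason term-by-term from a Taylor \emph{series} of $\psi$ in $u$, since $\overline\varphi$ is $C^\infty$ with compact support and hence not real-analytic; the cubic term is only the leading piece, and the ``higher-order terms'' you mention still contain odd powers $F_r^{2j+1}$ whose $\dr/r$-integrals would diverge without symmetrization. The clean fix, entirely in the spirit of what you wrote for the cubic, is to symmetrize the whole remainder at once: set $g(z_0,F):=\int_0^F\bigl(\psi(z_0,u)-\psi(z_0,0)\bigr)\,du$, which is odd in $F$ (evenness of $\psi$ in $u$) with $|\partial_F g(z_0,F)|=|\psi(z_0,F)-\psi(z_0,0)|\lesssim F^2$ on the relevant range $|F|\lesssim\alpha_0$. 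Then $R(r,x_0)=\frac12\int\bigl[g(z_0,F_r(z_0))+g(z_0,F_r(-z_0))\bigr]dz_0$, and since $g(z_0,a)+g(z_0,b)=g(z_0,a)-g(z_0,-b)=(a+b)\int_0^1\partial_F g(z_0,-b+t(a+b))\,dt$, you get $|g(z_0,a)+g(z_0,b)|\lesssim\alpha_0^2|z_0|^2|a+b|$ in one stroke, after which your Cauchy--Schwarz and Plancherel steps go through unchanged. Finally, the closing sentence about smoothing $f$ is unnecessary: you Taylor-expand only $\psi$, never $f$, and the Plancherel computation only requires $f\in W^{1,2}$ with compact support, which any compactly supported Lipschitz function satisfies.
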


We will prove this result by using the Fourier transform. This will play an
essential role in the proof of Main Lemma \ref{l:main2}.
We need first some auxiliary results.

\begin{lemma}\label{lemfourier}
Let $f:\R^n\to\R$ be a Lipschitz function with compact support. Then we have
$$\int_{\R^n}\int_0^\infty \left|\frac{f*\vphi_r(x) - c(\vphi)f(x)}r \right|^2\,\frac{dr}r\,dx = c\,\|\nabla f\|_{L^2(\R^n)}^2,$$
where $c(\vphi) = \int_{\R^n}\vphi\,dx$ and $c>0$.
\end{lemma}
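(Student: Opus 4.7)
The plan is to prove this Littlewood--Paley-type identity via the Fourier transform. The starting observation is that, since $c(\vphi) = \int \vphi \, dx = \widehat{\vphi}(0)$, we may write
$$
\frac{f*\vphi_r(x) - c(\vphi)\,f(x)}{r}
$$
as a convolution operator whose symbol is $r^{-1}\bigl(\widehat{\vphi}(r\xi)-\widehat{\vphi}(0)\bigr)$. First I would apply Plancherel's theorem in the variable $x$ for each fixed $r$, obtaining
$$
\int_{\R^n}\Bigl|\tfrac{f*\vphi_r(x)-c(\vphi)f(x)}{r}\Bigr|^2 dx
= \int_{\R^n} |\widehat{f}(\xi)|^2 \,\frac{|\widehat{\vphi}(r\xi) - \widehat{\vphi}(0)|^2}{r^2}\,d\xi.
$$

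Next I would apply Fubini (justified by non-negativity of the integrand) to interchange the integrations in $r$ and $\xi$, and then perform the change of variables $s = r|\xi|$ in the inner integral to separate the $\xi$ dependence from the $r$ dependence. Since $\vphi$ is radial on $\R^n$, the Fourier transform $\widehat{\vphi}$ is also radial, so $\widehat{\vphi}(s\xi/|\xi|)$ depends only on $s$; let $\Phi(s) := \widehat{\vphi}(s e)$ for any unit vector $e$. The inner integral becomes
$$
\int_0^\infty \frac{|\widehat{\vphi}(r\xi)-\widehat{\vphi}(0)|^2}{r^3}\,dr = |\xi|^2 \int_0^\infty \frac{|\Phi(s)-\Phi(0)|^2}{s^3}\,ds =: c_0\,|\xi|^2.
$$
Putting things together yields
$$
\int_{\R^n}\int_0^\infty\Bigl|\tfrac{f*\vphi_r(x)-c(\vphi)f(x)}{r}\Bigr|^2 \frac{dr}{r}\,dx
= c_0 \int_{\R^n} |\widehat{f}(\xi)|^2\,|\xi|^2\,d\xi,
$$
and one more application of Plancherel identifies the right-hand side with $(2\pi)^{-n}\|\nabla f\|_{L^2(\R^n)}^2$ (up to the standard Fourier normalization constant), giving the claimed estimate with $c = c_0 \cdot (2\pi)^{-n}$.

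The only nontrivial point, which I would verify at the end, is that the constant $c_0$ is finite and strictly positive. Finiteness follows from the fact that $\vphi$ is even and Schwartz: near $s=0$, Taylor expansion of the radial smooth function $\Phi$ at the origin has no linear term, so $|\Phi(s)-\Phi(0)| = O(s^2)$ and the integrand is $O(s)$; near $s=\infty$, $\Phi(s)$ is bounded (in fact decays rapidly) and $|\Phi(s)-\Phi(0)|^2$ is bounded, so the integral converges by $\int_1^\infty s^{-3}\,ds <\infty$. Positivity follows from the fact that $\Phi$ is not identically equal to $\Phi(0)$, because $\vphi = \overline\vphi(|\cdot|)$ is not a constant multiple of the identity on $\R^n$. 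This completes the proof sketch; I expect no significant obstacles, as the computation is standard once one sets it up via Plancherel.
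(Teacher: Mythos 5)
Your proposal is correct and follows essentially the same route as the paper's proof: Plancherel, Fubini, the change of variable $s=r|\xi|$ exploiting the radial symmetry of $\wh\vphi$, and the check that the resulting universal constant $\int_0^\infty|\wh\vphi(se_1)-\wh\vphi(0)|^2\,s^{-3}\,ds$ is finite (since $\wh\vphi(se_1)-\wh\vphi(0)=O(s^2)$ near $0$ by radial smoothness, and $\wh\vphi$ decays at infinity) and strictly positive. No gap.
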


\begin{proof}
By Plancherel, we have
\begin{align*}
\int_{\R^n}\int_0^\infty \left|\frac{f*\vphi_r(x) - c(\vphi)f(x)}r \right|^2\,\frac{dr}r\,dx
& =
\int_{\R^n}\int_0^\infty \left|\frac{\wh f(\xi) \wh \vphi(r\xi) - \wh f(\xi) \wh \vphi(0)}r \right|^2\,\frac{dr}r\,d\xi\\
& =
\int_{\R^n}  |f(\xi)|^2 \int_0^\infty \big| \wh \vphi(r\xi) - \wh \vphi(0)\big|^2\,\frac{dr}{r^3}\,d\xi.
\end{align*}
By the change of variable $r|\xi|=t$, and using that $\wh \vphi$ is radial we get
$$\int_0^\infty \big| \wh \vphi(r\xi) - \wh \vphi(0)\big|^2\,\frac{dr}{r^3}
= |\xi|^2\int_0^\infty \big| \wh \vphi(te_1) - \wh \vphi(0)\big|^2\,\frac{dt}{t^3} .$$
The last integral is finite because 
 $\wh \vphi(te_1) - \wh \vphi(0) = O(t^2)$ as $t\to0$, since $\vphi$
is a radial function in the Schwartz class.
Hence,
$$\int_{\R^n}\int_0^\infty \left|\frac{f*\vphi_r(x) - c(\vphi)f(x)}r \right|^2\,\frac{dr}r\,dx
= \tilde c(\vphi)\int_\R |\xi \wh f(\xi)|^2\,d\xi = c\,\|\nabla f\|_{L^2(\R^n)}^2.$$
\end{proof}

\begin{lemma}\label{lemfourier2}
Let $f:\R^n\to\R$ be a Lipschitz function with compact support. Then we have
\begin{equation}\label{eqfour4}
\int_0^\infty\!\!\int_\R^n \int_{y\in\R^n:|y-x|\leq r} \left|\frac{ c(\vphi)^{-1}(\vphi_r*\nabla f)(x) \cdot(y-x)+f(x)-f(y)}r\right|^2\,\frac{dy}{r^n}\,dx\,\frac{dr}r = c\,\|\nabla f\|_{L^2(\R^n)}^2.
\end{equation}
\end{lemma}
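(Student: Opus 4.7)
The plan is to reduce the identity to a computation on the Fourier side, in the same spirit as Lemma \ref{lemfourier}. First I would perform the substitution $y = x+rz$ with $|z|\leq 1$ to separate the scale $r$ from the domain of integration in $y$, so that the left-hand side of \rf{eqfour4} becomes
$$\int_{|z|\leq1}\int_0^\infty \int_{\R^n} \left|c(\vphi)^{-1}(\vphi_r*\nabla f)(x)\cdot z+\frac{f(x)-f(x+rz)}{r}\right|^2 dx\,\frac{dr}r\,dz.$$

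Next, for each fixed $z$ with $|z|\leq 1$ and each $r>0$, I would apply Plancherel's theorem in the $x$-variable. Using the convention $\wh f(\xi)=\int f(x)e^{-ix\cdot\xi}dx$, the Fourier transform of the integrand (in $x$) is
$$\Bigl[c(\vphi)^{-1}\,\wh\vphi(r\xi)\,i(z\cdot\xi)+\frac{1-e^{irz\cdot\xi}}{r}\Bigr]\wh f(\xi).$$
After swapping the order of integration (which I would justify using the Schwartz decay of $\vphi$ and the compact support of $f$), we obtain
$$I=c_n\int_{\R^n}|\wh f(\xi)|^2 J(\xi)\,d\xi,\qquad J(\xi):=\int_{|z|\leq 1}\int_0^\infty \Bigl|c(\vphi)^{-1}\wh\vphi(r\xi)\,i(z\cdot\xi)+\frac{1-e^{irz\cdot\xi}}{r}\Bigr|^2\frac{dr}{r}\,dz.$$

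To finish, I would exploit the scale invariance of the measure $dr/r$ and the radial symmetry of $\wh\vphi$: substituting $r\mapsto r/|\xi|$ and rotating $z$ to align $\xi$ with $e_1$, a direct computation gives $J(\xi)=C_\vphi\,|\xi|^2$, where
$$C_\vphi=\int_{|z|\leq 1}\int_0^\infty\Bigl|c(\vphi)^{-1}\wh\vphi(r e_1)\,iz_1+\frac{1-e^{irz_1}}{r}\Bigr|^2\frac{dr}{r}\,dz.$$
Plancherel then yields $I=C_\vphi\,c_n\int|\xi|^2|\wh f(\xi)|^2\,d\xi = c\,\|\nabla f\|_{L^2}^2$.

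The one step that needs genuine verification is the finiteness of the constant $C_\vphi$, and this is the main (albeit minor) obstacle. Finiteness at $r\to\infty$ is immediate from the Schwartz decay of $\wh\vphi$ and the trivial bound $|(1-e^{irz_1})/r|\leq 2/r$. For $r\to 0$, one must Taylor expand: since $\wh\vphi(0)=c(\vphi)$, one has $c(\vphi)^{-1}\wh\vphi(re_1)=1+O(r^2)$ (using that $\vphi$ is even and smooth), while $(1-e^{irz_1})/r=-iz_1+\tfrac12 ir z_1^2 +O(r^2)$. The leading terms $iz_1-iz_1=0$ cancel, so the bracket is $O(r)$ as $r\to 0$, making the integrand $O(r)$ near $0$ and hence locally integrable against $dr/r$. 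Strict positivity of $C_\vphi$ follows because the bracket is not identically zero in $(r,z)$ (otherwise $f\mapsto$ a canonical smoothing of $\nabla f$ would satisfy a nontrivial identity forbidding all non-constant $f$), yielding the lemma with an explicit positive constant $c=c_n\,C_\vphi$.
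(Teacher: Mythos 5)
Your proof is correct and follows essentially the same route as the paper: substitute to decouple the scale $r$ from the integration domain, apply Plancherel in $x$, use the radial symmetry of $\wh\vphi$ together with the scale invariance of $dr/r$ to extract the factor $|\xi|^2$, and verify finiteness of the resulting universal constant by Taylor expansion near $r=0$ (where the $O(1)$ terms cancel) and decay of $\wh\vphi$ at infinity. (Minor slip: the Taylor term should read $\tfrac12 r z_1^2$ rather than $\tfrac12 i r z_1^2$, but this has no bearing on the cancellation or the conclusion.)
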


\begin{proof}
By replacing $\vphi$ by $c(\vphi)^{-1}\vphi$ if necessary, we may assume that
$\int \vphi\,dx=1$. This is due to the fact that, as we shall see below, the assumption that $\chara_{[-1,1]}\leq \vphi\leq  \chara_{[-1.1,1.1]}$ is not necessary for the validity of this lemma.

Appealing to the change of variable $z=y-x$ and Fubini's theorem, the left hand side of \rf{eqfour4}
(with $c(\vphi)=1$) equals
\begin{align}\label{eqali7345}
\int_0^\infty\!\!&\int_{z\in\R:|z|\leq r}\int_{x\in\R}  \left|\frac{(\vphi_r*\nabla f)(x)\cdot z+f(x)-f(x+z)}r\right|^2\,dx\,\frac{dz}{r^n}\,\frac{dr}r \\
& \stackrel{\text{Plancharel + Fubini}}{=}
\int_{\xi\in\R^n}\int_0^\infty\!\! \int_{z\in\R^n:|z|\leq r} \left|\frac{2\pi i \xi \cdot z\,\wh\vphi(r\xi)\,\wh f(\xi)+\wh f(\xi)-e^{2\pi i \xi z} \wh f(\xi)}r\right|^2\,\frac{dz}{r^n}\,\frac{dr}r\,d\xi\notag\\
& \qquad\quad= \int_{\xi\in\R^n}\int_0^\infty \!\!\int_{z\in\R^n:|z|\leq r} \left|2\pi i \xi \cdot z\,\wh\vphi(r\xi)+ 1-e^{2\pi i \xi z}\right|^2\,dz\,\frac{dr}{r^{n+3}}\,|\wh f(\xi)|^2\,d\xi.\notag
 \end{align}
 We claim that the inner integral on the right hand side only depends on $|\xi|$. Indeed, let $R:\R^{n}\to\R^n$ be a rotation such that
 $R(\xi) = |\xi|e_1$ and apply the change of variable $z'=R(z)$. Using also the fact that $\wh\vphi$ is radial, then we deduce that the inner integral equals
 \begin{multline*}
 \int_{z'\in\R^n:|z'|\leq r} \left|2\pi i \xi \cdot R^{-1}(z')\,\wh\vphi(r|\xi|e_1)+ 1-e^{2\pi i \xi R^{-1}(z')}\right|^2\,dz'
 \\= \int_{z'\in\R^n:|z'|\leq r} \left|2\pi i |\xi| e_1\cdot z'\,\wh\vphi(r|\xi|e_1)+ 1-e^{2\pi i |\xi| e_1\cdot z'}\right|^2\,dz'
 \end{multline*}
 Now we plug this identity into \rf{eqali7345} and we change variables again writing $y=|\xi|z'$. Then we get
 that the triple integral on the right hand side of \rf{eqali7345} equals
$$ \int_{\xi\in\R^n}\int_0^\infty \!\!\int_{y\in\R^n:|y|\leq |\xi|r} \left|2\pi i y_1\,\wh\vphi(r|\xi|e_1)+ 1-e^{2\pi i  y_1}\right|^2\,\frac{dy}{|\xi|^n}\,\frac{dr}{r^{n+3}}\,|\wh f(\xi)|^2\,d\xi.
 $$
Next we write $t=|\xi|r$, so that the triple integral above becomes
 \begin{multline*}
 \int_{\xi\in\R^n}\int_0^\infty \!\!\int_{y\in\R^n:|y|\leq t} \left|2\pi i y_1\,\wh\vphi(t e_1)+ 1-e^{2\pi i  y_1}\right|^2\,\frac{dy}{|\xi|^n}\,\frac{|\xi|^{n+2}\,dt}{t^{n+3}}\,|\wh f(\xi)|^2\,d\xi\\
 = \int_0^\infty \!\!\int_{y\in\R^n:|y|\leq t} \left|2\pi i y_1\,\wh\vphi(t e_1)+ 1-e^{2\pi i  y_1}\right|^2\,dy\,\frac{dt}{t^{n+3}}\,   \int_{\xi\in\R^n}|\xi\,\wh f(\xi)|^2\,d\xi.
\end{multline*}

 Hence, to prove the lemma it suffices to show that the double integral 
\begin{align*}
I:=\int_0^\infty \!\!\int_{y\in\R^n:|y|\leq t} \left|2\pi i y_1\,\wh\vphi(t e_1)+ 1-e^{2\pi i  y_1}\right|^2\,dy\,\frac{dt}{t^{n+3}}
\end{align*}
 is absolutely convergent and positive.
  It is immediate that this is positive.  To show that this is  absolutely convergent,
 we split it  as follows:
 \begin{align*}
 I & = \int_0^\infty \!
\int_{|y|\leq \min(1,t)}\cdots\; + \int_1^\infty \!\!\int_{1\leq |y|\leq t}\cdots  =: I_1 + I_2.
\end{align*}

First we estimate $I_2$:
\begin{align*}
I_2 &\lesssim \int_1^\infty \!\!\int_{1\leq |y|\leq t}\big(1+ \big|y_1\,\wh \vphi(te_1)\big|^2\big)\,dy\,\frac{dt}{t^{n+3}} \\ &\lesssim
 \int_1^\infty \frac{1}{t^{2}}\,dt +
   \int_1^\infty t^{n+2} \,\big|\wh \vphi(te_1)\big|^2\frac{dt}{t^{n+3}} \lesssim 1+  \int_1^\infty \big|\wh \vphi(te_1)\big|^2\,\frac{dt}{t} \lesssim 1,
\end{align*}
taking into account the fast decay of $\wh \vphi$ at $\infty$ for the last estimate.
Concerning $I_1$, we have
\begin{align} \label{eqfou84}
I_1  \lesssim \int_0^\infty \! \!
\int_{|y|\leq \min(1,t)}&
\left|2\pi i y_1 + 1 -e^{2\pi i y_1} \right|^2\, dy\,\frac{dt}{t^{n+3}}\notag\\
& +
\int_0^\infty \! \!
\int_{|y|\leq \min(1,t)}
\left|2\pi i y_1\,(\wh \vphi(te_1) - 1) \right|^2\, dy\,\frac{dt}{t^{n+3}}.
\end{align}
The first term on the right hand side satisfies
\begin{align*}
\int_0^\infty \! \!\int_{|y|\leq \min(1,t)}
\left|2\pi i y_1 + 1 -e^{2\pi i y_1} \right|^2\, dy\,\frac{dt}{t^{n+3}}
 & \leq
 \int_{B(0,1)} \int_{t\geq |y|}
\left|2\pi i y_1 + 1 -e^{2\pi i y_1} \right|^2\,\frac{dt}{t^{n+3}}\,dy
\\
& \lesssim
\int_{B(0,1)}
\left|2\pi i y_1 + 1 -e^{2\pi i y_1} \right|^2\,\frac{1}{|y|^{n+2}}\,dy
\lesssim 1,
\end{align*}
taking into account that $2\pi i y_1 + 1 -e^{2\pi i y_1}=O(|y|^2)$ as $y\to0$.
Finally we turn our attention to the second term on the right hand side of \eqref{eqfou84}:
\begin{align*}\int_0^\infty \! \!
\int_{|y|\leq \min(1,t)}
\left|2\pi i y_1\,(\wh \vphi(te_1) - 1) \right|^2\, dy\,\frac{dt}{t^{n+3}} &
\lesssim \int_{B(0,1)} |y|^2\, \int_{t\geq |y|}\! \left|\wh \vphi(te_1) - 1 \right|^2\,\frac{dt}{t^{n+3}}\,dy
\\&\lesssim 1 +  \int_{B(0,1)} |y|^2\, \int_{|y|\leq t\leq 1}\! \left|\wh \vphi(te_1) - 1 \right|^2\,\frac{dt}{t^{n+3}}\,dy.
\end{align*}
Since $\vphi\in C^{\infty}$ is radial, and $\wh\vphi(0)=1$, we have $\wh \vphi(t e_1) - 1= O({t}^2)$ as $t\to0$, and so the last integral is bounded above by
 $$C\int_{B(0,1)} |y|^2\, \int_{|y|\leq t\leq 1}\frac{1}{t^{n-1}}\, dt\,dy\lesssim \int_{B(0,1)} |y|^{4-n}\,dy\lesssim1.
$$
So $I_1<\infty$ and the proof of the lemma
is concluded.
\end{proof}
\vv

In the remaining of this section,  we will use the notation $x=(x_0,x_{n+1})$ for $x\in\R^{n+1}$.

\begin{lemma}\label{lem5.4}
Let $f:\R^n\to\R$ be a Lipschitz function with compact support with $\| \nabla f\|_\infty\leq 1/10$ and let $\Gamma$ and
$\Omega^\pm$ be as in Lemma \ref{lemlips}.
For $x=(x_0,x_{n+1})\in\R^n \times \R$, denote
$$\rho(x) = \vphi(x_0)\,\overline{\vphi}(x_{n+1}).$$
Then we have
$$\lca_\rho^\pm(x,r) = \frac{\vphi_r * f(x_0) - c(\vphi)\,f(x_0)}r\quad \mbox{for all $x\in\R^{n+1}$ in the graph of $f$ and all $r>0$}
$$
and
$$\int_\R \A_{\rho}((x_0,f(x_0)))^2\,dx_0 =2
\int_{\R^{n+1}}\int_0^\infty \left|\frac{f*\vphi_r(x_0) - c(\vphi)f(x_0)}r \right|^2\,\frac{dr}r\,dx_0,$$
where $c(\vphi) = \int_\R\vphi\,dx$.
\end{lemma}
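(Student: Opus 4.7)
The plan is to verify the pointwise identity on the graph by a direct change of variables, and then extract the integrated identity using the symmetry of $\rho$ in its last coordinate. The key observation—which is really the only place where the slope hypothesis $\|\nabla f\|_\infty \leq 1/10$ enters—is that on the support of $\vphi$, the normalized difference quotient of $f$ takes values in $[-1,1]$, where $\overline{\vphi}$ equals $1$; this linearizes an otherwise nonlinear expression in $f$ and makes it into a genuine convolution.

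First I would unfold $\lca_\rho^+(x,r)$ for $x=(x_0,f(x_0))$. Since $\rho$ is even in $x_{n+1}$ and even in each component of $x_0$, I can replace $(x-y)/r$ by $(y-x)/r$ in the definition of $\apsi$ without changing anything. Writing $\Omega^+ = \{(y_0,y_{n+1}) : y_{n+1} > f(y_0)\}$, Fubini gives
$$\frac{1}{r^{n+1}}\int_{\Omega^+}\rho\Bigl(\frac{y-x}{r}\Bigr)dy = \frac{1}{r^{n+1}}\int_{\R^n}\vphi\Bigl(\frac{y_0-x_0}{r}\Bigr)\int_{f(y_0)}^\infty \overline{\vphi}\Bigl(\frac{y_{n+1}-f(x_0)}{r}\Bigr)dy_{n+1}\,dy_0.$$
Setting $\Phi(t) := \int_t^\infty \overline{\vphi}(s)\,ds$ and performing the changes of variable $z_0=(y_0-x_0)/r$ in the outer integral and $z_{n+1}=(y_{n+1}-f(x_0))/r$ in the inner one, this becomes
$$\int_{\R^n}\vphi(z_0)\,\Phi\!\left(\tfrac{f(x_0+rz_0)-f(x_0)}{r}\right)dz_0.$$
Since $c_\rho = c(\vphi)\cdot\tfrac12 c(\overline{\vphi}) = \int\vphi(z_0)\Phi(0)\,dz_0$, subtracting yields
$$c_\rho - \frac{1}{r^{n+1}}\int_{\Omega^+}\rho\Bigl(\frac{y-x}{r}\Bigr)dy = \int_{\R^n}\vphi(z_0)\Bigl[\Phi(0) - \Phi\bigl(D_{r,z_0}f(x_0)\bigr)\Bigr]dz_0,$$
with $D_{r,z_0}f(x_0) := (f(x_0+rz_0)-f(x_0))/r$.

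Now comes the key step. The function $\vphi$ is supported in $\bar B(0,1.1)$, and the mean-value theorem combined with $\|\nabla f\|_\infty\leq 1/10$ gives $|D_{r,z_0}f(x_0)| \leq 1.1/10 < 1$ for every $z_0$ in the support of $\vphi$. Since $\overline{\vphi}\equiv 1$ on $[-1,1]$, we have the elementary identity $\Phi(0)-\Phi(t) = t$ for $|t|\leq 1$. Therefore the bracketed expression collapses to $D_{r,z_0}f(x_0)$, and reverting the change of variables $z_0 \mapsto (y_0-x_0)/r$ (using that $\vphi$ is even, so $\vphi_r$ equals its own reflection, giving an honest convolution) we obtain
$$c_\rho - \frac{1}{r^{n+1}}\int_{\Omega^+}\rho\Bigl(\frac{y-x}{r}\Bigr)dy = \frac{\vphi_r*f(x_0) - c(\vphi)f(x_0)}{r}.$$
Taking absolute values proves the first claimed identity.

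Finally, for the integrated statement I would exploit that $\rho$ is symmetric enough that $\lca_\rho^+ = \lca_\rho^-$. Indeed, since $\R^{n+1}\setminus(\Omega^+\cup\Omega^-)$ is a graph and hence Lebesgue-null, and $\rho$ is even in $x_{n+1}$, we have
$$\frac{1}{r^{n+1}}\int_{\Omega^+}\rho\Bigl(\frac{y-x}{r}\Bigr)dy + \frac{1}{r^{n+1}}\int_{\Omega^-}\rho\Bigl(\frac{y-x}{r}\Bigr)dy = \frac{1}{r^{n+1}}\int_{\R^{n+1}}\rho\Bigl(\frac{y-x}{r}\Bigr)dy = 2c_\rho,$$
so the two deficits $c_\rho - \frac{1}{r^{n+1}}\int_{\Omega^\pm}\rho$ are negatives of each other and the absolute values coincide. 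Hence $\A_\rho(x)^2 = 2\int_0^\infty \lca_\rho^+(x,r)^2\,dr/r$, and integrating the pointwise identity against $dx_0$ and applying Fubini yields
$$\int_{\R^n}\A_\rho((x_0,f(x_0)))^2\,dx_0 = 2\int_{\R^n}\int_0^\infty\left|\frac{\vphi_r*f(x_0)-c(\vphi)f(x_0)}{r}\right|^2\frac{dr}{r}\,dx_0,$$
which is the stated identity. I do not expect any real obstacle: the whole argument rests on the single algebraic observation $\Phi(0)-\Phi(t)=t$ for $|t|\leq 1$, which is available precisely because the slope hypothesis forces the difference quotient into the flat region of $\overline{\vphi}$.
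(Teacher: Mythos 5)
Your proof is correct and follows essentially the same line as the paper: both reduce the computation to a Fubini decomposition in the $\R^n\times\R$ coordinates, observe that the slope bound $\|\nabla f\|_\infty\le 1/10$ together with $\supp\vphi\subset \bar B(0,1.1)$ forces the one-dimensional integral of $\overline\vphi_r$ to live in the flat region where $\overline\vphi\equiv 1$, and thereby linearize to the convolution expression. Your auxiliary antiderivative $\Phi$ and explicit change of variables are cosmetic repackagings of the paper's direct manipulation of $\int_{f(x_0)}^{f(y_0)}\overline\vphi_r(y_{n+1}-x_{n+1})\,dy_{n+1}$, and your evenness argument for $\lca_\rho^+=\lca_\rho^-$ justifies what the paper states (just after the proof) as a one-line observation.
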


\begin{proof}
Observe that
\begin{align*}
c_\rho - \frac1{r^n}\int_{\Omega^+} \rho\bigg(\frac{y-x}{r}\bigg)\,dy & =
\frac1{r^n}\int_{y_0\in\R^n}\int_{y_{n+1}> f(x_0)} \vphi\bigg(\frac{y_0-x_0}{r}\bigg)\overline{\vphi}\bigg(\frac{y_{n+1}-x_{n+1}}{r}\bigg)\,dy_{n+1}\,dy_0 \\
& \quad - \frac1{r^n}
\int_{y_0\in\R^n}\int_{y_{n+1}> f(y_0)} \vphi\bigg(\frac{y_0-x_0}{r}\bigg)\overline{\vphi}\bigg(\frac{y_{n+1}-x_{n+1}}{r}\bigg)\,dy_{n+1}dy_0 \\
& = \int_{y_0\in\R^n} \vphi_r(y_0-x_0)
\int_{f(x_0)}^{f(y_0)} \overline{\vphi}_r(y_{n+1}-x_{n+1})\,dy_{n+1} dy_0
\end{align*}
Observe also that, if $\vphi_{r}(y_0-x_0)\neq 0$, then because $\|\nabla f\|_\infty\leq 1/10$,
$$\overline{\vphi}_r(y_{n+1}-x_{n+1})=\frac1{r}\quad \mbox{ for $y_{n+1}\in [f(x_0),f(y_0)]$.}$$
 As a consequence
$$c_\rho - \frac1{r^n}\int_{\Omega^+} \rho\bigg(\frac{y-x}{r}\bigg)\,dy
= \int_{y_0\in\R^n} \vphi_r(y_0-x_0)
\frac{f(y_0)- f(x_0)}r  dy_0 = \frac{\vphi_r * f(x_0) - c(\vphi)\,f(x_0)}r
.$$
Hence,
$$\A_\vphi(x)^2= 2
\int_0^\infty \lca_\vphi^+(x,r)^2 \,\frac{dr}r= 2\int_0^\infty \left|\frac{\vphi_r * f(x_0) - c(\vphi)\,f(x_0)}r\right|^2 \,\frac{dr}r.$$
Integrating with respect to $x_0$ in $\R$, the lemma follows.
\end{proof}

\vv

\begin{lemma}\label{lemdiff1}
Let $f:\R^n\to\R$ be a Lipschitz function with compact support with $\| \nabla f \|_\infty\leq 1/10$ and let $\Gamma$ and 
$\Omega^\pm$ be as in Lemma \ref{lemlips}.
For $x=(x_0,x_{n+1})\in\R^n\times \R$, denote
$$\rho(x) = \vphi(x_0)\,\overline\vphi(x_{n+1})\quad \text{and}\quad \psi(x)=\vphi(|x|)
.$$
Then we have
$$\int_\Gamma |\A_{\rho}(x) - \Apsi(x)|^2\,d\HH^n(x) \lesssim \| \nabla f\|_\infty^4\,\|\nabla f\|_{L^2(\R^n)}^2.
$$
\end{lemma}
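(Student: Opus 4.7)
My plan is to express the difference $\lca_\rho^\pm(x,r) - \lca_\psi^\pm(x,r)$ as an integral of $\rho-\psi$ over the thin slab between the graph $\Gamma$ and the horizontal plane through $x$. The integrand $h := \rho - \psi$ vanishes to second order in the vertical variable, so its antiderivative $H$ in that variable is cubic. I will kill the cubic by subtracting a linear approximation of $f$: the resulting symmetry (radial in $z_0$, odd in $\tau$) forces the leading term to integrate to zero, and the quadratic remainder is exactly the quantity controlled by Lemma~\ref{lemfourier2}.

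\textbf{Step 1 (setup via change of variables).} Fix $x = (x_0,f(x_0)) \in \Gamma$. Carrying out the change of variables $z = (y-x)/r$ in the definitions of $\lca_\rho^+(x,r)$ and $\lca_\psi^+(x,r)$ as in the proof of Lemma~\ref{lem5.4}, one obtains
\[
g_\rho^+(x,r) - g_\psi^+(x,r) \;=\; \int_{\R^n} H\!\bigl(z_0, F_r(z_0)\bigr)\,dz_0,
\]
where $F_r(z_0) := [f(x_0+rz_0) - f(x_0)]/r$, $H(z_0,\tau) := \int_0^\tau [\rho(z_0,u) - \psi(z_0,u)]\,du$, and $g_\ast^\pm$ denotes the signed version of $\lca_\ast^\pm$. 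By the reverse triangle inequality $|\lca_\rho^\pm - \lca_\psi^\pm| \leq |g_\rho^\pm - g_\psi^\pm|$, and the symmetry $\psi(z_0,-u) = \psi(z_0,u)$, $\rho(z_0,-u) = \rho(z_0,u)$ gives $g_\ast^- = -g_\ast^+$, so it suffices to estimate $|g_\rho^+ - g_\psi^+|$.

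\textbf{Step 2 (structure of $H$).} Since $\overline{\vphi}$ is smooth and even, we may write $\overline{\vphi}(t) = \tilde\phi(t^2)$ with $\tilde\phi \in C_c^\infty$. On the range relevant here, namely $|z_0| \leq 1.1$ (outside of which $h \equiv 0$) and $|u| \leq \|\nabla f\|_\infty |z_0| \leq 0.11$ (where $\overline{\vphi}(u) = 1$), one has
\[
h(z_0,u) = \tilde\phi(|z_0|^2) - \tilde\phi(|z_0|^2 + u^2), \qquad |h(z_0,u)| \lesssim u^2\,\chi_{|z_0|\leq 1.1}.
\]
Hence $H$ is odd in $\tau$, invariant under $z_0 \mapsto -z_0$, satisfies $|H(z_0,\tau)| \lesssim |\tau|^3 \chi_{|z_0|\leq 1.1}$, and
\[
|H(z_0,\tau) - H(z_0,\tau')| \lesssim \max(|\tau|,|\tau'|)^2\,|\tau - \tau'|\,\chi_{|z_0|\leq 1.1}.
\]

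\textbf{Step 3 (cancellation and pointwise bound).} Introduce the linear approximation
\[
L_r(z_0) := c(\vphi)^{-1}\,(\vphi_r * \nabla f)(x_0)\cdot z_0.
\]
The map $z_0 \mapsto H(z_0, L_r(z_0))$ is odd under $z_0 \mapsto -z_0$ (by the invariance of $H$ in $z_0$ combined with oddness in $\tau$ and linearity of $L_r$), so $\int_{\R^n} H(z_0, L_r(z_0))\,dz_0 = 0$. Therefore
\[
g_\rho^+(x,r) - g_\psi^+(x,r) = \int_{\R^n} \bigl[H(z_0, F_r(z_0)) - H(z_0, L_r(z_0))\bigr]\,dz_0.
\]
Since $|F_r(z_0)|, |L_r(z_0)| \leq \|\nabla f\|_\infty |z_0|$, Step~2 yields
\[
|H(z_0, F_r) - H(z_0, L_r)| \lesssim \|\nabla f\|_\infty^2\,|z_0|^2\,|R_r(z_0)|\,\chi_{|z_0|\leq 1.1}, \qquad R_r := F_r - L_r.
\]
Cauchy--Schwarz in $z_0$ then gives
\[
|g_\rho^+(x,r) - g_\psi^+(x,r)|^2 \lesssim \|\nabla f\|_\infty^4 \int_{|z_0|\leq 1.1} |R_r(z_0)|^2\,dz_0.
\]

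\textbf{Step 4 (Fourier-side conclusion).} Integrate over $x \in \Gamma$ (with $d\HH^n(x) \approx dx_0$ since $\|\nabla f\|_\infty \leq 1/10$) and over $r \in (0,\infty)$ against $dr/r$, then change variables $y = x_0 + r z_0$ to rewrite the right-hand side as the square function appearing in Lemma~\ref{lemfourier2}, up to the harmless modification $|y - x_0| \leq 1.1\,r$ in place of $|y - x_0| \leq r$ (handled by an identical argument with a different constant). This gives
\[
\int_\Gamma \int_0^\infty |g_\rho^+(x,r) - g_\psi^+(x,r)|^2 \frac{dr}{r}\,d\HH^n(x) \lesssim \|\nabla f\|_\infty^4\,\|\nabla f\|_{L^2(\R^n)}^2,
\]
and combining with Step~1 (and doubling for the ``$-$'' contribution) yields the asserted bound.

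\textbf{Main obstacle.} The naive estimate $|g_\rho^+ - g_\psi^+| \lesssim \|\nabla f\|_\infty^3$ from $|H(z_0,F_r)| \lesssim |F_r|^3$ is too weak, as $\int_0^\infty dr/r$ then diverges. The crucial step is thus the cancellation argument: identifying $L_r$ as the correct linear surrogate for $F_r$, verifying that the radial symmetry of $H$ in $z_0$ makes $\int H(\cdot, L_r)\,dz_0$ vanish, and reducing the difference to the quadratic remainder $R_r$ whose Littlewood--Paley-type square function is precisely the object controlled by Lemma~\ref{lemfourier2}.
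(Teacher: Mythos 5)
Your proof is correct and takes essentially the same approach as the paper's. The oddness cancellation $\int_{\R^n} H(z_0,L_r(z_0))\,dz_0=0$ is an equivalent packaging of the paper's identity $c_\rho=\rho_r*\one_{H^+_{x,r}}(x)$, which reduces the difference to an integral of $|\rho_r-\psi_r|$ over the symmetric difference $\Omega^+\triangle H^+_{x,r}$; both arguments then extract the $\|\nabla f\|_\infty^2$ gain from the second-order vanishing of $\rho-\psi$ in the vertical variable and close via Cauchy--Schwarz and Lemma~\ref{lemfourier2}.
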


Notice that for the open sets equal to the two components of $\R^{n+1}\setminus \Gamma$, we have $\apsun(x,r)=\apsdu(x,r)$ and $\lca_\rho^+(x,r) = \lca_\rho^-(x,r)$. So quite often we will drop the superindices $+,-$ in this situation.

\begin{proof}
For $r>0$, $x\in\R^{n+1}$, we denote
$$\rho_r(x) = \frac1{r^{n+1}} \rho\Big(\frac xr\Big),\qquad\psi_r(x) = \frac1{r^{n+1}} \psi\Big(\frac xr\Big),
\qquad \vphi_r(x_0) = \frac1{r^n} \vphi\Big(\frac {x_0}r\Big)
.$$
Then we have
\begin{equation}\label{eq931}
\big|\lca_\rho(x,r) - \aps(x,r)\big|\leq \big|(\rho_r * \chara_{\Omega^+} - c_\rho) - (\psi_r * \chara_{\Omega^+} - c_\psi)\big|.
\end{equation}

For $x\in\Gamma$, $r>0$, we denote by $L_{x,r}$ the hyperplane passing through $x$ with slope equal to $c(\vphi)^{-1}(\vphi_r*\nabla f)(x_0)$ (as a graph over $\R^n$), and we let $H_{x,r}^+$, $H_{x,r}^-$ be two complementary half spaces whose common boundary is $L_{x,r}$, so that $H_{x,r}^+$ is above $L_{x,r}$ and $H_{x,r}^-$ is below $L_{x,r}$.

Observe that, by the radial symmetry of $\psi$,
$$c_\psi = \int_{y\in\R^{n+1}_+} \psi(y)\,dy = \psi_r*\chara_{H_{x,r}} (x)\quad \mbox{ for all $x\in\R^{n+1}$ and $r>0$.}$$
We claim that the same identity holds replacing $\psi$ by $\rho$. To check this, suppose that $x=0$ for easiness of
notation and let $y_{n+1}= b \cdot y_0$ be equation of the line $L_{0,r}$. Then, by the evenness of $\vphi$ we have
\begin{align*}
\rho_r*\chara_{H_{0,r}} (0) & =\int \vphi_r(y_0)\int_{y_{n+1}>b \cdot y_0}\overline{\vphi}_r(y_{n+1})\,dy_{n+1}\,dy_0\\
& = \frac12 \int \vphi_r(y_0)\int_{y_{n+1}>b \cdot y_0}\overline{\vphi}_r(y_{n+1})\,dy_{n+1}\,dy_0 + \frac12
\int \vphi_r(y_0)\int_{y_{n+1}<-b \cdot y_0}\overline{\vphi}_r(y_{n+1})\,dy_{n+1}\,dy_0\\
& = \frac12 \int \vphi_r(y_0)\int_{y_{n+1}>b \cdot y_0}\overline{\vphi}_r(y_{n+1})\,dy_{n+1}\,dy_0 + \frac12
\int \vphi_r(y_0)\int_{y_{n+1}<b \cdot y_0}\overline{\vphi}_r(y_{n+1})\,dy_{n+1}\,dy_0\\
& = \frac12 \int \rho_r(y)\,dy =\int_{\R^{n+1}_+} \rho_r(y)\,dy = c_\rho,
\end{align*}
which proves the claim.

From the above identities and \rf{eq931}, for $x\in\Gamma$, we obtain

\begin{align}\label{eqplu89}
\big|\lca_\rho(x,r) - \aps(x,r)\big| & \leq \big|\rho_r * (\chara_{\Omega^+} - \chara_{H_{x,r}^+})(x) - \psi_r * (\chara_{\Omega^+} - \chara_{H_{x,r}^+})(x)\big| \\
& = \big|(\rho_r- \psi_r) * (\chara_{\Omega^+} - \chara_{H_{x,r}^+})(x)\big| \nonumber\\
& \leq \int_{\Omega^+ \Delta H_{x,r}^+} |\rho_r(y-x)- \psi_r(y-x)|\,dy.\nonumber
\end{align}
 But now observe that, if $|x-y|<3r$, then using the fact that $\|\nabla f\|_{\infty}<1/10$, we have
$\overline{\vphi}_r(y_{n+1}-x_{n+1}) = \frac1r$ for all $x\in \Gamma$ and $y\in \Omega^+ \Delta H_{x,r}^+$. Thus,
by the definition of $\rho$ and $\psi$,
$$\rho_r(y-x)- \psi_r(y-x) = \vphi_r(y_0-x_0)\, \overline\vphi_r(y_{n+1}-x_{n+1}) - \frac1r \vphi_r(|y-x|) =
\frac1{r^{n+1}}\,\big( \vphi_r(y_0-x_0) - \vphi_r(|y-x|)\big).$$
Still for $x\in \Gamma$ and $y\in \Omega^+ \Delta H_{x,r}^+$,
notice that if $|x-y|\leq r/2$, then
$\vphi_r(y_0-x_0) =\frac{1}{r^{n-1}}\overline{\vphi}_r(|y-x|) =\frac1r$
and thus $\rho_r(y-x)- \psi_r(y-x)=0$; while if $|x-y|\geq r/2$
$$\big|\rho_r(y-x)- \psi_r(y-x)\big|\leq \frac1{r^n}\|(\overline{\vphi}_r)'\|_\infty \,\big|(y_0-x_0)- |y-x|\big|
\lesssim \frac{|y_{n+1}-x_{n+1}|^2}{r^{n+3}}
.$$
Since $\supp\rho_r \cup\supp\psi_r\subset B(0,3r)$, in any case we get
$$\big|\rho_r(y-x)- \psi_r(y-x)\big|
\lesssim \frac{(\|\nabla f\|_\infty\,r)^2}{r^{n+3}} =  \frac{\|\nabla f\|_\infty^2}{r^{n+1}}
\quad \mbox{for $x\in \Gamma$ and $y\in \Omega^+ \Delta H_{x,r}^+$}
.$$
Plugging this estimate into \rf{eqplu89}, we obtain
$$\big|\lca_\rho(x,r) - \aps(x,r)\big|\lesssim \frac{\|\nabla f\|_\infty^2}{r^{n+1}}\,\HH^{n+1}\big(
(\Omega^+ \Delta H_{x,r}^+)\cap B(x,3r)\big).$$
Next, using the fact that the equation of  the line $L_{x,r}$ is $$y_{n+1} = c(\vphi)^{-1}(\vphi_r*\nabla f)(x_0)\cdot(y_0-x_0) + f(x_0),$$
we get
\begin{align*}
\HH^{n+1}\big(
&(\Omega^+ \Delta H_{x,r}^+)\cap B(x,3r)\big)  \leq \int_{|y_0-x_0|\leq 3r} |c(\vphi)^{-1}(\vphi_r*\nabla f)(x_0)(y_0-x_0) + f(x_0) - f(y_0)|\,dy_0\\
& \lesssim r^{n/2}
\left(\int_{|y_0-x_0|\leq 3r} |c(\vphi)^{-1}(\vphi_r*\nabla f)(x_0)(y_0-x_0) + f(x_0) - f(y_0)|^2\,dy_0\right)^{1/2}.
\end{align*}
Hence,
\begin{align*}
\big|\lca_\rho(x,r) - &\aps(x,r)\big|\\
&\lesssim \frac{\|\nabla f\|_\infty^2}{r^{n/2+1}}\,
\left(\int_{|y_0-x_0|\leq 3r} |c(\vphi)^{-1}(\vphi_r*\nabla f)(x_0)(y_0-x_0) + f(x_0) - f(y_0)|^2\,dy_0\right)^{1/2}
.
\end{align*}
Therefore,
\begin{align*}
|\A_{\rho}&(x) - \Apsi(x)| \\
& = \left|\left(\int_0^\infty \lca_\rho(x,r)^2\,\frac{dr}r\right)^{1/2} -
\left(\int_0^\infty \aps(x,r)^2\,\frac{dr}r\right)^{1/2}
\right|\\
& \leq \left(\int_0^\infty |\lca_\rho(x,r) - \aps(x,r)|^2\,\frac{dr}r\right)^{1/2}\\
&\lesssim
\|\nabla f\|_\infty^2
\left(\int_0^\infty \!\!
\int_{|y_0-x_0|\leq 3r} |c(\vphi)^{-1}(\vphi_r*\nabla f)(x_0)(y_0-x_0) + f(x_0) - f(y_0)|^2\,dy_0
\,\frac{dr}{r^{n+2}}\right)^{1/2}.
\end{align*}
Squaring and integrating on $x$ and applying Lemma \ref{lemfourier2}, we get
\begin{align*}
&\int_\Gamma |\A_{\rho}(x) - \Apsi(x)|^2\,d\HH^1(x) \approx
\int |\A_{\rho}(x) - \Apsi(x)|^2\,dx_0\\
&\; \lesssim
\|\nabla f\|_\infty^4 \!\int_{\R^n} \int_0^\infty \!\!\!
\int_{|y_0-x_0|\leq 3r} \!\!|c(\vphi)^{-1}(\vphi_r*\nabla f)(x_0)(y_0-x_0) + f(x_0) - f(y_0)|^2\,dy_0
\,\frac{dr}{r^{n+2}}\,dx_0 \\
&\;\approx \|\nabla f\|_\infty^4\,\|\nabla f\|_{L^2(\R^n)}^2.
\end{align*}
\end{proof}

\vv
\begin{proof}[\bf Proof of Lemma \ref{lemlips}]
By Lemma \ref{lemfourier} and Lemma \ref{lem5.4}, we have
$$\int_\Gamma \A_{\rho}(x)^2\,d\HH^n(x) \approx
\|\nabla f\|_{L^2(\R^n)}^2.$$
On the other hand, by Lemma \ref{lemdiff1},
$$\int_\Gamma |\A_{\rho}(x) - \Apsi(x)|^2\,d\HH^1(x) \lesssim
\|\nabla f\|_\infty^4\,\|\nabla f\|_{L^2(\R^n)}^2.
$$
Hence,
$$\int_\Gamma \Apsi(x)^2\,d\HH^n(x)\leq 2 \int_\Gamma \A_{\rho}(x)^2\,d\HH^n(x) + 2 \int_\Gamma |\A_{\rho}(x) - \Apsi(x)|^2\,d\HH^n(x) \lesssim \|\nabla f\|_{L^2(\R^n)}^2.
$$
In the converse direction, we have
\begin{align*}
\int_\Gamma \Apsi(x)^2\,d\HH^n(x) & \geq \frac12 \int_\Gamma \A_{\rho}(x)^2\,d\HH^n(x)
- \int_\Gamma \Apsi(x)^2\,d\HH^n(x) \\
&\geq c \|\nabla f\|_{L^2(\R^n)}^2 - C\,
\|\nabla f\|_\infty^4\,\|\nabla f\|_{L^2(\R^n)}^2.
\end{align*}
So if $\|\nabla f\|_\infty^4\leq c/2C$, the lemma follows.
\end{proof}
\vv


\section{Construction of an approximate Lipschitz graph}\label{sec-corona}

 To
prove Lemma \ref{l:main2} we need to construct a Lipschitz graph which covers a fairly big proportion of the measure $\mu$. Many lemmas will not be proved, and we leave it to the reader to apply the (mostly cosmetic) changes from the planar case in \cite{JTV} (or \cite[Chapter 7]{Tolsa-llibre}).


Fix $\epsilon>0$ and $\theta>0$.  We assume that the hypotheses of Lemma \ref{l:main2} are satisfied with these choices of parameters $\epsilon$ and $\theta$.  We shall also introduce $\alpha>0$, $\alpha \ll 1$.  Here $\alpha$ will regulate the slope of a Lipschitz graph that will well approximate the support of $\mu$.  We will eventually determine $\theta$, then we will pick $\alpha$ depending on $\delta$, and finally $\epsilon$ can be chosen to depend on both $\theta$ and $\alpha$.\\

Recall that $E\cap B_0\neq \varnothing$, since  $\mu(E\cap B_0)\geq c_0 \,\rad(B_0)^n$. Also, we require $\supp\mu\subset E$, but we do {\em not} ask $\supp\mu= E$

\begin{claim}
It suffices to prove Lemma \ref{l:main2} under the following conditions:
\begin{itemize}
\item The center of $B_0$ satisfies $x_0\in E$.
\item $\supp\mu\subset B_0$. 
\item $E\cap (100 B_0\setminus 99 B_0)\neq \varnothing$.
\end{itemize}
\end{claim}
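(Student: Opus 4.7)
The claim is a standard technical reduction; we justify each of the three assumptions in turn by modifying the pair $(B_0, \mu, E)$ without disturbing the hypotheses. For the first point, since $\mu(E\cap B_0)\geq c_0\,\rad(B_0)^n>0$, we may pick a point $x_0' \in \supp\mu \cap E \cap B_0$ and replace $B_0$ by $\tilde B_0 := B(x_0', 2\,\rad(B_0))$. Then $B_0\subset \tilde B_0$ gives $\mu(\tilde B_0)\geq c_0\,\rad(B_0)^n = (c_0/2^n)\,\rad(\tilde B_0)^n$, so hypothesis (a) is preserved with a modified $c_0$; hypothesis (b) is preserved since balls of radius $\leq 200\,\rad(B_0)$ are also balls of radius $\leq 200\,\rad(\tilde B_0)$; and (c) holds up to harmlessly enlarging the scale parameter on which we integrate $\mfg$. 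A conclusion of the form $\mu(\Gamma)\geq c\,\mu(\tilde B_0)\geq c\,\mu(B_0)$ gives back the original conclusion.

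\textbf{Reduction to $\supp\mu\subset B_0$.} Assuming now $x_0\in E$, define $\tilde\mu:=\mu|_{B_0}$. Polynomial growth with constant $1$ is inherited, $\tilde\mu(B_0\cap E)=\mu(B_0\cap E)\geq c_0\,\rad(B_0)^n$, and condition (c) is unaffected since it depends only on the Borel sets $\Omega^\pm$. For (b), if $\tilde\mu(B)\geq\theta\,\rad(B)^n$ then a fortiori $\mu(B)\geq\theta\,\rad(B)^n$, so the original (b) applied to $\mu$ supplies the required conclusion for the balls that matter under $\tilde\mu$. Finally, a Lipschitz graph $\Gamma$ with $\tilde\mu(\Gamma)\geq c\,\tilde\mu(B_0)$ trivially satisfies $\mu(\Gamma)\geq \tilde\mu(\Gamma)=c\,\mu(B_0)$.

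\textbf{Reduction to $E\cap(100B_0\setminus 99B_0)\neq\varnothing$.} Assume $x_0\in E$ and $\supp\mu\subset B_0$. If the annular condition already holds, nothing is to be done; otherwise $E\cap 100B_0\subset 99B_0$. The idea is to enlarge the auxiliary compact set $E$ (which is only required to contain $\supp\mu$) by adjoining a single ``pilot'' point $p$ in the target annulus, chosen so as not to spoil hypothesis (b). Concretely, apply (b) to the ball $B_0$ itself, obtaining a hyperplane $L_{B_0}$ with $\beta_{\infty,E}(B_0)\leq \epsilon$, and take $p$ on $L_{B_0}$ with $|p-x_0|\in[99\,\rad(B_0),100\,\rad(B_0))$; set $\tilde E:=E\cup\{p\}$. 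Condition (c) is unchanged. For (b), distinguish two cases for a ball $B$ centered at $\tilde E$ with $\mu(B)\geq\theta\,\rad(B)^n$ and $\rad(B)\leq 200\,\rad(B_0)$: if the center lies in $E$ and $p\notin B$, then $\beta_{\infty,\tilde E}(B)=\beta_{\infty,E}(B)$; if $p\in B$ (which forces $\rad(B)\gtrsim \rad(B_0)$ and hence $B$ to be comparable to $B_0$ up to a fixed factor) the existing minimizing hyperplane $L_B$ for $\beta_{\infty,E}(B)$ is within $C\epsilon\,\rad(B_0)$ of $L_{B_0}$, hence of $p$, so $\beta_{\infty,\tilde E}(B)\leq \beta_{\infty,E}(B)+C\epsilon$; finally, if $B$ is centered at $p$ then $\mu(B)>0$ combined with $\supp\mu\subset B_0$ forces $\rad(B)\gtrsim \rad(B_0)$ and again $B$ is comparable to a fixed dilate of $B_0$, so the same argument applies. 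Taking $\epsilon$ slightly smaller absorbs the additive constant, and the domain splitting half of (b) is similarly stable under adding a single point on the minimizing hyperplane.

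\textbf{Main obstacle.} The only nontrivial step is the third one: one has to verify that adjoining the pilot point $p$ to $E$ does not upset the flatness statement in (b) for balls centered near~$p$. This is handled by placing $p$ on the approximating hyperplane $L_{B_0}$ and by exploiting that, for balls $B$ with $p\in B$ and $\mu(B)$ large, the scale of $B$ is comparable to that of $B_0$, so the approximating planes of $B$ and of $B_0$ agree up to an error $O(\epsilon)$. This is the same reduction argument used in the planar case (\cite{JTV}, \cite[Chapter~7]{Tolsa-llibre}); only cosmetic changes are required here.
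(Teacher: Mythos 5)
The paper's proof handles all three reductions simultaneously by a single, purely measure-theoretic device that leaves $E$ untouched: from the growth of $\mu$ and $\mu(B_0)\geq c_0 r_0^n$ one extracts two balls $B_1,B_2$ centered on $E$, well-separated and each carrying a fixed fraction of $\mu(B_0)$; replacing $B_0$ by $\tfrac1{100}\wt B_1$ (with $\wt B_1$ the smallest ball concentric with $B_1$ that contains $B_2$) and $\mu$ by $\mu|_{B_1}$ achieves $x_0\in E$, $\supp\mu\subset B_0$, and, crucially, $E$ already meets $100B_0\setminus 99B_0$ because the center of $B_2$ lies there. No enlargement of $E$ is needed.

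Your third reduction contains a genuine gap. You propose to adjoin a ``pilot point'' $p\in 100B_0\setminus 99B_0$ to the set $E$. But $E$ is not merely an auxiliary container for $\supp\mu$: hypothesis~(c) of Main Lemma~\ref{mainlemma2} demands
\[
\int_0^{20\rad(B_0)}\bigl(\mathfrak g(x,r)^2+\mathfrak a^\psi(x,r)^2\bigr)\,\frac{dr}{r}\le\delta\quad\text{for every }x\in E,
\]
and once $p\in\tilde E$ this must hold at $x=p$. There is no a priori control on the symmetry coefficient $\mathfrak g(p,\cdot)$ at an arbitrary point on the approximating hyperplane far from $\supp\mu$; the hypothesis is simply not available there, so ``condition~(c) is unchanged'' is false. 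The same concern applies to~(b): enlarging $E$ changes $\beta_{\infty,E}(B)$ for every ball $B$ touching $p$, not only those centered at $p$, and one would need the full flatness/domain-splitting hypothesis for the new family of balls centered at $p$, which is not inherited from the hypothesis on the original $E$. Finally, a smaller point: in your first reduction you enlarge $B_0$ to $\tilde B_0$ of double the radius, but then~(b) must hold for balls of radius up to $200\rad(\tilde B_0)=400\rad(B_0)$, which is \emph{more} than what the original~(b) provides, not less; so that direction of the implication also needs justification (in the paper's argument the replacement ball is smaller than $B_0$, so no such issue arises).
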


\begin{proof}
From the $(n,1)$-growth of $\mu$ and the fact that $\mu(B_0)\geq c_0\,r_0^n$, it follows that 
there exists two balls $B_1$, $B_2$ centered in $E$ such that
$$\dist(B_1,B_2) \gtrsim_{c_0} \rad(B_0),$$
$$\rad(B_0)\lesssim_{c_0}\rad(B_1)=\rad(B_2)\leq \rad(B_0),$$
and 
$$\mu(B_i)\gtrsim_{c_0}\mu(B_0),$$
for $i=1,2$. See Lemma 7.6 from \cite{Tolsa-llibre}, for example. Further, we can assume that $\dist(B_1,B_2)\geq 1000\,\rad(B_1)=1000\,\rad(B_2)$. To check this, cover each ball $B_i$ by a finite family of suitable smaller balls $B_i^j$ and for each $i=1,2$  replace $B_i$ by the ball $B_i^j$ with largest measure. 

Let $\wt B_1$ be the smallest ball concentric with $B_1$ containing $B_2$ (so 
 $\rad(\wt B_1) = \dist(B_1,B_2) + 3\,\rad(B_2)$).
It is easy to check that $B_1\subset \frac1{200}\wt B_1$ and $B_2\subset \wt B_1\setminus 0.99 \wt B_1$. So replacing the ball $B_0$ by $\frac1{100}\wt B_1$, 
$\mu$ by $\mu|_{B_1}$,
  and adjusting suitably the constant $c_0$ and other constants in Lemma \ref{l:main2}, the claim follows.
\end{proof}

\vv
We denote $r_0=\rad(B_0)$ and we set
$$E_0 = E\cap B_0.$$
Besides the conditions in the claim above, we will assume that $x_0=0$ and that the hyperplane $L_{B_0}$ that minimizes
$\beta_{\infty,E}(B_0)$ is horizontal. We denote by $L_0$ the horizontal hyperplane passing through $x_0$, and we identify it with $\R^n$. Notice that if $\dist_H(L_0,L_{B_0})\leq \epsilon\,r_0$, because $x\in E_0$.

Recall that for a ball $B$, $L_B$ denotes a best approximating plane for $\beta_{\infty,E}(B)$. Also we denote
$$\Theta_\mu(B)=\frac{\mu(B)}{\rad(B)^n}.$$
That is, $\Theta_\mu(B)$ is the $n$-dimensional density of $B$ with respect to $\mu$.

Let $x\in E_0=E\cap B_0$ and $0<r\leq 50r_0$. We call the ball $B=B(x,r)$ \emph{good}, and write $B\in\G$ if
\begin{enumerate}[label=\textup{(}\alph*\textup{)}]
\item  $\Theta_{\mu}(B)\geq \theta$, and
\item  $\angle(L_B, L_{B_0})\leq \alpha.$
\end{enumerate}
Therefore, by the assumptions of Lemma \ref{l:main2}\begin{equation}\label{goodballsflat}\beta_{\infty,E}(B)\leq \epsilon\text{ whenever }B\in \G.\end{equation}

We say that $B=B(x,r)$ is \emph{very good}, and write $B\in \VG$, if $B(x,s)\in \G$ for every $r\leq s\leq 50r_0$.
Since $\theta\ll c_0$, we have that any ball $B$ centered on $E_0$ that contains $B_0$ with $r(B)\leq 50 r_0$ is very good.  In particular, $B_0$ is  very good. Notice also that $\beta_{\infty, E}(20B_0)\leq \epsilon$, and thus
$$\dist(x, L_0)\lesssim \epsilon r_0\text{ for all }x\in E\cap 20 B_0.
$$
For $x\in E_0$ we then set
$$h(x) = \inf\{r:0<r<50r_0, \,B(x,r)\in \VG\}.
$$
Observe that $h(x)\leq 2r_0$, as $B(x, 2r_0)\supset B_0$.
Notice that, if $x\in E_0$ and $r\in (h(x), 50r_0)$,  then, from (\ref{goodballsflat}),
$$\Theta_{\mu}(B(x,r))\geq \theta,\; \beta_{\infty,E}(B(x,r))\leq \epsilon, \text{ and }\;\angle(L_{B(x,r)},L_0)\leq \alpha.
$$
Put
$$Z = E_0\cap\{h=0\}.
$$
We now set
$$\LD = \{x\in E_0\backslash Z: \Theta_{\mu}(B(x,h(x)))\leq \theta\},
$$
and
$$\BA = E_0\backslash (\LD\cup Z),
$$
so that
$$E_0 = Z\cup \LD\cup \BA.
$$
Since, for $x\in \BA$, $\Theta_{\mu}(B(x, h(x))\geq \theta$, we must have that $L_{B(x,h(x))}$ has a big angle with $L_0$, moreover

\begin{lemma}\cite[Lemma 7.13]{Tolsa-llibre}\label{l:biganglestop}     Provided that $\epsilon$ is sufficiently small in terms of $\theta$ and $\alpha$, if $x\in \BA$, then
$$\angle(L_{B(x, 2h(x))}, L_0)\geq\frac{\alpha}{2},
$$
for any approximating $n$-plane $L_{B(x,2h(x))}$.
\end{lemma}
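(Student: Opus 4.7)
The approach will be by contradiction: assume $\angle(L_{B(x,2h(x))},L_0)<\alpha/2$ and produce some scale $r_\ast<h(x)$ with $B(x,r_\ast)\in\VG$, contradicting the definition of $h(x)$ as the infimum of such scales. Write $B^\ast:=B(x,2h(x))$ and $L^\ast:=L_{B^\ast}$. Since $2h(x)>h(x)$, the ball $B^\ast$ lies in $\VG$, and by \eqref{goodballsflat} one has $\beta_{\infty,E}(B^\ast)\leq\epsilon$, so $E\cap B^\ast\subset\NN_{2\epsilon h(x)}(L^\ast)$.

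The first step will be to transfer angle information to smaller concentric balls. For any $s\in[h(x)/2,2h(x)]$, $B(x,s)\subset B^\ast$ gives $E\cap B(x,s)\subset \NN_{4\epsilon s}(L^\ast)$, so $\beta_{\infty,E}(B(x,s))\leq 4\epsilon$. Assuming the density $\Theta_\mu(B(x,s))\geq\theta$, the $(n,1)$-polynomial growth of $\mu$ together with this density is a well-known condition forcing $E\cap B(x,s)$ to contain $n+1$ affinely independent points spanning an $n$-simplex of volume at least $\eta(\theta)\,s^n$, for some $\eta(\theta)>0$. Both $L^\ast$ and the minimizing plane $L_{B(x,s)}$ pass within $4\epsilon s$ of all these points, which forces $\angle(L_{B(x,s)},L^\ast)\leq C\epsilon/\eta(\theta)$ by a standard geometric estimate. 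Combined with the contradictory hypothesis,
\[
\angle(L_{B(x,s)},L_0) \leq \angle(L_{B(x,s)},L^\ast) + \angle(L^\ast,L_0) \leq C\epsilon/\eta(\theta) + \alpha/2 < \alpha,
\]
provided $\epsilon$ is small enough in terms of $\theta$ and $\alpha$. Thus, under the density hypothesis, condition (b) in the definition of $\G$ is automatic at scale $s$.

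The second step will establish the density condition at scales just below $h(x)$. Since $x\in\BA$, one has the \emph{strict} inequality $\Theta_\mu(B(x,h(x)))>\theta$. As balls $B(x,s)$ are open, $s\mapsto \mu(B(x,s))$ is left-continuous in $s$, so $\Theta_\mu(B(x,s))\to\Theta_\mu(B(x,h(x)))$ as $s\nearrow h(x)$. Hence there exists $r_1<h(x)$ such that $\Theta_\mu(B(x,s))\geq\theta$ for every $s\in[r_1,h(x)]$.

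Setting $r_\ast:=\max(r_1,h(x)/2)$, for $s\in[r_\ast,h(x)]$ the two previous steps yield both conditions (a) and (b) defining $\G$, so $B(x,s)\in\G$. For $s\in(h(x),50r_0]$ the membership $B(x,s)\in\G$ is automatic: such $s$ belongs to some interval $[r,50r_0]$ with $B(x,r)\in\VG$ (by definition of $h(x)$ as an infimum), so $B(x,s)\in\VG\subset\G$. Combining, $B(x,s)\in\G$ on all of $[r_\ast,50r_0]$, whence $B(x,r_\ast)\in\VG$ with $r_\ast<h(x)$, which is the desired contradiction. The main obstacle in this argument is ensuring condition (a) just below $h(x)$; the resolution exploits the fact that the defining inequality for $\BA$ (namely $\Theta_\mu(B(x,h(x)))>\theta$) is \emph{strict}, in contrast to $\Theta_\mu\leq\theta$ in the definition of $\LD$, giving enough slack for the density to remain at least $\theta$ on a small interval of scales below $h(x)$, via left-continuity of $\mu$ on open balls.
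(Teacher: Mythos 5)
Your argument is correct and follows the same contradiction scheme that underlies \cite[Lemma 7.13]{Tolsa-llibre}, which the paper cites rather than reproving: assume $\angle(L_{B(x,2h(x))},L_0)<\alpha/2$, propagate the angle bound down to radii just below $h(x)$ via well-spread points of $\supp\mu$ and the $\beta_\infty$-flatness of $E$, establish the density bound at the same radii, and conclude $B(x,r_\ast)\in\VG$ for some $r_\ast<h(x)$, contradicting the definition of $h(x)$ as an infimum. The one place where you are more explicit than the reference is the descent strictly below $h(x)$: you correctly observe that it is precisely the strictness of $\Theta_\mu(B(x,h(x)))>\theta$ (coming from $x\notin\LD$) together with the left-continuity of $s\mapsto\mu(B(x,s))$ on open balls that furnishes an interval $[r_1,h(x)]$ with $\Theta_\mu\geq\theta$; without this, the argument would only give $B(x,h(x))\in\VG$, which does not contradict the infimum definition. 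Two small simplifications are available but not needed: assumption (b) of Main Lemma \ref{l:main2} already yields $\beta_{\infty,E}(B(x,s))\leq\epsilon$ directly once the density bound holds, so the $4\epsilon$ nesting estimate is redundant; and one should note that the angle comparison is really against $L_{B_0}$, but since the paper normalizes so that $L_0$ and $L_{B_0}$ are parallel this is immaterial.
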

\begin{rem}
	As remarked above, the cited Lemma is stated for the construction of a one-dimensional Lipschitz graphs in $\R^2$. The very same construction can be carried out in our context without changes. Hence we omit the details. The same remark applies to all cited lemmas in this section.
\end{rem}


We now introduce a regularized version of the function $h$.  Denote by $\Pi$ the orthogonal projection onto $L_0$ and $\Pi^{\perp}$ the orthogonal projection onto the orthogonal complement of $L_0$.
For $x\in \R^{n+1}$, set
\begin{equation}\label{e:d-smoothing}
d(x) = \inf_{B(z,r)\in \VG}\bigl[|x-z|+r\bigl].
\end{equation}
(Recall here that in order for $B(z,r)\in \VG$ we must have that $z\in E_0=E\cap B_0$.)
Now define, for $p\in L_0$,
$$D(p) = \inf_{x\in \Pi^{-1}(p)}d(x). 
$$
As infima over $1$-Lipschitz functions, we see that $d$ and then $D$ are both $1$-Lipschitz functions.
Observe that $d(x)\leq h(x)$ whenever $x\in E_0$, so the closed set
$$Z_0 = \{x\in \R^{n+1}: d(x)=0\}
$$
contains $Z$.

\begin{lemma}\label{PiperpLip}\cite[Lemma 7.19]{Tolsa-llibre}  For all $x,y\in \R^{n+1}$, we have
$$|\Pi^{\perp}(x)-\Pi^{\perp}(y)|\leq 6\alpha |\Pi(x)-\Pi(y)|+4d(x)+4d(y).
$$
\end{lemma}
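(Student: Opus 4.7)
The plan is to split into cases according to whether $|x-y|$ is comparable to $d(x)+d(y)$ or much larger, and in the latter case to use near-optimal very good balls at $x$ and $y$ to transfer the angular control from the approximating planes to the segment joining $x$ and $y$.

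First, I would handle the trivial case $|x-y|\le 4(d(x)+d(y))$: since $\Pi^\perp$ is $1$-Lipschitz, $|\Pi^\perp(x)-\Pi^\perp(y)|\le |x-y|\le 4d(x)+4d(y)$, which is better than needed. So from now on assume $|x-y|>4(d(x)+d(y))$.

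Next, for a small $\eta>0$, choose near-optimal competitors $B_x=B(z_x,r_x)\in\VG$ and $B_y=B(z_y,r_y)\in\VG$ for the infima defining $d(x)$ and $d(y)$, so that $|x-z_x|+r_x\le d(x)+\eta$ and $|y-z_y|+r_y\le d(y)+\eta$; by definition of $\VG$, both $z_x,z_y$ lie in $E_0\subset B_0$. WLOG $r_x\le r_y$, and set $s:=|z_x-z_y|$. If $s=0$ then $|\Pi^\perp(x)-\Pi^\perp(y)|\le 2(d(x)+d(y))+O(\eta)$ and the conclusion holds; otherwise, observe that
$$s\le |z_x-x|+|x-y|+|y-z_y|\le 2r_0,$$
since $z_x,z_y\in B_0$, while by the triangle inequality and the case assumption
$$s\ge |x-y|-2(d(x)+d(y))-2\eta\ge 2(d(x)+d(y))\ge 2\max(r_x,r_y)$$
for $\eta$ small. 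Hence $s\in[r_x,50r_0]$, which by the definition of $\VG$ forces $B(z_x,s)\in\G$, and in particular $\angle(L_{B(z_x,s)},L_0)\le\alpha$ and $\beta_{\infty,E}(B(z_x,s))\le\epsilon$.

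With this in hand, since $z_x,z_y\in E\cap B(z_x,s)$, both points lie within $\epsilon s$ of $L_{B(z_x,s)}$. Projecting them orthogonally onto $L_{B(z_x,s)}$ and using that on a plane of slope $\le\tan\alpha\le 2\alpha$ one has $|\Pi^\perp(p)-\Pi^\perp(q)|\le 2\alpha|\Pi(p)-\Pi(q)|$, a short calculation yields
$$|\Pi^\perp(z_x)-\Pi^\perp(z_y)|\le 2\alpha|\Pi(z_x)-\Pi(z_y)|+4\epsilon s.$$
The $1$-Lipschitz property of $\Pi,\Pi^\perp$ then transfers this from $(z_x,z_y)$ to $(x,y)$ at the cost of $d(x)+d(y)+O(\eta)$; substituting the bound $s\le |\Pi(x)-\Pi(y)|+|\Pi^\perp(x)-\Pi^\perp(y)|+2(d(x)+d(y))$ gives
$$|\Pi^\perp(x)-\Pi^\perp(y)|\le(2\alpha+4\epsilon)|\Pi(x)-\Pi(y)|+4\epsilon\,|\Pi^\perp(x)-\Pi^\perp(y)|+(1+2\alpha+O(\epsilon))(d(x)+d(y))+O(\eta).$$
Absorbing the term $4\epsilon|\Pi^\perp(x)-\Pi^\perp(y)|$ into the left-hand side (using $\epsilon\ll 1$) and letting $\eta\to 0$ yields the claim; the hypothesis $\epsilon\ll\alpha\ll 1$ already enforced throughout the section ensures that the resulting constants are at most $6\alpha$ and $4$ respectively.

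The main delicacy will be the final bookkeeping of constants: one must choose $\epsilon$ sufficiently small with respect to $\alpha$ so that, after absorbing the term involving $|\Pi^\perp(x)-\Pi^\perp(y)|$ on the right into the left, the factor $(2\alpha+4\epsilon)/(1-4\epsilon)$ stays below $6\alpha$ and $(1+2\alpha+O(\epsilon))/(1-4\epsilon)$ stays below $4$. Beyond this, the argument is essentially a cone/flat-plane estimate made quantitative by the definition of $\VG$ and the fact that both competing centers lie inside $B_0$, which bounds the scale $s$ automatically and sidesteps any large-distance case.
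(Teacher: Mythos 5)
Since the paper states this lemma with a citation to \cite{Tolsa-llibre} and does not reproduce a proof, I compare your proposal against the standard argument from that reference, which it essentially reproduces: split according to whether $|x-y|$ is comparable to $d(x)+d(y)$, and in the far-apart case transfer the angular control from a single good ball containing both near-optimal centers. Your proof is correct in spirit and in the key steps; the items below are small points you should tighten in a final write-up.

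First, once you set $s=|z_x-z_y|$, the point $z_y$ lies on the boundary sphere $\partial B(z_x,s)$, not in the ball, so the flatness bound $\beta_{\infty,E}(B(z_x,s))\le\epsilon$ need not a priori apply to $z_y$. Use a slightly larger concentric ball, e.g.\ $B(z_x,2s)$: one checks $r_x\le 2s\le 4r_0\le 50r_0$, so $B(z_x,2s)\in\G$, and now both $z_x$ and $z_y$ lie in $E\cap B(z_x,2s)$ and are within $2\epsilon s$ of $L_{B(z_x,2s)}$. This only perturbs your numerical constants. Second, your inequality chain showing $s\ge 2\max(r_x,r_y)$ is not quite right as written (you wrote $s\ge|x-y|-2(d(x)+d(y))-2\eta$; the correct lower bound is $s\ge|x-y|-(d(x)+d(y))-2\eta$, which in Case $2$ gives $s>3(d(x)+d(y))-2\eta$). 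This suffices to get $s\ge r_x$ after fixing $\eta$ small relative to $d(x)+d(y)$, but only when $d(x)+d(y)>0$; the residual case $d(x)=d(y)=0$, $|x-y|>0$ (where both $x,y$ are already in $Z_0$) needs a one-line separate remark: there $r_x,r_y\to 0$ and $s\to|x-y|>0$ as $\eta\to 0$, so the constraint $s\ge r_x$ again holds for $\eta$ small. Third, the sub-case $s=0$ that you raise can actually not occur in Case $2$ once $\eta$ is chosen small, for the reason above; it is cleaner to remove it and instead argue that $s>0$ after shrinking $\eta$. With these fixes the bookkeeping you carry out at the end (absorbing $4\epsilon|\Pi^\perp(x)-\Pi^\perp(y)|$ into the left-hand side and using $\epsilon\ll\alpha$) goes through and yields constants bounded by $6\alpha$ and $4$ respectively.
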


As a consequence of this lemma, we have that if $x,y\in Z_0$, then
$$|\Pi^{\perp}(x) - \Pi^{\perp}(y)|\leq 6\alpha |\Pi(x)-\Pi(y)|.
$$
In particular, the map $\Pi:Z_0\to L_0$ is injective and the function
$$A:\Pi(Z_0)\to \R, \; A(\Pi(x))=\Pi^{\perp}(x) \text{ for }x\in Z_0,
$$
is Lipschitz with norm at most $6\alpha$.  To extend the definition of $A$ to $L_0$, we appeal to a Whitney decomposition.

\vv

\subsection{Whitney decomposition}

Let $\DD_{L_0}$ be the collection of dyadic cubes in $L_0$ (which we identify with $\R^{n} \subset \R^{n+1}$ under an affine transformation).
For $I \in \DD_{L_0}$,
\begin{align*}
    D(I) := \inf_{p \in I} D(p), \,\, \text{where }\, D(p) = \inf_{x\in \Pi^{-1}(p)}d(x).
\end{align*}
Set
\begin{align*}
    \Whit := \{ I \mbox{ maximal in } \DD_{L_0} \, :\, \ell(I)  < 20^{-1}\,  D(I)\}.
\end{align*}

 We index $\Whit$ as $\{R_i\}_{i \in I_\Whit}$.  The basic properties of the cubes in $\Whit$ are summarized in the following lemma. The proof of this result is standard, and can be found as Lemma 7.20 in \cite{Tolsa-llibre}.

\begin{lemma}\label{l:whitney-dec}
    The cubes $R_i$, $i\in I_\Whit$, have disjoint interiors in $L_0$ and satisfy the following properties:
    \begin{enumerate}[label=\textup{(}\alph*\textup{)}]
        \item If $x \in 15R_i$, then $5 \ell(R_i) \leq D(x) \leq 50 \ell(R_i)$.
        \item There exists an absolute constant $C>1$ such that if $15R_i \cap 15R_j \neq \varnothing$, then
        \begin{align}
            C^{-1} \ell(R_i) \leq \ell(R_j) \leq C \ell(R_i).
        \end{align}
        \item For each $i \in I_\Whit$, there are at most $N$ cubes $R_j$ such that $15 R_i \cap 15R_j \neq \varnothing$, where $N$ is some absolute constant.
        \item $L_0\setminus \Pi(Z_0) = \bigcup_{i\in I_\Whit} R_i =  \bigcup_{i\in I_\Whit} 15R_i$.
    \end{enumerate}
\end{lemma}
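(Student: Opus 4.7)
The proof is a standard Whitney-type argument, with the constants tuned to the factor $20^{-1}$ appearing in the definition of $\Whit$. I would organize it in four steps, matching the four conclusions, preceded by the observation that the cubes $R_i$ have disjoint interiors: if $R_i$ and $R_j$ had overlapping interiors, then one would be a dyadic descendant of the other, contradicting the maximality in the definition of $\Whit$.

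For \textup{(a)}, I would use the two competing features of the definition of $\Whit$: the selection condition $\ell(R_i) < 20^{-1} D(R_i)$ and maximality. For the lower bound, any $p\in R_i$ satisfies $D(p)\geq D(R_i) > 20\,\ell(R_i)$; for $x\in 15R_i$ the $1$-Lipschitz character of $D$ gives $D(x) \geq D(p) - |x-p| \geq 20\,\ell(R_i) - c_n\,\ell(R_i)$, which exceeds $5\,\ell(R_i)$ since $c_n$ is a modest dimensional constant (using $|x-p|\leq 15\sqrt{n}\,\ell(R_i)$; if the resulting constant is too large, the claim is in fact that one should replace $15$ by a sufficient smaller factor, or adjust the threshold $20^{-1}$, but the scheme is unchanged). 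For the upper bound, let $\hat R_i$ denote the dyadic parent of $R_i$; by maximality, $\ell(\hat R_i) \geq 20^{-1} D(\hat R_i)$, so $D(\hat R_i)\leq 40\,\ell(R_i)$. Choosing $p\in \hat R_i$ with $D(p)$ close to this infimum and using $1$-Lipschitz once more, $D(x) \leq D(p)+|x-p|\leq 50\,\ell(R_i)$ for $x\in 15R_i$.

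Conclusions \textup{(b)} and \textup{(c)} follow from \textup{(a)} immediately. If $x\in 15R_i\cap 15R_j$, then by (a)
\[
5\,\ell(R_i)\leq D(x)\leq 50\,\ell(R_j) \quad\text{and}\quad 5\,\ell(R_j)\leq D(x)\leq 50\,\ell(R_i),
\]
which gives $\ell(R_j)/\ell(R_i)\in [1/10,10]$, proving (b) with $C=10$. For (c), any $R_j$ with $15R_j\cap 15R_i\ne\varnothing$ is contained in a fixed multiple of $R_i$ (say $40R_i$) by (b), and since such $R_j$ have disjoint interiors and side length comparable to $\ell(R_i)$, a volume counting argument bounds their number by a dimensional constant $N$.

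For \textup{(d)}, the containment $\bigcup R_i\subset L_0\setminus\Pi(Z_0)$ is clear since $D>0$ on $R_i$ by (a). Conversely, if $p\in L_0\setminus \Pi(Z_0)$, then $D(p)>0$, so any sufficiently small dyadic cube $I\ni p$ satisfies $\ell(I)<20^{-1}D(I)$ (by continuity of $D$) and is therefore contained in a maximal such cube, i.e., in some $R_i\in\Whit$. Thus $L_0\setminus\Pi(Z_0)=\bigcup R_i$. The equality with $\bigcup 15R_i$ reduces to showing that $15R_i\subset L_0\setminus \Pi(Z_0)$, which is immediate since $D\geq 5\,\ell(R_i)>0$ on $15R_i$ by (a).

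None of the steps is really an obstacle; the only point that requires a little care is choosing the numerical constants so that the $5$ and $50$ in (a) come out exactly as stated — if they don't, one simply adjusts the threshold $20^{-1}$ in the definition of $\Whit$ or the dilation factor $15$, without altering the proof. The rest is the classical Whitney machinery.
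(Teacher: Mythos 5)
Your proof is correct and is precisely the standard Whitney argument that the paper intends — the paper itself does not write out a proof but defers to Lemma 7.20 of Tolsa's book (which is the one-dimensional version), noting that the construction transfers verbatim. Your sharper bound for part (a) should use $|x-p|\leq 8\sqrt{n}\,\ell(R_i)$ (for $x\in 15R_i$ and $p\in R_i$, both within $\ell^\infty$-distance $\tfrac{15}{2}\ell(R_i)$, resp.\ $\tfrac12\ell(R_i)$, of the center), in place of your cruder $15\sqrt{n}\,\ell(R_i)$; with that the lower bound $D(x)\geq (20-8\sqrt n)\ell(R_i)\geq 5\ell(R_i)$ holds for $n\leq 3$, and the caveat you raise is genuine: for $n\geq 4$ the stated constants $5$ and $50$ do not literally follow from the threshold $20^{-1}$ and dilation $15$, and one must either shrink the dilation factor, tighten the $20^{-1}$, or read $5$ and $50$ as dimensional constants. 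Since the paper cites a one-dimensional source and uses these constants downstream (e.g.\ in \eqref{eqellI}), this is a real, if cosmetic, imprecision in the paper rather than a gap in your argument. Parts (b), (c), (d) are all fine as you present them, following by the usual comparison/volume-counting/maximality scheme once (a) is in place.
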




Now set
\begin{align*}
    I_0 := \{ i \in I_\Whit\, :\, R_i \cap B(0, 10r_0) \neq \varnothing\}.
\end{align*}


\begin{lemma}\label{lem73}\cite[Lemma 7.21]{Tolsa-llibre}
    The following two statements hold.
    \begin{itemize}
        \item If $i \in I_0$, then $\ell(R_i) \leq r_0$ and $3R_i \subset L_0 \cap 12 B_0 = (-12r_0,12r_0)^n$.
        \item If $i \notin I_0$, then
        \begin{align*}
            \ell(R_i) \approx \dist(0, R_i) \approx |p| \gtrsim r_0 \mbox{ for all } p \in R_i.
        \end{align*}
    \end{itemize}
\end{lemma}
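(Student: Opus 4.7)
The plan is to combine the Whitney sandwich from Lemma \ref{l:whitney-dec}(a), which gives $5\ell(R_i) \leq D(x) \leq 50\ell(R_i)$ for every $x\in 15R_i$, with two elementary estimates on $D$ that exploit the special role of the base ball $B_0$. The first estimate is an upper bound: since $0\in E_0$ and $B_0 = B(0,r_0)$ belongs to $\VG$, the infimum defining $d$ is majorised by the contribution from $B_0$, so $D(p) \leq d(p) \leq |p|+r_0$ for every $p\in L_0$. The second is a lower bound: every very good ball $B(z,r)\in\VG$ is centred at a point $z\in E_0\subset B_0$, so $|z|\leq r_0$, and hence $d(x)\geq \dist(x,E_0)\geq |x|-r_0$ whenever $|x|\geq r_0$; projecting onto $L_0$ via $|x|\geq |\Pi(x)|$ gives $D(p)\geq |p|-r_0$ whenever $|p|\geq r_0$.

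For the first bullet, I would fix $i\in I_0$ and pick a witness $p\in R_i\cap B(0,10r_0)$. The upper estimate gives $D(p)\leq 11r_0$, while the left half of the Whitney sandwich yields $\ell(R_i)\leq D(p)/5$, so $\ell(R_i)\lesssim r_0$ (with the numerical constant absorbed into a harmless adjustment, or by rounding down to the nearest admissible dyadic level so that in fact $\ell(R_i)\leq r_0$). The inclusion $3R_i\subset 12B_0$ is then a triangle-inequality calculation: every point of $3R_i$ lies within $O(\ell(R_i))$ of $p$ in sup-norm, and $p$ has sup-norm at most $10r_0$, so once $\ell(R_i)\leq r_0$ the sup-norm of any point in $3R_i$ is at most $12r_0$.

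For the second bullet, I would fix $i\notin I_0$, so that every $p\in R_i$ satisfies $|p|\geq 10r_0$ and in particular the lower bound on $D$ applies, giving $D(p)\geq |p|-r_0 \geq 9|p|/10$. Pairing this with $D(p)\leq 50\ell(R_i)$ yields $\ell(R_i)\gtrsim |p|$, while the matching upper bound $5\ell(R_i)\leq D(p)\leq |p|+r_0\leq 11|p|/10$ gives $\ell(R_i)\lesssim |p|$; together these say $\ell(R_i)\approx |p|$. The double inequality $\dist(0,R_i)\leq |p|\leq \dist(0,R_i)+\sqrt{n}\,\ell(R_i)$, combined with $\ell(R_i)\approx |p|$, forces $\dist(0,R_i)\approx |p|$ as well. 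Finally $|p|\geq 10r_0 \gtrsim r_0$ closes the chain of comparisons.

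I do not expect any real obstacle: this is a bookkeeping statement about the Whitney decomposition built in the previous subsection, and the planar argument in \cite{Tolsa-llibre} carries over verbatim once one tracks the dimensional $\sqrt{n}$ factors in cube diameters. The only mildly delicate point is matching the explicit numerical constants ($11r_0/5$ versus $r_0$, and $10r_0+O(\ell(R_i))$ versus $12r_0$), which is handled by refining $\theta$ and $\alpha$ or, equivalently, by a cosmetic rescaling of the Whitney threshold $1/20$ in the definition of $\Whit$.
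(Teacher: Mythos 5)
Your approach is the standard Whitney bookkeeping and matches what one finds in \cite[Lemma 7.21]{Tolsa-llibre}, to which the paper itself defers. The two key sandwiches you identify are exactly right: $D(p)\leq d(p)\leq |p|+r_0$ because $B_0=B(0,r_0)\in\VG$, and $D(p)\geq |p|-r_0$ for $|p|\geq r_0$ because every $\VG$ ball is centred at a point of $E_0\subset B_0$.

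There is, however, a constant-tracking gap in the first bullet that neither of your proposed workarounds closes. Invoking the Whitney sandwich $5\ell(R_i)\leq D(x)$ from Lemma \ref{l:whitney-dec}(a) gives only $\ell(R_i)\leq D(p)/5\leq 11r_0/5$, and rounding down to the nearest dyadic level still permits $\ell(R_i)=2r_0>r_0$ (the dyadic scales jump by factors of $2$, so one cannot pass from $2.2\,r_0$ below $r_0$). The clean fix is to appeal to the defining stopping threshold directly: $R_i\in\Whit$ means $\ell(R_i)<D(R_i)/20\leq D(p)/20\leq 11r_0/20<r_0$. The same tightening is needed in the second bullet: concluding $\dist(0,R_i)\approx|p|$ from $\ell(R_i)\approx|p|$ and the inequality $|p|\leq\dist(0,R_i)+\sqrt{n}\,\ell(R_i)$ requires the implicit constant in $\ell(R_i)\lesssim|p|$ to be smaller than $1/\sqrt{n}$, which does not follow from the $50\ell(R_i)\geq D(p)$ side of the sandwich alone. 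Instead, infimize your upper bound $\ell(R_i)\leq\tfrac{11}{50}|p|$ over $p\in R_i$ to get the quantitative bound $\ell(R_i)\leq\tfrac{11}{50}\dist(0,R_i)$; then $|p|\leq(1+\tfrac{11\sqrt{n}}{50})\dist(0,R_i)$, which together with $\dist(0,R_i)\leq|p|$ gives $\dist(0,R_i)\approx_n|p|$ with a legitimate dimensional constant. With these two adjustments the argument is complete.
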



\begin{lemma}\label{l:whitney-prop1}\cite[Lemma 7.22]{Tolsa-llibre}
    Let $i \in I_0$; there exists a ball $B_i \in \VG$ such that
    \begin{align}
        & \ell(R_i) \lesssim r(B_i) \lesssim \ell(R_i),\text{ and} \\
        & \dist(R_i, \Pi(B_i)) \lesssim \ell(R_i).
    \end{align}
\end{lemma}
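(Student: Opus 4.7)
The proof is essentially an unraveling of the nested definitions of $D$, $d$, and $\VG$, leveraging the Whitney properties already established in Lemma \ref{l:whitney-dec}. The plan is as follows.

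First, fix any point $p \in R_i$ (for instance, the center). Since $R_i \subset 15 R_i$, Lemma \ref{l:whitney-dec}(a) guarantees that
\begin{equation*}
5 \ell(R_i) \leq D(p) \leq 50 \ell(R_i),
\end{equation*}
so $D(p) \approx \ell(R_i)$. By the definition $D(p) = \inf_{y \in \Pi^{-1}(p)} d(y)$, I may choose $x \in \Pi^{-1}(p)$ with $d(x) \leq D(p) + \eta$ for a small parameter $\eta > 0$ to be fixed. Unraveling one level further, since $d(x) = \inf_{B(z,r) \in \VG} (|x - z| + r)$, I select a ball $B(z_i, r_i) \in \VG$ achieving the infimum up to error $\eta$, so
\begin{equation*}
|x - z_i| + r_i \leq d(x) + \eta \leq D(p) + 2\eta \lesssim \ell(R_i).
\end{equation*}
This bound immediately delivers $r_i \lesssim \ell(R_i)$ and $|x - z_i| \lesssim \ell(R_i)$. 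Since $\Pi(x) = p \in R_i$, the projection estimate follows:
\begin{equation*}
\dist(R_i, \Pi(z_i)) \leq |p - \Pi(z_i)| \leq |x - z_i| \lesssim \ell(R_i),
\end{equation*}
which yields $\dist(R_i, \Pi(B_i)) \lesssim \ell(R_i)$ once I account for $r(B_i) \lesssim \ell(R_i)$.

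The one remaining issue is the lower bound $r(B_i) \gtrsim \ell(R_i)$, which is not automatic from the infimum construction (the infimum could be achieved by a ball with very small radius but whose center is at distance $\approx \ell(R_i)$ from $x$). The way around this is to simply \emph{enlarge} the ball: I replace $r_i$ by $r_i' := c\,\ell(R_i)$ for a constant $c$ chosen large enough that $r_i' \geq r_i$ (which is possible since $r_i \lesssim \ell(R_i)$) but also small enough that $r_i' \leq 50 r_0$, using Lemma \ref{lem73} which asserts $\ell(R_i) \leq r_0$ whenever $i \in I_0$. By the very definition of $\VG$ --- namely, $B(z_i, s) \in \G$ for every $s \in [r_i, 50 r_0]$ --- the enlarged ball $B(z_i, r_i')$ is still in $\VG$. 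Setting $B_i := B(z_i, r_i')$ then gives $r(B_i) = c\,\ell(R_i) \approx \ell(R_i)$, while the projection estimate is only improved.

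The main (minor) obstacle is purely bookkeeping of constants: one must verify that the Whitney constants in Lemma \ref{l:whitney-dec}(a), the bound $\ell(R_i) \leq r_0$ from Lemma \ref{lem73}, and the enlargement constant $c$ can all be arranged consistently so that the enlarged radius stays within the admissible range $(0, 50 r_0]$ defining $\VG$. This is a routine check and presents no conceptual difficulty.
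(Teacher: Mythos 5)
Your overall approach is correct and matches the standard argument (unfold the definitions of $D$, $d$, and $\VG$ in sequence, pick a nearly-infimizing very good ball, then enlarge the radius to secure the lower bound). The projection estimate and the upper bound $r(B_i)\lesssim\ell(R_i)$ are clean.

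There is, however, a genuine (if small) snag in the enlargement step that is not quite the ``routine check'' you describe. You want a single constant $c$ with $r_i\le c\,\ell(R_i)\le 50 r_0$. From $r_i\le D(p)+2\eta$ and the Whitney bound $D(p)\le 50\ell(R_i)$, the best you can say is $r_i\le (50+o(1))\,\ell(R_i)$, so you need $c>50$. But Lemma~\ref{lem73} only gives $\ell(R_i)\le r_0$, so $c\ell(R_i)\le 50 r_0$ forces $c\le 50$ when $\ell(R_i)$ is close to $r_0$; the two constraints on $c$ are incompatible. The fix is to abandon the form $c\,\ell(R_i)$ and instead set $r_i':=\max\bigl(r_i,\ell(R_i)\bigr)$. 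This trivially satisfies $r_i'\ge r_i$, hence $B(z_i,r_i')\in\VG$ as you argue; it satisfies $r_i'\le 50 r_0$ because $r_i\le 50 r_0$ is automatic from membership in $\VG$ and $\ell(R_i)\le r_0$ from Lemma~\ref{lem73}; and it satisfies $\ell(R_i)\le r_i'\lesssim \ell(R_i)$ since both $r_i$ and $\ell(R_i)$ are $\lesssim\ell(R_i)$. With this replacement the proof goes through exactly as you set it up, and the remark about the projection estimate being ``only improved'' is unaffected (any $r_i'\ge r_i$ keeps $z_i$ as center, and enlarging $B_i$ can only shrink $\dist(R_i,\Pi(B_i))$).

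One stray remark: the sentence ``which yields $\dist(R_i,\Pi(B_i))\lesssim\ell(R_i)$ once I account for $r(B_i)\lesssim\ell(R_i)$'' is unnecessary; since $\Pi(z_i)\in\Pi(B_i)$, you already have $\dist(R_i,\Pi(B_i))\le|p-\Pi(z_i)|\lesssim\ell(R_i)$ with no reference to the radius.
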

For $i \in I_0$, denote by $A_i$ the affine function $L_0 \to L_0^\perp$ whose graph is the $n$-plane $L_{B_i}$. Insofar as $B_i\in \VG$, $\angle(L_{B_i}, L_{B_0})\leq \alpha$, so $A_i$ is Lipschitz with constant $\tan\alpha\lesssim \alpha$.
On the other hand, for $i \in I_\Whit\setminus I_0$, we put $A_i \equiv 0$.  We are now in a position to be able to define $A$ on $L_0$.
\vv




\subsection{Extending $A$ to $L_0$}  Consider a smooth partition of unity $\{\phi_i\}_{i\in I_{\Whit}}$ subordinate to $\{3R_i\}_{i\in I_{\Whit}}$, i.e. $\phi_i\in C^{\infty}_0(3R_i)$ with $\sum_i \phi_i\equiv 1$ on $L_0=\R^n \subset \R^{n+1}$, which moreover satisfies that for every $i\in I_{\Whit}$,

\begin{equation}\nonumber
     \|\nabla \phi_i \|_\infty \lesssim \ell(R_i)^{-1} \ \mbox{ and } \|\nabla^2\phi_i\|_{\infty} \lesssim \ell(R_i)^{-2}.
\end{equation}
(See \cite[p. 250]{Tolsa-llibre} for an explicit construction.)

Now, if $p \in L_0\backslash \Pi(Z_0)$, we set
\begin{align*}
    A(p) := \sum_{i \in I_\Whit} \phi_i(p) A_i(p)= \sum_{i \in I_0} \phi_i(p) A_i(p).
\end{align*}

We require the following lemma, which combines Lemmas 7.24 and 7.27 from \cite{Tolsa-llibre}.
\begin{lemma}\label{l:A}\
   The function $A: L_0 \to L_0^\perp$ is supported in $[-12r_0, 12r_0]^n$ and is Lipschitz with slope $\lesssim \alpha$. Moreover, if $i\in I_{\Whit}$, then for any $x \in 15 R_i$,
   \begin{align*}
       |\nabla^2A(x)| \lesssim_\theta \frac{\epsilon}{\ell(R_i)}.
   \end{align*}
\end{lemma}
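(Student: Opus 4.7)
The support claim is immediate from the construction: since $A_i \equiv 0$ for $i \notin I_0$, we have $A=\sum_{i \in I_0} \phi_i A_i$, and by Lemma \ref{lem73} each $3R_i \subset (-12r_0, 12r_0)^n$ for $i \in I_0$; as $\supp \phi_i \subset 3R_i$, the support of $A$ is contained in $[-12r_0, 12r_0]^n$. The non-trivial content is the Lipschitz bound and the Hessian estimate on $L_0 \setminus \Pi(Z_0)$; the extension to $\Pi(Z_0)$ is then handled by the continuity of $A$ (using Lemma \ref{PiperpLip}).

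The key computational device is partition-of-unity cancellation. Fix $p \in L_0 \setminus \Pi(Z_0)$ and choose $j=j(p) \in I_{\Whit}$ with $p \in R_j$. Since $\sum_i \phi_i \equiv 1$, we have $\sum_i \nabla \phi_i \equiv 0$ and $\sum_i \nabla^2 \phi_i \equiv 0$. Because each $A_i$ is affine (so $\nabla^2 A_i = 0$),
\begin{align*}
\nabla A(p) &= \nabla A_j + \sum_{i}\nabla \phi_i(p)\bigl(A_i(p)-A_j(p)\bigr) + \sum_{i}\phi_i(p)\bigl(\nabla A_i - \nabla A_j\bigr),\\
\nabla^2 A(p) &= \sum_{i}\nabla^2 \phi_i(p)\bigl(A_i(p)-A_j(p)\bigr) + 2\sum_{i}\nabla\phi_i(p)\otimes\bigl(\nabla A_i - \nabla A_j\bigr).
\end{align*}
By Lemma \ref{l:whitney-dec}(b)--(c), only $\lesssim N$ indices $i$ contribute, all with $\ell(R_i) \approx \ell(R_j)$. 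Combined with $\|\nabla \phi_i\|_\infty \lesssim \ell(R_i)^{-1}$ and $\|\nabla^2 \phi_i\|_\infty \lesssim \ell(R_i)^{-2}$, everything reduces to estimating $|A_i(p)-A_j(p)|$ and $|\nabla A_i - \nabla A_j|$ for neighboring Whitney cubes.

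The main step is to show: if $i,j \in I_0$ satisfy $15R_i \cap 15R_j \neq \varnothing$, then
\begin{equation*}
|A_i(p)-A_j(p)| \lesssim_\theta \epsilon\,\ell(R_i),\qquad |\nabla A_i - \nabla A_j| \lesssim_\theta \epsilon.
\end{equation*}
To prove this, note that $\ell(R_i) \approx \ell(R_j)$ forces $r(B_i) \approx r(B_j)$ and $\dist(B_i,B_j) \lesssim \ell(R_i)$, so there is a ball $\tilde B \in \VG$, centered at a point of $E$, with $r(\tilde B) \approx \ell(R_i)$, containing both $B_i$ and $B_j$ and a neighborhood of $p$. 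Then $\beta_{\infty,E}(\tilde B) \leq \epsilon$ provides an approximating plane $L_{\tilde B}$ within $\epsilon\, r(\tilde B)$ of every point of $E \cap \tilde B$. Using $\Theta_\mu(B_i) \geq \theta$ together with the $(n,1)$-polynomial growth of $\mu$, a standard packing argument produces $n+1$ affinely independent points in $E \cap B_i$ with mutual separations $\approx_\theta r(B_i)$, each at distance $\leq \epsilon\,r(B_i)$ from both $L_{B_i}$ and $L_{\tilde B}$; solving the resulting linear system yields $\dist_H\bigl(L_{B_i}\cap B_i,\,L_{\tilde B}\cap B_i\bigr) \lesssim_\theta \epsilon\,r(B_i)$, which in turn controls both the pointwise value and the slope of $A_i - A_{\tilde B}$. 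The analogous estimate for $B_j$ and the triangle inequality close the argument.

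Plugging these into the formulas above gives $|\nabla A(p)-\nabla A_j| \lesssim_\theta \epsilon$ and $|\nabla^2 A(p)| \lesssim_\theta \epsilon/\ell(R_i)$ for $p \in 15R_i$. Since $B_j \in \VG$ forces $\angle(L_{B_j},L_0)\leq \alpha$, we have $|\nabla A_j| \lesssim \alpha$, and choosing $\epsilon$ small enough relative to $\alpha$ yields the Lipschitz bound $|\nabla A|\lesssim \alpha$. The principal obstacle is the quantitative plane-comparison estimate in paragraph three: controlling both the position and the slope of best-approximating $n$-planes on neighboring scales by $O(\epsilon)$. This is precisely where the density hypothesis $\Theta_\mu \geq \theta$ (which in higher dimensions requires spreading out $n+1$ independent points rather than just two) is indispensable, and it forces the implicit constants to depend on $\theta$.
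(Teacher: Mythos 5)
The paper does not prove this lemma; it cites Lemmas 7.24 and 7.27 of \cite{Tolsa-llibre}. Your reconstruction is the standard argument used there (and going back to David--Semmes and L\'eger): partition-of-unity cancellation ($\sum_i\nabla\phi_i=\sum_i\nabla^2\phi_i=0$) reduces the Lipschitz and Hessian bounds to comparing the affine functions $A_i$ of neighbouring Whitney cubes, and the comparison of nearby best-approximating $n$-planes is carried out by exhibiting $n+1$ well-spread points of $E$ (via the density lower bound $\Theta_\mu\geq\theta$ and the $(n,1)$-growth) lying within $\epsilon\cdot\mathrm{radius}$ of both planes. This is essentially the same route as the reference and the key quantitative step (plane stability under $\beta_\infty$-flatness and $\theta$-density, with the resulting $\theta$-dependence of the constant) is correctly identified. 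One small point worth making explicit: the reference index $j$ in your formulas should be chosen in $I_0$ (or one should observe that on $15R_i$ with $i\in I_\Whit$ every $\phi_k$ that is nonzero either has $k\in I_0$ with $\ell(R_k)\approx\ell(R_i)$, or $A$ vanishes there), and that $15R_i\cap\Pi(Z_0)=\varnothing$ by Lemma \ref{l:whitney-dec}(a), so the Hessian estimate is a genuine pointwise statement on the open set $L_0\setminus\Pi(Z_0)$ and no extension across $\Pi(Z_0)$ is needed for that part; the Lipschitz extension to $\Pi(Z_0)$ via Lemma \ref{PiperpLip} and continuity is needed and you invoke it correctly.
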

We will denote the graph of $A$ by $\Gamma$, that is
\begin{align}\label{e:graphA}
    \Gamma := \mathrm{Graph}(A)=\{ (x, A(x)) \, |\, x \in L_0\}.
\end{align}

\vvv

\subsection{The Lipschitz graph $\Gamma$ and $E_0$ are close to each other}\label{closeprops}

The next four results, concerning the relationship between $\Gamma$ and $E_0$, are central to our analysis.

\begin{lemma} \label{l:dist-B0}\cite[Lemma 7.28]{Tolsa-llibre}
    Every $x \in B(0, 10r_0)$ satisfies
    \begin{align*}
        \dist(x, \Gamma) \lesssim d(x).
    \end{align*}
\end{lemma}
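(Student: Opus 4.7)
\emph{Proof plan.} The strategy is simply to bound $\dist(x,\Gamma)$ above by $|\Pi^\perp(x) - A(\Pi(x))|$ and to estimate the latter in terms of $d(x)$. Writing $p=\Pi(x)$, we split into three cases: (a) $x\in Z_0$, (b) $x\notin Z_0$ but $p\in\Pi(Z_0)$, and (c) $p\notin\Pi(Z_0)$, so $p$ lies in some Whitney cube $R_j\in\Whit$.

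Case (a) is immediate: if $d(x)=0$ then $x\in Z_0$, and by the very definition of $A$ on $\Pi(Z_0)$ we have $A(p)=\Pi^\perp(x)$, so $x\in\Gamma$. Case (b) follows directly from Lemma~\ref{PiperpLip}: taking $z\in Z_0$ with $\Pi(z)=p$, we have $(p,A(p))=z$, so
\[
|x-z|=|\Pi^\perp(x)-\Pi^\perp(z)|\leq 6\alpha|\Pi(x)-\Pi(z)|+4d(x)+4d(z)=4d(x),
\]
since $\Pi(x)=\Pi(z)$ and $d(z)=0$.

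The main work is case (c). Since $x\in B(0,10r_0)$ gives $p\in L_0\cap B(0,10r_0)$, the cube $R_j$ containing $p$ lies in $I_0$, and by the Whitney properties (Lemma~\ref{l:whitney-dec}(b),(c)) the finitely many cubes $R_i$ with $p\in 3R_i$ satisfy $\ell(R_i)\approx\ell(R_j)\approx D(p)\leq d(x)$; moreover each such $R_i$ lies in $I_0$ (being within $3\ell(R_i)$ of $R_j$ with comparable side length, by Lemma~\ref{lem73} this keeps it close to the origin). Using the partition of unity it will then suffice to prove the subclaim
\[
|\Pi^\perp(x)-A_i(p)|\lesssim d(x)\quad\text{for each $i$ with $p\in 3R_i$,}
\]
since then $|\Pi^\perp(x)-A(p)|\leq\sum_i\phi_i(p)|\Pi^\perp(x)-A_i(p)|\lesssim d(x)$.

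To prove the subclaim, recall from Lemma~\ref{l:whitney-prop1} that the associated ball $B_i=B(z_i,r_i)\in\VG$ has $z_i\in E_0$, $r_i\approx\ell(R_i)$, and $\dist(R_i,\Pi(B_i))\lesssim\ell(R_i)$, while $z_i\in L_{B_i}$ means $A_i(\Pi(z_i))=\Pi^\perp(z_i)$. I would write
\[
|\Pi^\perp(x)-A_i(p)|\leq|\Pi^\perp(x)-\Pi^\perp(z_i)|+|A_i(\Pi(z_i))-A_i(p)|
\]
and control each term separately. For the second, $A_i$ has slope $\lesssim\alpha$ and $|\Pi(z_i)-p|\lesssim\ell(R_i)\lesssim d(x)$, giving $\lesssim\alpha\,d(x)$. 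For the first, Lemma~\ref{PiperpLip} gives
\[
|\Pi^\perp(x)-\Pi^\perp(z_i)|\leq 6\alpha|\Pi(x)-\Pi(z_i)|+4d(x)+4d(z_i);
\]
since $|\Pi(x)-\Pi(z_i)|=|p-\Pi(z_i)|\lesssim\ell(R_i)\lesssim d(x)$, and since taking $B_i$ itself in the infimum yields $d(z_i)\leq r_i\lesssim\ell(R_i)\lesssim d(x)$, the whole expression is $\lesssim d(x)$.

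The principal technical point is checking that all comparability constants in case (c) close up correctly: one must use that $p\in B(0,10r_0)$ and $R_j\in I_0$ force the Whitney neighbours $R_i$ of $R_j$ into $I_0$ as well (so that $A_i$ is genuinely an affine function associated with some $B_i\in\VG$ and not the default $A_i\equiv 0$), and one must take care that the $\alpha$-factors in the slope of $A_i$ and in Lemma~\ref{PiperpLip} are absorbed into the absolute implicit constants. Once these are in place, combining the three cases yields $\dist(x,\Gamma)\lesssim d(x)$ as required.
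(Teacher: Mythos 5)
Your proof is essentially correct and follows the same casework strategy as Tolsa's book (the paper itself does not reproduce a proof and simply cites [Lemma 7.28]{Tolsa-llibre}): you treat $p=\Pi(x)\in\Pi(Z_0)$ via Lemma~\ref{PiperpLip}, and $p\notin\Pi(Z_0)$ via the Whitney cubes, the partition of unity, and Lemma~\ref{l:whitney-prop1}.

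Two small inaccuracies are worth flagging, though neither affects the outcome. First, you assert ``$z_i\in L_{B_i}$ means $A_i(\Pi(z_i))=\Pi^\perp(z_i)$,'' but $z_i$ is the center of $B_i\in\VG$, a point of $E$, and need not lie on the best-approximating plane $L_{B_i}$; one only has $\dist(z_i,L_{B_i})\leq \epsilon\,r(B_i)\lesssim\epsilon\,\ell(R_i)\lesssim\epsilon\,d(x)$, so you should insert the extra term $|\Pi^\perp(z_i)-A_i(\Pi(z_i))|\lesssim\epsilon\,d(x)$ in the triangle inequality. Second, the claim that every Whitney neighbour $R_i$ with $p\in 3R_i$ automatically lies in $I_0$ does not follow directly from Lemma~\ref{lem73}: it is consistent that $R_i\notin I_0$, but in that case Lemma~\ref{lem73} forces $\ell(R_i)\approx\dist(0,R_i)\gtrsim r_0$, hence $d(x)\geq D(p)\approx\ell(R_j)\approx\ell(R_i)\gtrsim r_0$, and then $\dist(x,\Gamma)\lesssim r_0\lesssim d(x)$ is trivial since $x\in B(0,10r_0)$ and $\Gamma$ passes within $O(\alpha r_0)$ of the origin; so the bound survives regardless, but it is cleaner to make this dichotomy explicit rather than asserting $R_i\in I_0$.
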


\begin{lemma}\label{l:distQ-L}\cite[Lemma 7.29]{Tolsa-llibre}
    For $B\in \VG$ and $x\in \Gamma\cap 3B$, it holds that
    \begin{align}\label{e:distQ-L}
        \dist(x,L_B) \lesssim_{\theta} \epsilon r(B).
    \end{align}
\end{lemma}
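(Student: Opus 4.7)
\textbf{Plan for the proof of Lemma \ref{l:distQ-L}.} Write $x = (p, A(p))$ with $p \in L_0$. Denote by $A_B: L_0 \to L_0^\perp$ the affine function whose graph is $L_B$; since $B \in \VG$, the slope of $A_B$ is at most $\tan\alpha \lesssim \alpha$. Because both $A$ and $A_B$ have slope $\lesssim \alpha \ll 1$, we have $\dist(x, L_B) \approx |A(p) - A_B(p)|$. The plan is to bound $|A(p) - A_B(p)|$ by $\epsilon r(B)$, dealing separately with the cases $p \in \Pi(Z_0)$ and $p \in L_0 \setminus \Pi(Z_0)$.

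\textbf{Case 1: $p \in \Pi(Z_0)$.} Then $x \in Z_0$ and $d(x) = 0$, so there exist very good balls $B(z_k, r_k)$ with $|x - z_k| + r_k \to 0$. Since each $z_k \in E$ and $E$ is compact, $x \in E$, hence $x \in E \cap 3B$. Assuming $3r(B) \leq 50 r_0$ (otherwise $B$ already contains essentially all of interest and the argument simplifies), $3B \in \G$ so $\beta_{\infty,E}(3B) \leq \epsilon$, and thus $\dist(x, L_{3B}) \leq 3\epsilon\, r(B)$. A comparison estimate (see below) between $L_B$ and $L_{3B}$ then yields $\dist(x, L_B) \lesssim_\theta \epsilon\, r(B)$.

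\textbf{Case 2: $p \notin \Pi(Z_0)$.} Then $p \in R_i$ for some Whitney cube $R_i \in \Whit$. Since $x \in 3B$ and $B \in \VG$, $d(x) \leq 4 r(B)$, so $D(p) \leq 4 r(B)$ and Lemma \ref{l:whitney-dec}(a) gives $\ell(R_i) \lesssim r(B)$. Writing $A(p) = \sum_{j:\, p \in 3R_j} \phi_j(p) A_j(p)$ and using $\sum_j \phi_j(p) = 1$,
\begin{equation*}
|A(p) - A_B(p)| \leq \sum_{j:\, p \in 3R_j} \phi_j(p)\, |A_j(p) - A_B(p)|,
\end{equation*}
so it suffices to show $|A_j(p) - A_B(p)| \lesssim_\theta \epsilon\, r(B)$ for each such $j$. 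By Lemma \ref{l:whitney-dec}(b) and Lemma \ref{l:whitney-prop1}, $r(B_j) \approx \ell(R_j) \approx \ell(R_i) \lesssim r(B)$, and $B_j$ is at distance $\lesssim \ell(R_j)$ from $p$, hence $B_j \subset CB$ for some absolute constant $C$ (and we may enlarge $B$ to $CB \in \G$ provided $C r(B) \leq 50 r_0$). The required estimate $|A_j(p) - A_B(p)| \lesssim_\theta \epsilon\, r(B)$ is again a comparison between approximating planes of two nested very good balls.

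\textbf{Key comparison estimate (main obstacle).} In both cases, the core step is: if $B', B'' \in \VG$ with $B'' \subset C_0 B'$ and $r(B'') \gtrsim_{\theta} r(B')$, then for every $q \in L_0$ with $|q - \Pi(y_{B''})| \lesssim r(B')$,
\begin{equation*}
|A_{B'}(q) - A_{B''}(q)| \lesssim_\theta \epsilon\, r(B').
\end{equation*}
(In Case 2 we need the slightly stronger form where evaluation is at distance $\lesssim r(B_j)$ from $B_j$, which is why the slope difference term does not blow up.) To prove this, one uses $\Theta_\mu(B'') \geq \theta$ together with the $n$-polynomial growth of $\mu$ (with constant $1$) to extract $n+1$ points $y_0, \ldots, y_n \in E \cap B''$ whose mutual distances are $\gtrsim_\theta r(B'')$ and which are affinely independent with quantitative constants depending only on $\theta$. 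Each $y_i$ satisfies $\dist(y_i, L_{B'}) \leq \epsilon\, r(B'')$ and $\dist(y_i, L_{B''}) \leq C_0 \epsilon\, r(B')$ (using $\beta_{\infty,E}(B'), \beta_{\infty,E}(C_0 B') \leq \epsilon$). Since both $A_{B'}, A_{B''}$ are Lipschitz with slope $\lesssim \alpha$, the projections $\Pi(y_0), \ldots, \Pi(y_n)$ are affinely independent in $L_0$ with separation $\gtrsim_\theta r(B'')$, so the values $|A_{B'}(\Pi y_i) - A_{B''}(\Pi y_i)| \lesssim \epsilon\, r(B')$ determine $A_{B'} - A_{B''}$ up to error $\epsilon\, r(B')$ on points at distance $\lesssim r(B')$. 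Verifying the existence of the well-separated points $y_i$ via the density/polynomial-growth argument is the step I expect to require the most care; once it is in place, comparison of $L_{B_j}$ with $L_B$ in Case 2 and of $L_{3B}$ with $L_B$ in Case 1 both follow immediately and the lemma is proved.
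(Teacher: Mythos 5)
Your plan is sound and follows essentially the same strategy as Lemma 7.29 of \cite{Tolsa-llibre}: reduce to comparing $L_B$ with the approximating plane of a nearby very good ball (with $L_{3B}$ when $p\in\Pi(Z_0)$, and with the $L_{B_j}$ over the relevant Whitney cubes otherwise), using the density condition together with polynomial growth to produce $n+1$ quantitatively affinely independent points of $E$ that both planes must approximate. You also correctly identify the key subtlety in Case 2: when $\ell(R_j)\ll r(B)$ the slope difference between $L_{B_j}$ and $L_B$ can only be bounded by $\epsilon\, r(B)/r(B_j)$, and it is precisely the fact that $p$ lies within $\lesssim r(B_j)$ of $B_j$ that keeps the evaluated error at $\lesssim\epsilon\,r(B)$. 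Two details want tightening when you flesh this out. First, the ``key comparison estimate'' as you state it assumes $r(B'')\gtrsim_\theta r(B')$, which is exactly what fails in Case 2; the version you actually need drops this assumption and instead hypothesizes that the evaluation point is at distance $\lesssim r(B'')$ from $B''$ (your parenthetical gestures at this but the formal statement should reflect it). Second, the points $y_i\in E\cap B_j$ need not lie in $B$, only in $CB$, so the direct comparison of $L_{B_j}$ with $L_B$ does not quite go through; you need the two-step chain $L_{B_j}\leftrightarrow L_{CB}\leftrightarrow L_B$, the second comparison using points of $E\cap B$ (which you allude to by enlarging to $CB$ but do not carry out). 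Neither is a genuine gap, and the plan is workable as written.
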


\begin{lemma}\label{l:distG-A}\cite[Lemma 7.30]{Tolsa-llibre}
We have    \begin{align*}
        \dist(x, \Gamma) \lesssim_{\theta} \epsilon \,d(x)\quad\text{for every }x\in E_0.
    \end{align*}
\end{lemma}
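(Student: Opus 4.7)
The plan is to sharpen Lemma \ref{l:dist-B0} by inserting, at the relevant scale, the quantitative flatness $\beta_{\infty,E}(B)\leq \epsilon$ that is automatic for every very good ball containing $x$. If $d(x)=0$ then $x\in Z_0$ and, by the very construction of $A$ on $\Pi(Z_0)$, $A(\Pi(x))=\Pi^\perp(x)$, so $x\in\Gamma$ and the statement is trivial. Assume therefore $d(x)>0$.

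For $\eta>0$ small, pick $B(z,r_0')\in\VG$ almost realizing the infimum in \eqref{e:d-smoothing}, so that $|x-z|+r_0'\leq (1+\eta)d(x)$, and then enlarge to $B:=B(z,M\,d(x))$ for an absolute constant $M$ to be chosen (taken with $M\leq 25$ so that $r(B)\leq 50 r_0$). Since $B(z,s)\in\VG$ for every $s\in[r_0',50r_0]$, also $B\in\VG$. As $|x-z|<r(B)$ and $x\in E$, we have $x\in E\cap B$, hence $\beta_{\infty,E}(B)\leq\epsilon$ yields $\dist(x,L_B)\leq \epsilon\,r(B)$; writing $L_B$ as the graph of an affine map $A_B:L_0\to L_0^\perp$ of slope $\lesssim\alpha$, this translates into $|\Pi^\perp(x)-A_B(\Pi(x))|\lesssim \epsilon\,d(x)$. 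The natural candidate on $\Gamma$ is $y:=(\Pi(x),A(\Pi(x)))$, and the triangle inequality reduces the problem to establishing
\begin{equation*}
|A(\Pi(x))-A_B(\Pi(x))|\lesssim_\theta \epsilon\,d(x),
\end{equation*}
which is exactly what Lemma \ref{l:distQ-L} delivers, \emph{provided} $y\in 3B$.

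The only technical point (and the reason we inflated $r_0'$ by the factor $M$) is verifying that $y\in 3B$. The horizontal estimate $|\Pi(y)-\Pi(z)|\leq |x-z|\leq 2d(x)$ is trivial; for the vertical component, split
\begin{equation*}
|\Pi^\perp(y)-\Pi^\perp(z)| \leq |A(\Pi(x))-A(\Pi(z))|+|A(\Pi(z))-\Pi^\perp(z)| \leq 2\alpha\,d(x) + |A(\Pi(z))-\Pi^\perp(z)|.
\end{equation*}
Now $B(z,r_0')\in\VG$ forces $h(z)\leq r_0'$, hence $d(z)\leq r_0'\leq (1+\eta)d(x)$; applying the weaker Lemma \ref{l:dist-B0} at the point $z$ yields $\dist(z,\Gamma)\lesssim d(z)\lesssim d(x)$ with an absolute constant, and since $A$ is $\alpha$-Lipschitz this forces $|A(\Pi(z))-\Pi^\perp(z)|\lesssim d(x)$. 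Combining, $|y-z|\lesssim d(x)$ with a purely absolute implicit constant, so choosing $M$ larger than this constant puts $y\in 3B$. Lemma \ref{l:distQ-L} then gives $|A(\Pi(x))-A_B(\Pi(x))|\lesssim_\theta \epsilon\,r(B)\approx_\theta \epsilon\,d(x)$, and combined with the earlier estimate for $|\Pi^\perp(x)-A_B(\Pi(x))|$ we obtain $\dist(x,\Gamma)\leq |x-y|\lesssim_\theta \epsilon\,d(x)$. The main (and only real) obstacle is this bookkeeping of constants ensuring that the vertical fiber over $\Pi(x)$ meets $\Gamma$ inside the enlarged ball $3B$; everything else is a direct combination of the preceding lemmas.
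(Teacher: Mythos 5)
Your approach is the natural one and is structurally sound: find a very good ball $B$ at scale comparable to $d(x)$, use the quantitative flatness $\beta_{\infty,E}(B)\leq\epsilon$ to pin $x$ to $L_B$, use Lemma \ref{l:distQ-L} to pin $\Gamma$ to $L_B$ inside $3B$, and combine. Since the paper only cites Lemma 7.30 of \cite{Tolsa-llibre} without reproducing a proof, this is exactly what one would expect the argument to look like.

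There is, however, one unresolved tension in the bookkeeping of constants that you gloss over. You need two things of the absolute constant $M$: on one hand $M\le 25$, so that $\rad(B)=M\,d(x)\leq 50 r_0$ (using $d(x)\leq h(x)\leq 2r_0$) and $B\in\VG$; on the other hand $M$ must exceed roughly a third of the absolute constant $C$ appearing in the chain $|y-z|\leq |\Pi(y)-\Pi(z)|+|\Pi^\perp(y)-\Pi^\perp(z)|\lesssim d(x)$, which in turn is dominated by the implicit constant $C_0$ in Lemma \ref{l:dist-B0} via $\dist(z,\Gamma)\lesssim C_0\,d(z)\lesssim C_0\,d(x)$. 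Nothing in the statement of Lemma \ref{l:dist-B0} guarantees $C_0\leq 36$ or so, which is what you would need for both requirements to be simultaneously satisfiable. As written, the sentence ``choosing $M$ larger than this constant puts $y\in 3B$'' quietly assumes compatibility that has not been checked.

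The fix is easy and worth spelling out: the cap $M\,d(x)\leq 50 r_0$ only fails when $d(x)\geq 50 r_0/M$, i.e.\ when $d(x)\gtrsim r_0$ with an absolute lower bound. In that regime one does not need the ball $B$ at all. Since $x\in E_0\subset E\cap B_0$, the flatness of $B_0$ gives $\dist(x,L_{B_0})\leq\epsilon r_0$, hence (as $\dist_H(L_0,L_{B_0})\leq\epsilon r_0$) $|\Pi^\perp(x)|\lesssim\epsilon r_0$; and Lemma \ref{l:GdistL0} gives $|A(\Pi(x))|=\dist\big((\Pi(x),A(\Pi(x))),L_0\big)\lesssim_\theta\epsilon r_0$. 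Combining, $\dist(x,\Gamma)\leq |\Pi^\perp(x)|+|A(\Pi(x))|\lesssim_\theta\epsilon r_0\lesssim_\theta\epsilon\, d(x)$. With that case disposed of, in the remaining case $d(x)\lesssim r_0$ one may take $M$ as large as any absolute constant requires, and the rest of your argument goes through verbatim.
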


Lemmas \ref{l:distG-A} and \ref{l:distQ-L} combine to yield the following statement.

\begin{coro}\label{l:LBcloseGA} Suppose that $B\in \VG$.  As long as $\epsilon$ is small enough in terms of $\theta$,
\begin{align}
\dist(x, \Gamma)\lesssim_{\theta}\epsilon r(B) \quad \text{for all }x\in L_B\cap 2B.
\end{align}
\end{coro}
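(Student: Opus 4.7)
\smallskip

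\noindent\textbf{Proof plan.}
The idea is to combine the two previous lemmas in a direct way: Lemma~\ref{l:distG-A} says the center $x_B$ of $B$ (which lies in $E_0$) is close to $\Gamma$, while Lemma~\ref{l:distQ-L} says every point of $\Gamma\cap 3B$ is close to $L_B$. So the natural strategy, for a given $x\in L_B\cap 2B$, is to project $x$ orthogonally to $L_0$, obtaining $p=\Pi(x)$, and to argue that the ``vertical lift'' $y:=(p,A(p))\in\Gamma$ is the desired point close to $x$. First I would observe that since $B\in\VG\subset \G$, we have $\angle(L_B,L_{B_0})\leq \alpha$, and combined with the fact that $L_{B_0}$ is within $\epsilon r_0$ of the reference hyperplane $L_0$, $L_B$ may be written as a graph $q\mapsto (q,a(q))$ over $L_0$ with slope $\lesssim\alpha$. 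In particular $x=(p,a(p))$ and $|x-y|$ reduces to $|a(p)-A(p)|$.

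The main step is to verify that $y\in 3B$ so that Lemma~\ref{l:distQ-L} is applicable to $y$. To this end, note that $x_B\in E_0$ and $B(x_B,r(B))\in\VG$, which forces $h(x_B)\leq r(B)$ and hence $d(x_B)\leq r(B)$. Then Lemma~\ref{l:distG-A} produces $y_0=(p_0,A(p_0))\in\Gamma$ with $|y_0-x_B|\leq C_\theta\epsilon r(B)$, giving both $|p_0-\Pi(x_B)|$ and $|A(p_0)-\Pi^\perp(x_B)|$ bounded by $C_\theta \epsilon r(B)$. Since $A$ is Lipschitz with slope $\lesssim \alpha$ by Lemma~\ref{l:A}, and $|p-\Pi(x_B)|\leq |x-x_B|\leq 2r(B)$, the triangle inequality yields
$$|A(p)-\Pi^\perp(x_B)|\leq \alpha|p-p_0|+|A(p_0)-\Pi^\perp(x_B)|\leq C\alpha\, r(B)+C_\theta\epsilon r(B).$$
Combining with $|p-\Pi(x_B)|\leq 2r(B)$ gives $|y-x_B|\leq (2+C\alpha+C_\theta\epsilon)\,r(B)<3r(B)$ provided $\alpha$ and $\epsilon$ are small enough (in fact this is where the smallness of $\epsilon$ in terms of $\theta$ enters).

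Once $y\in\Gamma\cap 3B$, Lemma~\ref{l:distQ-L} yields $\dist(y,L_B)\lesssim_\theta \epsilon\, r(B)$. Since $L_B$ has slope $\lesssim \alpha\ll 1$, the orthogonal distance from $y$ to $L_B$ is comparable (up to a factor $\cos(\alpha)\approx 1$) to the vertical difference $|A(p)-a(p)|$, so
$$\dist(x,\Gamma)\leq |x-y|=|a(p)-A(p)|\lesssim_\theta \epsilon\, r(B),$$
which is exactly the claim. The only genuinely delicate point is the verification that $y\in 3B$; this is where one uses Lemma~\ref{l:distG-A} at the center of $B$ together with the Lipschitz control on $A$ from Lemma~\ref{l:A}, and it is the reason for the smallness assumption on $\epsilon$ relative to $\theta$.
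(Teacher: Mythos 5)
Your proof is correct and follows essentially the same route as the paper's: both combine Lemma~\ref{l:distG-A} applied at the center of $B$ with Lemma~\ref{l:distQ-L} applied to $\Gamma\cap 3B$. Where the paper finishes with a brief connectivity argument ("since both $\Gamma$ and $L_B$ are connected, we readily deduce the conclusion"), you make that last step explicit by projecting $x\in L_B\cap 2B$ to $L_0$, lifting to the point $y=(p,A(p))\in\Gamma$, and verifying $y\in 3B$ via Lemma~\ref{l:A} — a cleaner and fully rigorous unpacking of the same idea.
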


\begin{proof}  If $B=B(z, r)\in \VG$, then $z\in E_0$ so by Lemma \ref{l:distG-A}, $\dist(z, \Gamma)\lesssim _{\theta}\epsilon r$, so $\Gamma$ (as well as $L_B$) passes close to $z$. On the other hand, Lemma \ref{l:distQ-L} ensures that if $B\in \VG$, then $\Gamma\cap 3B\subset \NN_{C(\theta)\epsilon r(B)}(L_B)$.  Since both $\Gamma$ and $L_B$ are connected, we readily deduce the conclusion.
\end{proof}

\begin{lemma}\label{l:GdistL0}\cite[Lemma 7.32]{Tolsa-llibre}
We have
$$\dist(x,L_0)\lesssim_{\theta}\,\epsilon\,r_0
\quad \text{for all $x\in \Gamma$.}$$
\end{lemma}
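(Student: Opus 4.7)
The proof naturally splits according to whether the point of $\Gamma$ lies above $\Pi(Z_0)$ or above the Whitney complement. Since $A$ is supported in $[-12r_0,12r_0]^n$ by Lemma \ref{l:A}, it suffices to prove $|A(p)| \lesssim_\theta \epsilon r_0$ for $p$ in this cube; outside, $A(p)=0$ so $(p,0)\in L_0$ trivially. First I would handle the base case $p\in\Pi(Z_0)$: take $x\in Z_0$ with $\Pi(x)=p$, so that $A(p)=\Pi^\perp(x)$. Because $d(x)=0$, there is a sequence of very good balls $B(z_k,r_k)\in\VG$ with $|x-z_k|+r_k\to 0$. Every center $z_k\in E_0\subset B_0$, so $\dist(z_k,L_{B_0})\leq \epsilon r_0$ from $\beta_{\infty,E}(B_0)\leq \epsilon$; combined with $\dist_H(L_0\cap B_0, L_{B_0}\cap B_0)\lesssim \epsilon r_0$ (which follows from $x_0=0\in E_0\cap L_0$), this gives $\dist(z_k,L_0)\lesssim \epsilon r_0$. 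Passing $k\to\infty$ yields $|\Pi^\perp(x)|\lesssim \epsilon r_0$.

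Next, for $p\in R_i$ with $p\notin \Pi(Z_0)$, the partition of unity and bounded overlap reduce matters to estimating $|A_j(p)|$ for each $j\in I_0$ with $p\in 3R_j$ (for $j\notin I_0$, $A_j\equiv 0$). The function $A_j$ is affine with graph $L_{B_j}$, where $B_j\in\VG$ with $r(B_j)\approx\ell(R_j)$ by Lemma \ref{l:whitney-prop1}. The center $z_j\in E_0$ satisfies two estimates: $|\Pi^\perp(z_j)|\lesssim\epsilon r_0$ as in the first case, and $\dist(z_j,L_{B_j})\lesssim \epsilon r(B_j)$ from $B_j\in \G$. These combine to give $|A_j(\Pi(z_j))|\lesssim \epsilon r_0$. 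Since $|p-\Pi(z_j)|\lesssim \ell(R_j)$ and $A_j$ has slope $\lesssim \alpha$, we conclude
$$|A_j(p)|\leq |A_j(\Pi(z_j))|+\alpha |p-\Pi(z_j)|\lesssim \epsilon r_0+\alpha\,\ell(R_j).$$

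The main obstacle is to absorb the term $\alpha\,\ell(R_j)$, which is only satisfactory when $\ell(R_j)$ is small compared to $r_0$. The cleanest route I would take is to apply Lemma \ref{l:distQ-L} with a very good ball large enough to encompass $\Gamma$: the ball $\widetilde B = B(0,50r_0)$ lies in $\VG$ (since every concentric larger ball contains $B_0$ and inherits its density and flatness), and the support condition on $A$ places $\Gamma$ inside $3\widetilde B$. Lemma \ref{l:distQ-L} then gives $\dist(x,L_{\widetilde B})\lesssim_\theta \epsilon r_0$ (the factor $50$ being absorbed into the $\theta$-constant). To compare $L_{\widetilde B}$ with $L_0$, I would use that both planes well approximate the $(n,1)$-spread set $E_0$ inside $B_0$: by the polynomial growth and the reduction guaranteeing two balls of positive $\mu$-mass well separated inside $B_0$, one locates $n+1$ affinely independent points of $E_0$ at mutual distances $\gtrsim_{c_0}r_0$, each lying within $\epsilon r_0$ of both $L_0$ and $L_{\widetilde B}$. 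This forces the Hausdorff distance between $L_0\cap B_0$ and $L_{\widetilde B}\cap B_0$ to be $\lesssim_{c_0,\theta}\epsilon r_0$, and propagating through $\Gamma\subset 3\widetilde B$ yields the bound $\dist(x,L_0)\lesssim_\theta \epsilon r_0$ for every $x\in\Gamma$.
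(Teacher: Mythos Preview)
Your argument is correct and follows the standard route that the paper cites from \cite{Tolsa-llibre}. The decisive step is your third paragraph: applying Lemma~\ref{l:distQ-L} to the maximal very good ball $\widetilde B = B(0,50r_0)$, which is exactly how Lemma~7.32 is proved there. The first two paragraphs are exploratory and can be dropped entirely---the large-ball argument already covers every point of $\Gamma$ (including those over $\Pi(Z_0)$), so the case split is unnecessary, and you correctly diagnose that the direct estimate only yields $\epsilon r_0 + \alpha\,\ell(R_j)$, which is too weak.

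Two small imprecisions worth tightening. First, ``$\Gamma\subset 3\widetilde B$'' is not literally true since $\Gamma$ is an unbounded graph; what you need (and what holds) is that the portion of $\Gamma$ where $A\neq 0$ lies in $3\widetilde B$, which follows from $\operatorname{supp} A \subset B_{L_0}(0,12r_0)$ and $\|A\|_\infty\lesssim\alpha r_0$. Second, for the comparison of $L_{\widetilde B}$ with $L_0$ you need $n+1$ points of $\supp\mu$ spanning an $n$-simplex of $n$-volume $\gtrsim_{c_0} r_0^n$, not merely pairwise separation $\gtrsim r_0$ (affinely independent points with large mutual distances can still be nearly coplanar). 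This stronger statement does follow from the $(n,1)$-growth and $\mu(B_0)\geq c_0 r_0^n$: a covering argument shows $\supp\mu$ is not contained in the $c_0 r_0$-neighborhood of any $(k-1)$-plane for $k\leq n$, so one builds the simplex inductively.
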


\begin{lemma}\label{lemDd}
Assuming $\epsilon$ small enough, for all $x\in E_0$ and for all $x\in\Gamma$, we have
$$D(\Pi(x))\leq d(x) \lesssim D(\Pi(x)).$$
\end{lemma}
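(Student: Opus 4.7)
The inequality $D(\Pi(x))\leq d(x)$ is immediate, since $x\in\Pi^{-1}(\Pi(x))$ and $D(\Pi(x))$ is the infimum of $d$ over this vertical fiber. The content of the lemma is therefore the reverse bound $d(x)\lesssim D(\Pi(x))$ for $x\in E_0\cup\Gamma$.

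Set $p=\Pi(x)$ and pick $y\in\Pi^{-1}(p)$ with $d(y)\leq 2D(p)$ (which exists by the definition of $D$ as an infimum). The key preliminary claim is the following: for any $y$ with $d(y)$ small enough that Lemma \ref{l:dist-B0} applies (in particular for $y\in B(0,10r_0)$), one has
\[
\bigl|\Pi^\perp(y)-A(\Pi(y))\bigr|\lesssim d(y).
\]
This follows by choosing $z\in\Gamma$ with $|y-z|\lesssim d(y)$ (Lemma \ref{l:dist-B0}), and using that $\Gamma$ is a Lipschitz graph of slope $\lesssim\alpha\leq 1$ (Lemma \ref{l:A}) to write
\[
\bigl|\Pi^\perp(y)-A(p)\bigr|\leq |\Pi^\perp(y)-\Pi^\perp(z)|+\bigl|A(\Pi(z))-A(p)\bigr|\leq |y-z|+\alpha|\Pi(z)-p|\lesssim d(y).
\]

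With this in hand, the two cases split as follows. If $x\in\Gamma$, then $\Pi^\perp(x)=A(p)$, so
\[
|x-y|=\bigl|A(p)-\Pi^\perp(y)\bigr|\lesssim d(y)\leq 2D(p),
\]
and by $1$-Lipschitzness of $d$ one gets $d(x)\leq d(y)+|x-y|\lesssim D(p)$, as desired. If instead $x\in E_0$, then Lemma \ref{l:distG-A} gives $\dist(x,\Gamma)\lesssim_\theta\epsilon\,d(x)$, and the same argument as in the preliminary claim (applied to $x$ in place of $y$) produces the improved bound
\[
\bigl|\Pi^\perp(x)-A(p)\bigr|\lesssim_\theta \epsilon\,d(x).
\]
Combining with the bound on $|\Pi^\perp(y)-A(p)|$ yields
\[
|x-y|\leq \bigl|\Pi^\perp(x)-A(p)\bigr|+\bigl|A(p)-\Pi^\perp(y)\bigr|\lesssim_\theta \epsilon\,d(x)+D(p),
\]
so $d(x)\leq d(y)+|x-y|\leq 2D(p)+C_\theta\epsilon\,d(x)+CD(p)$. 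Taking $\epsilon$ small enough (depending on $\theta$) so that $C_\theta\epsilon\leq 1/2$, the term $C_\theta\epsilon\,d(x)$ is absorbed into the left-hand side, giving $d(x)\lesssim_\theta D(p)$.

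The only potential technicality is verifying that Lemma \ref{l:dist-B0} is applicable to the auxiliary point $y$: this will follow from the fact that $d(y)\leq 2D(p)\leq 2d(x)\leq Cr_0$ whenever $x\in E_0\cup\Gamma$ lies in a bounded neighborhood of $B_0$ (note $E_0\subset B_0$, and by Lemma \ref{l:A} the nontrivial part of $\Gamma$ is supported where $\Pi(x)\in[-12r_0,12r_0]^n$), so that $y\in B(0,Cr_0)$ for some absolute $C$. Outside such a bounded region both $d(x)$ and $D(\Pi(x))$ are comparable to $|x|$ and the equivalence is trivial. This handling of the range is the only mildly delicate step; the rest is a direct combination of Lemmas \ref{l:A}, \ref{l:dist-B0}, and \ref{l:distG-A}.
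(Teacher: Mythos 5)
Your proposal is correct and follows essentially the same route as the paper: pick a near-minimizer $y$ on the fiber $\Pi^{-1}(p)$, bound $|x-y|$ by combining Lemma~\ref{l:dist-B0} (resp. \ref{l:distG-A}) with the small-slope Lipschitz property of $\Gamma$ from Lemma~\ref{l:A}, and then use $1$-Lipschitzness of $d$ together with absorption of the $C_\theta\epsilon\,d(x)$ term. Your explicit formulation of the intermediate estimate $|\Pi^\perp(y)-A(\Pi(y))|\lesssim d(y)$ and the discussion of the range where Lemma~\ref{l:dist-B0} applies make a couple of steps more transparent than the paper's phrasing, but the argument is structurally the same.
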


\begin{proof}
The inequality $D(\Pi(x))\leq d(x)$ holds for any $x\in\R^{n+1}$ by the definition of $D$. 
For the converse inequality 
suppose first that $x\in \Gamma$. For any $\tau>0$, let $x'\in \R^{n+1}$ be such that 
$\Pi(x) =\Pi(x')$ and $ d(x') \leq D(\Pi(x')) +\tau$. Then $|x'-x|\approx \dist(x',\Gamma) \lesssim d(x')$
by Lemma \ref{l:dist-B0}. Thus, since $d$ is $1$-Lipschitz,
\begin{equation}\label{eqsame47}
d(x) \leq d(x') + |x-x'|\lesssim d(x') \leq D(\Pi(x')) +\tau = D(\Pi(x)) +\tau.
\end{equation}

In the case $x\in E_0$, consider again $z\in \Gamma$ such that $\Pi(x)=\Pi(z)$, so that by Lemma \ref{l:distG-A},
$$|x-z|\approx \dist(x,\Gamma)\lesssim_\theta \epsilon \,d(x).$$
Then, 
$$d(x) \leq |x-z| + d(z) \leq C(\theta)\epsilon \,d(x) + C\,D(\Pi(z)) = C(\theta)\epsilon \,d(x) + C\,D(\Pi(x))$$
which implies that $D(\Pi(x))\leq d(x) \lesssim D(\Pi(x))$ for $\epsilon$ small enough.
\end{proof}

\vv

\section{Small measure of $\LD$ and $\BA$}\label{sec-coronaest}

\subsection{$\LD$ has small measure}


The following lemma shows that $\LD$ has small measure.  The reason for this is that $\LD$ can be covered by balls of small density that are closely aligned to the $n$-dimensional Lipschitz graph $\Gamma$.

\begin{lemma} \label{l:LD-small}\cite[Lemma 7.33]{Tolsa-llibre}
    If $\theta$ is sufficiently small, and $\epsilon>0$ is sufficiently small in terms of $\theta$, then
    \begin{align*}
    \mu(\LD) \leq \frac{1}{1000} \mu(B_0).
    \end{align*}
\end{lemma}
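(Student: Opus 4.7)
The plan is to combine the low-density condition defining $\LD$ with the proximity of $E_0$ to the Lipschitz graph $\Gamma$ (Lemma \ref{l:distG-A}), via a packing argument on $\Gamma$.

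First, I would collect the relevant geometric input. For each $x \in \LD$, the ball $B_x := B(x, h(x))$ satisfies $\mu(B_x) \leq \theta\, h(x)^n$ by the very definition of $\LD$. Since $x \in E_0$ and $d(x) \leq h(x)$ (the latter being immediate from the infimum definition of $h$, as $B(x, h(x)) \in \VG$ up to a limiting radius), Lemma \ref{l:distG-A} gives
$$\dist(x, \Gamma) \leq C(\theta)\,\epsilon\, h(x).$$
Choosing $\epsilon$ small enough depending on $\theta$, one ensures $\dist(x, \Gamma) \leq h(x)/10$.

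Second, I would apply the Besicovitch covering theorem to $\{B_x\}_{x \in \LD}$ in $\R^{n+1}$, producing an absolute number $N = N(n+1)$ of disjoint subfamilies $\mathcal F_1, \ldots, \mathcal F_N$ whose union covers $\LD$. Fix a single subfamily $\mathcal F_j = \{B(x_i, h(x_i))\}_i$. The half-size balls $\tfrac{1}{2}B_i := B(x_i, h(x_i)/2)$ remain pairwise disjoint. Thanks to the proximity estimate above, each $\tfrac{1}{2}B_i$ meets $\Gamma$, and since $\Gamma$ is an $n$-Lipschitz graph with small slope (at most $C\alpha \ll 1$ by Lemma \ref{l:A}), a trivial projection argument yields
$$\HH^n\bigl(\Gamma \cap \tfrac{1}{2}B_i\bigr) \gtrsim h(x_i)^n.$$
As these sets are disjoint and all contained in $\Gamma \cap B(0, 3r_0)$, whose $\HH^n$-measure is $\lesssim r_0^n$ (again using the Lipschitz character of $\Gamma$), one obtains $\sum_i h(x_i)^n \lesssim r_0^n$.

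Third, I would sum over the $N$ subfamilies and invoke the low-density bound from the first step to estimate
$$\mu(\LD) \leq \sum_{j=1}^{N}\sum_{B_i \in \mathcal F_j} \mu(B_i) \leq \theta \sum_{j=1}^{N}\sum_i h(x_i)^n \lesssim N\,\theta\, r_0^n \leq \frac{C\theta}{c_0}\,\mu(B_0),$$
where the last step uses $\mu(B_0) \geq c_0\, r_0^n$. Taking $\theta$ sufficiently small depending on $c_0$ (and then $\epsilon$ sufficiently small depending on $\theta$) yields $\mu(\LD) \leq \tfrac{1}{1000}\mu(B_0)$, as required. The main (mild) obstacle is the order of parameter selection: the proximity estimate $\dist(x,\Gamma)\lesssim_\theta \epsilon\, h(x)$ carries a constant depending on $\theta$, so $\epsilon$ must be chosen \emph{after} $\theta$; this is already consistent with the quantifier structure in the statement of the lemma. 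Everything else is a standard packing argument on a Lipschitz graph.
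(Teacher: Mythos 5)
Your argument is correct and is essentially the standard packing proof (and, to my knowledge, the one given in \cite[Lemma 7.33]{Tolsa-llibre}, which the paper cites in lieu of a proof): combine the low-density bound $\mu(B(x,h(x)))\leq \theta\, h(x)^n$ from the definition of $\LD$ with the proximity estimate $\dist(x,\Gamma)\lesssim_\theta \epsilon\, d(x)\leq C(\theta)\epsilon\, h(x)$ from Lemma \ref{l:distG-A}, take $\epsilon$ small in terms of $\theta$ so each stopping ball carries a definite chunk of $\Gamma$, run Besicovitch to pack these chunks disjointly inside $\Gamma\cap B(0,Cr_0)$, and close with $\mu(B_0)\geq c_0 r_0^n$. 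The parameter ordering you flag ($\theta$ first, depending on $c_0$ and $n$; then $\epsilon$ depending on $\theta$) is exactly the quantifier structure of the lemma.
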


This lemma determines our choice of $\theta$.

\vv

\subsection{$\BA$ has small measure}

Our objective in this section is to prove the following.

\begin{lemma}\label{l:BA-small}
If $\alpha$ is chosen sufficiently small, and $\epsilon$ is chosen sufficiently small, with respect to  $\alpha$ and $\theta$, then
\begin{align}
    \mu(\BA) \lesssim \epsilon^{1/(n+3)} \mu(E_0).
\end{align}
\end{lemma}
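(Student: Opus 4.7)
The plan is to bound $\mu(\BA)$ by combining a lower bound on $\|\nabla A\|_{L^2(L_0)}^2$ in terms of $\mu(\BA)$ --- arising from the big-angle stopping condition --- with an upper bound on $\|\nabla A\|_{L^2(L_0)}^2$ in terms of $\ve\,\mu(E_0)$, the latter obtained by pushing the symmetry control from $E$ to $\Gamma$ through the Fourier estimate of Lemma \ref{lemlips}.

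First I would extract a local gradient lower bound: for each $x \in \BA$, since $B(x, 2h(x)) \in \VG$, Lemma \ref{l:biganglestop} gives $\angle(L_{B(x,2h(x))}, L_0) \geq \alpha/2$, and Corollary \ref{l:LBcloseGA} confines $\Gamma$ to a $C(\theta)\ve\,h(x)$-neighborhood of $L_{B(x,2h(x))}$ on $2B(x,2h(x))$. On $D_x := \Pi(B(x,2h(x)))$ the Lipschitz graph function $A$ is therefore uniformly close to an affine function of slope $\geq \alpha/2$, and Stokes plus Cauchy--Schwarz applied to $|\int_{D_x} \nabla A\,dp|^2 \leq |D_x|\int_{D_x}|\nabla A|^2\,dp$ yield
\begin{equation*}
    \int_{D_x} |\nabla A(p)|^2\,dp \gtrsim \alpha^2 h(x)^n
\end{equation*}
once $\ve \ll \alpha$. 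A Vitali $5r$-covering of $\{B(x,h(x))\}_{x \in \BA}$ produces a disjoint subfamily $\{B(x_i, h(x_i))\}$ covering $\BA$ up to a factor $5$; the $(n,1)$-polynomial growth of $\mu$ then gives $\mu(\BA) \lesssim \sum_i h(x_i)^n$. The disjointness of the selected balls in $\R^{n+1}$, coupled with the fact that every $x_i$ lies $C(\theta)\ve\,h(x_i)$-close to the near-flat graph $\Gamma$ (Lemma \ref{l:distG-A}), forces the projections $\{D_{x_i}\}$ to have bounded overlap in $L_0$. Summing yields $\alpha^2\mu(\BA) \lesssim \|\nabla A\|_{L^2(L_0)}^2$.

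Next I would derive the upper bound. Since $\|\nabla A\|_\infty \lesssim \alpha$ is small (Lemma \ref{l:A}), Lemma \ref{lemlips} gives $\|\nabla A\|_{L^2(L_0)}^2 \approx \int_\Gamma \wt{\A}_\psi(y)^2\,d\HH^n(y)$, where $\wt{\A}_\psi$ is the smooth square function associated with the Lipschitz domains $\wt\Omega^\pm$ cut out by $\Gamma$. Its integrand at $y \in \Gamma$ is in turn dominated by the symmetry coefficients $\wt{\mfg}$ of $\wt\Omega^\pm$ (in the $L^2\,\tfrac{dr}{r}$ sense, following Lemma \ref{lem1**}). Finally, the key transfer Lemma \ref{lemkey40}, to be proved in a subsequent section, compares $\wt{\mfg}(y,r)$ at $y \in \Gamma$ with $\mfg(x,r)$ at nearby points $x \in E$. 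Integrating this pointwise comparison against $\HH^n|_\Gamma$ and invoking hypothesis (c) of Main Lemma \ref{l:main2} yields
\begin{equation*}
    \int_\Gamma \wt{\A}_\psi(y)^2\,d\HH^n(y) \lesssim \ve\,\mu(E_0) + (\text{small error absorbable into }\mu(\BA)).
\end{equation*}

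Combining the two bounds gives $\mu(\BA) \lesssim \alpha^{-2}\ve\,\mu(E_0)$, and the asserted estimate $\mu(\BA) \lesssim \ve^{1/(n+3)}\mu(E_0)$ then follows once $\ve$ is taken small enough in terms of $\alpha$ so that $\alpha^{-2}\ve \leq \ve^{1/(n+3)}$, i.e.\ $\ve \leq \alpha^{2(n+3)/(n+2)}$. The hard part is the transfer Lemma \ref{lemkey40}: the symmetry coefficient $\mfg(x,r)$ integrates information over the \emph{entire} ball $B(x,r)$, which may extend far from $\Gamma$, whereas $\wt{\mfg}$ depends only on the graph $\Gamma$ itself. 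Reconciling the two requires the domain splitting conclusion from Main Lemma \ref{mainlemma1}(iii), which at flat scales forces $\Omega^\pm$ to occupy the two sides of the approximating plane up to a small $\HH^{n+1}$-error; this is what makes $\wt\Omega^\pm$ and $\Omega^\pm$ agree modulo acceptable error on the bulk of $\R^{n+1}\setminus\Gamma$ and thus allows the transfer to go through.
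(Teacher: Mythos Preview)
Your outline follows the paper's approach: the big-angle lower bound $\mu(\BA)\lesssim\alpha^{-2}\|\nabla A\|_2^2$ (Lemma \ref{l:BA-a}), the Fourier identity $\|\nabla A\|_2^2\approx\int_\Gamma\A_{V,\psi}^2\,d\HH^n$ (Lemma \ref{lemlips}), and then a comparison chain ending in Lemma \ref{lemkey40}. Two points need correction.

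First, the transfer from $\Gamma$ to hypothesis (c) on $E$ happens in two separate moves, not one. The paper first passes from $\int_\Gamma\cdots\,d\HH^n$ to $\int_{E_0}\cdots\,d\mu$ at the level of the coefficients $\lca_{V,\psi}$ themselves (Lemma \ref{lemllumy}, via the repartition functions $g_I$ of Lemma \ref{lemrepart}), and only \emph{then} applies Lemma \ref{lemkey40}, which compares $\mfg_V(x,r)$ with $\mfg_\Omega(x,2r)$ at the \emph{same} point $x\in E_0$ --- not at $y\in\Gamma$ versus nearby $x\in E$ as you describe. In between there is also a truncation step (Lemma \ref{l:BA-est2}) producing the absorbable term $\alpha^4\|\nabla A\|_2^2$, and a small-scale estimate (Lemma \ref{l:lemfac64}) disposing of the range $r\in[\ell(x),\epsilon^{-1}\ell(x)]$.

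Second, your claimed output $\mu(\BA)\lesssim\alpha^{-2}\ve\,\mu(E_0)$ is too optimistic, and the error is not ``absorbable into $\mu(\BA)$''. The comparison in Lemma \ref{lemkey40} costs $O(\kappa^2)=O(\epsilon^{1/(n+2)})$, not $O(\epsilon)$: it uses a Whitney-type decomposition of $B(x,r)$ with cubes scaled down by $\kappa^2$, chosen to balance the fact that each cube is covered by a $\VG$-ball roughly $\kappa^{-2}$ times larger (so the $\epsilon$ from Lemma \ref{l:Gtopology2} becomes $\epsilon\kappa^{-2(n+1)}=\kappa^2$) against a remainder $\kappa^2 r^{-(n+1)}\sum_{P\subset B(x,Cr)}\ell(P)^{n+1}$ from cubes near $\Gamma$. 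After integration (Lemma \ref{lemfi99}) this remainder contributes $\epsilon^{1/(n+2)}r_0^n$, which dominates the $\epsilon\,\mu(E_0)$ coming from hypothesis (c). The actual bound is $\mu(\BA)\lesssim_\theta\alpha^{-2}\epsilon^{1/(n+2)}r_0^n$, and the exponent $1/(n+3)$ in the statement is what remains after absorbing the $\alpha,\theta$-dependence for $\epsilon$ small relative to them.
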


Recall our assumption that the hyperplane $L_0$ coincides with the $\{x\in\R^{n+1}:x_{n+1}=0\}$, and so $L_0^\bot$ is the vertical axis. For a radial function $\psi$ as in Lemma \ref{lemlips}, we 
denote by $\lca_{\Omega,\psi}$ and $\A_{\Omega,\psi}$ the respective square functions $\aps$ and $\Apsi$ associated with
the sets $\Omega^+,\Omega^-$. The analogous square functions associated
with the Lipschitz domains
$$V^+=\{x\in\R^{n+1}:\Pi^\bot(x)>A(\Pi(x))\},\quad V^-=\{x\in\R^{n+1}:\Pi^\bot(x)<A(\Pi(x))\}
$$
are denoted by $\lca_{V,\psi}$ and $\A_{V,\psi}$.

\vv

For the reader's sake, we recall here the assumption (b) of Lemma \ref{l:main2}. 

\begin{assum}\label{l:Gtopology1}

For any ball $B$ with radius at most $200\, \rad(B_0)$ centered at $E$  such that
		$\mu(B)\geq \theta \, \rad(B)^n$, it holds $\beta_{\infty,E}(B)\leq \epsilon$ and moreover, if 
		$E\cap (B\setminus \frac45B)\neq\varnothing$
		and 
		$L_B$ stands for the hyperplane minimizing $\beta_{\infty,E}(B)$, the two components $D^+_{\frac12B}$ and $D^-_{\frac12B}$ of $\tfrac12B\setminus L_B$ satisfy \textup{(}up to relabeling\textup{)}
 $$\HH^{n+1}(D^+_{\frac12B} \cap \Omega^+) \geq (1-\epsilon)\,\HH^{n+1}(D^+_{\frac12B})$$
and 
$$\HH^{n+1}(D^-_{\frac12B} \cap \Omega^-) \geq (1-\epsilon)\,\HH^{n+1}(D^-_{\frac12B)}).$$
\end{assum}


 Applying this lemma to a ball $B'\in \G$ containing $15B_0$, interchanging the upper half plane by the lower half plane if necessary, we deduce that
 \begin{equation}\label{eq500con}
 \HH^{n+1}(15B_0 \cap \R_i^{n+1}\cap \Omega^i) \geq (1-C\epsilon)\,\HH^{n+1}(15B_0 \cap \R_i^{n+1})\quad 
\mbox{ for $i=+,-$.}
\end{equation}
 
\vv

\begin{lemma}\label{l:Gtopology2}
Assume $\epsilon$ small enough in terms of $\theta$. For every ball $B\in \VG$ it holds
$$\HH^{n+1}(B \cap V^+ \setminus \Omega^+) \lesssim_\theta \epsilon \,\HH^{n+1}(B)$$
and 
$$\HH^{n+1}(B \cap V^- \setminus \Omega^-) \lesssim_\theta \epsilon \,\HH^{n+1}(B).$$
\end{lemma}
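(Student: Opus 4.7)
The plan is to reduce the estimate to the domain-splitting property guaranteed by hypothesis (b) of Lemma \ref{l:main2}, combined with the fact that on a slightly enlarged ball the graph $\Gamma$ and the best-fit hyperplane are essentially indistinguishable (Lemma \ref{l:distQ-L} and Corollary \ref{l:LBcloseGA}). The proof has two essentially independent ingredients: a geometric comparison between $V^\pm$ and the half-spaces cut by $L_{B^*}$, and the construction of a suitable enlarged ball $B^*$.

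Given $B=B(x_B,r_B)\in \VG$, I would first construct a ball $B^*=B(x_B,s^*)$ with $B\subset \tfrac12 B^*$, $s^*\lesssim_\theta r_B$, density $\mu(B^*)\geq \theta (s^*)^n$, and the annulus condition $E\cap(B^*\setminus \tfrac45 B^*)\neq\varnothing$. The existence of $s^*$ for small $r_B$ is obtained by an iterative argument: if the annulus condition failed for every $s\in[2r_B,M r_B]$, iterating the identity $E\cap B(x_B,s)=E\cap \tfrac45 B(x_B,s)$ would force $E\cap B(x_B,Mr_B)\subset B(x_B,2r_B)$, and the $n$-polynomial growth of $\mu$ combined with the density lower bound $\mu(B(x_B,Mr_B))\geq \theta (Mr_B)^n$ (guaranteed by $B\in\VG$ provided $Mr_B\leq 50 r_0$) yields a contradiction once $M\gtrsim \theta^{-1/n}$. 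When $r_B$ is comparable to $r_0$, this iteration cannot run; instead I would use the standing assumption $E\cap(100 B_0\setminus 99 B_0)\neq\varnothing$ to take $B^*$ as a ball of radius $\approx r_0$ centered near the origin, for which the annulus and density conditions are verified directly. In either case $s^*\leq 50 r_0$, so $B^*\in\VG$ and Lemma \ref{l:distQ-L} together with Corollary \ref{l:LBcloseGA} apply to $B^*$: the Lipschitz graph $\Gamma$ and the hyperplane $L_{B^*}$ lie within Hausdorff distance $\lesssim_\theta \epsilon s^*$ inside $2 B^*$.

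Next, applying Assumption \ref{l:Gtopology1} to $B^*$, with labeling chosen globally consistent with \eqref{eq500con} (so that $V^+\leftrightarrow \Omega^+$ throughout $B_0$), gives
$$\HH^{n+1}(D^+_{\frac12 B^*}\setminus \Omega^+)+\HH^{n+1}(D^-_{\frac12 B^*}\setminus \Omega^-)\lesssim \epsilon (s^*)^{n+1}.$$
The closeness of $\Gamma$ and $L_{B^*}$ then implies that the symmetric difference $(V^+\cap\tfrac12 B^*)\triangle D^+_{\frac12 B^*}$ is contained in the $C(\theta)\epsilon s^*$-neighborhood of $L_{B^*}\cap\tfrac12 B^*$, hence has $\HH^{n+1}$-measure $\lesssim_\theta \epsilon (s^*)^{n+1}$. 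Using $B\subset\tfrac12 B^*$ and the triangle inequality for symmetric differences,
$$\HH^{n+1}(V^+\cap B\setminus \Omega^+)\leq \HH^{n+1}(D^+_{\frac12 B^*}\setminus \Omega^+)+\HH^{n+1}\bigl((V^+\cap\tfrac12 B^*)\triangle D^+_{\frac12 B^*}\bigr)\lesssim_\theta \epsilon (s^*)^{n+1}\lesssim_\theta \epsilon\, \HH^{n+1}(B),$$
and the argument for $V^-$ versus $\Omega^-$ is symmetric.

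The main obstacle I expect is the construction of $B^*$: the interplay between the density hypothesis (which forces $s^*\leq 50 r_0$) and the need for $s^*\lesssim_\theta r_B$ makes the case analysis delicate, especially in the intermediate range where $r_B$ is neither very small nor fully comparable to $r_0$. Once $B^*$ is produced, the remainder is a routine geometric comparison relying only on the fact that $\Gamma$ is $\epsilon$-close to the approximating plane on $2 B^*$, which is already established in the corona construction.
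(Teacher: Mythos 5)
Your approach coincides in essence with the paper's: enlarge $B$ to a concentric $B^*$ meeting the annulus condition, apply Assumption~\ref{l:Gtopology1}, and transfer the resulting domain-splitting to $V^\pm$ using the closeness of $\Gamma$ to the approximating plane. Your construction of $B^*$ via iterating a failing annulus condition is a valid repackaging of the paper's density argument based on $\mu(\lambda B'')\leq\lambda^n\theta^{-1}\mu(B'')$. There is, however, one genuine gap: the labeling. Assumption~\ref{l:Gtopology1} gives the splitting only \emph{up to relabeling}, ball by ball, so for $B^*$ you are handed one of two possible correspondences between $D^\pm_{\frac12 B^*}$ and $\Omega^\pm$, and it must be \emph{proved} that this correspondence matches the global one determined by \eqref{eq500con}, which is the one inherited by $V^\pm$. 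Writing ``with labeling chosen globally consistent with \eqref{eq500con}'' treats this as a choice available to you, but the relabeling in Assumption~\ref{l:Gtopology1} is determined by the geometry of $\Omega^\pm$ near $B^*$, not by you. The paper closes this by a continuity argument: the final estimate holds with some $i\in\{+,-\}$ for $B$ and for every concentric ball of radius between $\rad(B)$ and $50 r_0$, the index cannot jump as the radius varies continuously, and \eqref{eq500con} forces $i=+$ for balls containing $B_0$, hence $i=+$ for $B$ too. Without some version of this, your conclusion is only proved up to a possible swap $+\leftrightarrow -$, which is strictly weaker than the statement.

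A second, more minor issue: the claim ``in either case $s^*\leq 50 r_0$'' is false when $\rad(B)$ is close to $50 r_0$, since $B\subset\frac12 B^*$ then forces $\rad(B^*)>50 r_0$, and the annulus you produce from $E\cap(100 B_0\setminus 99 B_0)$ gives $\rad(B^*)\gtrsim 99 r_0$. Such a $B^*$ is outside the range where $\VG$ membership and hence Lemma~\ref{l:distQ-L} and Corollary~\ref{l:LBcloseGA} have content. The paper sidesteps this by applying Lemma~\ref{l:distQ-L} only to the original ball $B$ and separately comparing $L_B$ with $L_{B^*}$ via the $\beta_{\infty}$-bound of hypothesis (b), which holds for any ball of density $\geq\theta$ and radius up to $200 r_0$; you should restructure the last step along those lines.
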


\begin{proof}
	We prove the lemma for $V^+$ and $\Omega^+$. 
	We claim that for every $B\in\VG$ there exists a concentric ball $B'$ with $2\,\rad(B)\leq \rad(B')\lesssim_\theta
	\rad(B)$ such that $E\cap (B'\setminus \frac45B')\neq\varnothing$. This follows immediately from the Assumption 
	\ref{l:Gtopology1} when $B$ is a large ball, that is when $c(\theta)\,r_0\leq \rad(B)\leq 50r_0$, where $c(\theta)$ is a small constant to be fixed in a moment.
	On the other hand, for smaller balls, we use the fact that for any ball $B''\in \G$ and $\lambda\in(0,1)$, by the $(n,1)$-growth of $\mu$ and the stopping condition for low density balls, we have
	$$\mu(\lambda B'')\leq \lambda^n\,\rad(B'')^n \leq \lambda^n\,\theta^{-1}\,\mu(B'').$$
	So for $\lambda=\lambda(\theta) = (\frac\theta2)^{1/n}$, 
	$$\mu(B''\setminus \lambda B'')\geq \frac12 \,\mu(B'')).$$
Consequently, the ball $20\lambda(\theta)^{-1} B$ satisfies $\mu(20\lambda(\theta)^{-1} B\setminus 20 B)>0$, assuming 
$20\lambda(\theta)^{-1}\rad(B)\leq 50r_0$. From this fact, it easily follows that we can find a ball $B'$ as claimed above, whenever $20\lambda(\theta)^{-1}\rad(B)\leq 50r_0$.

	By the Assumption \ref{l:Gtopology1} applied to $B''$, we know that $\HH^{n+1}(D^+_{\frac12 B''} \setminus \Omega^+) \lesssim \epsilon \HH^{n+1}(D^+_{\frac12B''})$. Since $B\subset \frac12 B''$ and $\rad(B'')\approx_\theta
	\rad(B)$, it follows that
	$$\HH^{n+1}(D^+_{B} \setminus \Omega^+) \leq \HH^{n+1}(D^+_{B''} \cap B\setminus \Omega^+) +
	\HH^{n+1}(D^+_B\,\triangle\, (D^+_{B''}\cap B)) \lesssim_\theta \epsilon\,\rad(B)^{n+1},$$
	where we took into account that $\dist_H(B\cap L_B,B\cap L_{B''})\lesssim_\theta \epsilon\,\rad(B)$ for the last inequality.
 Now, recall from Lemma \ref{l:distQ-L} that if $x \in \Gamma \cap 3B$, where $\Gamma$ is the graph of $A$ (see \eqref{e:graphA}), then $\dist(x, L_B) \lesssim_\theta \epsilon\,\rad(B)$. In other words, 
	\begin{equation*}
		\Gamma \cap B \subset \NN_{C_\theta \epsilon\, \rad(B)}(L_B).
	\end{equation*}
	In particular, this implies that 
	\begin{equation*}
	(B \cap V^i \setminus \Omega^+) \setminus \NN_{C_\theta \epsilon \rad(B)}(L_B) \subset D^+_{B}\setminus \Omega^+
	\quad \text{either for $i=+$ or $i=-$.}
	\end{equation*}
	Note moreover that $\HH^{n+1}(\NN_{C_\theta \epsilon\, \rad(B)}(L_B)) \lesssim_{\theta} \epsilon \HH^{n+1}(D^+_{B})$.
	Hence we conclude that, either for $i=+$ or $i=-$,
	\begin{align*}
		& \HH^{n+1}(B \cap V^i \setminus \Omega^+)\\
		&= \HH^{n+1}((B \cap V^i \setminus \Omega^+ )\cap \NN_{C_\theta \epsilon \, \rad(B)}(L_B))  + \HH^{n+1}((B \cap V^i\setminus \Omega^+ )\setminus \NN_{C_\theta \epsilon \, \rad(B)}(L_B))\\
		& \lesssim_\theta  \epsilon \HH^{n+1}(B) + \epsilon \HH^{n+1}(D^+_{B}) \lesssim_{\theta} \epsilon \, \HH^{n+1}(B).
	\end{align*}
	This estimate holds for $B$ and also for any ball $\wt B$ concentric with $B$ such that $\rad(B)\leq \rad(\wt B)
	\leq 50r_0$, with $i=+$ or $i=-$ depending on $\wt B$. Then by the condition \rf{eq500con} and connectivity it follows easily that $i=+$ for any such ball $\wt B$ and so also for $B$. We leave the details for the reader.
\end{proof}

\vv

To prove Lemma \ref{l:BA-small} we will show first that if $\BA$ has noticeable measure, then $\|\nabla A\|_2^2$ is (relatively) large, and later we will see that this in turn contradicts the smallness assumption of the square function involving the coefficients $\mathfrak g(x,r)$ in Main Lemma \ref{l:main2}.
 The proof is split into several lemmas.
The first one is analogous to Lemma 7.35 of \cite{Tolsa-llibre}, which is a consequence of Lemmas \ref{l:biganglestop} and \ref{l:distG-A}.
\begin{lemma} \label{l:BA-a}\cite[Lemma 7.35]{Tolsa-llibre}
    Provided that $\epsilon$ is small enough in terms of $\alpha$,
    \begin{align}
        \mu(\BA) \lesssim \alpha^{-2} \|\nabla A\|_2^2.
    \end{align}
\end{lemma}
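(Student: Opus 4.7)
The plan is to use the stopping condition defining $\BA$ to exhibit, for each $x\in\BA$, a definite contribution to $\|\nabla A\|_2^2$ coming from a projection to $L_0$ of a ball centered at $x$, and then aggregate these contributions through a Vitali-type covering argument.

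\textbf{Step 1: Setup at a single point.} Fix $x\in\BA$ and write $R_x=h(x)>0$. The ball $B_x:=B(x,2R_x)$ is $\VG$ by the infimum definition of $h$, so $\angle(L_{B_x},L_0)\leq\alpha$; on the other hand, Lemma \ref{l:biganglestop} gives $\angle(L_{B_x},L_0)\geq\alpha/2$. Writing $L_{B_x}$ as the graph of an affine map $\tilde A_x:L_0\to\R$, this translates into $|\nabla\tilde A_x|\approx\alpha$. Since $x\in E_0$, Lemma \ref{l:distG-A} places $x$ within $C(\theta)\epsilon R_x$ of $\Gamma$, and together with the fact that $A$ is $O(\alpha)$-Lipschitz this implies $(p,A(p))\in 3B_x$ for every $p\in\Pi(B(x,R_x/4))$; Lemma \ref{l:distQ-L} then yields
\begin{equation}\label{eq:plan-AclosetoAtilde}
|A(p)-\tilde A_x(p)|\lesssim_\theta \epsilon R_x \quad \text{for all } p\in D_x:=\Pi(B(x,R_x/4)).
\end{equation}

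\textbf{Step 2: Poincar\'e bound on $D_x$.} Split $D_x$ into its two halves $D_x^\pm$ by the hyperplane through $\Pi(x)$ orthogonal to $\nabla\tilde A_x$. Since $\tilde A_x$ is affine with gradient of norm $\approx\alpha$, the averages differ: $|\,\Xint-_{D_x^+}\tilde A_x-\Xint-_{D_x^-}\tilde A_x|\gtrsim \alpha R_x$. By \eqref{eq:plan-AclosetoAtilde} and taking $\epsilon$ small enough in terms of $\alpha$ (and $\theta$), the same lower bound holds for the averages of $A$. Applying Jensen's inequality on each half then yields $\int_{D_x}|A-A_{D_x}|^2\gtrsim \alpha^2 R_x^{n+2}$, and the Poincar\'e inequality on $D_x$ gives
\begin{equation}\label{eq:plan-poincare}
\int_{D_x}|\nabla A|^2\,dp\gtrsim \alpha^2\,R_x^n.
\end{equation}

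\textbf{Step 3: Covering and disjointness.} Apply Vitali's $5r$-covering to $\{B(x,R_x/2):x\in\BA\}$ to extract a countable subfamily $\{x_i\}\subset\BA$ with $R_i:=R_{x_i}$ such that the balls $\{B(x_i,R_i/2)\}$ are pairwise disjoint and $\{B(x_i,5R_i/2)\}$ covers $\BA$. The $(n,1)$-polynomial growth of $\mu$ then gives
$$\mu(\BA)\leq \sum_i \mu(B(x_i,5R_i/2))\lesssim \sum_i R_i^n.$$
The delicate point is that the projected balls $D_{x_i}\subset L_0$ are pairwise disjoint. Suppose $D_{x_i}\cap D_{x_j}\neq\varnothing$ with $R_i\leq R_j$; then $|\Pi(x_i)-\Pi(x_j)|\leq(R_i+R_j)/4$, while disjointness of $B(x_i,R_i/2)$ and $B(x_j,R_j/2)$ yields $|x_i-x_j|\geq(R_i+R_j)/2$, so
$$|\Pi^\perp(x_i)-\Pi^\perp(x_j)|\geq \tfrac{\sqrt3}{4}(R_i+R_j).$$
On the other hand, applying Lemma \ref{l:distG-A} to $x_i,x_j$ and using that $A$ is $O(\alpha)$-Lipschitz,
$$|\Pi^\perp(x_i)-\Pi^\perp(x_j)|\leq 2(\alpha+C(\theta)\epsilon)(R_i+R_j).$$
For $\alpha,\epsilon$ small enough these two inequalities are incompatible, so the $\{D_{x_i}\}$ are indeed disjoint.

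\textbf{Step 4: Conclusion.} Combining \eqref{eq:plan-poincare} over the disjoint family $\{D_{x_i}\}$ with the packing bound from Step 3,
$$\|\nabla A\|_2^2\geq \sum_i \int_{D_{x_i}}|\nabla A|^2\gtrsim \alpha^2\sum_i R_i^n\gtrsim \alpha^2\,\mu(\BA),$$
which rearranges to $\mu(\BA)\lesssim \alpha^{-2}\|\nabla A\|_2^2$. The main obstacle is the disjointness of the projections $\{D_{x_i}\}$ in Step 3: this is what forces us to choose $\epsilon$ sufficiently small in terms of $\alpha$ (and $\theta$), exactly as stated in the hypothesis.
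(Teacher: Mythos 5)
Your proposal is correct and follows the same strategy the paper imports from \cite{Tolsa-llibre}: combine Lemma \ref{l:biganglestop} (which forces the best plane at scale $2h(x)$ to tilt by $\approx \alpha$) with Lemma \ref{l:distQ-L} / Lemma \ref{l:distG-A} (which locate $\Gamma$ within $C(\theta)\epsilon h(x)$ of that tilted plane), deduce $\int_{D_x}|\nabla A|^2 \gtrsim \alpha^2 h(x)^n$, and then aggregate via a covering argument. Two small points are worth making explicit. First, in Step 2, the Poincar\'e step can be replaced by the (slightly more elementary and more common in this literature) fundamental-theorem-of-calculus estimate: along a unit segment in the direction of $\nabla\tilde A_x$ one has $\int|\nabla A|\gtrsim \alpha R_x$ once $C(\theta)\epsilon\ll\alpha$, and Fubini plus Cauchy--Schwarz then yield $\int_{D_x}|\nabla A|^2\gtrsim\alpha^2 R_x^n$; either route is fine. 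Second, in Step 3 the contradiction requires $C'\alpha + C(\theta)\epsilon < \sqrt{3}/4$ with $C'$ absolute (coming from the Lipschitz bound of $A$ in Lemma \ref{l:A}) and $C(\theta)$ from Lemma \ref{l:distG-A}; so you need $\alpha$ small in an absolute sense first and then $\epsilon$ small in terms of $\theta$ and $\alpha$ --- this matches the paper's parameter ordering ($\theta$, then $\alpha$, then $\epsilon$) announced at the start of Section \ref{sec-corona}, but it is cleaner to state the dependence explicitly rather than only asserting "$\alpha,\epsilon$ small enough."
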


Assume that $\alpha$ is small enough to apply Lemma \ref{lemlips}.  Then
\begin{equation}\label{eqga534}
\int_{\Gamma} \A_{V,\psi}(x)^2\,d\HH^1(x)\approx \|\nabla A\|_{2}^2,
\end{equation}
where $\A_{V,\psi}$ stands for the square function $\A_\psi$ associated with the graph $\Gamma$ and the components $V^\pm$ of $\R^{n+1}\setminus \Gamma$, with $\psi$ and $\vphi$ as in Lemma \ref{lemlips}.
From this and Lemma \ref{l:BA-a} we infer that
\begin{equation}\label{eqba10}
\mu(\BA) \lesssim  \alpha^{-2} \int_{\Gamma} \A_{V,\psi}(x)^2\,d\HH^n(x).
\end{equation}
Our next objective is to estiimate
$\int_{\Gamma} \A_{V,\psi}(x)^2\,d\HH^n(x)$.

We denote
$$\ell(x):=\frac1{50}D(x)=\frac1{50}D(\Pi(x)).$$
Lemma \ref{l:whitney-dec} ensures that if $x\in 15I$, $I\in\Whit$, then
\begin{equation}\label{eqellI}
\frac1{10}\ell(I)\leq\ell(x)\leq \ell(I).
\end{equation}
We set
\begin{align*}
    \wt \A_{V, \psi}(x)^2 := \int_{\ell(x)}^{r_0} \lca_{V, \psi}(x,r)^2\, \dr.
\end{align*}

It will be convenient to denote $L^2(\Gamma)=L^2(\HH^n|_{\Gamma})$.
The following lemma is proved like Lemma 11.5 from \cite{JTV} and so we omit the detailed arguments.

\begin{lemma}\label{l:BA-est2}
    We have
    \begin{align*}
        \|\A_{V, \psi}- \wt \A_{V, \psi}\|_{L^2(\Gamma)}^2 \lesssim C(\theta)\epsilon^2 r_0^n + \alpha^4 \|\nabla A\|_{2}^2.
    \end{align*}
\end{lemma}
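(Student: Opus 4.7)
The plan is to bound the $L^2(\Gamma)$-difference $\|\A_{V,\psi} - \wt\A_{V,\psi}\|^2$ by splitting the scale integral into a small-scale tail $(0, \ell(x))$ and a large-scale tail $(r_0, \infty)$. Since $\A_{V,\psi}(x) \geq \wt\A_{V,\psi}(x) \geq 0$ on $\Gamma$, the elementary inequality $(\A_{V,\psi} - \wt\A_{V,\psi})^2 \leq \A_{V,\psi}^2 - \wt\A_{V,\psi}^2$ reduces the problem to estimating
$$I_1 := \int_\Gamma \int_0^{\ell(x)} \lca_{V,\psi}(x,r)^2 \, \frac{dr}{r} \, d\HH^n(x), \qquad I_2 := \int_\Gamma \int_{r_0}^\infty \lca_{V,\psi}(x,r)^2 \, \frac{dr}{r} \, d\HH^n(x).$$
The small-scale tail $I_1$ is controlled using the $C^2$ regularity of $A$ from the Whitney extension: by Lemma \ref{l:A} and Lemma \ref{l:whitney-dec}, for $x = (p, A(p)) \in \Gamma$ with $p \in 15R_i$ we have $|\nabla^2 A(p)| \lesssim_\theta \epsilon/\ell(R_i) \approx \epsilon/\ell(x)$. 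At scales $r \lesssim \ell(x)$, the graph $\Gamma$ lies within an $O(|\nabla^2 A(p)|\, r^2)$-tube around its tangent plane at $x$, and the radial symmetry of $\psi$ lets us use the tangent half-space as reference, yielding $\lca_{V,\psi}(x, r) \lesssim_\theta \epsilon r/\ell(x)$. Squaring, integrating in $r$, and integrating over $x$ with $\Pi(x) \in \bigcup_{i \in I_0} R_i \subset [-12 r_0, 12 r_0]^n$ (outside of which $A \equiv 0$ and hence $\lca_{V, \psi} \equiv 0$) yields $I_1 \lesssim_\theta \epsilon^2 r_0^n$.

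For the large-scale tail $I_2$ I would Taylor-expand in the vertical coordinate. Using the reference half-space $\{y_{n+1} > A(p)\}$ (through $x$) and the evenness of $\psi$ in $y_{n+1}$, one writes $\lca_{V,\psi}(x, r) = M(x, r) + R(x, r)$ where $M(x, r) = r^{-1}[(\vphi_r * A)(p) - c(\vphi) A(p)]$ (cf.\ Lemma \ref{lem5.4}) is the linear-in-$A$ main term, and the remainder satisfies $|R(x, r)| \lesssim \int |\tilde A_r(z_0)|^3 dz_0$ with $\tilde A_r(z_0) = (A(p + rz_0) - A(p))/r$. Since $\|\nabla A\|_\infty \leq \alpha$, for $|z_0| \leq 1.1$ (the support of $\vphi$) we have $|\tilde A_r(z_0)| \leq \alpha |z_0| \lesssim \alpha$, hence $|\tilde A_r|^3 \leq C\alpha^2 |\tilde A_r|$, and Cauchy--Schwarz yields $R(x,r)^2 \lesssim \alpha^4 \int |\tilde A_r|^2 dz_0$. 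A change of variables $u = p + rz_0$ and integration in $r$ and $x$ using the fractional-Sobolev identity $\int\!\int \frac{|A(u) - A(p)|^2}{|u - p|^{n+2}} du \, dp \approx \|\nabla A\|_2^2$ show that $R$ contributes at most $\alpha^4 \|\nabla A\|_2^2$ to $I_2$.

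To handle the main term $M$ I would pass through the non-radial companion kernel $\rho(y) = \vphi(y_0) \overline{\vphi}(y_{n+1})$ of Lemma \ref{lem5.4}, for which $\lca_\rho(x, r) \equiv M(x, r)$ on $\Gamma$. By Lemma \ref{lemdiff1}, $\|\A_\rho - \A_\psi\|_{L^2(\Gamma)}^2 \lesssim \alpha^4 \|\nabla A\|_2^2$, and an entirely analogous argument for the truncated square functions gives $\|\wt\A_\rho - \wt\A_\psi\|_{L^2(\Gamma)}^2 \lesssim \alpha^4 \|\nabla A\|_2^2$. The triangle inequality then reduces the problem to bounding $\|\A_\rho - \wt\A_\rho\|_{L^2(\Gamma)}^2$, which by the same tail-splitting equals $\int_\Gamma \bigl[\int_0^{\ell(x)} + \int_{r_0}^\infty \bigr] M(x,r)^2 \, dr/r \, d\HH^n$. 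The small-scale part is controlled exactly as above by $\lesssim_\theta \epsilon^2 r_0^n$. The main obstacle will be the large-scale part: a direct Plancherel identity on the full $r$-integral (Lemma \ref{lemfourier}) gives only a bound of order $\|\nabla A\|_2^2$, not $\alpha^4 \|\nabla A\|_2^2$. To extract the missing factor $\alpha^4$, the plan is to exploit, as in the planar case of \cite{JTV}, both the compact support of $A$ in $[-12r_0, 12r_0]^n$ (so that $(\vphi_r * A)(p)$ is essentially constant in $p$ for $r \geq r_0$, with Poincar\'e giving $\|A\|_2 \lesssim r_0 \|\nabla A\|_2$) and the Lipschitz smallness $\|\nabla A\|_\infty \leq \alpha$ to obtain the required gain. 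This large-scale Plancherel refinement is the delicate step and where I expect the main technical work to lie.
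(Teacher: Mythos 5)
Your split into the small-scale tail $I_1 = \int_\Gamma\int_0^{\ell(x)}$ and the large-scale tail $I_2 = \int_\Gamma\int_{r_0}^\infty$ is the right structure, and the elementary inequality $(\A - \wt\A)^2 \le \A^2 - \wt\A^2$ (valid since $\A \ge \wt\A \ge 0$) cleanly reduces to estimating $I_1 + I_2$. Your treatment of $I_1$ via the $C^2$ bound $|\nabla^2 A| \lesssim_\theta \epsilon/\ell(R_i)$ of Lemma \ref{l:A}, using radial symmetry of $\psi$ to compare against the tangent half-space and getting $\lca_{V,\psi}(x,r) \lesssim_\theta \epsilon r/\ell(x)$ for $r \le \ell(x)$, is essentially correct — though the integration over $x \in \Gamma$ with $|\Pi(x)| \gtrsim r_0$ is not literally zero (nearby scales can still see the graph) and requires the routine decay estimate $\lca_{V,\psi}(x,r) \lesssim \|A\|_\infty r_0^n / r^{n+1}$ to close.

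The genuine gap is in $I_2$. Your proposed route — Taylor expansion, absorbing the remainder into $\alpha^4\|\nabla A\|_2^2$, and then gaining a factor $\alpha^4$ on the Plancherel tail of the linear term $M(x,r) = \lca_\rho(x,r)$ via compact support and $\|\nabla A\|_\infty \le \alpha$ alone — cannot work. Plancherel on $r \ge r_0$ gives $\int|\wh A|^2\min(|\xi|^2, r_0^{-2})\,d\xi$, which is of size $\|A\|_2^2/r_0^2$ when the low frequencies dominate; combined with Poincar\'e and $\|\nabla A\|_\infty \le \alpha$ this gives at best $\alpha^2 r_0^n$, and no amount of massaging support and Lipschitz information produces the extra $\alpha^2$ you aim for. (A schematic bump of height $\alpha r_0$ over a ball of radius $r_0$ would have $\lca_\rho(x,r) \approx \alpha$ for $r \approx r_0$, giving $I_2 \approx \alpha^2 r_0^n \gg \alpha^4\|\nabla A\|_2^2 \approx \alpha^6 r_0^n$, so $\alpha^4\|\nabla A\|_2^2$ is simply not the right target for this piece.) The missing ingredient is the $C^0$ estimate of Lemma \ref{l:GdistL0}: the Whitney construction forces $\|A\|_\infty \lesssim_\theta \epsilon r_0$ (it is not merely $\alpha r_0$). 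Once you have this, the large-scale tail is immediate: for $r \ge r_0$, comparing against the horizontal half-space through $x$ gives $|\lca_{V,\psi}(x,r)| \lesssim_\theta \epsilon r_0/r$ (and $\lesssim_\theta \epsilon r_0^{n+1}/r^{n+1}$ once $r$ exceeds the diameter of the support), whence $I_2 \lesssim_\theta \epsilon^2 r_0^n$. The same $C^0$ bound is what makes the above-mentioned bump inadmissible. Note that with Lemma \ref{l:GdistL0} in hand, the $\alpha^4\|\nabla A\|_2^2$ term in the conclusion is not needed for $I_1$ or $I_2$ at all; it enters only if one elects, as you do, to pass through the auxiliary non-radial kernel $\rho$ and Lemma \ref{lemdiff1}, which costs an additive $\alpha^4\|\nabla A\|_2^2$. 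Either way, the $C^0$ flatness is the indispensable input you have overlooked.
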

\vv

Observe that, from \rf{eqga534} and the previous lemma, we have that
\begin{equation}\begin{split}\nonumber\|\nabla A\|_{2}^2&\lesssim \|\wt \A_{V, \psi}\|_{L^2(\Gamma)}^2+   \|\A_{V, \psi}- \wt \A_{V, \psi}\|_{L^2(\Gamma)}^2\\&\lesssim \|\wt \A_{V, \psi}\|_{L^2(\Gamma)}^2 +
 C(\theta)\epsilon^2 r_0^n + \alpha^4 \|\nabla A\|_{2}^2.\end{split}
 \end{equation}
Hence, for $\alpha$ small enough, we obtain
$$\|\nabla A\|_{2}^2\lesssim \|\wt \A_{V, \psi}\|_{L^2(\Gamma)}^2 +
 C(\theta)\epsilon^2 r_0^n,$$
and combining this inequality with \rf{eqba10} we get
\begin{align} \label{e:BA-100}
    \mu(\BA) \lesssim    \alpha^{-2} \|\wt \A_{V, \psi} \|_{L^2(\Gamma)}^2 + C(\theta)\epsilon^2 \alpha^{-2}\,r_0^n.
\end{align}

To estimate $\|\wt \A_{V, \psi} \|_{L^2(\Gamma)}^2$, we split
\begin{align} \label{e:alpha-G-Gamma}
\|\wt \A_{V, \psi} \|_{L^2(\Gamma)}^2 &= \int_{\Gamma}\int_{\ell(x)}^{r_0} \lca_{V, \psi}(x,r)^2 \, \dr  \, d \hi(x) \nonumber\\
 &=
  \int_{\Gamma}\int_{\ell(x)}^{\min(\epsilon^{-1}\ell(x),r_0)} \cdots
  +  \int_{\Gamma}\int_{\min(\epsilon^{-1}\ell(x),r_0)}^{r_0} \cdots.
 \end{align}
Concerning the first integral on the right hand side, we have:

\begin{lemma}\label{l:lemfac64}
We have
\begin{align} \label{e:alpha-G-Gamma2}
\int_{\Gamma}\int_{\ell(x)}^{\min(\epsilon^{-1}\ell(x),r_0)} \lca_{V, \psi}(x,r)^2 \, \dr \, d \hi(x) \lesssim_\theta \epsilon^2 |\log\epsilon| r_0^n.
\end{align}
\end{lemma}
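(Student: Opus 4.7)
The plan is to show that $\lca_{V,\psi}(x,r)\lesssim_\theta \epsilon$ uniformly for $r$ in the range $[\ell(x),\epsilon^{-1}\ell(x)]$, and then integrate. The key geometric fact is that at these scales $\Gamma$ is very flat near $x$.

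First, I will locate a VG ball of radius comparable to $r$ near $x$. By Lemma \ref{lemDd}, $d(x)\approx \ell(x)$ for $x\in\Gamma$, and by the very definition of $d$ there exists $B(z_0,s_0)\in\VG$ with $|x-z_0|+s_0\lesssim\ell(x)$. Since the defining condition for a ball to be in $\VG$ is inherited by all concentric enlargements with radius up to $50r_0$, for any $r\in[\ell(x),r_0]$ the ball $B_r:=B(z_0,r)$ belongs to $\VG$ and satisfies $|z_0-x|\lesssim\ell(x)\leq r$, so that $x\in 3B_r$ and $B(x,2r)\subset 3B_r$. Applying Lemma \ref{l:distQ-L} to $B_r$ gives $\dist(x,L_{B_r})\lesssim_\theta \epsilon r$ and $\Gamma\cap 3B_r\subset \NN_{C_\theta\epsilon r}(L_{B_r})$.

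Now let $L_x^{(r)}$ be the affine $n$-plane through $x$ parallel to $L_{B_r}$; it lies in a $C_\theta\epsilon r$ neighborhood of $L_{B_r}$, and therefore $\Gamma\cap B(x,2r)\subset \NN_{C_\theta\epsilon r}(L_x^{(r)})$. Denoting by $H^+_x(r)$ one of the two half-spaces with boundary $L_x^{(r)}$ (chosen so that the orientation matches that of $V^+$ on $\Gamma$), the graph structure of $\Gamma$ together with Lemma \ref{l:Gtopology2} yields
\begin{equation*}
\HH^{n+1}\bigl((V^+\triangle H^+_x(r))\cap B(x,2r)\bigr)\lesssim_\theta \epsilon\,r^{n+1}.
\end{equation*}
Since $\psi$ is radial with $\supp\psi\subset B(0,1.1)$ and $x\in\partial H^+_x(r)$, the radial symmetry of $\psi$ gives $c_\psi=r^{-(n+1)}\int_{H^+_x(r)}\psi((y-x)/r)\,dy$. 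Hence
\begin{equation*}
\lca_{V,\psi}^+(x,r)=\biggl|\frac{1}{r^{n+1}}\int_{V^+\triangle H^+_x(r)}\psi((y-x)/r)\,dy\biggr|\lesssim_\theta \epsilon,
\end{equation*}
and similarly for $\lca_{V,\psi}^-$.

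Squaring and integrating in $r$ (and using that $\min(\epsilon^{-1}\ell(x),r_0)/\ell(x)\leq \epsilon^{-1}$ so the logarithmic factor is at most $|\log\epsilon|$),
\begin{equation*}
\int_{\ell(x)}^{\min(\epsilon^{-1}\ell(x),r_0)}\lca_{V,\psi}(x,r)^2\,\frac{dr}{r}\lesssim_\theta \epsilon^2|\log\epsilon|.
\end{equation*}
Finally integrating over $\Gamma$, and noting that $\Gamma$ is the graph of a function supported in $[-12r_0,12r_0]^n$ with slope $\lesssim\alpha\leq 1$, so $\HH^n(\Gamma)\approx r_0^n$, we obtain \eqref{e:alpha-G-Gamma2}. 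The only place the argument could break is the geometric flattening step; the main obstacle is ensuring that the approximating plane can be taken \emph{through} $x$ (so that the radial symmetry of $\psi$ kills the main term), which is why we subtract off the parallel plane $L_x^{(r)}$ and pay only the extra $\epsilon r$ error absorbed into the symmetric difference estimate.
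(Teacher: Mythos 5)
Your argument is correct and follows the same route as the paper's (which it defers to Lemma~11.6 of \cite{JTV}): establish a pointwise bound $\lca_{V,\psi}(x,r)\lesssim_\theta\epsilon$ for $r$ in the stated range, using the flatness of $\Gamma$ at scale $r$ provided by Lemma~\ref{l:distQ-L} together with the radial symmetry of $\psi$ to kill the main term, and then integrate, with the logarithmic factor coming from the length $\log(\epsilon^{-1})$ of the $dr/r$ range and the $r_0^n$ coming from the fact that $\ell(x)\leq r_0$ only on a set of $\HH^n$ measure $\lesssim r_0^n$ in $\Gamma$. Two small points: the citation of Lemma~\ref{l:Gtopology2} is superfluous — the estimate $\HH^{n+1}((V^+\triangle H^+_x(r))\cap B(x,2r))\lesssim_\theta\epsilon r^{n+1}$ is a purely geometric consequence of $\Gamma\cap B(x,2r)\subset\NN_{C_\theta\epsilon r}(L_x^{(r)})$ and the graph structure (no comparison with $\Omega^\pm$ enters); and the dilation constant on $B_r$ needs adjusting, since the VG ball $B(z_0,s_0)$ realizing the infimum for $d(x)$ has $|x-z_0|+s_0\lesssim\ell(x)$ with an absolute constant that may exceed $3$, so one should take $B_r=B(z_0,Cr)$ with a suitably large absolute $C$ before invoking Lemma~\ref{l:distQ-L}.
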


The arguments for this lemma are the same as the ones in Lemma 11.6 from \cite{JTV}, and so we skip them again.

To estimate the second integral on the right hand side of \rf{e:alpha-G-Gamma} we need some additional notation.
We let $\Pi_{\Gamma}$ be the projection $\R^{n+1} \to \Gamma$ orthogonal to $L_0$.
We let $\DD_{\Gamma}$ be the family of ``dyadic cubes" on $\Gamma$ of the form
$$\DD_{\Gamma}= \big\{\Pi_{\Gamma}(I):I\in\DD_{L_0}\big\}.$$
We define the side-length of $R\in \DD_{\Gamma}$ (and write $\ell(R)$) to be equal to $\ell(I)$, where $I\in \DD_{L_0}$ satisfies $R=\Pi_{\Gamma}(I)$.  Then $\ell(R)^n$ is comparable to $\mathcal{H}^n(R)$.
Then we set
\begin{equation}\label{defwgam}
\pStop = \big\{\Pi_{\Gamma}(I):I\in\Whit\big\}.
\end{equation}
%

Denote by $\ppStop = \{\Pi_{\Gamma}(R_i): i\in I_0\}$, so $I\in \ppStop$ if $I = \Pi_{\Gamma}(R_i)$ for some $R_i$ that intersects $B(0, 10r_0)$. The following is the analog of Lemma 11.7 from \cite{JTV}.

\begin{lemma}\label{l:15B0} If $I\in \ppStop$, $x\in I$, and $0<r\leq r_0$, then $B(x, 2r)\subset 15B_0$.
\end{lemma}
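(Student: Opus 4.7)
The plan is to unwind the definitions and combine two previously established facts: the Whitney cubes associated with indices in $I_0$ are well inside a ball comparable to $12B_0$, and the graph $\Gamma$ stays in a thin horizontal slab around $L_0$. These together will force $B(x,2r)$ to lie inside $15B_0$ once $r \leq r_0$ and $\epsilon$ is small enough.

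More precisely, suppose $I \in \ppStop$, so that by the definition of $\ppStop$ there is some $i \in I_0$ with $I = \Pi_\Gamma(R_i)$. Fix $x \in I$, so that $\Pi(x) \in R_i$. By Lemma~\ref{lem73}, since $i \in I_0$ we have $R_i \subset 12 B_0 \cap L_0$, and hence $|\Pi(x)| \leq 12 r_0$. On the other hand, since $x \in \Gamma$, Lemma~\ref{l:GdistL0} yields
\[
\dist(x,L_0) = |x - \Pi(x)| \lesssim_\theta \epsilon\, r_0.
\]
Combining the two estimates,
\[
|x| \leq |\Pi(x)| + |x-\Pi(x)| \leq 12\, r_0 + C(\theta)\,\epsilon\, r_0.
\]

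Now for any $0 < r \leq r_0$ and any $y \in B(x, 2r)$, the triangle inequality gives
\[
|y| \leq |x| + 2r \leq 12\, r_0 + C(\theta)\,\epsilon\, r_0 + 2 r_0 = 14\, r_0 + C(\theta)\,\epsilon\, r_0.
\]
As $\epsilon$ is assumed sufficiently small in terms of $\theta$, we have $C(\theta)\,\epsilon \leq 1$, so $|y| \leq 15 r_0$, i.e. $y \in 15 B_0$. This gives $B(x,2r) \subset 15 B_0$, as desired. The step I expect to be most delicate in terms of presentation is simply tracking the implicit constants from Lemma~\ref{l:GdistL0} and the Whitney lemma to confirm that the constant $15$ in $15B_0$ is indeed large enough; this is a matter of bookkeeping rather than a genuine obstacle.
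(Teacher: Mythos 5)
Your proof is correct, and it takes the natural (and I believe the intended) route: unwind the definition of $\ppStop$ to land $\Pi(x)$ in a Whitney cube $R_i$ with $i\in I_0$, bound $|\Pi(x)|$ by Lemma~\ref{lem73}, bound the vertical offset $|x-\Pi(x)|=\dist(x,L_0)$ by Lemma~\ref{l:GdistL0}, and finish with the triangle inequality. The paper omits the proof (citing the planar analogue in \cite{JTV}), so there is no direct comparison to make; your reconstruction matches the two-lemma input exactly as the statement demands. One small caveat worth a remark: Lemma~\ref{lem73} is worded as $3R_i\subset L_0\cap 12B_0=(-12r_0,12r_0)^n$, which is a leftover of the one-dimensional normalization in \cite{Tolsa-llibre}; you are implicitly using the (correct) reading $3R_i\subset L_0\cap 12B_0$, i.e.\ $R_i\subset B(0,12r_0)$, which is what yields $|\Pi(x)|\le 12r_0$. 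Were one to take the cube literally, $|\Pi(x)|$ would only be bounded by $12\sqrt{n}\,r_0$ and the constant $15$ would fail for $n\ge 2$; so you did well to go with the ball.
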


We now claim that there is an absolute constant $\Celeven$ such that for each $I \in \pStop$ that intersects $15B_0$, there exists a ball $B_I\in \VG$ (centered at $z_I\in E_0$) such that
\begin{align}\label{cond22*}
    & \Celeven^{-1} \rad(B_I) \leq \ell(I)  \leq \Celeven \rad(B_I),\\
 \label{cond23*}   & 
  I\subset \Celeven B_I,\text{ and }\Pi_{\Gamma}(z_{I})\in \Celeven B_I.
\end{align}
The arguments are the same as the ones following (11.6) in \cite{JTV}, and so we omit them again.

\vv
The following auxiliary result is proven in the same way as Lemma 7.41 in \cite{Tolsa-llibre} 
and Lemma 11.8 in \cite{JTV}.

\begin{lemma} \label{lemrepart}
For each $I\in \ppStop $ there exists some function $g_I\in L^\infty(\mu)$, $g_I\geq0$ supported
on $B_I$ such that
\begin{equation} \label{co1}
\int g_I\,d\mu = \HH^n(I),
\end{equation}
and
\begin{equation} \label{co2}
\sum_{I\in \ppStop} g_I \lesssim c(\theta).
\end{equation}
\end{lemma}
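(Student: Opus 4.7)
The proof is very close in spirit (and in most technical details) to Lemma 7.41 of \cite{Tolsa-llibre} and Lemma 11.8 of \cite{JTV}; only cosmetic changes are needed to pass from the planar setting to $\R^{n+1}$. The plan is to define the functions $g_I$ explicitly by spreading the ``Whitney mass'' $\HH^n(I)$ uniformly across the ball $B_I$ with respect to $\mu$, and then to derive both required properties from the density lower bound of the very good balls and the bounded overlap of the Whitney cubes.

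Concretely, the first step is to set
\begin{equation*}
g_I := \frac{\HH^n(I)}{\mu(B_I)}\,\one_{B_I}, \qquad I \in \ppStop.
\end{equation*}
Property \eqref{co1} then holds tautologically. For pointwise control of $g_I$, note that $B_I \in \VG$ gives $\mu(B_I) \geq \theta\,\rad(B_I)^n$, while \eqref{cond22*} yields $\rad(B_I) \approx \ell(I)$ and hence $\HH^n(I) \approx \ell(I)^n \approx \rad(B_I)^n$. Combining these,
\begin{equation*}
\|g_I\|_\infty \;\leq\; \frac{\HH^n(I)}{\mu(B_I)} \;\lesssim_\theta\; 1.
\end{equation*}

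The main point, and the only step requiring a short argument, is the uniform estimate \eqref{co2}. I plan to show that for every $x \in \R^{n+1}$,
\begin{equation*}
\#\bigl\{I \in \ppStop : x \in B_I\bigr\} \;\lesssim\; 1,
\end{equation*}
which, combined with $\|g_I\|_\infty \lesssim_\theta 1$, immediately gives $\sum_I g_I \lesssim c(\theta)$. To prove this bounded overlap, suppose $x \in B_I$. By \eqref{cond23*} and \eqref{cond22*}, the cube $I$ lies within distance $\lesssim \ell(I)$ of $x$; projecting to $L_0$, the point $\Pi(x)$ lies within distance $\lesssim \ell(I)$ of $I$. By Lemma \ref{l:whitney-dec}(a) applied to any Whitney cube $R_j$ with $\Pi_\Gamma(R_j) = I$, we have $D(\Pi(I)) \approx \ell(I)$, and the $1$-Lipschitz character of $D$ together with the distance estimate just obtained yields $D(\Pi(x)) \approx \ell(I)$ with constants independent of $I$. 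Thus every $I \in \ppStop$ with $x \in B_I$ has side length comparable to $D(\Pi(x))$ and satisfies $\dist(I,\Pi(x)) \lesssim D(\Pi(x))$; the bounded overlap part of the Whitney decomposition (Lemma \ref{l:whitney-dec}(b)-(c)) then guarantees that only boundedly many such cubes exist.

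I do not anticipate any serious obstacle: the radius/side-length comparability \eqref{cond22*} and the containment \eqref{cond23*} are exactly the hypotheses needed to reduce the bounded overlap of the enlarged balls $\{B_I\}$ to the standard bounded overlap of the Whitney cubes $\{R_i\}$, and the density lower bound coming from $B_I \in \VG$ is what makes $\|g_I\|_\infty$ absorb the factor $\theta^{-1}$. The dimension $n$ enters only through the scaling $\HH^n(I) \approx \ell(I)^n$, which is the correct one for both the definition of $g_I$ and the density condition $\mu(B_I) \geq \theta\,\rad(B_I)^n$.
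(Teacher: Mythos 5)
Your plan---define $g_I := \frac{\HH^n(I)}{\mu(B_I)}\one_{B_I}$, verify $\|g_I\|_\infty \lesssim \theta^{-1}$ from $\mu(B_I)\geq\theta\rad(B_I)^n$ and $\HH^n(I)\approx\rad(B_I)^n$, and then prove \eqref{co2} via bounded overlap of the balls $B_I$---is the same strategy as in the cited Lemma 7.41 of \cite{Tolsa-llibre} and Lemma 11.8 of \cite{JTV}, and the normalization \eqref{co1} and the $L^\infty$ bound are fine as far as they go.

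The gap is in the bounded-overlap step, specifically in the claim $D(\Pi(x))\approx\ell(I)$ for every $x\in B_I$. The upper bound $D(\Pi(x))\lesssim\ell(I)$ is clear (indeed $D(\Pi(x))\leq d(x)\leq d(z_I)+|x-z_I|\leq 2\rad(B_I)\lesssim\ell(I)$), but the lower bound does not follow from $1$-Lipschitzness alone. The Lipschitz property only gives $D(\Pi(x))\geq D(q)-|\Pi(x)-q|$ for $q\in R_I$; Lemma~\ref{l:whitney-dec}(a) gives $D(q)\geq 5\ell(R_I)$, while chasing through \eqref{cond22*}--\eqref{cond23*} one only gets $|\Pi(x)-q|\leq(\Celeven+\Celeven^2)\ell(I)$ for all $q\in R_I$, so the resulting lower bound $D(\Pi(x))\geq(5-\Celeven-\Celeven^2)\ell(I)$ is vacuous unless $\Celeven+\Celeven^2<5$, a numerical inequality the paper does not assert and you have not checked. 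Without the lower bound, the only constraint on cubes $I$ with $x\in B_I$ is $\ell(I)\gtrsim D(\Pi(x))$, so a single $x$ may a priori lie in $B_I$ for cubes across a full range of dyadic scales, and $\sum_I g_I(x)$ is then controlled only up to a logarithm of that range (and not at all when $D(\Pi(x))=0$). This is exactly the crux of the lemma, and the one-sentence appeal to $1$-Lipschitzness skips it. To close the gap you must either verify that $\Celeven$ is small enough for the direct Lipschitz argument to work (which requires tracking the constants through Lemma~\ref{l:whitney-prop1} and the claim preceding Lemma~\ref{lemrepart}), or replace the direct formula by the inductive construction used in the references: order the cubes of $\ppStop$ by decreasing side length and take $g_{I_k}$ to be a multiple of $\one_{A_k}$ with $A_k = B_{I_k}\setminus\{\sum_{j<k}g_{I_j}\geq M\}$ for a suitable absolute $M$, then show by a Chebyshev-type argument that $\mu(A_k)\geq\tfrac12\mu(B_{I_k})$.
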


The first main step to bound the 
first integral on the right hand of \rf{e:alpha-G-Gamma} is contained in the following lemma.

\begin{lemma}\label{lemllumy}
We have
\begin{align}\label{eqx2}
\int_{\Gamma}\int_{\min(\epsilon^{-1}\ell(x),r_0)}^{r_0} \lca_{V, \psi}(x,r)^2 \, \dr  &\, d \hi(x)
\nonumber
\\ & \lesssim_{\theta} \epsilon^2r_0^n +
\int_{E_0}\int_{\min(c\epsilon^{-1}\ell(x),r_0)}^{r_0} \lca_{V, \psi}(x,r)^2 \, \dr  \, d \mu(x),
\end{align}
for some absolute constant $c>0$.
\end{lemma}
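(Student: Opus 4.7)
The plan is to decompose the integration domain $\Gamma$ using the Whitney family $\ppStop$ together with the degenerate portion $Z_0\cap\Gamma$, and, on each cube $I\in\ppStop$, transfer the square function from $\Gamma$ to the support of $\mu$ by means of the associated very good ball $B_I$ from \rf{cond22*}--\rf{cond23*} and the partition-of-unity-type functions $g_I$ from Lemma \ref{lemrepart}. The guiding observation is the pointwise Lipschitz stability
$$
|\lca_{V,\psi}(x,r) - \lca_{V,\psi}(x',r)| \;\lesssim\; \frac{|x-x'|}{r},
$$
which follows immediately from the smoothness and compact support of $\psi$: only a ball of volume $\lesssim r^{n+1}$ contributes to the difference $r^{-(n+1)}\int|\psi((\cdot-x)/r)-\psi((\cdot-x')/r)|$, and on it the integrand is bounded by $r^{-1}|x-x'|\|\nabla\psi\|_\infty$. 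Squaring gives $\lca_{V,\psi}(x,r)^2 \lesssim \lca_{V,\psi}(x',r)^2 + (|x-x'|/r)^2$.

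First, I would fix $I\in\ppStop$ with $\epsilon^{-1}\ell(I)\le r_0$ (otherwise the inner integral is empty) and use that for $x\in I$ one has $\ell(x)\approx \ell(I)$ by \rf{eqellI}, while the center $z_I\in E_0$ of $B_I\in\VG$ satisfies $|x-z_I|\lesssim \ell(I)$ by \rf{cond22*}--\rf{cond23*}. For $r\ge\epsilon^{-1}\ell(I)$ the stability gives
$$
\int_I\!\int_{\epsilon^{-1}\ell(I)}^{r_0}\lca_{V,\psi}(x,r)^2\,\tfrac{dr}{r}\,d\hi(x) \lesssim \hi(I)\!\int_{\epsilon^{-1}\ell(I)}^{r_0}\!\lca_{V,\psi}(z_I,r)^2\,\tfrac{dr}{r} + \hi(I)\!\int_{\epsilon^{-1}\ell(I)}^{r_0}\!\tfrac{\ell(I)^2}{r^3}\,dr.
$$
The second integral is $\lesssim \epsilon^2\hi(I)$, and summing over $I\in\ppStop$ yields $\lesssim\epsilon^2 r_0^n$, using that the Whitney cubes $R_i$, $i\in I_0$, are disjoint subsets of $L_0\cap 12B_0$ by Lemma \ref{lem73}, and that $\Pi_\Gamma$ is bi-Lipschitz so $\HH^n(I)\approx\HH^n(R_i)$.

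Next, for the main term, I would write $\hi(I)=\int g_I\,d\mu$ by \rf{co1} and apply the stability estimate once more between $z_I$ and a generic $y\in\supp g_I\subset B_I$, noting that $|z_I-y|\lesssim \rad(B_I)\approx\ell(I)$. The resulting quadratic error is again $\lesssim\epsilon^2\hi(I)$, hence summable to $\epsilon^2 r_0^n$. For the remaining contribution, I would invoke the finite superposition $\sum_I g_I\lesssim c(\theta)$ from \rf{co2} to get a global bound involving $\int_{E_0}\int\lca_{V,\psi}(y,r)^2 dr/r\, d\mu(y)$. To bring the lower endpoint in the $r$-integral into the form $\min(c\epsilon^{-1}\ell(y),r_0)$ required by the right-hand side of \rf{eqx2}, observe that for $y\in B_I\cap E_0$ the 1-Lipschitz function $d$ satisfies $d(y)\le |y-z_I|+d(z_I)\lesssim\ell(I)$ since $d(z_I)=0$; combined with Lemma \ref{lemDd} this yields $\ell(y)\lesssim\ell(I)$, so for a suitable absolute constant $c$ we have $c\epsilon^{-1}\ell(y)\le\epsilon^{-1}\ell(I)$ and the $r$-interval only enlarges.

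The remaining and most delicate point is the contribution of $Z_0\cap\Gamma$, on which $\ell\equiv 0$ so the inner integral runs over $(0,r_0]$. Here I would argue that $Z_0\cap\Gamma\subset E_0$: every $x\in Z_0$ is the limit of centers $z_k\in E_0$ of very good balls $B(z_k,r_k)$ with $r_k\to 0$, hence $x\in E_0$ by compactness of $E$. Moreover, the lower density condition $\mu(B(z_k,r_k))\ge\theta r_k^n$ propagates to $\Theta^n_*(x,\mu)\gtrsim\theta$ for every $x\in Z_0$, which via standard density comparison yields $\hi|_{Z_0\cap\Gamma}\le C(\theta)\,\mu|_{Z_0\cap E_0}$ as measures. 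Integrating the nonnegative function $\int_0^{r_0}\lca_{V,\psi}(\cdot,r)^2 dr/r$ against this inequality absorbs the $Z_0\cap\Gamma$ piece into the right-hand side of \rf{eqx2}, noting that $\min(c\epsilon^{-1}\ell(y),r_0)=0$ whenever $y\in Z_0$. The main obstacle is precisely this measure comparison on the degenerate set, but it parallels arguments already used in the planar case of \cite{JTV}.
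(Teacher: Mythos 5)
Your proposal follows the paper's own argument for the contribution from the Whitney cubes in $\ppStop$: the Lipschitz stability $|\lca_{V,\psi}(x,r)-\lca_{V,\psi}(x',r)|\lesssim\ell(I)/r$ for $x\in I$, $x'\in B_I$, the direct evaluation of the error term, and the transfer to $\mu$ via the functions $g_I$ of Lemma \ref{lemrepart} and the finite overlap \rf{co2}. The way you iterate the stability estimate ($x\to z_I\to y$) rather than working with $\inf_{x'\in B_I}\lca_{V,\psi}(x',r)^2$ directly is cosmetic.

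Where you genuinely depart from the paper is the treatment of $Z_0\cap\Gamma$. The Whitney family $\pStop$ only covers $\Gamma\setminus Z_0$ (Lemma \ref{l:whitney-dec}(d) gives $L_0\setminus\Pi(Z_0)=\bigcup_i R_i$, and $\Pi_\Gamma$ is a bijection $L_0\to\Gamma$), and for $x\in Z_0\cap\Gamma$ one has $\ell(x)=0$, so the inner integral is $\int_0^{r_0}$ and the integrand does not vanish. The paper's decomposition \rf{eqajg32} is written as an equality $\int_\Gamma=\sum_{I\in\ppStop}\int_I$, which silently drops this piece; it is not harmless, since $Z\subset Z_0\cap\Gamma$ and $\HH^n(Z)>0$ in general. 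Your fix is correct: for $x\in Z_0$ the very--good balls give $\mu(B(x,\rho))\gtrsim\theta\rho^n$ for all $0<\rho<50r_0$, hence $\Theta^{n,*}(x,\mu)\gtrsim\theta$, and the standard density comparison (e.g.\ \cite[Theorem 6.9]{Mattila-gmt}) gives $\HH^n|_{Z_0\cap\Gamma}\lesssim_\theta\mu|_{Z_0\cap\Gamma}$ as measures; since $Z_0\cap\Gamma\subset\overline{E_0}$ and $\supp\mu\subset B_0$ in the reduced setting of Section \ref{sec-corona}, the resulting integral lands in the right-hand side of \rf{eqx2}, where $\min(c\epsilon^{-1}\ell(y),r_0)=0$ for $y\in Z_0$. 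So your write-up is in fact more complete than the paper's proof on this point, and the omission it addresses is real, though easily repaired exactly as you do.
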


Throughout the proof of Lemma \ref{lemllumy}, we will let the implicit constant in the symbol $\lesssim$ depend on $\theta$ without further mention.

\begin{proof}
By Lemma \ref{lem73}, $\epsilon^{-1}\ell(x)\geq r_0$ for $x\in I\in\pStop\setminus \ppStop$, so we may write
\begin{align}\label{eqajg32}
\int_{\Gamma}\int_{\min(\epsilon^{-1}\ell(x),r_0)}^{r_0} \lca_{V, \psi}&(x,r)^2 \, \dr  \, d \hi(x)\nonumber
\\
&= \sum_{I\in\ppStop} \int_{I}\int_{\min(\epsilon^{-1}\ell(x),r_0)}^{r_0} \lca_{V, \psi}(x,r)^2 \, \dr  \, d \hi(x).
\end{align}

Given $x\in I\in\ppStop$, we consider an arbitrary point $x'\in B_{I}$.
Using the fact that $r\geq \epsilon^{-1}\ell(x)\approx \epsilon^{-1}\ell(I)\gg\ell(I)$ and
taking into account that $|x-x'|\leq \dist(I,B_{I})\lesssim\ell(I)$ by \rf{cond23*}, we get
 \begin{align*}
 \av{ \lca_{V, \psi} (x,r) - \lca_{V, \psi}(x',r)}   &\leq r^{-n-1}\int_{V^+}
 \bigg| \psi\ps{\frac{x-y}{r}} - \psi\ps{\frac{x'-y}{r}}\bigg|\,dy\\
 & \lesssim \|\nabla\psi\|_\infty \int_{B(x,2r)}\frac{|x-x'|}{r^{n+2}}\,dy \lesssim \frac{\ell(I)}r,
\end{align*}
Plugging this inequality into the right hand side of \rf{eqajg32}, we estimate the integral on the left side of \rf{eqx2} as follows:
\begin{align*}
\int_{\Gamma}\int_{\min(\epsilon^{-1}\ell(x),r_0)}^{r_0}& \lca_{V, \psi}(x,r)^2 \, \dr \,d \hi(x)
\\
& \lesssim_\theta \sum_{I\in \ppStop}\int_I\int_{\min(c\epsilon^{-1}\ell(I),r_0)}^{r_0} \inf_{x'\in  B_I}
\lca_{V, \psi}(x',r)^2\, \dr \,d \hi(x)\\
&\quad+ \sum_{I\in \ppStop}\int_I\int_{\min(c\epsilon^{-1}\ell(I),r_0)}^{r_0} \frac{\ell(I)^2}{r^2} \, \dr \,d \hi(x)\\
&=: T_1+ T_2.
\end{align*}
First we bound $T_2$ in a straightforward manner by evaluating the double integral:
$$T_2\lesssim
\sum_{I\in \ppStop}\ell(I)^n \int_{c\epsilon^{-1}\ell(I)}^\infty \frac{\ell(I)^2}{r^{3}} \, dr \lesssim
\epsilon^2 \sum_{I\in \ppStop}\ell(I)^n \lesssim \epsilon^2\,r_0^n.
$$

Finally we estimate the term $T_1$. To this end, we consider the functions $g_I$ constructed in
Lemma \ref{lemrepart}. It is clear that
$$T_1 =\sum_{I\in \ppStop}\iint_{\min(c\epsilon^{-1}\ell(I),r_0)}^{r_0} \inf_{x'\in  B_I}
\lca_{V, \psi}(x',r)^2\, \dr \,g_I(x)\,d \mu(x).$$
Observe now that, for each $x\in B_I$, since $\ell(\cdot)$ is $\frac1{50}$-Lipschitz and taking into account
\rf{eqellI},
denoting by $x_I$ the center of $I$,
$$\ell(x)  \leq \ell(I) + \frac1{50}\,|x-x_I|\lesssim \ell(I).$$
Therefore, using also \rf{co2} and the fact that we are assuming that $\supp\mu\subset E_0$,
$$T_1 \leq\iint_{\min(c\epsilon^{-1}\ell(x),r_0)}^{r_0} \sum_{I\in \ppStop}g_I(x)\,
\lca_{V, \psi}(x,r)^2\, \dr \,d \mu(x) \lesssim 
\int_{E_0}\int_{\min(c'\epsilon^{-1}\ell(x),r_0)}^{r_0} 
\lca_{V, \psi}(x,r)^2\, \dr \,d \mu(x)
.$$
Gathering the estimates obtained for the terms $T_1$ and $T_2$, the lemma follows.
\end{proof}

\vv

Next we intend to estimate the square integral on the right hand side of \rf{eqx2} in terms of
the coefficients $\mathfrak g(x,r)$ defined in \rf{eqcoefg}. To distinguish the coefficients associated
to $\Omega^\pm$ from the ones associated to $V^\pm$, we write
$$\mathfrak g_\Omega(x,r)  = 
\frac1{r^{n+1}}\,\max_{i=+,-}\HH^{n+1}\big((\Omega^i\triangle\, T_x(\Omega^{i,c}))\cap B(x,r)\big)
$$
and
$$\mathfrak g_V(x,r)  = 
\frac1{r^{n+1}}\,\max_{i=+,-}\HH^{n+1}\big((V^i\triangle\, T_x(V^{i,c}))\cap B(x,r)\big).
$$
We also define coefficients $\gamma_\Omega(x,r)$ and $\gamma_V(x,r)$ as in \rf{eqcoefgamma} associated to $\Omega^\pm$ and $V^\pm$, respectively.
\vv

\begin{lemma}\label{lem9.11}
We have
$$\int_{E_0}\int_{\min(c\epsilon^{-1}\ell(x),r_0)}^{r_0}\lca_{V, \psi}(x,r)^2 \, \dr  \, d \mu(x)
\lesssim 
\int_{E_0}\int_{\min(c\epsilon^{-1}\ell(x),r_0)}^{2r_0} \mfg_V(x,r)^2 \, \dr  \, d \mu(x).$$
\end{lemma}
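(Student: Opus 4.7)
My plan is to prove the pointwise estimate $\lca_{V,\psi}(x,r)\lesssim \mathfrak g_V(x,2r)$ and then integrate, using a change of variable $r\mapsto 2r$ in the $dr/r$ integral to absorb the factor of $2$.

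The key observation is that $\psi(y)=\vphi(|y|)$ is radial, hence even. This gives two consequences. First, $c_\psi=\int_{\R^{n+1}_+}\psi=\tfrac12\int\psi$. Second, for any measurable set $A\subset\R^{n+1}$, the change of variable $z=T_x(y)=2x-y$ (of Jacobian one) together with $\psi((x-z)/r)=\psi((z-x)/r)$ yields
\begin{equation*}
\int_A \psi\!\left(\tfrac{y-x}{r}\right)dy=\int_{T_x(A)}\psi\!\left(\tfrac{z-x}{r}\right)dz.
\end{equation*}

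With this I would proceed as follows. Writing $a=r^{-n-1}\!\int_{V^+}\psi((y-x)/r)dy$ and $b=r^{-n-1}\!\int_{V^-}\psi((y-x)/r)dy$, we have $a+b=\int\psi$ (since $\HH^{n+1}(\Gamma)=0$), so
\begin{equation*}
\lca_{V,\psi}^+(x,r)=\left|\tfrac12(a+b)-a\right|=\tfrac12|a-b|=\lca_{V,\psi}^-(x,r),
\end{equation*}
and thus $\lca_{V,\psi}(x,r)^2=\tfrac12|a-b|^2$. By the radial identity above, $a=r^{-n-1}\!\int_{T_x(V^+)}\psi((y-x)/r)dy$, so
\begin{equation*}
|a-b|\leq \frac{1}{r^{n+1}}\int_{T_x(V^+)\triangle V^-}\psi\!\left(\tfrac{y-x}{r}\right)dy\lesssim \frac{\|\psi\|_\infty}{r^{n+1}}\,\HH^{n+1}\!\bigl((T_x(V^+)\triangle V^-)\cap B(x,2r)\bigr),
\end{equation*}
where I used $\supp\psi\subset B(0,1.1)$. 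Since $V^{-,c}=V^+\cup\Gamma$ and $\Gamma$ is $\HH^{n+1}$-null, $T_x(V^+)=T_x(V^{-,c})$ modulo a null set, so the symmetric difference equals $V^-\triangle T_x(V^{-,c})$ up to null sets. By the definition of $\mathfrak g_V$ this last quantity is at most $(2r)^{n+1}\mathfrak g_V(x,2r)$. Altogether,
\begin{equation*}
\lca_{V,\psi}(x,r)\lesssim \mathfrak g_V(x,2r).
\end{equation*}

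Squaring, integrating against $\mu(x)\,dr/r$ over the relevant range, and applying the substitution $s=2r$ (which replaces the lower limit $\min(c\epsilon^{-1}\ell(x),r_0)$ by $\min(2c\epsilon^{-1}\ell(x),2r_0)\leq \min(2c\epsilon^{-1}\ell(x),2r_0)$ and the upper limit $r_0$ by $2r_0$) yields the claimed inequality after renaming $c$. The argument is essentially self-contained and short; the only point requiring care is the null-set manipulation $T_x(V^+)\triangle V^-\equiv V^-\triangle T_x(V^{-,c})$, and the fact that the support of $\psi$ being contained in $B(0,1.1)$ allows one to absorb the factor $1.1$ harmlessly into the constant (or into the argument $2r$ of $\mathfrak g_V$).
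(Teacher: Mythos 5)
Your proof is correct, and it takes a genuinely different (and arguably slicker) route than the paper's. You exploit the evenness of $\psi$ directly: since $V^+\cup V^-$ has full Lebesgue measure, $c_\psi=\tfrac12(a+b)$, so $\lca_{V,\psi}(x,r)=\tfrac12|a-b|$; then the change of variables $y\mapsto T_x(y)=2x-y$ together with $\psi(z)=\psi(-z)$ converts $|a-b|$ into an integral over $T_x(V^+)\triangle V^-$, which — after the null-set identification $T_x(V^+)\equiv T_x(V^{-,c})$ — is precisely what $\mfg_V(x,2r)$ measures, and $\supp\psi\subset B(0,1.1)$ makes the integral local. The paper instead routes through spherical slices: it uses the bound $\lca_{V,\psi}(x,r)\le \frac{1}{r^{n+1}}\int_0^\infty\varphi(s/r)\,a_V(x,s)\,s^n\,ds$ (polar coordinates, as in the proof of Lemma \ref{lem1}), then invokes Lemma \ref{lemgamma} to get $a_V\le\tfrac12\gamma_V$, and finally reassembles $\int_0^{2r}\gamma_V(x,s)\,s^n\,ds$ into $\mfg_V(x,2r)$ by a second application of polar coordinates. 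The underlying symmetry idea is the same, but you apply it once on the ball rather than twice (once to define $a$, once to pass sphere-by-sphere via $\gamma$), and you avoid depending on Lemma \ref{lemgamma}. Your handling of the change of variable $s=2r$ in the $dr/r$ integral — which turns the lower limit $\min(c\epsilon^{-1}\ell(x),r_0)$ into $\min(2c\epsilon^{-1}\ell(x),2r_0)\ge \min(c\epsilon^{-1}\ell(x),r_0)$, so the inequality closes without even renaming $c$ — is also correct.
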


\begin{proof}
Integrating in polar coordinates as in \rf{eqal82} and applying Lemma \ref{lemgamma}, 
 we get
\begin{align*}
\lca_{V, \psi}^\pm(x,r) 
& \leq \frac1{r^{n+1}}\int_0^\infty
\varphi\Bigl(\frac{s}{r}\Bigl) a_V(x,s)\,s^n\,ds\\
& \leq \frac1{4r^{n+1}}\int_0^\infty
\varphi\Bigl(\frac{s}{r}\Bigl) \gamma_V(x,s)\,s^n\,ds \lesssim \frac1{r^{n+1}}\int_0^{2r}
 \gamma_V(x,s)\,s^n\,ds
.
\end{align*}
By polar coordinates again, we obtain
\begin{align*}
\frac1{r^{n+1}}\int_0^{2r}
 \gamma_V(x,s)\,s^n\,ds & \leq \sum_{i=+,-}
\frac1{r^{n+1}}\int_0^{2r} \HH^{n}\big((V^i\triangle\, T_x(V^{i,c}))\cap S(x,s)\big)\,ds\\
& = \frac1{r^{n+1}} \sum_{i=+,-}
\HH^{n+1}\big((V^i\triangle\, T_x(V^{i,c}))\cap B(x,2r)\big)\lesssim\mfg_V(x,2r).
\end{align*}
Thus,
$$\lca_{V, \psi}^\pm(x,r)\lesssim \mfg_V(x,2r).$$
Integrating with respect to $\ds\dr  \, d \mu(x)$, the lemma follows.
\end{proof}
\vv

The next key step for the proof of Lemma \ref{l:BA-small} consists in estimating $\mfg_V(x,r)$ in terms 
of $\mfg_\Omega(x,r)$. To ease notation, we set
\begin{equation}
	\kappa = \kappa(\epsilon) = \epsilon^{1/(2n+4)}.
\end{equation}

\begin{lemma}\label{lemkey40}
There exists an absolute constant $C>1$ such that, for all $x\in E_0$ and $r\in[c\epsilon^{-1}\ell(x),2r_0]$,
$$ 
 \mfg_V(x,r) \lesssim \mfg_\Omega(x,2r) + \frac{\redeps}{r^{n+1}}\sum_{P\in \WW_\Gamma:P\subset B(x,Cr)} \ell(P)^{n+1}.
$$
\end{lemma}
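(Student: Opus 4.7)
The strategy is to first reduce the bound to an estimate on the symmetric difference $(V^i \triangle \Omega^i) \cap B(x,r)$, and then to bound this quantity via a covering of $B(x,r)$ by very good balls where Lemma \ref{l:Gtopology2} applies. Using that $T_x$ is an isometry fixing $B(x,r)$ and that $\Omega^{i,c} \triangle V^{i,c} = \Omega^i \triangle V^i$, the inclusion
\begin{align*}
(V^i \triangle T_x(V^{i,c})) \cap B(x,r) \subset {}&\,((V^i \triangle \Omega^i) \cap B(x,r)) \cup ((\Omega^i \triangle T_x(\Omega^{i,c})) \cap B(x,r)) \\
&\cup T_x((V^i \triangle \Omega^i) \cap B(x,r))
\end{align*}
holds, and taking $\HH^{n+1}$-measure yields
\begin{equation*}
\mathfrak{g}_V(x,r) \;\leq\; \mathfrak{g}_\Omega(x,r) + \frac{2}{r^{n+1}} \max_{i=\pm} \HH^{n+1}((V^i \triangle \Omega^i) \cap B(x,r)).
\end{equation*}
Moreover, since $V^+$ and $V^-$ partition $\R^{n+1}$ up to the null set $\Gamma$, one has $\Omega^+ \setminus V^+ \subset V^- \setminus \Omega^-$ modulo $\HH^{n+1}$-null sets, so the full symmetric difference is bounded by the two quantities controlled by Lemma \ref{l:Gtopology2}.

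The main task is then to bound $\HH^{n+1}((V^i \triangle \Omega^i) \cap B(x,r))$ via a covering argument. For each Whitney cube $R_j \in \Whit$ whose image $P_j = \Pi_\Gamma(R_j) \in \WW_\Gamma$ meets $B(x, Cr)$, I would use Lemma \ref{l:whitney-prop1} to produce an associated very good ball, enlarged if necessary (since expansion preserves $\VG$) to a ball $\widetilde B_j \in \VG$ of radius $\sim \ell(P_j)$ covering the ``Whitney strip'' $\{y : \Pi(y) \in R_j,\ |\Pi^\perp(y) - A(\Pi(y))| \lesssim \ell(P_j)\}$. For $y \in B(x,r)$ with $\Pi(y) \in \Pi(Z_0)$, the definition \eqref{e:d-smoothing} of $d(\cdot)$ supplies a very good ball $B(z_y, \rho_y)$ with $|y-z_y|+\rho_y \leq 2 d(y)$, and I would extract a Vitali subfamily of bounded overlap. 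The constraint $r \geq c\epsilon^{-1}\ell(x)$ ensures that all the balls in both families stay inside $B(x, Cr)$ for an appropriate constant $C$. On each very good ball $B$ of the cover, Lemma \ref{l:Gtopology2} together with $\epsilon \leq \kappa^2 = \redeps$ yields $\HH^{n+1}(B \cap (V^i \triangle \Omega^i)) \lesssim_\theta \kappa^2\, \HH^{n+1}(B)$; summing over the Whitney balls produces the target contribution $\kappa^2 \sum_{P \subset B(x,Cr)} \ell(P)^{n+1}$.

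The main technical obstacle is to show that the contribution of the non-Whitney balls (those covering $\Pi^{-1}(\Pi(Z_0)) \cap B(x,r)$) is absorbed either into the Whitney-cube sum or into $\mathfrak{g}_\Omega(x,2r)$. I expect to handle this by a dyadic organization of the Vitali family according to the distance of each ball to $\Gamma$: because $\Gamma$ passes through genuine points of $Z_0 \subset E_0$, the radii of the selected Vitali balls form a geometric cascade as they approach $\Gamma$, and the bounded overlap of the family together with Lemma \ref{l:dist-B0} permits one to charge the volume of each Vitali ball against a bounded number of adjacent Whitney strips (using the contiguity property of Lemma \ref{l:whitney-dec}(b)). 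The slight enlargement from $r$ to $2r$ in the $\mathfrak{g}_\Omega$ term provides the additional room needed to accommodate Vitali balls that spill just beyond $B(x,r)$, ensuring that every such ball is charged to some Whitney cube $P \subset B(x,Cr)$. Carrying out this charging while carefully tracking the geometric sums, and verifying that the $\epsilon$-versus-$\kappa^2$ slack is enough to close the telescoping argument, is the technical heart of the proof.
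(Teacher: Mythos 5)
Your opening reduction
\[
\mfg_V(x,r)\;\leq\;\mfg_\Omega(x,r)+\frac2{r^{n+1}}\,\max_{i=\pm}\HH^{n+1}\big((V^i\triangle\Omega^i)\cap B(x,r)\big)
\]
is correct as an inequality, but it is too lossy to prove the lemma. Lemma~\ref{l:Gtopology2} only gives $\HH^{n+1}\big(B'\cap(V^i\setminus\Omega^i)\big)\lesssim_\theta\epsilon\,\HH^{n+1}(B')$ on each very good ball $B'$, and this bound is genuinely sharp: the set $V^i\triangle\Omega^i$ can be \emph{uniformly spread} throughout $B(x,r)$, not concentrated near $\Gamma$. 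Consequently, no matter how cleverly you organize a cover of $B(x,r)$ by very good balls, the total you can extract from Lemma~\ref{l:Gtopology2} is of order $\epsilon\,r^{n+1}$, which in general is \emph{not} controlled by $\kappa\sum_{P\subset B(x,Cr)}\ell(P)^{n+1}$. (Take the extreme case where $\Gamma\cap B(x,Cr)\subset Z_0$: then there are no cubes $P\in\WW_\Gamma$ inside $B(x,Cr)$ and the lemma demands $\mfg_V(x,r)\lesssim\mfg_\Omega(x,2r)$ with no error, yet your middle quantity is still of size $\epsilon r^{n+1}$.) The ``charging'' mechanism you gesture at for the non-Whitney balls therefore cannot close: balls over $\Pi^{-1}(\Pi(Z_0))$ contribute an $\epsilon\cdot\mathrm{(volume)}$ amount that vanishes into neither $\mfg_\Omega$ nor the Whitney sum. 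Integrated over $r$ as in Lemma~\ref{lemfi99}, an $\epsilon$-per-scale loss produces a logarithmic blowup near $Z$, precisely what the lemma is designed to avoid.

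The idea you are missing is that the paper never passes from $V$ to $\Omega$ globally. It builds a Whitney decomposition $\wt\WW$ of $\R^{n+1}$ adapted to the \emph{symmetrized} distance $\wt d(y)=\max(d(y),d(T_x(y)))$ at the fine scale $\kappa^2\wt d$; this family is invariant under $T_x$. A cube $Q\in\wt\WW$ is declared good when $Q\subset V^+$ and $T_x(Q)\subset V^-$ (or vice versa), and for such cubes $(V^i\cap Q)\triangle(T_x(V^{i,c})\cap Q)$ is \emph{identically zero} --- there is no $\epsilon$-per-cube loss at all, so the bulk of $B(x,r)$ away from $\Gamma$ contributes nothing. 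The comparison with $\Omega$ via Lemma~\ref{l:Gtopology2} is invoked only on the residual bad cubes, which come in three types: those missing $\Gamma$ but with $Q,T_x(Q)$ on the same side (controlled by $\mfg_\Omega$ via the argument around \rf{eqigusk42}); those hitting $\Gamma$ but with only small nearby cubes of $\WW_\Gamma$ (again controlled by $\mfg_\Omega$); and those within $\kappa$ of a large cube $P\in\WW_\Gamma$, whose total measure is what produces the $\kappa\sum_P\ell(P)^{n+1}$ error. Your reduction forfeits the exact cancellation on good cubes, which is the entire content of the lemma.
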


\begin{proof}
Without loss of generality we assume that $x=0$, so that $T_{x}(A)=-A$ for any set $A$.
We define the function $\wt d:\R^{n+1}\to [0,\infty)$ given by
$$\wt d(y) = \max\big(d(y),d(-y)\big).$$
Recall that the function $d$ is defined in \eqref{e:d-smoothing}. Since this is $1$-Lipschitz, the same holds for $\wt d$.
Next we define the following family of cubes
$$\wt \WW= \{Q  \mbox{ maximal in } \DD_{\R^{n+1}} \, :\, \ell(Q)  < 20^{-1}\,\redeps  \inf_Q \wt d(y)\}.$$
Here $\DD_{\R^{n+1}}$ is the family of the usual dyadic cubes form $\R^{n+1}$, which we assume here to be closed.

The cubes from $\wt\WW$ satisfy properties analogous to the ones in Lemma \ref{l:whitney-dec}. In particular, it holds:
    \begin{enumerate}[label=\textup{(}\alph*\textup{)}]
        \item If $Q\in\wt\WW$ and $y \in 15Q$, then $\ell(Q) \approx \redeps \wt d(y)$.
        \item For each $Q\in\wt\WW$, there are at most $N$ cubes $R\in \wt\WW$ such that $15 Q \cap 15R \neq \varnothing$, where $N$ is some absolute constant.
        \item If $Q\in \wt\WW$, then $-Q\in \wt\WW$ too.
    \end{enumerate}
    The properties (a) and (b) follow by arguments analogous to the ones  in Lemma \ref{l:whitney-dec}. We leave the details for the reader. The property (c) is due to the fact that $\wt d(y)=\wt d(-y)$ for all $y\in\R^{n+1}$ and that $Q\in\DD_{\R^{n+1}}$ if and only if
$-Q\in\DD_{\R^{n+1}}$. 

To shorten notation, we write $B=B(0,r)$ (recall we are assuming $x=0$). Also, we let $\wt\WW_B$ be the family of the cubes from 
$\wt\WW$ which intersect $B$. 
We claim that if $Q\in\wt\WW_B$, then $Q\subset C\,B$ for some absolute constant $C>1$. Indeed, if $y\in Q\cap B$, then $\wt d(y) = d(y')$ either for $y'=y$ or for $y'=-y$, and taking into account that, by Lemma \ref{lemDd} and the fact that $r\in[c\epsilon^{-1}\ell(0),2r_0]$, $d(0)\approx \ell(0)\leq c\epsilon r$, we have
$$\ell(Q)\approx\redeps \,\wt d(y) = \redeps \,d(y') \leq \redeps\,d(0) + \redeps\,|y'| \leq 
2r.$$
Together with the condition that $Q\cap B\neq\varnothing$, this implies that $2Q\subset CB$. Remark that from the above estimates it follows also easily that $\ell(Q)\lesssim\kappa^2r_0$.

We say that a cube $Q\in\wt\WW_B$ is good, and we write $Q\in\wt\WW_g$
if either
\begin{itemize}
\item $Q\subset V^+$ and $-Q\subset V^-$, or
\item $Q\subset V^-$ and $-Q\subset V^+$.
\end{itemize}
We also set 
\begin{equation}\label{WW_b}
	\wt\WW_b= \wt\WW_B\setminus \wt\WW_g.
\end{equation}

For $i=+,-$, write 
$$\mfg_V^i(y,r)  = 
\frac1{r^{n+1}}\,\HH^{n+1}\big((V^i\triangle\, T_y(V^{i,c}))\cap B(y,r)\big),
$$
so that $\mfg_V(y,r) = \max_{i=+,-}\mfg_V^i(y,r)$. We also let $j=-i$ (i.e., $j=\mp$ for $i=\pm$).
Notice that if $Q\in\wt\WW_g$, then, modulo a set of Lebesgue measure zero,
$$(V^i\cap Q)\triangle\, (T_0(V^{i,c})\cap Q) = (V^i\cap Q)\triangle\, T_0(V^j\cap (-Q)) =
\varnothing.$$ 
Then, for $i=+,-$, we have
\begin{align*}
\mfg_V^i(x,r) &\leq \frac1{r^{n+1}}\sum_{Q\in \wt \WW_B} \HH^{n+1}\big((V^i\cap Q)\triangle\, (T_0(V^j)\cap Q)\big)\\
& = \frac1{r^{n+1}}\sum_{Q\in \wt \WW_b} \HH^{n+1}\big((V^i\cap Q)\triangle\, (T_0(V^j)\cap Q)\big).
\end{align*}
Next we need to distinguish different types of cubes from $Q\in \wt W_b$:
\begin{itemize}
\item[(1)] We write $Q\in\wt \WW_b^1$ if both $Q\cap \Gamma=\varnothing$ and $-Q\cap \Gamma=\varnothing$.

\item[(2)] We write $Q\in\wt \WW_b^2$ if either $Q\cap \Gamma\neq \varnothing$ or $-Q\cap \Gamma\neq \varnothing$, and moreover any cube $P\in\WW_\Gamma$ (recall that 
$\WW_\Gamma$ was defined in \eqref{defwgam} and that cubes from $\WW_\Gamma$ are subsets of $\Gamma$) which intersects either $2Q$ or $-2Q$
satisfies $\ell(P) \leq \blueps \ell(Q)$  (here $-2Q=-(2Q)$).

\item[(3)] We write $Q\in\wt \WW_b^3$ if neither (1) nor (2) hold. That is,
 either $Q\cap \Gamma\neq \varnothing$ or $-Q\cap \Gamma\neq \varnothing$, and moreover there exists some cube $P\in\WW_\Gamma$ which intersects either $2Q$ or $-2Q$
satisfying $\ell(P) > \blueps 
\ell(Q)$.
\end{itemize}
Then we split
\begin{align*}
\mfg_V^i(x,r) &\leq  \sum_{k=1}^3\frac1{r^{n+1}}\sum_{Q\in \wt \WW_b^k} \HH^{n+1}\big((V^i\cap Q)\triangle\, (T_0(V^j)\cap Q)\big) =: \sum_{k=1}^3 S_k.
\end{align*}

\noi {\bf Estimate of $S_1$.}
In this case $\pm Q\subset \R^{n+1}\setminus \Gamma$. Together with the condition that $Q\in\wt \WW_b$ (recall \eqref{WW_b}), this implies that either both $Q$ and $-Q$ are contained in $V^+$, or both $Q$ and $-Q$ are
contained in $V^-$.
Suppose, for example, that the first option holds (the arguments for the second one are analogous).
Since the center $x_Q$ of $Q$ satisfies
$$ d(x_Q) \leq \wt d(x_Q)\approx \redepsinv \ell(Q),$$
from the definition of $d(x_Q)$ it follows easily that there exists some ball $B(Q)\in \VG$ that contains $Q$ and satisfies
$\rad(B(Q))\lesssim \redepsinv\ell(Q)$. Similarly, there exists some ball $B(-Q)\in \VG$ that contains $-Q$ and satisfies
$\rad(B(-Q))\lesssim \redepsinv \ell(Q)=\redepsinv\ell(-Q)$. Since both $Q$ and $-Q$ are contained in $V^+$,
by Lemma \ref{l:Gtopology2} and recalling that $\kappa= \epsilon^{1/(2n+4)}$ we have
\begin{align}\label{eqalid72}
\HH^{n+1}(Q \setminus \Omega^+) & \leq 
\HH^{n+1}(B(Q) \cap V^+ \setminus \Omega^+) \lesssim \epsilon\,\HH^{n+1}(B(Q))\\
&  \lesssim \epsilon \,(\redepsinv)^{n+1} \,\HH^{n+1}(Q) = \redeps
 \,\HH^{n+1}(Q),\nonumber
\end{align}
and analogously,
$$\HH^{n+1}(-Q \setminus \Omega^+)  \lesssim 
\redeps\,\HH^{n+1}(-Q).$$
Hence, for $\epsilon$ small enough
$$\HH^{n+1}(Q \cap \Omega^+)\geq \frac34\,\HH^{n+1}(Q)\quad \mbox{ and }\quad
\HH^{n+1}(-Q \cap \Omega^+)\geq \frac34\,\HH^{n+1}(-Q).$$
Consequently,
\begin{align}\label{eqigusk42}
\HH^{n+1}\big((\Omega^+ \triangle \,T_0(\Omega^{+,c}))\cap Q) & =
\HH^{n+1}\big((\Omega^+\cap Q) \triangle \,(T_0(\Omega^{+,c}\cap -Q)\big)\\
&\geq \HH^{n+1}(\Omega^+\cap Q) - \HH^{n+1}\big(T_0(\Omega^{+,c}\cap -Q)\big)\nonumber\\
& = \HH^{n+1}(\Omega^+\cap Q) - \HH^{n+1}(\Omega^{+,c}\cap -Q)\nonumber\\
& \geq \frac34\,\HH^{n+1}(Q) - \frac14\,\HH^{n+1}(-Q) = \frac12\,\HH^{n+1}(Q).\nonumber
\end{align}
The same inequality holds with $\Omega^+$ interchanged by $\Omega^-$ in the case when
both $Q$ and $-Q$ are
contained in $V^-$.
Therefore,
$$S_1 \leq \frac1{r^{n+1}}\sum_{Q\in \wt \WW_b^1} \HH^{n+1}(Q)
\leq \frac2{r^{n+1}}\sum_{k=+,-}\sum_{Q\in \wt \WW_B} \HH^{n+1}\big((\Omega^k \triangle \,T_0(\Omega^{k,c}))\cap Q).$$

\vv
\noi {\bf Estimate of $S_2$.}
We claim that in this case one of the cubes $2Q$ or $-2Q$ does not intersect $\Gamma$. Indeed, either
$\wt d(x_Q) = d(x_Q)$ or $\wt d(x_Q) = d(-x_Q)$. Suppose the first option holds and suppose that $2Q\cap\Gamma\neq\varnothing$. Then take $y\in 2Q\cap\Gamma$, so that by Lemma \ref{lemDd} $d(y)\approx D(\Pi(y))$. Let $P\in\WW_\Gamma$ be such that
$y\in P$, so that by the definition of $\wt\WW_b^2$ it holds $\ell(P)\leq \blueps 
\ell(Q)$.
We deduce that
$$\wt d(x_Q) =d(x_Q) \leq d(y) + |y-x_Q| \lesssim D(\Pi(y)) + \ell(Q) \approx \ell(P) + \ell(Q) \lesssim \blueps 
\ell(Q),$$
which contradicts the fact that $\ell(Q) \approx 
 \redeps \wt d(x_Q)$, by the property (a) of the family $\wt \WW$. So our claim is proven.

So in this case, up to an interchange between $Q$ and $-Q$, we have $2Q\cap \Gamma=\varnothing$ and $-Q\cap \Gamma\neq\varnothing$. The last condition implies that
\begin{equation}\label{eqmesig4}
\HH^{n+1}(-2Q\cap V^+) \approx \HH^{n+1}(-2Q\cap V^-)\approx \ell(Q)^{n+1}.
\end{equation}
By almost the same arguments as above in the case of $S_1$, there are 
balls $B(Q),B(-Q)\in \VG$ that contain $2Q$ and $-2Q$ respectively and satisfy
$\rad(B(Q)),\rad(B(-Q))\lesssim \redepsinv\ell(Q)$.

Suppose that $2Q\subset \Omega^+$ (the arguments for $2Q\subset \Omega^-$ are analogous). Then, from
Lemma \ref{l:Gtopology2} applied to $B(Q)$, arguing as in \rf{eqalid72}, we derive
\begin{equation}\label{eqmesig5}
\HH^{n+1}(2Q \setminus \Omega^+) \lesssim  \redeps 
 \,\HH^{n+1}(2Q).
\end{equation}
We also deduce 
\begin{align*}
\HH^{n+1}(-2Q \cap V^+ \setminus \Omega^+) & \leq 
\HH^{n+1}(B(-Q) \cap V^+ \setminus \Omega^+) \lesssim \epsilon\,\HH^{n+1}(B(-Q))\\
&  \lesssim \epsilon 
 \, (\redepsinv)^{n+1} 
  \,\HH^{n+1}(-2Q) \approx \redeps 
  	\,\HH^{n+1}(-2Q\cap V^+),\nonumber
\end{align*}
using \rf{eqmesig4} in the last estimate. Thus, for $\epsilon$ small enough,
\begin{equation}\label{eqmesig6}
\HH^{n+1}(-2Q \cap  \Omega^+) \geq \HH^{n+1}(-2Q \cap V^+ \cap\Omega^+) \approx \HH^{n+1}(-2Q\cap V^+)
\approx \HH^{n+1}(-2Q).
\end{equation}
Now we argue as in \rf{eqigusk42}:
\begin{align*}
\HH^{n+1}\big((\Omega^+ \triangle \,T_0(\Omega^{+,c}))\cap 2Q) & =
\HH^{n+1}\big((\Omega^+\cap 2Q) \triangle \,(T_0(\Omega^{+,c}\cap -2Q)\big)\\
&\geq \HH^{n+1}(\Omega^+\cap 2Q) - \HH^{n+1}\big(T_0(\Omega^{+,c}\cap -2Q)\big)\\
& = \HH^{n+1}(\Omega^+\cap 2Q) - \HH^{n+1}(\Omega^{+,c}\cap -2Q).
\end{align*}
By \rf{eqmesig5} we have
$$\HH^{n+1}(\Omega^+\cap 2Q) \geq (1-C\redeps)
 \,\HH^{n+1}(2Q),$$
and by 
\rf{eqmesig6},
$$\HH^{n+1}(\Omega^{+,c}\cap -2Q) = \HH^{n+1}(-2Q) - \HH^{n+1}(-2Q \cap  \Omega^+) 
\leq (1-\cfour)\,\HH^{n+1}(-2Q)$$
for some $\cfour>0$.
Thus,
$$\HH^{n+1}\big((\Omega^+ \triangle \,T_0(\Omega^{+,c}))\cap 2Q)\geq 
(\cfour-C\redeps
)\,\HH^{n+1}(-2Q)\approx \HH^{n+1}(-2Q)$$
for $\epsilon$ small enough. The same inequality holds with $\Omega^+$ interchanged by $\Omega^-$ in the case when
$2Q\subset \Omega^-$.
Therefore,
$$S_2 \leq \frac1{r^{n+1}}\sum_{Q\in \wt \WW_b^2} \HH^{n+1}(Q)
\lesssim \frac1{r^{n+1}}\sum_{k=+,-}\sum_{Q\in \wt \WW_B} \HH^{n+1}\big((\Omega^k \triangle \,T_0(\Omega^{k,c}))\cap 2Q).$$

\vv
\noi {\bf Estimate of $S_3$.}
For $Q\in\wt \WW_b^3$, we write $Q\in\wt \WW_b^{3a}$ if there exists some cube $P\in\WW_\Gamma$ which intersects $2Q$
satisfying $\ell(P) > \blueps 
\ell(Q)$. Since $Q\in\wt \WW_b^3$ if and only if $-Q\in\wt \WW_b^3$, and in this case either 
$Q\in\wt \WW_b^{3a}$ or $-Q\in\wt \WW_b^{3a}$, it follows that
$$S_3 \leq \frac1{r^{n+1}}\sum_{Q\in \wt \WW_b^3} \HH^{n+1}(Q)\leq  \frac2{r^{n+1}}\sum_{Q\in \wt \WW_b^{3a}} \HH^{n+1}(Q).$$
Observe also that if $Q\in\wt \WW_b^{3a}$ and $P$ is as above, then $Q\subset \NN_{C \kappa
	\ell(P)} (2P)$ (recall that $2P$ is a subset of $\Gamma$).
Consequently,
$$\sum_{Q\in \wt \WW_b^{3a}} \HH^{n+1}(Q) \leq \sum_{P\in\WW_\Gamma:P\cap 2B\neq\varnothing}
\HH^{n+1}(\NN_{C\tcb{\kappa}\ell(P)} (2P)) \lesssim 
\tcb{\kappa} \sum_{P\in \WW_\Gamma:P\cap 2B\neq\varnothing}\ell(P)^{n+1}.$$
From the properties of the cubes from $\WW_\Gamma$, Lemma \ref{lemDd}, and the fact that $r\gtrsim\epsilon^{-1}\ell(0)\approx\epsilon^{-1}d(0)$, we deduce that if $y\in P\cap 2B\neq\varnothing$,
then
$$\ell(P)\approx D(\Pi(y))\approx d(y) \leq d(0) + |y| \leq d(0) + 2r\lesssim (\epsilon + 1)\,r.$$ 
Hence $P\subset CB$, for some absolute constant $C>1$. Thus,
$$S_3\lesssim \frac{\tcb{\kappa}
}{r^{n+1}}\sum_{P\in \WW_\Gamma:P\subset CB} \ell(P)^{n+1}.$$

\vv
Gathering the estimates obtained for the terms $S_1$, $S_2$, and $S_3$, and taking into account 
that the cubes $2Q$ with $Q\in\wt \WW_B$ have finite superposition (by the property (b) above) and they are contained in $2B$, we obtain
\begin{align*}
\mfg_V^i(0,r) &\lesssim \frac1{r^{n+1}}\sum_{k=+,-}\sum_{Q\in \wt \WW_B} \HH^{n+1}\big((\Omega^k \triangle \,T_0(\Omega^{k,c}))\cap 2Q) +  \frac{\tcb{\kappa}
}{r^{n+1}}\sum_{P\in \WW_\Gamma:P\subset CB} \ell(P)^{n+1} \\
& \lesssim 
\frac1{r^{n+1}}\sum_{k=+,-}\HH^{n+1}\big((\Omega^k \triangle \,T_0(\Omega^{k,c}))\cap 2B) +  \frac{\tcb{\kappa}
}{r^{n+1}}\sum_{P\in \WW_\Gamma:P\subset CB} \ell(P)^{n+1},
\end{align*}
which proves the lemma.
\end{proof}


\vv
\begin{lemma}\label{lemfi99}
We have
$$\int_{E_0}\int_{\min(c\epsilon^{-1}\ell(x),r_0)}^{2r_0} \mfg_V(x,r)^2 \, \dr  \, d \mu(x)
\lesssim
\int_{E_0}\int_{\min(c\epsilon^{-1}\ell(x),r_0)}^{4r_0} \mfg_\Omega(x,r)^2 \, \dr  \, d \mu(x) + \redeps\,r_0^n
.$$
\end{lemma}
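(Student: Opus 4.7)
The plan is to square the pointwise estimate from Lemma \ref{lemkey40}, split into two pieces, handle the $\mfg_\Omega$ piece by a change of variable, and bound the remaining ``Whitney cube'' piece by a Fubini/duality argument exploiting polynomial growth of $\mu$.

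By Lemma \ref{lemkey40} and the inequality $(a+b)^2\leq 2(a^2+b^2)$, we have, for $x\in E_0$ and $r\in[c\epsilon^{-1}\ell(x),2r_0]$,
\begin{equation*}
\mfg_V(x,r)^2 \lesssim \mfg_\Omega(x,2r)^2 + \frac{\kappa^4}{r^{2n+2}}\Big(\sum_{P\in\WW_\Gamma:\,P\subset B(x,Cr)}\ell(P)^{n+1}\Big)^2.
\end{equation*}
Integrating against $dr/r\,d\mu(x)$ over the range in the statement, the first term is handled by the change of variable $s=2r$, which yields
$\int_{E_0}\int_{\min(c\epsilon^{-1}\ell(x),r_0)}^{4r_0}\mfg_\Omega(x,s)^2\,\tfrac{ds}{s}\,d\mu(x)$, exactly the first term on the right hand side. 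So it remains to show that the contribution from the second term is $\lesssim \redeps r_0^n=\kappa^2 r_0^n$.

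For the Whitney term, denote
$$\Phi(x,r):=\frac1{r^{n+1}}\sum_{P\in\WW_\Gamma:\,P\subset B(x,Cr)}\ell(P)^{n+1}.$$
Since the cubes $P\in\WW_\Gamma$ projecting into $B(x,Cr)$ have pairwise disjoint interiors on $\Gamma$,
$\sum_P \ell(P)^n\lesssim \HH^n(\Gamma\cap C'B(x,Cr))\lesssim r^n$, and since each such $\ell(P)\lesssim r$, we get $\Phi(x,r)\lesssim 1$. Hence
\begin{equation*}
\int_{E_0}\int_{\min}^{2r_0}\frac{\kappa^4}{r^{2n+2}}\Big(\sum_P\ell(P)^{n+1}\chi_{\{P\subset B(x,Cr)\}}\Big)^2\,\frac{dr}r\,d\mu(x)
\lesssim \kappa^4\int_{E_0}\int_0^{2r_0}\Phi(x,r)\,\frac{dr}r\,d\mu(x).
\end{equation*}
Applying Fubini,
\begin{equation*}
\int_{E_0}\int_0^{2r_0}\Phi(x,r)\,\frac{dr}r\,d\mu(x)
=\sum_{P\in\WW_\Gamma}\ell(P)^{n+1}\int_{E_0}\int_{r\gtrsim |x-x_P|+\ell(P)}^{2r_0}\frac{dr}{r^{n+2}}\,d\mu(x),
\end{equation*}
where $x_P$ denotes the center of $P$ (so that $P\subset B(x,Cr)$ forces $r\gtrsim |x-x_P|+\ell(P)$). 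Evaluating the inner $r$-integral gives a factor $\lesssim(|x-x_P|+\ell(P))^{-(n+1)}$, and then the $n$-polynomial growth of $\mu$ with constant $1$ yields
$$\int_{E_0}\frac{d\mu(x)}{(|x-x_P|+\ell(P))^{n+1}}\lesssim \frac1{\ell(P)}.$$
Summing,
$$\sum_{P\in\WW_\Gamma}\ell(P)^{n+1}\cdot\frac1{\ell(P)}=\sum_{P\in\WW_\Gamma}\ell(P)^{n}\lesssim r_0^n,$$
using once more the disjointness of the $P$'s in $\Gamma$ together with the bounded support of $A$ guaranteed by Lemma \ref{l:A}. Combining the estimates and using $\kappa^4\leq\kappa^2=\redeps$ (recall $\kappa\in(0,1)$) yields the desired bound $\lesssim \redeps r_0^n$.

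The only delicate point is the absorption $\kappa^4\cdot r_0^n\leq\redeps r_0^n$: this works precisely because $\redeps$ on the right of Lemma \ref{lemkey40} is squared once we square the coefficient, giving a $\kappa^4$ that is more than enough to beat the $\kappa^2$ we need. Everything else is a direct application of Fubini, the polynomial growth of $\mu$, and the Whitney packing inequality $\sum_{P\subset\Gamma}\ell(P)^n\lesssim r_0^n$.
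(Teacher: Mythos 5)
Your overall structure matches the paper's: square Lemma \ref{lemkey40}, peel off the $\mfg_\Omega$ term by the rescaling $s=2r$, and reduce the Whitney contribution via Fubini and polynomial growth to $\sum_P \ell(P)^n \lesssim r_0^n$. The one genuine variation is how you dispose of the squared sum: the paper applies Cauchy--Schwarz to bound $\bigl(\sum_P \ell(P)^{n+1}/r^{n+1}\bigr)^2$ by $\sum_P \ell(P)^{n+2}/r^{n+2}$, while you instead observe $\Phi(x,r)\lesssim 1$ pointwise and use $\Phi^2\lesssim\Phi$. Both are correct and land at the same quantity after the $r$- and $\mu$-integrations ($\ell(P)^{n+2}/\ell(P)^2$ vs.\ $\ell(P)^{n+1}/\ell(P)$, either way $\ell(P)^n$). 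Your route is marginally more elementary, and the absorption $\kappa^4\leq\kappa^2=\redeps$ is fine (in fact the internal proof of Lemma \ref{lemkey40} actually produces $\kappa$, not $\redeps=\kappa^2$, on the right, which would give you exactly $\kappa^2$ after squaring; either way the conclusion holds).

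There is, however, a real gap in the very last step. After evaluating the inner $r$-integral as $\lesssim (|x-x_P|+\ell(P))^{-(n+1)}$ you have thrown away the upper limit $r\leq 2r_0$ and, with it, the constraint that forces the surviving cubes $P$ to lie near $B_0$. As written your final sum is $\sum_{P\in\WW_\Gamma}\ell(P)^n$ over the entire Whitney family of $\Gamma$, and that diverges: far from $B_0$ the function $D$ grows linearly (Lemma \ref{lem73}), the Whitney cubes there have $\ell(P)\approx\dist(0,P)$, and the $\ell(P)^n$ contributions at scale $2^k r_0$ do not sum. Invoking the bounded support of $A$ does not fix this, since Whitney cubes exist over the flat portion of $\Gamma$ too. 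What you need to say (and what the paper says) is that for $x\in E_0\subset B_0$ and $r\leq 2r_0$, the condition $P\subset B(x,Cr)$ forces $P\subset CB_0$ for some absolute $C$ -- equivalently, the inner $r$-integral vanishes unless $P\subset CB_0$ -- and only then is $\sum_{P\subset CB_0}\ell(P)^n\lesssim\HH^n(\Gamma\cap CB_0)\lesssim r_0^n$ by disjointness of the Whitney cubes. Inserting this one sentence before you bound the $r$-integral repairs the argument.
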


\begin{proof}
By Lemma \ref{lemkey40}, we have
\begin{align}\label{eqal8934}
\int_{E_0}\int_{\min(c\epsilon^{-1}\ell(x),r_0)}^{2r_0} \mfg_V&(x,r)^2 \, \dr  \, d \mu(x) \\
 & \lesssim \int_{E_0}\int_{\min(c\epsilon^{-1}\ell(x),r_0)}^{2r_0} \mfg_\Omega(x,2r)^2\, \dr  \, d \mu(x)
 \nonumber\\
&\quad  + \int_{E_0}\int_{\min(c\epsilon^{-1}\ell(x),r_0)}^{2r_0}\bigg(
  \frac{\kappa}{r^{n+1}}\sum_{P\in \WW_\Gamma:P\subset B(x,Cr)} \ell(P)^{n+1}\bigg)^2\, \dr  \, d \mu(x).\nonumber
\end{align}
So we just have to show that the last summand is bounded above by $\redeps r_0^n = \epsilon^{1/(n+2)}r_0^n$.

By Cauchy-Schwarz and the fact that $\ell(P)^{n}\approx \HH^n(P)$, we get
\begin{align*}
\bigg(
  \sum_{P\in \WW_\Gamma:P\subset B(x,Cr)} \frac{\ell(P)^{n+1}}{r^{n+1}}\bigg)^2
  & \lesssim 
\bigg(\sum_{P\in \WW_\Gamma:P\subset B(x,Cr)} \frac{\ell(P)^{n+2}}{r^{n+2}}\bigg)
\bigg(\sum_{P\in \WW_\Gamma:P\subset B(x,Cr)} \frac{\ell(P)^{n}}{r^{n}}\bigg)\\
 & \lesssim \sum_{P\in \WW_\Gamma:P\subset B(x,Cr)} \frac{\ell(P)^{n+2}}{r^{n+2}}.
\end{align*}
Therefore,
\begin{align}\label{eqllarg84}
\int_{E_0}\int_{\min(c\epsilon^{-1}\ell(x),r_0)}^{2r_0}\Bigg( 
  \frac{\kappa}{r^{n+1}}&\sum_{\substack{P\in \WW_\Gamma:\\P\subset B(x,Cr)}} \ell(P)^{n+1}\Bigg)^2\, \dr  \, d \mu(x)\\
&  \lesssim \redeps 
  \int_{E_0}\int_{\min(c\epsilon^{-1}\ell(x),r_0)}^{2r_0}\sum_{\substack{P\in \WW_\Gamma:\\P\subset B(x,Cr)}} \frac{\ell(P)^{n+2}}{r^{n+2}}\, \dr  \, d \mu(x).\nonumber
\end{align}
Since all the cubes $P$ in the last sum are contained in $CB_0$ for some $C>1$, for each $x\in {E_0}$
we have, by Fubini,
$$\int_{\min(c\epsilon^{-1}\ell(x),r_0)}^{2r_0}\sum_{\substack{P\in \WW_\Gamma:\\P\subset B(x,Cr)}} \frac{\ell(P)^{n+2}}{r^{n+2}}\, \dr \leq
\sum_{\substack{P\in \WW_\Gamma:\\P\subset CB_0}}
\int_{r>0: P\subset B(x,Cr)}\frac{\ell(P)^{n+2}}{r^{n+2}}\, \dr.
$$
Notice that the condition that $P\subset B(x,Cr)$ implies that $r\gtrsim \ell(P) + \dist(x,P)=:D(x,P)$.
Thus,
$$\sum_{\substack{P\in \WW_\Gamma:\\P\subset CB_0}}
\int_{r>0: P\subset B(x,Cr)}\frac{\ell(P)^{n+2}}{r^{n+2}}\, \dr \lesssim
\sum_{\substack{P\in \WW_\Gamma:\\P\subset CB_0}} \int_{r\gtrsim D(x,P)}\frac{\ell(P)^{n+2}}{r^{n+2}}\, \dr \lesssim
\sum_{\substack{P\in \WW_\Gamma:\\P\subset CB_0}} \frac{\ell(P)^{n+2}}{D(x,P)^{n+2}}.$$
Plugging this estimate in \rf{eqllarg84}, we obtain
$$\int_{E_0}\int_{\min(c\epsilon^{-1}\ell(x),r_0)}^{2r_0}\Bigg( 
  \frac{\kappa}{r^{n+1}}\sum_{\substack{P\in \WW_\Gamma:\\P\subset B(x,Cr)}} \ell(P)^{n+1}\Bigg)^2\, \dr  \, d \mu(x)\lesssim 
\redeps
  \int_{E_0} \sum_{\substack{P\in \WW_\Gamma:\\P\subset CB_0}} \frac{\ell(P)^{n+2}}{D(x,P)^{n+2}}\, d \mu(x).
 $$
Using Fubini and the polynomial growth of  degree $n$  of $\mu$, we easily obtain
$$
  \int_{E_0} \sum_{\substack{P\in \WW_\Gamma:\\P\subset CB_0}} \frac{\ell(P)^{n+2}}{D(x,P)^{n+2}}\, d \mu(x) = \sum_{\substack{P\in \WW_\Gamma:\\P\subset CB_0}}\int_{E_0} \frac{\ell(P)^{n+2}}{D(x,P)^{n+2}}\, d \mu(x) \lesssim \sum_{\substack{P\in \WW_\Gamma:\\P\subset CB_0}} \ell(P)^{n} \approx r_0^n.$$
Putting together the last estimates and \rf{eqal8934}, the lemma follows.
\end{proof}

\vv
\begin{proof}[\bf Proof of Lemma \ref{l:BA-small}]
By \rf{e:BA-100}, Lemma \ref{l:lemfac64}, and Lemma \ref{lemllumy}, we have
\begin{align*}
 \alpha^2   \mu(\BA) &\lesssim    \|\wt \A_{V, \psi} \|_{L^2(\Gamma)}^2 + C(\theta)\epsilon^2 \,r_0^n\\
    & \lesssim \int_{E_0}\int_{\min(c\epsilon^{-1}\ell(x),r_0)}^{r_0} \lca_{V, \psi}(x,r)^2 \, \dr  \, d \mu(x)
    + C(\theta)\epsilon^2\,r_0^n + C(\theta) \epsilon^2 |\log\epsilon| r_0^n.
 \end{align*}
On the other hand, by Lemma \ref{lem9.11}, Lemma \ref{lemfi99}, and the assumption (c) of Main Lemma 
\ref{l:main2},
\begin{align*} 
\int_{E_0}\int_{\min(c\epsilon^{-1}\ell(x),r_0)}^{r_0} \lca_{V, \psi}(x,r)^2 \, \dr  \, d \mu(x) &
\lesssim 
\int_{E_0}\int_{\min(c\epsilon^{-1}\ell(x),r_0)}^{2r_0} \mfg_V(x,r)^2 \, \dr  \, d \mu(x)\\
& \lesssim
\int_{E_0}\int_{\min(c\epsilon^{-1}\ell(x),r_0)}^{4r_0} \mfg_\Omega(x,r)^2 \, \dr  \, d \mu(x) + \epsilon^{1/(n+2)}\,r_0^n\\
& \lesssim \epsilon\,\mu({E_0}) + \epsilon^{1/(n+2)}\,r_0^n \lesssim \epsilon^{1/(n+2)}\,r_0^n.
\end{align*}
Putting the estimates above together, we obtain
$$ \mu(\BA) \leq C(\theta)\,\alpha^2 \,\epsilon^{1/(n+2)}\,r_0^n\leq \epsilon^{1/(n+3)}\,\mu({E_0}),$$
assuming $\epsilon$ small enough, depending on $\alpha$ and $\theta$.
\end{proof}
\vv


\subsection{Proof of the Main Lemma \ref{l:main2}}

By Lemmas \ref{l:LD-small} and \ref{l:BA-small}, if $\theta$ is chosen small enough and then $\epsilon$ (and thus $\kappa$) also
small enough (depending on $\alpha$ and $\theta$), then we have
$$\mu(\BA \cup \LD)\leq \frac1{100}\,\mu(E_0).$$
Consequently,
$$\mu(Z)\geq \frac{99}{100}\mu(E_0),
$$
and $Z\subset Z_0\subset \Gamma$.  This completes the proof of the Lemma \ref{l:main2}.


\part{True tangents}
In this (small) Part we assume that $\Omega^+$ and $\Omega^-$ are disjoint open sets. We will show that, if in addition to a Dini-type condition on the square function $\ve(x,r) = \ve_n(x,r)$ we assume a Dini condition on $\ve_{s}(x,r)^2$ on a set $E\subset F=\R^{n+1} \setminus (\Omega^+ \cup \Omega^-)$ of positive $\HH^n$ measure, where $s \in (0, n-1)$ is not necessarily integer, and moreover we assume that $F$ is $s$-content regular, then not only $E$ is $n$-rectifiable, but it also admits true tangents.
\vv


\section{Spherical slicing with respect to points in a Lipschitz graph}\label{sec-slicing}

In this section we prove some slicing results involving the capacities $\capp_{s}$ defined in Section~\ref{seccap}.


\begin{propo}\label{lem-slicing}
Let $B(0,r_0)\subset \R^{n+1}$ and let $\Gamma$ be a Lipschitz graph through the origin with slope at most $\tau$ (with respect to $\R^n\times \{0\}$). Let $B\subset B(0,r_0)$ be a ball with $\rad(B)\leq \frac1{10}\,r_0$ such that 
$\dist(B,\Gamma)\geq 100\,\tau \,r_0$.
 Let $F\subset B$  and  $G\subset \Gamma$ both be  compact sets. 
	Then, for any $s>1$,
	$$\capp_{s}(F)\,\frac{\HH^n(G)^2}{r_0^n} \leq C(\tau) \int_{G}\int_0^\infty \capp_{s-1}(F\cap S(z,r))\,dr\,d\HH^n(z).$$
\end{propo}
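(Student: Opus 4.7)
The plan is to combine Frostman duality for $\capp_s(F)$ with a double Cauchy--Schwarz argument in the radial and in the $z$ variables, reducing the claim to a single sliced energy estimate, which is in turn proved by a coarea computation on $\Gamma$ that uses the separation hypothesis $\dist(B,\Gamma)\ge 100\tau r_0$ in a crucial way.

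By \eqref{eqcaps} I would first fix $\mu\in M_+(F)$ with $\mu(F)\ge \tfrac12\capp_s(F)$ and $U_s\mu(x)\le 1$ for $x\in \supp\mu$, so that $I_s(\mu)=\int U_s\mu\,d\mu\le \mu(F)$. For each $z\in G$, the $1$-Lipschitz function $y\mapsto |y-z|$ provides (via the coarea disintegration) a measurable family of Radon slice measures $\mu^r_z$, each supported on $F\cap S(z,r)$, with $\mu=\int_0^\infty \mu^r_z\,dr$. Using the elementary inequality $\capp_{s-1}(F\cap S(z,r))\ge \mu^r_z(F\cap S(z,r))^2/I_{s-1}(\mu^r_z)$ together with Cauchy--Schwarz first in $r$ and then in $z\in G$ yields
$$
\mu(F)^2\,\HH^n(G)^2 \le \Bigl(\int_G\!\int_0^\infty \capp_{s-1}(F\cap S(z,r))\,dr\,d\HH^n(z)\Bigr)\Bigl(\int_G\!\int_0^\infty I_{s-1}(\mu^r_z)\,dr\,d\HH^n(z)\Bigr).
$$
The first factor on the right is (up to the comparison $\mu(F)\approx \capp_s(F)$) the very quantity appearing in the desired inequality, so the proposition follows at once from the energy bound
$$
\int_G\!\int_0^\infty I_{s-1}(\mu^r_z)\,dr\,d\HH^n(z)\le C(\tau)\,r_0^n\,\mu(F).
$$

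To prove this energy bound, by Fubini--Tonelli together with the disintegration and the coarea formula applied to the $1$-Lipschitz function $\phi_{xy}(z):=|x-z|-|y-z|$ on $\Gamma$, one obtains the exact identity
$$
\int_G\!\int_0^\infty I_{s-1}(\mu^r_z)\,dr\,d\HH^n(z)=\iint \frac{W(x,y)}{|x-y|^{s-1}}\,d\mu(x)\,d\mu(y),\qquad W(x,y):=\int_{G\cap P_{xy}}\frac{d\HH^{n-1}(z)}{|\nabla_\Gamma\phi_{xy}(z)|},
$$
where $P_{xy}$ is the perpendicular bisector hyperplane of $x$ and $y$. Combined with $I_s(\mu)\le \mu(F)$, the whole argument reduces to the pointwise estimate $W(x,y)\le C(\tau)\,r_0^n/|x-y|$.

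This bound on $W$ is the main obstacle and is precisely where the hypothesis $\dist(B,\Gamma)\ge 100\tau r_0$ enters. Decomposing $x-y=v_0+v_\perp$ into components along and transverse to $L_0:=\R^n\times\{0\}$, for $z\in\Gamma$ with $|x-z|\approx |y-z|\approx r_0$ the Euclidean gradient $\nabla\phi_{xy}(z)$ is close to $(y-x)/r_0$, which after projection onto $T_z\Gamma$ gives $|\nabla_\Gamma\phi_{xy}(z)|\gtrsim (|v_0|-\tau|v_\perp|)/r_0$. This lower bound degenerates only when $x-y$ is nearly vertical; but in that case $P_{xy}$ is nearly horizontal at height $\approx (x_{n+1}+y_{n+1})/2\gtrsim \tau r_0$ (since $B$ lies at distance $\ge 100\tau r_0$ from $\Gamma$), and such a horizontal slice simply cannot meet $\Gamma$ within a bounded neighborhood of the origin, so $G\cap P_{xy}=\varnothing$ and $W(x,y)=0$. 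In the complementary good regime $|v_0|\gtrsim |x-y|$ one has $|\nabla_\Gamma\phi_{xy}|\gtrsim c(\tau)|x-y|/r_0$ together with $\HH^{n-1}(G\cap P_{xy})\le C(\tau)\,r_0^{n-1}$ (from the Lipschitz bound on $\Gamma$ and the diameter bound on $G$ implicit in the applications, $G\subset \Gamma\cap CB(0,r_0)$), yielding exactly $W(x,y)\le C(\tau)\,r_0^n/|x-y|$. The technical heart of the proof is this clean dichotomy, which the quantitative separation $\dist(B,\Gamma)\ge 100\tau r_0$ is calibrated to produce; once it is in place, a single application of $I_s(\mu)\le \mu(F)$ closes the argument.
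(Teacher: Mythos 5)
Your argument is correct and follows essentially the paper's route: Frostman/equilibrium duality for $\capp_s(F)$, two Cauchy--Schwarz steps reducing the claim to the sliced energy bound $\int_G\int_0^\infty I_{s-1}(\mu_{z,r})\,dr\,d\HH^n(z)\lesssim_\tau r_0^n\,I_s(\mu)$ (this is the paper's Lemma~\ref{lem-slicing2}), and a transversality estimate for bisector hyperplanes against $\Gamma$ driven by the separation $\dist(B,\Gamma)\ge 100\tau r_0$ (this is the paper's Lemma~\ref{lemFaux}, whose Pythagorean argument is the integrated form of your gradient/dichotomy estimate). The one step to tighten is your ``exact identity'' $\int_G\int_0^\infty I_{s-1}(\mu^r_z)\,dr\,d\HH^n(z)=\iint W(x,y)\,|x-y|^{1-s}\,d\mu(x)\,d\mu(y)$: since $I_{s-1}(\mu^r_z)$ is bilinear in the slice measure, a single application of coarea on $\Gamma$ does not by itself produce the integral against $\mu\otimes\mu$; one must disintegrate $\mu\otimes\mu$ along $(x,y)\mapsto |x-z|-|y-z|$ for each fixed $z$, which is precisely what the paper's $\tfrac1{2\delta}\liminf_{\delta\to0}$ argument in the proof of Lemma~\ref{lem-slicing2} carries out, after which the required pointwise bound on $W$ is exactly the slab estimate $\HH^n(\{z\in G:\,||x-z|-|y-z||\le\delta\})\lesssim_\tau \delta r_0^n/|x-y|$ that you also prove.
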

\vvv

To prove this proposition we will use the following auxiliary result.

\begin{lemma}\label{lemFaux}
	Under the assumptions of Proposition \ref{lem-slicing}, for all $x,y\in F$ and all $\delta>0$, we have
	$$\HH^n\big(\{z\in G:||x-z|-|y-z||\leq\delta\}\big)\lesssim_\tau \frac{\delta\,r_0^n}{|x-y|}.$$
\end{lemma}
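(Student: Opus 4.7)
The plan is to project the problem to $\R^n$, reformulate it via an algebraic identity, and then apply the coarea formula. First, I would write $\Gamma=\{(w,A(w)):w\in\R^n\}$ where $A:\R^n\to\R$ is Lipschitz with $\|\nabla A\|_\infty\leq\tau$ and $A(0)=0$; setting $\gamma(w):=(w,A(w))$, the orthogonal projection $\pi\colon \Gamma\to\R^n$ given by $\pi(\gamma(w))=w$ is bi-Lipschitz with distortion bounded by $\sqrt{1+\tau^2}$. Hence it suffices to bound, up to constants depending on $\tau$, the Lebesgue measure of
\begin{equation*}
E:=\big\{w\in\pi(G):\bigl||x-\gamma(w)|-|y-\gamma(w)|\bigr|\leq\delta\big\}
\end{equation*}
by $\delta\,r_0^n/|x-y|$.

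Next, I would exploit the identity $a-b=(a^2-b^2)/(a+b)$: setting $v:=x-y$ and $m:=(x+y)/2$, and using $|x|^2-|y|^2=2v\cdot m$, one obtains
\begin{equation*}
|x-\gamma(w)|-|y-\gamma(w)|=\frac{-2\,v\cdot(\gamma(w)-m)}{R(w)},\qquad R(w):=|x-\gamma(w)|+|y-\gamma(w)|.
\end{equation*}
The hypothesis $\dist(B,\Gamma)\geq 100\tau r_0$ gives $R(w)\geq 200\tau r_0$, while for $w$ in the geometrically relevant region $|w|\lesssim r_0$ we have $R(w)\lesssim r_0$; so the membership $w\in E$ translates, up to constants depending on $\tau$, into $|H(w)|\leq C(\tau)\,\delta\, r_0$ for the Lipschitz function
\begin{equation*}
H(w):=v\cdot(\gamma(w)-m)=v_0\cdot(w-m_0)+v_{n+1}\bigl(A(w)-m_{n+1}\bigr),
\end{equation*}
writing $v=(v_0,v_{n+1})$ and $m=(m_0,m_{n+1})$ in $\R^n\times\R$.

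Then I would apply the coarea formula in $\R^n$. Since $\nabla H(w)=v_0+v_{n+1}\nabla A(w)$ obeys $|\nabla H|\leq(1+\tau)|v|$, and the level sets $\{H=t\}$ intersect any ball of radius $r_0$ in a codimension-one Lipschitz graph of $\HH^{n-1}$-measure $\lesssim r_0^{n-1}$, a pointwise lower bound $|\nabla H(w)|\gtrsim_\tau|v|$ would yield
\begin{equation*}
c_\tau|v|\cdot|E|\leq\int_{\{|H|\leq C\delta r_0\}}|\nabla H|\,dw=\int_{-C\delta r_0}^{C\delta r_0}\HH^{n-1}(\{H=t\}\cap B(0,Cr_0))\,dt\lesssim \delta\, r_0^n,
\end{equation*}
which is exactly the required estimate.

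The hard part will be justifying the lower bound on $|\nabla H|$. The case $|v_0|\gtrsim \tau|v_{n+1}|$ is straightforward, since then $|\nabla H|\geq|v_0|-\tau|v_{n+1}|\gtrsim|v|$; the delicate regime is $|v_0|\ll|v|\approx|v_{n+1}|$, i.e.\ $v$ nearly orthogonal to $\Gamma$, where the two summands in $\nabla H$ can conceivably cancel. To handle it, I would split $\R^n$ into the set where $\nabla A(w)$ is close to the gradient-cancelling direction $-v_0/v_{n+1}$ and its complement: the former has controlled Lebesgue measure thanks to the Lipschitz structure of $A$ together with the separation hypothesis $\dist(B,\Gamma)\geq 100\tau r_0$ (which prevents the perpendicular bisector of $xy$ from running parallel to $\Gamma$ through $B$), while on the latter the pointwise lower bound holds. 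Piecing these together, with a constant that may degenerate mildly as $\tau\to 0$ but remains finite, gives the claimed inequality.
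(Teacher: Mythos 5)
Your proposal takes essentially the same approach as the paper, only phrased in analytic rather than geometric language. The identity $a-b=(a^2-b^2)/(a+b)$ is exactly what the paper uses (there, $|y-z|-|x-z|=\frac{|y-z'|^2-|x-z'|^2}{|y-z|+|x-z|}$); the set $\{w:|H(w)|\le C(\tau)\delta r_0\}$ is precisely the projection to $\R^n$ of $\NN_{\delta_0}(L)\cap\Gamma$, where $L$ is the perpendicular bisector of $[x,y]$ and $\delta_0\approx_\tau \delta r_0/|x-y|$; and your coarea estimate is the analytic formulation of the paper's ``transversality'' between $L$ and $\Gamma$. You also correctly identify where the difficulty lies: when $v=x-y$ is nearly vertical (say $|v_0|\lesssim\tau|v_{n+1}|$), the pointwise lower bound $|\nabla H|\gtrsim_\tau|v|$ fails, and coarea alone is useless.

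The gap is in the fix for that degenerate regime. The statement ``the set where $\nabla A(w)$ is close to $-v_0/v_{n+1}$ has controlled Lebesgue measure'' is not correct as a measure bound: if $A$ is affine with that very slope, this set is all of $\R^n$. What is actually true, and what you need, is a \emph{lower bound on $|H|$} on that set, so that it is disjoint from $E$ once $\delta\ll\tau|x-y|$ (and the lemma is then trivial, since $\HH^n(G)\lesssim r_0^n$). Concretely, if $|\nabla H(w)|\le c\tau|v|$ for small $c$, write $v_0=-v_{n+1}\nabla A(w)+e$ with $|e|\le c\tau|v|$; then
\begin{equation*}
H(w)=v_{n+1}\bigl[A(w)-\nabla A(w)\cdot(w-m_0)-m_{n+1}\bigr]+e\cdot(w-m_0),
\end{equation*}
and the bracket is the signed vertical distance from $m$ to the tangent plane of $\Gamma$ at $\gamma(w)$. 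Since $\Gamma$ passes through the origin with slope $\le\tau$ and $w,m_0\in B(0,Cr_0)$, that tangent plane stays within $O(\tau r_0)$ of height $0$ over $B(0,Cr_0)$, while $|m_{n+1}|\ge(100-C)\tau r_0$ by the separation hypothesis; this forces $|H(w)|\gtrsim\tau|v_{n+1}|r_0\gtrsim\tau|v|r_0$. This is the content of the paper's phrase about the perpendicular bisector not running parallel to $\Gamma$ through $B$, and of your parenthetical remark, but it must be turned into an estimate on $H$, not on Lebesgue measure. (One further point both your proof and the paper's use silently: $G$ must lie in $B(0,Cr_0)$ for an absolute $C$ — otherwise the level sets $\{H=t\}$ are unbounded and the claimed $\HH^{n-1}$ bound fails; this holds in the application of the lemma but is not stated in the hypotheses.)
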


\begin{proof}
	Clearly, to prove the lemma we may assume that $\delta\leq \frac12\,|x-y|$.
	
	Let $L$ be the hyperplane orthogonal to the segment $[x,y]$ passing through the midpoint of that segment, so that $y$ is the point symmetric to $x$ with respect to $L$.
	Let $z\in G$ be such that $||x-z|-|y-z||\leq\delta$. We claim that this condition implies that
	\begin{equation}\label{eqcond99}
		\dist(z,L) \lesssim_\tau \frac{\delta\,r_0}{|x-y|}.
	\end{equation}
	To prove this, we assume that $z$ is at the same side of $L$ as $x$ and we
	let $L_z$ be the plane parallel to $L$ passing through $z$. We distinguish two cases. Suppose first that 
	\begin{equation}\label{eqcond100}
		\dist(z,L)\geq \dist(x,L)=\frac12\,|x-y|.
	\end{equation}
	Let $z'$ be the orthogonal projection of $z$ on the line $L_{x,y}$ passing  through $x,y$, so that the points $z',x,y$ are aligned.
	The condition \rf{eqcond100} ensures that $\dist(z',L)= \dist(z,L) \geq \dist(x,L)$. So $x$ is contained in the segment $[z',y]$ (because $z$ is at the same side of $L$ as $x$).
	Then, denoting $h = \dist(z,L_{x,y}) = |z-z'|$, by Pithagoras,
	\begin{align*}
		\delta\geq |y-z| -|x-z| & = \sqrt{h^2 + |y-z'|^2} - \sqrt{h^2 + |x-z'|^2} \\ 
		&= \frac{|y-z'|^2 - |x-z'|^2}{|y-z| +|x-z|}\approx_\tau
		\frac{|y-x|\,(|y-z'| + |x-z'|)}{r_0} \gtrsim_\tau \frac{|y-x|\,\dist(z,L)}{r_0},
	\end{align*}
	which implies \rf{eqcond99}.
	
	Suppose now that \rf{eqcond100} does not hold. That is, $\dist(z,L)< \dist(x,L)$. In this case, 
	the point $z'$, the orthogonal projection of $z$ on $L_{x,y}$, is contained in the segment $[x,y]$. Let $x'$ be the point symmetric to $x$ with respect to $L_z$ 
	(here a drawing would help). From the fact that $z$ is at the same side of $L$ as $x$, it follows that
	$x'\in [x,y]$.
	Obviously, $|x-z|=|x'-z|$, and also  
	$|x'-y| = 2\,\dist(z,L)$. Then, by Pithagoras,
	\begin{align*}
		\delta\geq |y-z| -|x-z| & = |y-z| -|x'-z|
		=\sqrt{h^2 + |y-z'|^2} - \sqrt{h^2 + |x'-z'|^2} \\ 
		&= \frac{|y-z'|^2 - |x'-z'|^2}{|y-z| +|x'-z|}\approx_\tau
		\frac{|y-x'|\,(|y-z'| + |x'-z'|)}{r_0} \geq_\tau \frac{2\,\dist(z,L)\,|x-y|}{r_0},
	\end{align*}
	where in the last inequality we took into account that $|y-z'| + |x'-z'|= |y-z'| + |x-z'| = |x-y|$.
	So \rf{eqcond99} also holds in this case and our claim is proved.
	\vv
	
	Because of the claim above, the set
	$$G_{x,y}:= \{z\in G:||x-z|-|y-z||\leq\delta\}$$
	is contained in $\NN_{\delta_0}(L)$, the $\delta_0$ neighborhood of the above hyperplane $L$, with $\delta_0= C(\tau)\,\frac{\delta\,r_0}{|x-y|}$.
	Observe that the mid point of $[x,y]$, which we denote by $p$, belongs to $B$, since both $x,y\in B$.
	Recalling that $\dist(B,\Gamma)\geq 100\tau r_0$ and that the slope of $\Gamma$ is at most $\tau$
it follows that there is some ``transversality" between $\Gamma$ and $L$ depending on $\tau$. More precisely,
the angle between $L$ and any line intersecting $\Gamma\cap B(0,r_0)$ in at least two points is bounded away from $0$
with some bound depending on $\tau$.
	It is easy to check that this implies that 
	$$\HH^n(\NN_{\delta_0}(L) \cap \Gamma) \leq C(\tau) \delta_0\,r_0^{n-1}\approx_\tau \frac{\delta\,r_0^n}{|x-y|}.$$
	Since $G_{x,y}$ is contained in $\NN_{\delta_0}(L)$, the lemma follows.
\end{proof}
\vv

For $z\in G$ and $r>0$, we consider the measures $\mu_{z,r}$ defined by
$$\int \vphi\,d\mu_{z,r} = \lim_{\delta\to0} \int_{A(z,r-\delta,r+\delta)} \vphi\,d\mu$$
for a.e. $r>0$. 
See also Chapter 10 from \cite{Mattila-gmt} for more details in the case of slicing by planes.
\vv

\begin{lemma}\label{lemabscont}
	Under the assumption of Lemma \ref{lem-slicing}, for any Borel set $A\subset [0,\infty)$, all $\vphi\in C(\R^n)$, and for $\HH^n$-a.e.\ $z\in F$, the sliced mesaures
	$\mu_{z,r}$ satisfy
	$$\int_A \int \vphi\,d\mu_{z,r}\,dr \leq \int_{x:|x-z|\in A}\vphi(x)\,d\mu(x),$$ 
	with equality if $I_1(\mu)<\infty$.
\end{lemma}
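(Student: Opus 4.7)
By linearity reduce to $\varphi \geq 0$. Fix $z \in G$ (I read the ``$z \in F$'' in the statement as a typo for $z \in G$, since only points of $G$ appear in the preceding definition of $\mu_{z,r}$). Let $f_z(x) := |x-z|$ and consider the pushforward $\nu_z := (f_z)_\ast (\varphi\mu)$, a finite Radon measure on $[0,\infty)$ with $\nu_z(A) = \int_{|x-z|\in A} \varphi\,d\mu$. Its distribution function $g_z(r) := \nu_z([0,r]) = \int_{B(z,r)} \varphi\,d\mu$ is non-decreasing and bounded.

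By the classical Lebesgue theorem on differentiation of monotone functions, $g_z$ is differentiable at a.e.\ $r$ with $g_z' \in L^1_{\mathrm{loc}}$, and its Lebesgue decomposition takes the form $\nu_z = g_z'(r)\,dr + \nu_z^s$, with $\nu_z^s$ singular with respect to one-dimensional Lebesgue measure. Therefore, for every Borel $A \subset [0,\infty)$,
\begin{equation*}
\int_A g_z'(r)\,dr \;\leq\; \nu_z(A) \;=\; \int_{|x-z|\in A} \varphi\,d\mu.
\end{equation*}
Moreover, the identification $g_z'(r) = \int \varphi\,d\mu_{z,r}$ for a.e.\ $r$ follows directly from the paper's definition of $\mu_{z,r}$, interpreted as a symmetric Lebesgue derivative with the implicit normalizing factor $(2\delta)^{-1}$. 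This yields the inequality.

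For the equality case under $I_1(\mu) < \infty$ it suffices to show that $\nu_z^s \equiv 0$ for $\HH^n$-a.e.\ $z \in G$. The plan is to combine Lemma \ref{lemFaux} with the finite $1$-energy. Writing $h_{x,y}(z) := |x-z| - |y-z|$, Lemma \ref{lemFaux} gives $\HH^n(\{z \in G : |h_{x,y}(z)| \leq \delta\}) \lesssim_\tau \delta\,r_0^n/|x-y|$. A Fubini argument then relates the upper Lebesgue derivative $\overline{D}\nu_z(r) := \limsup_{\delta \to 0}(2\delta)^{-1}\nu_z((r-\delta, r+\delta))$ integrated over $z \in G$ to an expression controlled by $r_0^n\,I_1(\mu)\,\|\varphi\|_\infty^2$ (after suitable dyadic localization in $\delta$ to absorb the logarithmic blow-up of $\int_G |h_{x,y}|^{-1}\,d\HH^n$). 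Hence $\overline{D}\nu_z$ is $\nu_z$-a.e.\ finite for $\HH^n$-a.e.\ $z$, which by standard Besicovitch/Radon--Nikodym theory forces $\nu_z \ll \mathrm{Leb}$ for such $z$ and produces the equality.

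The main obstacle will be making this last step rigorous, since $\int_G |h_{x,y}|^{-1}\,d\HH^n$ is only logarithmically finite. The plan is to decompose dyadically both in $|x-y|$ and in $\delta$ (matching scales via Lemma \ref{lemFaux}) and sum, converting the finiteness of $I_1(\mu)$ into the desired uniform bound on the singular-part contribution of $\nu_z$ in the integrated sense.
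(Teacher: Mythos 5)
Your inequality part is fine and matches the paper: you pass to the pushforward $\nu_z = (P_z)_\ast(\vphi\mu)$ with $P_z(x)=|x-z|$, take the Lebesgue decomposition, and observe that the absolutely continuous part integrates to at most $\nu_z(A)$. (You are also right that the ``$z\in F$'' in the statement should read $z\in G$, and that the paper's definition of $\mu_{z,r}$ implicitly contains the $\frac{1}{2\delta}$ normalization.)

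For the equality under $I_1(\mu)<\infty$ you have the right strategy in outline — use Lemma \ref{lemFaux} and finite $1$-energy to force $\nu_z\ll\mathrm{Leb}$ for $\HH^n$-a.e.\ $z$ — but the execution as sketched has a genuine gap. You propose to bound $\int_G \int \overline{D}\nu_z(r)\,d\nu_z(r)\,d\HH^n(z)$, where $\overline{D}$ is the \emph{upper} symmetric derivative. Since $\overline{D}$ is a $\limsup$, there is no Fatou-type inequality letting you pull it out of the $z$-integral; the inequality goes the wrong way. The paper avoids this by working with the \emph{lower} density $\Theta^1_*(t,P_{z\#}\mu)=\liminf_{\delta\to0}\frac{P_{z\#}\mu(B(t,\delta))}{2\delta}$: Fatou then yields
$$\int_G\int_0^\infty \Theta^1_*\,dP_{z\#}\mu\,d\HH^n(z)\leq\liminf_{\delta\to0}\frac{1}{2\delta}\int_G\int P_{z\#}\mu(B(t,\delta))\,dP_{z\#}\mu(t)\,d\HH^n(z),$$
and finiteness of the lower density $P_{z\#}\mu$-a.e.\ already implies absolute continuity by the Besicovitch comparison theorem (Mattila, Theorem 2.12), since the singular part lives exactly where the derivative is $+\infty$. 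So you should replace $\overline{D}$ by $\underline{D}$; then no further work is needed.

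Relatedly, your anticipated ``logarithmic blow-up of $\int_G|h_{x,y}|^{-1}\,d\HH^n$'' is a red herring, and the proposed dyadic decomposition is unnecessary. That integral never appears in the correct argument: after Fubini, the innermost quantity is $\HH^n\big(\{z\in G:\big||x-z|-|y-z|\big|\leq\delta\}\big)$, which Lemma \ref{lemFaux} bounds by $C(\tau)\,\delta\,r_0^n/|x-y|$ \emph{uniformly in $\delta$}. The $\delta$ cancels the $\frac{1}{2\delta}$ exactly, leaving $\lesssim_\tau r_0^n\iint|x-y|^{-1}\,d\mu\,d\mu=r_0^n\,I_1(\mu)$ with nothing left to decompose. (In fact, if you actually tried to bound $\int_G|h_{x,y}(z)|^{-1}\,d\HH^n(z)$ via the layer-cake formula you would find it diverges outright, not just logarithmically — another sign that this is the wrong quantity to estimate.)
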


\begin{proof}
	Consider the spherical projection $P_z:\R^n\to[0,\infty)$ given by $P_z(x) = |x-z|$.
	The lemma follows arguing as in (10.6) from \cite{Mattila-gmt}. To prove the equality when  $I_1(\mu)<\infty$, it is enough to show that the measure
	$P_{z\#} \mu$ is absolutely continuous with respect to Lebesgue measure in $[0,+\infty)$ for 
	$\HH^n$-a.e.\ $z\in F$. 
	
	Denote
	$$\Theta_*^1(t,P_{z\#} \mu) = \liminf_{\delta\to0} \frac{P_{z\#} \mu(B(t,\delta))}{2\delta}.$$
	By Theorem 2.12 from \cite{Mattila-gmt}, to show that $P_{z\#} \mu$ is absolutely continuous with respect to Lebesgue measure in $[0,\infty)$, it is enough to show that $\Theta_*^1(t,P_{z\#} \mu)< \infty$
	for $P_{z\#} \mu$-a.e.\ $t\in [0,+\infty)$. To this end, using Fatou's lemma, we write
	$$\int_G\int_0^\infty \Theta_*^1(t,P_{z\#} \mu)\,dP_{z\#} \mu(t)\,d\HH^n(z)
	\leq \liminf_{\delta\to0} \frac1{2\delta} \int_G \int_0^\infty P_{z\#} \mu(B(t,\delta))\,dP_{z\#} \mu(t)\,d\HH^n(z).$$
	Notice that
	$$P_{z\#} \mu(B(t,\delta)) = \mu(A(z,t-\delta,t+\delta)) = \mu\big(\big\{x\in\R^{n+1}:\big||x-z|-t\big|\leq\delta\big\}\big).$$
	Thus,
	$$ \int P_{z\#} \mu(B(t,\delta))\,dP_{z\#} \mu(t) = 
	\int \mu\big(\big\{x\in\R^{n+1}:\big||x-z|-|y-z|\big|\leq\delta\big\}\big)\, d\mu(y).$$
	Then, by Fubini and Lemma \ref{lemFaux},
	\begin{align*}
		\int_G\int_0^\infty  \Theta_*^1(t,P_{z\#} \mu)&\,dP_{z\#} \mu(t)\,d\HH^n(z)\\
		& \leq \liminf_{\delta\to0} \frac1{2\delta} \iint \HH^n\big(\big\{z\in F:\big||x-z|-|y-z|\big|\leq\delta\big\}\big)\, d\mu(y)\,d\mu(x)\\
		&\lesssim_\tau\iint \frac{r_0^n}{|x-y|}\, d\mu(y)\,d\mu(x) \approx_\tau r_0^n\,I_1(\mu)<\infty,
	\end{align*}
	which implies that $\Theta_*^1(t,P_{z\#} \mu)< \infty$
	for $P_{z\#} \mu$-a.e.\ $t\in [0,+\infty)$, as wished.	
\end{proof}
\vv

\begin{lemma}\label{lem-slicing2}
	Let $F,B(0,r_0),\Gamma,G$ be as in Lemma \ref{lem-slicing}. Let $\mu$ be a measure supported on $F$.
	Then we have
	$$\int_G\int_0^\infty I_{s-1}(\mu_{z,r})\,dr\,d\HH^n(z) \lesssim_\tau r_0^n\,I_{s}(\mu),$$
	where $\mu_{z,r}$ is the sliced measure $\mu$ on the sphere $S(z,r)$. 
\end{lemma}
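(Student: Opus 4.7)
The plan is to exploit the disintegration formula of Lemma \ref{lemabscont} in both variables of the double integral defining $I_{s-1}(\mu_{z,r})$, combined with the geometric bound of Lemma \ref{lemFaux}. Since the conclusion is trivial if $I_s(\mu)=\infty$, we may assume $I_s(\mu)<\infty$; as $\mu$ is supported on the compact set $F\subset B$, this forces $I_1(\mu)<\infty$, so Lemma \ref{lemabscont} will apply as an \emph{equality}. Expand
$$I := \int_G\!\int_0^\infty\!\iint |x-y|^{-(s-1)}\,d\mu_{z,r}(x)\,d\mu_{z,r}(y)\,dr\,d\HH^n(z).$$
The essential difficulty is that the two sliced measures share the radius $r$; a direct double application of disintegration would convert $dr\,d\mu_{z,r}(x)\otimes dr'\,d\mu_{z,r'}(y)$ into $d\mu(x)\,d\mu(y)$ only when $r,r'$ are free and independent, which they are not.

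To decouple them, I would introduce a nonnegative mollifier $\rho_\delta$ on $\R$, supported in $[-\delta,\delta]$, with $\int\rho_\delta=1$ and $\|\rho_\delta\|_\infty\lesssim 1/\delta$, and define
$$J_\delta := \int_G d\HH^n(z)\iint \rho_\delta(r-r')\iint |x-y|^{-(s-1)}\,d\mu_{z,r}(x)\,d\mu_{z,r'}(y)\,dr\,dr'.$$
Applying Lemma \ref{lemabscont} once in $(x,r)$ and once in $(y,r')$, and using that $\mu_{z,r}$ is supported on $S(z,r)$ (so that $r=|x-z|$ on its support, and likewise $r'=|y-z|$), the integrals over $r$ and $r'$ collapse to yield
$$J_\delta = \iint |x-y|^{-(s-1)}\,\Phi_\delta(x,y)\,d\mu(x)\,d\mu(y),\qquad \Phi_\delta(x,y):=\int_G \rho_\delta\bigl(|x-z|-|y-z|\bigr)\,d\HH^n(z).$$
Since $\rho_\delta$ is supported in $[-\delta,\delta]$ and $\|\rho_\delta\|_\infty\lesssim 1/\delta$, Lemma \ref{lemFaux} furnishes the $\delta$-independent bound $\Phi_\delta(x,y)\lesssim_\tau r_0^n/|x-y|$. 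Hence $J_\delta \lesssim_\tau r_0^n\iint |x-y|^{-s}\,d\mu(x)\,d\mu(y)=r_0^n\,I_s(\mu)$.

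It remains to pass to the limit $\delta\to 0$ and compare with $I$. Rewrite $J_\delta=\int_G\int_0^\infty\!\int h_\delta(x,z,r)\,d\mu_{z,r}(x)\,dr\,d\HH^n(z)$, where $h_\delta(x,z,r)$ is the $\rho_\delta$-mollification in $r$ of $h(x,z,r):=\int |x-y|^{-(s-1)}\,d\mu_{z,r}(y)$. By Lemma \ref{lemabscont}, $\int_0^\infty h(x,z,r)\,dr=U_{s-1}\mu(x)$, which is finite for $\mu$-a.e.\ $x$ (since $I_{s-1}(\mu)\lesssim r_0\,I_s(\mu)<\infty$ by boundedness of $F$); therefore $h(x,z,\cdot)\in L^1_{\mathrm{loc}}(dr)$ for $\mu$-a.e.\ $x$, and Lebesgue differentiation gives $h_\delta\to h$ a.e.\ in $r$. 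Since $h_\delta\geq 0$, Fatou's lemma yields $I\leq \liminf_{\delta\to 0}J_\delta\lesssim_\tau r_0^n\,I_s(\mu)$, which is the desired bound. The only step of genuine substance is the double disintegration that produces the identity for $J_\delta$; everything else is routine bookkeeping once Lemmas \ref{lemabscont} and \ref{lemFaux} are in hand.
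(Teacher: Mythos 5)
Your proposal takes a genuinely different route from the paper's. The paper keeps one sliced-measure integral intact, replaces $d\mu_{z,r}(x)$ by its defining differentiation quotient, applies Fatou, then Fubini and a \emph{single} application of Lemma~\ref{lemabscont} (as an inequality, which avoids needing $I_1(\mu)<\infty$), and finally Lemma~\ref{lemFaux}. You instead decouple the shared radius with a mollifier $\rho_\delta(r-r')$, apply Lemma~\ref{lemabscont} \emph{twice} as an equality (whence the reduction to $I_1(\mu)<\infty$), and arrive at the clean identity $J_\delta=\iint|x-y|^{-(s-1)}\Phi_\delta(x,y)\,d\mu\,d\mu$, where Lemma~\ref{lemFaux} enters transparently. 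That much is correct.

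The weak link is the final passage $I\le\liminf_\delta J_\delta$. You invoke Lebesgue differentiation of the $L^1(dr)$ function $h(x,z,\cdot)$ to get $h_\delta\to h$ for a.e.\ $r$ (given $x,z$), and then apply Fatou. But the Fatou integral is against $d\mu_{z,r}(x)\,dr\,d\HH^n(z)$, which (for a.e.\ $z$) is the pushforward of $\mu$ under $x\mapsto(x,|x-z|)$; it is supported on the graph $\{r=|x-z|\}$, a Lebesgue-null set in $r$. Lebesgue differentiation gives an exceptional $r$-set that depends on $(x,z)$ and says nothing about the specific value $r=|x-z|$, so the a.e.\ convergence needed for Fatou does not follow from what you wrote. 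The statement is nonetheless true, and the right way to see it is closer to the paper's own Fatou step: by Lemma~\ref{lemabscont}, $h_\delta(x,z,r)=\int\rho_\delta\bigl(r-|y-z|\bigr)|x-y|^{-(s-1)}\,d\mu(y)$, which for $\rho_\delta=\tfrac1{2\delta}\one_{[-\delta,\delta]}$ is \emph{exactly} the differentiation quotient in the definition of $\mu_{z,r}$. Truncating $|x-\cdot|^{-(s-1)}$ by $k$, applying that definition, and letting $k\uparrow\infty$ gives $\liminf_\delta h_\delta(x,z,r)\ge h(x,z,r)$ for \emph{every} $x$ simultaneously, valid for $\mathcal L^1\otimes\HH^n|_G$-a.e.\ $(z,r)$ — an exceptional set independent of $x$. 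Since the $(z,r)$-marginal of $d\mu_{z,r}(x)\,dr\,d\HH^n(z)$ is absolutely continuous with respect to $\mathcal L^1\otimes\HH^n|_G$, Fatou now applies legitimately, and your proof goes through as a valid alternative to the paper's.
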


\vv
\begin{proof}
	By Fubini and Fatou's lemma, we have
	\begin{align*}
		I: & = \int_G\int_0^\infty I_{s-1}(\mu_{z,r})\,dr\,d\HH^n(z)\\
		& = \int_G\int_0^\infty \int_{S(z,r)}\int_{S(z,r)} \frac1{|x-y|^{s-1}}\,d\mu_{z,r}(x)\,d\mu_{z,r}(y)\,dr\,d\HH^n(z)\\
		&= \int_G\int_0^\infty \int_{S(z,r)}\lim_{\delta\to0}\frac1{2\delta}\int_{A(z,r-\delta,r+\delta)} \frac1{|x-y|^{s-1}}\,d\mu(x)\,d\mu_{z,r}(y)\,dr\,d\HH^n(z)\\
		& \leq \liminf_{\delta\to0}\frac1{2\delta}\int_G\int_0^\infty \int_{S(z,r)}\int_{A(z,r-\delta,r+\delta)} \frac1{|x-y|^{s-1}}\,d\mu(x)\,d\mu_{z,r}(y)\,dr\,d\HH^n(z).
	\end{align*}
	Notice that the condition $x\in A(z,r-\delta,r+\delta)$ is equivalent to saying $||z-x|-r|\leq\delta$, and for  $y\in S(z,r)$ this implies $||z-x|-|y-z||\leq\delta$. So by Fubini, for each $z\in G$,
	\begin{align*}
		\int_{S(z,r)}\int_{A(z,r-\delta,r+\delta)} \frac1{|x-y|^{s-1}}\,d\mu(x)\,d\mu_{z,r}(y)\,dr  & =
		\int_F \int_{||z-x|-r|\leq\delta} \int_{y\in S(z,r)} \frac1{|x-y|^{s-1}}\,d\mu_{z,r}(y)\,dr\,d\mu(x)\\ 
		&\leq \int_F \int_{||x-z|-|y-z||\leq\delta} \frac1{|x-y|^{s-1}}\,d\mu(y)\,d\mu(x).
	\end{align*}
	Thus, by Fubini,
	\begin{align*}
		I & \leq \liminf_{\delta\to0}\frac1{2\delta}\int_G \int_F \int_{||x-z|-|y-z||\leq\delta} \frac1{|x-y|^{s-1}}\,d\mu(y)\,d\mu(x)
		\,d\HH^n(z)\\
		& = \liminf_{\delta\to0}\frac1{2\delta} \int_F \int_F \frac1{|x-y|^{s-1}}\,\HH^n\big(\{z\in G:||x-z|-|y-z||\leq\delta\}\big)\, d\mu(y)\,d\mu(x).
	\end{align*}
	By Lemma \ref{lemFaux} we know that
	$$\HH^n\big(\{z\in G:||x-z|-|y-z||\leq\delta\}\big)\lesssim_\tau \frac{\delta\,r_0^n}{|x-y|}.$$
	Then we deduce that
	$$I\lesssim_\tau r_0^n\,\int_F \int_F \frac1{|x-y|^{s}}\, d\mu(y)\,d\mu(x) \approx_\tau r_0^n\,I_{s}(\mu),$$
	which proves the lemma.
\end{proof}

\vv

\begin{proof}[Proof of Proposition \ref{lem-slicing}]
	Let $\mu$ be a probability measure supported on $F$ such that $\capp_{s}(F)=I_{s}(\mu)^{-1}$. 
	Obviously, we may assume that $\capp_{s}(F)>0$ and $I_{s}(\mu)<\infty$, which implies that $I_1(\mu)<\infty$.
	For each $z\in G$, $r>0$, 
	consider the measure $\nu_{z,r} = \|\mu_{z,r}\|^{-1}\,\mu_{z,r}$. For $\HH^n$-a.e.\ $z\in G$, by
	Lemma \ref{lemabscont} and Cauchy-Schwarz,
	\begin{align*}
		1 & = \left(\int \mu_{z,r}(F)\,dr\right)^2 = \left(\int \mu_{z,r}(F)\,I_{s-1}(\nu_{z,r})^{1/2}\, I_{s-1}(\nu_{z,r})^{-1/2}\,dr\right)^2\\
		& \leq \left(\int \mu_{z,r}(F)^2\,I_{s-1}(\nu_{z,r})\, dr\right)\left(\int I_{s-1}(\nu_{z,r})^{-1}\,dr\right)\\
		& \leq \left(\int I_{s-1}(\mu_{z,r})\, dr\right)\left(\int \capp_{s-1}(F\cap S(z,r))\,dr\right).
	\end{align*}
	Thus, $\left(\int \capp_{s-1}(F\cap S(z,r))\,dr\right)^{-1}\leq \int I_{s-1}(\mu_{z,r})\, dr$, and by Lemma \ref{lem-slicing2},
	\begin{align*}
		\int_G \left(\int \capp_{s-1}(F\cap S(z,r))\,dr\right)^{-1}\,d\HH^n(z)  & \leq \int_G
		\int I_{s-1}(\mu_{z,r})\, dr\,d\HH^n(z)\\
		& \lesssim_\tau r_0^n\,I_{s}(\mu) \approx_\tau r_0^n\,\capp_{s}(F)^{-1}.
	\end{align*}
	Then, again by Cauchy-Schwarz,
	\begin{align*}
		\HH^n(G) & \leq \int_G \left(\int \capp_{s-1}(F\cap S(z,r))\,dr\right)^{-1/2}\left(\int \capp_{s-1}(F\cap S(z,r))\,dr\right)^{1/2}\,d\HH^n(z)\\
		&  \leq \bigg( \int_G \left(\int \capp_{s-1}(F\cap S(z,r))\,dr\right)^{-1}\!d\HH^n(z)\bigg)^{1/2}
		\left(\int_G \int\! \capp_{s-1}(F\cap S(z,r))\,dr\,d\HH^n(z)\right)^{1/2}\\
		& \lesssim_\tau  \big(r_0^n\,\capp_{s}(F)\big)^{-1/2}\left(\int_G \int \capp_{s-1}(F\cap S(z,r))\,dr\,d\HH^n(z)\right)^{1/2},
	\end{align*}
	which proves the proposition.
\end{proof}
\vv


\section{Weak flatness for $F$}\label{sec-mainlemma3}

In this section we prove Main Lemma \ref{mainlemma3}, which we restate for the reader's sake. 
\begin{lemma}\label{lem-small-betaF}
	Let $\Omega^+$, $\Omega^- \subset \R^{n+1}$ be open and disjoint, and $F=\R^{n+1} \setminus \Omega^+ \cup \Omega^-$. Fix $c_0,c_0'\in (0,1)$. For any $\tau>0$, there exists $\delta>0$ small enough such that the following holds. Let 
	$\Gamma\subset\R^{n+1}$ be a Lipschitz graph with slope at most $\delta$, 
	 let  $B_0=B(x_0, r_0)$ be a ball centered in $\Gamma$, and let $E\subset \Gamma\cap B_0$ be such that $\HH^n(E \cap \tfrac14 B_0) \geq c_0 r_0^n$ and $E \cap (B_0 \setminus \tfrac{4}{5}B_0) \neq \varnothing$.
	If
	$\Omega^+\cup\Omega^-$ satisfies the $s$-CDC for some $s \in (n-1, n)$  and
		\begin{equation}\label{e:lem17.1ve}
			\int_0^{1000 \rad (B_0)} \big(\ve_n(x,r)^2 + \mathfrak{a}^\pm(x,r)^2 + \ve_{s-1}(x,r)^2\big)\,\frac{dr}r < \delta \,\, \mbox{ for all } \, x \in E,
		\end{equation}
		where $a, \mathfrak{c}_0$ in the definition of $\ve_s$ are chosen appropriately depending on $\tau$,
		then
	\begin{equation}\label{small-beta-F'}
		\beta_{\infty, F}(\tfrac14B_0) < \tau.
	\end{equation}
\end{lemma}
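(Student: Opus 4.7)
I argue by contradiction: assume $\beta_{\infty,F}(\tfrac14 B_0)\ge\tau$. Then no hyperplane $L$ satisfies $F\cap \tfrac14 B_0 \subset \NN_{\tau r_0/4}(L)$; applying this to the tangent hyperplane $L_0$ of $\Gamma$ (which differs from $\Gamma$ by at most $\delta r_0$ in Hausdorff distance within $B_0$) produces a point $p\in F\cap \tfrac14 B_0$ with $\dist(p,\Gamma)\ge c\tau r_0$ for some absolute $c>0$. Set $B^*:=B(p,c_1\tau r_0)$ with $c_1>0$ chosen so small that $\dist(B^*,\Gamma)\ge 100\delta r_0$, as required by Proposition~\ref{propo-slicing}. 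The $s$-CDC applied at $p\in F$ gives $\capp_s(\overline{B^*}\cap F)\gtrsim (c_1\tau r_0)^s$.

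Applying Proposition~\ref{propo-slicing} with $K=\overline{B^*}\cap F$ and $G=E\cap \tfrac14 B_0$ then yields
\begin{equation}\label{sliceLB}
c(\tau,c_0)\,r_0^{n+s}\;\le\;C(\delta)\int_E\int_0^\infty \capp_{s-1}(K\cap S(z,r))\,dr\,d\HH^n(z),
\end{equation}
upon using $\HH^n(E\cap \tfrac14 B_0)\ge c_0 r_0^n$. For fixed $z\in E$, the slice $K\cap S(z,r)$ is nonempty only for $r$ in an interval $I_z$ of length $\le 2c_1\tau r_0$, and every $y\in K\cap S(z,r)\subset \overline{B^*}$ satisfies $\dist(y,\Gamma)\gtrsim \tau r_0\approx \tau r$.

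The central estimate is
\begin{equation}\label{keyclaim}
\capp_{s-1}(K\cap S(z,r))\;\lesssim\; \tau^{-(n-s+1)}\,r^{s-1}\,\ve_{s-1}(z,r),\qquad z\in E,\ r\in I_z,
\end{equation}
and its proof has two ingredients, the first of which is the main obstacle. \emph{(i) Control of the optimal half-space}: using the smallness of $\ve_n(z,r)$ (which by Lemma~\ref{lem-beta<epsilon} forces $\beta_{\infty,E}(B(z,r))\ll 1$) together with the smallness of $\mathfrak{a}^\pm(z,r)$ (ensuring a balanced distribution of $\Omega^\pm$ on the two sides of the optimal plane), show that the boundary plane $L_{H^*}$ of an almost-optimal half-space $H^*$ realizing $\ve_{s-1}(z,r)$ must lie close to $L_0$; any substantially tilted $L_{H^*}$ passing near $B^*$ would be forced to cut deeply across a sizable piece of $\Omega^+$ (or $\Omega^-$) and thereby pick up a large contribution to $\ve_{s-1}$ from thick points of $F$ near $E$. \emph{(ii) Spherical CDC}: for such $L_{H^*}$ and $y\in K\cap S(z,r)$, we have $\delta_{L_{H^*}}(y)\gtrsim \tau r$, and a slicing-of-capacity argument applied to the $s$-CDC ball around $y$ at scale $a\delta_{L_{H^*}}(y)$ shows that, for suitable $a$ and $\mfc_0$ depending only on $\tau$ and the CDC constant, $y\in V_{\mfc_0}^a(z,r;H^*)$. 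Combining this with the weight bound $(\delta_{L_{H^*}}(y)/r)^{n-s+1}\gtrsim \tau^{n-s+1}$ on $K$ and the inequality $\capp_{s-1}\lesssim \HH^{s-1}_\infty$ from Lemma~\ref{lemcap-contingut} yields \eqref{keyclaim}. I expect the bulk of the work to be in ingredient (i), namely ruling out tilted optimal planes; this appears to be the technically heaviest part and is where the full strength of the joint smallness of $\ve_n$, $\mathfrak{a}^\pm$, and $\ve_{s-1}$ must be exploited.

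Inserting \eqref{keyclaim} into \eqref{sliceLB} and applying Cauchy--Schwarz in $r$ give, for each $z\in E$,
\begin{equation*}
\int_0^{1000 r_0} r^{s-1}\,\ve_{s-1}(z,r)\,dr\;\le\;\Bigl(\int_0^{1000 r_0}\ve_{s-1}(z,r)^2\,\tfrac{dr}{r}\Bigr)^{1/2}\Bigl(\int_0^{1000 r_0} r^{2s-1}\,dr\Bigr)^{1/2}\;\lesssim_s\;\sqrt{\delta}\,r_0^{s}.
\end{equation*}
Integrating over $E$, whose $\HH^n$ measure is $\lesssim r_0^n$ since $E\subset \Gamma\cap B_0$ and $\Gamma$ is a Lipschitz graph, we obtain
\[
c(\tau,c_0)\,r_0^{n+s}\;\le\; C(\delta,\tau)\,\sqrt{\delta}\,\tau^{-(n-s+1)}\,r_0^{n+s},
\]
which is a contradiction once $\delta$ is chosen sufficiently small in terms of $\tau$, $c_0$, and $s$, completing the proof.
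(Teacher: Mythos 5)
Your overall skeleton — contradiction, find a point of $F$ far from $\Gamma$, apply Proposition~\ref{propo-slicing} to transfer the $s$-CDC onto slices, and then show that the slice information forces $\ve_{s-1}(z,r)$ to be too large — is the same as the paper's. However, your ingredient (i), which you yourself flag as the technically heaviest part, is a genuine gap: it is neither proved nor provable by the argument you sketch, and it is exactly where the paper introduces a separate idea that you are missing.

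The problem with ingredient (i) is this: you want to show that the (near-)optimal half-space $H^*$ for $\ve_{s-1}(z,r)$ has boundary close to $L_0$, and your heuristic is that a tilted $L_{H^*}$ ``would be forced to cut deeply across a sizable piece of $\Omega^+$ (or $\Omega^-$) and thereby pick up a large contribution to $\ve_{s-1}$.'' But $\ve_{s-1}(z,r)$ is an integral against $\HH^{s-1}_\infty$ over the thick points $V_{\mathfrak{c}_0}^a(z,r;H^*)$, which live inside $F \cap S(z,r)$ — it sees no $\Omega^\pm$-mass directly. A tilted plane through $z$ that happens to graze your single ball $B^*$ keeps $\delta_{L_{H^*}}$ small on $K\cap S(z,r)$, so the weight $(\delta_{L_{H^*}}/r)^{n-s+1}$ suppresses the contribution from $B^*$; and cutting across $\Omega^\pm$ elsewhere adds nothing to the Choquet integral unless there is actual $F$-mass there. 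Your key claim \eqref{keyclaim} therefore has no path to a proof from a single ball far from $\Gamma$, because the infimizing plane is always free to pass near that one ball.

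What the paper does instead (after Lemmas \ref{auxlem-part3-1} and \ref{auxlemma-part3-2}) is to produce not one but $n+1$ well-separated balls $B, B_1, \dots, B_n$, with $B_1,\dots,B_n$ centered along $L_z$, and to show via a spherical projection argument — crucially using the domain-splitting conclusion of Main Lemma~\ref{mainlemma1}(iii) — that $F\cap S(z,r)$ meets each $B_i$ in a set of substantial $\HH^{n-1}_\infty$-content for most radii $r$. Since $B, B_1,\dots,B_n$ are in affinely general position, \emph{no} hyperplane through $z$ can stay close to all $n+1$ of them, so whichever plane $L_{H^*}$ is chosen, the ball farthest from it gives $\delta_{L_{H^*}}(y)\gtrsim \gamma^{1/(n+2)}r_0$ on a set of thick points with large $\HH^{s-1}_\infty$-content, forcing $\ve_{s-1}(z,r)\gtrsim 1$. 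This $(n+1)$-ball device is the essential step your proposal lacks. (Your observation that the smallness of $\mathfrak{a}^\pm$ gives ``balanced distribution of $\Omega^\pm$'' points in the right direction — that is exactly the input to Lemma~\ref{auxlem-part3-1} — but you do not convert it into quantitative $F$-mass on the spheres in the lateral balls $B_i$, which is what the projection argument supplies.)
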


Remark that the constant $\delta$ in the preceding lemma depends on $\tau$, $c_0$, $c_0'$, and also on $s$ and the parameters involved in the
$s$-CDC condition and in the definition of the coefficients $\ve_s(x,r)$.

\vv
\begin{rem}\label{rem-mainlemma3}
	For the the proof of the Lemma \ref{lem-small-betaF}, we take $\epsilon,\gamma>0$ with $\epsilon,\gamma\ll\tau$ and then we assume $\delta$ small enough so that the conclusion of Main Lemma \ref{mainlemma1} holds.  In particular, 
	this ensures that $\beta_{\infty,E}(B_0)\leq \epsilon$, and,
	if we denote $L_0=L_{B_0}$ and we let $D^\pm_0$ be the two components of $\tfrac12 B_0\setminus L_{B_0}$, we have	
	\begin{equation}\label{e-780}
	\HH^{n+1}( D^\pm_0 \setminus \Omega^\pm) < \gamma \, \HH^{n+1}( D^\pm_0).
\end{equation}
Note that the condition $E \cap B_0 \setminus \tfrac{4}{5}B_0 \neq \varnothing$ is needed to apply Main Lemma \ref{mainlemma1}.
\end{rem}
\vv

We start with an easy technical lemma. Before stating it, we introduce some notation. Let $z \in \R^{n+1}$ and let $L_z$ be an affine $n$-plane containing $z$; put $\wt L_z=L_z-z$. We define the spherical projection onto $L_z$ with center $x$ by 
\begin{equation*}
	\Pi_{z, L_z}(y):= \frac{|z-y|}{|\Pi_{\wt L_z}(z-y)|}\Pi_{\wt L_z}(z-y) + z,
\end{equation*}
where $\Pi_{\wt L_z}$ is the orthogonal projection on $\wt L_z$.
When $z, L_z$ are understood we will simply write $\Pi_\circ(y)$.

\begin{lemma}\label{auxlem-part3-1}
	With notation and assumptions of Lemma \ref{lem-small-betaF}, if $0< \epsilon < \gamma$ and $\gamma$ is sufficiently small, then the following holds. Let $L_0$ be an $n$-plane minimizing $\beta_{\infty,E}(B_0)$, let
	$z\in E\cap \frac14B_0$, let $L_z$ be the $n$-plane parallel to $L_0$ through $z$, and
	denote by $\Pi_\circ(y)=\Pi_{z, L_z}(y)$ the spherical projection defined above.
	Let $B$ be any ball satisfying
	\begin{enumerate}
		\item $B$ is centered on $L_z\cap\tfrac14B_0$ and $B \subset \tfrac12 B_0$;
		\item $\rad(B)=\gamma^{1/(n+2)}r_0$; 
		\item $B \subset X_z$, where, denoting by $u_{L_{z}}$ the unit vector perpendicular to $L_{z}$, 
		\begin{equation*}
			X_z= \{ x \in \R^{n+1} \, :\, |(x-z)\cdot u_{L_z}| \leq \tfrac34 |x-z| \}.
		\end{equation*}
	\end{enumerate}  
 Denote by $D_B^\pm$ be the two components of $B\setminus \NN_{2\epsilon r_0}(L_z)$, so that $D_B^\pm\subset 
D^\pm_0\cap B$. 
 Then we have
	\begin{equation}\label{e:60}
		\HH^{n+1}(\{y \in \Omega^+\cap D_B^+ :\,  \Pi_{\circ}^{-1}(\Pi_{\circ}(y)) \cap \Omega^{-} \cap D_B^- \neq \varnothing\}) \geq (1-\Cone\gamma^{1/(n+2)})\,\HH^{n+1}(D_B^+),
	\end{equation}
	for some constant $\Cone$ only depending on $n$.
\end{lemma}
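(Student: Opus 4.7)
The plan is to exploit the reflection symmetry built into the hypotheses. Since the ball $B$ is centered on $L_z$, the orthogonal reflection $T$ across $L_z$ is an isometry that maps $B$ to itself and interchanges the two components $D_B^+$ and $D_B^-$. Moreover $T$ preserves fibers of $\Pi_\circ$: writing $y = z + a\,u_{L_z} + p$ with $p \in \wt L_z$ one has $T(y) = z - a\,u_{L_z} + p$, so $|T(y)-z| = |y-z|$ and $\Pi_{\wt L_z}(T(y)-z) = p = \Pi_{\wt L_z}(y-z)$, whence $\Pi_\circ(T(y)) = \Pi_\circ(y)$. (The cone condition $B \subset X_z$ is used only to guarantee that $\Pi_\circ$ is well-defined on $B$.) Hence it will be enough to show that, outside a bad set of relative measure $\lesssim \gamma^{1/(n+2)}$, every $y \in D_B^+$ satisfies both $y \in \Omega^+$ and $T(y) \in \Omega^-$.

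First I would pass to the half-balls $D_0^\pm$ of Remark \ref{rem-mainlemma3}: since $\delta$ is taken small enough, Main Lemma \ref{mainlemma1}(iii) applies and gives
$$\HH^{n+1}(D_0^\pm \setminus \Omega^\pm) \leq \gamma\,\HH^{n+1}(D_0^\pm) \lesssim \gamma\,r_0^{n+1}.$$
Because $z \in E \cap \frac14 B_0$ and $\beta_{\infty,E}(B_0) \leq \epsilon$, the plane $L_z$ lies within Hausdorff distance $\epsilon r_0$ of $L_0$; combined with the assumption $\epsilon < \gamma \leq \gamma^{1/(n+2)}$, this yields $D_B^\pm \subset D_0^\pm$, and hence $\HH^{n+1}(D_B^\pm \setminus \Omega^\pm) \lesssim \gamma\,r_0^{n+1}$. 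On the other hand, since $\rad(B) = \gamma^{1/(n+2)}\,r_0 \gg \epsilon r_0$, the strip $\NN_{2\epsilon r_0}(L_z)$ removes only a negligible fraction of $B$, so $\HH^{n+1}(D_B^\pm) \approx \rad(B)^{n+1} = \gamma^{(n+1)/(n+2)}\,r_0^{n+1}$. Dividing,
$$\HH^{n+1}(D_B^\pm \setminus \Omega^\pm) \leq C\,\gamma^{1/(n+2)}\,\HH^{n+1}(D_B^\pm).$$

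Setting $A^\pm := \Omega^\pm \cap D_B^\pm$ and using that $T$ is a measure-preserving bijection $D_B^+ \to D_B^-$, the previous estimate gives both $\HH^{n+1}(A^+) \geq (1-C\gamma^{1/(n+2)})\HH^{n+1}(D_B^+)$ and $\HH^{n+1}(T^{-1}(A^-)) \geq (1-C\gamma^{1/(n+2)})\HH^{n+1}(D_B^+)$. One line of inclusion--exclusion then produces
$$\HH^{n+1}(A^+ \cap T^{-1}(A^-)) \geq (1 - 2C\gamma^{1/(n+2)})\,\HH^{n+1}(D_B^+),$$
and for every $y \in A^+ \cap T^{-1}(A^-)$ the reflected point $T(y)$ lies in $\Omega^- \cap D_B^- \cap \Pi_\circ^{-1}(\Pi_\circ(y))$ by the fiber-preservation observation above. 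Thus $A^+ \cap T^{-1}(A^-)$ is contained in the set appearing on the left-hand side of \eqref{e:60}, and choosing $\Cone = 2C$ finishes the proof. There is no serious obstacle; the only mild subtlety is the bookkeeping that allows one to replace $D_B^\pm$ by $D_0^\pm$ when invoking Main Lemma \ref{mainlemma1}(iii), and this is guaranteed by the scale separation $\epsilon \ll \gamma^{1/(n+2)}$.
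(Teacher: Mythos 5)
Your argument is correct and follows essentially the same route as the paper. Both proofs hinge on the same two observations: (i) the reflection across $L_z$ is a measure-preserving involution that maps $B$ to itself, interchanges $D_B^+$ and $D_B^-$, and preserves fibers of $\Pi_\circ$; and (ii) Main Lemma \ref{mainlemma1}(iii) together with the inclusion $D_B^\pm\subset D_0^\pm$ and the volume comparison $\HH^{n+1}(D_0^\pm)/\HH^{n+1}(D_B^\pm)\approx\gamma^{-(n+1)/(n+2)}$ gives $\HH^{n+1}(D_B^\pm\setminus\Omega^\pm)\lesssim\gamma^{1/(n+2)}\HH^{n+1}(D_B^\pm)$. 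The only cosmetic difference is bookkeeping: the paper defines the ``bad'' set $A_2=\Omega^+\cap D_B^+\setminus A_1$ and bounds its measure by pushing it forward through the reflection into $D_B^-\setminus\Omega^-$, whereas you set $A^\pm=\Omega^\pm\cap D_B^\pm$ and run an inclusion--exclusion on $A^+\cap T^{-1}(A^-)$; these are equivalent up to an absorbed factor of $2$ in $\Cone$. If anything, your formulation is slightly cleaner, since the paper's $A_1$ is (seemingly through a typo) defined with $\Omega^-\cap B$ rather than $\Omega^-\cap D_B^-$, and your use of $T^{-1}(A^-)$ makes the containment in the set of \eqref{e:60} immediate.
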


If $\gamma$ is sufficiently small such balls can be found. Note also that condition (3) implies that the set on the right hand side of \eqref{e:60} is well-defined, and moreover $\mathrm{Lip}(\Pi_\circ|_B) \lesssim 1$.

\begin{proof}
	Without loss of generality, we can assume $z=0$ and $L_z=\R^n \subset \R^{n+1}$. 
	Note that $B\cap L_0\neq \varnothing$ because $B$ is centered in $L_{z} \cap \frac14B_0$ and 
	$$\beta_{\infty,E}(B_0) r_0 < \epsilon r_0 \ll \gamma^{1/(n+2)}r_0 = \rad(B)$$
	 since $\epsilon \leq \gamma$. We also have that
	$\HH^{n+1}(D_B^\pm) \approx \HH^{n+1}(B)\approx \gamma^{(n+1)/(n+2)} \HH^{n+1} (B_0)$.
	Using \eqref{e-780} and the fact that $B \subset \tfrac12 B_0$, and thus $D^\pm_B \subset D^\pm_0$, we obtain
	\begin{align*}
		\HH^{n+1}(D^+_B\setminus \Omega^+)& \leq \HH^{n+1}(D^+_0 \setminus \Omega^+) \leq \gamma \HH^{n+1}(D^+_0) \\
		&= \gamma \, \frac{\HH^{n+1}(D^+_0)}{\HH^{n+1}(D^+_B)}\, \HH^{n+1}(D^+_B) \leq \Cone \,\gamma^{1/(n+2)} \HH^{n+1} (D^+_B),
	\end{align*} 
for some constant $\Cone=\Cone(n)$.
Clearly the same holds for $\HH^{n+1}(D^-_B\setminus \Omega^-)$. 
 Now set
	\begin{align}\label{e:A1A2}
		A_{1} := \{ y \in \Omega^+\cap D_B^+ :\, \Pi_\circ^{-1}(\{\Pi_\circ(y)\}) \cap \Omega^{-}\cap B \neq \varnothing\} \,\, \mbox{ and } \,\, A_{2} := \Omega^+\cap D_B^+ \setminus A_{1}.
	\end{align}
Consider the reflection $\psi$ with respect to $L_z$ defined by $\psi(y) = (y_1,...,y_n,-y_{n+1}).$
As $\psi(y)\in \Pi_\circ^{-1}(\{\Pi_\circ(y)\})$, it follows that 
$$y\in A_2 \;\Rightarrow\; \psi(y)\in (\Omega^+\cup F) \cap D_B^-.$$
Consequently,
\begin{align*}
\HH^{n+1}(A_2) & = \HH^{n+1}(\psi(A_2)) \leq \HH^{n+1}((\Omega^+\cup F) \cap D_B^-)\\
& = 
\HH^{n+1}(D_B^-\setminus \Omega^-)\leq \Cone \gamma^{1/(n+2)}\,\HH^{n+1}(D_B^-) = \Cone \gamma^{1/(n+2)}\,\HH^{n+1}(D_B^+),
\end{align*}
which is equivalent to \rf{e:60}.
\end{proof}
\vv

An immediate consequence of the previous lemma is that $\Pi_\circ(F\cap B)$ is large.

\begin{lemma}\label{auxlemma-part3-2}
	Notation and assumptions as in Lemma \ref{auxlem-part3-1}. Then, for any $B$ satisfying (1)-(3) there, we have
	\begin{equation*}
		\HH^n(\Pi_\circ(F \cap B)) \geq (1-\Ctwo\gamma^{1/(n+2)})\, \HH^n(B\cap L_0),
	\end{equation*}
	where $\Ctwo=\Ctwo(n)$.
\end{lemma}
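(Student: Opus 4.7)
The plan is to combine Lemma~\ref{auxlem-part3-1} with a connectedness argument, followed by a coarea estimate for the map $\Pi_\circ$. Let $A_1$ denote the set appearing in the proof of Lemma~\ref{auxlem-part3-1}, namely
\[
A_1 = \{y \in \Omega^+ \cap D_B^+ : \Pi_\circ^{-1}(\Pi_\circ(y)) \cap \Omega^- \cap D_B^- \neq \varnothing\},
\]
so that $\HH^{n+1}(A_1) \geq (1-\Cone\gamma^{1/(n+2)})\HH^{n+1}(D_B^+)$. First I would verify the inclusion $\Pi_\circ(A_1) \subseteq \Pi_\circ(F \cap B)$. Given $y \in A_1$ with partner $y' \in \Omega^- \cap D_B^-$, the fiber $\Pi_\circ^{-1}(\Pi_\circ(y))$ is an arc of a great circle on the sphere $S(z,|y-z|)$, lying in the $2$-plane through $z$ spanned by $u_{L_z}$ and the horizontal direction $\Pi_{\wt L_z}(y-z)/|\Pi_{\wt L_z}(y-z)|$. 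Within this $2$-plane, $B$ restricts to a convex disk, and since a circle meets the boundary of a disk in at most two points, the intersection of the fiber with $B$ is a connected subarc. This arc contains $y \in \Omega^+$ and $y' \in \Omega^-$, so by the disjoint openness of $\Omega^+$ and $\Omega^-$ it must contain some $w \in F$, and then $w \in F \cap B$ with $\Pi_\circ(w) = \Pi_\circ(y)$.

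It then remains to bound $\HH^n(\Pi_\circ(A_1))$ from below. A short computation in polar coordinates centred at $z$ (parameterising $v = y - z$ as $r(\cos\theta\,\omega + \sin\theta\,u_{L_z})$ with $\omega \in S^{n-1}\subset \wt L_z$) yields the coarea Jacobian $J\Pi_\circ(y) = 1/\cos^{n-1}\theta_y$, and the cone condition $B \subset X_z$ gives $|\cos\theta_y| \geq \sqrt{7}/4$, so $1 \leq J\Pi_\circ \leq C_n$ throughout $B$. Writing $\mathrm{Bad} := \Pi_\circ(D_B^+) \setminus \Pi_\circ(A_1)$, every point $p \in \mathrm{Bad}$ has its fiber in $D_B^+$ entirely contained in $D_B^+ \setminus A_1$, so the coarea formula yields
\[
\int_{\mathrm{Bad}} \HH^1(\Pi_\circ^{-1}(p) \cap D_B^+) \, d\HH^n(p) \leq \int_{D_B^+ \setminus A_1} J\Pi_\circ \, dx \leq C_n \Cone \gamma^{1/(n+2)} \HH^{n+1}(D_B^+).
\]
I would then split $\mathrm{Bad}$ according to whether the fiber length exceeds a threshold $\ell_0$, using that the short-fiber set is contained in an $O(\ell_0^2/\rad(B))$-collar of $\partial \Pi_\circ(D_B^+)$ (since fiber length behaves like $\sqrt{\rad(B)^2 - \mathrm{dist}(p,\mathrm{centre})^2}$), to obtain
\[
\HH^n(\mathrm{Bad}) \leq C\,\frac{\gamma^{1/(n+2)} \HH^{n+1}(D_B^+)}{\ell_0} + C\,\rad(B)^{n-2}\, \ell_0^2.
\]
Combined with $\HH^n(\Pi_\circ(D_B^+)) \geq (1 - O(\epsilon))\HH^n(B \cap L_0)$, which follows since $\Pi_\circ$ restricted to $L_z$ is just the reflection $p \mapsto 2z-p$ and $L_z$ is $\epsilon r_0$-close to $L_0$, an optimal choice of $\ell_0$ then produces the claimed measure estimate.

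The main obstacle I anticipate is matching the exponent $\gamma^{1/(n+2)}$ exactly in the bound on $\HH^n(\mathrm{Bad})$: naive balancing of the two error terms above with $\ell_0 \approx \gamma^{1/(3(n+2))}\rad(B)$ only produces the overall exponent $2/(3(n+2))$, which is strictly smaller. Sharpening to $1/(n+2)$ may require a more delicate treatment of the short-fiber collar, perhaps exploiting that on radial rays from $z$ the set $D_B^+ \setminus A_1$ is (via the reflection $\psi$) in bijection with a subset of $D_B^- \setminus \Omega^-$, so that its intersection with fibers near $\partial \Pi_\circ(D_B^+)$ is geometrically constrained rather than just controlled in measure; alternatively, any positive polynomial smallness in $\gamma$ would already suffice for the downstream application to Main Lemma~\ref{mainlemma3}, so for that purpose the weaker exponent from the naive optimization is enough.
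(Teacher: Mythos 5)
Your approach is essentially the same as the paper's: use the connectedness of the fibers of $\Pi_\circ|_B$ to see that $\Pi_\circ(D_B^+)\setminus\Pi_\circ(F\cap B)$ decomposes as $A^+\cup A^-$ where the fiber is entirely inside $\Omega^+$ or entirely inside $\Omega^-$, and then run the coarea formula against the small-measure set $D_B^+\setminus A_1=A_2\cup(D_B^+\setminus\Omega^+)$ controlled by Lemma~\ref{auxlem-part3-1}. The paper sets $D_B^\circ\setminus F_B=A^+\cup A^-$, picks the larger of the two, and argues by contradiction rather than bounding $\HH^n(\mathrm{Bad})$ directly, but this is cosmetic.

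Your concern about the exponent is well-founded, and in fact you have identified a genuine looseness in the paper's own argument. The paper's final display asserts
\[
\int_{A^+} \HH^1\big(\Pi_\circ^{-1}(\{p\})\cap A_2\big)\,d\mathcal{L}^n(p) \gtrsim \Cthree\,\gamma^{1/(n+2)}\,\HH^{n+1}(D_B^+),
\]
given only $\HH^n(A^+)\gtrsim \Cthree\gamma^{1/(n+2)}\HH^n(D_B^\circ)$. This implicitly takes the fiber length $\HH^1(\Pi_\circ^{-1}(\{p\})\cap D_B^+)$ to be $\gtrsim\rad(B)$ uniformly over $A^+$, which is false near $\partial\Pi_\circ(D_B^+)$ (the length behaves like $\sqrt{\rad(B)\,d(p)}$ where $d(p)$ is the distance of $p$ to the boundary, exactly as you note), and nothing forces $A^+$ away from that collar. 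Running the split you propose, or equivalently splitting $A^+$ by a distance threshold $d_0$, gives at best $\HH^{n+1}(D_B^+\setminus A_1)\gtrsim \gamma^{1/(n+2)}\sqrt{\gamma^{1/(n+2)}}\,\HH^{n+1}(D_B^+)$ once you retain only the part of $A^+$ outside the collar, which upon optimization produces your exponent $2/(3(n+2))$ — strictly weaker than the stated $1/(n+2)$. I do not see how to recover the sharper exponent by this method, nor do I see the paper supplying the missing argument.

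You are also right that this does not matter downstream: in the proof of Main Lemma~\ref{mainlemma3} the estimate is only used to produce some positive power of $\gamma$ small relative to $c(\tau)$, and $\gamma^{1/(3(n+2))}$ (half of your exponent, after the subsequent square root in the slicing lemma) serves just as well as $\gamma^{1/(2n+4)}$, since $\gamma$ is a free small parameter. So I would suggest you state the lemma with your weaker exponent $\Ctwo\gamma^{2/(3(n+2))}$ and carry it through; your proof then closes cleanly without needing the speculative reflection argument you sketch in the last paragraph. The only small correction to your write-up: in the coarea bound for $\int_{\mathrm{Bad}}$ you should integrate over $D_B^+\setminus A_1$ (and not just $A_2$), since fibers over $\mathrm{Bad}$ can also pass through $D_B^+\setminus\Omega^+$; but $\HH^{n+1}(D_B^+\setminus\Omega^+)\leq \Cone\gamma^{1/(n+2)}\HH^{n+1}(D_B^+)$ by the first computation in the proof of Lemma~\ref{auxlem-part3-1}, so the bound is unchanged.
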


\begin{proof}
	Let $D_B^\circ:=\Pi_\circ(D_B^+)=\Pi_\circ(D_B^-)$ and $F_B=\Pi_\circ(F \cap B)$.
	Suppose that $\HH^n(D_B^\circ \setminus F_B) > \Cthree \gamma^{1/(n+2)} \HH^{n}(D_B^\circ)$ for some constant $\Cthree$ to be fixed below. Set 
	\begin{align*}
		A^\pm := \{ y \in D_B^\circ \setminus F_B : \Pi_\circ^{-1}(\{y\}) \subset \Omega^\pm \cap B \}.
	\end{align*}
	Clearly $D_B^0 \setminus F_B = A^+ \cup A^-$ and $A^+ \cap A^- = \varnothing$.
	 Suppose that $A^+$ has the largest $\HH^n$ measure out of $A^\pm$, so that $\HH^{n}(A^+) \geq \frac{\Cthree \gamma^{1/(n+2)}}{2} \HH^n(D_B^\circ)$ (the argument in the other case is the same). Then, using the fact that $\Pi_0|_B$ is Lipschitz with constant $\lesssim 1$, and recalling the definition of $A_2$ in \eqref{e:A1A2}, by the coarea formula we obtain
	\begin{align*}
		\HH^{n+1}(A_2)  \gtrsim  \int_{A^+} \HH^1 (\Pi_\circ^{-1}(\{y\}) \cap A_2) \, d\mathcal{L}^n(x)  \gtrsim \Cthree \gamma^{1/(n+2)} \,\HH^{n+1}(D_B^+).
	\end{align*}
	This, however, contradicts the conclusion of Lemma \ref{auxlem-part3-1}, i.e. \eqref{e:60}, whenever $\Cthree$ is chosen appropriately with respect to $\Cone$ in \eqref{e:60}. Hence, $\HH^n(D_B^\circ \setminus F_B) < \Cthree \gamma^{1/(n+2)} \HH^{n}(D_B^\circ)$.
	
	On the other hand, note that $\HH^{n}(\Pi_\circ(B)\setminus \Pi_\circ(D^+_B\cup D^-_B)) \approx \left( \frac{\epsilon r_0}{\rad(B)}\right)^{2n} \rad(B)^n \approx \epsilon^{n} \HH^n(\Pi_\circ(B)) $. We conclude that
	\begin{align*}
		\HH^n(F_B)  = \HH^n(F_B \cap D_B^\circ) + \HH^n(F_B \setminus D_B^\circ) 
		& \geq (1-\Cthree \gamma^{1/(n+2)}) \HH^n(D_B^+) \\
		& \geq (1-\Cthree \gamma^{1/(n+2)})(1-C \epsilon^n)\HH^n(\Pi_\circ(B))\\
		& \geq (1-\Cthree'\gamma^{1/(n+2)}) \HH^n(\Pi_\circ(B)),
	\end{align*}
	where the last inequality holds becuase $\epsilon \leq \gamma$. Setting now $\Ctwo= \Cthree'$ gives the lemma.
\end{proof}
\vv

\begin{lemma}
	Let $\Pi_\circ$ denote the spherical projection onto $\R^n\equiv \R^n\times\{0\}$ centered at the origin. Let $x_1 \in \R^n$ be a point with $|x_1|=R$ and consider a ball $B$ in $\R^{n+1}$ centered in $x_1$ and contained in the cone $$X_\circ = \{y\in\R^{n+1}:|y_{n+1}| \leq \tfrac34 |y|\}.$$ Denote $I=[R-\rad(B)/2, R+ \rad(B)/2]$. If a set $F_0 \subset \R^{n+1}$ satisfies
		\begin{equation}\label{e:32}
			\HH^n(\Pi_\circ(F_0\cap B)) \geq (1-C\alpha) \HH^n(\Pi_\circ(B))
	\end{equation}
	with $\alpha\in (0,1/10)$, then 
	\begin{equation}\label{e:31}
		\HH^1\left( \{ r \in I: \HH^{n-1}_\infty (S(0,r)\cap F_0 \cap B) \gtrsim_n \rad(B)^{n-1}  \}\right) \geq (1-C\alpha^{1/2})\HH^1(I),
	\end{equation}
	where the constant $C$ can only depend on $n$.
\end{lemma}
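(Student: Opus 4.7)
The strategy is a coarea--Chebyshev argument followed by a content--versus--measure comparison that exploits the fact that $\Pi_\circ(F_0\cap B)$ lies on an $(n-1)$-sphere inside $\R^n$.

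First, I would set up spherical coordinates. Since $\Pi_\circ(y)=(|y|/|\pi(y)|)\pi(y)$ satisfies $|\Pi_\circ(y)|=|y|$, for any $A\subset\R^{n+1}\setminus\{0\}$ the sliced image satisfies
\[
\Pi_\circ(A)\cap S_{\R^n}(0,r)=\Pi_\circ\bigl(A\cap S(0,r)\bigr),
\]
and Fubini in polar coordinates on $\R^n$ gives
\[
\HH^n(\Pi_\circ(A))=\int_0^\infty \HH^{n-1}\bigl(\Pi_\circ(A\cap S(0,r))\bigr)\,dr.
\]
Applying this to $A=B$ and $A=F_0\cap B$ and combining with the hypothesis yields
\[
\int_0^\infty \bigl[\HH^{n-1}(\Pi_\circ(B\cap S(0,r)))-\HH^{n-1}(\Pi_\circ(F_0\cap B\cap S(0,r)))\bigr]\,dr\leq C\alpha\,\HH^n(\Pi_\circ(B)).
\]

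Second, I would estimate the reference integrand. The cone condition $B\subset X_\circ$ forces $\rad(B)\lesssim R$ and $|\pi(y)|\approx|y|$ on $B$, so by standard spherical geometry the spherical cap $B\cap S(0,r)$ is non-degenerate for $r\in I$, and $\HH^{n-1}(\Pi_\circ(B\cap S(0,r)))\approx_n \rad(B)^{n-1}$ for every $r\in I$, while $\HH^n(\Pi_\circ(B))\approx_n\rad(B)^n$. A Chebyshev inequality applied to the deficit then shows that the bad set
\[
G'=\bigl\{r\in I:\HH^{n-1}(\Pi_\circ(B\cap S(0,r)))-\HH^{n-1}(\Pi_\circ(F_0\cap B\cap S(0,r)))>\sqrt{\alpha}\,\rad(B)^{n-1}\bigr\}
\]
has $\HH^1(G')\leq C\alpha\,\rad(B)/\sqrt{\alpha}=C\sqrt{\alpha}\,\HH^1(I)$; for $r\in I\setminus G'$ one obtains $\HH^{n-1}(\Pi_\circ(F_0\cap B\cap S(0,r)))\gtrsim_n\rad(B)^{n-1}$, provided $\alpha$ is small enough (otherwise the conclusion is vacuous).

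The main obstacle---and the last step---is upgrading $\HH^{n-1}$ of the projection to $\HH^{n-1}_\infty$ of the pre-image. This splits into two ingredients. \emph{(i) Sphere-to-content comparison:} any subset $A$ of the $(n{-}1)$-sphere $S_{\R^n}(0,r)$ satisfies $\HH^{n-1}_\infty(A)\gtrsim_n\HH^{n-1}(A)$, because for any cover $\{B(x_i,r_i)\}$ of $A$ the intersection $B(x_i,r_i)\cap S_{\R^n}(0,r)$ is a spherical cap of surface measure $\lesssim_n r_i^{n-1}$ (uniformly in $r$, by the tangent-plane comparison), so $\HH^{n-1}(A)\lesssim_n\sum r_i^{n-1}$. \emph{(ii) Lipschitz pullback:} on the annular region $S(0,r)\cap X_\circ$ the map $\Pi_\circ$ is $L$-Lipschitz with $L=4/\sqrt{7}$, as a short calculation of $d\Pi_\circ$ using $|\pi(y)|\geq(\sqrt{7}/4)|y|$ shows; hence $\HH^{n-1}_\infty(\Pi_\circ(K))\leq L^{n-1}\HH^{n-1}_\infty(K)$ for $K=F_0\cap B\cap S(0,r)\subset X_\circ$. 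Chaining (i) and (ii) gives, for $r\in I\setminus G'$,
\[
\HH^{n-1}_\infty\bigl(F_0\cap B\cap S(0,r)\bigr)\gtrsim_n \HH^{n-1}_\infty\bigl(\Pi_\circ(F_0\cap B\cap S(0,r))\bigr)\gtrsim_n \HH^{n-1}\bigl(\Pi_\circ(F_0\cap B\cap S(0,r))\bigr)\gtrsim_n \rad(B)^{n-1},
\]
and the measure estimate $\HH^1(I\setminus G')\geq(1-C\sqrt{\alpha})\HH^1(I)$ yields \eqref{e:31}.
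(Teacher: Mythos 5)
Your proposal is correct and follows essentially the same line as the paper's proof: Fubini in polar coordinates on $\R^n$ to slice the projected measure, Chebyshev with threshold $\sqrt{\alpha}\,\rad(B)^{n-1}$ to control the bad radii, and then the Lipschitz property of $\Pi_\circ$ on the cone together with the comparability of $\HH^{n-1}$ and $\HH^{n-1}_\infty$ on $(n-1)$-dimensional spheres to pass to Hausdorff content of the preimage. The only differences are cosmetic (you spell out the Lipschitz constant $4/\sqrt7$ and the cap-covering argument for content-vs-measure on the sphere, which the paper simply cites as standard).
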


\begin{proof}
	Let $B_n = \Pi_\circ(B)$  and $F_B=\Pi_\circ(B\cap F)$. Note that $B_n$ is a ball in $\R^n$ with $\rad(B_n)=\rad(B)$ and center $x_1$. 
	   By Fubini, we can write
	\begin{equation*}
		\HH^{n}(F_B) = \int_{ r \in 2I} \HH^{n-1}(F_B \cap S(0,r)) \, dr,
	\end{equation*}
	where $2I = [R-\rad(B), R+\rad(B)]$. (Note that $F_B \cap S(0,r) \subset \R^{n} \cap S(0,r)$, which is an $(n-1)$-dimensional manifold). 
	Set 
	\begin{equation*}
		I_B= \{ r \in 2I: \HH^{n-1}(S(0,r) \cap B_n \setminus F_B) > \alpha^{1/2} \rad(B)^{n-1}\}.
	\end{equation*}	
	From \rf{e:32}, we have $\HH^n(B_n\setminus F_B)) \leq \alpha\, \HH^n(B_n)$. Then, by Chebyshev's inequality, it follows
\begin{align*}
\HH^1(I_B) & \leq 	\epsilon^{-1/2}\rad(B)^{1-n}\int_{2I}\HH^{n-1}(S(0,r) \cap B_n \setminus F_B)\,dr\\ & =
\alpha^{-1/2}\rad(B)^{1-n} \,\HH^{n}(B_n\setminus F_B) \lesssim \alpha^{1/2}\,\rad(B)\approx \alpha^{1/2}\,\HH^1(I).
\end{align*}
It is easy to see that if moreover $r \in I$, then $\HH^{n-1}(S(0,r) \cap B_n) \gtrsim \rad(B)^{n-1}$. 
 Thus, if $r \in I_G= I\setminus I_B$ and $\alpha$ is small enough,
 $$\HH^{n-1}(S(0,r) \cap B_n \setminus F_B) \leq \alpha^{1/2} \rad(B)^{n-1}\leq C\, \alpha^{1/2}\HH^{n-1}(S(0,r) \cap B_n)\leq \frac12\,\HH^{n-1}(S(0,r) \cap B_n).$$
Then, using 
 that $\mathrm{Lip}(\Pi_\circ|_B)$ is comparable to 1, because $B\subset X_\circ$, we see that, for $r\in I_G$,
	\begin{align*}
		\HH^{n-1}_\infty(S(0,r)\cap B \cap F) & \gtrsim  \HH^{n-1}_\infty(\Pi_\circ|_B(S(0,r)\cap B\cap F)) \\
		& = \HH^{n-1}_\infty (S(0,r)\cap B_n\cap F_B))\\
		& \approx \HH^{n-1} (S(0,r) \cap B_n \cap F_B)\\
		& \gtrsim \HH^{n-1}(S(0,r)\cap B_n)\gtrsim \rad(B)^{n-1},
	\end{align*}
	where we used the fact that $\HH^k$ and $\HH^k_\infty$ coincide up to a constant in $k$-dimensional manifolds.	
	So adjusting constants if necessary, the set on the left hand side of \rf{e:31} contains $I_G$. 
	As shown above, $\HH^1(I\setminus I_G) \leq \HH^1(I_B)\lesssim \alpha^{1/2}\HH^1(I)$, and so	
	the lemma is proven.
\end{proof}
\vv

\begin{figure}\label{figure17}
	\centering
	\includegraphics[scale=.7]{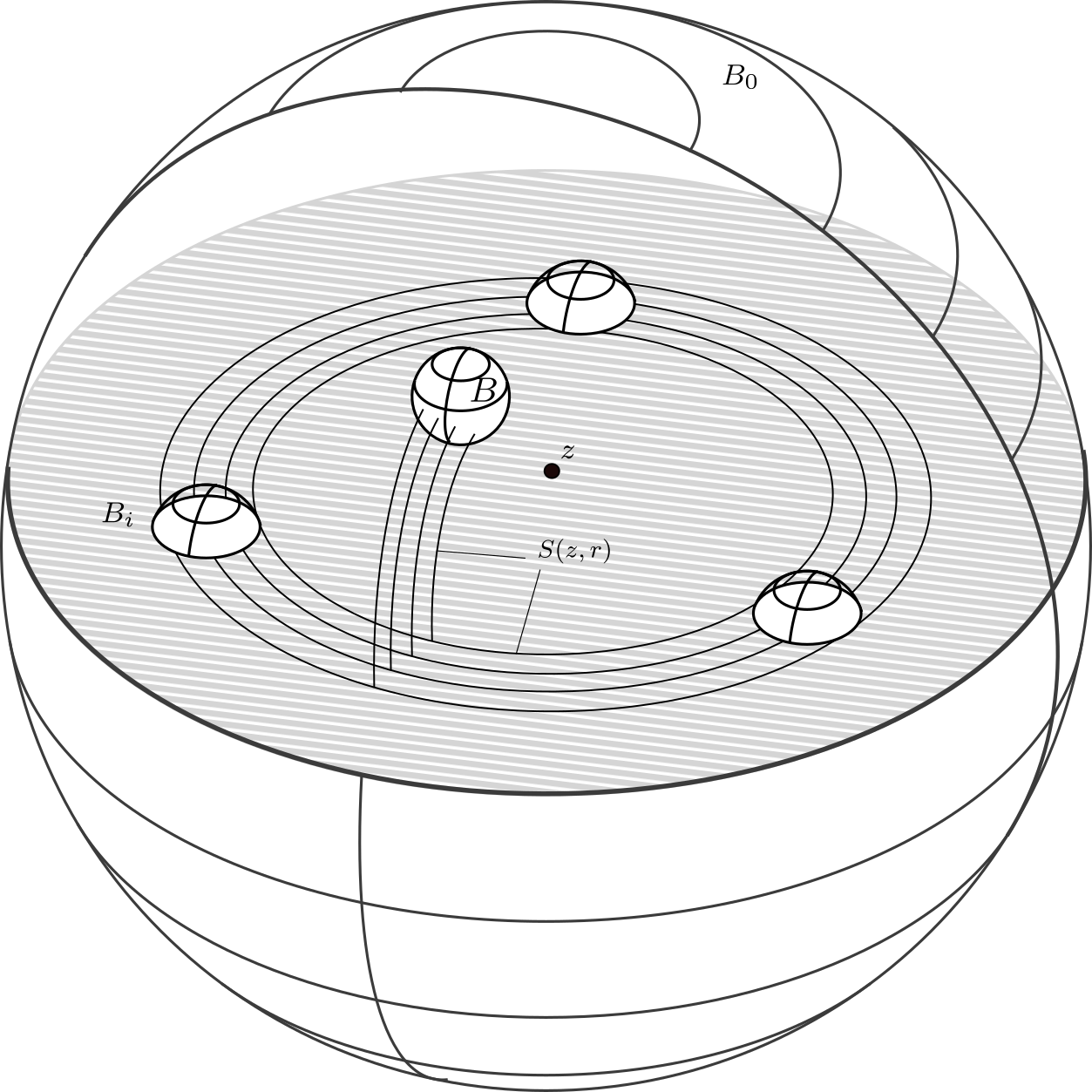}
	\caption{The setup of the proof of Lemma \ref{lem-small-betaF} in the three dimensional case. We are able to find 4 (i.e. $n+2$) balls such that, for many radii $r$, the spheres $S(z,r)$ intersect $F$ in these balls, and the intersection is large. Since no $2$-plane can be close to $4$, well spaced balls, this makes the $\ve$ coefficients large.}
\end{figure}

\begin{proof}[\bf Proof of Lemma \ref{lem-small-betaF} ]
Without loss of generality, we can assume that the $n$-plane $L_0$
infimizing $\beta_{\infty, E}(\tfrac14 B_0)$ is horizontal.
	 Suppose that there is a point $x \in F \cap \frac14 B_0$ such that $\dist(x, L_0) > \tau r_0$. Set $B=B(x, \gamma^{1/(n+2)} r_0)$. 
	 We suppose $\delta\ll\epsilon\leq \gamma \ll \tau^4$, so that $\dist(B,L_0)\approx \dist(B,\Gamma) \gtrsim \tau r_0$.
	 Set $\wt{F} := F \cap \overline B$ and note that $\wt{F} \subset B_0$ and $\diam (\wt F) \leq r_0/5$.
Set also $\widetilde{E} := E \cap \frac14 \overline B_0$.
By Lemma \ref{lem-slicing}, we deduce that
$$\capp_s(\wt F)\,\HH^n(\wt E)\leq c_0^{-1}\capp_{s}(\wt F)\,\frac{\HH^n(\wt E)^2}{r_0^n} \leq C(\tau) \int_{\wt E}\int_0^\infty \capp_{s-1}(\wt F\cap S(z,r))\,dr\,d\HH^n(z)$$
for any $s >1$.
Consequently, there exists some $z\in \wt E= E \cap \tfrac14 \overline B_0$ such that
$$\capp_{s}(\wt F) \lesssim_\tau \int_0^\infty \capp_{s-1}(\wt F\cap S(z,r))\,dr.$$
Because of the $s$-CDC, we know that $\capp_{s}(\wt F)=\capp_s (F \cap B)\gtrsim \rad(B)^{s}$.
Using also that
$\capp_{s-1}(G)\lesssim \HH_\infty^{s-1}(G)$ for any Borel set $G$, we deduce
$$
\rad(B)^s \lesssim_\tau \int_0^\infty \capp_{s-1}(\wt F\cap S(z,r))\,dr
\lesssim_\tau \int_0^\infty \HH_\infty^{s-1}(\wt F\cap S(z,r))\,dr.$$

Let $R=|x-z| \leq \tfrac14 r_0$, and set $I=[R - \rad(B), R+ \rad(B)]$. For some constant $b\in(0,1)$ to be chosen soon, let
 $I_{G} \subset I$ be the subset of radii such that 
	\begin{equation*}
		\HH^{s-1}_\infty(S(z,r) \cap \wt F) \geq b \,\rad(B)^{s-1}.
	\end{equation*}
	We then compute:
	\begin{align*}
		c(\tau)\,\rad(B)^s & \leq  \int_{I_G} \HH^{s-1}_\infty(S(z,r) \cap \wt F) \, dr + \int_{I \setminus I_G} \HH^{s-1}_\infty (S(z,r) \cap \wt F) \,dr\\
		& \leq  |I_G| \,\rad(B)^{s-1} +  b\, \rad(B)^s,
	\end{align*}
	which implies, with a choice of $b = c(\tau)/2$, say,
	\begin{equation*}
		\HH^1(I_G)\gtrsim_\tau \rad(B) \geq c(\tau) \,\gamma^{1/(n+2)}r_0.
	\end{equation*}

	For $i=1,\ldots,n$, consider the points
	$$x_i= z+ R\,e_i,$$
	where $e_1,\ldots,e_{n+1}$ is the standard orthonormal basis of $\R^{n+1}$.
	Notice that the points $x_1,\ldots,x_n$
	belong to $L_z\cap S(z,r)$, where $L_z$ is the $n$-plane parallel to $L_0$ through $z$.
	Denote by $B_i$ the ball centered in $x_i$ with radius $\rad(B_i)=\gamma^{1/(n+2)} r_0$. 
	Now, with the choice of $\epsilon$ and $\gamma$ stipulated at the beginning of this proof, we can apply Lemma \ref{auxlem-part3-1} to obtain through Lemma~\ref{auxlemma-part3-2} the conclusion \eqref{e:31} with $\alpha= \gamma^{1/(n+2)}$ for each of these balls; that is, for $R=|x-z|$ and $I= [R- \gamma^{1/(n+2)} r_0/2, R+  \gamma^{1/(n+2)} r_0/2]$, it holds
	\begin{equation*}
		\HH^{1}(I_{G,i}) = \HH^1 \left(\{ r \in I: \HH^{n-1}_\infty (S(z, r) \cap F \cap B_i) \gtrsim_n \HH^{n-1}_\infty(S(z,r) \cap B_i)\}\right) \geq (1-C\gamma^{1/(2n+4)})\HH^1(I),
	\end{equation*}
	where $C$ can only depend on $n$.

	Picking $s \in (n-1, n)$ for which \eqref{e:lem17.1ve} holds, via Chebyschev's inequality we see that the subset $I_{\ve_{s-1}}$ of radii $r\in I$ where $\ve_{s-1}(z,r) > \delta^{1/4}$ satisfies
	$$\HH^1\big(I_{\ve_{s-1}}\big)\leq \delta^{-1/2}\int_I\ve_{s-1}(z,r)^2\,dr \leq \delta^{-1/2}\HH^1(I)\int_I\ve_{s-1}(z,r)^2\,\frac{dr}r \leq \delta^{1/2}\HH^1(I).$$
	Recalling that $\HH^1(I_G)\geq c(\tau)\,\HH^1(I)$, it is then immediate to see that, for $\delta$ and $\epsilon$ small enough,
	\begin{equation*}
		\HH^1\Big( I_G\cap (I\setminus I_{\ve_{s-1}})\cap \bigcap_{i=1}^{n}I_{G,i}\Big) >0.
	\end{equation*}	
	But note that if $r  \in J_z:= (I\setminus I_{\ve_{s-1}}) \cap I_G \cap \bigcap_{i=1}^{n}I_{G,i}$, because $s-1<n-1$, then we have
	\begin{equation}\label{e:51}
		\HH^{s-1}_\infty (S(z,r) \cap B_i \cap F) \gtrsim_n \rad(B_i)^{s-1},
	\end{equation}
and the same estimate holds for $B$. 


Recall from Definition \ref{def-thickpoints} that $\ve_{s-1}(x,r)$ is given by 
\begin{equation*}
\ve_{s-1}(z,r) = \inf_{H} \frac{1}{r^s} \int_{V_{\mfc_0}^a(z,r;H) \cap F} \left(\delta_{L_H}(x)/r\right)^{n-s+1} \, d\HH^{s-1}_\infty(x),
\end{equation*}
where $V_{\mfc_0}^a(z,r; H) = T_{\mfc_0,a}^1(z,r;H) \cup T_{\mfc_0,a}^2 (z,r;H)$, and
\begin{equation*}
T_{\mfc_0,a}^i(z,r;H) =  \{y \in S_H^i(z,r) \setminus \Omega^i: \capp_{n-2}(\overline{B}(y, a \delta_{L_H}(y))\cap S(z,r)\setminus \Omega^i)\geq \mfc_0 \delta_{L_H}^{n-2}(y)\}.
\end{equation*}
Let $H$ be the half space infimizing $\ve_{s-1}(z,r)$, $r \in J_z$. Denote by $B_{far}$ a ball farthest from $L_H$ out of $B, B_1,...,B_{n}$. In particular, it must hold that 
\begin{equation}\label{e:50}
\mathrm{dist}(B_{far}, L_{H})\gg \gamma^{1/(n+2)} r_0
\end{equation}
by our choice of parameters.
Let $p \in B_{far} \cap S(z, r) \cap F$. Then from \eqref{e:50} we see that $\delta_{L_H}(p) \gg \gamma^{1/(n+2)} r_0$. We claim that
\begin{equation}\label{e:62}
p \in V_{\mfc_0}^a(z,r; H),
\end{equation}
whenever $\mfc_0$ and $a$ are appropriately chosen. Recall that $\rad(B_{far})=\gamma^{1/(n+2)} r_0$; then choosing $a \in (0,1)$ appropriately (e.g. $a=2/3$) we can insure that 
\begin{equation*}
B_{far} \cap S(z,r) \subset B(p, a \delta_{L_H}(p)) \cap S(z,r).
\end{equation*}
Hence we see that
\begin{align*}
\HH^{s-1}_\infty(B_{S(z,r)}(p, a \delta_{L_H}(p))\cap F) & \geq \HH^{s-1}_\infty (S(z, r) \cap B_{far} \cap F)  \gtrsim \rad(B_{far})^{s-1}  \geq \wt{\mathfrak{c}}_0 \delta_{L_H}^{s-1}(p),
\end{align*}
where we used \eqref{e:51} in the penultimate inequality and the choice $\wt{\mathfrak{c}}_0 = \gamma^{(n-1)/(n+2)}$ in the last one. We claim that $p \in V_{\wt{\mathfrak{c}}_0}^a(z,r;H)$ with the appropriate choice of $a, \wt{\mathfrak{c}}_0$ and $p\in B_{far} \cap F \cap S(z,r)$. Indeed, by Lemma 2.1 from \cite{FTV} we have
\begin{align*}
	\capp_{n-2}(\overline{B}_{S(z,r)}(p, a \delta_{L_H}(p))\setminus \Omega^i ) & \geq c(s,n) \HH_\infty^{s-1}(B_{S(z,r)}(p, a \delta_{L_H}(p))\setminus \Omega^i )^{\frac{n-2}{s-1}}\\
& \geq c(s,n,\wt{\mathfrak{c}}_0)\,\delta_{L_H}^{n-2}(p),
\end{align*}
where $i=+,-$.
So choosing $\mathfrak{c}_0= c(s,n,\wt{\mathfrak{c}}_0)$ our claim \eqref{e:62} holds.
 This and \eqref{e:51} imply that 
\begin{equation*}
	\HH^{s-1}_\infty(V_{\mfc_0}^a(z,r; H)) \geq \HH^{s-1}_\infty(S(z,r) \cap B_{far} \cap F) \gtrsim \rad(B_{far})^{s-1} \gtrsim_{\gamma,\tau} r^{s-1},
\end{equation*}	
recalling that $r \in J_z$ and thus $r \approx_\tau r_0$ and that $\rad(B_{far}) = \gamma^{1/(n+2)} r_0$.
		It is now easy to see that
 $\ve_{s-1}(z,r) \gtrsim_{n, \gamma} 1$. An appropriate choice of $\delta\ll \epsilon \leq \gamma$ now  contradicts that $r \in I\setminus I_{\ve_s}$ and Lemma \ref{lem-small-betaF}  is proven.
	
\end{proof}
\vv


\appendix
\section{Appendix}

In this appendix we include an argument  to end the proof of Theorem \ref{teomain1} when $\Omega^+, \Omega^-$ are open which that does not rely on \cite{DLF}. 
	
Recall from Subsection \ref{sec-proof-teoA} that to complete the proof of Theorem \ref{teomain1} it sufficed to show that $E$ has $\sigma$-finite measure
$\HH^n$. For the sake of contradiction, suppose that $\HH^n|_E$ is non-$\sigma$-finite
By a result of Sion and Sjerve \cite[Theorem 6.5]{sion-sjerve}, this implies that there exists a gauge function $h:[0,\infty)\to [0,\infty)$ 
(increasing, with $h(0)=0$) such that 
$\lim_{t\to0+} \frac{h(t)}{t^n} =0$
and $\HH^h(E)>0$. Also, there exists a compact subset $F\subset E$ such that $\HH^h(F)>0$, by 
\cite[Theorem 6.6]{sion-sjerve}. By Frostman Lemma, there exists a non-zero Radon measure $\sigma$ supported on $F$ such that $\sigma(B(x,r))\leq h(r)$ for all $x\in\R^{n+1}$, $r>0$.
As above, we can replace $F$ by a subset $F_1$ where \rf{e:unifint} holds uniformly on $F_1$.
Then, again we take $R>0$ small enough so that 
\rf{Repseq} holds for all $x\in F_1$, for some $\delta,\epsilon>0$ to be chosen soon. 

We also assume $R$ small
enough so that $h(r)\leq r^n$ for $r\leq R$. In this way, for all $r\in (0,R]$ and $x\in\R^{n+1}$,
$$\frac{\sigma(B(x,r))}{r^n} \leq \frac{\sigma(B(x,r))}{h(r)}\leq1.$$
Then, take $x_0\in F_1$ and $r_0\in (0,R]$ such that
$$\frac{\sigma(B(x_0,r_0))}{r_0^n} \geq \frac12 \,\sup_{x\in F_1,0<r\leq R}\frac{\sigma(B(x,r))}{r^n}.$$
Next take 
$\nu = \frac{r_0^n}{2\,\sigma(B(x_0,r_0))}\,\sigma|_{B(x_0,r_0)}.$
Notice that $\nu(B(x_0,r_0))=\frac12r_0^n$ and, for all $x\in F_1$, $r\in (0,r_0]$,
\begin{equation}\label{eqgrowth77}
	\frac{\nu(B(x,r))}{r^n} \leq 2\,\frac{\nu(B(x_0,r_0))}{r_0^n} =1.
\end{equation}
Then, by choosing parameters appropriately and applying the Main Lemmas \ref{mainlemma1} and
\ref{mainlemma2} to $\nu$, we infer that there exists
an $n$-dimensional Lipschitz graph $\Gamma$ such that the set $F_2 = F_1\cap \Gamma$ satisfies $\nu(F_2)>0$.
The growth condition \rf{eqgrowth77} implies that $\nu|_{F_2}$ is absolutely continuous with respect to $\HH^n|_{F_2}$.
This is not possible because for all $x\in F_1$, and in particular for $x\in F_2$,
$$\limsup_{r\to0}\frac{\nu(B(x,r))}{r^n} \leq\limsup_{r\to0}\frac{\nu(B(x,r))}{h(r)}\,\lim_{r\to0} \frac{h(r)}{r^n} = 0.$$
So we get the desired contradiction.

\vvv

\end{document}